\newtheorem{theorem}{Theorem}[section]
\newtheorem{lemma}[theorem]{Lemma}
\newtheorem{proposition}[theorem]{Proposition}
\newtheorem{corollary}[theorem]{Corollary}
\newtheorem{definition}[theorem]{Definition}
\newtheorem{example}[theorem]{Example}
\newtheorem{remark}[theorem]{Remark}
\newtheorem{fact}[theorem]{Fact}
\def\a{\bar{a}}\def\b{\bar{b}}\def\c{\bar{c}}
\def\d{\bar{d}}\def\x{\bar{x}}\def\y{\bar{y}}
\def\z{\bar{z}}\def\t{\bar {t}}
\def\s{\bar{s}}\def\e{\bar{e}}
\def\Nn{\mathbb N}
\def\Qn{\mathbb Q}
\def\Rn{\mathbb R}
\def\Sc{\mathcal S}
\def\0{\sf 0}
\def\dotminussym#1#2{%
  \setbox0=\hbox{$\m@th#1-$}%
  \kern.5\wd0%
  \hbox to 0pt{\hss\hbox{$\m@th#1-$}\hss}%
  \raise.8\ht0\hbox to 0pt{\hss$\m@th#1.$\hss}%
  \kern.5\wd0}
\begin{document}
\begin{center}
{\Large\sc{Elements of affine model theory\\}}\vspace{10mm}

{\bf Seyed-Mohammad Bagheri}
\vspace{4mm}

{\footnotesize {\footnotesize Department of Pure Mathematics, 
Tarbiat Modares University,\\ Tehran, Iran, P.O. Box 14115-134}}
\vspace{1mm}

bagheri@modares.ac.ir 
\end{center}

\newpage

{\hypersetup{linkcolor=blue}{\footnotesize\tableofcontents}


\newpage\section{Introduction}
A general trend in model theory is to extend first order logic in order to deal with notions from other parts of mathematics.
Among the most interesting such notions are those arising from topology.
Topological notions are usually second order properties and this makes the problem sufficiently difficult.
The approach which handles such notions using variations of first order logic is called topological model theory \cite{Ziegler}.
A related approach in this respect is to incorporate continuity in logic.
This notion is ubiquitous in mathematics and deserves particular attention.
C.C. Chang and H.J. Keisler introduced continuous model theory \cite{CK} in its most general form.
They used an arbitrary compact Hausdorff space $X$ as the space of truth values.
In this logic, only continuous operations on $X$ are allowed to be used as logical connectives.
Since then, improvements of continuous logic has been introduced by logicians.
The focus has been mostly on the notion of metric, aiming to put metric algebraic structures in the realm of model theory.
Retaining some sort of compactness theorem similar to what holds in first order logic is an essential part of these essays.
Fortunately, this goal is usually achieved by using the ultraproduct method.
One of the most successful approaches up to now has been the model theory of metric structures \cite{BBHU}.
This is a relatively direct generalization of first order logic where the value space $\{0,1\}$ is replaced with a closed interval say $[0,1]$.
Of course, with some precautions, one can shift to the whole real line and use $\{+,-,\times,0,1\}$ as a system of connectives.
Other connectives are then approximated by their combinations in the light of the Stone-Weierstrass theorem.
In the resulting framework, one handles metric spaces equipped with a family of uniformly continuous relations and operations.
The expressive power of continuous logic is so strong that compact structures are characterized up to isomorphism.

By Lindstr\"{o}m's theorems, the expressive power of first order logic (and similarly continuous logic)
is not strengthened without losing some interesting property.
Weakening it, is however less harmless and has been payed attention by some authors (see e.g. \cite{Poizat}).
Affine continuous logic is a weakening of continuous logic obtained by avoiding $\times$
(or correspondingly $\wedge,\vee$) as connective.
This reduction leads to the affinization of most basic tools and technics of continuous logic
such as the ultraproduct construction, compactness theorem, type, saturation etc.
The affine variant of the ultraproduct construction is the ultramean construction where
ultrafilters are replaced with maximal finitely additive probability measures.
A consequence of this relaxation is that compact structures with at least
two elements have now proper elementary extensions.
In particular, they have non-categorical theories in the new setting.
Thus, a model theoretic framework for study of such structures is provided.
A more remarkable aspect of this logic is that the type spaces are compact convex sets.
The extreme types then play a crucial role in the study of affine theories.

In first order logic, types have several features depending on the ambient theory,
e.g. Dedekind cuts in linearly ordered sets or prime ideals of polynomial rings in algebraically closed fields.
In the continuous logic framework, formulas form a real Banach algebra and types are generally Banach algebra homomorphisms.
In the affine part, types correspond to norm one positive linear functionals and they form compact convex sets.
Surprisingly, in first order models, ordinary types correspond to extremal affine types.
In contrast, Keisler measures are affine types in disguise and they are realized in the affine combinations of first order models.
So, Keisler measures are better understood in the perspective provided by affine continuous logic.

The main object of study in full or affine continuous logic is metric structures.
Examples include first order models, pure metric spaces (e.g. Hadamard spaces), metric groups (or rings etc.) and probability algebras.
Structures based on Banach spaces such as Banach algebras, $C^*$-algebras, Hilbert spaces etc. are
considered as many sorted metric structures which are of main interest in continuous logic \cite{Henson}.
In the affine part, first order theories find now new models consisting of affine combinations
(or more generally ultrameans) of other first order models.
These structures are interesting and can be studied in this framework.
Also, as stated above, compact structures have non-compact elementary extensions in the affine logic
and hence may be payed attention from a new point of view now. An important
example is the classical dynamical systems which are based on compact metric spaces.
Another source of examples is Lie groups some of which carry invariant metrics
making them metric structures in the continuous logic sense.

The purpose of the present article is to initiate the model theory of
metric structures in the framework of affine continuous logic.
Parts of the work owe to R. Safari, M. Malekghasemi and F. Fadai and has been published previously
in \cite{Bagheri-Lip, Bagheri-Extreme, Bagheri-Safari, Malekghasemi, Safari-Bagheri}.
Affine variants of some standard notions of model theory including quantifier-elimination, saturation,
types, definability etc. are considered. There are also new notions raised from the affinity.
There are interesting connections between logic and Choquet theory.
The ultramean construction has new features which is studied in brief but which also needs further work.
A more general form of the affine logic is obtained by using powers of the metric relation.
It provides a framework for axiomatizing structures like Hadamard spaces.
This is discussed in \S \ref{ALp}.
The appendix contains some standard mathematical notions and results used in the text.
It also contains a selected list of questions concerning different aspects of affine model theory.

Recently, a long preprint of I. Ben-Yaacov, T. Ibarluc\'{i}a, T. Tsankov \cite{Ibarlucia} appeared.
They give a detailed study of extreme types, extremal models and representations
of models of affine theories by extremal models. They extended the subject and found its
deep connections with analysis and ergodic theory.
In contrast, the present text is rather introductory and emphasizes more on the general model theoretic aspects of affine logic.

\newpage\section{Logic and structure}
The formal presentation of continuous logic \cite{BBHU} is similar to first order logic.
The space of truth values is the unit interval and every continuous $\alpha:[0,1]^n\rightarrow[0,1]$
can be used as a connective. Also, the quantifiers $\inf$ and $\sup$ replace $\forall$ and $\exists$.
In practice, a generating set of connectives consisting of $\wedge,\vee$, $1-x$ etc. is used.
On the other hand, all operations and relations on the structures are assumed to be uniformly continuous.
Uniform continuity is preserved by ultraproducts if the modulus of continuity is fixed.
The setting of continuous logic has been well regulated to fit in with the ultraproduct construction.
The Keisler-Shelah isomorphism theorem is a natural consequence of this accordance.

To achieve the affine fragment of continuous logic, in the first step, we replace $[0,1]$ with
$\Rn$ and use its algebraic operations as connectives.
There are several options such as $\{1,+,\wedge,r\cdot\}_{r\in\Rn}$ and $\{+,\times,r\}_{r\in\Qn}$.
Thanks to the various forms of the Stone-Weierstrass theorems \cite{Royden},
these systems of connectives are complete and the resulting logics are equivalent to the standard one stated above.
In affine continuous logic, ultrafilters are replaced with finitely additive probability measures
and ultralimit is replaced with integration.
So, in the second step, we keep the linear connectives and remove the rest.
More precisely, we use $(\Rn,1,+,r\cdot,\leqslant)_{r\in\Rn}$ as value structure for defining this fragment.
Uniform continuity is also replaced with Lipschitzness.
Recall that every bounded uniformly continuous real valued function on a metric space
can be uniformly approximated by Lipschitz functions. So, Lipschitzness is not a serious restriction.
Also, the order relation $\leqslant$ is used in the formation of conditions (or statements).

A \emph{Lipshcitz language} \index{Lipschitz language} is a set $$L=\{c,e,...;\ F,G,...;\ P,R,...\}$$ consisting of constant symbols
as well as function and relation symbols of various arities. $L$ is allowed to have arbitrary cardinality.
The arity of a function (or relation) symbol is a number $n\geqslant1$ which indicates the number of variables in the domain of its interpretation.
Also, to each function symbol $F$ (resp. relation symbol $R$) is assigned a Lipschitz constant
$\lambda_F\geqslant0$ (resp. $\lambda_R\geqslant0$).
It is always assumed that $L$ contains a distinguished binary relation symbol $d$ for metric and $\lambda_d=1$.
However, exceptionally it is put in the list of logical symbols.
In brief, the following list of symbols are used in affine logic:
\vspace{1mm}

- logical and auxiliary symbols $1$, $+$, $r\cdot$, $\inf$, $\sup$, $d$, $($, $)$

- individual variables $x,y,z,...$

- symbols of the language $L$.
\bigskip

Let $L$ be a Lipschitz language.

\begin{definition}
\em{\emph{$L$-terms} and their Lipschitz constants (denoted by $\lambda_t$ for the term $t$) are inductively defined as follows:

- Constant symbols and variables are terms with Lipschitz constants resp. $0$ and $1$.

- If $F$ is a $n$-ary function symbol and $t_1,...,t_n$
are terms, then $t=F(t_1,...,t_n)$ is a
term with Lipschitz constant $\lambda_t=\lambda_F\cdot\sum_{i=1}^n\lambda_{t_i}$.}
\end{definition}

\begin{definition}\label{formulas and bounds}
\em{\emph{$L$-formulas} and their Lipschitz constants and bounds (denoted resp. by $\lambda_\phi$ and
$\mathbf{b}_\phi$ for the formula $\phi$) are inductively defined as follows:

- $1$ is an \emph{atomic formula} with Lipschitz constant $0$ and bound $1$.

- If $t_1,t_2$ are terms, $d(t_1,t_2)$ is an atomic formula with Lipschitz constant $\lambda_{t_1}+\lambda_{t_2}$ and bound $1$.

- If $R$ is a $n$-ary relation symbol and $t_1,...,t_n$ are terms, then $R(t_1,...,t_n)$ is an
\emph{atomic formula} with Lipschitz constant $\lambda_R\cdot\sum_{i=1}^n\lambda_{t_i}$ and bound $1$.

- If $\phi,\psi$ are formulas and $r\in\Rn$, then $\phi+\psi$ and $r\phi$ are formulas with
Lipschitz constants resp. $\lambda_\phi+\lambda_\psi$,\ $|r|\lambda_\phi$ and bounds
resp. $\mathbf{b}_{\phi}+\mathbf{b}_\psi$,\ $|r|\mathbf{b}_\phi$.

- If $\phi$ is a formula and $x$ is a variable, then $\inf_x\phi$ and $\sup_x\phi$ are formulas with
the same Lipschitz constant and bound as $\phi$.}
\end{definition}
\vspace{2mm}

Free and bounded variables are defined in the usual way. For example, in the formula $\sup_x d(x,y)$,
the variable $y$ is free while $x$ bounded. The notation $\phi(x_1,...,x_n)$ (resp. $t(\x)$)
indicates that all free variables of the formula $\phi$ (resp. the term $t$) are included in the list $x_1,...,x_n$.
A \emph{sentence} is a formula without free variable.
If $d$ is a pseudometric on $M$, we put the pseudometric $\sum_{i=1}^nd(x_i,y_i)$ on $M^n$
and denote it again by $d$. So, $d$ has several meanings which must not be confused.

\begin{definition} \label{prestructure}
{\em A {\em prestructure} \index{prestructure} in $L$ is a pseudometric space $(M,d)$ with diameter at most $1$ equipped with:

- for each constant symbol $c\in L$, an element $c^{M}\in M$

- for each $n$-ary function symbol $F$ a function $F^{M}:M^n\rightarrow M$ such that
$$d(F^{M}(\a),F^{M}(\b))\leqslant\lambda_F d(\a,\b)\ \ \ \ \ \ \ \forall\a,\b$$

- for each $n$-ary relation symbol $R$ a function $R^{M}:M^n\rightarrow[0,1]$ such that
$$R^{M}(\a)-R^{M}(\b)\leqslant\lambda_R d(\a,\b)\ \ \ \ \ \ \ \ \forall\a,\b.$$}
\end{definition}
\bigskip

The metric symbol is interpreted by the metric function, i.e. $d^M=d$ and the last clause holds for it.
Let $M$ be an $L$-prestructure.
\vspace{1mm}

\begin{definition}
\em{The value of a term $t(x_1,...,x_k)$ in $\a\in M^k$, denoted by $t^M(\a)$, is inductively defined as follows:

- $c^M(\a)=c$

- $x_i^M(\a)=a_i$

- $\big(F(t_1(\x),...,t_n(\x))\big)^M(\a)=F^M(t_1^M(\a),...,t_n^M(\a))$ \ \ \ \ \ (for $n$-ary $F$).}
\end{definition}
\vspace{1mm}

\begin{definition}
\em{The value of a formula $\phi(\x)$ in $\a\in M^k$, denoted by $\phi^M(\a)$, is inductively defined as follows:

  - $1^M=1$

  - $(d(t_1,t_2))^M(\a)=d(t^M_1(\a),t^M_2(\a))$

  - $(R(t_1,...,t_n))^M(\a)=R^M(t^M_1(\a),...,t_n^M(\a))$

  - $(\phi+\psi)^M(\a)=\phi^M(\a)+\psi^M(\a)$

  - $(r\phi)^M(\a)=r\phi^M(\a)$

  - $(\inf_y\phi)^M(\a)=\inf\{\phi^M(\a,b):\ b\in M\}$ \ \ \ \ \ (for $\phi(\x,y)$)

  - $(\sup_y\phi)^M(\a)=\sup\{\phi^M(\a,b):\ b\in M\}$\ \ \ \ (for $\phi(\x,y)$).}
\end{definition}
\vspace{1mm}

\begin{proposition} \label{completion of prestructures}
Let $M$ be an $L$-prestructure and $|\x|=n$.
For each term $t(\x)$, the map $t^M:M^n\rightarrow M$ is $\lambda_t$-Lipschitz.
For each formula $\phi(\x)$, the map $\phi^M:M^n\rightarrow \Rn$ is $\lambda_{\phi}$-Lipschitz
and $|\phi^M(\a)|\leqslant\mathbf{b}_{\phi}$ for every $\a$.
\end{proposition}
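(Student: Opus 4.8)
The plan is to prove both assertions by a single structural induction that runs in parallel with the inductive clauses of the definitions, treating terms first and then formulas. For terms I would begin with the base cases: a constant symbol $c$ yields the constant map $c^M(\a)=c$, which is $0$-Lipschitz in agreement with $\lambda_c=0$; a variable $x_i$ yields the projection $x_i^M(\a)=a_i$, and since the product pseudometric is $d(\a,\b)=\sum_j d(a_j,b_j)$ we have $d(a_i,b_i)\leqslant d(\a,\b)$, matching $\lambda_{x_i}=1$. For the inductive step $t=F(t_1,\dots,t_n)$ I would set $\c=(t_1^M(\a),\dots,t_n^M(\a))$ and $\d=(t_1^M(\b),\dots,t_n^M(\b))$, apply the Lipschitz hypothesis on $F^M$ from Definition \ref{prestructure}, and bound $d(\c,\d)=\sum_i d(t_i^M(\a),t_i^M(\b))\leqslant\big(\sum_i\lambda_{t_i}\big)d(\a,\b)$ using the inductive hypothesis on each $t_i$; chaining these gives $d(t^M(\a),t^M(\b))\leqslant\lambda_F\big(\sum_i\lambda_{t_i}\big)d(\a,\b)=\lambda_t\,d(\a,\b)$.

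For formulas I would carry the Lipschitz estimate and the uniform bound $\mathbf{b}_\phi$ together through the induction. In the atomic cases, $1^M$ is the constant $1$; for $d(t_1,t_2)$ the bound is immediate from $\mathrm{diam}(M)\leqslant 1$ and the Lipschitz estimate follows from the quadrilateral inequality $|d(u,v)-d(u',v')|\leqslant d(u,u')+d(v,v')$ composed with the term estimates just established; for $R(t_1,\dots,t_n)$ the bound comes from $R^M$ taking values in $[0,1]$. One small point deserves mention here: the prestructure condition on $R^M$ is stated one-sidedly as $R^M(\a)-R^M(\b)\leqslant\lambda_R d(\a,\b)$, but since it holds for all pairs, interchanging $\a$ and $\b$ yields the two-sided $|R^M(\a)-R^M(\b)|\leqslant\lambda_R d(\a,\b)$, after which the composition with terms is identical to the function case. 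The propositional steps are then routine: for $\phi+\psi$ and $r\phi$ the triangle inequality and homogeneity of $|\cdot|$ deliver the Lipschitz constants $\lambda_\phi+\lambda_\psi$ and $|r|\lambda_\phi$ together with the bounds $\mathbf{b}_\phi+\mathbf{b}_\psi$ and $|r|\mathbf{b}_\phi$ simultaneously.

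The step I expect to need the most care is the quantifier case $\inf_y\phi$ (and, symmetrically, $\sup_y\phi$), where the claim is that passing to the infimum over the last coordinate preserves both $\lambda_\phi$ and $\mathbf{b}_\phi$. The bound is clear, since an infimum of numbers lying in $[-\mathbf{b}_\phi,\mathbf{b}_\phi]$ remains in that interval. For the Lipschitz estimate I would use that the inductive hypothesis on $\phi(\x,y)$ gives $\phi^M(\a,b)\leqslant\phi^M(\b,b)+\lambda_\phi d(\a,\b)$ for every fixed $b$, the $y$-coordinate contributing $d(b,b)=0$ to the product pseudometric. Hence for each $b$ one has $(\inf_y\phi)^M(\a)-\lambda_\phi d(\a,\b)\leqslant\phi^M(\b,b)$; taking the infimum over $b$ on the right gives $(\inf_y\phi)^M(\a)-(\inf_y\phi)^M(\b)\leqslant\lambda_\phi d(\a,\b)$, and the reverse inequality follows by symmetry. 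This is just the standard fact that a pointwise infimum of a uniformly $\lambda$-Lipschitz family is $\lambda$-Lipschitz, but it is the one place where the interchange of the infimum with the estimate must be handled explicitly, so I would write it out in full rather than leave it to the reader.
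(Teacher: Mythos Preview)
Your proposal is correct and is exactly the standard structural induction one would write; the paper itself states this proposition without proof, treating it as a routine verification, so there is nothing to compare against beyond noting that your argument is precisely what the authors left to the reader.
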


A prestructure $M$ is called \emph{structure} \index{structure} (or \emph{model}) if $d^M$ is a complete metric.
This is the standard terminology in continuous logic and we normally follow it.
Nevertheless, we encounter in this article structures which are not complete.
For example, the ultramean of complete structures may be incomplete and the union of a chain of models is incomplete.
Generally, every prestructure can be transformed to a complete structure in a natural process.
For this purpose, first one identifies $a,b\in M$ if $d(a,b)=0$. The quotient metric is denoted again by $d$.
The resulting metric is then completed by adding all Cauchy sequences
and the interpretations of function and relation symbols are extended to this space
in the natural way (see also \cite{BBHU}).

\begin{proposition}\label{exist}
Let $M$ be an $L$-prestructure. Then, there is a complete $L$-structure $\bar M$
and a dense mapping $\pi:M\rightarrow \bar M$ such that for every formula $\phi(\x)$
$$\phi^{M}(a_1,...,a_k)=\phi^{\bar M}(\pi a_1,...,\pi a_k) \ \ \ \ \ \ \forall a_1,...,a_k\in M.$$
\end{proposition}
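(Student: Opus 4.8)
The plan is to carry out the two-stage construction already sketched before the statement: first pass to the metric quotient, then take its Cauchy completion, and finally extend every interpretation by uniform continuity. The preservation identity will be established by induction on syntax, with the quantifier clauses resting on a density argument.

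First I would define $a\sim b$ on $M$ by $d(a,b)=0$. The Lipschitz inequalities in Definition \ref{prestructure} make this a congruence: if $d(\a,\b)=0$ then $d(F^M(\a),F^M(\b))\leqslant\lambda_F d(\a,\b)=0$ and $|R^M(\a)-R^M(\b)|\leqslant\lambda_R d(\a,\b)=0$, so the interpretations descend to well-defined maps on the quotient, on which $d$ is now a genuine metric of diameter at most $1$. These induced interpretations satisfy the same Lipschitz bounds, so the quotient is again an $L$-prestructure, and since the quotient map is distance-preserving and surjective it preserves all term and formula values on the nose (a routine induction in which even the quantifier clause is immediate, the two infima ranging over the very same set of values).

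Next I would let $\bar M$ be the metric completion of the quotient and let $\pi\colon M\to\bar M$ be the composite of the quotient map with the canonical dense, distance-preserving inclusion. Each $F$ is interpreted by a $\lambda_F$-Lipschitz, hence uniformly continuous, map on the quotient, so it extends uniquely to a $\lambda_F$-Lipschitz map $F^{\bar M}$ on the complete space $\bar M$; constants map to the images of their interpretations. For a relation symbol $R$, its interpretation is Lipschitz into the closed set $[0,1]$, so the continuous extension $R^{\bar M}$ again takes values in $[0,1]$ and inherits the one-sided Lipschitz inequality by density. Thus $\bar M$ is a complete $L$-structure, and applying Proposition \ref{completion of prestructures} to $\bar M$ tells me that every $\phi^{\bar M}$ is $\lambda_\phi$-Lipschitz and bounded by $\mathbf{b}_\phi$, a fact I will need in the last step.

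Finally I would prove $\phi^M(\a)=\phi^{\bar M}(\pi a_1,\dots,\pi a_k)$ by induction on formulas, after first establishing the term identity $t^{\bar M}(\pi\a)=\pi\big(t^M(\a)\big)$ (immediate from the definitions of $F^{\bar M}$ and the constants). The atomic clauses then follow because $\pi$ preserves $d$, and the additive and scalar clauses are trivial. \textbf{The main obstacle is the quantifier clause.} For $\inf_y\phi$ I must compare the infimum of $\phi^{\bar M}(\pi\a,\,\cdot\,)$ over all of $\bar M$ with the infimum of $\phi^M(\a,\,\cdot\,)$ over $M$. The induction hypothesis gives $\phi^{\bar M}(\pi\a,\pi b)=\phi^M(\a,b)$ for every $b$, so the two infima coincide once I know that the infimum of the continuous function $\phi^{\bar M}(\pi\a,\,\cdot\,)$ over the dense subset $\pi(M)$ equals its infimum over the whole of $\bar M$; this is exactly where the Lipschitz continuity of $\phi^{\bar M}$ from Proposition \ref{completion of prestructures} enters. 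The case of $\sup_y$ is identical. This density interchange is the only nonroutine point, and it closes the induction.
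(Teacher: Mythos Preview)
Your proposal is correct and follows precisely the approach the paper sketches just before the statement: quotient by the null pseudometric, complete, extend the interpretations by Lipschitz continuity, and verify the formula identity by induction, with the quantifier step handled by density and the $\lambda_\phi$-Lipschitz bound from Proposition~\ref{completion of prestructures}. The paper itself does not spell out the proof beyond that sketch (deferring to \cite{BBHU}), so your write-up is in fact more detailed than what appears there; the one point you flag as the main obstacle---interchanging the infimum over $\bar M$ with the infimum over the dense image $\pi(M)$---is exactly the substantive step, and your justification via continuity of $\phi^{\bar M}$ is the intended one.
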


The logic based on the family of formulas defined above is called \emph{affine continuous logic} (abbreviated AL).
In full continuous logic (abbreviated CL), one further allows $\phi\wedge\psi$ and $\phi\vee\psi$ to be formulas.
The expressive power is then extended and the above mentioned propositions hold for the class of CL-formulas too.
Most fundamental results in classical model theory have in parallel a CL variant (see \cite{BBHU}) and an AL variant.
The later one is the subject of study of this article. Except otherwise stated,
\begin{quote}{\sf every notion or notation used in this text is in the AL sense}.\end{quote}
We may sometimes emphasize this by adding the adverb `affine' (or AL) to that notion.
However, we also mention similar notions in the CL sense mostly for comparison between the two notions.
In this case the full one is indicated by the acronym CL.
\vspace{1mm}

\begin{definition}
\em{A map $f:M\rightarrow N$ is called an \emph{embedding}\index{embedding} if

  - for every constant symbol $c\in L$, $f(c^M)=c^N$

  - for every $n$-ary function symbol $F$ and $a_1,...,a_n\in M$ $$f(F^M(a_1,...,a_n))=F^N(f(a_1),...,f(a_n))$$

  - for every $n$-ary relation symbol $R$ (including $d$) and $a_1,...,a_n\in M$ $$R^M(a_1,...,a_n)=R^N(f(a_1),...,f(a_n)).$$}
\end{definition}
\bigskip

So, by the third clause for every $a,b\in M$ we have that $$d^M(a,b)=d^N(f(a),f(b))$$
which means that $f$ is distance preserving. If $M$ is a subset of $N$ and the inclusion map $id$ is an embedding,
$M$ is called a \emph{substructure}\index{substructure} of $N$. This is denoted by $M\subseteq N$.
A formula is \emph{quantifier-free}\index{quantifier-free} if it does not contain the quantifiers $\sup$ and $\inf$,
i.e. it is of the form $\sum r_i\phi_i$ where $\phi_i$ is atomic and $r_i\in\Rn$.
It is not hard to verify that

\begin{lemma}
$f:M\rightarrow N$ is an embedding if and only if for every quantifier-free formula $\phi(\x)$ and $\a\in M$,
one has that\ \ $\phi^M(\a)=\phi^N(f(\a))$.
\end{lemma}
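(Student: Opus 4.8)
The plan is to prove the two implications separately, the forward direction ($\Rightarrow$) by a routine induction and the reverse direction ($\Leftarrow$) by a small trick exploiting the ever-present metric symbol.

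For the forward direction, assume $f$ is an embedding. I would first establish, by induction on the structure of terms, the auxiliary identity $f(t^M(\a))=t^N(f(\a))$ for every term $t(\x)$ and every $\a\in M$. The base cases are the constant clause of the definition of embedding and the (trivial) variable case, and the induction step $t=F(t_1,\dots,t_n)$ combines the function-symbol clause of the embedding definition with the induction hypothesis applied to $t_1,\dots,t_n$. With this term identity in hand, I would prove $\phi^M(\a)=\phi^N(f(\a))$ for quantifier-free $\phi$. Since a quantifier-free formula is by definition a finite linear combination $\sum r_i\phi_i$ of atomic formulas, it suffices to treat the atomic cases $1$, $d(t_1,t_2)$ and $R(t_1,\dots,t_n)$ and then close under the operations $\phi+\psi$ and $r\phi$ by linearity of $(\cdot)^M$ and $(\cdot)^N$. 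The case $1$ is immediate, while $d(t_1,t_2)$ and $R(t_1,\dots,t_n)$ follow from the relation-symbol clause of the embedding definition (applied to $d$, respectively to $R$) together with the term identity just proved; there are no quantifier cases to consider.

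For the reverse direction, assume $\phi^M(\a)=\phi^N(f(\a))$ for every quantifier-free $\phi$ and every $\a$. The relation-symbol clause, including the one for $d$, is immediate, since each $R(x_1,\dots,x_n)$ and each $d(x_1,x_2)$ is itself atomic, hence quantifier-free, so its preservation is exactly the hypothesis. The point requiring care is recovering the constant and function clauses, which assert equalities of \emph{elements} rather than of formula values. Here I would use the fact that the metric is always available as an atomic formula in order to detect coincidence of constructed elements. To obtain $f(c^M)=c^N$, apply the hypothesis to the atomic formula $d(c,x)$ evaluated at $x=c^M$, which yields $d^N(c^N,f(c^M))=d^M(c^M,c^M)=0$. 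Similarly, to obtain $f(F^M(\a))=F^N(f(\a))$, apply the hypothesis to $d(F(\x),y)$ evaluated at $y=F^M(\a)$, obtaining $d^N(F^N(f(\a)),f(F^M(\a)))=0$.

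The main obstacle is precisely this last step, together with a subtlety about pseudometrics: distance $0$ forces genuine equality only when $d^N$ is an actual metric, so for general prestructures the identities $f(c^M)=c^N$ and $f(F^M(\a))=F^N(f(\a))$ should be read modulo the canonical identification $d^N(u,v)=0$. Under the standing convention that structures are metric, or after passing to the associated complete structure of Proposition \ref{exist}, this reads off as literal equality and completes the argument.
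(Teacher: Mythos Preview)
Your proof is correct and follows exactly the natural route one would take; the paper itself does not supply a proof but merely introduces the lemma with ``It is not hard to verify that'', so there is nothing substantive to compare against. Your caveat about pseudometrics is well observed, and under the paper's standing convention that structures carry complete metrics it dissolves just as you indicate.
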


\begin{definition}
\em{ $M$ and $N$ are \emph{elementarily equivalent}\index{elementary equivalent}, denoted by
$M\equiv N$\index{$\equiv$},
if for every sentence $\sigma$ one has that $\sigma^M=\sigma^N$.
A function $f:M\rightarrow N$ is called an \emph{elementary embedding}\index{elementary embedding} if
for every formula $\phi(\x)$ and $\a\in M$ one has that $\phi^M(\a)=\phi^N(f(\a))$.
If $f=id$ is an elementary embedding, $M$ is called an \emph{elementary substructure}\index{elementary substructure}
of $N$. This is denoted by $M\preccurlyeq N$.\index{$\preccurlyeq$}}
\end{definition}


\begin{proposition} \label{tarski} \emph{(Tarski-Vaught's test)}
Assume $M\subseteq N$. Then, $M\preccurlyeq N$ if and only if for
every formula $\phi(\bar{x},y)$ and $\bar{a}\in M$ one has that
$$\inf\{\phi^N(\bar{a},b)\ | \ b\in N\}=\inf\{\phi^N(\bar{a},c) \ | \ c\in M\}.$$
\end{proposition}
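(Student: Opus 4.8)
The plan is to prove the two implications separately. The forward direction $M \preccurlyeq N \Rightarrow$ (the displayed identity) is an immediate specialization of elementarity, while the backward direction is an induction on the structure of formulas, using the hypothesis precisely at the quantifier steps.

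For the forward direction I would assume $M \preccurlyeq N$ and apply the definition of elementarity to the single formula $\inf_y\phi(\bar x,y)$. By the inductive clause for the value of an $\inf$-formula, $(\inf_y\phi)^M(\bar a)=\inf\{\phi^M(\bar a,c):c\in M\}$, and since $M\preccurlyeq N$ we have $\phi^M(\bar a,c)=\phi^N(\bar a,c)$ for every $c\in M$; on the other side $(\inf_y\phi)^N(\bar a)=\inf\{\phi^N(\bar a,b):b\in N\}$. The elementary equality $(\inf_y\phi)^M(\bar a)=(\inf_y\phi)^N(\bar a)$ then gives exactly the asserted identity.

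For the backward direction I would take the Tarski-Vaught condition as hypothesis and prove by induction on $\phi$ that $\phi^M(\bar a)=\phi^N(\bar a)$ for all $\bar a\in M$, keeping the induction statement quantified over all tuples from $M$ so that substituting parameters from $M$ into a subformula never leaves $M$. The base case of atomic, and more generally quantifier-free, $\phi$ is precisely the content of the preceding Lemma, since $M\subseteq N$ means the inclusion is an embedding. The connective cases $\phi=\psi_1+\psi_2$ and $\phi=r\psi$ follow at once from the inductive definition of $\phi^M$ together with the induction hypothesis. For $\phi=\inf_y\psi$, the induction hypothesis gives $\psi^M(\bar a,c)=\psi^N(\bar a,c)$ for every $c\in M$, so $\phi^M(\bar a)=\inf\{\psi^N(\bar a,c):c\in M\}$, and the hypothesis applied to $\psi$ rewrites this infimum as $\inf\{\psi^N(\bar a,b):b\in N\}=\phi^N(\bar a)$.

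The one point requiring care is that the condition as stated mentions only $\inf$, so the case $\phi=\sup_y\psi$ needs an extra step; this is the main obstacle, though a mild one. I would apply the hypothesis to the formula $(-1)\psi$, which is a legitimate formula via the scalar connective $r\cdot$ with $r=-1$, and use $\inf(-t)=-\sup t$ to deduce the parallel identity $\sup\{\psi^N(\bar a,b):b\in N\}=\sup\{\psi^N(\bar a,c):c\in M\}$; the $\sup$-case then closes exactly as the $\inf$-case. Boundedness of formula values (Proposition \ref{completion of prestructures}) ensures that all infima and suprema involved are finite reals, so no infinite-value bookkeeping arises. Everything apart from this reduction of $\sup$ to $\inf$ is a routine verification.
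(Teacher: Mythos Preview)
Your argument is correct and is exactly the standard Tarski--Vaught proof adapted to the affine setting: the forward direction via elementarity of $\inf_y\phi$, and the backward direction by induction on formula complexity, with the $\sup$ case reduced to the $\inf$ case through the scalar connective $(-1)\cdot$. The paper itself states this proposition without proof, so there is nothing to compare against; your write-up supplies the expected routine verification.
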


\begin{proposition}\label{chain}
Assume $(M_i: i\in \kappa)$ is an elementary chain of $L$-structures (i.e. $M_i\preccurlyeq M_j$ for $i<j$).
Then $M_j\preccurlyeq M$ for each $j<\kappa$ where $M$ is the completion of $\bigcup_i{M_i}$.
\end{proposition}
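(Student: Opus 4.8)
The plan is to handle the algebraic union first and to pass to the completion only at the very end. Write $N=\bigcup_{i<\kappa}M_i$. This is an $L$-prestructure: the interpretations of the symbols of $L$ are well defined on $N$ because the maps in the chain are embeddings, the pseudometric $d$ is in fact a metric of diameter at most $1$ since any two points lie in a common $M_i$, and the Lipschitz bounds of Definition \ref{prestructure} hold on $N$ because they hold on each $M_i$. By Proposition \ref{exist} there is a complete structure $M=\bar N$ and a dense map $\pi\colon N\to M$ with $\phi^{N}(\bar a)=\phi^{M}(\pi\bar a)$ for every formula $\phi$ and every tuple $\bar a$ from $N$; since $\pi$ restricted to each $M_i$ is an isometry onto a closed subspace, we may identify $M_i$ with its image. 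Hence it suffices to prove $M_j\preccurlyeq N$ for every $j<\kappa$, that is
$$\phi^{M_j}(\bar a)=\phi^{N}(\bar a)\qquad\text{for all }\bar a\in M_j\text{ and all formulas }\phi,$$
and then to compose with $\pi$.

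I would establish this displayed equality by induction on the structure of $\phi$. The atomic cases reduce to the fact that $M_j$ is a substructure of $N$: term values and the interpretations of $d$, $1$ and the relation symbols are computed inside $M_j$ and transported unchanged into $N$. The cases $\phi+\psi$ and $r\phi$ are immediate from the inductive hypothesis together with the clauses defining $(\phi+\psi)^{N}$ and $(r\phi)^{N}$.

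The genuinely substantial case, and the main obstacle, is the quantifier step; I treat $\inf_y\phi$, the case $\sup_y\phi$ being dual. Fix $\bar a\in M_j$. On one side, the inductive hypothesis gives $\phi^{M_j}(\bar a,b)=\phi^{N}(\bar a,b)$ for $b\in M_j$, and since $M_j\subseteq N$ the infimum over the larger set can only be smaller, so
$$(\inf_y\phi)^{N}(\bar a)\leqslant(\inf_y\phi)^{M_j}(\bar a).$$
For the reverse inequality I exploit that $\kappa$ is linearly ordered: any witness $c\in N$ lies in some $M_i$, and replacing $i$ by $\max(i,j)$ we may assume $i\geqslant j$, whence $\bar a,c\in M_i$. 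Applying the chain elementarity $M_j\preccurlyeq M_i$ to the formula $\inf_y\phi$, and then the inductive hypothesis to $\phi$ inside $M_i$, yields
$$(\inf_y\phi)^{M_j}(\bar a)=(\inf_y\phi)^{M_i}(\bar a)\leqslant\phi^{M_i}(\bar a,c)=\phi^{N}(\bar a,c).$$
As $c\in N$ is arbitrary, taking the infimum over $c$ gives $(\inf_y\phi)^{M_j}(\bar a)\leqslant(\inf_y\phi)^{N}(\bar a)$, and the two inequalities combine to equality.

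I would stress that nothing here requires the infimum or supremum to be attained: the argument manipulates the infima directly, using only that elementarity along the chain yields \emph{exact} equality of the values of $\inf_y\phi$. This is exactly the point where both the linear ordering of the chain and the hypothesis $M_i\preccurlyeq M_j$ (rather than mere $M_i\subseteq M_j$) are indispensable, and it mirrors the classical Tarski elementary chain argument. Once $M_j\preccurlyeq N$ is proved for all $j$, composing with the value-preserving dense map $\pi$ of Proposition \ref{exist} gives $\phi^{M_j}(\bar a)=\phi^{N}(\bar a)=\phi^{M}(\pi\bar a)$, i.e. $M_j\preccurlyeq M$, as required.
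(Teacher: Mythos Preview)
Your proof is correct. The paper states this proposition without proof, treating it as a routine adaptation of the classical elementary chain theorem; your argument supplies exactly that standard proof---induction on formula complexity over the union $N=\bigcup_i M_i$, with the quantifier step handled by locating any witness in some $M_i$ with $i\geqslant j$ and invoking $M_j\preccurlyeq M_i$---and correctly invokes Proposition~\ref{exist} to pass from $N$ to its completion $M$.
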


The \emph{density character}\index{density character} of a metric space $M$ is \index{$\mathsf{dc}(M)$}
$$\mathsf{dc}(M)=\min\{|X|:\ X\subseteq M \ \mbox{is\ dense}\}.$$

\begin{proposition}\label{downward} \emph{(Downward)}
Let $|L|+\aleph_0\leqslant\kappa$. Assume $N$ is an $L$-structure and $X\subseteq N$
has density character $\leqslant\kappa$.
Then, there exists $X\subseteq M\preccurlyeq N$ such that $\mathsf{dc}(M)\leqslant\kappa$.
\end{proposition}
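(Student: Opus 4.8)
The plan is to run the classical downward L\"owenheim--Skolem argument, building $M$ as the metric completion of an increasing $\omega$-chain of sets that is closed under the function symbols and that contains an approximate witness for every relevant infimum. The criterion I would check is the Tarski--Vaught test (Proposition \ref{tarski}): it suffices to produce a substructure $M$ with $X\subseteq M$, $\mathsf{dc}(M)\leqslant\kappa$, and such that for every formula $\phi(\x,y)$ and every $\a\in M$,
\[
\inf\{\phi^N(\a,b):b\in N\}=\inf\{\phi^N(\a,c):c\in M\}.
\]
Two features of the affine metric setting force changes to the classical proof: the infimum on the right need not be attained, so I must add \emph{approximate} witnesses; and the relevant invariant is the density character, not the cardinality, so all counting must be carried out on dense subsets and shown to survive both algebraic closure and metric completion.

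For the construction I would first fix a dense $D_0\subseteq X$ with $|D_0|\leqslant\kappa$ and let $A_0$ be the substructure of $N$ generated by $X$ (closure under the constant and function symbols). Recursively, given $A_n$ with a dense subset $D_n$ of size $\leqslant\kappa$, for each formula $\phi(\x,y)$, each tuple $\a$ from $D_n$, and each $k\in\Nn$ I would choose a point $b_{\phi,\a,k}\in N$ with
\[
\phi^N(\a,b_{\phi,\a,k})<\inf\{\phi^N(\a,c):c\in N\}+\tfrac1k;
\]
letting $W_n$ collect these witnesses, I would set $A_{n+1}$ to be the substructure generated by $A_n\cup W_n$. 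Finally put $A_\omega=\bigcup_nA_n$ and $M=\overline{A_\omega}$, the closure in $N$. By construction $X\subseteq A_0\subseteq M$, $A_\omega$ is closed under the function and constant symbols, and since these interpretations are Lipschitz (hence continuous) their closure $M$ is again closed under them; as $N$ is complete, $M$ is a complete substructure, i.e. a genuine $L$-structure with $M\subseteq N$.

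For the density bound I would use that a Lipschitz map sends a dense set onto a dense subset of its image, so the substructure generated by a set of density character $\leqslant\kappa$ again has density character $\leqslant\kappa$; hence inductively $\mathsf{dc}(A_{n+1})\leqslant\kappa$ with witnessing dense set $D_{n+1}=\langle D_n\cup W_n\rangle$, where $|W_n|\leqslant(|L|+\aleph_0)\cdot\kappa\cdot\aleph_0=\kappa$. Then $D=\bigcup_nD_n$ is dense in $A_\omega$, hence in $M$, and $|D|\leqslant\kappa$, so $\mathsf{dc}(M)\leqslant\kappa$. For the Tarski--Vaught condition, note first that for $\a$ ranging over $D$ the inequality $\inf_{c\in M}\phi^N(\a,c)\leqslant\inf_{c\in N}\phi^N(\a,c)$ holds, because the chosen witnesses $b_{\phi,\a,k}$ lie in $M$; the reverse inequality is trivial since $M\subseteq N$, so equality holds on $D$. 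Finally both sides are $\lambda_\phi$-Lipschitz functions of $\a$ (bounded by $\mathbf b_\phi$) --- here I invoke Proposition \ref{completion of prestructures} together with the fact, recorded in Definition \ref{formulas and bounds}, that passing to an infimum preserves the Lipschitz constant --- and they agree on the dense set $D$, hence on all of $M$. By the test this yields $M\preccurlyeq N$.

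The step I expect to require the most care is the density bookkeeping rather than the witnessing itself: one must check that taking approximate witnesses, closing under the (Lipschitz) function symbols, and finally completing the metric all keep the density character at $\leqslant\kappa$, and that it is enough to witness only over the dense sets $D_n$. This last economy is exactly what keeps the count at $\kappa$, and it is justified after the fact by the Lipschitz continuity of the two infimum functions, which lets the Tarski--Vaught equality propagate from the dense set $D$ to the whole of $M$.
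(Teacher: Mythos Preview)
The paper states this proposition without proof (it is a standard result, analogous to the continuous-logic version in \cite{BBHU}), so there is nothing to compare against. Your argument is correct and is precisely the standard one: build an $\omega$-chain adding approximate $\inf$-witnesses, close in $N$, and verify Tarski--Vaught on a dense set using that $\a\mapsto\inf_c\phi^N(\a,c)$ is $\lambda_\phi$-Lipschitz.

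One bookkeeping point deserves a sentence of justification: because the connective $r\cdot$ ranges over all $r\in\Rn$, the set of \emph{all} affine $L$-formulas has cardinality $2^{\aleph_0}$, which can exceed $\kappa$. When you write ``for each formula $\phi(\x,y)$'' in the witnessing step, you should restrict to formulas with rational coefficients (there are $|L|+\aleph_0\leqslant\kappa$ of those). This suffices: any formula $\phi$ is uniformly approximated by rational-coefficient formulas $\psi$, and since $|\inf_c\phi^N(\a,c)-\inf_c\psi^N(\a,c)|\leqslant\|\phi-\psi\|_\infty$ on both $M$ and $N$, the Tarski--Vaught equality for all $\psi$ implies it for $\phi$. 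The paper itself makes this move elsewhere (e.g.\ in the proof of Proposition~\ref{sat1}), so it is entirely in keeping with the conventions of the text.
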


A \emph{condition}\index{condition} is an expression of the form $\phi\leqslant\psi$ where
$\phi$ and $\psi$ are formulas. If $\phi$, $\psi$ are sentences, it is called a \emph{closed condition}.
The expression $\phi=\psi$ abbreviates $\{\phi\leqslant\psi, \psi\leqslant\phi\}$.\ \
$M$ is a \emph{model} of $\phi\leqslant\psi$ if $\phi^M\leqslant\psi^M$.
One may also write this by $M\vDash\phi\leqslant\psi$ and say that $\phi\leqslant\psi$ is satisfied by $M$.
A set of closed conditions is called a \emph{theory}.
$M$ is a model of a theory $T$, written $M\vDash T$, if it is a model of every condition in it.
Finally, one writes $T\vDash\phi\leqslant\psi$ if for every $M$,\ \ $M\vDash T$ implies $M\vDash\phi\leqslant\psi$.
\bigskip

\begin{example}
\em{1) The axioms of metric spaces as well as the Lipchitzness of relations and operations are stated by affine conditions.

2) In first order logic, an equation is an expression of the form $$\forall\x(t_1(\x)=t_2(\x)).$$
This can be restated by the condition $$\sup_{\x} d(t_1(\x),t_2(\x))=0.$$
So, equational theories are special kinds of affine theories.

3) Some metric variants of first order structures can be stated by affine conditions.
For example, a metric $d$ on a group $G$ is bi-invariant if $d(ax,ay)=d(x,y)=d(xa,ya)$ for all $a$.
This property is stated by AL-conditions and such structures are Lipschitz.
As a special case, a compact connected simple Lie group has a unique bi-invariant metric which makes
it a Lipschitz structure.

4) A classical dynamical system is a pair $(M,f)$ where $M$ is a compact metric space and
$f:M\rightarrow M$ is continuous. If $f$ is Lipschitz, then $(M,d,f)$ is a metric structure.
Continuous flows are other form of such structures.
A dynamical system may be further equipped with a group structure and $f$ may be a group automorphism or an affine transformation.
All such structures can be axiomatized in appropriate languages.

5) Measure algebras and measure algebras equipped with automorphisms can be axiomatized in
appropriate Lipschitz languages (see also \S \ref{Examples}).

6) Banach spaces can be axiomatized in the many sorted variant of AL. Axiomatizing Hilbert spaces
requires the multiplication connective which is absent in AL.
However, it can be axiomatized in the variations of AL using the parallelogram identity.
Similarly, abstract $L^p$ spaces can be axiomatized in variations of AL (see \S \ref{ALp}).

7) O. Gross \cite{Morris} has proved that for every compact connected metric space $M$ there is a unique $r$
such that for each $x_1,...,x_n$ there exists $y$ with $\frac{1}{n}\sum_id(x_i,y)=r$.
This is called the \emph{rendez-vous number} of $M$. That the rendez-vous number of $M$ is $r$
can be stated by the affine conditions
$$\sup_{x_1\ldots x_n}\inf_y\frac{1}{n}\sum_id(x_i,y)\leqslant r
\leqslant\inf_{x_1\ldots x_n}\sup_y\frac{1}{n}\sum_id(x_i,y).$$
Using this notion, we can prove that the unit circle and unit sphere are not elementarily
equivalent as they have different rendez-vous numbers.}
\end{example}

\newpage\section{Basic technics} \label{Basic technics}
In first order logic, the compactness theorem is proved either by the ultraproduct
construction or by Henkin's method. In this section we give the affine variants of these constructions.
A theory $T$ is \emph{affinely satisfiable}\index{affinley satisfiable} if for every
conditions $\phi_1\leqslant\psi_1,\ ...,\ \phi_n\leqslant\psi_n$ in $T$
and $0\leqslant r_1,...,r_n$, the condition $\sum_ir_i\phi_i\leqslant\sum_ir_i\psi_i$ is satisfiable.
The set of all such combinations is called the \emph{affine closure}\index{affine closure} of $T$.
So, $T$ is affinely satisfiable if every condition in its affine closure is satisfiable.
To prove the affine compactness theorem by ultrameans, we need some basic facts on
integration with respect to finitely additive measures which can be seen in the appendix \S \ref{Appendix}.

\subsection{Affine compactness}\label{Affine compactness}

\noindent{\bf The ultramean construction} \label{The ultramean construction}\\
Affine variant of the ultraproduct construction is the \emph{ultramean \index{ultramean} construction}.
An ultracharge\index{ultracharge} on an index set $I$ is a finitely additive probability
measure on the power set of $I$. Let $\mu$ be an ultracharge on $I$.
Let $L$ be a Lipschitz language and $(M_{i}, d_{i})_{i\in I}$ be a family of $L$-structures.
First define a pseudo-metric on $\prod_{i\in I}M_i$ by setting $$d((a_i),(b_i))=\int d_{i}(a_i,b_i)d\mu.$$
Clearly, $d((a_i),(b_i))=0$ is an equivalence relation. The equivalence class of $(a_i)$ is denoted by $[a_i]$.
Let $M$ be the set of these equivalence classes. Then, $d$ induces a metric on $M$ which is denoted again by $d$.
So, $$d([a_i],[b_i])=\int d_{i}(a_i,b_i)d\mu.$$
Define an $L$-structure on $(M,d)$ as follows:
$$c^M=[c^{M_i}]$$
$$F^M([a_i],...)=[F^{M_i}(a_i,...)]$$
$$R^M([a_i],...)=\int R^{M_i}(a_i,...)d\mu$$ where $c,F,R\in L$.
One verifies that $F^M$ and $R^M$ are well-defined as well as $\lambda_F$-Lipschitz
and $\lambda_R$-Lipschitz respectively.
For example, assume $R$ is unary. For $a=[a_i]$ and $b=[b_i]$ one has that
$$R^{M_i}(a_i)-R^{M_i}(b_i)\leqslant\lambda_R\ d(a_i,b_i)\ \ \ \ \ \ \ \ \forall i\in I.$$
So, by integrating,
$$R^{M}(a)-R^{M}(b)\leqslant\lambda_R\ d(a,b).$$
Similarly, for each $i$ one has that $$-\mathbf{b}_R\leqslant R^{M_i}(a_i)\leqslant\mathbf{b}_R.$$
Hence, by integrating, $$-\mathbf{b}_R\leqslant R^{M}(a)\leqslant\mathbf{b}_R.$$

The structure $M$ is called the ultramean of the structures $M_i$ and is denoted by $\prod_\mu M_i$.
Note that it may be incomplete. Any ultrafilter $\mathcal{D}$ on $I$ can be regarded as a $\{0,1\}$-valued ultracharge.
Then, the ultraproduct $\prod_{\mathcal D} M_i$ coincides with the ultramean $\prod_{\mu_{\mathcal D}}M_i$.
In the ultracharge case, one has the following variant of {\L}o\'{s} theorem.

\begin{theorem} \emph{(Ultramean)} \label{th1}
For each AL-formula $\phi(x_{1}, \ldots, x_{n})$
and $[a^{1}_{i}], \ldots, [a^{n}_{i}]\in M$
$$\phi^{M}([a^{1}_{i}],\ldots, [a^{n}_{i}])=
\int\phi^{M_{i}}(a^{1}_{i},\ldots, a^{n}_{i})d\mu.$$
\end{theorem}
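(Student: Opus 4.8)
The plan is to prove the identity by induction on the structure of the AL-formula $\phi$, after first disposing of the analogous statement for terms. For terms one shows by induction that $t^M([a^1_i],\ldots,[a^n_i])=[t^{M_i}(a^1_i,\ldots,a^n_i)]$: the constant and variable clauses are immediate from $c^M=[c^{M_i}]$ and $x_j^M([a_i])=[a^j_i]$, and the function-symbol clause follows from $F^M([a_i],\ldots)=[F^{M_i}(a_i,\ldots)]$ together with the induction hypothesis. This term lemma is what makes the atomic cases go through.

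For the formula induction, the atomic cases are read off the definition of the ultramean structure. For $\phi=1$ both sides equal $1$, since $\mu$ is a probability charge. For $\phi=d(t_1,t_2)$ one combines the term lemma with the definition $d([u_i],[v_i])=\int d_i(u_i,v_i)\,d\mu$, and for $\phi=R(t_1,\ldots,t_n)$ with the definition $R^M([u_i],\ldots)=\int R^{M_i}(u_i,\ldots)\,d\mu$. The connective cases $\phi+\psi$ and $r\phi$ reduce directly to the induction hypothesis via the linearity of the integral with respect to the charge $\mu$, i.e. $\int(f+g)\,d\mu=\int f\,d\mu+\int g\,d\mu$ and $\int rf\,d\mu=r\int f\,d\mu$ for bounded $f,g$; boundedness of each function of $i$ that occurs is guaranteed by Proposition \ref{completion of prestructures}, so integrability against the finitely additive measure is never in question.

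The substantive step is the quantifier case, say $\phi=\inf_y\psi(\bar x,y)$; the $\sup$ case is symmetric. By the semantics, $(\inf_y\psi)^{M_i}(a_i)=\inf_{b\in M_i}\psi^{M_i}(a_i,b)$. For the inequality $\geqslant$, note that for every $[b_i]\in M$ the induction hypothesis and monotonicity of the integral give $\psi^M([a_i],[b_i])=\int\psi^{M_i}(a_i,b_i)\,d\mu\geqslant\int(\inf_y\psi)^{M_i}(a_i)\,d\mu$, and taking the infimum over $[b_i]$ yields $(\inf_y\psi)^M([a_i])\geqslant\int(\inf_y\psi)^{M_i}(a_i)\,d\mu$. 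For the reverse inequality, fix $\varepsilon>0$ and use the axiom of choice to select, for each $i$, an element $b_i\in M_i$ with $\psi^{M_i}(a_i,b_i)\leqslant(\inf_y\psi)^{M_i}(a_i)+\varepsilon$. Then $[b_i]\in M$ and, again by the induction hypothesis and monotonicity, $(\inf_y\psi)^M([a_i])\leqslant\psi^M([a_i],[b_i])=\int\psi^{M_i}(a_i,b_i)\,d\mu\leqslant\int(\inf_y\psi)^{M_i}(a_i)\,d\mu+\varepsilon$. Letting $\varepsilon\to 0$ gives the matching bound, hence equality.

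The main obstacle is precisely this quantifier step, and it is where affine logic runs exactly parallel to the classical {\L}o\'s argument rather than diverging from it: the only delicate points are that the $\varepsilon$-optimal selection $(b_i)$ yields a bounded, hence integrable, function of $i$, and that the integral against a charge is both monotone and linear. These are the facts I would cite from the appendix on integration with respect to finitely additive measures. No genuinely new phenomenon arises, because $\inf$ and $\sup$ interact with integration only through the one-sided estimates above, which is all the statement requires.
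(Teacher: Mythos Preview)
Your proof is correct and follows essentially the same approach as the paper's: induction on formula complexity, with the quantifier step handled by the two $\varepsilon$-inequalities (monotonicity for one direction, a pointwise near-optimal selection $(b_i)$ for the other). The paper is terser—it declares the atomic and connective cases ``obvious'' and writes out only the $\sup$ case—but your added detail on the term lemma and the atomic clauses is harmless and arguably clearer.
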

\begin{proof}
We prove the claim by induction on the complexity of formulas.
The atomic and connective cases are obvious. Assume the claim holds for $\phi(x)$.
We show that $$\sup_x\phi^{M}(x)=\int\sup_x\phi^{M_i}(x)d\mu.$$
Given $\epsilon>0$, for suitable $[a_i]$ we have
$$\sup_x\phi^{M}(x)-\epsilon<\phi^{M}([a_i])=\int\phi^{M_i}(a_i)d\mu\leqslant\int\sup_x\phi^{M_i}(x)d\mu.$$
Since $\epsilon$ is arbitrary, one direction of the claim is obtained.
Conversely, for each $i$ take $a_i$ such that $$\sup_x\phi^{M_i}(x)-\epsilon\leqslant\phi^{M_i}(a_i).$$
Let $a=[a_i]$. Then, by integrating,
$$\int\sup_x\phi^{M_i}(x)d\mu-\epsilon\leqslant\int\phi^{M_i}(a_i)d\mu=\phi^{M}(a)\leqslant\sup_x\phi^M(x).$$
The case $\inf_x\phi$ is similar.
\end{proof}

If $M_i=N$ for every $i$, $\prod_{\mu}M_i$ is denoted by $N^\mu$ and is called the
\emph{powermean}\index{powermean} of $N$. The map $a\mapsto[a]$ is the diagonal embedding.

\begin{proposition}
The diagonal embedding is an elementary embedding.
\end{proposition}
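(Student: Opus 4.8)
The plan is to deduce this directly from the Ultramean theorem (Theorem \ref{th1}), which is exactly the tool it was designed to feed into. Write $\delta:N\to N^\mu$ for the diagonal map, so that $\delta(a)=[a]$ is the class of the constant sequence $(a)_{i\in I}$. First I would check that $\delta$ satisfies the three clauses defining an embedding: applying the ultramean operations to constant sequences gives $c^{N^\mu}=[c^N]=\delta(c^N)$ and $F^{N^\mu}(\delta(a^1),\ldots,\delta(a^n))=[F^N(a^1,\ldots,a^n)]=\delta(F^N(a^1,\ldots,a^n))$, while for the metric
$$d(\delta(a),\delta(b))=\int d(a,b)\,d\mu=d(a,b),$$
since the integrand is constant in $i$ and $\mu(I)=1$. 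Hence $\delta$ is indeed an embedding, and the only remaining content is elementarity.

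For elementarity, fix a formula $\phi(\x)$ and a tuple $\a\in N^n$. Applying Theorem \ref{th1} to the constant family $M_i=N$, with $\delta(\a)$ represented by the constant sequences $a^j_i=a^j$, yields
$$\phi^{N^\mu}(\delta(\a))=\int\phi^{N}(\a)\,d\mu.$$
The integrand $\phi^N(\a)$ does not depend on $i$, so it is a constant function on $I$; because $\mu$ is a finitely additive probability measure, its integral equals that constant. Therefore $\phi^{N^\mu}(\delta(\a))=\phi^N(\a)$, which is precisely the condition defining an elementary embedding.

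There is no serious obstacle here: the proposition is an immediate corollary of the {\L}o\'{s}-type theorem. The one point worth a moment of care is the normalization, namely that $\mu$ has total mass one, since this is exactly what guarantees that integrating the constant value $\phi^N(\a)$ returns $\phi^N(\a)$ and not a rescaled quantity. Everything else is a routine unwinding of the definition of the ultramean on constant sequences.
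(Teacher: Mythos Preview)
Your proof is correct and is exactly the intended argument: the paper states this proposition without proof, as it is an immediate consequence of the Ultramean theorem applied to constant sequences, using that $\mu$ is a probability charge so that integrating a constant returns that constant. Your write-up makes this explicit and there is nothing to add.
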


For example, if $I=\{1,2\}$ and $\mu(\{1\})=\mu(\{2\})=\frac{1}{2}$,
the ultramean of $(M_i)_{i\in I}$ is denoted by $M=\frac{1}{2}M_1+\frac{1}{2}M_2$.
In this case, for each affine sentence $\sigma$ one has that
$$\sigma^M=\frac{1}{2}\sigma^{M_1}+\frac{1}{2}\sigma^{M_2}.$$

Note that if $|N|\geqslant2$ and $\mu$ is not an ultrafilter,
then $N^\mu$ is a proper extension of $N$. Since no compact model
has a proper elementary extension in the CL sense, we conclude that
$\equiv_{\mathrm{AL}}$ is strictly weaker than $\equiv_{\mathrm{CL}}$.

The following theorem was originally proved by R. Safari (see \cite{Bagheri-Lip}).
An other variant of the proof is given here.
Let $\mathbb{D}(L)$ be the vector space of $L$-sentences equipped with the norm and order defined by:
$$\|\sigma\|=\sup\{|\sigma^M|:\ \ M\ \mbox{is\ an\ $L$-structure}\}$$
$$0\leqslant\sigma\ \ \ \mbox{if}\ \ \ \vDash0\leqslant\sigma$$
where $\sigma,\eta$ are identified if $\vDash\sigma=\eta$.

\begin{theorem} \label{compactness}\index{affine compactness}
\emph{(Affine compactness)} Every affinely satisfiable theory is satisfiable.
\end{theorem}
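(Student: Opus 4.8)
The plan is to reduce the statement to an approximate finitary claim, to settle that claim by a finite-dimensional separation argument in which the nonnegative weights of affine satisfiability appear as the normal vector of a separating hyperplane, and finally to manufacture an exact model by an ultraproduct, invoking the ultramean theorem (Theorem \ref{th1}).

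First I would normalize the data. Rewrite each condition $\phi\leqslant\psi$ of $T$ as $0\leqslant\eta$ with $\eta=\psi-\phi$, and set $E=\{\eta:(0\leqslant\eta)\in T\}\subseteq\mathbb{D}(L)$. Unwinding the definition of satisfiability of a single closed condition, affine satisfiability of $T$ says exactly that for all $\eta_1,\dots,\eta_n\in E$ and all $r_1,\dots,r_n\geqslant0$ one has $\sup_M\sum_i r_i\eta_i^M\geqslant0$, the supremum ranging over all $L$-structures $M$.

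The finitary core is the following. Fix a finite $\Delta=\{\eta_1,\dots,\eta_n\}\subseteq E$ and let $K=\{(\eta_1^M,\dots,\eta_n^M):M\text{ an }L\text{-structure}\}\subseteq\Rn^n$. Each coordinate is bounded by $\mathbf{b}_{\eta_i}$ (Proposition \ref{completion of prestructures}), so $A=\overline{\mathrm{conv}}(K)$ is compact and convex. I claim $A$ meets the nonnegative orthant $\Rn^n_{\geqslant0}$. If not, the separating hyperplane theorem for the compact convex $A$ and the closed convex cone $\Rn^n_{\geqslant0}$ produces $v\in\Rn^n$ and reals $\alpha<\beta$ with $\langle v,x\rangle\leqslant\alpha$ on $A$ and $\langle v,y\rangle\geqslant\beta$ on $\Rn^n_{\geqslant0}$; since this cone contains $0$ and is closed under positive scaling, $\beta\leqslant0$ and $v=(r_1,\dots,r_n)$ has all entries $r_i\geqslant0$. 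Then $\sum_i r_i\eta_i^M\leqslant\alpha<0$ for every $M$, contradicting affine satisfiability. The appearance of the nonnegative weights $r_i$ as the separating functional is precisely where the hypothesis is used, and organizing this separation is the main obstacle of the proof.

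A point of $A\cap\Rn^n_{\geqslant0}$ is a limit of finite convex combinations of points of $K$, and by the finite ultramean identity following Theorem \ref{th1} such a convex combination is the value-tuple of a finite affine combination $\sum_k\lambda_k M_k$ of structures. Hence for each $\epsilon>0$ there is an ultramean $N_{\Delta,\epsilon}$ with $\eta^{N_{\Delta,\epsilon}}\geqslant-\epsilon$ for all $\eta\in\Delta$. Finally I would index these by the pairs $(\Delta,\epsilon)$; the sets $\{(\Delta,\epsilon):\eta_0\in\Delta,\ \epsilon\leqslant\epsilon_0\}$ for $\eta_0\in E$ and $\epsilon_0>0$ have the finite intersection property, so they extend to an ultrafilter $\mathcal{D}$. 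Forming $M=\prod_{\mathcal{D}}N_{\Delta,\epsilon}$ with the $\{0,1\}$-valued ultracharge $\mu_{\mathcal{D}}$, Theorem \ref{th1} yields $\eta_0^M=\lim_{\mathcal{D}}\eta_0^{N_{\Delta,\epsilon}}\geqslant-\epsilon_0$ for every $\epsilon_0$, whence $\eta_0^M\geqslant0$. Thus $M\vDash T$, and after passing to its completion (Proposition \ref{exist}, which preserves all values) we obtain the required model.
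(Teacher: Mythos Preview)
Your proof is correct, and it takes a genuinely different route from the paper's. The paper works globally on the ordered normed space $\mathbb{D}(L)$: it extends $\Sigma$ to a maximal affinely satisfiable theory, defines the sublinear functional $q(\sigma)=\inf\{r:\sigma\leqslant r\in\Sigma\}$, applies Hahn-Banach to get a positive linear functional $T$ on $\mathbb{D}(L)$ dominated by $q$, then uses Kantorovich extension and a Riesz-type representation to realize $T$ as integration against a single ultracharge $\mu$ on a set of representative models, so that $\prod_\mu M_i$ is the desired model in one stroke.

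Your argument instead localizes: for each finite $\Delta\subseteq E$ you run a finite-dimensional separation in $\Rn^n$, where affine satisfiability is exactly what forces the separating functional to have nonnegative entries and hence yields the contradiction. You then realize the resulting nonnegative point approximately by a finite ultramean, and glue the approximate models $N_{\Delta,\epsilon}$ by an ordinary ultraproduct. This is more elementary---it avoids the Hahn-Banach/Kantorovich/Riesz machinery entirely, using only separation of a compact convex set from a closed cone in $\Rn^n$---and it makes the mechanism of affine satisfiability very transparent. The paper's approach, on the other hand, is one-shot and immediately delivers the refinement noted just after the theorem (the model is a single ultramean of models from a prescribed class $\mathcal K$); in your approach the final model is an ultraproduct of finite ultrameans, so recovering that refinement would require an additional Fubini-type step as in Proposition~\ref{compose}.
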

\begin{proof}
Let $\Sigma$ be an affinely satisfiable theory which we may further assume it is maximal with this property.
For every sentence $\sigma$, let $$q(\sigma)=\inf\{r:\sigma\leqslant r\in\Sigma\}.$$
By maximality, $q$ is defined for every $\sigma$ and it is sublinear, i.e.
$$\ \ \ \ \ \ q(r\sigma)=rq(\sigma)\ \ \ \ \ \ \mbox{for}\ r\geqslant0$$
$$q(\sigma+\eta)\leqslant q(\sigma)+q(\eta).$$
Let $T_0$ be the identity map on the linear subspace $\Rn\subseteq\mathbb{D}(L)$.
Then, $T_0\leqslant q$ on $\Rn$ and by the Hahn-Banach extension theorem (\cite{Aliprantis-Inf}, Th 8.30),
$T_0$ extends to a linear map $T$ on $\mathbb{D}(L)$ such that $T(\sigma)\leqslant q(\sigma)$ for every $\sigma$.
Note that $T$ is positive. In particular, if $\vDash\sigma\leqslant0$ then $\sigma\leqslant0\in\Sigma$
and hence $T(\sigma)\leqslant q(\sigma)\leqslant0$.

Let $\{M_i\}_{i\in I}$ be a set containing a model from each equivalence class of the relation $M\equiv N$.
We put the discrete topology on $I$.
So, we may write $\Rn\subseteq\mathbb{D}(L)\subseteq\mathbf{C}_b(I)$ if we identify
$\sigma\in\mathbb{D}(L)$ with the map $i\mapsto\sigma^{M_i}$.
Since $\mathbb{D}(L)$ majorizes $\mathbf{C}_b(I)$, by the Kantorovich extension theorem (\S \ref{Appendix}),
$T$ is extended to a positive linear functional $\bar{T}$ on $\mathbf{C}_b(I)$.
By (a variant of) Riesz representation theorem (\S \ref{Appendix}) 
there is a (maximal) probability charge $\mu$ on $I$
such that for every $\sigma$ $$\bar{T}(\sigma)=\int\sigma^{M_i}d\mu.$$
Let $M=\prod_{\mu}M_i$. Then, for each $\sigma$
$$\sigma^M=\int\sigma^{M_i}d\mu=T(\sigma)\leqslant q(\sigma).$$
Finally, if $\sigma\leqslant\eta\in\Sigma$, then $\sigma^M-\eta^M\leqslant q(0)=0$
and hence $M\vDash\Sigma$. We may also complete $M$ by Proposition
\ref{completion of prestructures} to obtain a complete model of $\Sigma$.
\end{proof}
\vspace{2mm}

The proof can be arranged to show that if $T$ is affinely satisfiable in a class
$\mathcal K$ of $L$-structures, then $T$ has a model of the form $\prod_\mu M_i$
where every $M_i$ belongs to $\mathcal{K}$.
For this purpose, it is sufficient to replace $\mathbb{D}(L)$ with $\mathbb{D}(\mathcal{K})$ consisting
of equivalence classes of $L$-sentences where $\sigma\equiv\eta$ if $\sigma^M=\eta^M$ for all $M\in\mathcal{K}$.
Also, $\sigma\leqslant\eta$ if $\sigma^M\leqslant\eta^M$ for all $M\in\mathcal{K}$
and $$\|\sigma\|=\sup\{|\sigma^M|:\ M\in\mathcal{K}\}.$$
Then, let $\{M_i\}_{i\in I}$ contain a model from each equivalence class of the relation
$M\equiv N$ when restricted to $\mathcal{K}$ and redo the proof in the new situation.

A class $\mathcal{K}$ of $L$-structures is elementary if there is a theory $T$ such that $\mathcal{K}=Mod(T)$.
It is closed under ultramean if $\prod_{\mu}M_i$ (or rather its completion) belongs to $\mathcal{K}$ whenever
$M_i\in\mathcal{K}$ for every $i$. Being closed under $\equiv$ is defined similarly.

\begin{theorem} \emph{(Axiomatizability)}
A class $\mathcal K$ of $L$-structures is elementary if and only if it is closed under
ultramean and elementary equivalence.
\end{theorem}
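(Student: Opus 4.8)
The plan is to prove both implications, the forward one being routine and the converse resting on a class-relative affine compactness argument.

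For the forward direction, suppose $\mathcal{K}=\mathrm{Mod}(T)$. Closure under elementary equivalence is immediate: the conditions in $T$ involve only sentences, so if $M\models T$ and $N\equiv M$ then $\sigma^N=\sigma^M$ for every sentence $\sigma$, and each condition $\phi\leqslant\psi$ of $T$ transfers from $M$ to $N$. Closure under ultramean follows from the Ultramean theorem \ref{th1}: for a condition $\phi\leqslant\psi\in T$ and structures $M_i\models T$, integrating the pointwise inequalities $\phi^{M_i}\leqslant\psi^{M_i}$ against $\mu$ gives $\phi^{\prod_\mu M_i}=\int\phi^{M_i}d\mu\leqslant\int\psi^{M_i}d\mu=\psi^{\prod_\mu M_i}$, and passing to the completion alters no sentence value by Proposition \ref{exist}.

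For the converse, let $T=\mathrm{Th}(\mathcal{K})$ be the set of closed conditions satisfied in every member of $\mathcal{K}$, so that $\mathcal{K}\subseteq\mathrm{Mod}(T)$ holds trivially; it remains to prove $\mathrm{Mod}(T)\subseteq\mathcal{K}$. Fix $M\models T$. The idea is to produce $N\in\mathcal{K}$ with $N\equiv M$, for then $M\in\mathcal{K}$ by closure under $\equiv$. To this end, consider the theory $T_M=\{\sigma=\sigma^M:\ \sigma\text{ a sentence}\}$, whose models are exactly the structures elementarily equivalent to $M$ (here each constant $\sigma^M\in\Rn$ is itself a sentence). If $T_M$ is affinely satisfiable in $\mathcal{K}$, then by the class-relative refinement of the affine compactness theorem \ref{compactness} it has a model of the form $\prod_\mu M_i$ with every $M_i\in\mathcal{K}$; this ultramean (or its completion) lies in $\mathcal{K}$ by closure under ultramean and is elementarily equivalent to $M$, which finishes the proof.

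Everything therefore reduces to checking that $T_M$ is affinely satisfiable in $\mathcal{K}$. A finite nonnegative combination of conditions drawn from $T_M$ collapses, after grouping the two inequality directions and using $(\phi+\psi)^N=\phi^N+\psi^N$ and $(r\phi)^N=r\phi^N$, to a single condition $\rho\leqslant\rho^M$ for some sentence $\rho$. First I would show $\inf_{N\in\mathcal{K}}\rho^N\leqslant\rho^M$: otherwise there is a real $r$ with $\rho^M<r<\inf_{N\in\mathcal{K}}\rho^N$, whence $r\leqslant\rho^N$ for all $N\in\mathcal{K}$, so the closed condition $r\leqslant\rho$ lies in $T=\mathrm{Th}(\mathcal{K})$ and, since $M\models T$, forces $r\leqslant\rho^M$, contradicting $r>\rho^M$. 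The hard part will be realizing this infimum by an actual member of $\mathcal{K}$: choosing $N_k\in\mathcal{K}$ with $\rho^{N_k}\to\inf_{N\in\mathcal{K}}\rho^N$ and forming the ultraproduct $N^\ast=\prod_{\mathcal{D}}N_k$ along an ultrafilter $\mathcal{D}$ (a $\{0,1\}$-valued ultracharge), Theorem \ref{th1} gives $\rho^{N^\ast}=\lim_{\mathcal{D}}\rho^{N_k}=\inf_{N\in\mathcal{K}}\rho^N\leqslant\rho^M$, while closure under ultramean places $N^\ast$ in $\mathcal{K}$. Hence $N^\ast\models\rho\leqslant\rho^M$, the combined condition is satisfiable in $\mathcal{K}$, and $T_M$ is affinely satisfiable in $\mathcal{K}$ as required. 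The one delicate point worth isolating is precisely the boundary case $\inf_{N\in\mathcal{K}}\rho^N=\rho^M$, where exact satisfiability inside $\mathcal{K}$, rather than mere approximation, is exactly what closure under ultramean supplies.
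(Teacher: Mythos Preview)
Your proof is correct and follows essentially the same approach as the paper: both set $T=\mathrm{Th}(\mathcal{K})$, show that for any $M\models T$ every single condition of $\mathrm{Th}(M)$ is satisfiable in $\mathcal{K}$ (approximating via $T$, then using an ultraproduct to close the $\epsilon$-gap), and then invoke the class-relative form of affine compactness to obtain an ultramean in $\mathcal{K}$ elementarily equivalent to $M$. The only cosmetic difference is that the paper works with $\mathrm{Th}(M)$ directly (already affinely closed, so single-condition satisfiability suffices), whereas you phrase it via $T_M=\{\sigma=\sigma^M\}$ and explicitly collapse affine combinations to a single condition $\rho\leqslant\rho^M$.
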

\begin{proof} We prove the nontrivial direction. Assume $\mathcal K$ is closed under ultramean
and elementary equivalence. Let $T=Th(\mathcal K)$, i.e. the set of conditions holding
in every $M\in\mathcal K$. We show that $\mathcal K=Mod(T)$. Clearly, $\mathcal K\vDash T$.
Conversely, assume $M$ is an arbitrary model of $T$.
Assume $M\vDash0\leqslant\sigma$. By the definition, for each $\epsilon>0$ there exists $N\in\mathcal K$
such that $N\vDash-\epsilon\leqslant\sigma$. Therefore, since $\mathcal K$ is closed under ultraproduct, there is
$N\in\mathcal K$ such that $N\vDash0\leqslant\sigma$.
This shows that every condition in $Th(M)$ is satisfied by some model in $\mathcal K$.
Hence, $Th(M)$ is satisfied by an ultramean of members of $\mathcal K$,
i.e. $M$ is elementarily equivalent to an ultramean of members of $\mathcal K$.
So, by the assumptions, $M\in\mathcal K$.
\end{proof}
\bigskip

\noindent{\bf Henkin's method}\index{Henkin's method}\\
The affine compactness theorem was first proved by Henkin's method.
An $L$-theory $T$ has the \emph{witness property}\index{witness property} if for each $L$-formula $\phi(x)$,
there is a constant symbol $c\in L$ such that $\sup_x\phi(x)\leqslant\phi(c)\in T$.
Using ultraproducts, one verifies that if $T,-r\leqslant\sigma$ is affinely
satisfiable for each $r>0$, then $T,0\leqslant\sigma$ is affinely satisfiable.

\begin{lemma} \label{witness}
Let $T$ be an affinely satisfiable theory in $L$. Then there are $\bar L\supseteq L$
and maximal affinely satisfiable $\bar T\supseteq T$ in $\bar L$ having the witness property.
\end{lemma}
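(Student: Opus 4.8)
The plan is to run Henkin's construction in the affine setting, where the only real work is promoting approximate witnesses to exact ones by means of the ultraproduct fact recorded just above the lemma.

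First I would establish the single-witness step: if $T$ is affinely satisfiable in $L$, $\phi(x)$ is an $L$-formula, and $c$ is a constant symbol not in $L$, then $T\cup\{\sup_x\phi(x)\leqslant\phi(c)\}$ is affinely satisfiable in $L\cup\{c\}$. Setting $\sigma=\phi(c)-\sup_x\phi(x)$, I would show that $T,\,-r\leqslant\sigma$ is affinely satisfiable for every $r>0$. Indeed, an arbitrary finite affine combination of $T$-conditions and the new condition $-r\leqslant\sigma$ can be satisfied as follows: since $c$ does not occur in $T$, first take an $L$-model $M$ satisfying the relevant combination of $T$-conditions, and then expand it by interpreting $c$. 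The supremum $\sup_x\phi^M(x)$ need not be attained, so I would pick only an approximate witness $a\in M$ with $\phi^M(a)>\sup_x\phi^M(x)-r$; putting $c^M=a$ yields $\sigma^M>-r$, which makes the whole combination nonnegative. Thus $T,\,-r\leqslant\sigma$ is affinely satisfiable for each $r>0$, and the stated fact upgrades this to the affine satisfiability of $T,\,0\leqslant\sigma$, i.e. of $T\cup\{\sup_x\phi(x)\leqslant\phi(c)\}$.

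Next I would iterate this step through all formulas. Starting from $L_0=L$, $T_0=T$, at stage $n+1$ I adjoin one fresh constant $c_\phi$ for each formula $\phi(x)$ of $L_n$ together with the condition $\sup_x\phi(x)\leqslant\phi(c_\phi)$. Applying the single-witness step repeatedly (one new constant at a time, using finitariness at limit stages when $L$ is large) keeps each $T_{n+1}$ affinely satisfiable, and because affine satisfiability is a finitary property the unions $\bar L=\bigcup_n L_n$ and $T'=\bigcup_n T_n$ remain affinely satisfiable. Every $\bar L$-formula involves only finitely many constants, hence belongs to some $L_n$, so its witness condition already sits in $T_{n+1}\subseteq T'$; thus $T'$ has the witness property. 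Finally, ordering the affinely satisfiable $\bar L$-theories extending $T'$ by inclusion and noting that chains have affinely satisfiable unions (finitariness again), Zorn's lemma produces a maximal affinely satisfiable $\bar T\supseteq T'\supseteq T$, which still contains all witness conditions and therefore retains the witness property.

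The hard part is the single-witness step, and specifically the fact that the value space is the whole real line and the quantifier $\sup_x\phi^M(x)$ is generally not realized inside $M$: one cannot simply name an exact witness. The argument circumvents this by producing $r$-approximate witnesses for every $r>0$ and then invoking the ultramean-based fact that this approximate satisfiability forces exact satisfiability; that passage is where the genuine content lies, everything else being bookkeeping.
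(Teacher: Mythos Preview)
Your proof is correct and follows the same Henkin template as the paper: establish a single-witness extension, iterate through all formulas, and pass to a maximal affinely satisfiable extension by Zorn. The only difference is in how the single-witness step is justified. You argue constructively: for each $r>0$ choose an approximate witness to see that $T,\,-r\leqslant\sigma$ is affinely satisfiable, then invoke the ultraproduct remark stated just before the lemma to upgrade to $T,\,0\leqslant\sigma$. The paper instead argues directly by contradiction, without that remark: assuming $T$ is affinely closed, if some combination $r\sup_x\phi(x)+s\sigma\leqslant r\phi(c)+s\eta$ (with $\sigma\leqslant\eta\in T$ and $r,s>0$) fails in every model, then for some $\epsilon>0$ one has $\vDash r\phi(c)+s\eta\leqslant r\sup_x\phi(x)+s\sigma-\epsilon$; since $c$ is fresh, taking the supremum over interpretations of $c$ gives $\vDash\eta\leqslant\sigma-\epsilon/s$, contradicting the satisfiability of $\sigma\leqslant\eta$. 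Your route is the natural one given the placement of the ultraproduct remark; the paper's is self-contained for this lemma and avoids the detour through approximations.
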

\begin{proof}
We may assume that $T$ is affinely closed. Let $\phi(x)$ be a formula and $c$ be a new constant symbol.
We claim that $T\cup\{\sup_x\phi(x)\leqslant\phi(c)\}$ is affinely satisfiable. Suppose not.
Then, there are $\sigma\leqslant\eta$ in $T$ and $r,s>0$ such that
$$r\sup_x\phi(x)+s\sigma\leqslant r\phi(c)+s\eta$$
is not satisfiable. So, for some $\epsilon>0$
$$\vDash r\phi(c)+s\eta\leqslant r\sup_x\phi(x)+s\sigma-\epsilon.$$
So, $$\vDash r\sup_x\phi(x)+s\eta\leqslant r\sup_x\phi(x)+s\sigma-\epsilon$$
which implies that
$$\vDash \eta\leqslant \sigma-\frac{\epsilon}{s}.$$
This is a contradiction. Now, an easy transfinite induction shows that there is a language $\bar L\supseteq L$
and an affinely satisfiable $\bar L$-theory $T_1\supseteq T$ which has the witness property.
Let $\bar T\supseteq T_1$ be a maximal affinely satisfiable $\bar L$-theory containing $T_1$.
Clearly, $\bar T$ has the required properties.
\end{proof}

\begin{theorem} \emph{(Affine compactness)}
Every affinely satisfiable theory $T$ is satisfiable.
\end{theorem}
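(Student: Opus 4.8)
The plan is to build a Henkin-style term model directly out of the syntax, letting Lemma \ref{witness} supply the witnesses needed for the quantifiers. First I would apply Lemma \ref{witness} to obtain a language $\bar L\supseteq L$ and a maximal affinely satisfiable theory $\bar T\supseteq T$ in $\bar L$ with the witness property; since any model of $\bar T$ restricts to a model of $T$, it suffices to satisfy $\bar T$. The engine of the construction is the value functional $I(\sigma)=\inf\{r:(\sigma\leqslant r)\in\bar T\}$ defined on $\bar L$-sentences. Because each $\sigma$ is bounded and $\sigma\leqslant\mathbf{b}_\sigma$ is valid, maximality forces $(\sigma\leqslant\mathbf{b}_\sigma)\in\bar T$, so $I(\sigma)$ is a finite real; monotonicity (from $\sigma\leqslant\eta\in\bar T$), subadditivity, positive homogeneity, and $I(1)=1$ all follow at once from affine closure and maximality.

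Next I would take as universe $M_0$ the set of closed $\bar L$-terms, equip it with the pseudometric $d(t_1,t_2)=I(d(t_1,t_2))$, and interpret the symbols syntactically: $c^{M_0}=c$, $F^{M_0}(t_1,\ldots,t_n)=F(t_1,\ldots,t_n)$, and $R^{M_0}(t_1,\ldots,t_n)=I(R(t_1,\ldots,t_n))$. The pseudometric axioms, the diameter bound, and the Lipschitz estimates for $F^{M_0}$ and $R^{M_0}$ hold because the corresponding conditions are valid, hence lie in $\bar T$, and $I$ is monotone and linear; this makes $M_0$ a genuine $\bar L$-prestructure. The heart of the argument is then the truth lemma: for every formula $\phi(x_1,\ldots,x_n)$ and all closed terms $t_1,\ldots,t_n$,
$$\phi^{M_0}([t_1],\ldots,[t_n])=I\big(\phi(t_1,\ldots,t_n)\big),$$
proved by induction on complexity. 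The atomic cases are definitional, while the connective cases $\phi+\psi$ and $r\phi$ reduce exactly to additivity and homogeneity of $I$.

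For the quantifier step, say $\sup_x\phi$, the inequality $(\sup_x\phi)^{M_0}\leqslant I(\sup_x\phi)$ holds because $\phi(t)\leqslant\sup_x\phi$ is valid for each closed term $t$, so $I(\phi(t))\leqslant I(\sup_x\phi)$ and one takes the supremum over $t$; the reverse inequality is precisely where the witness property enters, furnishing a constant $c$ with $(\sup_x\phi\leqslant\phi(c))\in\bar T$, whence $I(\sup_x\phi)\leqslant I(\phi(c))\leqslant\sup_t I(\phi(t))=(\sup_x\phi)^{M_0}$; the $\inf$ case is symmetric. Granting the truth lemma, any condition $\sigma\leqslant\eta\in\bar T$ gives $\sigma^{M_0}=I(\sigma)\leqslant I(\eta)=\eta^{M_0}$, so $M_0\vDash\bar T$, and completing $M_0$ by Proposition \ref{exist} yields a model of $T$.

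The step I expect to be the main obstacle is showing that $I$ is genuinely linear rather than merely sublinear, i.e. that $I(-\sigma)=-I(\sigma)$, equivalently $\inf\{r:\sigma\leqslant r\in\bar T\}=\sup\{s:s\leqslant\sigma\in\bar T\}$; this is needed for the homogeneity case $I(r\sigma)=rI(\sigma)$ with $r<0$ and hence for the truth lemma. This is exactly where maximality of $\bar T$ is used decisively: if the two values left a gap, one could insert a closed condition $\sigma\leqslant c$ (or $c\leqslant\sigma$) strictly inside it, and a short affine-combination computation would show the enlarged theory is still affinely satisfiable, contradicting maximality. Pinning down this coincidence cleanly is the delicate point; everything else is bookkeeping resting on affine closure and the witness property.
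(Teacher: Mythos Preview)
Your proposal is correct and matches the paper's Henkin-method proof (the one immediately following Lemma~\ref{witness}); your value functional $I$ and the maximality argument for its linearity make explicit precisely what the paper's sketch leaves implicit when it writes ``define $d(c,e)=r$ if $d(c,e)=r\in T$'' (the only cosmetic difference is that you build on all closed terms while the paper uses just the constant symbols). The paper also supplies an independent first proof via ultrameans (Theorem~\ref{compactness}): there the analogous functional $q$ is only shown to be sublinear, and Hahn--Banach, Kantorovich extension, and a Riesz-type representation by an ultracharge produce the model, so the linearity obstacle you flag is bypassed rather than confronted.
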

\begin{proof}
We just give sketch of the proof. By Lemma \ref{witness}, we may further assume
that $T$ has the witness property and is maximal.
Let $M$ be the set of all constant symbols of the language.
For $c,e\in M$ define $d(c,e)=r$ if the condition $d(c,e)=r$ belongs to $T$.
This is a pseudometric on $M$.
Also, for $c,R,F\in L$ define
$$c^M=c$$
$$R^M(c_1,...,c_n)=r \ \ \ \ \ \ \mbox{if} \ \ \ \ \ \ \ \ \ \ R(c_1,...,c_n)\in T$$
$$\ \ \ \ \ \ \ F^M(c_1,...,c_n)=e \ \ \ \ \ \ \ \  \mbox{if} \ \ \ \ \ \ d(F(c_1,...,c_n),e)=0\in T.$$
This defines a well-defined $L$-prestructure on $M$. Then, one verifies by induction on the complexity of $\phi$
that for every $c_1,...,c_n$ and $r\in\Rn$
$$\phi^M(c_1,...,c_n)=r\ \ \ \ \mbox{iff} \ \ \ \ \phi(c_1,...,c_n)=r\in T.$$
Finally, use Proposition \ref{exist} to find a model $\bar{M}$ of $T$.
\end{proof}

In the next sections, we use a modified form of Henkin's method to prove the existence of extremal models.

\subsection{Upward, JEP and AP} 
An immediate consequence of the affine compactness theorem is that if both $\sigma\leqslant0$
and $0\leqslant\sigma$ are satisfiable, then so is $\sigma=0$.
This is because the set $\{0\leqslant\sigma,\sigma\leqslant0\}$ is affinely satisfiable.
It is also easy to prove the following special form of the affine compactness theorem.
Let $\Sigma$ be a set of closed conditions of the form $\sigma=0$ such that $r\sigma+s\eta=0\in\Sigma$
whenever $\sigma=0, \eta=0\in\Sigma$ and $r,s\in\Rn$. If every $\sigma=0\in\Sigma$
is satisfiable, then $\Sigma$ is satisfiable.
Finite satisfiability is stronger than affine satisfiability.
So, one verifies that if $T\vDash0\leqslant\theta$ then for each $\epsilon>0$ there is a finite
$\Delta\subseteq T$ such that $\Delta\vDash-\epsilon\leqslant\theta$.
Using affine compactness, we can further prove the following:

\begin{lemma} \label{proofs}
Let $S$ be a set of conditions of the form $0\leqslant\sigma$.

\emph{(i)} If $T\cup S\vDash0\leqslant\theta$, then for each $\epsilon>0$ there exists
$0\leqslant\sigma$ in the affine closure of $S$ such that $T\vDash\sigma\leqslant\theta+\epsilon$.
In particular, $T,0\leqslant\sigma\vDash0\leqslant\theta+\epsilon$.

\emph{(ii)} If $T,0\leqslant\sigma\vDash0\leqslant\theta$, for each $\epsilon>0$ there exists
$\delta>0$ such that $T,-\delta\leqslant\sigma\vDash-\epsilon\leqslant\theta$.
\end{lemma}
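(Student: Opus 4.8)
The plan is to prove (i) by contradiction using affine compactness, and then to derive (ii) from (i) by a short arithmetic argument. Throughout I write $S=\{0\leqslant\sigma:\ 0\leqslant\sigma\in S\}$ and recall that its affine closure consists of the conditions $0\leqslant\sum_j b_j\sigma_j$ with each $0\leqslant\sigma_j\in S$ and $b_j\geqslant0$.

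For (i), fix $\epsilon>0$ and suppose toward a contradiction that $T\not\vDash\sigma'\leqslant\theta+\epsilon$ for \emph{every} $0\leqslant\sigma'$ in the affine closure of $S$. I would then consider the theory $T'=T\cup S\cup\{\theta\leqslant-\epsilon\}$ and show it is affinely satisfiable; affine compactness (Theorem \ref{compactness}) then yields a model of $T'$, hence a model of $T\cup S$ with $\theta\leqslant-\epsilon<0$, contradicting $T\cup S\vDash0\leqslant\theta$. To verify affine satisfiability, take an arbitrary condition in the affine closure of $T'$ and group its ingredients by source: conditions $\alpha_l\leqslant\beta_l$ from $T$ with coefficients $a_l\geqslant0$, conditions $0\leqslant\sigma_j$ from $S$ with coefficients $b_j\geqslant0$, and the single extra condition $\theta\leqslant-\epsilon$ with coefficient $c\geqslant0$. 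The combined condition is $\sum_l a_l\alpha_l+c\theta\leqslant\sum_l a_l\beta_l+\sum_j b_j\sigma_j-c\epsilon$. If $c>0$, set $\sigma'=\frac1c\sum_j b_j\sigma_j$, which lies in the affine closure of $S$; by the contradiction hypothesis there is $M\vDash T$ with $\sigma'^M>\theta^M+\epsilon$, i.e.\ $c\theta^M<\sum_j b_j\sigma_j^M-c\epsilon$, and since $\alpha_l^M\leqslant\beta_l^M$ this same $M$ satisfies the combined condition. If $c=0$ the condition lies in the affine closure of $T\cup S$ and is satisfiable because $T\cup S$ (having models, as it entails $0\leqslant\theta$ nonvacuously) is affinely satisfiable. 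Thus $T'$ is affinely satisfiable, completing the contradiction and proving that some $0\leqslant\sigma$ in the affine closure of $S$ satisfies $T\vDash\sigma\leqslant\theta+\epsilon$. The ``in particular'' clause is then immediate: if $M\vDash T$ and $0\leqslant\sigma^M$, then $0\leqslant\sigma^M\leqslant\theta^M+\epsilon$.

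For (ii), I would simply specialize (i) to the one-element set $S=\{0\leqslant\sigma\}$, whose affine closure consists of the conditions $0\leqslant r\sigma$ with $r\geqslant0$. Applying (i) with $\epsilon/2$ in place of $\epsilon$ produces some $r\geqslant0$ with $T\vDash r\sigma\leqslant\theta+\tfrac\epsilon2$. Now choose $\delta=\frac{\epsilon}{2(r+1)}$. For any $M\vDash T$ with $-\delta\leqslant\sigma^M$ one has $\theta^M\geqslant r\sigma^M-\tfrac\epsilon2\geqslant-r\delta-\tfrac\epsilon2\geqslant-\tfrac\epsilon2-\tfrac\epsilon2=-\epsilon$, using $r\delta\leqslant\tfrac\epsilon2$. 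Hence $T,-\delta\leqslant\sigma\vDash-\epsilon\leqslant\theta$, as required.

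The main obstacle I anticipate is the affine-satisfiability verification inside (i): the genuinely informative part is the case $c>0$, where the whole point is to turn the ``pointwise'' failure $T\not\vDash\sigma'\leqslant\theta+\epsilon$ into a witnessing model for a prescribed nonnegative combination, and one must be careful that the coefficients $b_j$ are exactly those appearing in the combined condition so that rescaling by $1/c$ lands in the affine closure of $S$. The remaining delicate point is purely bookkeeping around degenerate cases (for instance when $T\cup S$ has no model, so that the entailment holds vacuously), which can be dispatched separately and does not affect the structure of the argument.
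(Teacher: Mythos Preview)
Your proposal is correct and follows essentially the same route as the paper: both argue (i) by contradiction, showing that the failure of the conclusion makes $T\cup S\cup\{\theta\leqslant-\epsilon\}$ affinely satisfiable and hence satisfiable, and both derive (ii) from (i) by extracting an $r\geqslant0$ with $T\vDash r\sigma\leqslant\theta+\tfrac{\epsilon}{2}$ and choosing $\delta$ accordingly. Your write-up is more explicit in the affine-satisfiability check (the paper simply asserts the implication), and your uniform choice $\delta=\tfrac{\epsilon}{2(r+1)}$ neatly avoids the paper's case split on $r=0$ versus $r>0$.
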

\begin{proof} (i): Otherwise, there exists $\epsilon>0$ such that
for every $0\leqslant\sigma$ in the affine closure of $S$, the theory
$T\cup\{\theta+\epsilon\leqslant\sigma\}$ is satisfiable.
This implies that $T\cup S\cup\{\theta+\epsilon\leqslant0\}$ is affinely satisfiable.\
(ii): By (i), there exists $r\geqslant0$
such that $T\vDash r\sigma\leqslant\theta+\frac{\epsilon}{2}$.
If $r=0$ we have that $T\vDash-\epsilon\leqslant\theta$.
Otherwise, $T,\frac{-\epsilon}{2r}\leqslant\sigma\vDash-\epsilon\leqslant\theta$.
\end{proof}

Let $M$ be an $L$-structure. The \emph{diagram} and \emph{elementary diagram} of $M$
are respectively defined as follows:
$$diag(M):=\{0\leqslant\phi(\a):\ 0\leqslant\phi^M(\a),\ \phi\ \mbox{is\ quantifier-free},\ \a\in M\}$$
$$ediag(M):=\{0\leqslant\phi(\a):\ 0\leqslant\phi^M(\a),\ \phi\ \mbox{is\ arbitrary}, \a\in M\}.$$
These are theories in $L(M)$, where we add a constant symbol for each $a\in M$.
It is then clear that $N\vDash diag(M)$ if and only if there is an embedding from $M$ into $N$.
Similarly, $N\vDash ediag(M)$ if and only if there is an elementary embedding from $M$ into $N$.

\begin{proposition} \label{upward} \emph{(Upward)}
Every model $M$ having at least two elements has arbitrarily large elementary extensions.
\end{proposition}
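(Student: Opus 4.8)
The plan is to exhibit, for every infinite cardinal $\kappa$, an elementary extension of $M$ whose density character is at least $\kappa$, realized as a powermean of $M$. The starting point is the fact recorded just after Theorem \ref{th1}: the diagonal embedding $a\mapsto[a]$ of $M$ into a powermean $M^\mu$ is elementary. Hence every powermean of $M$ is automatically an elementary extension, and the whole problem reduces to making the metric space $M^\mu$ large, i.e. to producing arbitrarily many pairwise far-apart elements inside it. This is exactly where the hypothesis $|M|\geqslant2$ enters: I would fix once and for all two points $a,b\in M$ with $\delta:=d(a,b)>0$.

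The construction of a large separated family goes as follows. Given $\kappa$, choose an index set $I$ and an ultracharge $\mu$ on $I$ carrying a family $(A_\alpha)_{\alpha<\kappa}$ of subsets that are uniformly separated in measure, say $\mu(A_\alpha\triangle A_\beta)\geqslant\tfrac{1}{2}$ for $\alpha\neq\beta$. The canonical choice is $I=\{0,1\}^\kappa$ with the product measure (extended to a finitely additive measure on all of $\mathcal P(I)$) and $A_\alpha$ the $\alpha$-th coordinate set $\{x:x_\alpha=1\}$, the coordinate sets being independent of measure $\tfrac{1}{2}$. For each $\alpha$ define $f_\alpha:I\to M$ by $f_\alpha(x)=a$ if $x\in A_\alpha$ and $f_\alpha(x)=b$ otherwise, and set $g_\alpha=[f_\alpha]\in M^\mu$. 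Using the metric formula for the ultramean, $d(g_\alpha,g_\beta)=\int_I d(f_\alpha(x),f_\beta(x))\,d\mu=\delta\cdot\mu(A_\alpha\triangle A_\beta)=\tfrac{\delta}{2}$ for $\alpha\neq\beta$, since $d(f_\alpha(x),f_\beta(x))$ equals $\delta$ precisely on $A_\alpha\triangle A_\beta$ and $0$ elsewhere. Thus $\{g_\alpha:\alpha<\kappa\}$ is a $\tfrac{\delta}{2}$-separated subset of $M^\mu$ of cardinality $\kappa$, so no dense subset can have fewer than $\kappa$ points and $\mathsf{dc}(M^\mu)\geqslant\kappa$. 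As the diagonal embedding gives $M\preccurlyeq M^\mu$ (and, if a complete model is insisted upon, one passes to the completion, which by Proposition \ref{exist} preserves elementarity and obviously retains the separated family), this produces the desired arbitrarily large elementary extension.

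I expect the only genuine obstacle to be the measure-theoretic point of securing an ultracharge on the full power set of $I$ for which the $A_\alpha$ stay genuinely separated, since the product measure is a priori defined only on the Baire $\sigma$-algebra; here one invokes the extension of a finitely additive measure from a subalgebra to $\mathcal P(I)$ (already used in the proof of Theorem \ref{compactness}), observing that the basic sets $A_\alpha$ keep their values. A technically lighter alternative that avoids infinite measures is to argue by affine compactness: in the language $L(M)$ augmented by new constants $(c_\alpha)_{\alpha<\kappa}$, consider $ediag(M)\cup\{\tfrac{\delta}{2}\leqslant d(c_\alpha,c_\beta):\alpha<\beta<\kappa\}$. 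Every finite subtheory is satisfied by a single finite powermean of $M$ (index set $\{0,1\}^n$ with normalized counting measure, an honest probability charge on a finite power set), and a common model of finitely many conditions satisfies every nonnegative affine combination of them, so the theory is affinely satisfiable; Theorem \ref{compactness} then yields a model which is an elementary extension of $M$ containing $\kappa$ points pairwise at distance $\tfrac{\delta}{2}$, hence of density character at least $\kappa$.
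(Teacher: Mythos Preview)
Your proposal is correct and matches the paper's proof closely: the paper also offers both routes, first verifying that $ediag(M)\cup\{\tfrac{r}{2}\leqslant d(c_i,c_j):i<j<\kappa\}$ is affinely satisfiable and then sketching the powermean alternative with a charge admitting $\kappa$ pairwise separated events. The only notable difference is how affine satisfiability is checked in the compactness route: the paper does an inductive pigeonhole entirely inside $M$ (assigning each $c_i$ to $a$ or $b$ and showing the required affine combination holds), whereas you realize each finite piece inside a finite powermean; both arguments are valid.
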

\begin{proof} Let $a,b\in M$ be such that $d(a,b)=r>0$. Let $\kappa\geqslant2$ be a cardinal
and $\{c_i:i<\kappa\}$ a set of new (and distinct) constant symbols. We show that
$$\Sigma=ediag(M)\cup\{\frac{r}{2}\leqslant d(c_i,c_j) \ : \ \ \  i<j<\kappa\}.$$
is affinely satisfiable.
In fact, we must show that any condition of the form
$$\frac{r}{2}\sum_{i<j\leqslant n}\alpha_{ij}\leqslant\sum_{i<j\leqslant n}\alpha_{ij} d(c_i,c_j)=\sigma_n\ \ \ \ \ \ \ \ \ \alpha_{ij}\geqslant0$$
is satisfiable in $M$. Suppose this has been done for $n-1$ and
$c_0^M,...,c_{n-1}^M$ are interpreted by $a$ or $b$ such that
$$\frac{r}{2}\sum_{i<j\leqslant n-1}\alpha_{ij}\leqslant\sigma_{n-1}^M.$$
Then, $\sigma_n=\sigma_{n-1}+\sum_{i<n}\alpha_{in}d(c_i,c_n)$ and
for at least one of $c_n^M=a$ and $c_n^M=b$ we must have that
$$\frac{r}{2}\sum_{i<j\leqslant n}\alpha_{ij}\leqslant\sigma_n^M.$$
Now, any model of $\Sigma$ extends $M$ elementarily and has cardinality at least $\kappa$.

Alternatively, one may use the ultramean method. Let $\wp_0$ be a probability charge on a set $I$ whose
corresponding algebra of events has cardinality at least $\kappa$, i.e. there are
sets $A_i\subseteq I$,\ \ $i<\kappa$, such that $\wp_0(A_i\Delta A_j)\neq0$ for $i\neq j$.
For this purpose, the representation theorem for probability algebras (\cite{BBHU}, Th. 16.1) may be used.
Extend $\wp_0$ to a maximal probability charge on $I$ by Theorem \ref{ultracharge-extension}.
Then $M^\wp$ has cardinality at least $\kappa$.
\end{proof}
\vspace{2mm}

Combining the downward and upward L\"{o}wenheim-Skolem theorems, one proves that

\begin{proposition}
If $|L|+\aleph_0\leqslant\kappa$, every theory which has a model of cardinality
at least $2$ has a model of density character $\kappa$.
\end{proposition}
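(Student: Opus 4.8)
The plan is to combine the two L\"owenheim--Skolem results by first blowing a model up and then cutting it down to the prescribed size. Fix a model $M_0\vDash T$ with at least two elements and choose $a,b\in M_0$ with $d(a,b)=r>0$. The first step is to produce an elementary extension $N\succcurlyeq M_0$ carrying a set $X=\{c_i:i<\kappa\}\subseteq N$ whose elements are pairwise at distance at least $\tfrac{r}{2}$. This is exactly what the proof of Proposition \ref{upward} delivers: the theory $ediag(M_0)$ augmented by the conditions $\tfrac{r}{2}\leqslant d(c_i,c_j)$ for $i<j<\kappa$ is affinely satisfiable, so any model $N$ of it is an elementary extension of $M_0$ equipped with the separated family $X$.

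The key observation is that such a family forces the density character to be at least $\kappa$. Indeed, if $D\subseteq N$ is dense, then for each $i<\kappa$ we may pick $d_i\in D$ with $d(c_i,d_i)<\tfrac{r}{4}$; the triangle inequality together with the separation $d(c_i,c_j)\geqslant\tfrac{r}{2}$ shows that the assignment $i\mapsto d_i$ is injective, whence $|D|\geqslant\kappa$. Thus $\mathsf{dc}(N)\geqslant\kappa$, and for the same reason the subset $X$ is itself an $\tfrac{r}{2}$-separated set of size $\kappa$, so $\mathsf{dc}(X)=\kappa$.

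The second step is to apply the downward theorem. Since $\mathsf{dc}(X)=\kappa\leqslant\kappa$ and $|L|+\aleph_0\leqslant\kappa$, Proposition \ref{downward} yields $X\subseteq M\preccurlyeq N$ with $\mathsf{dc}(M)\leqslant\kappa$. Because $X\subseteq M$ and $X$ is an $\tfrac{r}{2}$-separated family of size $\kappa$, the argument of the previous paragraph applied inside $M$ gives $\mathsf{dc}(M)\geqslant\kappa$ as well; hence $\mathsf{dc}(M)=\kappa$. Finally, $M\preccurlyeq N\succcurlyeq M_0$ forces $M\equiv N\equiv M_0$, so $M\vDash T$, and $M$ is the desired model of density character exactly $\kappa$.

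The one genuinely substantive point, and the step I would watch most carefully, is the passage from the upward theorem, phrased in terms of cardinality, to a lower bound on the density character: mere largeness of $|N|$ would not suffice, since a metric structure can have enormous cardinality yet be separable. What makes the argument go through is the explicit $\tfrac{r}{2}$-separated family produced inside the upward construction, and it is essential to transport that family, rather than a bare cardinality bound, into the downward step through the set parameter $X$.
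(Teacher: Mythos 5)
Your proof is correct and takes exactly the route the paper intends: the paper gives no separate argument for this proposition, stating only that it follows by combining the upward and downward L\"{o}wenheim--Skolem theorems (Propositions \ref{upward} and \ref{downward}), which is precisely what you carry out. Your closing remark identifies the one real subtlety correctly --- it is the $\tfrac{r}{2}$-separated family from the upward construction, not the cardinality bound, that must be passed through the downward step to control the density character.
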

\vspace{2mm}

\begin{proposition} \emph{(Elementary JEP and AP)}

\emph{(i)} If $M\equiv N$, then there are $K$ and elementary embeddings $f:M\rightarrow K$, $g:N\rightarrow K$.

\emph{(ii)} Let $f:A\rightarrow M$ and $g:A\rightarrow N$ be elementary.
Then there are $K$ and elementary embeddings $f':M\rightarrow K$
and $g':N\rightarrow K$ such that $f'f=g'g$.
\end{proposition}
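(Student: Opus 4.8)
The plan is to prove both parts using the affine compactness theorem applied to a union of elementary diagrams in an expanded language, exactly mirroring the standard first-order argument but phrased through affine satisfiability.

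For part (i), I would work in the language $L(M)\cup L(N)$, where we add one fresh constant for each element of $M$ and one fresh constant for each element of $N$ (keeping the two families disjoint). Consider the theory $\Sigma=ediag(M)\cup ediag(N)$ and a model $K$ of $\Sigma$ would induce an elementary embedding $f$ of $M$ and an elementary embedding $g$ of $N$ by the characterization of elementary embeddings via $ediag$ stated just above Proposition~\ref{upward}. So it suffices to show $\Sigma$ is affinely satisfiable, and then apply Theorem~\ref{compactness}. To check affine satisfiability, take finitely many conditions $0\leqslant\phi_k(\a)$ from $ediag(M)$ (with $\a\in M$) and finitely many $0\leqslant\psi_l(\b)$ from $ediag(N)$ (with $\b\in N$), together with nonnegative coefficients; I must show the corresponding affine combination is satisfiable. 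The key point is that, since the two blocks of new constants are disjoint, the two groups of formulas share no common variables, so I can interpret the $M$-constants inside $M$ itself and the $N$-constants inside $N$ itself — but these live in different structures, so the honest move is to evaluate the $M$-part in $M$ and the $N$-part in $N$ and then combine them in a single structure via the ultramean $\tfrac12 M+\tfrac12 N$. Because $M\equiv N$, every affine sentence has the same value in $M$, in $N$, and in $\tfrac12 M+\tfrac12 N$; evaluating the $M$-constants by their images under the diagonal map into the first coordinate and the $N$-constants into the second coordinate makes each inequality $0\leqslant\phi_k$ and $0\leqslant\psi_l$ hold, hence any nonnegative combination holds.

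For part (ii), the setup is the same but now I take a single language with fresh constants for $M$ and for $N$, identified along $A$: that is, for $a\in A$ the constant used for $f(a)\in M$ and the constant used for $g(a)\in N$ are declared to be the same symbol. Then I form $\Sigma'=ediag(M)\cup ediag(N)$ in this amalgamated language and seek a model $K$; the shared constants over $A$ force the resulting elementary embeddings $f'$ and $g'$ to satisfy $f'f=g'g$. Affine satisfiability is again the crux, and here the compatibility over $A$ is what makes the two diagrams consistent: since $f$ and $g$ are elementary, $A$ has the same elementary type (as computed through $f$ in $M$ and through $g$ in $N$), so any affine condition mentioning only the shared $A$-constants receives matching values on both sides.

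The main obstacle I anticipate is the affine-satisfiability verification in part (ii): unlike the first-order case where one simply checks finite consistency, here I must confirm that an arbitrary nonnegative affine combination mixing formulas from $ediag(M)$ and $ediag(N)$ — which may genuinely entangle $M$-constants, $N$-constants, and the shared $A$-constants — remains satisfiable. The clean way to handle this is to reduce to the previous proposition: by elementarity of $f$ and $g$, I may first replace $A$ by its image and argue that $M$ and $N$ become elementarily equivalent \emph{over} the common parameters $A$, so that the amalgamated diagram is affinely satisfiable by the same ultramean $\tfrac12 M+\tfrac12 N$ construction, now with the $A$-constants interpreted consistently via the diagonal embedding. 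Once affine satisfiability is secured, Theorem~\ref{compactness} delivers $K$ and completing $K$ by Proposition~\ref{exist} yields a genuine model, finishing both parts.
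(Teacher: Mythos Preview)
Your overall scaffolding is right and matches the paper: form $\Sigma=ediag(M)\cup ediag(N)$, verify affine satisfiability, and apply Theorem~\ref{compactness}. The gap is in your verification of affine satisfiability via the ultramean $\tfrac{1}{2}M+\tfrac{1}{2}N$.

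There is no ``diagonal map into the first coordinate'' from $M$ into $\tfrac{1}{2}M+\tfrac{1}{2}N$ when $M\neq N$: an element of this ultramean is a pair $(m,n)$ with $m\in M$, $n\in N$, and the value of $\phi$ at such a pair is $\tfrac{1}{2}\phi^M(m)+\tfrac{1}{2}\phi^N(n)$. So interpreting a constant for $a\in M$ as $(a,n_0)$ gives $\phi^K(\ldots)=\tfrac{1}{2}\phi^M(\a)+\tfrac{1}{2}\phi^N(\bar n_0)$, and the second summand can be very negative; the inequality $0\leqslant\phi_k$ need not hold. The ultramean \emph{averages} values across the two coordinates---it does not let you ``evaluate the $M$-part in $M$ and the $N$-part in $N$'' as separate summands.

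The paper's fix is much simpler and avoids any auxiliary construction. Both $ediag(M)$ and $ediag(N)$ are already affinely closed, so an affine combination from $\Sigma$ collapses to a single condition $0\leqslant\phi(\a)+\psi(\b)$ with $0\leqslant\phi^M(\a)$ and $0\leqslant\psi^N(\b)$. Now use $M\equiv N$ directly: from $0\leqslant\psi^N(\b)$ one gets $0\leqslant\sup_{\x}\psi(\x)$ in $N$, hence in $M$, so (approximately) interpret $\b$ in $M$ itself to satisfy the combined condition there. No second structure is needed. The same reduction handles part~(ii), with the shared $A$-constants fixed in $M$ via $f$.
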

\begin{proof} (i) Let $\Sigma=ediag(M)\cup ediag(N)$ and assume $0\leqslant\phi^N(\b)$.
Then $0\leqslant\sup_{\x}\phi(\x)$ holds in $N$ and hence in $M$. Since, $ediag(N)$ is affinely closed, we conclude by the
affine compactness theorem that $\Sigma$ is satisfiable. Now, every model of $\Sigma$ does the job.
The second part is proved similarly.
\end{proof}

\subsection{Quantifier-elimination} \label{Quantifier-elimination}
A condition is called universal if it of the form $0\leqslant\inf_{\x}\phi(\x)$ where $\phi$ is quantifier-free.
A theory $T$ is \emph{universal}\index{universal theory} if it can be axiomatized by a set of universal conditions.
$T_{\forall}$ denotes the set of universal consequences of $T$.
A theory $T$ is \emph{closed under substructure} if whenever $A\subseteq M\vDash T$,
one has that $A\vDash T$.

\begin{lemma} $A\vDash T_{\forall}$ if and only if there exists $M$ such that $A\subseteq M\vDash T$.
\end{lemma}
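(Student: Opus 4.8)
The plan is to establish both implications, with essentially all the content in the direction $A\vDash T_\forall\Rightarrow\exists M\,(A\subseteq M\vDash T)$. First I would dispatch the easy converse: assume $A\subseteq M\vDash T$ and let $0\leqslant\inf_{\x}\phi(\x)$ be any universal consequence of $T$, with $\phi$ quantifier-free, so that $M\vDash 0\leqslant\inf_{\x}\phi$. For every $\a\in A$ one has $\phi^A(\a)=\phi^M(\a)$ since quantifier-free formulas are preserved by the embedding $A\subseteq M$ (Lemma characterizing embeddings), and $\phi^M(\a)\geqslant\inf_{\x}\phi^M\geqslant0$. Hence $\inf_{\x}\phi^A(\x)\geqslant0$, i.e. $A\vDash 0\leqslant\inf_{\x}\phi$; as this holds for every universal consequence, $A\vDash T_\forall$.

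For the main direction I would work in the language $L(A)$ and reduce everything to the satisfiability of the $L(A)$-theory $T\cup diag(A)$. Granting that $T\cup diag(A)$ has a model $N$, we have $N\vDash T$ and $N\vDash diag(A)$; the latter provides an embedding of $A$ into $N$, and identifying $A$ with its image yields $A\subseteq N\vDash T$, which is the desired $M$. So it suffices to prove that $T\cup diag(A)$ is satisfiable.

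To this end I would argue by contradiction: suppose $T\cup diag(A)$ is unsatisfiable, so vacuously $T\cup diag(A)\vDash 0\leqslant-1$. Writing $S=diag(A)$, a set of conditions of the form $0\leqslant\sigma$, Lemma \ref{proofs}(i) applied with $\theta=-1$ and $\epsilon=\tfrac12$ produces a condition $0\leqslant\sigma$ in the affine closure of $diag(A)$ with $T\vDash\sigma\leqslant-\tfrac12$. Because $\sigma$ is a finite nonnegative combination of instances from $diag(A)$, it has the form $\sigma=\sigma(\a)$ for a quantifier-free $L$-formula $\sigma(\x)$ and a tuple $\a$ from $A$, and moreover $\sigma^A(\a)\geqslant0$. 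The constants naming $\a$ are new and do not occur in the $L$-theory $T$, so $T\vDash\sigma(\a)\leqslant-\tfrac12$ is equivalent to the closed $L$-condition $T\vDash\sup_{\x}\sigma(\x)\leqslant-\tfrac12$, that is, $T\vDash 0\leqslant\inf_{\x}(-\sigma(\x)-\tfrac12)$. Since $-\sigma(\x)-\tfrac12$ is quantifier-free, this is a universal condition and therefore lies in $T_\forall$. As $A\vDash T_\forall$, evaluation at $\a$ gives $-\sigma^A(\a)-\tfrac12\geqslant0$, i.e. $\sigma^A(\a)\leqslant-\tfrac12$, contradicting $\sigma^A(\a)\geqslant0$. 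Hence $T\cup diag(A)$ is satisfiable, which completes the proof.

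I expect the main obstacle to be precisely this last step: one must retain the strict gap $\tfrac12$ so that the universal consequence extracted from $T$ is genuinely incompatible with the nonnegativity of $\sigma^A(\a)$ forced by $diag(A)$, and one must correctly convert the parameterized entailment $T\vDash\sigma(\a)\leqslant-\tfrac12$ (over $L(A)$) into an honest universal condition over $L$ by quantifying out the fresh constants. The rest is bookkeeping with the affine closure and the embedding characterization of $diag(A)$.
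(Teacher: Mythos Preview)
Your proof is correct and follows essentially the same route as the paper: reduce to satisfiability of $T\cup diag(A)$, extract a single quantifier-free condition $0\leqslant\sigma(\a)$ whose conjunction with $T$ is already unsatisfiable, quantify out the fresh constants to obtain a universal consequence of $T$, and contradict $A\vDash T_\forall$. The only cosmetic difference is that you invoke Lemma~\ref{proofs}(i) to perform the reduction, whereas the paper appeals directly to affine compactness together with the observation that $diag(A)$ is already affinely closed; these are equivalent packagings of the same step.
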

\begin{proof}  Assume $A\vDash T_{\forall}$.
We claim that $T\cup diag(A)$ has a model.
For this purpose it is sufficient to show that for each $0\leqslant\phi(\a)$
in $diag(A)$, $T\cup\{0\leqslant\phi(\a)\}$ is satisfiable.
Suppose not. Then, for some $0\leqslant\phi(\a)\in diag(A)$ and $\epsilon>0$
one has that $T\vDash\phi(\a)\leqslant-\epsilon$.
So, $T\vDash\sup_{\x}\phi(\x)\leqslant -\epsilon$ and hence
$\epsilon\leqslant\inf_{\x}-\phi(\x)$ belongs to $T_{\forall}$.
However this condition does not hold in $A$. Now, let $M$ be a model of $T\cup diag(A)$.
Then $A\subseteq M\vDash T$. The other direction is obvious.
\end{proof}

For $|\x|=n\geqslant0$, \ $\mathbb{D}_{n}(T)$\index{$\mathbb{D}_n(T)$} denotes the real vector space consisting
of formulas with free variables $\x$ where $\phi(\x)$, $\psi(\x)$ are identified if $T\vDash\phi=\psi$.
This is a partially ordered normed space by
$$\|\phi(\x)\|_T=\sup\big\{|\phi^M(\a)|:\ M\vDash T,\ \a\in M\big\}$$
$$0\leqslant_T\phi\ \ \ \ \mbox{if} \ \ \ \ T\vDash0\leqslant\inf_{\x}\phi(\x).$$
So, $\phi$ and $\psi$ are $T$-equivalent if and only if $\|\phi-\psi\|_T=0$.

\begin{definition}
\emph{$T$ has \emph{quantifier-elimination}\index{quantifier-elimination} if for each $n\geqslant1$ and $\x$ with $|\x|=n$,
the subspace consisting of quantifier-free formulas $\phi(\x)$ is dense in $\mathbb{D}_n(T)$.}
\end{definition}

For complete $T$, every sentence is $T$-equivalent to some $r$.
So, in this case, the definition of quantifier-elimination is the same if we require it for every $n\geqslant0$.
A set $\Gamma$ of formulas is called \emph{convex} if for each nonnegative $r,s$ with $r+s=1$
and $\phi,\psi\in\Gamma$, one has that $r\phi+s\psi\in\Gamma$.
By $\Gamma\leqslant r$ we mean the set $\{\theta\leqslant r: \theta\in\Gamma\}$.

\begin{proposition} \label{quant-el-loc}
For each $\phi(\x)$ with $|\x|\geqslant1$, the following are equivalent:

\emph{(i)} For each $\epsilon>0$ there is a quantifier-free $\theta(\x)$ such that $\|\phi-\theta\|_T\leqslant\epsilon$.

\emph{(ii)} If $A\subseteq M\vDash T$,\ \ $A\subseteq N\vDash T$ and $\a\in A$ then $\phi^M(\a)=\phi^N(\a)$.
\end{proposition}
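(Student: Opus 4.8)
The plan is to prove the easy implication (i)$\Rightarrow$(ii) directly, and the substantive implication (ii)$\Rightarrow$(i) by a Hahn--Banach separation argument in the ordered normed space $\mathbb{D}_n(T)$, realizing the resulting functionals by pointed ultramean models exactly as in the proof of the affine compactness theorem.

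For (i)$\Rightarrow$(ii): given $A\subseteq M\vDash T$ and $A\subseteq N\vDash T$ with $\a\in A$, recall that the two inclusions are embeddings, so every quantifier-free $\theta(\x)$ satisfies $\theta^M(\a)=\theta^A(\a)=\theta^N(\a)$ by the characterization of embeddings through quantifier-free formulas. Fixing $\epsilon>0$ and choosing quantifier-free $\theta$ with $\|\phi-\theta\|_T\leqslant\epsilon$, one gets $|\phi^M(\a)-\phi^N(\a)|\leqslant|\phi^M(\a)-\theta^M(\a)|+|\theta^N(\a)-\phi^N(\a)|\leqslant2\epsilon$, and since $\epsilon$ is arbitrary, $\phi^M(\a)=\phi^N(\a)$.

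For (ii)$\Rightarrow$(i): First I would observe that $(\mathbb{D}_n(T),1)$ is an order-unit space whose norm $\|\cdot\|_T$ is exactly the order-unit norm, since $-r\leqslant_T\psi\leqslant_T r$ holds precisely when $|\psi^M(\a)|\leqslant r$ for all $M\vDash T$ and all $\a$. Let $V\subseteq\mathbb{D}_n(T)$ be the subspace of quantifier-free formulas. If (i) fails then $\phi\notin\overline V$, so Hahn--Banach yields a bounded functional $\Lambda$ with $\Lambda|_V=0$ and $\Lambda(\phi)\neq0$. Using the order-unit structure I would write $\Lambda=\Lambda^+-\Lambda^-$ as a difference of positive functionals (a classical fact for order-unit spaces, and the key place where the absence of a lattice structure in the affine setting is circumvented); since $1\in V$ we have $\Lambda(1)=0$, hence $\Lambda^+(1)=\Lambda^-(1)$, and after rescaling $p=\Lambda^+$, $q=\Lambda^-$ are positive functionals with $p(1)=q(1)=1$ that agree on all of $V$ but satisfy $p(\phi)\neq q(\phi)$. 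Next I would realize $p$ and $q$ by genuine pointed models: embedding $\mathbb{D}_n(T)$ into $\mathbf{C}_b(I)$, where $I$ indexes all pointed models $(M_i,\a_i)$ with $M_i\vDash T$ via $\psi\mapsto(i\mapsto\psi^{M_i}(\a_i))$, I would extend $p$ to a positive functional on $\mathbf{C}_b(I)$ (Kantorovich) and represent it by a probability charge $\mu$ (Riesz), exactly as in Theorem~\ref{compactness}. Setting $(M,\mathbf a)=(\prod_\mu M_i,[\a_i])$, Theorem~\ref{th1} gives $\psi^M(\mathbf a)=\int\psi^{M_i}(\a_i)\,d\mu=p(\psi)$ for every $\psi$, with $M\vDash T$ since models of $T$ are closed under ultramean; doing the same for $q$ produces $(N,\mathbf b)\vDash T$. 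As $p$ and $q$ agree on $V$, the tuples $\mathbf a$ and $\mathbf b$ have the same quantifier-free type, so $t^M(\mathbf a)\mapsto t^N(\mathbf b)$ is a well-defined isometric $L$-embedding of the substructure generated by $\mathbf a$ onto that generated by $\mathbf b$; identifying these as a common substructure $A$ gives $A\subseteq M\vDash T$, $A\subseteq N\vDash T$, $\mathbf a\in A$, yet $\phi^M(\mathbf a)=p(\phi)\neq q(\phi)=\phi^N(\mathbf a)$, contradicting (ii).

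The main obstacle is the (ii)$\Rightarrow$(i) direction, and within it the two functional-analytic steps: decomposing the separating functional $\Lambda$ into positive parts, which genuinely needs the order-unit structure of $\mathbb{D}_n(T)$, and realizing the positive functionals $p,q$ as point-evaluations at tuples inside ultramean models of $T$. The latter is a recycling of the compactness-theorem machinery (Hahn--Banach, Kantorovich, Riesz, then Theorem~\ref{th1}), so the conceptual weight lies in passing correctly from \emph{agreement on all quantifier-free formulas} to \emph{sharing a common substructure}, which rests on the characterization of embeddings by quantifier-free formulas.
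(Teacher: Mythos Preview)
Your proof is correct, but it takes a genuinely different route from the paper's. The paper argues directly with affine compactness: it considers the convex set $\Gamma=\{\theta\ \text{q.f.}:T\vDash\theta\leqslant\phi\}$, assumes no $\theta\in\Gamma$ satisfies $T\vDash\phi-\epsilon\leqslant\theta$, and by affine compactness builds $M\vDash T$ with a tuple $\d$ where $\phi^M(\d)=\gamma$ exceeds every $\theta^M(\d)$ by $\epsilon$; then a second compactness argument shows $T\cup diag(A)\cup\{\phi(\d)\leqslant\gamma-\epsilon\}$ is satisfiable (the key computation being that a failure would produce a new member of $\Gamma$ contradicting the choice of $M$), yielding $N\vDash T$ with $A\subseteq N$ and $\phi^N(\d)\leqslant\gamma-\epsilon$. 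Your approach instead separates $\phi$ from $\overline V$ by Hahn--Banach, invokes the Jordan decomposition of bounded functionals on an order-unit space to obtain two states $p,q$ agreeing on $V$ but not on $\phi$, and then realizes $p,q$ as pointed models via the ultramean/Riesz machinery of Theorem~\ref{compactness}.

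What each buys: the paper's argument stays entirely inside the affine-compactness toolkit already developed in \S\ref{Basic technics}, needs no external functional analysis beyond what proved compactness, and is the template reused verbatim for the model-completeness and preservation results that follow. Your argument is more conceptual---it isolates exactly the statement ``two types agreeing on quantifier-free formulas agree on $\phi$''---but imports the decomposition $\Lambda=\Lambda^+-\Lambda^-$ for duals of order-unit spaces (true, via the embedding $E\hookrightarrow\mathbf C(S(E))$ and Riesz--Markov, but not developed in the paper). Two small points worth tightening in your write-up: the index set $I$ of pointed models should be taken as a \emph{set} (e.g.\ one representative per $L(\c)$-elementary-equivalence class), and the realization of $p,q$ can be shortened to ``$p,q$ are affinely satisfiable with $T$, hence realized in some $M,N\vDash T$ by affine compactness'' rather than redoing the Kantorovich/Riesz steps.
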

\begin{proof} (i)$\Rightarrow$(ii) is obvious. (ii)$\Rightarrow$(i):
Let $$\Gamma(\x)=\{\theta(\x):\ \ T\vDash\theta\leqslant\phi\ \ \& \ \ \theta\ \mbox{is\ quantifier-free}\}.$$
$\Gamma$ is convex. We have only to show that for each $\epsilon>0$ there exists $\theta\in\Gamma$ such that
$T\vDash\phi-\epsilon\leqslant\theta$. Assume not. Then, by convexity, for some $\epsilon>0$,
$$T\cup\{\theta(\d)+\epsilon\leqslant\phi(\d):\ \theta\in\Gamma\}$$
is affinely satisfiable, where $\d$ is a tuple of new constant symbols replacing $\x$.
Let $M$ be a model for it and $\gamma=\phi^M(\d)$. Let $A$ be the (incomplete) substructure of $M$ generated by $\d^M$.
Then $$A\vDash \Gamma(\d)\leqslant\gamma-\epsilon\ \ \ \ \ (*).$$

\noindent{\sc Claim}: $\Sigma=T\cup diag(A)\cup\{\phi(\d)\leqslant \gamma-\epsilon\}$ is satisfiable.\\
\noindent{\sc Proof}: Assume not. Then for some $r,s\geqslant0$
and $0\leqslant\xi(\d)\in diag(A)$, the theory
$$T, s\phi(\d)\leqslant r\xi(\d)+s(\gamma-\epsilon)$$ is unsatisfiable.
This means that for some $\delta>0$,
$$T\vDash r\xi(\d)+s(\gamma-\epsilon)\leqslant s\phi(\d)-\delta.$$
Since $s$ cannot be zero, we must have that
$$T\vDash\frac{r}{s}\xi(\x)+\gamma-\epsilon+\frac{\delta}{s}\leqslant\phi(\x).$$
Therefore, $$\frac{r}{s}\xi(\x)+\gamma-\epsilon+\frac{\delta}{s}\in\Gamma$$ and hence by $(*)$,
$A$ satisfies $\frac{r}{s}\xi(\d)+\frac{\delta}{s}\leqslant0$ which is a contradiction.

Finally, let $N\vDash\Sigma$. Then, $A\subseteq N\vDash T$ while
$\phi^N(\d)\leqslant\gamma-\epsilon$. This is a contradiction.
\end{proof}
\vspace{1mm}

It is proved by induction on the complexity of formulas that a theory $T$ has
quantifier-elimination if and only if every formula $\inf_y\phi(\x,y)$,
where $\phi$ is quantifier-free, is approximated by quantifier-free formulas.

\begin{corollary} \label{quant-el} The following are equivalent:

\emph{(i)} $T$ has quantifier-elimination

\emph{(ii)} If $M,N\vDash T$, then every embedding of a substructure of $M$ into $N$ extends
to an embedding of $M$ into an elementary extension of $N$.
\end{corollary}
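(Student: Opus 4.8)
The plan is to run both implications through Proposition \ref{quant-el-loc} and the remark preceding the corollary, which together say that $T$ has quantifier-elimination precisely when every formula $\inf_y\phi(\x,y)$ with $\phi$ quantifier-free is \emph{invariant}, in the sense that $(\inf_y\phi)^M(\a)=(\inf_y\phi)^N(\a)$ whenever $A\subseteq M\vDash T$, $A\subseteq N\vDash T$ and $\a\in A$. So neither direction will argue about density in $\mathbb D_n(T)$ directly; instead I will translate quantifier-elimination into this invariance statement and exploit it.

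For (ii)$\Rightarrow$(i) I would fix a quantifier-free $\phi(\x,y)$ and data $A\subseteq M\vDash T$, $A\subseteq N\vDash T$, $\a\in A$, and apply (ii) to the inclusion $A\hookrightarrow N$ regarded as an embedding of a substructure of $M$ into $N$. This yields $N\preccurlyeq N'$ and an embedding $g:M\rightarrow N'$ fixing $A$. Because embeddings preserve quantifier-free formulas, each $b\in M$ satisfies $\phi^M(\a,b)=\phi^{N'}(\a,gb)\geqslant(\inf_y\phi)^{N'}(\a)$; taking the infimum over $b$ gives $(\inf_y\phi)^M(\a)\geqslant(\inf_y\phi)^{N'}(\a)=(\inf_y\phi)^N(\a)$, the final equality by $N\preccurlyeq N'$ and $\a\in N$. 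Swapping the roles of $M$ and $N$ gives the opposite inequality, so the two values coincide and $\inf_y\phi$ is invariant. By the remark this is exactly quantifier-elimination.

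For (i)$\Rightarrow$(ii) I would take an embedding $f:A\rightarrow N$ of a substructure $A\subseteq M\vDash T$ into $N\vDash T$, identify $A$ with $f(A)$ so that $A$ becomes a common substructure of $M$ and $N$, and try to satisfy $\Sigma=ediag(N)\cup diag(M/A)$ in the language naming every element of $N$ and every element of $M\setminus A$, with each $a\in A$ named by its image in $N$. A model of $\Sigma$ is precisely an elementary extension $N'$ of $N$ carrying an embedding of $M$ extending $f$, so by affine compactness (Theorem \ref{compactness}) it suffices to show $\Sigma$ is affinely satisfiable. An affine combination of its conditions has the shape $0\leqslant\Theta+\Phi$, where $\Theta$ comes from $ediag(N)$ with $\Theta^N\geqslant0$, and $\Phi$ is quantifier-free, comes from $diag(M/A)$ with $\Phi^M(\a,\b)\geqslant0$, and carries the only occurrences of the fresh constants $\b$. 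Here quantifier-elimination enters: by Proposition \ref{quant-el-loc} the formula $\sup_y\Phi$ is invariant, so $(\sup_y\Phi)^N(\a)=(\sup_y\Phi)^M(\a)\geqslant\Phi^M(\a,\b)\geqslant0$, and therefore for every $\delta>0$ I can choose $\b'\in N$ with $\Phi^N(\a,\b')>-\delta$, which interpreted into $\b$ makes $(\Theta+\Phi)\geqslant-\delta$ in $N$.

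The main obstacle I anticipate is precisely this last gap: the construction only yields $(\Theta+\Phi)\geqslant-\delta$ for each $\delta$, not an exact witness of $0\leqslant\Theta+\Phi$. I would close it with an ultramean, choosing structures $S_k$ (copies of $N$ with $\b$ interpreted so that $(\Theta+\Phi)^{S_k}\geqslant-\tfrac1k$) and forming $\prod_\mu S_k$ along a free ultrafilter $\mu$; then Theorem \ref{th1} gives $(\Theta+\Phi)^{\prod_\mu S_k}=\int(\Theta+\Phi)^{S_k}\,d\mu\geqslant0$, so the single condition is satisfiable. This makes $\Sigma$ affinely satisfiable and affine compactness finishes the direction. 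One minor point to dispatch is the case $\a$ empty, where Proposition \ref{quant-el-loc} (which requires $|\x|\geqslant1$) is applied after adjoining a dummy variable.
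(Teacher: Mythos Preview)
Your proof is correct and follows essentially the same route as the paper's. Both directions use Proposition~\ref{quant-el-loc} (via the remark preceding the corollary) to reduce quantifier-elimination to invariance of $\inf_y\phi$, and in (i)$\Rightarrow$(ii) both arguments show affine satisfiability of $diag(M)\cup ediag(N)$ by transferring the value of $\sup_{\y}\phi$ across the common substructure $A$; the only cosmetic difference is that the paper picks an explicit quantifier-free $\theta$ that $\epsilon$-approximates $\sup_{\y}\phi$ and uses $\theta^M(\a)=\theta^N(\a)$, whereas you invoke the invariance criterion of Proposition~\ref{quant-el-loc} directly. Your explicit closing of the $\delta$-gap via an ultraproduct is exactly the mechanism the paper relies on implicitly (see the remark before Lemma~\ref{witness}).
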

\begin{proof}
(i)$\Rightarrow$(ii): Assume $A\subseteq M$ and $f:A\rightarrow N$ is an embedding
(which may be assumed to be the inclusion map).
It is sufficient to show that the theory $$T\cup diag(M)\cup ediag(N)$$ is satisfiable.
In fact, we have only to show that every $0\leqslant\phi(\a,\bar m)+\psi(\a,\bar n)$
is satisfiable in a model of $T$, where $\a\in A$, $0\leqslant\phi^M(\a,\bar m)$,
$0\leqslant\psi^N(\a,\bar n)$ and $\phi$ is quantifier-free.
Let $\epsilon>0$. By quantifier-elimination, there is a quantifier-free
$\theta(\x)$ which is $\epsilon$-close to $\sup_{\y}\phi(\x,\y)$ in models of $T$.
So, it is sufficient to check that $0\leqslant\theta(\a)+\epsilon+\psi(\a,\bar n)$
is satisfied in $N$ which is clearly true since
$$0\leqslant\phi^M(\a,\bar m)\leqslant\sup_{\x}\phi^M(\a,\x)\leqslant\theta^M(\a)+\epsilon=\theta^N(\a)+\epsilon.$$

(ii)$\Rightarrow$(i): We must verify the condition (ii) of Proposition
\ref{quant-el-loc} for every $\inf_y\phi(\x,y)$ where $\phi$ is quantifier-free.
Assume $\a\in A\subseteq M,N\vDash T$. Let $f:M\rightarrow N'$ be an embedding where
$N\preccurlyeq N'$ and $f|_A=id$.
Then $$\inf_y\phi^N(\a,y)=\inf_y\phi^{N'}(\a,y)\leqslant\inf_y\phi^M(\a,y).$$
The reverse inequality holds similarly. Hence $T$ has quantifier-elimination.
\end{proof}

As in first order logic (see \cite{Marker} p.78), we say that a theory $T$ has \emph{algebraically prime models} of for each
$A\vDash T_\forall$ there are $M\vDash T$ and embedding $i:A\rightarrow M$ such that for every
$N\vDash T$ and embedding $j:A\rightarrow N$, there is an embedding $h:M\rightarrow N$ with $j=h\circ i$.
For $M,N\vDash T$ and $M\subseteq N$, we say $M$ is \emph{simply closed} in $N$, denoted by $M\prec_s N$,
if for every quantifier-free formula $\phi(\x,y)$ and $\a\in M$ one has that
$$\inf_{y\in M}\phi^M(\a,y)=\inf_{y\in N}\phi^N(\a,y).$$
The following is then routine.

\begin{proposition}
If $T$ satisfies the following conditions, it has quantifier-elimination.

\emph{(i)} $T$ has algebraically prime models

\emph{(ii)} for every $M,N\vDash T$, if $M\subseteq N$ then $M\prec_s N$.
\end{proposition}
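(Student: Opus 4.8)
The plan is to show that the two hypotheses let us verify condition (ii) of Corollary \ref{quant-el}, which by that corollary gives quantifier-elimination. So suppose $M,N\vDash T$, let $A\subseteq M$ be a substructure, and let $f:A\rightarrow N$ be an embedding. Because $A$ is a substructure of $M\vDash T$, the substructure lemma preceding Corollary \ref{quant-el} tells us $A\vDash T_\forall$. The idea is to route both $M$ and $N$ through the algebraically prime model over $A$ and then use simple closedness to upgrade an embedding into an elementary one.

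First I would apply hypothesis (i): since $A\vDash T_\forall$, there is an algebraically prime model $P\vDash T$ with an embedding $i:A\rightarrow P$, where $i$ may be taken to be the inclusion $A\subseteq P$. The universal property of $P$ says that for \emph{every} $N'\vDash T$ and embedding $j:A\rightarrow N'$ there is an embedding $h:P\rightarrow N'$ with $j=h\circ i$. I would invoke this twice. Applied to the inclusion $A\subseteq M$, it yields an embedding $g:P\rightarrow M$ fixing $A$; applied to $f:A\rightarrow N$, it yields an embedding $k:P\rightarrow N$ with $k\circ i=f$. Thus $P$ sits inside both $M$ and $N$ (via $g$ and $k$) in a way compatible with the given data on $A$.

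Next I would bring in hypothesis (ii) to make these embeddings elementary. Since $P\subseteq M$ and $P\subseteq N$ with $P,M,N\vDash T$, condition (ii) gives $P\prec_s M$ and $P\prec_s N$, i.e. $P$ is simply closed in each. The point is that simple closedness controls exactly the quantified quantifier-free formulas: for any quantifier-free $\phi(\x,y)$ and $\a\in P$,
$$\inf_{y\in P}\phi^P(\a,y)=\inf_{y\in M}\phi^M(\a,y)\qquad\text{and}\qquad \inf_{y\in P}\phi^P(\a,y)=\inf_{y\in N}\phi^N(\a,y).$$
By the remark preceding Corollary \ref{quant-el}, it suffices to control formulas of the form $\inf_y\phi(\x,y)$ with $\phi$ quantifier-free in order to establish quantifier-elimination, so this is precisely the information simple closedness supplies. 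Comparing the two displayed equalities, for $\a\in A\subseteq P$ we get $\inf_{y\in M}\phi^M(\a,y)=\inf_{y\in N}\phi^N(\a,y)$, which is the agreement condition (ii) of Proposition \ref{quant-el-loc} for the formula $\inf_y\phi(\x,y)$.

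The main obstacle I anticipate is bookkeeping rather than a deep difficulty: one must be careful that the embeddings $g,k$ out of the prime model genuinely restrict to the identity (resp. to $f$) on $A$, so that the common value over $A$ computed inside $P$ transfers correctly to both $M$ and $N$; this is where the compatibility clauses $g\circ i=\mathrm{id}_A$ and $k\circ i=f$ do the work. With that checked, one passes from the local criterion of Proposition \ref{quant-el-loc} to full quantifier-elimination by the induction-on-complexity reduction stated just before Corollary \ref{quant-el}, namely that it is enough to approximate the formulas $\inf_y\phi(\x,y)$ with $\phi$ quantifier-free. Hence $T$ has quantifier-elimination, completing the proof.
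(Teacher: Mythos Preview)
Your argument is correct and is precisely the routine argument the paper has in mind (the paper gives no proof, writing only ``The following is then routine''), namely the standard Marker-style proof: pass to the algebraically prime model $P$ over $A$, embed it into both $M$ and $N$, and use $\prec_s$ to match the values of $\inf_y\phi$ over $A$, then invoke Proposition~\ref{quant-el-loc} and the reduction remark before Corollary~\ref{quant-el}. One cosmetic point: your opening sentence says you will verify condition~(ii) of Corollary~\ref{quant-el}, but what you actually (and correctly) verify is condition~(ii) of Proposition~\ref{quant-el-loc} for formulas $\inf_y\phi$ with $\phi$ quantifier-free; you should adjust that sentence to match the argument you give.
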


\subsection{Preservation theorems}
Preservation theorems relate syntactical properties of theories to the category theoretic properties of their models.
In this subsection we prove appropriate variants of some of these theorems in affine continuous logic.

Let $\Delta$ be an affinely closed set conditions of the form $0\leqslant\phi$ and
assume $-\epsilon\leqslant\phi\in\Delta$ whenever $0\leqslant\phi\in\Delta$ and $\epsilon>0$.
A $\Delta$-theory is a theory $T$ which can be axiomatized by $\Delta$-conditions.

\begin{lemma}\label{localization}
Assume $\Delta$ is as above. Then, the following are equivalent:

\emph{(i)} $T$ is a $\Delta$-theory.

\emph{(ii)} If $M\vDash T$ and every $\Delta$-condition which holds in $M$ holds in $N$, then $N\vDash T$.
\end{lemma}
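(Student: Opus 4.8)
The direction (i)$\Rightarrow$(ii) is the routine one: if $T$ is axiomatized by a set $\Delta_0\subseteq\Delta$ of conditions, and every $\Delta$-condition true in $M\vDash T$ is also true in $N$, then in particular every condition of $\Delta_0$ holds in $N$, so $N\vDash T$. The substance is in (ii)$\Rightarrow$(i), and the plan is to mimic the preservation-theorem strategy already used in the Axiomatizability theorem and in Lemma \ref{proofs}: show that the $\Delta$-consequences of $T$ already axiomatize $T$. Let $T_\Delta=\{0\leqslant\phi : 0\leqslant\phi\in\Delta,\ T\vDash 0\leqslant\phi\}$ be the set of $\Delta$-conditions that follow from $T$; this is a $\Delta$-theory and $T\vDash T_\Delta$. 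I claim $T$ and $T_\Delta$ have the same models, which gives (i).

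So take any $N\vDash T_\Delta$; the goal is to produce $M\vDash T$ such that every $\Delta$-condition true in $M$ is true in $N$, for then hypothesis (ii) forces $N\vDash T$. The natural candidate data is the set of $\Delta$-conditions with parameters that already hold in $N$, namely $S=\{0\leqslant\phi(\a) : \phi\in\Delta,\ \a\in N,\ 0\leqslant\phi^N(\a)\}$, working in the language $L(N)$ with a constant for each element of $N$. I would show that $T\cup S$ is affinely satisfiable by the compactness machinery: a finite affine combination of conditions from $S$ is again (up to the closure hypotheses on $\Delta$, which let me absorb the $-\epsilon$ slack) a single $\Delta$-condition $0\leqslant\phi(\a)$ holding in $N$, and I must rule out $T\vDash\phi(\a)\leqslant-\epsilon$. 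If that held, then $T\vDash\sup_{\x}\phi(\x)\leqslant-\epsilon$ would be a parameter-free $\Delta$-consequence of $T$, i.e. an element of $T_\Delta$ (here I use that $\Delta$ is affinely closed and closed under the $\sup$-quantifier, so $\sup_{\x}\phi$ stays in $\Delta$), contradicting $0\leqslant\phi^N(\a)$ since $N\vDash T_\Delta$. Hence $T\cup S$ is affinely satisfiable, and by the affine compactness theorem (Theorem \ref{compactness}) it has a model $M$. Then $M\vDash T$, and by construction every $\Delta$-condition holding in $M$ holds in $N$: indeed if $0\leqslant\psi^M(\b)$ fails in $N$, then $\psi^N(\b)\leqslant-\epsilon$ for some $\epsilon>0$, so $0\leqslant(-\psi-\tfrac{\epsilon}{2})(\b)$ lies in $S$ and is satisfied by $M$, contradicting $0\leqslant\psi^M(\b)$. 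Applying (ii) to $M$ and $N$ yields $N\vDash T$.

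The main obstacle I anticipate is bookkeeping rather than depth: verifying that the syntactic closure properties assumed for $\Delta$ (affine closure and the $-\epsilon\leqslant\phi$ clause) really allow a finite affine combination of parameterized $\Delta$-conditions, together with the passage to $\sup_{\x}\phi$, to stay inside $\Delta$, so that the obstructing consequence genuinely lands in $T_\Delta$. The only delicate point is handling the $\epsilon$-slack consistently so that strict versus non-strict inequalities match up, exactly the kind of manipulation already carried out in Lemma \ref{proofs} and Proposition \ref{quant-el-loc}; I would lean on those patterns rather than reprove them.
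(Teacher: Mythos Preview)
Your overall strategy---define $T_\Delta$, take $N\vDash T_\Delta$, and manufacture an $M\vDash T$ to which hypothesis (ii) applies---matches the paper's. But the construction of $M$ goes the wrong way, and the patch you apply at the end uses a closure property of $\Delta$ that is not assumed.

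You set $S$ to be the $\Delta$-conditions \emph{holding} in $N$, and take $M\vDash T\cup S$. This guarantees that every $\Delta$-condition true in $N$ is true in $M$, which is the reverse of what (ii) needs. To flip the direction you argue: if $0\leqslant\psi$ holds in $M$ but $\psi^N\leqslant-\epsilon$, then $0\leqslant-\psi-\tfrac{\epsilon}{2}$ lies in $S$. That step requires $0\leqslant-\psi-\tfrac{\epsilon}{2}\in\Delta$ whenever $0\leqslant\psi\in\Delta$, i.e.\ closure of $\Delta$ under negation (up to a shift). The hypotheses on $\Delta$ are only affine closure (nonnegative combinations) and the $-\epsilon$ relaxation; neither yields negations. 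Think of $\Delta=\{0\leqslant\inf_{\x}\phi:\phi\text{ quantifier-free}\}$: the negation of a universal condition is existential and need not lie in $\Delta$. Your argument breaks exactly here. (The appeal to closure of $\Delta$ under $\sup_{\x}$ earlier in the paragraph is likewise unjustified, and the introduction of parameters from $N$ is unnecessary since $\Delta$ consists of closed conditions.)

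The paper's fix is simple: collect the $\Delta$-conditions that \emph{fail} in $N$, with slack. Set
\[
\Sigma=\{\phi\leqslant-\epsilon:\ 0\leqslant\phi\in\Delta,\ \epsilon>0,\ N\vDash\phi\leqslant-\epsilon\}\cup\{0\leqslant 0\}.
\]
This is affinely closed (using affine closure of $\Delta$), so it suffices to show each single $T\cup\{\phi\leqslant-\epsilon\}$ is satisfiable. If not, then $T\vDash-\epsilon+\delta\leqslant\phi$ for some $0<\delta<\epsilon$; by the $-\epsilon$ relaxation hypothesis on $\Delta$, the condition $-\epsilon+\delta\leqslant\phi$ lies in $\Delta$, hence in $T_\Delta$, yet it fails in $N$---contradiction. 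A model $M\vDash T\cup\Sigma$ then has the property that every $\Delta$-condition failing in $N$ already fails in $M$, which is precisely the direction (ii) demands. No negation, no quantifier closure, no parameters.
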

\begin{proof} (i)$\Rightarrow$(ii) is obvious. (ii)$\Rightarrow$(i): Let
$$T_\Delta=\{0\leqslant\phi\in\Delta:\ T\vDash0\leqslant\phi\}.$$
Every model of $T$ is a model of $T_\Delta$. Conversely assume $N\vDash T_\Delta$.
Let $$\Sigma=\{-\epsilon\leqslant\phi:\ 0\leqslant\phi\in\Delta,\ 0<\epsilon\ \mbox{and}\
N\vDash\phi\leqslant-\epsilon\}\cup\{0\leqslant0\}.$$
Note that $\Sigma$ is affinely closed. We show that $\Sigma\cup T$ is satisfiable.
It is sufficient to show that $T\cup\{\phi\leqslant -\epsilon\}$
is satisfiable for each $\phi\leqslant-\epsilon\in\Sigma$.
Suppose not. Then for some $\phi\leqslant-\epsilon\in\Sigma$
and $0<\delta<\epsilon$, one has that $T\vDash-\epsilon+\delta\leqslant\phi$.
Hence $-\epsilon+\delta\leqslant\phi\in T_\Delta$. This is a contradiction as
this condition does not hold in $N$.
Let $M$ be a model of $\Sigma\cup T$. Then by (ii) we must have that $N\vDash T$.
\end{proof}

A $\forall\exists$-condition is a condition of the form
$0\leqslant\inf_{\x}\sup_{\y}\phi(\x,\y)$ where $\phi$ is quantifier-free.
A theory axiomatized by such conditions is called a $\forall\exists$-theory.
A theory $T$ is \emph{inductive}\index{inductive theory} if the union of every chain of
models of $T$ is a model of $T$.

\begin{proposition}
A theory $T$ is inductive if and only if it is a $\forall\exists$-theory.
\end{proposition}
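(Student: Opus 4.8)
The plan is to prove the two implications separately, the reverse one (inductive $\Rightarrow\forall\exists$) being the substantial half. For the easy direction, suppose $T$ is axiomatized by $\forall\exists$-conditions, let $(M_i)$ be a chain of models and let $M$ be the completion of $\bigcup_iM_i$. Fix a condition $0\leqslant\inf_{\x}\sup_{y}\phi(\x,y)$ of $T$ with $\phi$ quantifier-free, a tuple $\a\in M$ and $\epsilon>0$. Since $\bigcup_iM_i$ is dense in $M$ and $\phi^M$ together with the $\inf/\sup$ operations is Lipschitz by Proposition \ref{completion of prestructures}, I may replace $\a$ by a tuple $\a'$ lying in a single $M_i$ at the cost of $\epsilon$. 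As $M_i\vDash T$ gives $\sup_y\phi^{M_i}(\a',y)\geqslant0$, there is $\b\in M_i\subseteq M$ with $\phi^{M_i}(\a',\b)\geqslant-\epsilon$; and since $\phi$ is quantifier-free it is preserved by the embedding $M_i\hookrightarrow M$, so $\phi^M(\a',\b)\geqslant-\epsilon$. Letting $\epsilon\to0$ yields $\sup_y\phi^M(\a,y)\geqslant0$ for every $\a$, hence $M\vDash T$.

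For the converse, assume $T$ is inductive and let $\Delta$ be the affine closure of the set of $\forall\exists$-conditions. Using $\inf_{\x}f+\inf_{\x'}g=\inf_{\x,\x'}(f+g)$ and the analogous identity for $\sup$, a nonnegative combination of $\forall\exists$-conditions is again a $\forall\exists$-condition, and $-\epsilon\leqslant\inf_{\x}\sup_{y}\phi$ is the $\forall\exists$-condition $0\leqslant\inf_{\x}\sup_{y}(\phi+\epsilon)$; thus $\Delta$ meets the hypotheses of Lemma \ref{localization}, and being a $\forall\exists$-theory is the same as being a $\Delta$-theory. By Lemma \ref{localization} it then suffices to show: if $M\vDash T$ and every $\forall\exists$-condition true in $M$ holds in $N$, then $N\vDash T$.

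Given such $M$ and $N$, the plan is to interleave an elementary chain above $N$ with a chain of models of $T$, building $N=N_0\subseteq M_0\subseteq N_1\subseteq M_1\subseteq\cdots$ with each $M_i\vDash T$, each $N_i\preccurlyeq N_{i+1}$, and each $N_i\prec_sM_i$. The key step (Step A) produces from $N_i$ a model $M_i\vDash T$ with $N_i\prec_sM_i$: I show that $T\cup diag(N_i)\cup\{r-\epsilon\leqslant\inf_y\phi(\a,y)\}$ is affinely satisfiable, where $\phi$ ranges over quantifier-free formulas, $\a\in N_i$, $\epsilon>0$ and $r=\inf_{y\in N_i}\phi^{N_i}(\a,y)$. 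Were it not, Theorem \ref{compactness} would give, after collecting the diagram part into one quantifier-free $\xi(\a)\geqslant0$ and merging the inner infima into a single $\inf_{y}$, a bound $T\vDash\sup_{\x}\inf_{y}\psi(\x,y)\leqslant C-\delta$ with $\psi$ quantifier-free. Rewritten as $0\leqslant\inf_{\x}\sup_{y}(C-\delta-\psi)$ this is a $\forall\exists$-condition holding in $M$, hence in $N_i$ since $N_i\equiv N$; but evaluating the inner infimum at $\a$ in $N_i$ gives a value $\geqslant C>C-\delta$, because $\xi^{N_i}(\a)\geqslant0$ and the component infima equal the corresponding $r$'s, a contradiction. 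Step B produces $N_{i+1}\succcurlyeq N_i$ containing $M_i$ by showing $ediag(N_i)\cup diag(M_i)$ is satisfiable; here $N_i\prec_sM_i$ is precisely what lets one realize, inside an elementary extension of $N_i$, the quantifier-free facts that $M_i$ asserts about its new elements, via Theorem \ref{compactness} and Lemma \ref{proofs}.

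Since the chain is cofinally interleaved, $\bigcup_iN_i$ and $\bigcup_iM_i$ share the same underlying set, so the common completion is simultaneously the union of an elementary chain above $N$ and the union of a chain of models of $T$. By Proposition \ref{chain} it is an elementary extension of $N$, and by inductivity it is a model of $T$; therefore $N\vDash T$. I expect Step A to be the main obstacle, as it is the only point where the $\forall\exists$ hypothesis is genuinely converted into existential (simple) closure, and it requires care in tracking the coefficients when passing from the failure of affine satisfiability to a single $\forall\exists$-condition.
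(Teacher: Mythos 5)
Your proof is correct, and its skeleton---reduce via Lemma \ref{localization}, build an interleaved chain of elementary extensions of $N$ and models of $T$, then conclude by inductivity together with the elementary chain theorem (Proposition \ref{chain})---is the same as the paper's. The difference is in how the chain-extension step is realized. The paper takes $\Sigma$ to be the set of universal $L(N)$-conditions true in $N$ and satisfies $Th(M)\cup\Sigma$ by checking each universal condition separately in $M$ itself (the condition $0\leqslant\sup_{\y}\inf_{\x}\phi$ passes from $N$ to $M$ because its failure in $M$ would be a $\forall\exists$-condition that transfers to $N$); its intermediate models are therefore all elementarily equivalent to $M$, and it is this equivalence that propagates the transfer hypothesis along the chain. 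You instead satisfy $T\cup diag(N_i)$ together with the simple-closure conditions, deriving a contradiction by converting a failed affine combination (via Theorem \ref{compactness} and Lemma \ref{proofs}) into a single $\forall\exists$-consequence of $T$ that must fail in $N_i$; your intermediate models need only be models of $T$, and the transfer hypothesis is propagated through $N\equiv N_i$ instead. Note that the property ``every universal condition of $N$ holds in $M_1$'' exploited by the paper is exactly $N\prec_s M_1$, so the two constructions produce the same configuration; your Step B is essentially the paper's ``this implies easily'' step made explicit. Your route costs a more delicate coefficient-tracking argument in Step A---and you should run Step A with conditions $r-\epsilon\leqslant\inf_{\y}\phi(\a,\y)$ for tuples $\y$, not just a single variable, since Step B invokes simple closure for the tuple $\c$ of new elements of $M_i$---but it buys a slightly leaner chain in which no appeal to the full theory of $M$ is needed.
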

\begin{proof} The `if' part is obvious. We prove the `only if' part. Assume $T$ is inductive.
Let $\Delta$ be the set of all consequence of $T$ which are $\emptyset$-equivalent to a $\forall\exists$-condition.
Then, $\Delta$ is affinely closed and satisfies the requirements of Lemma \ref{localization}.
We just need to verify the part (ii) of this lemma.
So, assume $M\vDash T$ and every $\forall\exists$-condition which holds in $M$ holds in $N$.
We must prove that $N\vDash T$. For this purpose, we first use this assumption to prove the following subclaim:
\begin{quote}
There are $M_1,N_1$ such that $N\subseteq M_1\subseteq N_1$,\ \ $M_1\equiv M$\ \ and $N\preccurlyeq N_1$.
\end{quote}
Let $\Sigma$ be the set of all universal $L(N)$-conditions
holding in $N$. Then $Th(M)\cup\Sigma$ must be satisfiable.
Indeed, since $\Sigma$ is affinely closed, it is sufficient to show that
every condition $0\leqslant\inf_{\x}\phi(\x,\a)\in\Sigma$ is satisfiable in $M$.
But, the condition $0\leqslant\sup_{\y}\inf_{\x}\phi(\x,\y)$
holds in $N$ and hence in $M$. Let $M_1$ be a model of $Th(M)\cup\Sigma$.
So, $$N\subseteq M_1\equiv M.$$
Moreover, every universal condition holding in $N$ holds in $M_1$.
This implies easily that $diag(M_1)\cup ediag(N)$ is satisfiable and has a model say $N_1$.
We have therefore that $$M_1\subseteq N_1,\ \ \ \ N\preccurlyeq N_1$$
and the subclaim is proved.

Now, note that every $\forall\exists$-condition holding in $M_1$ holds in $N_1$. 
So, we may iterate the construction to build a chain of structures
$$N=N_0\subseteq M_1\subseteq N_1\subseteq M_2\subseteq N_2\ \cdots$$ such that for each $i$,
$$N_i\preccurlyeq N_{i+l} \ \ \ \ \&\ \ \ \ M_i\equiv M.$$
Let $\bar N=\bigcup N_i=\bigcup M_i$.
Since $T$ is inductive and each $M_i$ is a model of $T$, one has that $\bar N\vDash T$.
Since $N\preccurlyeq\bar N$ we conclude that $N\vDash T$.
\end{proof}
\vspace{1mm}

A theory $T$ is \emph{model-complete} if for every $M,N\vDash T$,
if $M\subseteq N$ then $M\preccurlyeq N$. This is equivalent to saying that
for every $M\vDash T$, the theory $T\cup diag(M)$ is complete.
A formula is \emph{infimal}\index{infimal formula} (resp. \emph{supremal}\index{supremal formula})
if it of the form $\inf_{\x}\phi$ (resp. $\sup_{\x}\phi$) where $\phi$ is quantifier-free.

\begin{theorem} Let $T$ be a satisfiable theory. Then the following are equivalent:

\emph{(i)} $T$ is model-complete.

\emph{(ii)} If $M, N$ are models of $T$ and $M\subseteq N$, then for every infimal formula
$\phi(\x)$ and $\a\in M$, one has that $\phi^N(\a)=\phi^M(\a)$.

\emph{(iii)} Every infimal formula is $T$-approximated by supremal formulas.

\emph{(iv)} Every formula is $T$-approximated by supremal formulas.
\end{theorem}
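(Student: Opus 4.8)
My plan is to establish the cycle (i)$\Rightarrow$(ii)$\Rightarrow$(iii)$\Rightarrow$(iv)$\Rightarrow$(i). The implication (i)$\Rightarrow$(ii) is immediate: model-completeness makes every inclusion $M\subseteq N$ of models of $T$ elementary, so \emph{all} formulas, in particular infimal ones, keep their values at tuples from $M$. The genuine content is the passage (ii)$\Rightarrow$(iii), and I will then obtain (iii)$\Rightarrow$(iv) by an induction on complexity and (iv)$\Rightarrow$(i) by a soft monotonicity argument.

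For (ii)$\Rightarrow$(iii) I would adapt the technique of Proposition \ref{quant-el-loc}. Fix an infimal $\phi(\x)$ and set $\Gamma(\x)=\{\theta:\ \theta\ \text{supremal},\ T\vDash\theta\leqslant\phi\}$, which is convex because supremal formulas are closed under nonnegative affine combinations. Since every $\theta\in\Gamma$ already satisfies $\theta\leqslant\phi$, it suffices to approximate $\phi$ from \emph{below} by $\Gamma$. If this failed for some $\epsilon>0$, then by convexity and affine compactness (Theorem \ref{compactness}) there would be $M\vDash T$ with constants $\d$ such that, writing $\gamma=\phi^M(\d)$, one has $\theta^M(\d)\leqslant\gamma-\epsilon$ for all $\theta\in\Gamma$. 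The crux is to show that $T\cup diag(M)\cup\{\phi(\d)\leqslant\gamma-\epsilon\}$ is satisfiable: any model of it embeds $M$ as a substructure $M\subseteq N\vDash T$ with $\phi^N(\d)\leqslant\gamma-\epsilon<\gamma=\phi^M(\d)$, directly contradicting (ii). To prove this satisfiability, suppose it fails; affine compactness then yields $r,s\geqslant0$, $\delta>0$ and a quantifier-free $\xi(\bar{m})$ coming from $diag(M)$ with $T\vDash r\xi(\bar{m})+s(\gamma-\epsilon)+\delta\leqslant s\,\phi(\d)$. Here $s\neq0$, since otherwise evaluating at $M$ contradicts $\xi^M(\bar{m})\geqslant0$. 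Dividing by $s$ and separating the auxiliary parameters as $\bar{m}=(\d,\bar{w})$, the key move is to take the supremum over $\bar{w}$, producing the \emph{supremal} formula
$$\theta(\d)=\sup_{\bar{w}}\bigl(\tfrac{r}{s}\xi(\d,\bar{w})+\gamma-\epsilon+\tfrac{\delta}{s}\bigr)$$
with $T\vDash\theta\leqslant\phi$, hence $\theta\in\Gamma$; but evaluating at $M$ gives $\theta^M(\d)\geqslant\gamma-\epsilon+\tfrac{\delta}{s}>\gamma-\epsilon$, contradicting the defining property of $M$. This $\sup_{\bar{w}}$ step, which lets me use the full diagram $diag(M)$ (so that the contradicting extension satisfies $M\subseteq N$, matching the hypothesis (ii)) rather than only the diagram of the substructure generated by $\d$, is the point I expect to be the main obstacle.

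For (iii)$\Rightarrow$(iv) I would first observe that (iii) forces the $\|\cdot\|_T$-closures of the supremal and of the infimal formulas to coincide: applying (iii) to an infimal $\psi$ gives supremal approximants, while negating it (for supremal $\eta$, the formula $-\eta$ is infimal, so is approximated by supremal formulas, whence $\eta$ is approximated by infimal ones) shows every supremal formula is approximated by infimal ones. Thus the closure $V$ of the supremal formulas equals the closure of the infimal formulas, and $V$ is a closed subspace (closure under $+$ is clear, and for a negative scalar one passes from supremal to infimal and back inside $V$). An induction on complexity then shows every formula lies in $V$: quantifier-free formulas are supremal; the connective cases use that $V$ is a subspace; and the quantifier cases use the $1$-Lipschitzness of $\sup_x$ and $\inf_x$ for $\|\cdot\|_T$, approximating $\sup_x\phi$ by $\sup_x$ of a supremal approximant of $\phi$, and $\inf_x\phi$ by $\inf_x$ of an infimal one.

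Finally, (iv)$\Rightarrow$(i). Since (iv) trivially gives (iii), every formula $\phi$ is $T$-approximated both by a supremal $\theta$ and by an infimal $\chi$. For $M\subseteq N\vDash T$ and $\a\in M$, the monotonicity of quantifiers under extension ($\sup$ can only grow and $\inf$ can only shrink in a larger structure) yields, for $\|\phi-\theta\|_T\leqslant\epsilon$ and $\|\phi-\chi\|_T\leqslant\epsilon$, the sandwich $\phi^M(\a)\leqslant\theta^M(\a)+\epsilon\leqslant\theta^N(\a)+\epsilon\leqslant\phi^N(\a)+2\epsilon$ and symmetrically $\phi^N(\a)\leqslant\chi^N(\a)+\epsilon\leqslant\chi^M(\a)+\epsilon\leqslant\phi^M(\a)+2\epsilon$. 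Letting $\epsilon\to0$ gives $\phi^M(\a)=\phi^N(\a)$, so $M\preccurlyeq N$ and $T$ is model-complete, closing the cycle.
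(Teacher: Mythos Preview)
Your proposal is correct and takes essentially the same approach as the paper. The paper only writes out (ii)$\Rightarrow$(iii), calling the remaining implications ``straightforward,'' and your argument for (ii)$\Rightarrow$(iii) matches the paper's almost verbatim: the same set $\Gamma$, the same compactness contradiction via $T\cup diag(M)\cup\{\phi(\d)\leqslant\gamma-\epsilon\}$, and the same $\sup_{\y}\xi$ trick to manufacture a supremal member of $\Gamma$; your detailed treatments of (iii)$\Rightarrow$(iv) and (iv)$\Rightarrow$(i) simply flesh out what the paper leaves implicit.
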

\begin{proof} We only prove that (ii) implies (iii). The other parts are straightforward.
Let $\phi(\x)$ be an infimal formula and set
$$\Gamma(\x)=\{\theta(\x):\ \ T\vDash\theta(\x)\leqslant\phi(\x)\ \ \mbox{and}
\ \ \theta(\x)\ \mbox{is\ a\ supremal\ formula}\}.$$
Note that $\Gamma$ is convex. We show that for each $\epsilon>0$ there is a $\theta\in\Gamma$ such that
$$T\vDash\phi(\x)-\epsilon\leqslant\theta(\x).$$
Assume not. Then, by convexity, for some $\epsilon>0$, the theory
$$T\cup\{\theta(\d)+\epsilon\leqslant\phi(\d):\ \theta(\x)\in\Gamma\}$$
where $\d$ is a tuple of new constant symbols, is affinely satisfiable.
Let $M$ be model for it.
Let $\gamma=\phi^M(\d)$ and set $$\Sigma=T\cup diag(M)\cup\{\phi(\d)\leqslant \gamma-\epsilon\}.$$

\noindent{\sc Claim}: $\Sigma$ is satisfiable.\\
\noindent{\sc Proof}: Otherwise, for some $r,s\geqslant0$
and $0\leqslant\xi(\d,\a)\in diag(M)$,
the theory $$T\cup\{s\phi(\d)\leqslant r\xi(\d,\a)+s(\gamma-\epsilon)\}$$ is unsatisfiable.
This means that for some $\delta>0$,
$$T\vDash r\xi(\d,\a)+s(\gamma-\epsilon)\leqslant s\phi(\d)-\delta.$$
Since $s$ can not be zero, we must have that
$$T\vDash\frac{r}{s}\sup_{\y}\xi(\x,\y)+\gamma-\epsilon+\frac{\delta}{s}\leqslant\phi(\x).$$
Therefore, $$\frac{r}{s}\sup_{\y}\xi(\x,\y)+\gamma-\epsilon+\frac{\delta}{s}\ \in\Gamma$$ and hence,
$M\vDash\frac{r}{s}\sup_{\y}\xi(\d,\y)+\frac{\delta}{s}\leqslant0$ which is
a contradiction. The claim is proved.

Finally, let $N\vDash\Sigma$. Then, $M\subseteq N\vDash T$ while
$\phi^N(\d)\leqslant \gamma-\epsilon$. This is a contradiction.
\end{proof}

Two $L$-theories $U,T$ are called \emph{cotheory} if every model of one is embedded
in a model of the other one.
$U$ is \emph{model-companion} of $T$ if $U,T$ are cotheory and $U$ is model-complete.
The following propositions are proved as in the first order case.

\begin{proposition}
If $U_1,U_2$ are model-companion of $T$, then $U_1\equiv U_2$.
\end{proposition}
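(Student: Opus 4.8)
The plan is to follow the classical first-order argument for uniqueness of the model companion, using model-completeness to convert a back-and-forth tower of embeddings into two elementary chains sharing a common limit.

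First I would record that $U_1$ and $U_2$ are themselves cotheory. Since $U_1,T$ are cotheory and $U_2,T$ are cotheory, any model of $U_1$ embeds in a model of $T$, which in turn embeds in a model of $U_2$; symmetrically, every model of $U_2$ embeds in a model of $U_1$. Next I fix an arbitrary $M_0\vDash U_1$ and build an interleaved chain
$$M_0\subseteq N_0\subseteq M_1\subseteq N_1\subseteq\cdots$$
with $M_i\vDash U_1$ and $N_i\vDash U_2$, each inclusion an embedding furnished by the cotheory relation just established. Because $M_0\subseteq M_1\subseteq\cdots$ are all models of $U_1$ and $U_1$ is model-complete, each inclusion $M_i\subseteq M_{i+1}$ is elementary, so $(M_i)$ is an elementary chain; likewise $(N_i)$ is an elementary chain of models of $U_2$.

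Now let $K$ be the completion of $\bigcup_i M_i$. Since the chain interleaves, $\bigcup_i M_i=\bigcup_i N_i$ as prestructures, so $K$ is equally the completion of $\bigcup_i N_i$. Applying Proposition \ref{chain} to each elementary chain gives $M_0\preccurlyeq K$ and $N_0\preccurlyeq K$, whence $M_0\equiv K\equiv N_0$; thus the arbitrary model $M_0$ of $U_1$ is elementarily equivalent to the model $N_0$ of $U_2$. To conclude $U_1\equiv U_2$, suppose $U_1\vDash\sigma\leqslant r$ for a closed condition; given any $N\vDash U_2$, running the symmetric construction with the roles of $U_1,U_2$ reversed produces $M\vDash U_1$ with $M\equiv N$, so $\sigma^N=\sigma^M\leqslant r$ and hence $U_2\vDash\sigma\leqslant r$. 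By symmetry the two theories prove the same closed conditions, which is the desired elementary equivalence.

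The step requiring the most care is the passage to the limit $K$: one must check that the set-theoretic union of the interleaved chain is a single prestructure whose completion serves simultaneously as the limit of both elementary chains, and that Proposition \ref{chain} indeed yields $M_0\preccurlyeq K$ and $N_0\preccurlyeq K$ even after the metric completion is taken. Everything else is a direct transcription of the first-order proof of uniqueness of the model companion.
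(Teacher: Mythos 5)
Your proposal is correct and is exactly the argument the paper intends: the paper gives no explicit proof, saying only that this proposition "is proved as in the first order case," and your interleaved chain of models of $U_1$ and $U_2$ (obtained from the cotheory property), upgraded to two elementary chains by model-completeness and then merged via Proposition \ref{chain} applied to the common completion, is precisely that classical argument transcribed to the affine setting. The one genuinely metric point — that the set-theoretic union of the interleaved chain is a single prestructure whose completion serves as the limit of both elementary chains — is the step you flag, and you handle it correctly.
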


\begin{proposition}
If $U$ is model-complete and has the amalgamation property,
then it has elimination of quantifiers. As a consequence, if $T$
has the amalgamation property, then its model-companion, if it exists and contains $T$,
has elimination of quantifiers.
\end{proposition}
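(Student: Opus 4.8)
The plan is to deduce quantifier-elimination from the local criterion of Proposition \ref{quant-el-loc} (equivalently Corollary \ref{quant-el}), using amalgamation to place the two models inside a common overstructure and model-completeness to force the relevant embeddings to be elementary. Recall that by Proposition \ref{quant-el-loc} a single formula $\phi(\bar{x})$ is $T$-approximable by quantifier-free formulas precisely when it takes the same value on any tuple lying in a common substructure of two models of $T$. So it suffices to verify, for $U$, that \emph{every} formula enjoys this agreement property; quantifier-elimination then follows by the density characterization.

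For the first assertion, let $M,N\vDash U$ share a substructure $A$ (that is, $A\subseteq M$ together with an embedding $g:A\to N$, which after identification I treat as an inclusion $A\subseteq N$), and fix $\bar{a}\in A$ and an arbitrary formula $\phi(\bar{x})$. Applying the amalgamation property to $M$ and $N$ over $A$, I obtain $K\vDash U$ and embeddings $i:M\to K$, $j:N\to K$ with $i|_A=j|_A$. Since $U$ is model-complete, each of $i,j$ is elementary: the image $i(M)\cong M\vDash U$ is a substructure of $K\vDash U$, whence $i(M)\preccurlyeq K$, and likewise for $j$. Therefore $\phi^M(\bar{a})=\phi^K(i\bar{a})=\phi^K(j\bar{a})=\phi^N(\bar{a})$, the middle equality using $i|_A=j|_A$. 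As $\phi$ and $\bar{a}$ were arbitrary, condition (ii) of Proposition \ref{quant-el-loc} holds for every formula, so each formula is quantifier-free approximable and $U$ has quantifier-elimination.

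For the consequence, suppose $T$ has the amalgamation property and $U\supseteq T$ is a model-companion of $T$. By the first part it is enough to show that $U$ itself has the amalgamation property, after which the model-completeness of $U$ (built into the definition of model-companion) completes the proof. So let $M,N\vDash U$ have a common substructure $A$. Since $T\subseteq U$, both are models of $T$, so the amalgamation property of $T$ yields $K\vDash T$ with embeddings of $M$ and $N$ into $K$ agreeing on $A$. Because $U$ and $T$ are cotheory, $K$ embeds into some $K'\vDash U$; composing, $M$ and $N$ embed into $K'$ with the two embeddings still agreeing on $A$. Hence $U$ has the amalgamation property, and the first part delivers quantifier-elimination for $U$.

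The delicate point I expect is the base of the amalgamation: the common part $A$ supplied by Corollary \ref{quant-el}(ii) is merely a substructure, not a model of $U$, so the amalgamation property must be read as amalgamating two models over an arbitrary common substructure rather than over a common model. One must also take care to apply model-completeness to the images $i(M)$ and $j(N)$ as genuine substructures of $K$, and, in the consequence, to check that the successive embeddings into the cotheory model $K'$ preserve the identification over $A$. These are routine but are exactly the places where the argument would silently fail if the several variants of amalgamation were conflated.
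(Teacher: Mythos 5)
Your proof is correct and is exactly what the paper intends: the paper explicitly says these propositions are ``proved as in the first order case,'' and your argument---amalgamate $M$ and $N$ over the common substructure $A$, use model-completeness of $U$ to make both embeddings into the amalgam elementary, and conclude via condition (ii) of Proposition \ref{quant-el-loc}, then reduce the model-companion statement to this by pushing the $T$-amalgam into a model of $U$ using the cotheory property---is precisely that classical proof transported to the affine setting. Your closing caveat identifies the right reading of the hypothesis: amalgamation must be over an arbitrary common substructure, since amalgamation over a base that is itself a model of $U$ is automatic for any model-complete theory (embeddings from such a base are elementary, and elementary amalgamation always holds), which would make the proposition false as stated.
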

\bigskip

A model $M\vDash T$ is called \emph{existentially closed} for $T$ if
for every extension $M\subseteq N\vDash T$ and quantifier-free formula $\phi(\x)$
with parameters in $M$, one has that
$$\inf_{\b\in M}\phi^M(\b)=\inf_{\c\in N}\phi^N(\c).$$
Using chain arguments, one proves that if $T$ is an inductive theory, every model of $T$ is embedded in
an existentially closed model of $T$.
The following proposition is then proved as in the classical case (see \cite{CK1}).

\begin{proposition}
A $\forall\exists$-theory $T$ has a model-companion if
and only if the class of existentially closed models of $T$ is axiomatizable.
Moreover, the model-companion of $T$, if it exists, is the common theory of such models.
\end{proposition}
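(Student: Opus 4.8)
The plan is to reduce both directions to the single identification: a theory $U$ is a model-companion of $T$ if and only if its models are exactly the existentially closed models of $T$. Write $\mathcal E$ for the class of existentially closed models of $T$ (every member of $\mathcal E$ is by definition a model of $T$). Granting $Mod(U)=\mathcal E$, the proposition follows at once: if a model-companion $U$ exists then $\mathcal E=Mod(U)$ is elementary; and conversely, if $\mathcal E$ is elementary, say $\mathcal E=Mod(U)$, then $U$ is a model-companion. The \emph{moreover} clause then comes from the uniqueness of model-companions proved above (any two are elementarily equivalent): since $Th(\mathcal E)$ has the same models as $U$ and $\mathcal E$ is elementary, $Th(\mathcal E)$ is itself a model-companion, hence the common theory of the existentially closed models.

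First I would show that every model of a model-companion $U$ lies in $\mathcal E$. Given $N\vDash U$ and an extension $N\subseteq N'\vDash T$, I build an alternating chain $N=N_0\subseteq N'=M_1\subseteq N_1\subseteq M_2\subseteq\cdots$ with $N_i\vDash U$ and $M_i\vDash T$, using that $U$ and $T$ are cotheory to produce each embedding. Let $K$ be the completion of the union. Since $U$ is model-complete and the $N_i$ form a $\subseteq$-increasing chain of models of $U$, they form an elementary chain, so $N_0\preccurlyeq K$ and $K\vDash U$; since $T$ is inductive, $K\vDash T$. From $N_0\preccurlyeq K\vDash T$ we get $N\vDash T$, and for quantifier-free $\phi$ with parameters in $N$ the equality $\inf_{b\in N}\phi^N=\inf_K\phi$ (from $N_0\preccurlyeq K$), squeezed against $N\subseteq N'\subseteq K$, yields $\inf_N\phi=\inf_{N'}\phi$. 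Hence $N$ is existentially closed.

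For the converse engine I would establish model-completeness of $U:=Th(\mathcal E)$ directly from the characterization of model-completeness given above. Assuming $\mathcal E=Mod(U)$, take $M\subseteq N$ with $M,N\vDash U$, i.e. both existentially closed. For an infimal formula $\inf_{\bar y}\chi(\bar x,\bar y)$ with $\chi$ quantifier-free and $\bar a\in M$, existential closedness of $M$ applied to the extension $M\subseteq N\vDash T$ gives $\inf_{\bar y\in M}\chi(\bar a,\bar y)=\inf_{\bar y\in N}\chi(\bar a,\bar y)$; this is precisely condition (ii) of that theorem, so $U$ is model-complete. Cotheory is immediate: each model of $U$ is existentially closed, hence a model of $T$, while each model of $T$ embeds into an existentially closed model (the chain fact recalled above), which is a model of $U$. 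Thus $U$ is a model-companion.

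The remaining and, I expect, hardest point is the inclusion $\mathcal E\subseteq Mod(U)$ in the forward direction: an existentially closed $M$ must satisfy the companion $U$. Here I would embed $M\subseteq N\vDash U$ (cotheory), note $N\in\mathcal E$ by the first step, and then upgrade the inclusion to an elementary one, $M\preccurlyeq N$, which gives $M\vDash U$ since elementary substructures of a model of $U$ satisfy $U$. The approach is a back-and-forth between $M$ and $N$: build interleaved chains $M=M_0\subseteq N_0=N\subseteq M_1\subseteq N_1\subseteq\cdots$ in which the $N_i$ are models of $U$ forming an elementary chain (model-completeness), each $M_{i+1}$ is an elementary extension of $M_i$ absorbing $N_i$, produced from a model of $ediag(M_i)\cup diag(N_i)$, so that the common completion $C$ satisfies $M\preccurlyeq C$ and $N\preccurlyeq C$ and therefore $M\preccurlyeq N$. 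The delicate obstacle is keeping these amalgamation diagrams satisfiable along the construction — concretely, maintaining at each stage the simple-closedness $M_i\prec_s N_i$ needed to realize the quantifier-free type of $N_i$ over an elementary extension of $M_i$. Propagating this invariant past the base step (where it is just the existential closedness of $M$) is exactly the subtle part, handled as in the classical treatment \cite{CK1}, using model-completeness of $U$ to control the diagrams and existential closedness to feed the amalgamation.
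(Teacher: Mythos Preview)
Your outline is essentially correct and follows the classical argument the paper defers to. The inclusion $Mod(U)\subseteq\mathcal E$ via the alternating chain, and the converse direction (axiomatizability of $\mathcal E$ yields a model-companion) are both fine as written.

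However, you are making the inclusion $\mathcal E\subseteq Mod(U)$ much harder than necessary, and the obstacle you flag --- propagating $M_i\prec_s N_i$ along the back-and-forth --- is a genuine difficulty for \emph{that} route (and is not how \cite{CK1} handles this step). The classical argument bypasses it entirely. Since $U$ is model-complete, any $\subseteq$-chain of models of $U$ is an elementary chain, so $U$ is inductive; by the $\forall\exists$-preservation theorem proved earlier in the paper, $U$ is therefore axiomatized by conditions $0\leqslant\inf_{\bar x}\sup_{\bar y}\phi(\bar x,\bar y)$ with $\phi$ quantifier-free. Now let $M$ be existentially closed and embed $M\subseteq N\vDash U$ (cotheory); for any such axiom and any $\bar a\in M$ we have $\sup_{\bar y}\phi^N(\bar a,\bar y)\geqslant 0$, and existential closedness of $M$ in $N$ gives $\sup_{\bar y}\phi^M(\bar a,\bar y)=\sup_{\bar y}\phi^N(\bar a,\bar y)\geqslant 0$. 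Hence $M$ satisfies every axiom of $U$, so $M\vDash U$ with no chain construction at all. (Once $M\vDash U$, model-completeness of $U$ then gives $M\preccurlyeq N$ for free, if you still want it.) This collapses your ``hardest point'' to a two-line observation.
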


The following proposition is obvious.

\begin{proposition} \emph{(Prenex form)}
Every affine formula is $\emptyset$-equivalent to a formula of the form $\mathcal{Q}_1x_1\cdots\mathcal{Q}_kx_k\phi$
where $\phi$ is quantifier-free and $\mathcal{Q}_i$ is either $\sup$ or $\inf$.
\end{proposition}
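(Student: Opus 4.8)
The plan is to argue by induction on the structure of the formula, using three families of $\emptyset$-equivalences: renaming of bound variables, the interaction of the quantifiers with scalar multiplication, and their interaction with addition. Throughout, two formulas are $\emptyset$-equivalent precisely when they take the same value in every $L$-structure at every tuple, so each equivalence below is justified by a direct computation of values in an arbitrary $M$.

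First I record the basic equivalences. For scalar multiplication, when $r\geqslant0$ one has
$$r\sup_x\phi\equiv\sup_x(r\phi),\qquad r\inf_x\phi\equiv\inf_x(r\phi),$$
whereas for $r<0$ multiplication reverses the order, so $\sup$ and $\inf$ are interchanged,
$$r\sup_x\phi\equiv\inf_x(r\phi),\qquad r\inf_x\phi\equiv\sup_x(r\phi).$$
For addition, whenever $x$ is not free in $\psi$,
$$(\sup_x\phi)+\psi\equiv\sup_x(\phi+\psi),\qquad(\inf_x\phi)+\psi\equiv\inf_x(\phi+\psi),$$
since $\psi^M(\a)$ is constant as $x$ ranges over $M$ and the supremum or infimum of a sum with a constant is that constant plus the supremum or infimum. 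Finally, if a variable $x'$ does not occur in $\phi$, then renaming the bound occurrences of $x$ to $x'$ yields a formula $\emptyset$-equivalent to $\sup_x\phi$ (and likewise for $\inf_x\phi$); this is the usual alpha-conversion, and it holds because the value of a quantified formula depends only on the range of the bound variable.

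Now the induction. Atomic formulas are quantifier-free, hence already in the desired form with an empty prefix. For $r\phi$, the inductive hypothesis gives $\phi\equiv\mathcal{Q}_1x_1\cdots\mathcal{Q}_kx_k\,\phi_0$ with $\phi_0$ quantifier-free; pushing $r$ inward through each quantifier by the scalar equivalences above produces the quantifier-free matrix $r\phi_0$ together with a prefix obtained from $\mathcal{Q}_1,\ldots,\mathcal{Q}_k$ by dualizing every quantifier in case $r<0$. For $\inf_x\phi$ and $\sup_x\phi$, put $\phi$ in prenex form by induction, rename its bound variables (by alpha-conversion) so that none of them is $x$, and simply prepend the new quantifier to the prefix.

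The one case requiring care is $\phi+\psi$, and this is the main obstacle. By induction, write $\phi\equiv\mathcal{Q}_1x_1\cdots\mathcal{Q}_kx_k\,\phi_0$ and $\psi\equiv\mathcal{R}_1y_1\cdots\mathcal{R}_my_m\,\psi_0$ with $\phi_0,\psi_0$ quantifier-free. Using alpha-conversion I first rename all the bound variables so that $x_1,\ldots,x_k,y_1,\ldots,y_m$ are pairwise distinct and each is disjoint from the free variables of the other summand; this renaming is exactly what prevents variable capture when the two prefixes are merged. Once this is arranged, every bound variable of $\phi$ fails to be free in $\psi$ and vice versa, so repeated application of the addition equivalences (together with the commutativity of $+$, which lets the equivalence be used with the quantified summand on either side) pulls all the quantifiers outside the sum and gives
$$\phi+\psi\equiv\mathcal{Q}_1x_1\cdots\mathcal{Q}_kx_k\,\mathcal{R}_1y_1\cdots\mathcal{R}_my_m\,(\phi_0+\psi_0),$$
which is in prenex form since $\phi_0+\psi_0$ is quantifier-free. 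This exhausts the formation rules for affine formulas and completes the induction.
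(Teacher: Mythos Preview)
Your argument is correct and is exactly the standard inductive proof that the paper has in mind when it calls the proposition ``obvious'' without supplying a proof. The key equivalences you record (scalars commute with quantifiers up to dualization for negative scalars, addition with a quantifier-free-in-$x$ summand commutes with $\sup_x/\inf_x$, and alpha-conversion) are precisely the ones encoded in the paper's quantifier axioms (A12)--(A16), so there is no substantive difference between your route and the intended one.
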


The affine hierarchy of formulas is defined similar to first order logic.
A formula $\phi$ is $\Sigma_0=\Pi_0$ if it is quantifier-free.
A formula $\phi$ is $\Sigma_{n+1}$ (resp. $\Pi_{n+1}$)
if it is of the form $\sup_{\x}\psi$ (resp. $\inf_{\x}\psi$) where $\psi$ is $\Pi_n$ (resp. $\Sigma_n$).
We may extend a bit the terminology and say that $\phi$ is $\Sigma_n$
(resp. $\Pi_n$) if it is equivalent to a $\Sigma_n$ (resp. $\Pi_n$) formula.
It is then clear that $$\Sigma_n\ \subseteq\ \Sigma_{n+1}\cap\Pi_{n+1},\ \ \ \ \ \ \ \Pi_n\ \subseteq\ \Sigma_{n+1}\cap\Pi_{n+1}.$$

The notion of  $\Sigma_n$-extension generalizes the notion of embedding.
If $M\subseteq N$, then $N$ is a $\Sigma_n$-extension of $M$ if for each
$\Sigma_n$-formula $\phi(\x)$ and $\a\in M$ one has that $\phi^M(\a)\leqslant\phi^N(\a)$.
So, $\Sigma_0$-extension is the same as embedding.

\begin{lemma} \label{preserve2}
Let $M_0\subseteq M_1\subseteq\cdots$ be a $\Sigma_n$-chain of $L$-structures and $M=\cup_{k<\omega} M_k$.
Then

\emph{(i)} $M$ is a $\Sigma_n$-extension of each $M_k$.

\emph{(ii)} For each $\Pi_{n+1}$-sentence $\phi$, if $r\leqslant\phi^{M_k}$ for all $k$, then $r\leqslant\phi^M$.
\end{lemma}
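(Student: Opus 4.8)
The plan is to prove both statements simultaneously by induction on $n$, following the pattern of the proof that the union of an elementary chain is an elementary extension (Proposition \ref{chain}), but keeping careful track of the single directional inequality that a $\Sigma_n$-extension provides. Throughout, I would use that each $M_k$ embeds in $M=\bigcup_k M_k$, so that quantifier-free (hence $\Sigma_0$) formulas take the same value whether computed in $M_k$ or in $M$, and that for a $\Sigma_n$-extension the dual $\Pi_n$-formulas move in the opposite direction, i.e. their values do not increase when passing to the larger structure.

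For (i) I would first dispose of the low cases. For $n=0$ the assertion is just that $M$ extends each $M_k$, which is immediate. For $n=1$ it is automatic and uses no chain hypothesis: if $\phi=\sup_{\y}\psi$ with $\psi$ quantifier-free and $\a\in M_k$, then $\phi^{M_k}(\a)=\sup_{\b\in M_k}\psi^M(\a,\b)\leqslant\sup_{\c\in M}\psi^M(\a,\c)=\phi^M(\a)$, since the supremum is merely taken over a larger set while $\psi$ is computed identically. For the inductive step I would write a $\Sigma_{n+1}$-formula as $\phi=\sup_{\y}\psi$ with $\psi$ a $\Pi_n$-formula, fix $\a\in M_k$, and try to produce, for each $\epsilon>0$, a witness $\c\in M$ with $\psi^M(\a,\c)\geqslant\phi^{M_k}(\a)-\epsilon$. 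The induction hypothesis gives that $M$ is a $\Sigma_n$-extension of every $M_j$, and the chain (being a $\Sigma_{n+1}$-chain) makes the values $\phi^{M_j}(\a)$ nondecreasing in $j$.

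The hard part is exactly this production of a good witness in the union. The difficulty is that the matrix $\psi$ is a $\Pi_n$-formula and $\Sigma_n$-extensions \emph{decrease} $\Pi_n$-values, so a witness $\b\in M_k$ that is $\epsilon$-optimal for $\phi^{M_k}(\a)$ need not remain good once the inner quantifier of $\psi$ ranges over all of $M$. The plan is to choose the witness not in $M_k$ but at a sufficiently high level $M_j$, where $\phi^{M_j}(\a)$ is close to $\sup_j\phi^{M_j}(\a)\geqslant\phi^{M_k}(\a)$, and then to use the chain relation between $M_j$ and $M$ together with the level-$n$ induction hypothesis to control the loss incurred by enlarging the domain of the inner quantifier. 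Making this estimate precise — rather than merely reproving the free $n=1$ case — is the real content of the step, and it is where the $\Sigma_n$-chain hypothesis, as opposed to an arbitrary tower of extensions, is genuinely used; I expect this to be the main obstacle.

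Finally, once (i) is known at level $n$, part (ii) is a short consequence. Let $\phi=\inf_{\x}\psi$ be a $\Pi_{n+1}$-sentence with $\psi$ a $\Sigma_n$-formula, and suppose $r\leqslant\phi^{M_k}$ for all $k$. Given any $\c\in M$, pick $k$ with $\c\in M_k$; since $M$ is a $\Sigma_n$-extension of $M_k$ by (i) and $\psi$ is $\Sigma_n$, we get $\psi^M(\c)\geqslant\psi^{M_k}(\c)\geqslant\inf_{\b\in M_k}\psi^{M_k}(\b)=\phi^{M_k}\geqslant r$. Taking the infimum over $\c\in M$ yields $r\leqslant\phi^M$, as required.
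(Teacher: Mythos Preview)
Your argument for (ii) from (i) is correct and is exactly the paper's argument. The gap is in the inductive step for (i), and it is precisely the obstacle you flagged but did not resolve.

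The key point you are missing is an indexing observation together with the correct use of the simultaneous induction. At the step from level $n$ to level $n+1$ the chain is a $\Sigma_{n+1}$-chain, and since $\Pi_n\subseteq\Sigma_{n+1}$, the value $\psi^{M_j}(\a,\b)$ of the $\Pi_n$-matrix at a \emph{fixed} witness $\b\in M_k$ is nondecreasing in $j$. Hence any $\epsilon$-optimal witness $\b\in M_k$ for $\phi^{M_k}(\a)$ automatically satisfies $r-\epsilon\leqslant\psi^{M_j}(\a,\b)$ for every $j\geqslant k$; there is no need to move to a higher $M_j$, and the worry that ``$\Sigma_n$-extensions decrease $\Pi_n$-values'' is irrelevant along the chain itself. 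The passage from the chain to the union $M$ is then handled not by part (i) of the induction hypothesis (which, as you noticed, only gives that $M$ is a $\Sigma_n$-extension and hence goes the wrong way for $\psi$), but by part (ii) at level $n$: the $\Sigma_{n+1}$-chain is in particular a $\Sigma_n$-chain, the sentence $\psi(\a,\b)$ lies in $\Pi_n\subseteq\Pi_{n+1}$, and we have just shown $r-\epsilon\leqslant\psi^{M_j}(\a,\b)$ for all $j\geqslant k$, so (ii) at level $n$ yields $r-\epsilon\leqslant\psi^{M}(\a,\b)$ directly. This is exactly how the paper closes the step.

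In short: the simultaneous induction must feed (ii) at level $n$ into (i) at level $n+1$, not (i) into (i); once you do that, the ``main obstacle'' dissolves and no estimate is required.
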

\begin{proof}
(i) The claim holds for $n=0$. Assume it holds for $n-1$.
Let $\phi(\x)=\sup_{\y}\psi(\x,\y)$ where $\psi$ is $\Pi_{n-1}$.
Let $\phi^{M_k}(\a)=r$. Then, for each $\epsilon>0$, there exists $\b\in M_{k}$
such that $$r-\epsilon\leqslant\psi^{M_k}(\a,\b).$$
Consider the $\Sigma_n$-chain
$$(M_k,\a,\b)\subseteq (M_{k+1},\a,\b)\subseteq\cdots.$$
Since $r-\epsilon\leqslant\psi(\a,\b)$ holds in every model of this chain,
by the induction hypothesis, it holds in $(M,\a,\b)$.
Hence, $$r-\epsilon\leqslant\sup_{\y}\psi^M(\a,\y).$$
Since $\epsilon$ is arbitrary, one has that $r\leqslant\sup_{\y}\psi^M(\a,\y)$.

(ii) Let $\phi=\inf_{\x}\psi(\x)$ where $\psi$ is $\Sigma_n$.
Assume $r\leqslant\phi^{M_k}$ for all $k$.
Let $\a\in M$. Then $\a\in M_\ell$ for some $\ell$ and $r\leqslant\psi^{M_\ell}(\a)$.
So, by (i), $r\leqslant\psi^M(\a)$. We conclude that $r\leqslant\phi^M$.
\end{proof}
\bigskip

The following result is the affine variant of Theorem 3.1.11 of \cite{CK1}:

\begin{proposition}
For a sentence $\phi$, the following are equivalent (where $n\geqslant1$):

\emph{(i)} $\phi$ is approximated by both $\Sigma_{n+1}$-sentences and $\Pi_{n+1}$-sentences.

\emph{(ii)} $\phi$ is approximated by linear combinations of $\Sigma_n$-sentences.
\end{proposition}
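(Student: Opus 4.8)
The plan is to treat the two implications separately: (ii)$\Rightarrow$(i) is a syntactic closure fact, while (i)$\Rightarrow$(ii) is the substantial direction and rests on an affine sandwich theorem.

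For (ii)$\Rightarrow$(i) I would first record that, up to $\emptyset$-equivalence, both the class of $\Sigma_{n+1}$-sentences and the class of $\Pi_{n+1}$-sentences are closed under nonnegative linear combinations. This is an easy induction on $n$: quantifier-free formulas form a vector space, and once $\alpha,\beta\in\Pi_k$ the identities $\sup_{\x}\alpha+\sup_{\y}\beta=\sup_{\x\y}(\alpha+\beta)$ and $r\sup_{\x}\alpha=\sup_{\x}(r\alpha)$ (for $r\geqslant0$) keep us inside $\Sigma_{k+1}$, and dually for $\Pi_{k+1}$. Since $-\sigma$ is $\emptyset$-equivalent to a $\Pi_n$-sentence whenever $\sigma$ is $\Sigma_n$, an arbitrary linear combination of $\Sigma_n$-sentences is a nonnegative linear combination of $\Sigma_n$- and $\Pi_n$-sentences, hence (using $\Sigma_n,\Pi_n\subseteq\Sigma_{n+1}\cap\Pi_{n+1}$ noted above) is simultaneously a $\Sigma_{n+1}$- and a $\Pi_{n+1}$-sentence. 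Passing to limits gives (i).

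For (i)$\Rightarrow$(ii) I would isolate a semantic criterion. Writing $M\equiv_{\Sigma_n}N$ when $\sigma^M=\sigma^N$ for every $\Sigma_n$-sentence $\sigma$, I claim $\phi$ is approximated by linear combinations of $\Sigma_n$-sentences if and only if $\phi^M=\phi^N$ whenever $M\equiv_{\Sigma_n}N$. The forward direction is immediate. For the converse I would mimic the convexity argument of Proposition \ref{quant-el-loc}: let $\Gamma$ be the convex set of linear combinations $\theta$ of $\Sigma_n$-sentences with $\vDash\theta\leqslant\phi$, and suppose that for some $\epsilon>0$ no $\theta\in\Gamma$ satisfies $\vDash\phi-\epsilon\leqslant\theta$. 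By convexity the conditions $\{\theta+\epsilon\leqslant\phi:\theta\in\Gamma\}$ are affinely satisfiable, so by Theorem \ref{compactness} there is $M$ with $\theta^M\leqslant\gamma-\epsilon$ for all $\theta\in\Gamma$, where $\gamma=\phi^M$. A second application of affine compactness shows $\Theta=\{\sigma=\sigma^M:\sigma\in\Sigma_n\}\cup\{\phi\leqslant\gamma-\epsilon\}$ is satisfiable: an unsatisfiable affine combination, after scaling so that $\phi$ has coefficient $1$ (nonzero, since all the equalities hold in $M$), would yield $\theta\in\Gamma$ with $\theta^M>\gamma-\epsilon$, contradicting the choice of $M$. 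A model $N$ of $\Theta$ then satisfies $N\equiv_{\Sigma_n}M$ while $\phi^N\leqslant\gamma-\epsilon<\phi^M$, contradicting invariance; hence some $\theta\in\Gamma$ obeys $\phi-\epsilon\leqslant\theta\leqslant\phi$, giving a linear combination within $\epsilon$ of $\phi$.

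It thus remains to deduce from (i) that $M\equiv_{\Sigma_n}N$ implies $\phi^M=\phi^N$, and this is where the affine analogue of Keisler's sandwich theorem enters: if $M\equiv_{\Sigma_n}N$ then there are a structure $R$ and embeddings $f:M\to R$, $g:N\to R$ that are \emph{level-$n$ elementary}, i.e. preserve the value of every $\Sigma_n$-formula (equivalently every $\Pi_n$-formula) with parameters. Granting this, the two halves of (i) finish the proof: for a level-$n$ elementary $f$ and a $\Sigma_{n+1}$-sentence $\sup_{\x}\rho$ with $\rho\in\Pi_n$ one has $(\sup_{\x}\rho)^M=\sup_{\a\in M}\rho^R(f\a)\leqslant(\sup_{\x}\rho)^R$, so the $\Sigma_{n+1}$-approximations of $\phi$ force $\phi^M\leqslant\phi^R$; dually the $\Pi_{n+1}$-approximations force $\phi^R\leqslant\phi^M$, whence $\phi^M=\phi^R=\phi^N$. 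I would build $R$ as the union of an alternating chain $M=M_0\subseteq N_0\subseteq M_1\subseteq N_1\subseteq\cdots$ of level-$(n-1)$ elementary inclusions obtained by iterating a one-step amalgamation — the level-$(n-1)$ refinement of the elementary JEP argument, proved by applying Theorem \ref{compactness} to the union of the level-$(n-1)$ elementary diagrams of the two sides — and Lemma \ref{preserve2}, applied to a $\Sigma_{n-1}$-formula and its negation, ensures the union is level-$(n-1)$ elementary over every stage.

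The main obstacle is precisely the upgrade from level $(n-1)$ to level $n$ in this construction: as the preservation computation shows, a merely level-$(n-1)$ elementary $R$ is not enough, since a $\Sigma_n$-extension need not preserve $\Sigma_{n+1}$-sentences. The delicate point is to arrange the alternation so that the shared $\Sigma_n$-theory is transported with parameters at each stage — so that a value $\sup_{\y}\beta(\a,\cdot)$ of a $\Sigma_n$-formula computed over an $M$-stage is attained arbitrarily well on the interleaved $N$-stages and conversely. This reflection across the two sides is exactly what pins the $\Sigma_n$-values down and makes the union level-$n$ elementary over both $M$ and $N$, and it is the step I expect to require the most careful bookkeeping.
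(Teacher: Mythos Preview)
Your reduction to the semantic criterion is fine (the paper phrases it via density in $\mathbf{A}(K)$ rather than your convexity argument, but the two are interchangeable). The gap is in the chain argument, and it is exactly the obstacle you yourself flag: you build level-$(n-1)$ elementary inclusions and then need the union to be level-$n$ elementary over $M$ and $N$, but you never actually supply the ``reflection'' step that would close this. As written, the proposal stops where the real work begins.

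The paper sidesteps the upgrade problem entirely. Instead of aiming for level-$n$ elementary inclusions into a common $R$, it builds a $\Sigma_n$-chain $M=M_0\subseteq N_0\subseteq M_1\subseteq\cdots$ in which each $M_k$ is \emph{fully} elementarily equivalent to $M$ (as $L$-structures, for sentences) and each $N_k$ fully equivalent to $N$; the one-step amalgamation takes the set of $\Sigma_n$-conditions with parameters holding in $N_m$ together with $Th(M)$, so the $\Sigma_n$-extension property and full $L$-equivalence come simultaneously. Then Lemma~\ref{preserve2}(ii) applies directly: if $r\leqslant\phi$ holds in $M$ it holds in every $M_k$ by full equivalence, and since $\phi$ is approximated by $\Pi_{n+1}$-sentences the condition persists to the union $\bigcup M_k=\bigcup N_k$; if $r\leqslant\phi$ failed in $N$, then $-r+\epsilon\leqslant-\phi$ would hold in every $N_k$ and (since $-\phi$ is also $\Pi_{n+1}$-approximated, by the $\Sigma_{n+1}$ half of (i)) in the union as well, a contradiction. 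No level-$n$ elementarity of the union over $M$ or $N$ is ever invoked---the $\Pi_{n+1}$ preservation in $\Sigma_n$-chains from Lemma~\ref{preserve2}(ii), together with full $L$-equivalence at each stage, does all the work.
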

\begin{proof}
(ii)$\Rightarrow$(i) is obvious.
\noindent(i)$\Rightarrow$(ii): We first prove the following claim for each $M, N$.
\bigskip

\noindent{\sc Claim}:
If $\theta^M=\theta^N$ for each $\Sigma_n$-sentence $\theta$, then $\phi^M=\phi^N$.

\noindent{\sc Proof of the claim}:
Assume $M,N$ satisfy the hypothesis of the claim. We construct a $\Sigma_n$-chain
$$M=M_0\subseteq N_0\subseteq M_1\subseteq N_1\subseteq\cdots$$
such that for all $k$
$$M_k\equiv M,\ \ \ \ \  \ N_k\equiv N.\ \ \ \ \ \ \ \ \ \ \ (1)$$
Suppose that
$$M_0\subseteq N_0\subseteq\cdots\subseteq M_m\subseteq N_m$$
has been constructed such that (1) holds for $k\leqslant m$.
Let $T$ be the set of all conditions $0\leqslant\sigma$ holding in $N_k$
where $\sigma$ is a $\Sigma_n$-sentence in $L(N_m)$.
Clearly, $T$ is affinely closed.
For each $0\leqslant\sigma(\b)$ in $T$, the condition $0\leqslant\sup_{\y}\sigma(\y)$
holds in $N_m$ and hence in $M_m$ by $(1)$ and assumption of the claim.
So, $T\cup Th(M)$ has a model, say $M_{m+1}$.
Therefore, $N_m\subseteq M_{m+1}$ is a $\Sigma_n$-extension and $M_{m+1}\equiv M$.
Similarly, there is a $\Sigma_n$-extension $M_{m+1}\subseteq N_{m+1}$ such that $N_{m+1}\equiv N$.
The required infinite chain is obtained.

Now, let $r\leqslant\phi$ hold in $M$. Then, it holds in every $M_k$.
Since $\phi$ is approximated by $\Pi_{n+1}$-sentences, by Lemma \ref{preserve2} (ii),
$r\leqslant\phi$ holds in $\cup M_k=\cup N_k$.
Suppose $r\leqslant\phi$ does not hold in $N$. Then $-r+\epsilon\leqslant-\phi$
holds in $N$ for some $\epsilon>0$. Since $-\phi$ is approximated by $\Pi_{n+1}$ formulas,
again by Lemma \ref{preserve2}, $-r+\epsilon\leqslant-\phi$ must hold in $\cup_kN_k$
which is a contradiction. Similarly, if $r\leqslant\phi$ holds in $N$,
it must hold in $M$ too. We conclude that $\phi^M=\phi^N$.

\noindent{\sc Proof of the proposition}:\\
Let $\Gamma$ be the vector space generated by $\Sigma_n$-sentences and
$K$ be the set of all norm $1$ positive linear functionals on $\Gamma$.
So, for each $T\in K$, there exists $M$ such that $T(\sigma)=\sigma^M$ for all $\sigma\in\Gamma$.
It is clear that $K$ is compact and convex.
For $\sigma\in\Gamma$ set $$f_\sigma(T)=T(\sigma).$$
Let $$X=\{f_\sigma:\ \sigma\in\Gamma\}.$$
Then, $X$ is a linear subspace of $\mathbf{A}(K)$, the space of affine continuous real functions on $K$
(\S \ref{Appendix}), which contains constant functions.
Assume $T_1\neq T_2$. Then, there is a $\Sigma_n$-sentence $\sigma$ such that $T_1(\sigma)\neq T_2(\sigma)$.
So, $f_\sigma(T_1)\neq f_\sigma(T_2)$.
This shows that $X$ separates points and hence is dense in $\mathbf{A}(K)$.

Define similarly $f_\phi(T)=\phi^M$ where $M\vDash T$.
By the above claim, $f_\phi$ is well-defined. It is also affine and continuous.
Hence $f_\phi\in \mathbf{A}(K)$.
We conclude that for each $\epsilon>0$ there is a $\sigma\in\Gamma$
such that for every $T\in K$,\ \ $|f_\phi(T)-f_\sigma(T)|\leqslant\epsilon$.
In other words, for every $M$,\ \ $|\phi^M-\sigma^M|\leqslant\epsilon$.
\end{proof}
\vspace{2mm}

\begin{definition}
{\em An \emph{expanding} (resp. \emph{contracting}) homomorphism is a map
$f:M\rightarrow N$ such that

\noindent - for each $c\in L$, $f(c^M)=c^N$

\noindent- for each $F\in L$ and $\a\in M$, $f(F^M(\a))=F^N(f(\a))$

\noindent- for each $R\in L$ (including $d$) and $\a\in M$, $R^M(\a)\leqslant R^N(f(\a))$
(resp. $R^N(f(\a))\leqslant R^M(\a)$).}
\end{definition}

The family of \emph{positive} formulas is inductively defined as follow:
$$r,\ \ \ d(t_1,t_2),\ \ \ R(t_1,...,t_n),\ \ \ \phi+\psi,
\ \ \ s\phi,\ \ \ \sup_x\phi,\ \ \ \inf_x\phi$$
where $r\in\Rn$ and $s\in\Rn^+$.
$\phi$ is negative if $-\phi$ is equivalent to a positive formula.
A \emph{positive axiom} (resp. negative axiom) is a condition of the form $0\leqslant\sigma$
where $\sigma$ is positive (resp. $\sigma$ is negative).
It is not hard to check that a surjective function $f:M\rightarrow N$ is an expanding
(resp. contracting) homomorphism if and only if for every positive formula $\phi(\x)$ and $\a\in M$
one has that $\phi^M(\a)\leqslant\phi^N(f(\a))$ (resp. $\phi^N(f(\a))\leqslant\phi^M(f(\a))$). Let
$$ediag^+(M)=\{0\leqslant\phi(\a): \ 0\leqslant\phi^M(\a),
\ \a\in M,\ \phi(\x)\ \textrm{is positive}\}$$
$$ediag^-(M)=\{0\leqslant\phi(\a): \ 0\leqslant\phi^M(\a),
\ \a\in M,\ \phi(\x)\ \textrm{is negative}\}$$
Let write $M\vartriangleleft^+ N$ if $\sigma^M\leqslant\sigma^N$ for every positive sentence $\sigma$.

\begin{proposition} \label{pos-axioms}
\emph{(i)} A theory $T$ is preserved under surjective expanding homomorphisms if and only if it has a set of positive axioms.

\emph{(ii)} $T$ is preserved under surjective contracting homomorphisms if and only if it has a set of negative axioms.
\end{proposition}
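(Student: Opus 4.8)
The plan is to deduce both parts from the localization principle of Lemma \ref{localization}. I would treat part (i) in detail, since part (ii) is entirely dual.

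First I would let $\Delta$ be the set of all positive axioms, i.e.\ conditions $0\leqslant\sigma$ with $\sigma$ positive, and check that $\Delta$ meets the hypotheses of Lemma \ref{localization}. Since a sum of positive formulas and a nonnegative multiple of a positive formula are again positive, $\Delta$ is affinely closed; and since every real constant is a positive formula, $\phi$ positive implies $\phi+\epsilon$ positive, so $0\leqslant\phi\in\Delta$ yields $-\epsilon\leqslant\phi\in\Delta$. Next I would observe that ``every positive axiom holding in $M$ holds in $N$'' is exactly the relation $M\vartriangleleft^+N$: for a positive sentence $\sigma$ the formula $\sigma-r$ is again positive, so preservation of $0\leqslant\sigma-r$ for all $r$ (taking $r=\sigma^M$) forces $\sigma^M\leqslant\sigma^N$, and the converse is immediate. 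Thus Lemma \ref{localization} reduces part (i) to the statement
\[
T \text{ is preserved under surjective expanding homomorphisms} \iff \big(M\vDash T \text{ and } M\vartriangleleft^+ N \Rightarrow N\vDash T\big).
\]

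The direction from right to left is easy: if $f\colon M\to N$ is a surjective expanding homomorphism with $M\vDash T$, then by the characterization recorded just before the statement, $\sigma^M\leqslant\sigma^N$ for every positive sentence $\sigma$, i.e.\ $M\vartriangleleft^+N$, whence $N\vDash T$. For the reverse direction I assume $T$ is preserved under surjective expanding homomorphisms and that $M\vDash T$, $M\vartriangleleft^+N$, and I must produce a suitable homomorphism. The first step is to show that $ediag^+(M)\cup ediag(N)$ is affinely satisfiable in $L(M)\sqcup L(N)$. A finite affine combination reduces, after collecting terms, to a single condition $0\leqslant\phi(\a)+\psi(\b)$ with $\phi$ positive, $0\leqslant\phi^M(\a)$ and $0\leqslant\psi^N(\b)$; since $\sup_{\x}\phi$ is a positive sentence and $M\vartriangleleft^+N$, we get $0\leqslant\sup_{\x}\phi^N(\x)$, so in $N$ itself one may interpret the $M$-constants so that $\phi\geqslant-\epsilon$ while interpreting the $N$-constants by themselves. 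Affine compactness then yields a model $N'$ with $N\preccurlyeq N'$ and a map $g\colon M\to N'$ satisfying $\phi^M(\a)\leqslant\phi^{N'}(g\a)$ for every positive $\phi$.

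The remaining and main difficulty is surjectivity, as the theorem concerns surjective homomorphisms. To secure it I would run a back-and-forth chain in the style of the classical homomorphism preservation theorem: alternately enlarge source and target by elementary extensions, at each ``forth'' step using the satisfiability argument above to extend the homomorphism, and at each ``back'' step adjoining, for a listed element of the current target, a preimage realized in an elementary extension of the current source (again by affine compactness, the relation $\vartriangleleft^+$ being propagated along the chain). Taking unions produces $M\preccurlyeq M^*$, $N\preccurlyeq N^*$ and a map $h\colon M^*\to N^*$ that is onto and preserves all positive formulas; by the stated characterization $h$ is then a surjective expanding homomorphism, and passing to completions via Proposition \ref{exist} preserves all formula values. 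Since $M^*\vDash T$ and $T$ is preserved, $N^*\vDash T$, and $N\preccurlyeq N^*$ gives $N\vDash T$, as required. I expect this surjectivity step---finding preimages for target elements inside suitable elementary extensions while maintaining both the homomorphism and elementarity, together with the final passage to completions---to be the genuine obstacle, whereas the translation into positive axioms and the easy direction are routine. Finally, part (ii) is obtained by the same argument with ``positive'' replaced by ``negative'', $\vartriangleleft^+$ by the reverse relation, and ``expanding'' by ``contracting''.
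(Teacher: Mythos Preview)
Your proposal is correct and follows essentially the same route as the paper: reduce to Lemma \ref{localization} with $\Delta$ the positive axioms, translate ``positive axioms of $M$ hold in $N$'' into $M\vartriangleleft^+N$, and then build two elementary chains via a back-and-forth (forth using $ediag^+(M)\cup ediag(N)$, back using $ediag^-(N)\cup ediag(M)$) so that the union of the forward maps is a surjective expanding homomorphism onto an elementary extension of $N$. The paper records the back step slightly more explicitly than you do---it constructs an actual map $g\colon N\to M'$ with $(M',g(b))_{b\in N}\vartriangleleft^+(N,b)_{b\in N}$ and arranges $g_{i+1}^{-1}\subseteq f_{i+1}$---but your description of ``adjoining preimages in an elementary extension of the source'' is the same idea.
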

\begin{proof}
(i) We prove the nontrivial direction which is a linearized variant of the proof
of Theorem 3.2.4. in \cite{CK1}. Assume $T$ is preserved by surjective expanding homomorphisms.
One first proves that if $M\vartriangleleft^+ N$ then there is an elementary extension $N\preccurlyeq N'$
and a mapping $f:M\rightarrow N'$ such that $$(M,a)_{a\in M}\vartriangleleft^+ (N', f(a))_{a\in M}.$$
For this purpose one checks that $ediag^+(M)\cup ediag(N)$
is linearly satisfiable.
Similarly, if $M\vartriangleleft^+ N$, then there is an elementary extension
$M\preccurlyeq M'$ and a mapping $g:N\rightarrow M'$ such that
$(M',g(b))_{b\in N}\vartriangleleft^+ (N,b)_{b\in N}$.
For this purpose, one checks that $ediag^-(N) \cup ediag(M)$ is linearly satisfiable.
Now assume $M_0\vDash T$ and $M_0\vartriangleleft^+ N_0$.
Iterate the arguments to find chains
$$M_0\preccurlyeq M_1\preccurlyeq\ldots, \ \ \ \ \ \ N_0\preccurlyeq N_1\preccurlyeq\ldots$$
and maps $$f_i:M_i\rightarrow N_{i+1},\ \ \ \ \ \ g_i:N_i\rightarrow M_i$$
such that
$$(M_0,a)_{a\in A_0} \vartriangleleft^+ (N_1, f_0a)_{a\in A_0}$$
$$(M_1,a,g_1b)_{a\in A_0,\ b\in B_1}\vartriangleleft^+ (N_1, f_0a, b)_{a\in A_0, b\in B_0}$$
and so forth. In particular, $f_i:M_i\rightarrow N_{i+1}$ is an expanding homomorphism
and $f_i\subseteq f_{i+1}$, $g_{i+1}^{-1}\subseteq f_{i+1}$.
Set $\bar M=\cup_i M_i$ and $\bar N=\cup_i N_i$.
Then $M_0\preccurlyeq\bar M$, $N_0\preccurlyeq\bar N$ and $\cup f_i:M\rightarrow N$
is a surjective expanding homomorphism. By the assumption of
proposition, we must have that $N_0\vDash T$.
Let $\Delta$ be the set of all positive $L$-conditions.
Thus, we have proved that the clause (ii) of Lemma \ref{localization} holds for $\Delta$.
We conclude $T$ is axiomatized by a set of positive conditions. (ii) is proved similarly.
\end{proof}

\newpage\section{Types} \label{Types}
Let $T$ be a satisfiable theory.

\begin{definition}
\em{A partial \emph{$n$-type} for $T$ is a set $\Sigma(\x)$
of conditions $\phi(\x)\leqslant\psi(\x)$ such that $T\cup\Sigma(\x)$ is satisfiable.
Maximal partial types are called \emph{types}\index{type} (or complete types).}
\end{definition}

Every partial $n$-types can be extended to a complete $n$-type.
If $p$ is a $n$-type, then for each $\phi(\x)$ there is a unique real number $p(\phi)$
such that $\phi=p(\phi)$ belongs to $p$.
Moreover, the map $\phi\mapsto p(\phi)$ is a positive linear functional on
$\mathbb{D}_n(T)$ (the space of $T$-equivalence classes of formulas $\phi(\x)$,
$|\x|=n$, \S \ref{Quantifier-elimination}) with $p(1)=1$. The converse of this observation is true.

\begin{lemma}
Let $p:\mathbb{D}_n(T)\rightarrow\Rn$ be a positive linear functional such that $p(1)=1$.
Then the set of conditions of the form $\phi=p(\phi)$ is a $n$-type of $T$.
\end{lemma}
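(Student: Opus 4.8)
The plan is to exhibit a model $M\vDash T$ together with a point $\a\in M^n$ at which every formula takes the value prescribed by $p$, i.e. $\phi^M(\a)=p(\phi)$ for all $\phi(\x)\in\mathbb{D}_n(T)$. Once this is done the set $\Sigma_0=\{\phi=p(\phi):\phi\in\mathbb{D}_n(T)\}$ is realized by $(M,\a)$, so $T\cup\Sigma_0$ is satisfiable and $\Sigma_0$ is a partial type; its completeness will then follow easily. The essential point to keep in mind is that $p$ need not be an extreme functional, so in general it cannot be realized at a point of any single model of $T$; rather it is an ``average'' of such point evaluations. This forces us to look for $(M,\a)$ in an \emph{ultramean}, where by Theorem \ref{th1} the value of $\phi$ at a class $[a_i]$ is the \emph{integral} of the values $\phi^{M_i}(\a_i)$ --- exactly the averaging we need. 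Thus the proof runs parallel to that of the affine compactness theorem (Theorem \ref{compactness}), now carried out in $n$ free variables instead of for sentences.

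Concretely, I would fix a set $I$ of pairs $(M_i,\a_i)$ with $M_i\vDash T$ and $\a_i\in M_i^n$ that is rich enough to detect the order of $\mathbb{D}_n(T)$, meaning $0\leqslant_T\phi$ if and only if $\phi^{M_i}(\a_i)\geqslant0$ for every $i$; this is possible by the downward L\"owenheim--Skolem theorem (Proposition \ref{downward}), which lets me take the $M_i$ of bounded density character so that $I$ is a genuine set. Identifying each $\phi$ with the bounded function $i\mapsto\phi^{M_i}(\a_i)$ realizes $\mathbb{D}_n(T)$ as a subspace of $\mathbf{C}_b(I)$ on which $p$ is, by construction, positive; moreover $\mathbb{D}_n(T)$ contains the constants and hence majorizes $\mathbf{C}_b(I)$. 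The Kantorovich extension theorem then extends $p$ to a positive linear functional $\bar p$ on $\mathbf{C}_b(I)$, and the Riesz representation theorem yields a probability charge $\mu$ on $I$ with $\bar p(f)=\int f\,d\mu$; here $\mu$ is a probability charge because $\bar p(1)=p(1)=1$ and $\bar p$ is positive. Now form the ultramean $M=\prod_\mu M_i$ and set $\a=[\a_i]$. By Theorem \ref{th1}, $\phi^M(\a)=\int\phi^{M_i}(\a_i)\,d\mu=\bar p(\phi)=p(\phi)$ for every $\phi$, and integrating each closed condition of $T$ (which holds in every $M_i$) shows $M\vDash T$. Hence $(M,\a)$ realizes $\Sigma_0$.

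It remains to see that $\Sigma_0$ is maximal, i.e. a complete type. Since $\Sigma_0$ already contains $\phi=p(\phi)$ for every $\phi$, it decides the value of each formula; and if $\Sigma_1\supseteq\Sigma_0$ is any partial type and $\alpha\leqslant\beta$ lies in $\Sigma_1$, then realizing $\Sigma_1$ at some $(N,\b)$ forces $p(\alpha)=\alpha^N(\b)\leqslant\beta^N(\b)=p(\beta)$, so the condition $\alpha\leqslant\beta$ already holds at $(M,\a)$ and is therefore a consequence of $T\cup\Sigma_0$. Thus no partial type extending $\Sigma_0$ enlarges its set of consequences, which is precisely what it means for $\Sigma_0$ to be a type. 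I expect the single real obstacle to be the realization step of the second paragraph: seeing that positivity of $p$ matches the pointwise order on $I$, and that the Kantorovich/Riesz machinery produces a \emph{probability} charge whose associated ultramean, via the affine {\L}o\'{s} analogue (Theorem \ref{th1}), converts the abstract functional $p$ into an honest evaluation at a point. The completeness step and the set-theoretic bookkeeping around the index set $I$ are routine by comparison.
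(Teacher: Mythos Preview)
Your proof is correct, but it takes a considerably longer route than the paper's. The paper dispatches the lemma in three lines by invoking affine compactness as a black box: the set $\{\phi=p(\phi)\}$ is closed under linear combinations, so it suffices to check that each single condition $\phi=p(\phi)$ is satisfiable with $T$. If $\phi\leqslant p(\phi)$ were not, then $T\vDash p(\phi)+\epsilon\leqslant\phi$ for some $\epsilon>0$; applying the positive linear functional $p$ (with $p(1)=1$) gives $p(\phi)+\epsilon\leqslant p(\phi)$, a contradiction. The other inequality is symmetric. That is the whole argument.

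What you do instead is re-run the ultramean proof of affine compactness (Theorem~\ref{compactness}) over an index set of \emph{pointed} models $(M_i,\a_i)$ rather than plain models, producing an explicit realization $(M,\a)$ of $\Sigma_0$. This is sound --- the order-detection condition on $I$, the Kantorovich extension, and the Riesz/ultracharge step all go through exactly as in the sentence case --- and it has the minor advantage of exhibiting the realizing tuple concretely and making maximality transparent. But since the lemma sits after Theorem~\ref{compactness} in the paper, invoking that theorem directly is both shorter and conceptually cleaner: the positivity of $p$ is the only new ingredient, and the paper isolates exactly that.
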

\begin{proof}
The set of conditions $\phi(\x)=p(\phi)$ is closed under linear combinations.
We only need to show that every such condition is satisfiable with $T$.
Assume $\phi\leqslant p(\phi)$ is not satisfiable with $T$.
Then, for some $\epsilon>0$ one has that $T\vDash p(\phi)+\epsilon\leqslant\phi$.
Since $p$ is positive linear and normal, one has that $p(\phi)+\epsilon\leqslant p(\phi)$.
This is a contradiction. Similarly, $p(\phi)\leqslant\phi$ and hence $\phi=p(\phi)$ is satisfiable with $T$.
\end{proof}

So, $n$-types correspond to normal (i.e. of norm $1$) positive linear maps $p:\mathbb{D}_n(T)\rightarrow\Rn$.
The set of $n$-types of $T$ is denoted by $K_n(T)$\index{$K_n(T)$}.
For a complete $T$, types over a set of parameters $A\subseteq M\vDash T$ are defined similarly. They are types of $Th(M,a)_{a\in A}$.
Let $\mathbb{D}_n(A)$ be the vector space of formulas $\phi(\x)$ with parameters from $A$
where $\phi$, $\psi$ are identified if $\phi^M(\x)=\psi^M(\x)$ for all $\x\in M$.
Then, a type over $A$ is a normal positive linear functional $p:\mathbb{D}_n(A)\rightarrow\Rn$.
The set of $n$-types over $A$ is denoted by $K_n(A)$\index{$K_n(A)$}.

$K_n(T)$ is a closed subset of the unit ball of $\mathbb{D}_n(T)^*$.
So, by the Banach-Alaoglu theorem (\cite{Aliprantis-Inf}, Th. 6.21), it is compact (and Hausdorff).
In model theory, this topology is called \emph{logic topology}\index{logic topology}
and is generated by the sets of the form $$\{p\in K_n(T):\ 0<p(\phi)\}.$$
$K_n(T)$ is also convex, i.e. if $p,q\in K_n(T)$ and $\gamma\in[0,1]$, then $\gamma p+(1-\gamma)q\in K_n(T)$.
Similar properties hold for $K_n(A)$ if $T$ is complete.

\begin{theorem}  \label{Conway} \emph{(\cite{Conway} p.125)} 
Let $V$ be a normed linear space. Then the weak* dual of $V^*$ coincides with $V$ itself,
in other words $(V^*,\sigma(V^*,V))^*=V$.
\end{theorem}

\begin{theorem} \label{Rudin} \emph{(\cite{Rudin} Theorem 3.4)} 
Let $A$ and $B$ be disjoint nonempty closed convex subsets of a locally convex
topological vector space $V$ with $B$ compact. Then there exists $f\in V^*$ and $r,s\in\Rn$ such that $f(A)<r<s<f(B)$.
\end{theorem}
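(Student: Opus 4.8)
The plan is to reduce this two-sided strict separation to the basic geometric Hahn--Banach theorem (separation of a convex set from a disjoint open convex set), which in turn rests on the analytic Hahn--Banach extension theorem already invoked in the proof of Theorem \ref{compactness}. The one genuine difficulty is to replace the merely closed set $A$ by an \emph{open} convex set that is still disjoint from $B$; once an open set is available on one side, both a continuous functional and a strict gap are produced with little further effort.

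First I would form the difference $C=A-B=\{a-b:\ a\in A,\ b\in B\}$. It is convex, satisfies $0\notin C$ because $A\cap B=\emptyset$, and --- crucially --- is closed, since $A$ is closed and $B$ is compact. Local convexity of $V$ then supplies a symmetric convex open neighbourhood $U$ of $0$ with $U\cap C=\emptyset$. Symmetry of $U$ forces $A+U$ to be disjoint from $B$: a common point $a+u=b$ would give $b-a=-u\in U\cap(B-A)=U\cap(-C)$, contradicting $U\cap C=\emptyset$. And $A+U$ is manifestly open and convex.

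Next I would separate the disjoint convex sets $A+U$ (open) and $B$, obtaining $f\in V^*$ and $\gamma\in\Rn$ with $f<\gamma$ on $A+U$ and $f\geqslant\gamma$ on $B$. To upgrade this to two strict inequalities I would exploit the neighbourhood $U$ once more: since $f\neq0$, it maps $U$ onto a neighbourhood of $0$ in $\Rn$, so there is $u_0\in U$ with $f(u_0)=\delta>0$; then $a+u_0\in A+U$ gives $f(a)<\gamma-\delta$ for every $a\in A$. Choosing any $r,s$ with $\gamma-\delta<r<s<\gamma$ delivers $f(A)<r<s<f(B)$, as required.

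The main obstacle, and the sole point where compactness of $B$ is used, is the closedness of $C=A-B$: this is precisely what permits separating $0$ from $C$ by a neighbourhood and hence thickening $A$ into an open set missing $B$. The remaining ingredient, separation of an open convex set from a disjoint convex set, I would establish by the standard gauge argument: fixing $p_0\in A+U$ and $q_0\in B$ and putting $w=q_0-p_0$, the set $D=(A+U)-B+w$ is an open convex neighbourhood of $0$ with $w\notin D$; its Minkowski gauge is sublinear and, because $D$ is a neighbourhood of $0$, continuous, it dominates the functional $tw\mapsto t$ on the line $\Rn w$, and the Hahn--Banach extension of that functional is the desired $f$, its continuity inherited from domination by the gauge.
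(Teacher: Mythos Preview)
Your argument is correct and is essentially the standard proof (the one in Rudin, in fact). Note that the paper itself does not prove this theorem: it is stated as a citation of \cite{Rudin}, Theorem~3.4, and is used as a black box in the proof of Corollary~\ref{closed convex} and elsewhere. So there is no ``paper's own proof'' to compare against; you have supplied the argument that Rudin gives.

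One trivial slip: from $a+u=b$ you get $b-a=u$, not $-u$. The conclusion is unaffected, since $u\in U$ and $u=b-a\in B-A=-C$, and symmetry of $U$ gives $U\cap(-C)=\emptyset$ from $U\cap C=\emptyset$. Everything else --- closedness of $A-B$ from compactness of $B$, the gauge construction for the open-set separation, and the $\delta$-shift to obtain the strict gap on both sides --- is in order.
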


\begin{corollary} \label{closed convex}
Closed convex subsets of $K_n(T)$ are exactly the sets defined by partial types, i.e.
sets of the form $\{p\in K_n(T):\ \ \Gamma\subseteq p\}$ where $\Gamma$ is a set of conditions.
\end{corollary}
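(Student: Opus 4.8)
The plan is to prove the two inclusions separately: every set of the form $\{p:\Gamma\subseteq p\}$ is closed and convex, and conversely every closed convex $C\subseteq K_n(T)$ arises this way. The first inclusion is routine; the second is where the real work lies, and it is exactly here that Theorems \ref{Conway} and \ref{Rudin} are meant to be used together.

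For the easy direction I would first note that, modulo $T$, every condition is of the form $0\leqslant\theta$ for a single $\theta\in\mathbb{D}_n(T)$: the condition $\phi\leqslant\psi$ is equivalent to $0\leqslant\psi-\phi$, and since the formulas $\phi(\x)$ form the vector space $\mathbb{D}_n(T)$, which contains the constants $r\cdot1$, the difference $\psi-\phi$ is simply an element $\theta$ of that space, with $p$ satisfying $0\leqslant\theta$ exactly when $p(\theta)\geqslant0$. Hence
$$\{p\in K_n(T):\Gamma\subseteq p\}=\bigcap_{(0\leqslant\theta)\in\Gamma}\{p\in K_n(T):\ 0\leqslant p(\theta)\}.$$
Each set in the intersection is the preimage of $[0,\infty)$ under the evaluation map $p\mapsto p(\theta)$, which is continuous in the logic (weak*) topology by definition and affine in $p$; so the intersection is closed and convex.

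For the reverse inclusion, let $C\subseteq K_n(T)$ be nonempty, closed and convex, and put $\Gamma=\{0\leqslant\theta:\ \theta\in\mathbb{D}_n(T)\ \textrm{and}\ p(\theta)\geqslant0\ \textrm{for all}\ p\in C\}$. Clearly $C\subseteq\{p:\Gamma\subseteq p\}$, so it remains to fix $q\in K_n(T)\setminus C$ and produce a condition of $\Gamma$ that $q$ violates. View $K_n(T)$ inside the locally convex space $(\mathbb{D}_n(T)^*,\sigma(\mathbb{D}_n(T)^*,\mathbb{D}_n(T)))$. Since $K_n(T)$ is weak*-compact (Banach--Alaoglu), it is closed in this space, and therefore so is $C$; thus $C$ and $\{q\}$ are disjoint nonempty closed convex sets with $\{q\}$ compact, and Theorem \ref{Rudin} furnishes a weak*-continuous functional $f$ and reals $r<s$ with $f(p)<r<s<f(q)$ for all $p\in C$.

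The key step, and the main obstacle, is to realize $f$ as evaluation at an actual formula rather than as an abstract dual vector. This is precisely what Theorem \ref{Conway} provides: the weak* dual of $\mathbb{D}_n(T)^*$ is $\mathbb{D}_n(T)$ itself, so there is $\theta\in\mathbb{D}_n(T)$ with $f(p)=p(\theta)$ for every $p$. Then $p(\theta)<r$ for all $p\in C$ while $q(\theta)>s>r$. Taking $\eta=r\cdot1-\theta$, which corresponds to the condition $0\leqslant\eta$ (equivalently $\theta\leqslant r$), we get $p(\eta)=r-p(\theta)>0$ for every $p\in C$, so $(0\leqslant\eta)\in\Gamma$, whereas $q(\eta)=r-q(\theta)<r-s<0$, so $q$ fails this condition and $q\notin\{p:\Gamma\subseteq p\}$. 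This yields $\{p:\Gamma\subseteq p\}\subseteq C$, hence equality. The degenerate case $C=\emptyset$ is handled by letting $\Gamma$ contain the unsatisfiable condition $1\leqslant0$, since $p(1)=1$ for every type.
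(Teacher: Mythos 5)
Your proposal is correct and follows essentially the same route as the paper: both directions hinge on viewing types inside $(\mathbb{D}_n(T)^*,\sigma(\mathbb{D}_n(T)^*,\mathbb{D}_n(T)))$, separating a point $q\notin C$ from $C$ via Theorem \ref{Rudin}, and using Theorem \ref{Conway} to realize the separating functional as evaluation at a formula, which yields the defining conditions. Your packaging (taking $\Gamma$ to be all conditions valid on $C$, plus the explicit treatment of the empty case) is a minor variation of the paper's argument, not a different method.
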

\begin{proof}
Every set of the form $\{p\in K_n(T)\ |\ \ p\supseteq \Gamma\}$ is closed and convex.
For the converse, first note that every formula $\phi(\x)$ defines a map
$$\hat\phi:K_n(T)\rightarrow\Rn, \ \ \ \ \ \ \hat\phi(p)=p(\phi).$$
Assume $X\subseteq K_n(T)$ is closed and convex.
By Theorem \ref{Conway}, the dual of $\mathbb D_n(T)^*$ with respect to the topology
$\sigma(\mathbb D_n(T)^*,\mathbb D_n(T))$ is $\mathbb D_n(T)$.
By Theorem \ref{Rudin}, for each $q\in K_n(T)-X$ there exist $\phi_q(\x)$ and $r_q,s_q$ such that
$$\hat\phi_q(q)\leqslant r_q<s_q\leqslant \hat\phi_q(p)\ \ \ \ \ \ \ \forall p\in K.$$
We conclude that
$$X=\big\{p\in K_n(T)\ | \  p\vDash s_q\leqslant\phi_q(\x) \ \ \ \ \forall q\in K_n(T)-K\big\}.$$
\end{proof}

\subsection{Saturation and homogeneity} \label{Saturation and homogeneity}
Let $T$ be a complete theory and $A\subseteq M$. A type $p(\x)\in K_n(A)$ is \emph{realized}\index{realized} by $\a\in M$
if $p(\phi)=\phi^M(\a)$ for every $\phi(\x)$.

\begin{lemma}\label{realize}
Let $A\subseteq M$ and $p(\x)\in K_n(A)$. Then $p$ is realized in some $M\preccurlyeq N$.
\end{lemma}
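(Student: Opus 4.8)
The plan is to realize $p$ by adjoining a tuple $\d$ of new constant symbols and forcing them to satisfy simultaneously the elementary diagram of $M$ and the type $p$. Working in $L(M)$ together with the fresh constants $\d$, I would set
$$\Sigma=ediag(M)\cup\{\phi(\d)=p(\phi):\ \phi(\x)\in\mathbb{D}_n(A)\}.$$
Any model $N$ of $\Sigma$ is an elementary extension of $M$, since $N\vDash ediag(M)$ yields an elementary embedding of $M$ into $N$, and in it $\d^N$ realizes $p$ because $\phi^N(\d^N)=p(\phi)$ for every $\phi(\x)$ over $A$. So it suffices to prove that $\Sigma$ is satisfiable and then invoke the affine compactness theorem (Theorem \ref{compactness}).

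First I would reduce the affine satisfiability of $\Sigma$ to a single manageable condition. Since $ediag(M)$ is affinely closed, and the conditions $\phi(\d)=p(\phi)$ are closed under linear combinations by linearity of $p$, any member of the affine closure of $\Sigma$ can be regrouped and, using linearity, rewritten in the form $\chi(\d)\leqslant p(\chi)+r_0\Phi(\a)$, where $\chi(\x)$ is a formula over $A$, the summand $\Phi(\a)$ comes from $ediag(M)$ (so $0\leqslant\Phi^M(\a)$), and $r_0\geqslant0$. Thus everything rests on showing each such condition is satisfiable.

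The key computation is that $\inf_{\d\in M}\chi^M(\d)\leqslant p(\chi)$. Here I would use that $\inf_\x\chi(\x)$ is a sentence over $A$; that $p$, being a type of the complete theory $Th(M,a)_{a\in A}$, assigns this sentence its value $(\inf_\x\chi)^M=\inf_{\d\in M}\chi^M(\d)$; and that $p(\inf_\x\chi)\leqslant p(\chi)$ by positivity, since $\inf_\x\chi\leqslant\chi$. Interpreting the parameters $\a$ by themselves in $M$ gives $\Phi^M(\a)\geqslant0$, so for every $\epsilon>0$ there is $\d\in M$ with $\chi^M(\d)\leqslant p(\chi)+r_0\Phi^M(\a)+\epsilon$; that is, the condition $\chi(\d)\leqslant p(\chi)+r_0\Phi(\a)+\epsilon$ is satisfiable in $M$.

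The main obstacle is precisely that this infimum need not be attained, so I only obtain the condition up to an arbitrarily small slack $\epsilon$. To remove it I would appeal to the observation established in the treatment of affine compactness, that if $-r\leqslant\sigma$ is affinely satisfiable for every $r>0$ then $0\leqslant\sigma$ is affinely satisfiable, applied with $\sigma=p(\chi)+r_0\Phi(\a)-\chi(\d)$. This yields satisfiability of the combined condition at $\epsilon=0$, hence affine satisfiability of $\Sigma$; affine compactness then produces a model $N$ with $M\preccurlyeq N$ in which $\d^N$ realizes $p$, as required.
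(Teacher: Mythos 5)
Your proposal is correct and takes essentially the same approach as the paper: both form $\Sigma=ediag(M)\cup p$, use affine closedness of the two pieces to reduce an arbitrary affine combination to a single condition, verify that condition approximately, upgrade to exact satisfiability via the ultraproduct observation, and then invoke affine compactness. The only (immaterial) difference is where the witnessing happens: you satisfy the combined condition inside $M$ itself, interpreting the new constants as approximate minimizers and using $p(\inf_{\x}\chi)=(\inf_{\x}\chi)^M\leqslant p(\chi)$, whereas the paper satisfies it in a model realizing $p$, using that $0\leqslant\sup_{\y}\phi(\a,\y)$ belongs to $p$ — two symmetric ways of handling the same quantifier.
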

\begin{proof} Let $\Sigma=\mbox{ediag}(M)\cup p(\x)$ and $0\leqslant\phi^M(\a,\b)$
where $\a\in A$, $\b\in M-A$. By the definition, the condition $0\leqslant\sup_{\y}\phi(\a,\y)$ belongs to $p$.
This shows that $\Sigma$ is affinely satisfiable. Let $N$ be a model of $\Sigma$.
Then, $M\preccurlyeq N$ and $p$ is realized in $N$.
\end{proof}

Saturated models and related notions are defined as in first order (or full continuous) logic.
We review these notions and prove some relations between them in the AL setting.
Proofs are mostly adaptations of the usual first order proofs (see \cite{Buechler, Marker, Poizat-Model}).
As in full continuous logic, instead of cardinality, the density character of models is adopted.

To simplify the proofs, we assume that $T$ is a complete theory in a countable language.
In this case, for each infinite $\kappa$ and $A\subseteq M\vDash T$ with $|A|\leqslant\kappa$,
one has that $|K_n(A)|\leqslant2^{\kappa}$.
If $A\subseteq M$, a map $f:A\rightarrow N$ is called a \emph{partial elementary embedding}
\index{partial elementary map} if $\phi^M(\a)=\phi^N(f(\a))$ for every $\a\in A$.
For $\a,\b\in M$ of equal (possibly infinite) length we write $\a\equiv\b$ if the partial map defined by
$f(a_i)=b_i$ is elementary. If we extend the notion of type to infinite tuples, this means that $tp(\a)=tp(\b)$.

\begin{definition}
\emph{Let $\kappa$ be an infinite cardinal.\\
\emph{(i)} $M$ is called $\kappa$-\emph{saturated}\index{saturated} if for each $A\subseteq M$
with $|A|<\kappa$, every type $p\in K_1(A)$ (and hence every $p\in K_n(A)$) is realized in $M$.\\
\emph{(ii)} $M$ is $\kappa$-\emph{homogeneous}\index{homogeneous} if for every $\a,\b,c\in M$
with $|\a|=|\b|<\kappa$ and $\a\equiv\b$, there exists $e\in M$ such that $\a c\equiv\b e$.
It is \emph{strongly $\kappa$-homogeneous}\index{strongly homogeneous} if for $\a,\b$ as above there is an automorphism $f$ of $M$ such that $f(\a)=\b$.\\
\emph{(iii)} $M$ is $\kappa$-\emph{universal} if every $N\equiv M$ with $\mathsf{dc}(N)<\kappa$ is elementarily embedded in $M$.}
\end{definition}

By repeated use of Lemma \ref{realize} and forming chains of elementary extensions, one proves that

\begin{proposition}
Every model has $\kappa$-saturated elementary extensions for every $\kappa$.
\end{proposition}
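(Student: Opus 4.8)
The plan is to build the desired $\kappa$-saturated elementary extension as the union of a long elementary chain, at each successor step realizing all types over the current model, and then closing off at limit stages using Proposition \ref{chain}. The essential ingredient is Lemma \ref{realize}, which guarantees that any single type over a parameter set $A\subseteq M$ can be realized in some elementary extension of $M$. The work lies in arranging the iteration so that in the limit \emph{every} type over \emph{every} small parameter set is realized, while keeping control of the length of the chain.

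First I would reduce to the problem of realizing all types over the full model $M$ in one step. Given $M\models T$, consider the set of all types $p\in K_1(M)$ over the parameter set $A=M$ itself. Using Lemma \ref{realize} repeatedly and the elementary amalgamation available in this setting (via $ediag$), or more directly by taking $\Sigma=ediag(M)\cup\bigcup_{p}p(x_p)$ with a fresh variable $x_p$ for each type $p\in K_1(M)$, I would check that $\Sigma$ is affinely satisfiable and hence satisfiable by the affine compactness theorem (Theorem \ref{compactness}). The affine satisfiability check is the same computation as in Lemma \ref{realize}: any finite affine combination of conditions $0\leqslant\phi^M(\bar a,y_p)$ coming from the various types reduces, after applying $\sup_{\bar y}$, to a condition already satisfied in $M$ because each piece $0\leqslant\sup_y\phi(\bar a,y)$ belongs to the corresponding type $p$. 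A model $M_1$ of $\Sigma$ then satisfies $M\preccurlyeq M_1$ and realizes every type in $K_1(M)$.

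Next I would iterate this construction transfinitely. Build an elementary chain $(M_\alpha : \alpha<\kappa)$ by setting $M_0=M$, letting $M_{\alpha+1}$ be an elementary extension realizing every type over $M_\alpha$ as just described, and taking $M_\alpha$ to be the completion of $\bigcup_{\beta<\alpha}M_\beta$ at limit stages, which remains an elementary extension by Proposition \ref{chain}. The final model is $N=\overline{\bigcup_{\alpha<\kappa}M_\alpha}$. To verify $\kappa$-saturation, take any $A\subseteq N$ with $|A|<\kappa$ and any $p\in K_1(A)$. Since $\kappa$ is regular (or after replacing $\kappa$ by its successor, which suffices for the statement), the set $A$ is contained in some $M_\alpha$ with $\alpha<\kappa$, because each element of $A$ appears at some stage below $\kappa$ and the bound on $|A|$ keeps the supremum of these stages below $\kappa$. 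The type $p$ then extends to a type over $M_\alpha$, which is realized already in $M_{\alpha+1}\preccurlyeq N$, hence realized in $N$.

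The main obstacle is the cardinality bookkeeping at the limit, namely ensuring that a parameter set of size less than $\kappa$ lands inside a proper initial segment of the chain. This is a standard cofinality argument, but it does require care about regularity of $\kappa$: for singular $\kappa$ one passes to $\kappa^{+}$, or interprets ``$\kappa$-saturated for every $\kappa$'' as producing, for each target $\kappa$, a model that is $\kappa^{+}$-saturated and hence a fortiori $\kappa$-saturated. A secondary point worth noting is that each $M_{\alpha+1}$ need only realize types over $M_\alpha$ (types over the full model), not over all small subsets separately; the union then automatically realizes types over every small $A$ because any such $A$ is contained cofinally below some stage, and types over $A$ extend to types over the whole $M_\alpha$. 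This is exactly the efficiency that makes the transfinite induction terminate at length $\kappa$ rather than something larger.
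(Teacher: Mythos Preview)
Your proposal is correct and follows exactly the approach the paper indicates: it merely states ``By repeated use of Lemma \ref{realize} and forming chains of elementary extensions, one proves that\ldots'', and you have supplied precisely that argument with the standard cofinality bookkeeping. One minor remark: your justification of affine satisfiability of $\Sigma=ediag(M)\cup\bigcup_p p(x_p)$ is cleaner if you simply observe that any \emph{finite} subset is satisfiable by iterating Lemma \ref{realize} finitely many times (realize $p_1$ in $M\preccurlyeq N_1$, then $p_2$ in $N_1\preccurlyeq N_2$, etc.), which is stronger than affine satisfiability.
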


\begin{proposition}
Assume $M\equiv N$ are $\kappa$-saturated where $\kappa=\mathsf{dc}(M)=\mathsf{dc}(N)$. Then $M\simeq N$.
\end{proposition}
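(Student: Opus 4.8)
The plan is to run a back-and-forth argument adapted to the metric setting, producing a partial elementary bijection between dense subsets which then extends by continuity to an isomorphism. First I would fix enumerations $\{a_i : i<\kappa\}$ and $\{b_i : i<\kappa\}$ of dense subsets of $M$ and $N$ respectively, which exist because $\mathsf{dc}(M)=\mathsf{dc}(N)=\kappa$. I will build an increasing chain $(f_\alpha)_{\alpha<\kappa}$ of partial elementary maps from $M$ into $N$, keeping $|\mathrm{dom}(f_\alpha)|<\kappa$ throughout, so that $\bigcup_\alpha\mathrm{dom}(f_\alpha)$ contains every $a_i$ and $\bigcup_\alpha\mathrm{ran}(f_\alpha)$ contains every $b_i$.

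The heart of the construction is the pair of extension steps. Starting from $f_\alpha:A\to B$ with $A\subseteq M$, $B\subseteq N$ and $|A|<\kappa$, the ``forth'' step adjoins $a_\alpha$ to the domain: I transport the type $p=tp^M(a_\alpha/A)$ along $f_\alpha$ to a type $q\in K_1(B)$ (legitimate since $f_\alpha$ identifies $Th(M,A)$ with $Th(N,B)$ and, being distance preserving, is injective), and then use $\kappa$-saturation of $N$ together with $|B|<\kappa$ to realize $q$ by some $b\in N$; sending $a_\alpha\mapsto b$ keeps the map partial elementary. The symmetric ``back'' step adjoins $b_\alpha$ to the range using $\kappa$-saturation of $M$. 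At limit stages I take unions, which remain partial elementary since the defining condition $\phi^M(\a)=\phi^N(f\a)$ involves only finitely many coordinates at a time; and since fewer than $\kappa$ elements have been added by any stage $\alpha<\kappa$, the parameter sets stay of size $<\kappa$, so saturation always applies (when $\kappa=\aleph_0$ the stages are finite and the parameter sets finite).

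Setting $f=\bigcup_{\alpha<\kappa}f_\alpha$, I obtain a partial elementary map whose domain is dense in $M$ and whose range is dense in $N$. Because $d$ is among the relation symbols, $f$ preserves distances exactly, so it is an isometry between these two dense sets. By Proposition \ref{completion of prestructures} each $\phi^M$ and $\phi^N$ is Lipschitz, so $f$ extends uniquely by continuity to an isometry $\bar f:M\to N$; passing to the limit in $\phi^M(\a)=\phi^N(f\a)$ along a net from $\mathrm{dom}(f)$ then gives $\phi^M(\bar m)=\phi^N(\bar f\bar m)$ for every formula $\phi$ and every $\bar m\in M$, so $\bar f$ is elementary. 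Its image is complete, hence closed, and contains the dense set $\mathrm{ran}(f)$, so $\bar f$ is onto. Therefore $\bar f$ is a bijective elementary isometry, i.e.\ $M\simeq N$.

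I expect the main obstacle to be metric rather than combinatorial. The enumeration-and-saturation bookkeeping is routine; what has no counterpart in the discrete first order argument is the final passage from a partial elementary map on dense sets to an honest isomorphism. Verifying that the continuous extension $\bar f$ is still elementary and still surjective is exactly where completeness of $M$ and $N$ and the uniform (Lipschitz) continuity of formula interpretations from Proposition \ref{completion of prestructures} are indispensable, so I would organize the write-up to isolate and carefully justify that step.
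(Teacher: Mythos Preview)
Your proposal is correct and follows essentially the same approach as the paper: enumerate dense subsets of $M$ and $N$ of size $\kappa$, run a back-and-forth of length $\kappa$ using $\kappa$-saturation at each successor step to realize the transported type, take unions at limits, and extend the resulting partial elementary bijection between dense sets to an isomorphism by completeness. The paper's proof is terser about the final extension step (it just says ``$f$ extends to an isomorphism from $M$ to $N$ in the natural way''), whereas you spell out why the continuous extension remains elementary and surjective, which is a reasonable thing to make explicit in the metric setting.
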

\begin{proof}
Let $A=\{a_i: i<\kappa\}$, $B=\{b_i: i<\kappa\}$ be dense sets in $M$ and $N$ respectively.
We construct a chain $$f_0\subseteq f_1\subseteq\cdots\subseteq f_i\subseteq\cdots\ \ \ \ \ \ \ \ \ i<\kappa$$
of partial elementary maps such that $a_i\in Dom(f_{i+1})$ and $b_i\in Im(f_{i+1})$.
Let $f_0=\emptyset$ and assume $f_i$ is defined. Let $\bar{c},\bar{e}$ enumerate $Dom(f_i)$ and $Im(f_i)$ respectively
so that $\bar{c}\equiv\bar{e}$.
Let $$p(x)=\{\phi(x,\bar{e})=0:\ \phi^M(a_i,\bar{c})=0\}.$$
Then, $p$ is realized by say $b\in N$ and hence $f_i\cup\{(a_i,b)\}$ is a partial elementary map.
Similarly, there exists a partial elementary map $f_{i+1}=f_i\cup\{(a_i,b),(a,b_i)\}$.
For limit $i$ we let $f_i=\cup_{j<i}f_j$. Finally, $f=\cup_{i<\kappa}f_i$ is a partial elementary embedding
with $A\subseteq Dom(f)$ and $B\subseteq Im(f)$. Then, $f$ extends to an isomorphism from $M$ to $N$ in the natural way.
\end{proof}

\begin{proposition}
$M$ is $\kappa$-saturated if and only if it is $\kappa$-homogeneous and $\kappa^+$-universal.
For $\aleph_1\leqslant\kappa$, $M$ is $\kappa$-saturated if and only if it is $\kappa$-homogeneous and $\kappa$-universal.
\end{proposition}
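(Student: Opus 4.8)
The plan is to prove both equivalences by the standard back-and-forth bookkeeping, importing the realization lemma (Lemma \ref{realize}), the downward L\"owenheim--Skolem theorem (Proposition \ref{downward}), and the basic fact that a partial elementary map transports types to types. I would establish the first equivalence in full and then indicate the cardinality refinement needed for the second.

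For the forward direction ($\kappa$-saturated implies $\kappa$-homogeneous and $\kappa^+$-universal) I argue as follows. For homogeneity, given $\bar a\equiv\bar b$ with $|\bar a|<\kappa$ and $c\in M$, I push the type $\mathrm{tp}(c/\bar a)$ forward along the partial elementary map $\bar a\mapsto\bar b$ to obtain a type $p(y)$ over a parameter set of size $<\kappa$; by $\kappa$-saturation $p$ is realized by some $e\in M$, and then $\bar a\,c\equiv\bar b\,e$ by construction. For $\kappa^+$-universality, given $N\equiv M$ with $\mathsf{dc}(N)\leqslant\kappa$, I fix a dense set $\{d_i:i<\lambda\}$ of $N$ with $\lambda=\mathsf{dc}(N)\leqslant\kappa$ and build an increasing chain of partial elementary maps $f_i$ with domain $\{d_j:j<i\}$. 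Since $\kappa$ is an initial ordinal, every $i<\lambda\leqslant\kappa$ satisfies $|i|<\kappa$, so at each successor step the domain has size $<\kappa$; I realize the transported type of $d_i$ in $M$ by saturation and adjoin the new pair, taking unions at limits. The union $f=\bigcup_i f_i$ is a distance-preserving partial elementary map defined on a dense subset, so by its $1$-Lipschitz continuity and the completeness of $M$ it extends uniquely to an elementary embedding $N\rightarrow M$.

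For the converse ($\kappa$-homogeneous and $\kappa^+$-universal implies $\kappa$-saturated) I start with $A\subseteq M$, $|A|<\kappa$, and $p\in K_1(A)$. By Lemma \ref{realize} there is $M\preccurlyeq N$ realizing $p$ by some $b$, and $N\equiv M$. Universality supplies an elementary embedding $h:N\rightarrow M$; applying $h$ to the equalities witnessing realization shows that $h(b)$ realizes a copy of $p$ with parameters $h(A)$. Because $h$ is elementary and $M\preccurlyeq N$, the enumerations satisfy $\bar a\equiv h(\bar a)$ in $M$ with $|h(\bar a)|<\kappa$; so $\kappa$-homogeneity, applied to this pair together with the single element $h(b)$, produces $c\in M$ with $h(\bar a)\,h(b)\equiv\bar a\,c$, whence $\phi^M(\bar a,c)=\phi^M(h(\bar a),h(b))=\phi^N(\bar a,b)=p(\phi)$ for every $\phi$, so that $c$ realizes $p$.

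The main obstacle is the cardinality accounting in the converse, where $\kappa^+$-universality is genuinely needed because the model realizing a single type over $A$ may have density character as large as $|A|+\aleph_0=\kappa$ (for instance when $\kappa=\aleph_0$). For the refinement with $\aleph_1\leqslant\kappa$, I would first shrink $N$ using Proposition \ref{downward}: choose $N'\preccurlyeq N$ containing $A\cup\{b\}$ with $\mathsf{dc}(N')\leqslant|A|+\aleph_0$. Since $\aleph_0<\aleph_1\leqslant\kappa$ and $|A|<\kappa$, this maximum is $<\kappa$, so $N'\equiv M$ has $\mathsf{dc}(N')<\kappa$ and plain $\kappa$-universality already yields the embedding $h:N'\rightarrow M$, after which the homogeneity correction proceeds verbatim.
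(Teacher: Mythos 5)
Your proposal follows the paper's proof essentially step for step: homogeneity by realizing the pushed-forward type, $\kappa^+$-universality by a transfinite chain of partial elementary maps extended to all of $N$ by completeness, and the converse by realizing $p$ in an elementary extension, embedding a small copy back into $M$ by universality, and then correcting with homogeneity. The one defect is in your main converse paragraph: you apply $\kappa^+$-universality directly to the model $N$ produced by Lemma \ref{realize}, but that lemma gives no control on $\mathsf{dc}(N)$; since $M\preccurlyeq N$ forces $\mathsf{dc}(M)\leqslant\mathsf{dc}(N)$ and nothing in the proposition bounds $\mathsf{dc}(M)$ by $\kappa$, the hypothesis of $\kappa^+$-universality can fail for this $N$. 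The cure is exactly the downward L\"{o}wenheim--Skolem step that you describe only in your final paragraph: for every $\kappa$ (not just for $\aleph_1\leqslant\kappa$) one must first pass to $N'\preccurlyeq N$ containing $A\cup\{b\}$ with $\mathsf{dc}(N')\leqslant|A|+\aleph_0\leqslant\kappa$, and then embed $N'$ into $M$; this is what the paper does, and it is also why the final homogeneity correction is genuinely needed, since $N'$ is no longer an elementary extension of $M$. Your ``main obstacle'' paragraph (where you quote the bound $|A|+\aleph_0$) shows you intend this, and once that shrinking step is moved into the body of the argument for general $\kappa$, your proof is complete and coincides with the paper's.
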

\begin{proof}
Assume $M$ is $\kappa$-saturated. For $\kappa$-homogeneity, suppose that
$|\bar{a}|=|\bar{b}|=\alpha<\kappa$, $\bar{\a}\equiv\bar{b}$ and $c$ is given.
Then the type $$\{\phi(\b,y)=0: \phi^M(\a,c)=0\}$$ is realized in $M$ by say $e$. We then have that $\a c\equiv\b e$.
For $\kappa^+$-universality, assume $N\equiv M$ and $\mathsf{dc}(N)\leqslant\kappa$. Let $\{b_i: i<\kappa\}\subseteq N$ be dense.
We construct a chain $$f_0\subseteq f_1\subseteq\cdots f_i\subseteq\cdots\ \ \ \ \ \ \ \ \ \ \ i<\kappa$$
of partial elementary maps from $N$ to $M$ such that $b_i\in Dom(f_{i+1})$.
Set $f_0=\emptyset$ and assume $f_i$ is constructed. Let $\bar{c}$ enumerate the domain of $f_i$.
Then $|\bar{c}|<\kappa$ and the type $$\{\phi(f_i(\bar{c}),y)=0: \phi^N(\bar{c},b_i)=0\}$$ is realized by say $e\in M$.
So, $f_{i+1}=f_i\cup\{(b_i,e)\}$ is a partial elementary map. For limit $i$ set $f_i=\cup_{j<i}f_j$.
By completeness of $M$,\ \ $\cup_{i<\kappa} f_i$ extends to an elementary embedding of $N$ into $M$.

Conversely, assume $M$ is $\kappa$-homogeneous and $\kappa^+$-universal.
Let $A\subseteq M$, $|A|<\kappa$ and $p(x)\in K_1(A)$. There exists $M\preccurlyeq\bar{N}$ realizing $p(x)$
by say $b\in\bar{N}$.
Let $$A\cup\{b\}\subseteq N\preccurlyeq\bar{N}$$ where $\mathsf{dc}(N)\leqslant\kappa$. Then, there exists an elementary embedding
$f:N\rightarrow M$. Assume $\a$ enumerates $A$. Then, by $\kappa$-homogeneity, there exists $c\in M$ such that
$\a c\equiv f(\a)f(b)$. It is then clear that $c$ realizes $p(x)$.
The second part is a rearrangement of the proof for uncountable $\kappa$.
\end{proof}

\begin{proposition}\label{strong homogeneity}
Every $\kappa$-homogeneous model $M$ of density character $\kappa$ is strongly $\kappa$-homogeneous.
\end{proposition}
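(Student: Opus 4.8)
The plan is to run a back-and-forth argument, using $\kappa$-homogeneity at each step, to extend the partial elementary map $\a\mapsto\b$ to a partial elementary map whose domain and image are both dense in $M$, and then to extend this map by continuity to an automorphism of $M$. Recall that $M$ is a model and hence complete, and that $\mathsf{dc}(M)=\kappa$, so $M$ has a dense subset of size $\kappa$.

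First I would fix a dense set $\{d_i:i<\kappa\}$ of $M$ and set $f_0$ to be the map sending $\a$ to $\b$, which is partial elementary precisely because $\a\equiv\b$. I would then build an increasing chain $f_0\subseteq f_1\subseteq\cdots$ ($i<\kappa$) of partial elementary maps with $d_i\in Dom(f_{i+1})\cap Im(f_{i+1})$, obtained at stage $i$ by performing a \emph{forth} step (forcing $d_i$ into the domain) followed by a \emph{back} step (forcing $d_i$ into the image), and taking unions at limit stages. At a forth step I enumerate the current domain as a tuple $\c$ with image $\e=f_i(\c)$, so that $\c\equiv\e$; applying $\kappa$-homogeneity to $\c,\e,d_i$ yields $e\in M$ with $\c d_i\equiv\e e$, and I set $f_{i+1}=f_i\cup\{(d_i,e)\}$. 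A back step is symmetric: since the inverse of a partial elementary map is partial elementary we have $\e\equiv\c$, so $\kappa$-homogeneity applied to $\e,\c,d_i$ produces $a\in M$ with $\c a\equiv\e d_i$, and I set $f_{i+1}=f_i\cup\{(a,d_i)\}$. The point to check here is that $\kappa$-homogeneity is applicable at every stage, i.e. that the domain stays of size $<\kappa$: at stage $i<\kappa$ the domain has cardinality $|\a|+|i|$, and since $\kappa$ is an infinite cardinal with $|\a|,|i|<\kappa$, this sum is again $<\kappa$.

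The map $f=\bigcup_{i<\kappa}f_i$ is then partial elementary, and since elementary maps preserve the atomic formula $d(x,y)$ it is an isometry; moreover $Dom(f)$ and $Im(f)$ both contain the dense set $\{d_i:i<\kappa\}$. Because $f$ is a (uniformly continuous) isometry defined on a dense subset of the complete metric space $M$, it extends uniquely to an isometry $\bar f:M\rightarrow M$. Its image $\bar f(M)$ is complete, hence closed, and contains a dense set, so $\bar f$ is onto; thus $\bar f$ is a bijective isometry.

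It remains to verify that $\bar f$ is an elementary automorphism with $\bar f(\a)=\b$. Preservation of the whole structure follows by continuity: for any formula $\phi(\x)$ and $\bar m\in M^n$, choosing tuples $\bar d$ from $Dom(f)$ with $\bar d\rightarrow\bar m$ and using that $\phi^M$ is Lipschitz (Proposition \ref{completion of prestructures}) together with $\phi^M(\bar d)=\phi^M(f\bar d)=\phi^M(\bar f\bar d)$, one passes to the limit to obtain $\phi^M(\bar m)=\phi^M(\bar f\bar m)$. Taking $\phi$ among the atomic formulas built from the function, relation, and constant symbols shows that $\bar f$ respects the $L$-structure, and the general case shows it is elementary; hence $\bar f$ is an automorphism of $M$. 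Since $\a\in Dom(f)$ and $f(\a)=\b$, we conclude $\bar f(\a)=\b$, as desired. I expect the main obstacle to be the bookkeeping that keeps the domain of size $<\kappa$ so that $\kappa$-homogeneity applies at every stage, together with the concluding continuity argument that the completion of a densely-defined elementary isometry is a genuine elementary automorphism of the complete structure $M$.
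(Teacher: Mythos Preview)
Your proof is correct and follows essentially the same back-and-forth strategy as the paper, which merely sketches the construction (``It is then routine to construct a chain \ldots'') and asserts that the union extends to an automorphism. You have filled in precisely the details the paper omits: the cardinality bookkeeping ensuring $\kappa$-homogeneity remains applicable at each stage, and the continuity/completeness argument showing the densely-defined elementary isometry extends to an automorphism of $M$.
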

\begin{proof} Let $\a\equiv\b$ where $|\a|=|\b|<\kappa$ and $\{c_i:i<\kappa\}$ be a dense subset of $M$.
The map $f_0:\a\mapsto\b$ is partial elementary. It is then routine to construct a chain
$$f_0\subseteq f_1\subseteq\cdots\subseteq f_j\subseteq\cdots\ \ \ \ j<\kappa$$
of partial elementary maps such that $c_j$ belongs to both $Dom(f_i)$ and $Im(f_j)$.
Then, $\cup_j f_j$ extends to an automorphism of $M$.
\end{proof}

Again using chain arguments one easily proves the following.

\begin{proposition} \label{saturated2}
For each infinite $\kappa$, $T$ has a $\kappa^+$-saturated model of cardinality $2^{\kappa}$.
If $\kappa\geqslant\aleph_1$ is strongly inaccessible, $T$ has a $\kappa$-saturated model of cardinality $\kappa$.
\end{proposition}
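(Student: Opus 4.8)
The plan is to build, by transfinite recursion, an elementary chain whose union realizes all the required types, while keeping careful track of the density character at every stage. Note that the preceding proposition already produces $\kappa$-saturated elementary extensions, but it gives no control on their size; the whole point here is to realize, at each step, only the types over \emph{small} parameter sets and to count how many such types there are, using the bound $|K_n(A)|\leqslant 2^{\kappa}$ recorded above for $|A|\leqslant\kappa$ in a countable language.

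For the first assertion I would construct an elementary chain $(M_\alpha:\alpha<\kappa^+)$ of models of $T$, each of density character at most $2^\kappa$. Start with any $M_0\vDash T$ having $\mathsf{dc}(M_0)\leqslant 2^\kappa$. At a successor stage let $\mathcal P_\alpha$ collect all types $p\in K_n(A)$ with $A\subseteq M_\alpha$, $|A|\leqslant\kappa$, $n<\omega$; since there are at most $(2^\kappa)^{\kappa}=2^\kappa$ such parameter sets and at most $2^\kappa$ types over each, $|\mathcal P_\alpha|\leqslant 2^\kappa$. Enumerating $\mathcal P_\alpha$ and applying Lemma \ref{realize} repeatedly along an auxiliary elementary chain of length $2^\kappa$ — at each step realizing a not-yet-realized type over a subset of the current extension, then trimming by Proposition \ref{downward} to density character $\leqslant 2^\kappa$ about the previous model together with the new realization — produces $M_\alpha\preccurlyeq M_{\alpha+1}$ realizing every member of $\mathcal P_\alpha$, with $\mathsf{dc}(M_{\alpha+1})\leqslant 2^\kappa\cdot 2^\kappa=2^\kappa$. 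At a limit $\lambda$ I let $M_\lambda$ be the completion of $\bigcup_{\alpha<\lambda}M_\alpha$, an elementary extension of each earlier model by Proposition \ref{chain}, of density character $\leqslant 2^\kappa\cdot|\lambda|\leqslant 2^\kappa$. Finally $M=\overline{\bigcup_{\alpha<\kappa^+}M_\alpha}$ has $\mathsf{dc}(M)\leqslant 2^\kappa\cdot\kappa^+=2^\kappa$ since $\kappa^+\leqslant 2^\kappa$, and therefore cardinality at most $(2^\kappa)^{\aleph_0}=2^\kappa$.

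To see that $M$ is $\kappa^+$-saturated, take $A\subseteq M$ with $|A|\leqslant\kappa$ and $p\in K_1(A)$. The key point is a cofinality argument: because $\mathrm{cf}(\kappa^+)=\kappa^+>\kappa$, every subset of $\bigcup_\alpha M_\alpha$ of size $\leqslant\kappa$ already lies in some $M_\beta$ with $\beta<\kappa^+$. Since $M$ is the metric completion, $A$ itself need not lie in the union; here I would choose $A'\subseteq\bigcup_\alpha M_\alpha$ of size $\leqslant\kappa$ with $A\subseteq\overline{A'}$, note $A'\subseteq M_\beta$ for some $\beta$, and use that formulas are uniformly continuous in their parameters (Proposition \ref{completion of prestructures}) to reduce realizing $p$ over $A$ to realizing a type over $A'\subseteq M_\beta$. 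That type lies in $\mathcal P_\beta$, hence is realized in $M_{\beta+1}\preccurlyeq M$, and by continuity and completeness of $M$ the realizing element realizes $p$. This passage through the completion is the one genuinely delicate step; everything else is cardinal bookkeeping.

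The second assertion runs the same construction for $\kappa$ steps, realizing at each successor the types over parameter sets of size $<\kappa$, and strong inaccessibility supplies exactly the arithmetic needed. Regularity together with the strong-limit property give $\kappa^{<\kappa}=\kappa$ and $\sup_{\lambda<\kappa}2^{\lambda}=\kappa$, so there are at most $\kappa$ relevant types at each stage and each $M_\alpha$ can be kept of density character $\leqslant\kappa$; at limits below $\kappa$ the union has density character $\leqslant\kappa\cdot|\lambda|\leqslant\kappa$ by regularity. The final $M=\overline{\bigcup_{\alpha<\kappa}M_\alpha}$ then satisfies $\mathsf{dc}(M)\leqslant\kappa$, and by inaccessibility $\kappa^{\aleph_0}=\kappa$, so its actual cardinality is $\leqslant\kappa$, yielding a model of cardinality $\kappa$. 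Saturation is verified exactly as before, the cofinality argument now invoking regularity of $\kappa$ to place any parameter set of size $<\kappa$ inside some $M_\beta$. I expect the main obstacles to be, first, the reduction of a type over $A\subseteq M$ to one over a parameter set inside a single $M_\beta$ (combining cofinality with uniform continuity in the completion), and second, the verification that the strong-limit and regularity hypotheses keep every density character at the target value throughout the recursion.
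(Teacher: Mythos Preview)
Your argument is correct and is exactly the chain construction the paper has in mind when it says ``again using chain arguments one easily proves the following''; the paper gives no further details. Your treatment of the one continuous-logic subtlety---replacing $A\subseteq M$ by a countable-per-point approximating set $A'$ inside some $M_\beta$ and using Lipschitzness of formulas to transfer realization of $p|_{A'}$ to realization of $p$---is the right way to handle the metric completion, and the cardinal arithmetic is accurate (to get cardinality exactly $2^\kappa$ rather than $\leqslant 2^\kappa$, simply start with $M_0$ of density character $2^\kappa$, which is available by Propositions~\ref{upward} and~\ref{downward} for non-trivial $T$).
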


\begin{lemma} \label{aut-ext}
Let $f:A\subseteq M\rightarrow M$ be a partial elementary map where $\mathsf{dc}(M)\leqslant\kappa$.\\
\indent\emph{(i)} There are $M\preccurlyeq N$ and automorphism $f\subseteq g:N\rightarrow N$
such that $\mathsf{dc}(N)\leqslant\kappa$.

\emph{(ii)} There are $M\preccurlyeq N$ and automorphism $f\subseteq g:N\rightarrow N$ such that
$N$ is $\kappa$-saturated and $|N|\leqslant2^\kappa$.
\end{lemma}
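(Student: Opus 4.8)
The engine of both parts is the following observation, which I would isolate first. If $g\colon B\to N$ is a partial elementary map with $B\subseteq N$, and $\c$ is a (possibly infinite) tuple in some elementary extension of $N$, then the \emph{pushforward} type $g_*\,\mathrm{tp}(\c/B)$—obtained by replacing each parameter $b\in B$ occurring in a condition by $g(b)$ and keeping its value—is again a consistent type over $g(B)\subseteq N$. Indeed, a finite approximation involves only finitely many parameters $\b\subseteq B$, and since $g$ is partial elementary we have $(N,\b)\equiv(N,g\b)$; as $\mathrm{tp}(\c/\b)$ is realized, the matching finite piece over $g\b$ is satisfiable in $N$, so by affine compactness the whole pushforward is affinely satisfiable over $N$. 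By Lemma~\ref{realize} (in its evident infinitary form) it is realized in some $N\preccurlyeq N'$, and any realization $\c'$ makes $g\cup\{(c_i,c_i')\}$ partial elementary. Applying this to $g^{-1}$ symmetrically enlarges the \emph{range} of $g$.

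For part (i) I would build an elementary chain $M=N_0\preccurlyeq N_1\preccurlyeq\cdots$ of length $\omega$ together with partial elementary maps $f=g_0\subseteq g_1\subseteq\cdots$, where $g_n$ has domain and range inside $N_n$. At stage $n$ fix a dense $D_n\subseteq N_n$ with $|D_n|\le\kappa$ and apply the engine twice (to the tuple enumerating $D_n$ and then to $g_n^{-1}$) to obtain an extension of $g_n$ whose domain and range both contain $D_n$; set $N_{n+1}$ to be the resulting model, pruned by the Downward L\"owenheim--Skolem theorem (Proposition~\ref{downward}) so that $\mathsf{dc}(N_{n+1})\le\kappa$. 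Let $N$ be the completion of $\bigcup_n N_n$; by Proposition~\ref{chain} each $N_n\preccurlyeq N$, hence $M\preccurlyeq N$, and $\mathsf{dc}(N)\le\sum_n\mathsf{dc}(N_n)\le\aleph_0\cdot\kappa=\kappa$. The map $g=\bigcup_n g_n$ is partial elementary, hence isometric and injective, and by construction both its domain and its range are dense in $N$; extending it by uniform continuity gives an isometry with closed dense image, i.e. an automorphism $g\supseteq f$ of $N$. This proves (i).

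For part (ii) I would run the same back-and-forth but lengthen the chain and interleave saturation tasks. Using a bookkeeping enumeration, build an elementary chain $(N_\alpha)_{\alpha<\lambda}$ with coherent partial elementary maps $g_\alpha$, where at each successor step I handle one of three tasks: place a prescribed element in the domain of $g$, place one in the range, or realize a prescribed type $p$ over a parameter set $B\subseteq N_\alpha$ with $|B|<\kappa$. The first two are exactly the engine of part (i). For the third I realize $p$ by a new $c$ in an elementary extension and then immediately apply the engine to $\mathrm{tp}(c/N_\alpha)$ and to its pullback, so that $c$ is placed in both the domain and the range of the extended map; this ``closing off'' under images and preimages keeps each $g_\alpha$ injective with domain and range inside $N_\alpha$. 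At limit stages take unions (Proposition~\ref{chain}); since the bookkeeping ensures every element of $N_\lambda$ eventually lies in both the domain and the range of some $g_\alpha$, the union $g_\lambda=\bigcup_\alpha g_\alpha$ is an automorphism of $N_\lambda$. Choosing the chain of length $2^\kappa$—whose cofinality exceeds $\kappa$ by K\"onig's inequality—and recalling that over any parameter set of size $\le\kappa$ there are at most $2^\kappa$ types, standard cardinal bookkeeping makes every $<\kappa$-sized subset of $N_\lambda$ appear at some earlier stage with all its types realized, while $|N_\lambda|\le2^\kappa$. Completing $N_\lambda$ and extending $g_\lambda$ by continuity yields the required $\kappa$-saturated $N$ with $|N|\le2^\kappa$ and automorphism $g\supseteq f$. (Alternatively, one may first invoke the existence of $\kappa$-saturated elementary extensions together with Proposition~\ref{saturated2} to fix the target cardinality and then carry out the back-and-forth inside it.)

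The routine parts are the inductive checks that each $g_\alpha$ stays partial elementary and the metric clean-up—that an isometric map with dense domain and range extends to an automorphism of the completion, and that the completion of a $\kappa$-saturated model remains $\kappa$-saturated. The genuine difficulty lies in (ii): one must \emph{simultaneously} saturate and keep the partial map a total bijection, which is what forces the closing off under images and preimages, and one must control the cofinality and cardinality of the chain so that full $\kappa$-saturation is reached without exceeding $|N|\le2^\kappa$. The consistency of the pushforward type is precisely what makes each single step legal; everything else is bookkeeping.
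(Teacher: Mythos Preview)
Your proof is correct and follows the same back-and-forth-via-pushforward-types strategy as the paper. For (i) your single $\omega$-chain handling both directions at each step is equivalent to the paper's two-step presentation (first extend the domain to cover $M$, then alternate to make the map surjective). For (ii) there is a minor organizational difference: the paper takes the shortcut of invoking Proposition~\ref{saturated2} to insert pre-built $\kappa$-saturated models of cardinality $\le 2^\kappa$ at each step and runs a chain of length $\kappa$ (assuming $\kappa$ regular), whereas your main approach interleaves type-realization tasks directly into a chain of length $2^\kappa$, using K\"onig's inequality for the cofinality bound. Both routes work, and you do mention the paper's approach as your parenthetical alternative. One small simplification: since your chain has uncountable cofinality and each $N_\alpha$ is complete, the union $N_\lambda$ is already complete, so the clean-up about completions preserving $\kappa$-saturation is not actually needed.
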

\begin{proof}
(i) We give sketch of the proof which has two steps. 

\noindent{\sf Step 1}.
Let $\{c_i: i<\kappa\}$ be a dense subset of $M$. Construct a chain
$$M=M_0\preccurlyeq M_1\preccurlyeq\cdots\preccurlyeq M_i\preccurlyeq\cdots \ \ \ \ \ i<\kappa$$
and elementary maps $$f_i:A\cup\{c_j:j<i\}\rightarrow M_i$$
such that $\mathsf{dc}(M_i)\leqslant\kappa$, $f_0=f$ and $f_i\subseteq f_j$ whenever $i<j$.
Let $N_0$ be the completion of $\cup_iM_i$ and $\bar{f}=\cup_{i<\kappa} f_i$.
Then, $M\preccurlyeq N_0$, $\mathsf{dc}(N_0)\leqslant\kappa$ and $\bar{f}$ extends to an elementary map $f\subseteq g_0:M\rightarrow N_0$.
\vspace{1mm}

\noindent{\sf Step 2}. Iterating the step $1$, construct a chain
$$M=M_0\preccurlyeq N_0\preccurlyeq M_1\preccurlyeq N_1\preccurlyeq M_2\preccurlyeq N_2\preccurlyeq\cdots$$
and elementary maps $g_i:M_i\rightarrow N_i$ such that
$$f\subseteq g_0\subseteq g_1\subseteq\cdots,$$
$$N_i\subseteq Im(f_{i+1}), \ \ \ \ \ \ \ \ \ \ \ \mathsf{dc}(N_i)\leqslant\kappa.$$
Let $N$ be the completion of $\cup_iM_i=\cup_iN_i$. Then, $\cup_ig_i$ is an automorphism
of $\cup_i M_i$ which can be extended to an automorphism $f\subseteq g:N\rightarrow N$.
We have also that $\mathsf{dc}(N)\leqslant\kappa$.

(ii) We may assume $\kappa$ is regular. In the proof of (i), we may first replace each
$M_{i+1}$ in the step $1$ with a $\kappa$-saturated model of cardinality $\leqslant2^\kappa$.
We then set $N_0=\cup_i M_i$ which is a $\kappa$-saturated model of cardinality $\leqslant2^\kappa$.
Also, in the step 2, we continue the construction to obtain a longer chain
$$M=M_0\preccurlyeq N_0\preccurlyeq M_1\preccurlyeq N_1\preccurlyeq\cdots
\preccurlyeq M_i\preccurlyeq N_i\preccurlyeq\cdots\ \ \ \ \ \ \ \ i<\kappa$$
with $M_i$, $N_i$ being $\kappa$-saturated of cardinality $\leqslant2^\kappa$ and maps $f_i$
satisfying the mentioned requirements. Then set $N=\cup_i M_i=\cup_i N_i$ and $g=\cup_i g_i$.
\end{proof}

\begin{proposition} \label{homogeneous2}
Assume $M\vDash T$ and $\mathsf{dc}(M)\leqslant2^{\kappa}$.
Then, there exists a $\kappa$-saturated strongly $\kappa$-homogeneous $M\preccurlyeq N$ such that $|N|\leqslant2^{\kappa}$.
\end{proposition}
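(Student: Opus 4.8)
The plan is to obtain $N$ as the completion of the union of a continuous elementary chain $(N_i)_{i<\lambda}$ with $\lambda=2^\kappa$, starting from $N_0=M$ (note $|M|\leq(2^\kappa)^{\aleph_0}=2^\kappa$, since $M$ is a complete metric space of density character $\leq 2^\kappa$), in which two kinds of tasks are interleaved: \emph{saturation} tasks, which realize types over small subsets, and \emph{homogeneity} tasks, which build, for every small partial elementary map, an automorphism of the final model extending it. As in Lemma \ref{aut-ext}(ii) we may assume $\kappa$ is regular. The two cardinal facts driving the bookkeeping are $(2^\kappa)^{<\kappa}=2^\kappa$ and, by K\"onig's theorem, $\kappa<\mathrm{cf}(2^\kappa)$: the former bounds the number of tasks and keeps $\mathsf{dc}(N_i)\leq 2^\kappa$ at every stage, while the latter guarantees that every subset of $N=\bigcup_i N_i$ of size $<\kappa$ is already contained in some $N_i$.

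For the saturation tasks I would, via a bijection $\lambda\leftrightarrow\lambda\times\lambda$, enumerate all pairs $(A,p)$ with $A\subseteq N$, $|A|<\kappa$ and $p\in K_1(A)$, and at the designated stage pass to an elementary extension realizing $p$ (which exists by Lemma \ref{realize}), trimmed by the downward L\"owenheim--Skolem theorem (Proposition \ref{downward}) to keep density character $\leq 2^\kappa$. Since any $A$ of size $<\kappa$ is bounded in the chain, every type over it is realized at a later stage, so the union is $\kappa$-saturated. For the homogeneity tasks I would run, in parallel, one back-and-forth for each small partial elementary map $f$ that appears in the chain: maintaining an increasing chain $f\subseteq g^f_0\subseteq g^f_1\subseteq\cdots$ of partial elementary maps, at each assigned stage I put one more element of the current model into both the domain and the image of $g^f$, creating the required partner by realizing a single type in the next model (again Lemma \ref{realize}). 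This is exactly the two-step back-and-forth of Lemma \ref{aut-ext}, run simultaneously for all $f$.

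Passing to the completion of the union, $N$ is a complete structure with $M=N_0\preccurlyeq N$ (Proposition \ref{chain}), $|N|\leq 2^\kappa$, and $N$ is $\kappa$-saturated by the previous paragraph. For strong $\kappa$-homogeneity, let $\a\equiv\b$ with $|\a|=|\b|<\kappa$; the map $f\colon\a\mapsto\b$ is partial elementary and occurs in some $N_i$, so its associated back-and-forth yields $g^f=\bigcup_j g^f_j$, an elementary bijection of $\bigcup_i N_i$ onto itself with $g^f(\a)=\b$, which extends continuously to an automorphism of $N$ as required.

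The main obstacle is the bookkeeping itself: a single chain of length $2^\kappa$ must be arranged so that \emph{every} small type over \emph{every} small subset of the final model is realized and \emph{every} small partial elementary map of the final model is homogenized, even though subsets, types and maps keep appearing at later stages as new elements are added. The delicate point is the coherence of the homogeneity tasks --- each $g^f$ must end up a \emph{total} automorphism of $N$, not merely of some intermediate $N_i$, and the $2^\kappa$ back-and-forth systems must be advanced together while the chain stays $\kappa$-saturated and of size $\leq 2^\kappa$. This is precisely the parallelization of Lemma \ref{aut-ext}; I expect the essential subtlety to be that partners must be \emph{created} in elementary extensions rather than found inside a fixed model, since a fixed $\kappa$-saturated model of density character $2^\kappa$ need not realize the large types arising midway through a back-and-forth of length $2^\kappa$.
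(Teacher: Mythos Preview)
Your approach is correct but organized differently from the paper's. The paper builds a chain $(M_i)_{i<\kappa}$ of length only $\kappa$: at each successor step one applies a parallelized version of Lemma~\ref{aut-ext}(ii) to produce a $\kappa$-saturated $M_{i+1}$ of cardinality $\leq 2^\kappa$ in which every small partial elementary map over $M_i$ --- and, implicitly, every automorphism constructed at an earlier stage --- extends to a full automorphism of $M_{i+1}$; regularity of $\kappa$ then bounds small tuples in the union $N=\bigcup_{i<\kappa}M_i$. Your chain of length $2^\kappa$ with atomic steps (realize one type, or make one back-and-forth move) is more self-contained and closer to the standard first-order argument: it trades the wholesale appeal to Lemma~\ref{aut-ext} for explicit bookkeeping together with K\"onig's inequality $\mathrm{cf}(2^\kappa)>\kappa$. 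The paper's packaging keeps every intermediate model already $\kappa$-saturated (yours need not be) and hides the coherence of automorphism extensions inside the lemma; yours makes the stage-by-stage cardinality control transparent and in fact does not really need the regularity of $\kappa$ that you imported from Lemma~\ref{aut-ext}(ii).
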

\begin{proof}
We may assume $\kappa$ is regular. Let $\mathscr{C}$ be the family of all partial elementary maps $f:A\rightarrow M$
where $A\subseteq M$ and $|A|\leqslant\kappa$. Then $|\mathscr{C}|\leqslant2^{\kappa}$.
The proof of Lemma \ref{aut-ext} (ii) can be easily extended to show that
there is a $\kappa$-saturated $M\preccurlyeq M_1$ such that $|M_1|\leqslant2^{\kappa}$ and every
$f\in\mathscr{C}$ extends to automorphism of $M_1$ (the step $1$ must be simultaneously carried out for all $f\in\mathscr{C}$).
Repeating the argument, there is a chain
$$M=M_0\preccurlyeq M_1\preccurlyeq\cdots \preccurlyeq M_i\preccurlyeq\cdots\ \ \ \ \ \ \ \ i<\kappa$$
of $\kappa$-saturated models such that $\mathsf{dc}(M_i)\leqslant2^{\kappa}$ and every partial elementary map $f:A\rightarrow M_i$,
where $A\subseteq M_i$ and $|A|<\kappa$, extends to an automorphism of $M_{i+1}$.
Let, $N=\cup_{i<\kappa} M_i$. Then $M\preccurlyeq N$ is $\kappa$-saturated strongly $\kappa$-homogeneous
and $|N|\leqslant2^{\kappa}$.
\end{proof}

If we do not limit the cardinality of the extension, we can prove the following stronger result (see also \cite{BBHU} Prop. 7.12)

\begin{proposition}\label{saturated strong homogeneous}
For each $\kappa$ and $L$-structure $M$ there exists $M\preccurlyeq N$ which is $\kappa$-saturated
and strongly $\kappa$-homogeneous on every sublanguage of $L$.
\end{proposition}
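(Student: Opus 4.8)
The plan is to produce a single elementary extension $N\succcurlyeq M$ that is highly saturated, with the saturation \emph{increasing} along its construction, and then to read off all the desired reduct-properties from this one model. The starting observation is that saturation and homogeneity transfer automatically to reducts: if $N$ is $\kappa$-saturated and $L'\subseteq L$, then given $A\subseteq N$ with $|A|<\kappa$ and a type $p\in K_1(A)$ computed in the reduct $N{\restriction}L'$, one first extends $p$ to a complete $L$-type over $A$ (its $L'$-conditions stay consistent with $ediag(N)$, as witnessed in any $\kappa$-saturated $L$-elementary extension) and then realizes the latter in $N$ itself by $\kappa$-saturation; the realizing element realizes $p$ in $N{\restriction}L'$. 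Hence every reduct of a $\kappa$-saturated model is again $\kappa$-saturated, and in particular $\kappa$-homogeneous. So the only real content is strong homogeneity, and it suffices to arrange that, for every $L'\subseteq L$, every partial $L'$-elementary map between tuples of length $<\kappa$ extends to an $L'$-automorphism of $N$.

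For this I would not treat the sublanguages separately but build one model serving them all. Fix a strong-limit cardinal $\theta>|M|+2^{|L|+\kappa}$ of cofinality $>\kappa$ (such $\theta$ occur cofinally in the cardinals) and construct a continuous elementary chain $(N_\alpha)_{\alpha<\theta}$ with $N_0=M$, choosing $N_{\alpha+1}$ to be a $(|N_\alpha|+|L|+\kappa)^+$-saturated elementary extension of $N_\alpha$ of cardinality at most $2^{\,|N_\alpha|+|L|+\kappa}$ (which exists by the methods of Proposition \ref{saturated2}). Because $\theta$ is a strong limit these cardinalities stay below $\theta$, so the completion $N=\overline{\bigcup_{\alpha<\theta}N_\alpha}$ has $\mathsf{dc}(N)=\theta$, and by construction $N$ is $\mu^+$-saturated for every $\mu<\theta$; in particular it is $\kappa$-saturated. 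By the transfer remark above, every reduct $N{\restriction}L'$ inherits the same increasing saturation.

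Strong homogeneity is then obtained by a back-and-forth, exactly as in Proposition \ref{strong homogeneity} but now powered by the increasing saturation. Given $L'\subseteq L$ and tuples $\a,\b$ of length $<\kappa$ with the same type in $N{\restriction}L'$, fix a dense set $\{c_\xi:\xi<\theta\}$ of $N$ and build an increasing chain of partial $L'$-elementary maps $g_\xi$ with $g_0\colon\a\mapsto\b$, arranging $c_\xi\in Dom(g_{\xi+1})\cap Im(g_{\xi+1})$. At a successor stage the image of a new point is found by realizing, in $N{\restriction}L'$, the $g_\xi$-transport of its $L'$-type over $Dom(g_\xi)$: since $|Dom(g_\xi)|<\theta$, this is a type over a set of some size $\mu<\theta$ and is realized in $N$ by its $\mu^+$-saturation, and symmetrically for preimages; limit stages are unions. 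The union $g=\bigcup_{\xi<\theta}g_\xi$ is a distance-preserving $L'$-elementary bijection between dense subsets of $N$, so it extends by continuity to an $L'$-automorphism of $N$ carrying $\a$ to $\b$. As the same $N$ is used for every $L'$, this yields strong $\kappa$-homogeneity on every sublanguage.

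The main obstacle, and the reason the cardinal-arithmetic care is unavoidable, is the behaviour of the back-and-forth past size $\kappa$: plain $\kappa$-saturation extends partial elementary maps only while their domain has size $<\kappa$, whereas once such a map grows to size $\ge\kappa$ one must realize types over sets of that size. The increasing-saturation construction is precisely what overcomes this, and it forces $\mathsf{dc}(N)$ to be a special cardinal so that a dense subset can be exhausted in $\theta$ steps while saturation outruns every stage. The one technical point needing attention is that for singular $\theta$ a set of size $<\theta$ may fail to lie in a single $N_\alpha$; this is handled by the standard bookkeeping of the special-model construction, choosing $N_{\alpha+1}$ so as to absorb the relevant parameters and tracking where $Dom(g_\xi)$ sits. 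The sublanguage clause, by contrast, costs nothing beyond this, since the back-and-forth invokes only reduct saturation, which the single model $N$ supplies uniformly.
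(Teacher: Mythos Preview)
The paper does not actually prove this proposition; it states the result and defers to \cite{BBHU}, Proposition~7.12. The nearest in-paper argument is Proposition~\ref{homogeneous2}, whose method (iteratively extending each small partial elementary map to an automorphism along a chain) would generalise to all sublanguages by enumerating the $L'\subseteq L$ together with all small partial $L'$-elementary maps and absorbing them stage by stage.

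Your route is different and valid: you build a single special model of strong-limit cardinality $\theta$ with $\mathrm{cf}(\theta)>\kappa$ and read off strong $\kappa$-homogeneity of every reduct from one back-and-forth, using that saturation passes to reducts (which you argue correctly). One overclaim to flag: $N$ need not be $\mu^+$-saturated for \emph{every} $\mu<\theta$ when $\theta$ is singular, since a parameter set of size $\ge\mathrm{cf}(\theta)$ can be cofinal in the chain; but you identify exactly this and correctly defer to the standard special-model bookkeeping (enumerating $N$ compatibly with the filtration so that $Dom(g_\xi)$ and $Im(g_\xi)$ always sit inside some $N_\alpha$, whence $N_{\alpha+1}$ supplies the needed realization). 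With that understood, the proposal is sound. Compared with the iterated-automorphism approach it trades an explicit enumeration of sublanguages for a bit of cardinal arithmetic, with the payoff that a single model serves every reduct uniformly without any language-by-language work.
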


\begin{proposition}\label{homogeneous}
Let $T$ be complete and $M,N\vDash T$ be separable and $\aleph_0$-homogeneous.
Then $M\simeq N$ if and only if they realize the same types in every $K_n(T)$.
\end{proposition}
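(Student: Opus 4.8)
The plan is to prove the nontrivial implication by a back-and-forth construction, the whole point being that $\aleph_0$-homogeneity is exactly what converts the hypothesis ``the same types are realized'' into the ability to extend finite partial elementary maps one element at a time. For the easy direction, if $g:M\to N$ is an isomorphism then it is a bijective elementary embedding, so whenever $\a\in M^n$ realizes some $p\in K_n(T)$ the tuple $g(\a)$ realizes the same $p$ in $N$; hence $M$ and $N$ realize exactly the same types.

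For the converse, assume that for every $n$ a type $p\in K_n(T)$ is realized in $M$ if and only if it is realized in $N$. Since $M,N$ are separable, I would fix dense sequences $\{a_i:i<\omega\}\subseteq M$ and $\{b_i:i<\omega\}\subseteq N$ and build an increasing chain $f_0\subseteq f_1\subseteq\cdots$ of \emph{finite} partial elementary maps from $M$ to $N$, with $f_0=\emptyset$, arranging at odd stages that $a_i\in\mathrm{Dom}(f_{2i+1})$ and at even stages that $b_i\in\mathrm{Im}(f_{2i+2})$. Keeping each $f_n$ finite is what lets the homogeneity hypothesis (stated for finite tuples) apply at every stage.

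The forth step is the heart of the matter, and I expect it to be the main obstacle. Suppose $f$ is a finite partial elementary map, let $\c$ enumerate $\mathrm{Dom}(f)$ and put $\e=f(\c)$, so $\c\equiv\e$, and we wish to place $a_i$ in the domain. The type $p=tp^M(\c,a_i)$ lies in $K_{|\c|+1}(T)$ and is realized in $M$, so by hypothesis it is realized in $N$, say by $(\d,d')$. Then $tp^N(\d)=tp^M(\c)=tp^N(\e)$, i.e. $\d\equiv\e$ \emph{inside} $N$. The realization $(\d,d')$ sits on $\d$, not on the already-matched $\e$, and this is precisely the gap that homogeneity closes: applying $\aleph_0$-homogeneity of $N$ to the finite tuples $\d\equiv\e$ and the element $d'$ yields $e'\in N$ with $\d d'\equiv\e e'$, whence $tp^N(\e,e')=tp^N(\d,d')=tp^M(\c,a_i)$, so $f\cup\{(a_i,e')\}$ is again partial elementary. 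The back step, placing $b_i$ in the image, is symmetric, using that types realized in $N$ are realized in $M$ together with $\aleph_0$-homogeneity of $M$.

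Finally, I would set $f=\bigcup_n f_n$, a partial elementary (in particular distance-preserving) map whose domain contains the dense set $\{a_i\}$ and whose image contains the dense set $\{b_i\}$. Because every term and formula interpretation is Lipschitz (Proposition \ref{completion of prestructures}) and $f$ preserves the metric, $f$ extends uniquely by uniform continuity to an isometry $\hat f:M\to N$, using completeness of $N$. Passing to limits along the dense domain shows $\phi^M(\bar m)=\phi^N(\hat f(\bar m))$ for every formula $\phi$ and every $\bar m\in M$ (for a constant $c$, say, preservation of the value of $d(x,c)$ along a sequence $a_i\to c^M$ forces $\hat f(c^M)=c^N$), so $\hat f$ is an elementary embedding. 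Since $M$ and $N$ are complete and $\mathrm{Im}(\hat f)\supseteq\{b_i\}$ is dense, $\hat f$ is onto, and therefore an isomorphism $M\simeq N$.
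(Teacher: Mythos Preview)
Your proof is correct and follows essentially the same approach as the paper: the paper's proof is a terser version of exactly the same back-and-forth, realizing $tp(\c,a_i)$ in $N$ via the hypothesis and then sliding the realization onto the correct base tuple using $\aleph_0$-homogeneity, then extending the resulting map from a dense set by completeness. You have simply spelled out the details (separating the forth and back steps, and the passage to the completion) more carefully than the paper does.
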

\begin{proof}
We prove the non-trivial part. Assume $\a\in M$, $\b\in N$ are such that $tp(\a)=tp(\b)$.
Given $c\in M$, let $(\bar{u},v)\in N$ realize $tp(\a,c)$. Then, $tp(\a)=tp(\b)=tp(\bar{u})$.
By homogeneity, there exists $e\in N$ such that $tp(\a,c)=tp(\bar{u},v)=tp(\b,e)$.
Similarly, given $e\in N$, one finds $c\in M$ such that $tp(\a,c)=tp(\b,e)$.
Now, by a back and forth argument one defines a partial elementary map $f$ from a dense subset of $M$
onto a dense subset of $N$. Then, by completeness of $M$ and $N$,\ $f$ is extended to an isomorphism from $M$ onto $N$.
\end{proof}

A similar argument shows that 
Proposition \ref{homogeneous} holds in larger cardinalities too.
So. if $M\equiv N$ are complete $\kappa$-homogeneous models of density character $\kappa$
which realize the same types in every $K_n(T)$, then $M\simeq N$.

\begin{proposition}\label{homogeneous 3}
$M\vDash T$ is $\aleph_0$-saturated if and only if it is $\aleph_0$-homogeneous and realizes all types in every $K_n(T)$.
\end{proposition}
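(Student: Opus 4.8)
The plan is to prove the two implications separately, treating the forward direction as routine and concentrating the work on the converse. Suppose first that $M$ is $\aleph_0$-saturated. Every $p\in K_n(T)$ is a type over the empty set, which is finite, so it is realized in $M$ immediately from the definition of saturation; this gives the realization clause. For $\aleph_0$-homogeneity, given $\a\equiv\b$ of equal finite length and $c\in M$, I would push the type of $c$ over $\a$ forward along the partial elementary map $\a\mapsto\b$, obtaining $p(y)=\{\phi(\b,y)=0:\ \phi^M(\a,c)=0\}\in K_1(\b)$; saturation realizes $p$ by some $e\in M$, and then $\a c\equiv\b e$, as required. (This is just the case $\kappa=\aleph_0$ of the earlier characterization of saturation through homogeneity and universality.)

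For the converse, assume $M$ is $\aleph_0$-homogeneous and realizes every type in every $K_n(T)$; I must show it is $\aleph_0$-saturated, i.e. that every $p(y)\in K_1(A)$ with $A\subseteq M$ finite is realized in $M$. By the definition of saturation it suffices to treat $1$-types over finite sets, the $n$-type case being subsumed. Fix such an $A$, enumerate it as $\a$, and first pass to an elementary extension $M\preccurlyeq N$ realizing $p$, say by $b\in N$, using Lemma \ref{realize}. The crucial step is then to consider the type over the empty set $q=tp(\a,b)\in K_{|\a|+1}(T)$. By hypothesis $M$ realizes $q$, so there are $\a',b'\in M$ with $tp(\a',b')=tp(\a,b)$; in particular $\a'\equiv\a$.

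Now I would apply $\aleph_0$-homogeneity to the data $\a'\equiv\a$ together with the element $b'\in M$, which produces $e\in M$ with $\a'b'\equiv\a e$. Chaining the type equalities gives $tp(\a e)=tp(\a'b')=tp(\a,b)$, so that $\phi^M(\a,e)=\phi^N(\a,b)$ for every formula $\phi(\x,y)$ (using $M\preccurlyeq N$). Since $b$ realizes $p$ over $A$, this says precisely that $e$ realizes $p$ over $A$, finishing the proof that $M$ is $\aleph_0$-saturated.

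The main obstacle is exactly this converse, and within it the point that realizing types over the empty set only yields a witness pair $(\a',b')$ whose first block $\a'$ is elementarily equivalent to the prescribed tuple $\a$ rather than equal to it. Homogeneity is what repairs this: it transports the witness $b'$ from sitting over the wrong copy $\a'$ to an honest witness $e$ over $\a$ itself. The only care needed is to invoke the homogeneity clause in the correct orientation, namely with $\a'$ as the tuple being extended by $b'$ and $\a$ as its $\equiv$-partner receiving the new element $e$; with that bookkeeping, the remainder is a direct unwinding of the definitions.
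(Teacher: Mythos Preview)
Your proof is correct and follows essentially the same approach as the paper: realize the $(|\a|+1)$-type $tp(\a,b)$ over $\emptyset$ in $M$, then use $\aleph_0$-homogeneity to transport the witness so that it sits over $\a$ rather than over an equivalent copy. The only cosmetic difference is that you pass explicitly through an elementary extension via Lemma~\ref{realize} to produce $b$, whereas the paper simply writes $p(\x,y)$ for the resulting type over $\emptyset$ without that detour.
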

\begin{proof}
The `only if' part is obvious. Assume $p(\a,y)$ is a type over $\a\in M$ where $|\a|<\omega$.
Assume $p(\x,y)$ is realized by $(\b,c)\in M$. Then, $\a\equiv\b$ and by homogeneity, there exists $e$
such that $\a e\equiv \b c$. So, $p(\a,y)$ is realized by $e$. We conclude that $M$ is $\aleph_0$-saturated.
\end{proof}

It is also proved that a $\kappa$-homogeneous model which realizes all types in every
$K_n(T)$ is $\kappa$-saturated. 

\begin{proposition}\label{connected range}
Let $M$ be $\aleph_0$-saturated and $\phi_1(\x),...,\phi_n(\x)$ be formulas with parameters.
Then, the range of $f(\x)=(\phi^M_1(\x),...,\phi^n_n(\x))$ is compact and convex in $\mathbb R^n$.
\end{proposition}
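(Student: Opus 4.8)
The claim concerns the range of a tuple of formulas $f(\x)=(\phi_1^M(\x),\ldots,\phi_n^M(\x))$ as $\x$ ranges over $M^k$, where $M$ is $\aleph_0$-saturated. We must show this range is both compact and convex in $\Rn^n$. Convexity is the genuinely affine phenomenon here and should be the heart of the argument; compactness will follow from saturation together with the boundedness of formula values.

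**The plan for convexity.** The strategy is to realize midpoints (and hence, by iterating, all dyadic convex combinations, then all convex combinations by closure) using the ultramean construction and saturation. Given two parameters $\a,\b\in M^k$ with $f(\a)=\bar u$ and $f(\b)=\bar v$, I would form the ultramean $N=\frac12 M+\frac12 M$ (the two-point ultracharge from the remark following Theorem \ref{th1}). By the Ultramean Theorem, the element $w=[\a,\b]\in N$ satisfies $\phi_j^N(w)=\frac12\phi_j^M(\a)+\frac12\phi_j^M(\b)=\frac12(u_j+v_j)$ for each $j$. Since the diagonal embedding is elementary, $M\preccurlyeq N$, so the type $p(\x)$ over the parameters (the finitely many constants appearing in the $\phi_j$) saying ``$\phi_j(\x)=\frac12(u_j+v_j)$ for all $j$'' is consistent, being realized by $w$ in the extension $N$. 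By $\aleph_0$-saturation of $M$, this type is realized in $M$ itself, giving a genuine $\bar c\in M^k$ with $f(\bar c)=\frac12(\bar u+\bar v)$. Thus the range contains the midpoint of any two of its points; iterating produces every dyadic combination, and taking closure (once compactness is in hand) yields full convexity.

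**The plan for compactness.** By Proposition \ref{completion of prestructures}, each $\phi_j^M$ is bounded by $\mathbf{b}_{\phi_j}$, so the range lies in a bounded box in $\Rn^n$; it remains to show it is closed. Here I would again use saturation: if a point $\bar r\in\Rn^n$ is a limit of values $f(\a_m)$, then for every $\epsilon>0$ and every finite subfamily the conditions $|\phi_j(\x)-r_j|\leqslant\epsilon$ are simultaneously satisfiable in $M$, so the partial type $\{\phi_j(\x)=r_j: j\leqslant n\}$ over the finitely many parameters is finitely satisfiable, hence consistent with $T$. By $\aleph_0$-saturation it is realized in $M$, producing $\bar c$ with $f(\bar c)=\bar r$, so $\bar r$ lies in the range.

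**Main obstacle.** The delicate point is ensuring the midpoint type is genuinely a type over a \emph{finite} parameter set so that $\aleph_0$-saturation applies. The formulas $\phi_j$ carry only finitely many parameters between them, so the relevant parameter set $A$ is finite and $p\in K_k(A)$ is legitimate; one must simply check that consistency of $p$ with $Th(M,a)_{a\in A}$ follows from its realization in the elementary extension $N$ via Lemma \ref{realize}. I expect this bookkeeping, rather than any deep difficulty, to be where care is needed: confirming that passing to the ultramean $N$ keeps the same parameters fixed (which it does, since constants are interpreted diagonally) and that the resulting condition is an honest element of the type space $K_k(A)$.
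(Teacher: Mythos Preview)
Your proof is correct, and for closedness it matches the paper's argument exactly. For convexity, however, you take a genuinely different route. The paper argues directly and elementarily: fixing $\lambda\in(0,1)$, it shows that the set of $2n$ conditions $\phi_k(\x)\leqslant\lambda r_k+(1-\lambda)s_k$ and $\lambda r_k+(1-\lambda)s_k\leqslant\phi_k(\x)$ is affinely satisfiable by checking that any single nonnegative linear combination of them reduces to an inequality of the form $\sum_k\gamma_k\phi_k(\x)\leqslant\sum_k\gamma_k(\lambda r_k+(1-\lambda)s_k)$, which is satisfied by whichever of $\a,\b$ makes $\sum_k\gamma_k r_k$ and $\sum_k\gamma_k s_k$ the smaller. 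Saturation then realizes the point $\lambda\bar r+(1-\lambda)\bar s$ directly, for every $\lambda$ at once.

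Your ultramean approach is more conceptual: you manufacture the witness in $\frac12M+\frac12M$ and pull it back. This is perfectly sound, and in fact you could streamline it by using $\lambda M+(1-\lambda)M$ for arbitrary $\lambda$ rather than iterating midpoints and appealing to closure. What the paper's bare-hands argument buys is that it avoids invoking the ultramean machinery and the diagonal embedding, making the proposition self-contained; what your approach buys is that it makes transparent \emph{why} convexity holds---it is literally the ultramean theorem acting on the value tuple---and the same idea would generalize immediately to infinite convex combinations as in Proposition~\ref{midpoints}.
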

\begin{proof}
Let $f(\a)=\bar r$ and $f(\b)=\bar s$ and $\lambda\in(0,1)$ be fixed.
We show that $$\{\phi_k(\x)\leqslant\lambda r_k+(1-\lambda)s_k\}_{k=1}^n
\cup\{\lambda r_k+(1-\lambda)s_k\leqslant\phi_k(x)\}_{k=1}^n$$ is satisfiable.
For this, one must show that for any $\alpha_k,\beta_k\geqslant0$ the condition
$$\sum_k\alpha_k\phi_k(\x)+\sum_k\beta_k(\lambda r_k+(1-\lambda)s_k)\leqslant
\sum_k\alpha_k(\lambda r_k+(1-\lambda)s_k)+\sum_k\beta_k\phi_k(\x)$$
or equivalently
$$\sum_k(\alpha_k-\beta_k)\phi_k(\x)\leqslant\sum_k(\alpha_k-\beta_k)(\lambda r_k+(1-\lambda)s_k)$$
is satisfiable. One verifies easily that this holds for at least one of $\a$ and $\b$.
Therefore there exists $\bar c$ such that $f(\bar c)=\lambda\bar r+(1-\lambda)\s$.
For closedness, let $\bar u$ be in the closure of the range of $f$.
Then the set
$$\big\{u_k-\epsilon\leqslant\phi_k(\x)\leqslant u_k+\epsilon|\ \ \ \epsilon>0, \ k=1,...,n\big\}$$
is finitely satisfiable. Hence satisfiable in $M$.
\end{proof}
\vspace{1mm}

\begin{proposition} \label{midpoints}
Let $M$ be $\aleph_1$-saturated and $a_1,a_2,...\in M$.
Assume $d(a_i,a_j)=r_{ij}$ and $\sum_{i=1}^{\infty}u_i=1$ where $u_i\geqslant0$.
Then, there exists $x\in M$ such that\ $d(x,a_j)=\sum_{i=1}^\infty u_ir_{ij}$\ for every $j$.
\end{proposition}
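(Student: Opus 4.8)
The plan is to realize the desired $x$ as a point of a suitable type over the $a_j$ and then invoke $\aleph_1$-saturation. Set $s_j=\sum_{i=1}^\infty u_ir_{ij}$; since $0\leqslant r_{ij}\leqslant1$ (the diameter of $M$ is at most $1$) and $\sum_iu_i=1$, each series converges absolutely and $0\leqslant s_j\leqslant1$. Let $A=\{a_j:j\geqslant1\}\subseteq M$ and consider the set of conditions
$$p(x)=\{d(x,a_j)=s_j:\ j\geqslant1\}.$$
Because $|A|\leqslant\aleph_0<\aleph_1$, once I establish that $p$ is a consistent partial type over $A$ (i.e. consistent with $Th(M,a)_{a\in A}$), I may extend it to a complete type in $K_1(A)$, which is then realized in $M$ by $\aleph_1$-saturation; the realizing element is the required $x$. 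Thus everything reduces to the consistency of $p$.

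To prove consistency I would pass to a powermean. Let $\mu$ be the countably additive probability measure on $\Nn$ with $\mu(\{i\})=u_i$; since $\sum_iu_i=1$ this measure is concentrated on its atoms, and being countably additive it is in particular finitely additive, hence a legitimate ultracharge. Form the powermean $M^\mu=\prod_\mu M$, and recall that the diagonal embedding $\Delta:M\rightarrow M^\mu$, $a\mapsto[a]$, is elementary. Naming each parameter $a_j$ in $M$ together with its image $\Delta(a_j)$ in $M^\mu$, elementarity upgrades to $(M,a_j)_j\preccurlyeq(M^\mu,\Delta(a_j))_j$. As the candidate witness I take $w\in M^\mu$, the class of the sequence whose $i$-th coordinate is $a_i$.

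The crux is the value computation: by the definition of the metric on the ultramean (the atomic case of Theorem \ref{th1}),
$$d^{M^\mu}(w,\Delta(a_j))=\int d^M(a_i,a_j)\,d\mu=\sum_{i=1}^\infty u_i\,r_{ij}=s_j,$$
the middle equality holding because $\mu$ sits on its atoms and the integrand $i\mapsto d^M(a_i,a_j)$ is bounded by $1$, so its integral is exactly the convergent series. Hence $w$ realizes $p$ inside the elementary extension $(M^\mu,\Delta(a_j))_j$ of $(M,a_j)_j$, which shows that $ediag(M)\cup p$ is satisfiable and therefore that $p$ is a consistent partial type over $A$; the saturation step then delivers $x\in M$ with $d(x,a_j)=s_j$ for every $j$. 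The main obstacle is precisely this passage through the infinite sum: one must select the ultracharge so that integration against it reproduces the prescribed series $\sum_iu_ir_{ij}$ rather than some merely finitely additive average, and the choice of a countably additive $\mu$ with atoms of mass $u_i$ (together with the diameter bound $r_{ij}\leqslant1$ ensuring integrability and convergence) is exactly what makes the key equality hold; the remaining ingredients — elementarity of $\Delta$ and the realization of types in $\aleph_1$-saturated models — are routine.
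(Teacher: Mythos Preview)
Your proof is correct, but it takes a different route from the paper's. The paper verifies directly that the partial type $p(x)=\{d(x,a_j)=s_j:j\geqslant1\}$ is affinely satisfiable \emph{inside $M$}: for any finite linear combination $\phi(x)=\sum_{j=1}^n s_jd(x,a_j)$ with target value $s=\sum_j s_j\sum_i u_ir_{ij}$, a weighted-mean argument shows that some $a_k$ satisfies $\phi^M(a_k)\leqslant s$ and some $a_\ell$ satisfies $\phi^M(a_\ell)\geqslant s$, whence $\phi(x)=s$ is satisfiable in $M$; $\aleph_1$-saturation then finishes. Your argument instead builds an explicit realization in an elementary extension: with $\mu$ the atomic ultracharge of masses $u_i$ on $\Nn$, the element $w=[a_i]\in M^\mu$ satisfies $d(w,\Delta(a_j))=\int d(a_i,a_j)\,d\mu=s_j$, so $p$ is consistent and hence realized in $M$ by saturation. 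The paper's approach is more elementary (no powermean machinery, witnesses are found among the $a_k$ themselves via a simple averaging inequality), while yours is more conceptual: the point $w$ is visibly the ``$\mu$-barycenter'' of the sequence $(a_i)$, which makes the result feel inevitable and suggests how to generalize to other convex combinations of parameters.
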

\begin{proof} It is sufficient show that for every $s_1,...,s_n\in\Rn$ the condition
$$\phi(x)=\sum_{j=1}^ns_j\ d(x,a_j)=s$$ is satisfiable in $M$ where
$$s=\sum_{j=1}^ns_j\sum_{i=1}^\infty u_ir_{ij}.$$
Note that there must exist $k$ such that $\sum_{j=1}^ns_j r_{kj} \leqslant s$.
Since otherwise, $$s=\sum_{i=1}^\infty u_i\sum_{j=1}^ns_j r_{ij}>\sum_{i=1}^\infty u_is=s$$
which is impossible. For such $k$ we will have that $\phi^M(a_k)\leqslant s$.
Similarly, there exists $\ell$ such that $\sum_js_j r_{\ell j} \geqslant s$
and hence $\phi^M(a_\ell)\geqslant s$.
This shows that both $\phi(x)\leqslant s$ and $\phi(x)\geqslant s$
are satisfiable in $M$. We conclude that $\phi(x)=s$ is satisfiable in $M$.
\end{proof}

In particular, if $d(a_i,a_j)=\delta_{ij}$ and $u_i=\frac{1}{n}$ for
$1\leqslant i,j\leqslant n$, then there is $x$ such that $d(x,a_i)=\frac{n-1}{n}$.
For $n=2$, $x$ is the midpoint of $a_1$ and $a_2$.
A {\em geodesic} in a metric space $M$ is a distance preserving map
$\gamma: [0,|\gamma|]\rightarrow M$.
A {\em geodesic space} is a metric space wherein any two points are joined by a geodesic.
If $M$ is $\aleph_0$-saturated, any two points have a midpoint.
This implies (by completeness of the metric) that any two points are joined by a geodesic.
So, every $\aleph_0$-saturated model is a geodesic space.
This also shows that the class of geodesic metric spaces is not elementary.

\subsection{Metrics of the type spaces} \index{metric topology of types}}
The space of $n$-types $K_n(T)$ is equipped with several topologies the most important of
which is the logic topology induced by the weak* topology on $\mathbb{D}_n(T)^*$.
This topology is compact Hausdorff by the Banach-Alaoglu theorem.
It is also is equipped with the \emph{norm metric} defined by
$$\|p-q\|=\inf\big\{\lambda:\ \ \forall\phi\in\mathbb{D}_n(T),\ \ |p(\phi)-q(\phi)|\leqslant\lambda\|\phi\|_\infty\big\}.$$
Also, if $T$ is complete, $K_n(T)$ inherits a metric from the models of $T$ defined below.
Let $M\vDash T$ be $\aleph_0$-saturated. If $\a$ realizes $p$ we write $\a\vDash p$.
Then $$\mathbf{d}(p,q)=\inf\{d(\a,\b):\ \a,\b\in M,\ \a\vDash p,\ \b\vDash q\}.$$
Note that if $s<{\bf d}(p, q)$, the partial type
$$p(\x)\cup q(\y)\cup \{d(\x,\y)\leqslant s\}$$ is unsatisfiable. So, ${\bf d}$ is independent of the choice of $M$.
This will be called the \emph{logic metric} on $K_n(T)$.
By definition, ${\bf d}(tp(\a),tp(\b))\leqslant d(\a,\b)$ for every $\a,\b\in M$.

A \emph{topometric} space is a Hausdorff topological space $X$ equipped with a metric
$d$ which refines the topology and is lower semi-continuous with respect to it, i.e.
for each $r\geqslant0$, $\{(x,y)\in X^2: d(x,y)\leqslant r\}$ is closed in $X^2$.
In such a structure, if the topology is compact, the metric is complete.

\begin{proposition}
\emph{(i)} The restriction maps $\pi_n:K_{n+1}(T)\rightarrow K_n(T)$ are logic continuous,
affine and $1$-Lipschitz with respect to both logic and norm metrics.

\emph{(ii)} $K_n(T)$ equipped with the logic topology and logic metric is a topometric space.

\emph{(iii)} $K_n(T)$ equipped with the logic topology and norm metric is a topometric space.
\end{proposition}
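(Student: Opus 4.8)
The plan is to handle the three clauses in order, throughout identifying a type with the corresponding norm-one positive linear functional on $\mathbb{D}_n(T)$ and taking the logic topology to be the weak* topology. For (i), observe that $\pi_n$ is just restriction of a functional on $\mathbb{D}_{n+1}(T)$ to the subspace $\mathbb{D}_n(T)$ of formulas in which the last variable does not occur. Affinity is then immediate from linearity of restriction, and logic continuity holds because each evaluation $p\mapsto p(\phi)$ with $\phi\in\mathbb{D}_n(T)$ is already weak* continuous on $K_{n+1}(T)$. For the norm metric I would use that the sup-norm of $\phi(\x)$ is unchanged when $\phi$ is viewed in $\mathbb{D}_{n+1}(T)$ (the extra variable is dummy), so any $\lambda$ witnessing $|p(\phi)-q(\phi)|\leqslant\lambda\|\phi\|_\infty$ on all of $\mathbb{D}_{n+1}(T)$ witnesses it on $\mathbb{D}_n(T)$; hence $\|\pi_n p-\pi_n q\|\leqslant\|p-q\|$. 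For the logic metric, realize $p,q$ in an $\aleph_0$-saturated $M\vDash T$: if $(\a,a)\vDash p$ and $(\b,b)\vDash q$ then $\a\vDash\pi_n p$, $\b\vDash\pi_n q$, and since the tuple metric is additive, $d(\a,\b)\leqslant d((\a,a),(\b,b))$; taking infima yields $\mathbf{d}(\pi_n p,\pi_n q)\leqslant\mathbf{d}(p,q)$.

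For (ii) and (iii) the Hausdorff requirement is free, the logic topology being weak*. The refinement property---that each metric is finer than the logic topology---reduces in both cases to controlling finitely many coordinate evaluations by the metric. For the norm metric the estimate $|p(\phi)-q(\phi)|\leqslant\|p-q\|\,\|\phi\|_\infty$ is built into the definition. For the logic metric I would use that $\phi^M$ is $\lambda_\phi$-Lipschitz (Proposition \ref{completion of prestructures}): if $\mathbf{d}(p,q)<\delta$ there are realizations with $d(\a,\b)<\delta$, whence $|p(\phi)-q(\phi)|\leqslant\lambda_\phi\delta$. In either case a small enough metric ball about $p$ sits inside any basic logic-neighborhood of $p$.

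The real content is lower semi-continuity, i.e. closedness of $\{(p,q):\mathbf{d}(p,q)\leqslant r\}$ in $K_n(T)^2$, and likewise for $\|\cdot\|$. For the norm metric this is routine: since $\|p-q\|\leqslant r$ is equivalent to $|p(\phi)-q(\phi)|\leqslant r\|\phi\|_\infty$ for every $\phi$ (the admissible $\lambda$ form a closed half-line, so the defining infimum is attained), the sublevel set is the intersection over $\phi$ of the logic-closed sets $\{(p,q):|p(\phi)-q(\phi)|\leqslant r\|\phi\|_\infty\}$. The logic metric is the main obstacle, and here I would realize the sublevel set as a continuous image of a compact set. Put $\psi=d(\x,\y)\in\mathbb{D}_{2n}(T)$, let $D_r=\{t\in K_{2n}(T):t(\psi)\leqslant r\}$, and let $\pi_{\x}\times\pi_{\y}:K_{2n}(T)\to K_n(T)^2$ send a $2n$-type to the pair of its restrictions to $\x$ and to $\y$ (each factor is a restriction map, hence logic continuous as in (i)). I claim $\{(p,q):\mathbf{d}(p,q)\leqslant r\}=(\pi_{\x}\times\pi_{\y})(D_r)$: realizing any $t\in D_r$ in a saturated model gives $\a\vDash p$, $\b\vDash q$ with $d(\a,\b)=t(\psi)\leqslant r$, so $\mathbf{d}(p,q)\leqslant r$; conversely, if $\mathbf{d}(p,q)\leqslant r$ choose realizations with $d(\a_k,\b_k)<r+\frac1k$, let $t_k=tp(\a_k,\b_k)$, and pass to a logic limit point $t$, which satisfies $\pi_{\x}t=p$, $\pi_{\y}t=q$ and $t(\psi)\leqslant r$ by continuity of the relevant evaluations. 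Since $D_r$ is logic-closed in the compact space $K_{2n}(T)$ it is compact, so its image is compact and therefore closed in the Hausdorff space $K_n(T)^2$. The one delicate point to verify carefully is this passage to a limit type, which is exactly where compactness of $K_{2n}(T)$ (together with saturation, to produce the near-realizations) does the work.
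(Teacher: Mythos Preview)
Your argument is correct and follows the same overall architecture as the paper: parts (i) and (ii) match the paper's proof essentially line for line (you are merely more explicit about why the image of $D_r$ is closed and about obtaining a type $t\in D_r$ projecting to $(p,q)$, where the paper just asserts the image description).

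The one genuine divergence is in (iii), lower semi-continuity of the norm metric. You argue directly that
\[
\{(p,q):\|p-q\|\leqslant r\}=\bigcap_{\phi}\{(p,q):|p(\phi)-q(\phi)|\leqslant r\|\phi\|_\infty\},
\]
which is an intersection of logic-closed sets in $K_n(T)^2$. The paper instead reuses the $K_{2n}(T)$ trick from (ii), writing the sublevel set as the image of $\bigcap_{\phi(\x)}\{q(\x,\y):q(\phi(\x))-q(\phi(\y))\leqslant r\|\phi\|_\infty\}$ under the projection. Your route is simpler and avoids the need to check surjectivity of $K_{2n}(T)\to K_n(T)^2$; the paper's route has the virtue of giving a uniform picture for both metrics. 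Either is fine.
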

\begin{proof}
(i) Logic continuity is obvious. Let $\a b\vDash p(\x y)$ and $\c e\vDash q(\x y)$ be such that $\mathbf{d}(p,q)=d(\a b,\c e)$.
Then, $$\mathbf{d}(p|_{\x},q|_{\x})\leqslant d(\a,\c)\leqslant\mathbf{d}(p,q).$$
Also, assume $\|p(\x y)-q(\x y)\|=\lambda$. Then, for every $\phi(\x)$,
$$\|p|_{\x}(\phi)-q|_{\x}(\phi)\|\leqslant\lambda\|\phi\|_\infty.$$
So, $\|p|_{\x}-q|_{\x}\|\leqslant\lambda=\|p-q\|$.

(ii) Let $p$ be type, $\phi$ a formula and $\epsilon>0$.
Assume $\a\vDash p$ and $p(\phi)=r$. Then, since $\phi^M$ is $\lambda_\phi$-Lipschitz,
$$\{tp(\b): d(\a,\b)<\frac{\epsilon}{\lambda_\phi}\}\subseteq\{q:\ r-\epsilon<q(\phi)<r+\epsilon\}$$
Varying $\a$ over other realizations of $p$, one deduces that metric topology is finer than the logic one.
For lower semi-continuity of $d$ note that, up to a change of variables,
$$\{(p_1,p_2)\in (K_n(T))^2:\ {\bf d}(p_1,p_2)\leqslant r\}$$
is the image under the map $K_{2n}(T)\rightarrow K_n(T)\times K_n(T)$ of the set
$$\{q(\x,\y)\in K_{2n}(T):\ \ q(d(\x,\y))\leqslant r\}=\{q:\ \hat d(q)\leqslant r\}.$$

(iii) The norm topology is obviously finer that the logic one.
For lower semi-continuity of the norm metric, again, up to a change of variables,
$$\{(p_1,p_2)\in (K_n(T))^2:\ \|p_1-p_2\|\leqslant r\}$$
is the image under the map $K_{2n}(T)\rightarrow K_n(T)\times K_n(T)$ of the set
$$\bigcap_{\phi(\x)}\{q(\x,\y)\in K_{2n}(T):\ \ q(\phi(\x))-q(\phi(\y))\leqslant r\|\phi\|_\infty\}.$$
\end{proof}

In first order logic, every countable model in a countable language has a countable $\aleph_0$-homogeneous elementary extension.
If a similar property holds in AL, one can prove a criterion for the existence of separable saturated models.

\begin{proposition}
Assume for every separable $M\vDash T$ there is an $\aleph_0$-homogeneous separable $M\preccurlyeq N$.
Then, $T$ has a separable $\aleph_0$-saturated model if and only if every $K_n(T)$ is metrically separable.
\end{proposition}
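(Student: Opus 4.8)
The plan is to reduce both directions to the intrinsic description of $\aleph_0$-saturation given by Proposition \ref{homogeneous 3}: a model of $T$ is $\aleph_0$-saturated if and only if it is $\aleph_0$-homogeneous and realizes every type in every $K_n(T)$. Throughout, \emph{metrically separable} refers to the logic metric $\mathbf{d}$, for which one always has $\mathbf{d}(tp(\a),tp(\b))\leqslant d(\a,\b)$ and $\mathbf{d}(p,q)=\inf\{d(\bar u,\bar v):\bar u\vDash p,\ \bar v\vDash q\}$ in an $\aleph_0$-saturated model. The forward implication is then immediate: if $M$ is a separable $\aleph_0$-saturated model with countable dense $D\subseteq M$, then every $p\in K_n(T)$ is realized by some $\c\in M^n$, and approximating $\c$ by a tuple $\a\in D^n$ yields $\mathbf{d}(p,tp(\a))\leqslant d(\c,\a)$ arbitrarily small; hence $\{tp(\a):\a\in D^n\}$ is a countable $\mathbf{d}$-dense subset of $K_n(T)$.

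For the converse I would first manufacture a single separable, complete, $\aleph_0$-homogeneous $N$ realizing a carefully chosen \emph{countable} family $\mathcal D$ of types. Using metric separability, fix for each $n$ a countable $\mathbf{d}$-dense $D_n\subseteq K_n(T)$. For every pair $p,p'\in D_n$ and every $j<\omega$, the formula defining $\mathbf{d}$ guarantees that $p(\x)\cup p'(\y)\cup\{d(\x,\y)\leqslant\mathbf{d}(p,p')+2^{-j}\}$ is consistent with $T$; extend it to a complete type $\tau_{p,p',j}\in K_{2n}(T)$, so that $\tau|_{\x}=p$, $\tau|_{\y}=p'$ and $\tau(d(\x,\y))\leqslant\mathbf{d}(p,p')+2^{-j}$. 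Let $\mathcal D$ consist of all the $D_n$ together with all these (countably many) pair-types. By Lemma \ref{realize} each member of $\mathcal D$ is realized in some elementary extension, which Proposition \ref{downward} lets me take separable; iterating along an elementary chain and completing yields a separable $M_\omega\vDash T$ realizing all of $\mathcal D$. By hypothesis there is a separable $\aleph_0$-homogeneous $N\succcurlyeq M_\omega$, and since $M_\omega\preccurlyeq N$ this $N$ still realizes every member of $\mathcal D$.

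The heart of the matter is the claim that such an $N$ realizes \emph{every} $q\in K_n(T)$; granting it, $N$ is $\aleph_0$-saturated by Proposition \ref{homogeneous 3} and the proof is complete. To realize $q$ I would build a $d$-Cauchy sequence $(\a_k)$ in $N^n$ with $tp(\a_k)=p_k\in D_n$ and $\mathbf{d}(p_k,q)<2^{-k}$, so that $\mathbf{d}(p_k,p_{k+1})<2^{-k+1}$. The step from $\a_k$ to $\a_{k+1}$ is where homogeneity enters and is the main obstacle. The pair-type $\tau_{p_k,p_{k+1},k}\in\mathcal D$ is realized in $N$ by some $(\b,\c)$ with $\b\vDash p_k$, $\c\vDash p_{k+1}$ and $d(\b,\c)\leqslant\mathbf{d}(p_k,p_{k+1})+2^{-k}$; since $\a_k\vDash p_k$ forces $\a_k\equiv\b$, $\aleph_0$-homogeneity (applied coordinatewise to the finite tuple $\c$) produces $\a_{k+1}\in N$ with $\b\c\equiv\a_k\a_{k+1}$, whence $tp(\a_{k+1})=p_{k+1}$ and $d(\a_k,\a_{k+1})=d(\b,\c)$. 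These bounds are summable, so by completeness of $N$ we get $\a_k\to\a\in N^n$, and $\mathbf{d}(tp(\a),q)\leqslant d(\a,\a_k)+\mathbf{d}(p_k,q)\to0$ forces $tp(\a)=q$.

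The delicate point, and the reason the pair-types are folded into $\mathcal D$ from the start, is that $\aleph_0$-homogeneity transports only tuples of \emph{exactly} equal type: approximating the distance-witnessing $2n$-type after the fact would leave $\b$ with a type merely $\mathbf{d}$-close to $tp(\a_k)$, so $\a_k\equiv\b$ would fail and the alignment would break. Pre-realizing the exact witnesses $\tau_{p,p',j}$ is precisely what converts the abstract infimum defining $\mathbf{d}$ into genuine $N$-internal distances, and this is what I expect to require the most care to state correctly; the surrounding chain construction and the verification of Proposition \ref{homogeneous 3}'s hypotheses are then routine.
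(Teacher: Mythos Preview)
Your proposal is correct and follows essentially the same route as the paper's proof: both directions hinge on Proposition \ref{homogeneous 3}, and for the converse both build a separable $\aleph_0$-homogeneous model realizing a countable dense family of types \emph{together with} witnessing pair-types, then use homogeneity to align successive realizations into a Cauchy sequence converging to a realization of an arbitrary type. The only cosmetic differences are that the paper absorbs the pair-types directly into the dense sets $U_{2n}$ and uses the exact bound $d(\x,\y)\leqslant\mathbf d(q_1,q_2)$ (which is consistent by compactness), whereas you keep the pair-types separate and add the slack $2^{-j}$; neither change affects the argument.
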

\begin{proof}
The map $M^n\rightarrow K_n(T)$ defined by $\a\mapsto tp(\a)$ is $1$-Lipschitz.
So, if $M$ is separable and $\aleph_0$-saturated, the image of any countable dense subset of $M^n$ is metrically dense in $K_n(T)$.
Conversely, assume that every $K_n(T)$ is metrically separable.
Let $U_n\subseteq K_n(T)$ be countable and metrically dense for each $n$.
We may further assume that whenever $q_1(\x),q_2(\x)\in U_n$, there is $q(\x\y)\in U_{2n}$ such that $|\x|=|\y|$ and
$$q(\x\y)\ \supseteq\ q_1(\x)\cup q_2(\y)\cup\{d(x,y)\leqslant d(q_1,q_2)\}.$$
Using elementary chains, we can find a separable model $M$ realizing every type in every $U_n$.
By the assumption of the proposition, we may further assume that $M$ is $\aleph_0$-homogeneous.
We show that $M$ realizes every type of $T$. Take $p(x)\in K_1(T)$.
Let $p_k\rightarrow p$ in the metric topology where $p_k\in U_1$ and $a_k\in M$ realizes $p_k$.
Suppose that $d(p_k,p_{k+1})\leqslant 2^{-k}$.
Set $b_1=a_1$ and assume $b_1,...,b_k$ are defined such that $b_i\equiv a_i$ and
$d(b_i,b_{i+1})=d(p_i,p_{i+1})$ for $i<k$. Let $ce\in M$ realize a type
$$q(xy)\supseteq p_k(x)\cup p_{k+1}(y)\cup\{d(x,y)\leqslant d(p_k,p_{k+1})\}.$$
Then, $c\equiv a_k\equiv b_k$ and $e\equiv a_{k+1}$.
By homogeneity, there exists $b_{k+1}\in M$ such that $ce\equiv b_kb_{k+1}$.
Therefore, $b_{k+1}\equiv a_{k+1}$ and $d(b_k,b_{k+1})=d(p_k,p_{k+1})$.
The sequence $b_k$ is convergent to say $b\in M$. Then, $b\vDash p$.
Therefore, every type in $K_1(T)$ (and similarly, every type in $K_n(T)$) is realized in $M$.
We conclude by Proposition \ref{homogeneous 3} that $M$ is $\aleph_0$-saturated.
\end{proof}

\subsection{Facial types}\label{Facial types}
Let $T$ be a complete $L$-theory.
Extreme points of $K_n(T)$ are called \emph{extreme types}\index{extreme type}. The set of extreme $n$-types
of $T$ is denoted by $E_n(T)$\index{$E_n(T)$}.
By the Krein-Milman theorem, this is a non-empty set and its convex hull is dense in $K_n(T)$.

In first order logic (as well as continuous logic), the set of types realized in a model
$M\vDash T$ is dense in the set of all types. In affine continuous logic, the situation is different.
For example, $K_1(\textrm{PrA})$ (where PrA is the theory of probability algebras \S \ref{Examples})
is a $1$-dimensional simplex, hence isometric to the unit interval $[0,1]$.
While, $\{0,1\}$ is a model of PrA which realizes only the extreme types.
However, we have a similar result in this respect.

Let $M$ be a metric space and $\overline M$ be its Stone-\v{C}ech compactification.
Then, every normal positive linear functional $\Lambda:\mathbf{C}_b(M)\rightarrow\Rn$ extends to a positive linear
functional on $\mathbf{C}(\overline{M})$. So, by Riesz-Markov representation theorem (\S \ref{Appendix})
there is a regular Borel probability measure $\mu$ on $\overline M$
such that $$\ \ \ \ \Lambda(f)=\int \bar f d\mu\ \ \ \ \ \ \ \ \ \ \forall f\in \mathbf{C}_b(M)$$
where $\bar f$ denotes the continuous extension of $f$ to $\overline{M}$.
The set of all regular Borel probability measures on $M$ is denoted by $\mathcal P(M)$.

\begin{proposition} \label{dense-realization}
Let $M\vDash T$. Then, every $p\in E_n(T)$ is the limit (in the logic topology) of types realized in $M$.
\end{proposition}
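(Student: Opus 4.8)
The plan is to exhibit $p$ as an extreme point of a compact convex set whose ``vertices'' all come from $M$, and then invoke the partial converse to the Krein--Milman theorem (Milman's theorem). Write $R_M=\{tp(\a):\a\in M^n\}\subseteq K_n(T)$ for the set of types realized in $M$, and let $X=\overline{\mathrm{conv}}(R_M)$ be its closed convex hull, formed inside $K_n(T)$ with the logic topology. Since $K_n(T)$ is logic-compact, $X$ is a compact convex subset. The heart of the argument is the claim that
$$X=K_n(T).$$

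To prove the claim I would argue by separation. By Theorem~\ref{Conway} the weak* dual of $\mathbb{D}_n(T)^*$ is $\mathbb{D}_n(T)$, so the logic-continuous linear functionals on $K_n(T)$ are exactly the evaluation maps $\hat\phi(p)=p(\phi)$ for $\phi\in\mathbb{D}_n(T)$. Suppose toward a contradiction that some $q\in K_n(T)$ lies outside $X$. Applying the Hahn--Banach separation Theorem~\ref{Rudin}, with the compact convex set $X$ in the role of $B$ and $\{q\}$ in the role of $A$, I obtain a formula $\phi(\x)$ and reals $r<s$ with $\hat\phi(q)<r<s<\hat\phi(p')$ for all $p'\in X$. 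In particular $s<\hat\phi(tp(\a))=\phi^M(\a)$ for every $\a\in M^n$, so $s\leqslant\inf_{\x}\phi^M(\x)$, i.e. $M\vDash s\leqslant\inf_{\x}\phi$. Here the completeness of $T$ enters decisively: $\inf_{\x}\phi$ is a sentence, so $T\vDash s\leqslant\inf_{\x}\phi$, and hence $\phi^N(\b)\geqslant s$ for every $N\vDash T$ and $\b\in N^n$. But every type in $K_n(T)$ is realized in some model of $T$, so $q$ is realized by some $\b$ in some $N\vDash T$, giving $\hat\phi(q)=q(\phi)=\phi^N(\b)\geqslant s$, contradicting $\hat\phi(q)<r<s$. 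This establishes $X=K_n(T)$.

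With the claim in hand the conclusion is immediate. Since $p\in E_n(T)$ is by definition an extreme point of $K_n(T)=\overline{\mathrm{conv}}(R_M)$, Milman's theorem (the partial converse to Krein--Milman: the extreme points of the closed convex hull of a set lie in the closure of that set) yields $p\in\overline{R_M}$, the logic-topology closure of the set of types realized in $M$. This is precisely the assertion that $p$ is a logic limit of types realized in $M$.

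The main subtlety is recognizing that extremality of $p$ is exactly the hypothesis that makes the statement true and cannot be dropped: the closed convex hull of $R_M$ is all of $K_n(T)$, yet a non-extreme type need not be approximable by realized types, as the probability-algebra example with model $\{0,1\}$ shows. Consequently the argument must route through Milman's theorem rather than Krein--Milman alone, and the only genuinely model-theoretic input is the transfer of the bound $s\leqslant\inf_{\x}\phi$ from $M$ to all models of $T$, which rests on completeness of $T$ together with the realizability of every type in some model of $T$.
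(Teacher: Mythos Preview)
Your proof is correct and takes a genuinely different route from the paper's. The paper represents types as integrals: it passes to the Stone--\v{C}ech compactification $\overline{M}$, defines an affine surjection $\zeta:\mathcal{P}(\overline{M})\to K_n(T)$ via $\zeta(\mu)(\phi)=\int\overline{\phi^M}\,d\mu$ (using Kantorovich extension and Riesz representation), observes that $\zeta^{-1}(p)$ is a face whose extreme points are therefore Dirac measures $\delta_a$ with $a\in\overline{M}$, and then approximates $a$ by points of $M$. Your argument stays entirely inside $K_n(T)$: a Hahn--Banach separation shows $\overline{\mathrm{conv}}(R_M)=K_n(T)$, and Milman's partial converse to Krein--Milman places every extreme point in $\overline{R_M}$. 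Your approach is shorter and uses only the convex-analytic toolkit already invoked in the paper (e.g.\ in Corollary~\ref{closed convex}); the paper's approach is more constructive (it identifies an explicit approximating sequence coming from a point of $\overline{M}$) and sets up the measure-theoretic representation of types reused later, for instance in Proposition~\ref{sat1} and Proposition~\ref{counatble extremes}.
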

\begin{proof}
Consider the case $n=1$. We may assume that $M$ (and hence $\overline{M}$) is separable.
Let $\zeta:\mathcal P(\overline{M})\rightarrow K_1(T)$ be the function defined by
$\zeta(\mu)=p_\mu$ where $p_\mu(\phi)=\int\overline{\phi^M} d\mu$ for all $\phi$.
Note that $\zeta$ is affine, i.e. $$\zeta(\lambda\mu+(1-\lambda)\nu)=\lambda\zeta(\mu)+(1-\lambda)\nu.$$
Let $p(x)\in K_1(T)$. By the Kantorovich extension theorem (\S \ref{Appendix}),
the map defined by $\Lambda(\overline{\phi^M})=p(\phi)$,
for all $\phi$, extends to a positive linear functional on $C(\overline{M})$.
So, there exists a regular Borel probability measure
$\mu$ on $\overline{M}$ such that for all $\phi(x)$
$$p(\phi)=\int\overline{\phi^M}d\mu.$$
This shows that $\zeta$ is surjective.
Let $p$ be extreme. Then, $\zeta^{-1}(p)$ is a face of $\mathcal P(\overline{M})$.
Let $\nu$ be an extreme point of $\zeta^{-1}(p)$.
Then, $\nu$ is an extreme point of $\mathcal P(\overline{M})$ too.
However, the extreme points of $\mathcal P(\overline{M})$ are pointed measures (\S \ref{Appendix}, \ref{regular-extreme}),
i.e. $\nu=\delta_a$ for some $a\in\overline{M}$ (see \cite{Aliprantis-Inf} Th 15.9).
We conclude that for every $\phi$
$$p(\phi)=\int\overline{\phi^M}\ d\delta_a=\overline{\phi^M}(a).$$
Now assume $a_k\in M$ and $a_k\rightarrow a$.
Let $p_k=tp(a_k)$. Then for each $\phi(x)$
$$p_k(\phi)=\phi^M(a_k)\rightarrow\overline{\phi^M}(a)=p(\phi).$$
Therefore, $p_k\rightarrow p$.
\end{proof}
\bigskip

\begin{corollary} \label{compact-realization1}
If $M\vDash T$ is compact, then every $p\in E_n(T)$ is realized in $M$.
\end{corollary}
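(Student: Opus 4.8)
The plan is to read this off directly from Proposition \ref{dense-realization} together with the compactness of $M$. I treat the case $n=1$ first; the general case is identical once single variables are replaced by tuples. By Proposition \ref{dense-realization}, the extreme type $p$ is the logic limit of types realized in $M$, so there are points $a_k\in M$ with $tp(a_k)\to p$ in the logic topology. The whole point is that compactness should upgrade this approximation to an exact realization.

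First I would record that the realization map $M\to K_1(T)$, $b\mapsto tp(b)$, is continuous into the logic topology. This is immediate: a subbasic logic-open set has the form $\{q:\ 0<q(\phi)\}$, and its preimage is $\{b:\ 0<\phi^M(b)\}$, which is open because $\phi^M$ is continuous (indeed $\lambda_\phi$-Lipschitz by Proposition \ref{completion of prestructures}). Next, since $M$ is a compact metric space it is sequentially compact, so the sequence $(a_k)$ has a subsequence $a_{k_j}\to a$ for some $a\in M$. By continuity of the realization map, $tp(a_{k_j})\to tp(a)$, while by the choice of the $a_k$ we also have $tp(a_{k_j})\to p$. The logic topology on $K_1(T)$ is the weak$^*$ topology, hence Hausdorff, so limits are unique and $tp(a)=p$; that is, $p$ is realized by $a\in M$. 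For general $n$ the same argument runs in $M^n$, which is again compact.

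I expect no serious obstacle here; the only point that needs care is precisely the passage from the approximating sequence to a genuine realization, which is exactly where compactness (sequential compactness of $M$, respectively $M^n$) is invoked. Alternatively, one can extract the conclusion straight from the proof of Proposition \ref{dense-realization}: there the representing measure of the extreme type $p$ was shown to be a point mass $\delta_a$ on the Stone-\v{C}ech compactification $\overline{M}$, with $p(\phi)=\overline{\phi^M}(a)$. When $M$ is compact one has $\overline{M}=M$ and $\overline{\phi^M}=\phi^M$, so the representing point $a$ already lies in $M$ and satisfies $p(\phi)=\phi^M(a)$ for every $\phi$; this gives the realization with no appeal to subsequences at all.
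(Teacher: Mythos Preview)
Your proof is correct and follows the same idea as the paper's. The paper's version is even terser: it simply notes that $\a\mapsto tp(\a)$ is logic-continuous, so its range is compact (hence closed in the Hausdorff space $K_n(T)$), and then Proposition~\ref{dense-realization} forces this closed set to contain $E_n(T)$; your sequence/subsequence argument is just an explicit unwinding of that, and your alternative via $\overline{M}=M$ is a legitimate shortcut as well.
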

\begin{proof}
The map $\a\mapsto tp(\a)$ is logic-continuous. Hence, its range is compact.
By Proposition \ref{dense-realization}, it must contain $E_n(T)$.
\end{proof}
\vspace{1mm}

Now, we extend a bit the framework of logic and allow arbitrary (maybe uncountable) sets of individual variables.
Every formula uses a finite number of variables as before.
If $\x$ is a (possibly infinite) tuple of variables, then $D_{\x}(T)$ is the normed space of all
formulas (up to $T$-equivalence) whose free variables are included in $\x$.
Also, $K_{\x}(T)$ is the compact convex set consisting of all complete types with free variables $\x$,
i.e. positive linear functionals $p:D_{\x}(T)\rightarrow\Rn$ with $p(1)=1$.
\bigskip

Let $\Gamma(\x)$ be a set of conditions satisfiable with $T$ (a partial type).
Complete types can be still regarded as partial types.
We say $\Gamma$ is a \emph{face} of $K_{\x}(T)$ (or is \emph{facial}\index{facial type})
if the set $$\{p\in K_{\x}(T):\ \Gamma\subseteq p\}$$ is a face of $K_{\x}(T)$.
Note that if $\Gamma(\x,\y)$ is satisfiable and $\a$ realizes $\Gamma|_{\x}$ then $\Gamma(\a,\y)$ is satisfiable.

\begin{proposition}\label{ext2ext1}
Let $\Gamma(\x,\y)$ be a face of $K_{\x\y}(T)$. Then $\Gamma|_{\x}$ is a face of $K_{\x}(T)$.
In particular, the image of an extreme type under the restriction map $K_{n+1}(T)\rightarrow K_n(T)$ is extreme.
\end{proposition}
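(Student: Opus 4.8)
The plan is to identify $\Gamma|_{\x}$ with the image of the face cut out by $\Gamma$ under the restriction map, and then to show that this image is again a face; the entire difficulty is the lifting of a convex decomposition back along that map.

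First I would let $\pi\colon K_{\x\y}(T)\to K_{\x}(T)$ be the restriction map $q\mapsto q|_{D_{\x}(T)}$, which is affine and logic-continuous, and set $F=\{q\in K_{\x\y}(T):\Gamma\subseteq q\}$, which is a face by hypothesis. Since $\pi$ is affine and continuous and $K_{\x\y}(T)$ is compact, $\pi(F)$ is compact and convex, hence closed; by Corollary \ref{closed convex} it has the form $\{p\in K_{\x}(T):\Gamma|_{\x}\subseteq p\}$ for a suitable set of conditions, which is what $\Gamma|_{\x}$ denotes. So it suffices to prove that $\pi(F)$ is a face of $K_{\x}(T)$.

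Now take $p\in\pi(F)$ with $p=\lambda p_1+(1-\lambda)p_2$, where $p_1,p_2\in K_{\x}(T)$ and $\lambda\in(0,1)$, and pick $q\in F$ with $\pi(q)=p$. The heart of the matter is a lifting lemma: there are $q_1,q_2\in K_{\x\y}(T)$ with $q=\lambda q_1+(1-\lambda)q_2$ and $\pi(q_i)=p_i$. Granting it, $q$ lies in the face $F$ as a genuine convex combination, so $q_1,q_2\in F$, whence $p_i=\pi(q_i)\in\pi(F)$; this is exactly the defining property of a face. To prove the lemma it is enough to produce a \emph{positive} linear functional $\tilde q_1$ on $D_{\x\y}(T)$ with $\tilde q_1\leqslant q$ (i.e.\ $q-\tilde q_1$ positive) and $\tilde q_1|_{D_{\x}(T)}=\lambda p_1$: then $q_1=\tfrac1\lambda\tilde q_1$ and $q_2=\tfrac1{1-\lambda}(q-\tilde q_1)$ are positive, satisfy $q_i(1)=1$ because $1\in D_{\x}(T)$, restrict to $p_1$ and $p_2$ respectively, and recombine to $q$.

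The remaining point, the existence of $\tilde q_1$, is the main obstacle, and it is here that the product structure must be used (for a general affine continuous surjection of compact convex sets the image of a face need not be a face). On the order-unit space $D_{\x\y}(T)$ the functional $\lambda p_1$ is defined on the subspace $D_{\x}(T)$, is positive there, and is dominated by $q|_{D_{\x}(T)}=p$, since $p-\lambda p_1=(1-\lambda)p_2\geqslant0$. I would extend it by Hahn--Banach (\cite{Aliprantis-Inf}, Th.\ 8.30) using the sublinear functional
\[\rho(v)=\inf\{\lambda p_1(w)+q(u):\ w\in D_{\x}(T),\ u\in D_{\x\y}(T),\ u\geqslant0,\ w+u\geqslant v\}.\]
The only nontrivial verifications are that $\rho$ is finite and that $\rho=\lambda p_1$ on $D_{\x}(T)$, and both reduce to the key structural fact that $D_{\x}(T)$ is closed under $\inf_{\y}$ inside $D_{\x\y}(T)$: given a feasible pair $(w,u)$ one replaces $u$ by $\inf_{\y}u\in D_{\x}(T)$, which still lies above the relevant elements, bringing the estimate back inside $D_{\x}(T)$ where positivity and domination of $\lambda p_1$ apply. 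A functional $\tilde q_1\leqslant\rho$ extending $\lambda p_1$ is then automatically positive (test $\rho$ with $w=u=0$) and satisfies $\tilde q_1\leqslant q$ (test $\rho$ with $w=0,\ u=v$), as required.

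Finally, for the ``in particular'' clause I take $\x\y=(x_1,\dots,x_{n+1})$ and $\x=(x_1,\dots,x_n)$, and use that a complete type $p\in K_{n+1}(T)$ is extreme if and only if the singleton $\{p\}=\{q:p\subseteq q\}$ is a face; applying the proposition to $\Gamma=p$ yields that $\{p|_{\x}\}=\pi(\{p\})$ is a face, i.e.\ the restriction of $p$ is extreme.
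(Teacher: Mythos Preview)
Your proof is correct and takes a genuinely different route from the paper's. The paper argues model-theoretically: working in a sufficiently saturated model $M$, given $tp(\a)\in\Gamma|_{\x}$ written as $\tfrac12 tp(\a_1)+\tfrac12 tp(\a_2)$, it first realizes $\Gamma(\a,\y)$ by some $\b$, and then shows the partial type $\Sigma(\bar u,\bar v)=\{\tfrac12\phi(\a_1,\bar u)+\tfrac12\phi(\a_2,\bar v)=\phi^M(\a,\b)\}$ is satisfiable by checking each condition separately via an intermediate-value argument using $\inf_{\y}\phi$ and $\sup_{\y}\phi$; a realization $(\c,\e)$ then gives the lifted decomposition $\tfrac12 tp(\a_1,\c)+\tfrac12 tp(\a_2,\e)=tp(\a,\b)$. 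Your argument instead stays on the functional-analytic side and produces the lift by a Hahn--Banach extension dominated by the sublinear $\rho$. Both proofs ultimately exploit the same structural fact---that $\inf_{\y}$ sends $D_{\x\y}(T)$ into $D_{\x}(T)$ and interacts well with the order---but the paper uses it to verify satisfiability of individual conditions, whereas you use it to bound $\rho$ from below. Your approach avoids passing to a saturated model and makes the convex-geometric content explicit; the paper's approach is more concrete and closer in spirit to the surrounding model theory.

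One small remark: your phrase ``replaces $u$ by $\inf_{\y}u$, which still lies above the relevant elements'' is slightly misleading, since $(w,\inf_{\y}u)$ is no longer a feasible pair for $v$. The actual chain is $q(u)\geqslant q(\inf_{\y}u)=p(\inf_{\y}u)\geqslant\lambda p_1(\inf_{\y}u)\geqslant\lambda p_1(v-w)$, using $u\geqslant v-w$ with $v-w\in D_{\x}(T)$, positivity of $p_2$, and $\inf_{\y}u\geqslant 0$. This is what you intend, and it does go through; just tighten the wording.
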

\begin{proof} Let $M\vDash T$ be $\kappa$-saturated where $|\x|+|\y|+\aleph_0<\kappa$.
Assume that $$\frac{1}{2}tp(\a_1)+\frac{1}{2}tp(\a_2)=tp(\a)\in\Gamma|_{\x}.\ \ \ \ \ \ \ (*)$$
where $\a_1,\a_2,\a\in M$.
Since $\Gamma(\a,\y)$ is satisfiable, by saturation, there exists $\b$ such that
$(\a,\b)\vDash\Gamma(\x,\y)$. Let $\Sigma(\bar{u},\bar{v})$ be the set of all conditions
of the form $$\frac{1}{2}\phi(\a_1,\bar{u})+\frac{1}{2}\phi(\a_2,\bar{v})=\phi^M(\a,\b).$$
where $\phi(\x,\y)$ is an $L$-formula.
$\Sigma$ is closed under linear combinations. Let
$$\theta(\bar{u},\bar{v})=\frac{1}{2}\phi(\a_1,\bar{u})+\frac{1}{2}\phi(\a_2,\bar{v}).$$
By the assumption $(*)$,
$$\inf_{\bar{u}\bar{v}}\theta^M(\bar{u},\bar{v})=\frac{1}{2}\inf_{\bar{u}}\phi^M(\a_1,\bar{u})+\frac{1}{2}\inf_{\bar{v}}\phi^M(\a_2,\bar{v})
=\inf_{\y}\phi^M(\a,\y)\leqslant\phi^M(\a,\b)$$
Similarly, one has that $$\phi^M(\a,\b)\leqslant\sup_{\bar{u}\bar{v}}\theta^M(\bar{u},\bar{v}).$$
This shows that $\theta(\bar{u},\bar{v})=\phi^M(\a,\b)$ is satisfiable in $M$.
By affine compactness, $\Sigma$ is satisfiable in $M$. Let $(\bar{c},\bar{e})\vDash\Sigma$.
Then, one has that $$\frac{1}{2}tp(\a_1,\bar{c})+\frac{1}{2}tp(\a_2,\bar{e})=tp(\a,\b)\in\Gamma(\x,\y).$$
Since $\Gamma$ is a face, we must have that $tp(\a_1,\bar{c}), tp(\a_2,\bar{e})\in\Gamma(\x,\y)$.
Restricting to $\x$, we conclude that $tp(\a_1), tp(\a_2)\in\Gamma|_{\x}$.
\end{proof}
\bigskip

It is easy to verify that $\Gamma(\x)$ is a face if and only if $\Gamma|_{\y}$ is a face for every finite $\y\subseteq\x$.
In particular, $p(\x)$ is extreme if and only if $p|_{\y}$ is extreme for every finite $\y\subseteq\x$.

Exposed types may be regarded as the affine variant of principal types.
However, extreme types have more flexibility and behave similarly.

\begin{definition}
\emph{A tuple $\a\in M$ is \emph{extreme over}\index{extreme over} $A\subseteq M$ if $tp(\a/A)$ is extreme in $K_n(A)$.
In particular, $\a$ is extreme if it is extreme over $\emptyset$.}
\end{definition}

As stated above, if $\b$ realizes $p(\x,\y)|_{\y}$, then $p(\x,\b)$ is a type.
Also, if $(\a,\b)$ realizes $p(\x,\y)$, then
$$p(\x,\b)(\phi(\x,\b))=tp(\a/\b)(\phi(\x,\b))=\phi^M(\a,\b)=p(\phi(\x,\y)).$$

\begin{proposition} \label{extreme3}
$\a\b\in M$ is extreme if and only if $\b$ is extreme and $\a$ is extreme over $\b$.
In particular, if $\a$ is extreme, the restriction map $K_n(\a)\rightarrow K_n(\emptyset)$
takes extreme types to extreme types.
\end{proposition}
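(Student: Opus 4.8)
The plan is to work inside a fixed $\kappa$-saturated and strongly $\kappa$-homogeneous model $M\vDash T$, with $\kappa>|\a|+|\b|+\aleph_0$, whose existence is guaranteed by Proposition \ref{saturated strong homogeneous}. The whole argument rests on the correspondence recorded just before the statement: whenever $(\a,\b)$ realizes a type $p(\x,\y)$, one has $tp(\a/\b)(\phi(\x,\b))=\phi^M(\a,\b)=p(\phi(\x,\y))$. Thus the value of $tp(\a\b)$ on $\phi(\x,\y)$ coincides with the value of $tp(\a/\b)$ on $\phi(\x,\b)$, and this lets me transfer midpoint decompositions between $K_{n+m}(T)$ and $K_n(\b)$ while holding $\b$ fixed.

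For the forward direction, suppose $tp(\a\b)$ is extreme. Restriction to a subtuple preserves extremeness by Proposition \ref{ext2ext1}, so $tp(\b)$ is extreme, i.e. $\b$ is extreme. To see that $tp(\a/\b)$ is extreme in $K_n(\b)$, I argue by contradiction: if $tp(\a/\b)=\frac12(q_1+q_2)$ with $q_1\neq q_2$ in $K_n(\b)$, then by saturation I realize $q_1,q_2$ over $\b$ by tuples $\a_1,\a_2\in M$ sharing the same fixed parameter $\b$. The displayed correspondence gives $\frac12\big(\phi^M(\a_1,\b)+\phi^M(\a_2,\b)\big)=\phi^M(\a,\b)$ for every $\phi(\x,\y)$, that is $\frac12\big(tp(\a_1\b)+tp(\a_2\b)\big)=tp(\a\b)$. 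Extremeness of $tp(\a\b)$ forces $tp(\a_1\b)=tp(\a_2\b)$, whence $q_1=q_2$ after substituting $\b$ for $\y$, a contradiction.

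For the converse, suppose $\b$ is extreme and $\a$ is extreme over $\b$, and write $tp(\a\b)=\frac12(P_1+P_2)$ with $P_1,P_2\in K_{n+m}(T)$. Restricting to $\y$ and using extremeness of $tp(\b)$ gives $P_1|_{\y}=P_2|_{\y}=tp(\b)$. I realize $P_1,P_2$ in $M$; since their $\y$-restrictions both equal $tp(\b)$, strong $\kappa$-homogeneity lets me move the two realizations by automorphisms (which fix types over $\emptyset$) into the common form $(\a_1,\b)$ and $(\a_2,\b)$ with the same $\b$. The correspondence then yields $\frac12\big(tp(\a_1/\b)+tp(\a_2/\b)\big)=tp(\a/\b)$, so extremeness of $tp(\a/\b)$ gives $tp(\a_1/\b)=tp(\a_2/\b)$; substituting $\b$ back shows $P_1=tp(\a_1\b)=tp(\a_2\b)=P_2$, so $tp(\a\b)$ is extreme. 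The step I expect to require the most care is exactly this alignment of the two $\b$-parts: a decomposition in $K_{n+m}(T)$ produces realizations whose $\y$-coordinates are only elementarily equivalent to $\b$, and strong homogeneity is what lets me assume they are literally $\b$, so that the extremeness hypothesis at the level of $K_n(\b)$ becomes applicable.

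Finally, the ``in particular'' clause follows by combining the equivalence with Proposition \ref{ext2ext1}: if $\a$ is extreme and $q=tp(\bar c/\a)\in K_n(\a)$ is extreme, then the equivalence (with the roles interchanged, the ordering of variables being irrelevant to extremeness) shows $tp(\bar c\,\a)$ is extreme in $K_{n+m}(T)$; restricting to the variables of $\bar c$ via Proposition \ref{ext2ext1} then shows that the image $tp(\bar c)$ of $q$ under $K_n(\a)\to K_n(\emptyset)$ is extreme.
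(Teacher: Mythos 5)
Your proof is correct, and the forward direction is exactly the paper's argument: extremeness of $tp(\b)$ via Proposition \ref{ext2ext1}, then realizing the two halves of a decomposition of $tp(\a/\b)$ over the fixed parameter $\b$ by saturation and transferring the midpoint relation up to $K_{n+m}(T)$. Where you genuinely diverge is in the converse. The paper never realizes $p_1,p_2$ at all: it uses the remark stated immediately before the proposition that if $\b$ realizes $p_i|_{\y}$, then the \emph{formal substitution} $p_i(\x,\b)$, defined by $p_i(\x,\b)(\phi(\x,\b))=p_i(\phi(\x,\y))$, is already a type over $\b$. Since substitution is linear, the decomposition $p=\frac{1}{2}p_1+\frac{1}{2}p_2$ descends instantly to $tp(\a/\b)=\frac{1}{2}p_1(\x,\b)+\frac{1}{2}p_2(\x,\b)$, extremeness over $\b$ collapses it, and because the defining identity of substitution recovers $p_i$ from $p_i(\x,\b)$, one gets $p=p_1=p_2$ with no model-theoretic machinery beyond $\aleph_0$-saturation. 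You instead realize $P_1,P_2$ somewhere in $M$ and invoke strong $\kappa$-homogeneity (hence Proposition \ref{saturated strong homogeneous}) to move the realizations by automorphisms so that their $\y$-parts are literally $\b$. This is a sound alternative -- automorphisms preserve all formula values, so the moved tuples still realize $P_1,P_2$, and the rest of your argument is airtight -- but it buys the same conclusion at the cost of a heavier ambient model; the paper's substitution trick shows the alignment of parameters you worried about is purely formal and needs no homogeneity at all. Your handling of the ``in particular'' clause (apply the equivalence, then restrict via Proposition \ref{ext2ext1}) is the intended deduction.
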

\begin{proof}
We may assume $M$ is $\aleph_0$-saturated and that $|\a|=|\b|=1$. Let $p(x,y)=tp(a,b)$.

$\Rightarrow$:\ \ That $b$ is extreme is a consequence of Proposition \ref{ext2ext1}.
Suppose that $$tp(a/b)=\frac{1}{2}p_1(x,b)+\frac{1}{2}p_2(x,b).$$
Assume $c,e\in M$ realize $p_1(x,b)$, $p_2(x,b)$ respectively.
Then, for each $\phi(x,y)$
$$p(\phi)=\phi^M(a,b)=tp(a/b)(\phi(x,b))=\frac{1}{2}\phi^M(c,b)+\frac{1}{2}\phi^M(e,b).$$
This means that 
$$p=\frac{1}{2}tp(c,b)+\frac{1}{2}tp(e,b).$$
Since, $p$ is extreme, $p(x,y)=tp(c,b)=tp(e,b)$ and hence $p(x,b)=p_1(x,b)=p_2(x,b)$.
\bigskip

$\Leftarrow$: Assume $$p(x,y)=\frac{1}{2}p_1(x,y)+\frac{1}{2}p_2(x,y).$$
Since $b$ is extreme, by restricting these types to $y$,
we conclude that $$tp(b)=p|_y=p_1|_y=p_2|_y.$$
Therefore, $p_1(x,b)$ and $p_2(x,b)$ are realizable types and one has that
$$tp(a/b)=p(x,b)=\frac{1}{2}p_1(x,b)+\frac{1}{2}p_2(x,b).$$
Since $a$ is extreme over $b$, one has that
$p(x,b)=p_1(x,b)=p_2(x,b)$. Since this type is realized by $a$,
we conclude that $p=p_1=p_2$ which means that $p$ is extreme.
\end{proof}

\begin{lemma} \label{facetoface}
Let $\Gamma(\x,\y)$ be facial and $\b\in M$. If $\Gamma(\x,\b)$ is satisfiable, it is a face of $K_{\x}(\b)$.
\end{lemma}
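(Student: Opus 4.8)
The plan is to transfer the face property from $K_{\x\y}(T)$ down to $K_{\x}(\b)$ by realizing the three relevant types in one sufficiently saturated model, exploiting that the tuple $\b$ is common to all of them. First I would fix $M\vDash T$ with $\b\in M$ and, passing to an elementary extension if necessary, assume $M$ is $\kappa$-saturated for some $\kappa$ with $|\x|+|\y|+\aleph_0<\kappa$, so that every $\x$-type over $\b$ is realized in $M$ (this is available from the saturated-model existence results of $\S\ref{Saturation and homogeneity}$). The set $\{q\in K_\x(\b):\Gamma(\x,\b)\subseteq q\}$ is automatically convex, since the defining conditions of $\Gamma$ are preserved under convex combinations of positive functionals exactly as in Corollary \ref{closed convex}; so it remains only to verify the extremality clause of a face.

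Accordingly I would take $p\in K_\x(\b)$ with $\Gamma(\x,\b)\subseteq p$ together with a decomposition $p=\lambda p_1+(1-\lambda)p_2$, where $p_1,p_2\in K_\x(\b)$ and $\lambda\in(0,1)$, and realize $p,p_1,p_2$ over $\b$ by tuples $\a,\a_1,\a_2\in M$. The key observation, and what makes this lemma lighter than Proposition \ref{ext2ext1}, is that the second coordinate is the \emph{same} fixed tuple $\b$ in all three cases, so no auxiliary amalgamation of the $\y$-coordinates is required: for every $L$-formula $\phi(\x,\y)$ one simply has
$$\phi^M(\a,\b)=p(\phi(\x,\b))=\lambda\,\phi^M(\a_1,\b)+(1-\lambda)\,\phi^M(\a_2,\b),$$
and therefore, reading these as functionals in $K_{\x\y}(T)$,
$$tp(\a,\b)=\lambda\,tp(\a_1,\b)+(1-\lambda)\,tp(\a_2,\b).$$

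Next I would invoke the hypothesis that $\Gamma(\x,\y)$ is facial. Since $\Gamma(\x,\b)\subseteq p=tp(\a/\b)$, we have $(\a,\b)\vDash\Gamma(\x,\y)$, i.e. $tp(\a,\b)$ lies in the face $\{q\in K_{\x\y}(T):\Gamma(\x,\y)\subseteq q\}$. Because that set is a face and $tp(\a,\b)$ is a proper convex combination of $tp(\a_1,\b)$ and $tp(\a_2,\b)$, both components lie in it as well; that is, $(\a_1,\b)\vDash\Gamma(\x,\y)$ and $(\a_2,\b)\vDash\Gamma(\x,\y)$. Reinstating $\b$ as parameters, this says exactly $\Gamma(\x,\b)\subseteq tp(\a_1/\b)=p_1$ and $\Gamma(\x,\b)\subseteq tp(\a_2/\b)=p_2$, which is the required extremality and completes the verification that $\Gamma(\x,\b)$ is a face of $K_\x(\b)$.

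I expect the only real care to be bookkeeping rather than a genuine obstacle: ensuring $M$ is saturated enough to realize $p_1$ and $p_2$ over $\b$ simultaneously (so that the displayed identity of types is meaningful), and keeping straight the two readings of $\Gamma$, namely as conditions in the variables $(\x,\y)$ over $\emptyset$ versus conditions in $\x$ with the parameters $\b$ substituted for $\y$, since the face hypothesis lives in $K_{\x\y}(T)$ while the conclusion lives in $K_\x(\b)$. Once the translation $tp(\a,\b)\leftrightarrow tp(\a/\b)$ is in place the face property transfers at once, and in contrast to Proposition \ref{ext2ext1} no appeal to affine compactness is needed.
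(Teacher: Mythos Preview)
Your proposal is correct and follows essentially the same route as the paper's own proof: realize the three types over the common parameter tuple $\b$ in a sufficiently saturated model, translate the convex identity in $K_{\x}(\b)$ into the corresponding identity in $K_{\x\y}(T)$, apply the face hypothesis there, and pull the conclusion back. The paper streamlines slightly by working with $\lambda=\tfrac12$ and $|\x|=1$, and it does not pause over convexity (which, as you note, is immediate from Corollary~\ref{closed convex}), but the substance is identical.
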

\begin{proof} We may assume $M$ is $\aleph_0$-saturated and $|\x|=1$. Let $a$ realize $\Gamma(x,\b)$.
Suppose that $a_1,a_2\in M$ and $$\frac{1}{2}tp(a_1/\b)+\frac{1}{2}tp(a_2/\b)=tp(a/\b)\in\Gamma(x,\b).$$
This means that $$\frac{1}{2}tp(a_1,\b)+\frac{1}{2}tp(a_2,\b)=tp(a,\b)\in\Gamma(x,\y).$$
Since $\Gamma(x,\y)$ is a face, we conclude that $tp(a_1,\b),\ tp(a_2,\b)\in\Gamma(x,\y)$.
Hence, $$tp(a_1/\b),\ tp(a_2/\b)\in\Gamma(x,\b).$$
\end{proof}

\begin{proposition}
Let $M$ be $\kappa$-saturated and $A\subseteq M$ where $|A|<\kappa$. Then, for each extreme $\b\in M$
there exists $\bar{c}\in M$ which is extreme over $A$ and $\bar{c}\equiv\b$.
\end{proposition}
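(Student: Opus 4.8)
The plan is to produce the required $\c$ as a realization of a carefully chosen extreme type over $A$. Write $\pi\colon K_n(A)\to K_n(T)$ for the restriction map that forgets the parameters $A$; it is affine and logic-continuous, and it sends $tp(\d/A)\mapsto tp(\d)$ for every realizable tuple $\d$. The goal then becomes equivalent to finding an extreme point $q$ of $K_n(A)$ with $\pi(q)=tp(\b)$: given such a $q$, any realization $\c\in M$ of $q$ — which exists by $\kappa$-saturation since $|A|<\kappa$ — satisfies $tp(\c/A)=q$ extreme, so $\c$ is extreme over $A$, and $tp(\c)=\pi(q)=tp(\b)$, i.e. $\c\equiv\b$.

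The central observation is that the fiber $F:=\pi^{-1}(tp(\b))=\{q\in K_n(A):\pi(q)=tp(\b)\}$ is a \emph{face} of $K_n(A)$. Indeed, $F$ is exactly the set of types over $A$ extending the partial type $\{\phi(\x)=\phi^M(\b):\phi\ \mbox{over}\ \emptyset\}$, so it is closed and convex; and if $q=\tfrac12 q_1+\tfrac12 q_2\in F$ with $q_1,q_2\in K_n(A)$, then applying the affine map $\pi$ gives $tp(\b)=\tfrac12\pi(q_1)+\tfrac12\pi(q_2)$, whence $\pi(q_1)=\pi(q_2)=tp(\b)$ because $tp(\b)$ is extreme in $K_n(T)$ (recall $\b$ extreme means extreme over $\emptyset$). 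Thus $q_1,q_2\in F$, confirming that $F$ is a face.

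It remains to extract an extreme point of $K_n(A)$ lying in $F$. The fiber is nonempty, since $tp(\b/A)\in F$, and it is compact and convex, so by the Krein-Milman theorem it has an extreme point $q$. A standard fact about faces then applies: an extreme point of a face $F$ of $K_n(A)$ is extreme in $K_n(A)$ itself, for if $q=\tfrac12(q_1+q_2)$ with $q_1,q_2\in K_n(A)$, then $q_1,q_2\in F$ because $F$ is a face, and extremality of $q$ within $F$ forces $q_1=q_2=q$. Hence $q$ is an extreme type of $K_n(A)$ with $\pi(q)=tp(\b)$, and realizing it in $M$ by saturation closes the argument.

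I do not expect a serious obstacle here. The only point requiring genuine care is verifying that the fiber $F$ is a face, and this rests squarely on the extremality of $tp(\b)$ together with the affineness of the restriction map; everything else is a routine application of Krein-Milman and of the definitions. The $\kappa$-saturation hypothesis is used only in the final step, to guarantee that the constructed extreme type over $A$ is actually realized inside $M$ rather than in a proper elementary extension.
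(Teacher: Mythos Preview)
Your proof is correct and follows essentially the same route as the paper: consider the affine restriction map $\pi:K_n(A)\to K_n(\emptyset)$, observe that the fiber over the extreme type $tp(\b)$ is a face, pick an extreme point of that face (hence of $K_n(A)$), and realize it by $\kappa$-saturation. The only difference is that you spell out the Krein--Milman step and the face-extreme-point argument in detail, whereas the paper simply asserts that the face contains some $p\in E_n(A)$.
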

\begin{proof}
Let $|\b|=n$. Consider the affine surjective map $\pi: K_n(A)\rightarrow K_n(\emptyset)$.
Since $tp(\b)$ is extreme, $\pi^{-1}(tp(\b))$ is a face in $K_n(A)$.
This face contains some $p(\x)\in E_n(A)$ which is realized by say $\bar{c}\in M$.
Then, $\bar{c}$ is extreme over $A$ and $\pi(p(\x))=tp(\b)$, i.e. $\bar{c}\equiv\b$.
\end{proof}

\subsection{Omitting non-extreme types} \label{omitting types}\index{omitting types}
In the previous subsection we proved that all extreme types are realized in compact models (if any).
In this subsection we show that there are models which omit all non-extreme types.
There is no limit on the cardinality of language.

\begin{theorem} \label{omitting}
Let $T$ be a complete theory in a language $L$ and $p(\z)\in K_{n}(T)$
be non-extreme. Then $p$ is omitted in a (complete) model $M$ of $T$.
\end{theorem}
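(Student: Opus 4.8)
The plan is to reduce the statement to skipping a single real value of one formula, and then to realize that skip by a Henkin construction. Since $p$ is non-extreme, fix $q_1\neq q_2$ in $K_n(T)$ with $p=\tfrac12 q_1+\tfrac12 q_2$ and choose a formula $\psi(\z)$ with $q_1(\psi)\neq q_2(\psi)$. Put $r_0=p(\psi)=\tfrac12(q_1(\psi)+q_2(\psi))$ and $\eta=\tfrac12|q_2(\psi)-q_1(\psi)|>0$, so that $q_1(\psi)$ and $q_2(\psi)$ are exactly the endpoints $r_0\mp\eta$. The key observation is that any $\c$ realizing $p$ must satisfy $\psi^M(\c)=r_0$; hence it suffices to build $M\vDash T$ whose realized values of $\psi$ avoid the \emph{open} interval $(r_0-\eta,r_0+\eta)$. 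Because $\psi^M$ is $\lambda_\psi$-Lipschitz (Proposition \ref{completion of prestructures}), if every point of a dense prestructure keeps its $\psi$-value in the closed set $\Rn\setminus(r_0-\eta,r_0+\eta)$, then so does every point of the completion; thus the avoidance, and with it the omission of $p$, survives the passage to a complete model (Proposition \ref{exist}).

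Next I would set up the construction. Working in a language $L(C)$ obtained by adjoining enough new constant symbols, I would build a maximal affinely satisfiable $T^\ast\supseteq T$ with the witness property, as in Lemma \ref{witness} and the Henkin proof of affine compactness, but interleaving an extra family of requirements: for each finite tuple $\c$ of constants, $T^\ast$ must decide either $\psi(\c)\leqslant r_0-\eta$ or $\psi(\c)\geqslant r_0+\eta$. Maintaining affine satisfiability through these steps (and through the witness steps) amounts to preserving the invariant that the partial theory never forces $\psi(\c)$ into the open slab $(r_0-\eta,r_0+\eta)$. The Henkin recipe from the proof of Theorem \ref{compactness} then yields an $L(C)$-prestructure $M_0$ on the constants, on which $\psi$ avoids the slab by construction, and its completion $M$ is the desired model.

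The engine is a pushing lemma: if $\Sigma\supseteq T$ is affinely satisfiable in $L(C)$ and does \emph{not} force $\psi(\c)$ into the slab, then at least one of $\Sigma\cup\{\psi(\c)\leqslant r_0-\eta\}$ and $\Sigma\cup\{\psi(\c)\geqslant r_0+\eta\}$ is affinely satisfiable; granting the invariant this is immediate from affine compactness (Theorem \ref{compactness}, Lemma \ref{proofs}), since a model of $\Sigma$ with $\psi(\c)\notin(r_0-\eta,r_0+\eta)$ witnesses one of the two one-sided conditions. The real content, and the step I expect to be the \textbf{main obstacle}, is showing that the invariant can be maintained at all, i.e. that non-extremality of $p$ is what makes the slab avoidable. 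This is where the hypothesis must bite. If $p$ were extreme, then by Proposition \ref{dense-realization} the realized types of every model accumulate, in the logic topology, to $p$; but $p$ lies in the logic-open slab $\{q:\ r_0-\eta<q(\psi)<r_0+\eta\}$, so no model could keep its $\psi$-values out of it, which is exactly why extreme types are not omittable. For non-extreme $p$ the two halves of the decomposition sit safely on opposite sides of the slab, and I would exploit this together with a Hahn--Banach separation (Theorem \ref{Rudin}) applied to the closed convex set of types that $\Sigma$ permits, to rule out that $\Sigma$ can ever be driven to force $\psi(\c)=r_0$.

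Finally I would assemble the pieces: the maximal $T^\ast$ gives the prestructure $M_0$ with $\psi$ avoiding the slab; Proposition \ref{exist} produces a complete $M\vDash T$, and by the Lipschitz argument of the first paragraph $\psi^M$ still avoids $(r_0-\eta,r_0+\eta)$. Since any realization of $p$ would force $\psi=r_0$, no tuple of $M$ realizes $p$, so $p$ is omitted. The delicate points to check are the compatibility of the slab-avoidance requirements with the witness requirements and the separation argument underpinning the pushing lemma; the remainder is routine transfinite bookkeeping over the (arbitrarily large) set of constant tuples, which is why no bound on $|L|$ is needed.
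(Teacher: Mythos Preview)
Your approach has a genuine gap at exactly the point you flag as the main obstacle, and it is not a bookkeeping issue but a structural one: avoiding a \emph{fixed} slab of values of a \emph{single} formula $\psi$ is in general impossible. Suppose some extreme type $e\in E_n(T)$ satisfies $\hat\psi(e)\in(r_0-\eta,r_0+\eta)$. Then by Proposition~\ref{dense-realization} the realized types of \emph{every} model accumulate to $e$ in the logic topology, hence their $\psi$-values accumulate to $\hat\psi(e)$, and so $\psi$ takes values inside the slab in every model. Your choice of $q_1,q_2,\psi$ gives no control over what the \emph{other} extreme types do to $\psi$; in fact if $E_n(T)$ is connected (as happens e.g.\ for the theory of the circle, cf.\ Example~\ref{sphere}) then $\hat\psi(E_n(T))$ is an interval containing both $q_1(\psi)$ and $q_2(\psi)$, so it covers the slab entirely and your construction can never get started. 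The Hahn--Banach separation you gesture at separates $p$ from a closed convex set, but the set you need to separate from is $E_n(T)$, which is neither convex nor, in general, avoidable by an affine slab.

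The paper's argument avoids this by never committing to a single formula. It maintains the invariant that the partial theory $\Gamma_i(\x)$ cuts out a \emph{face} of $K_{\x}(T)$. At the omitting step for a constant $c$, Proposition~\ref{ext2ext1} says the restriction of this face to the $c$-variable is again a face of $K_n(T)$; since $p$ is not extreme, this face cannot equal $\{p\}$, so there is \emph{some} condition $\theta\leqslant 0$ in $p$ and $\epsilon>0$ with $\Gamma_i\cup\{\epsilon\leqslant\theta(c)\}$ satisfiable, and adding it preserves the face property. Crucially, the formula $\theta$ and the margin $\epsilon$ are chosen \emph{afresh for each constant tuple}, depending on the face already built; this is what your global slab cannot reproduce. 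The face invariant also interacts cleanly with the witness steps and with the Cauchy-limit steps needed for metric completeness, which your proposal does not address.
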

\begin{proof}
For simplicity we assume $n=1$.
Let $\kappa\geqslant|L|$ be such that $\kappa^{\aleph_0}=\kappa$ and $N$ be a $\kappa$-saturated model of $T$.
Let $\{X,Y,Z,W\}$ be a partition of $\kappa$ into sets of cardinality $\kappa$.
Let $C=\{c_i|\ i\in Z\}$ be a set of distinct new constant symbols,
$\{\sigma_i|\ i\in X\}$ be an enumeration of $L(C)$-sentences
and $\{\phi_i(y)| \ i\in Y\}$ be an enumeration of $L(C)$-formulas with one free variable $y$.
Also, assume that all $\omega$-sequences of constant symbols of $C$ are enumerated by indices from $W$.
We construct a chain
$$\ \ \ \ T_{0}\subseteq T_1\subseteq\cdots \subseteq T_i\subseteq\cdots\ \ \ \ \ \ \ \ \ \ \ \ \ i<\kappa$$
of satisfiable extensions of $T$ such that for each $i$:

(I) $T_i=T\cup \Gamma_i(\bar{e})$ where $\bar{e}\in C$ (maybe of infinite length) and $|\Gamma_i|<\kappa$

(II) $\Gamma_i(\x)$ is a face in $K_{\x}(T)$ for $0<i$.
\bigskip

Set $T_{0}=T$ and for infinite limit $i$ set $\Gamma_i=\cup_{j<i} \Gamma_j$,\ \ $T_i=T\cup\Gamma_i$.
Note that (I), (II) hold.
Assume $T_i$ is defined and (I), (II) hold. We define $T_{i+1}$ according to the following cases:
\vspace{1mm}

- $i\in X$:\ There is a biggest $r$ such that $T_i,r\leqslant\sigma_i$
is satisfiable. So, $T_i\vDash\sigma_i\leqslant r$.
Let $\e$ be the tuple obtained by unifying the constants of $C$ used in $\Gamma_i$ and $\sigma_i$.
Let $$\Gamma_{i+1}(\e)=\Gamma_i(\e)\cup\{r\leqslant\sigma_i(\e)\},\ \ \ \ \ \ \ T_{i+1}=T\cup\Gamma_{i+1}(\e).$$
Note that $T\cup\Gamma_i(\x)\vDash\sigma_i(\x)\leqslant r$ so that $\Gamma_{i+1}(\x)$  is a face of $K_{\x}(T)$.
\bigskip

- $i\in Y$:\ Unifying the constants of $C$ used in $\Gamma_i$ and $\phi$, we may write $\Gamma_i=\Gamma_i(\e)$ and $\phi_i=\phi_i(\e,y)$.
Take a $c\in C$ not been used in $T_i,\phi_i$.
Let $$\Gamma_{i+1}(\e,c)=\Gamma_i(\e)\cup\{\phi_i(\e,c)\leqslant\inf_y\phi_i(y)\},\ \ \ \ \ \ \ T_{i+1}=T\cup\Gamma_{i+1}(\e,c).$$
Then, the conditions (I), (II) are satisfied for $T_{i+1}$ and $\Gamma_{i+1}$.
\bigskip

- $i\in Z$:\ Assume $T_i=T\cup\Gamma_i(\bar{e},c_i)$ and conditions (I), (II) are satisfied.
We claim that there are $\theta(z)\leqslant0\in p(z)$ and $\epsilon>0$
such that  $T_i\cup\{\epsilon\leqslant\theta(c_i)\}$ is satisfiable.
Suppose not. Then, we must have that $$T\cup\Gamma_i(\x,z)\vDash p(z)$$
By Proposition \ref{ext2ext1}, $p(z)$ is an extreme type. This is a contradiction.
We conclude that there are $\theta(z)\leqslant0\in p(z)$ and a greatest $\epsilon>0$ such that and
$T_i\cup\{\epsilon\leqslant\theta(c_i)\}$ is satisfiable.
Let $$\Gamma_{i+1}(\e,c_i)=\Gamma_i(\e,c_i)\cup\{\epsilon\leqslant\theta(c_i)\},\ \ \ \ \ \ \ T_{i+1}=T\cup\Gamma_{i+1}(\e,c_i).$$
Then, the conditions (I), (II) are satisfied for $T_{i+1}$ and $\Gamma_{i+1}$.
\bigskip

- $i\in W$:\ Suppose that $T_i=T\cup\Gamma_i(\e)$ is constructed and that the
index $i$ corresponds to the sequence $(c_{i_n})$.
Suppose that for all $n,k$ $$d(c_{i_n},c_{i_{n+k}})\leqslant2^{-n}\in T_i$$
(hence every $c_{i_k}$ is a component of $\e$).
Let $c\in C$ be a constant symbol different from the components of $\e$. Set
$$\Gamma_{i+1}(\e,c)=\Gamma_i(\e)\cup\big\{d(c_{i_n},c)\leqslant 2^{-n}:\ \ n=1,2,...\big\}.$$
Note that $\Gamma_{i+1}(\e,c)$ is finitely and hence totally satisfiable.
We show that $\Gamma(\x,y)$ is a face of $K_{\x y}(T)$.
Assume $$\frac{1}{2}q_1(\x,y)+\frac{1}{2}q_2(\x,y)=q(\x,y)\supseteq\Gamma_{i+1}(\x,y).$$
Since $\Gamma_i(\x)$ is a face, $q_1|_{\x}$ and $q_2|_{\x}$ must contain it.
Suppose that $c_{i_n}$ corresponds to the variable $x_n\in\x$.
So, the condition $d(x_{n},x_{n+k})\leqslant2^{-n}$ belongs to both $q_1|_{\x}$ and $q_2|_{\x}$ for each $n$.
Suppose that $q_1(\x,y)$ is realized by a tuple in $N$ where $x_n=a_n$, $y=a$ and $d(a_n,a)=r_n$.
Also, $q_2(\x,y)$ is realized by a tuple where $x_n=b_n$, $y=b$ and $d(b_n,b)=s_n$.
By the above assumption, we must have that $\frac{1}{2}r_n+\frac{1}{2}s_n\leqslant2^{-n}$ for each $n$.
This shows that $a_n\rightarrow a$ and $b_n\rightarrow b$.
Since $d(a_n,a_{n+k})\leqslant2^{-n}$ for each $k$, we must have that $d(a_n,a)\leqslant2^{-n}$.
Hence, $d(x_n,y)\leqslant2^{-n}$ belongs to $q_1(\x,y)$ for each $n$.
This implies that $q_1(\x,y)\supseteq\Gamma_{i+1}(\x,y)$. Similarly, $q_2(\x,y)\supseteq\Gamma_{i+1}(\x,y)$.
\vspace{2mm}

Finally, let $\overline T=\cup_i T_i$.
This is a complete $L(C)$-theory with the following properties:

- for every $\phi(y)$ in $L(C)$, there exists $c\in C$ such that $\phi(c)\leqslant\inf_y\phi(y)\in\overline T$

- for every $c\in C$, there exists $\theta(z)\leqslant0\in p(z)$ and $\epsilon>0$ such that
$\epsilon\leqslant\theta(c)\in\overline T$.

- for every sequence $(c_n)$ of constant symbols from $C$,
if $d(c_n,c_{n+k})\leqslant2^{-n}\in\overline T$ for every $k,n$, then there exists $c\in C$
such that $d(c_n,c)\leqslant2^{-n}\in\overline T$ for every $n$.
\vspace{2mm}

We define the canonical model of $\overline T$ as follows.
For $c,e\in C$ set $c\sim e$ if $d(c,e)=0\in\overline T$.
The equivalence class of $c$ under this relation is denoted by $\hat c$.
Let $M=\{\hat c:\ c\in C\}$. For $\hat c,\hat e\in M$ set $d^M(\hat c,\hat e)=r$
if $d(c,e)=r\in\overline T$. This defines a metric on $M$.
Define an $L(C)$-structure on $M$ by setting for all constant, relation and function symbols
(say unary for simplicity):

- $c^M=\hat c$\ \ if $c\in C$\ \ and \ \ $c^M=\hat e$\ \ if $c\in L$, $e\in C$ and $d(c,e)=0\in \overline T$

- $R^M(\hat c)=r$ \ if \ $R(c)=r\in\overline T$

- $F^M(\hat c)=\hat e$ \ if \ $d(F(c),e)=0\in\overline T$.
\vspace{1mm}

Note that for $c\in L$, since $\inf_x d(c,x)=0$ is satisfied in every model, there exists
$e\in C$ such that $d(c,e)=0\in\overline T$.
Similarly, for $c\in C$, there exists $e\in C$ such that $d(F(c),e)=0\in\overline T$.
It is not hard to verify that $M$ is a well-defined $L(C)$-structure.
It is routine to show by induction on the complexity of formulas that
for every $L$-formula $\phi(x_1,...,x_n)$, $r\in\Rn$ and $c_1,...,c_n\in C$
$$\phi^M(\hat{c}_1,...,\hat{c}_n)=r \ \ \ \ \ \ \
\mbox{iff}\ \ \ \ \ \ \ \phi(c_1,...,c_n)=r\in\overline T.$$
Another way to obtain the canonical model is to consider a model $M'\vDash\overline{T}$
and to show that $M=\{c^{M'}:\ c\in C\}$ is an elementary submodel of $M'$.

It is also clear by the construction of $\overline T$ that $M$ is metrically complete and that it omits $p(z)$.
\end{proof}
\bigskip

A stronger result is obtained by simultaneously omitting all non-extreme types.

\begin{theorem} \label{strong omitting}
There exists $M\vDash T$ omitting every non-extreme type in every $K_n(T)$.
\end{theorem}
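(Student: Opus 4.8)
The plan is to re-run the Henkin construction of Theorem \ref{omitting}, but to discard the $Z$-case that was tailored to a single target type $p$ and to read off extremality directly from the face invariant. So I would fix $\kappa\geqslant|L|$ with $\kappa^{\aleph_0}=\kappa$, adjoin new constants $C=\{c_i:i<\kappa\}$, and partition $\kappa$ into three pieces governing, respectively, completeness of the theory, the witness property for infimal formulas, and Cauchy-completeness of the eventual metric. Building a chain $T_0\subseteq T_1\subseteq\cdots$ with $T_i=T\cup\Gamma_i(\e)$, I would keep the two invariants of that proof: $|\Gamma_i|<\kappa$, and $\Gamma_i(\x)$ is a face of $K_{\x}(T)$ for $0<i$. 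The completeness step carries the weight: given an enumerated $L(C)$-sentence $\sigma_i$, take the largest $r$ with $T_i,\ r\leqslant\sigma_i$ satisfiable and adjoin $r\leqslant\sigma_i$. Since $T\cup\Gamma_i\vDash\sigma_i\leqslant r$ already, on the current face the affine function $p\mapsto p(\sigma_i)$ is pinned to its maximal value $r$, and the subset of a face on which an affine continuous function attains its maximum is again a face; hence the invariant survives and, running over all $L(C)$-sentences, $\overline T=\cup_iT_i$ becomes complete. The witness and Cauchy steps, together with their face verifications, are copied verbatim from Theorem \ref{omitting}.

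The new point replaces the $Z$-case. Form the canonical model $M$ exactly as in Theorem \ref{omitting}, so each element is $\hat c$ for some $c\in C$ and $\phi^M(\hat c_1,\ldots,\hat c_n)=r$ iff $\phi(c_1,\ldots,c_n)=r\in\overline T$. Fix a finite tuple $\c=(c_1,\ldots,c_n)$ and let $\x$ be variables replacing these constants. Because the completeness step ranged over every $L(C)$-sentence, the value of each sentence $\phi(c_1,\ldots,c_n)$ is forced in $\overline T$; transcribed into conditions on $\x$, these forced values show that the solution set $\{p\in K_{\x}(T):\Gamma|_{\x}\subseteq p\}$ contains a single complete type, namely $tp_M(\c)$. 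On the other hand, each $\Gamma_i|_{\x}$ is a face by Proposition \ref{ext2ext1}, and as $i$ grows these nested faces intersect to $\{p:\Gamma|_{\x}\subseteq p\}$, which is therefore again a face of $K_{\x}(T)$. A face that is a singleton is by definition an extreme point, so $tp_M(\c)$ is extreme. Since every finite tuple from $M$ arises in this way, $M$ realizes only extreme types and hence omits every non-extreme type in every $K_n(T)$.

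The main obstacle I anticipate is the bookkeeping that keeps the face invariant honest: one must check that it is preserved not just in the completeness step but at limit stages, and that restricting the limit data $\Gamma$ to the finitely many constants of $\c$ commutes with being a face. This is exactly the remark following Proposition \ref{ext2ext1}, that $\Gamma(\z)$ is a face if and only if $\Gamma|_{\y}$ is a face for every finite $\y\subseteq\z$, combined with the fact that a decreasing intersection of faces is a face. The other delicate check is that the completeness step truly forces $\Gamma|_{\x}$ to be a complete type rather than merely bounding finitely many sentences; this is where enumerating all of $L(C)$ (not just the base language) is essential. Once the equivalence ``complete type that is a face $=$ extreme point'' is secured, the remainder is the routine canonical-model verification of Theorem \ref{omitting}, the sole departure being that extremality of every named type is now automatic from the invariant, with no prescribed $p$ to avoid.
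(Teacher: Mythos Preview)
Your argument is correct and takes a genuinely different, cleaner route than the paper's. The paper proves Theorem \ref{strong omitting} by enumerating all non-extreme types $\{p_\alpha:\alpha<\lambda\}$, enlarging $\kappa$ to at least $|L|+\lambda$, partitioning $\kappa$ into $\{X,Y,W\}\cup\{Z_\alpha:\alpha<\kappa\}$, and running the $Z$-case of Theorem \ref{omitting} once for each $p_\alpha$; extremality of the canonical model is thus obtained by explicitly dodging each non-extreme type. You instead drop the $Z$-machinery entirely and extract extremality directly from the face invariant plus completeness: since $\overline T$ is complete, the union $\Gamma=\cup_i\Gamma_i$ (viewed in $K_C(T)$) determines a single type $p^\ast$; since each $\Gamma_i$ is a face and faces are closed under increasing unions and under extension to larger variable sets (Remark \ref{facial}), $p^\ast$ is extreme; and by the observation following Proposition \ref{ext2ext1}, every finite restriction $p^\ast|_{\x}=tp_M(\hat c_1,\ldots,\hat c_n)$ is extreme. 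Your approach buys a smaller $\kappa$ (no need to dominate the number of non-extreme types) and a more conceptual explanation of why the construction produces an extremal model: the $Z$-case in Theorem \ref{omitting} was never doing work beyond what the face invariant already guarantees once the theory is made complete. The one place to tighten your write-up is the claim that $\Gamma|_{\x}$ itself is a complete type: the $X$-step at $\sigma_i=\phi(\c)$ only puts the lower bound $r\leqslant\phi(\c)$ into $\Gamma$, but since $-\phi(\c)$ is also enumerated at some later $X$-step, the matching upper bound enters $\Gamma$ as well, so the restriction is indeed complete. With that made explicit, your proof is complete and arguably preferable.
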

\begin{proof}
There is nothing to prove if $K_n(T)$ is a singleton. Otherwise, $T$ has non-extreme types.
Let $\{p_\alpha(z_1,...,z_{n_\alpha}):\ \alpha\in\lambda\}$ be an enumeration of all
non-extreme types. Let $\kappa\geqslant|L|+\lambda$ be such that $\kappa^{\aleph_0}=\kappa$
and $C$ be a set of new constant symbols of cardinality $\kappa$.
Let $$\{X,Y,W\}\cup\{Z_\alpha:\ \alpha<\kappa\}$$ be a partition of $\kappa$
into $\kappa$ disjoint sets of cardinality $\kappa$.
Let $\{\sigma_i|\ i\in X\}$ be an enumeration of $L(C)$-sentences,
$\{\phi_i(y)| \ i\in Y\}$ be an enumeration of $L(C)$-formulas with one free variable $y$
and for each $\alpha$, $\{\bar{c}_i:\ i\in Z_\alpha\}$ be an enumeration of all
$n_\alpha$-tuples of constant symbols from $C$.
Also, assume that all $\omega$-sequences of constant symbols from $C$ are enumerated by indices from $W$.
Then the argument follows as in Theorems \ref{omitting} and the resulting theory
has a metrically complete canonical model omitting every $p_\alpha$.
\end{proof}
\vspace{1mm}

The model given by Theorem \ref{strong omitting} omits non-extreme types with infinite number of variables too.
This is because $p(\x)$ is extreme if and only if $p|_{\z}$ is extreme for every finite $\z\subseteq\x$.

\begin{definition}
\emph{A model $M\vDash T$ is \emph{extremal}\index{extremal model} if every $\a\in M$ has an extreme type.}
\end{definition}

So, the omitting types theorem states that every complete theory has an extremal model.
If $M$ is extremal, then for every $A\subseteq M$, the structure $(M,a)_{a\in A}$ is extremal.

Note that we have assumed $L$-structures have diameter at most $1$.
We call a theory $T$ \emph{ample} if there are $M\vDash T$
and an infinite sequence $a_n\in M$ such that $d(a_m,a_n)=1$ for all $m\neq n$.
If $T$ is ample, it has arbitrarily large models omitting the non-extreme type $p(\z)$.

\begin{proposition} \label{extremal3}
Let $T$ be a complete ample theory in a language $L$.
Let $\lambda$ be an infinite cardinal and $p(\z)\in K_n(T)$ be non-extreme.
Then $p$ is omitted in a model $M$ with $\lambda\leqslant |M|$.
\end{proposition}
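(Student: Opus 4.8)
The plan is to re-run the construction of Theorem \ref{omitting} essentially verbatim, but to seed the Henkin diagram with a block of $\lambda$ constants that are forced to be pairwise at distance $1$. The canonical model then automatically has at least $\lambda$ elements, while the face-based omitting mechanism is left undisturbed. The only genuinely new work is (a) choosing the index cardinal so that the size invariant of the construction survives, and (b) checking that the distance-one block is itself a satisfiable face.

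First I would fix the cardinal. Put $\mu=(|L|+\lambda)^{\aleph_0}$ and $\kappa=\mu^+$. Then $\mu^{\aleph_0}=\mu$, so by the Hausdorff formula $\kappa^{\aleph_0}=(\mu^+)^{\aleph_0}=\mu^{\aleph_0}\cdot\mu^+=\mu^+=\kappa$; moreover $\kappa\geqslant|L|+\lambda$, $\kappa$ is regular, and crucially $\lambda\leqslant\mu<\kappa$. Let $C$ be a set of new constants of cardinality $\kappa$ containing a distinguished block $D=\{d_\xi:\xi<\lambda\}$, and set up the partition and enumerations exactly as in Theorem \ref{omitting}. The single modification to the inductive construction is to replace the base case $T_0=T$ by
$$T_0=T\cup\Gamma_0,\qquad \Gamma_0=\{\,d(d_\xi,d_\eta)=1\ :\ \xi\neq\eta<\lambda\,\}.$$
Two properties of $\Gamma_0$ must be verified. \emph{Satisfiability:} since $T$ is ample, any finite positive combination of these conditions is satisfiable (interpret the finitely many relevant $d_\xi$ by distinct points of an ample model, making every occurring distance equal to $1$), so $T_0$ is affinely satisfiable and hence satisfiable by Theorem \ref{compactness}. \emph{Faceness:} because the diameter is at most $1$, each condition $d(d_\xi,d_\eta)=1$ is equivalent to $1\leqslant d(d_\xi,d_\eta)$, i.e. it cuts out the set on which the affine continuous functional $q\mapsto q\big(d(x_\xi,x_\eta)\big)$ attains its maximum value $1$ on $K_{\x}(T)$; such a maximum set is a face, and an intersection of faces is a face, so $\Gamma_0$ is a face. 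Finally $|\Gamma_0|=\lambda<\kappa$, and since each later stage adjoins at most one condition we get $|\Gamma_i|\leqslant\lambda+|i|<\kappa$ by regularity of $\kappa$, so invariant (I) persists.

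With $\Gamma_0$ a satisfiable face, the steps for $i\in X,Y,Z,W$ go through exactly as in Theorem \ref{omitting}, since each uses only that $\Gamma_i$ is a satisfiable face of $K_{\x}(T)$: the $Z$-step still derives a contradiction via Proposition \ref{ext2ext1} from the hypothesis that $p$ becomes implied, and the $W$-step still realizes the relevant types in the $\kappa$-saturated model $N$, legitimately because fewer than $\kappa$ variables are ever in play. The resulting complete $L(C)$-theory $\overline T=\bigcup_i T_i$ contains $d(d_\xi,d_\eta)=1$ for all $\xi\neq\eta$ and still omits $p$. Passing to its canonical model $M$, the classes $\hat d_\xi$ are pairwise at distance $1$, whence $\lambda\leqslant|M|$, and $M$ omits $p$ for the same reason as before.

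I expect the main obstacle to be bookkeeping rather than a new idea, concentrated in verifying that adjoining the $\lambda$ distance-one conditions breaks neither invariant (I) nor (II). The faceness of $\Gamma_0$ is the crux: it is precisely what makes the distance-one block compatible with the omitting machinery, and it rests on the diameter-$\leqslant 1$ normalization together with ampleness, which guarantees the maximal value $1$ is actually attained so that the maximum set is a nonempty face. The cardinal bookkeeping ensuring $\lambda<\kappa=\kappa^{\aleph_0}$ with $\kappa$ regular is the other point that must be handled with care, as it is what keeps the ``fewer than $\kappa$ variables'' hypothesis of the $W$-step valid.
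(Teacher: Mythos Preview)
Your proposal is correct and follows essentially the same approach as the paper: seed the Henkin construction of Theorem~\ref{omitting} with a block $D$ of $\lambda$ constants at pairwise distance $1$, verify this initial $\Gamma_0$ is a satisfiable face, and run the rest unchanged. Your treatment is in fact more explicit than the paper's on two points the paper leaves to the reader: the cardinal arithmetic guaranteeing a $\kappa>\lambda+|L|$ with $\kappa^{\aleph_0}=\kappa$, and the verification that $\Gamma_0$ is a face (which in the paper's language falls under Remark~\ref{facial}, since $T\vDash d(x_\xi,x_\eta)\leqslant 1$ makes each $1\leqslant d(x_\xi,x_\eta)$ a face and intersections of faces are faces).
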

\begin{proof}
Let $r>0$ be as above and $\kappa>\lambda+|L|$ be such that $\kappa^{\aleph_0}=\kappa$.
Take a partition of $\kappa$ as in the proof of Theorem \ref{omitting}.
Let $D$ be a subset of $C$ of cardinality $\lambda$.
Let $$\Gamma_{0}=\{1\leqslant d(c,c'):\ c,c'\in D\ \mbox{are\ distinct}\},
\ \ \ \ \ \ \ \ T_{0}=T\cup\Gamma_{0}.$$
Then, $T_0$ is consistent and $T_0$, $\Gamma_0$ satisfy the conditions (I), (II)
in the proof of Theorem \ref{omitting}.
We then continue the construction of $T_i$ and $\Gamma_i$ as before.
The resulting canonical model has cardinality at least $\lambda$ and omits $p(\z)$.
\end{proof}

Similarly, one can prove that

\begin{proposition} \label{extremal4}
Let $T$ be a complete ample theory in a language $L$ and $\lambda$ be an infinite cardinal.
Then, $T$ has an extremal model $M$ such that $\lambda\leqslant |M|$.
\end{proposition}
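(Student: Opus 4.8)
The plan is to merge the two constructions already carried out in the preceding pages: the simultaneous omission of all non-extreme types from the proof of Theorem \ref{strong omitting}, and the distance-inflating device from Proposition \ref{extremal3} that forces a model to have cardinality at least $\lambda$. Concretely, I would enumerate all non-extreme types $\{p_\alpha(\z_1,\ldots,\z_{n_\alpha}):\alpha<\mu\}$ occurring in the various $K_n(T)$, and fix a cardinal $\kappa\geqslant\lambda+|L|+\mu$ with $\kappa^{\aleph_0}=\kappa$ together with a set $C$ of $\kappa$ new constant symbols. As in Theorem \ref{strong omitting}, I would partition $\kappa$ into $\kappa$ sets of size $\kappa$, namely $\{X,Y,W\}\cup\{Z_\alpha:\alpha<\mu\}$, and fix enumerations: $\{\sigma_i:i\in X\}$ of $L(C)$-sentences, $\{\phi_i(y):i\in Y\}$ of $L(C)$-formulas in one free variable, $\{\c_i:i\in Z_\alpha\}$ of all $n_\alpha$-tuples from $C$, and an enumeration by $W$ of all $\omega$-sequences from $C$.

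The one new ingredient, taken from Proposition \ref{extremal3}, is the base of the chain. I would pick $D\subseteq C$ with $|D|=\lambda$ and set
$$\Gamma_0=\{1\leqslant d(c,c'):\ c,c'\in D\ \mbox{distinct}\},\qquad T_0=T\cup\Gamma_0.$$
Ampleness guarantees that $T_0$ is satisfiable, and $\Gamma_0$ defines a face of $K_{\x}(T)$: since every model has diameter at most $1$, each affine function $p\mapsto p(d(x,x'))$ is bounded above by $1$, so the set on which it attains this maximum is a face, and $\Gamma_0$ cuts out the intersection of such faces, which is again a face. Thus $T_0$ and $\Gamma_0$ satisfy the conditions (I) and (II) of Theorem \ref{omitting} from the very first step.

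From here the construction proceeds exactly as in Theorems \ref{omitting} and \ref{strong omitting}. The cases $i\in X$ (passing to the greatest $r$ with $T_i,r\leqslant\sigma_i$ satisfiable), $i\in Y$ (adding a witness $\phi_i(c)\leqslant\inf_y\phi_i(y)$ for a fresh $c$), and $i\in W$ (the Cauchy-completeness clause) preserve satisfiability and the face property verbatim. The crucial cases $i\in Z_\alpha$ handle the omission of $p_\alpha$: for the tuple $\c_i$ I would seek $\theta(\z)\leqslant0\in p_\alpha$ and a greatest $\epsilon>0$ with $T_i\cup\{\epsilon\leqslant\theta(\c_i)\}$ satisfiable, the point being that if no such $\theta,\epsilon$ existed then $T\cup\Gamma_i(\x,\z)\vDash p_\alpha(\z)$, which by Proposition \ref{ext2ext1} would force $p_\alpha$ to be extreme, a contradiction. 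Taking $\overline T=\bigcup_i T_i$ and forming its canonical model $M$ from the constants of $C$ exactly as in Theorem \ref{omitting}, the model $M$ omits every $p_\alpha$, hence realizes only extreme types and is extremal; and the images of the constants in $D$ are $\lambda$ points pairwise at distance $1$, so $\lambda\leqslant|M|$.

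The main obstacle, really the only point needing care, is the bookkeeping: one must check that the extra conditions $1\leqslant d(c,c')$ coming from $\Gamma_0$ never interfere with face-preservation in the later cases, and that running the $Z_\alpha$-step for all target tuples across all $\alpha$ simultaneously still leaves enough fresh constants available. Both are controlled by the arithmetic $\kappa^{\aleph_0}=\kappa$ and by the fact that $\Gamma_0$ is itself a face, so that the inductive invariant (II) is maintained throughout and the argument reduces, step by step, to verifications already made in Theorems \ref{omitting}, \ref{strong omitting} and Proposition \ref{extremal3}.
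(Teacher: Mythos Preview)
Your proposal is correct and follows exactly the approach the paper intends: the paper's own proof is just the phrase ``Similarly, one can prove that,'' pointing back to Proposition~\ref{extremal3} and Theorem~\ref{strong omitting}, and you have carried out precisely that merger. One small bookkeeping point: you should take $\kappa$ strictly larger than $\lambda+|L|+\mu$ (as the paper does in Proposition~\ref{extremal3}, where $\kappa>\lambda+|L|$), so that $|\Gamma_0|=\lambda<\kappa$ and condition~(I) of Theorem~\ref{omitting} is met at stage~$0$; otherwise the argument is complete as written.
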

\vspace{1mm}

\begin{proposition} \label{extremal5}
Let $p(\z)$ be extreme. Then, there is an extremal $N\vDash T$ realizing $p$.
\end{proposition}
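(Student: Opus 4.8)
The plan is to name a realization of $p$ by new constants and then invoke the omitting types theorem in the expanded language. Let $\c=(c_1,\dots,c_n)$ be a tuple of new constant symbols, one for each variable of $\z$, and set
$$T_p=T\cup\{\phi(\c)=p(\phi):\ \phi(\z)\ \text{an}\ L\text{-formula}\}.$$
Since $p\in K_n(T)$ this theory is satisfiable, and because $p$ assigns a value to \emph{every} formula with free variables $\z$, every $L(\c)$-sentence is of the form $\psi(\c)$ for some $L$-formula $\psi(\z)$ and satisfies $T_p\vDash\psi(\c)=p(\psi)$; hence $T_p$ is a complete $L(\c)$-theory. Applying Theorem \ref{strong omitting} to $T_p$, I obtain a model $N\vDash T_p$ that is extremal as an $L(\c)$-structure, i.e. every tuple of $N$ has an extreme type in the sense of $T_p$. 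Viewing $N$ as an $L$-structure, $N\vDash T$ and the tuple $\c^N$ realizes $p$, since $\phi^N(\c^N)=p(\phi)$ for every $\phi(\z)$.

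It then remains to check that $N$, as an $L$-structure, is extremal. Fix a finite tuple $\a\in N$. The types of $T_p$ in the free variables of $\a$ are exactly the $L$-types over the parameters $\c^N$, so the extremality of the $L(\c)$-type of $\a$ means precisely that $\a$ is extreme over $\c$ in the $L$-sense. Moreover $\c^N$ realizes the extreme type $p$, so $\c^N$ is extreme. By Proposition \ref{extreme3} the concatenation $\a\c$ is then extreme over $\emptyset$, and applying Proposition \ref{ext2ext1} to the restriction map that deletes the $\c$-coordinates shows $tp_L(\a)$ is extreme. As $\a$ was arbitrary, $N$ is an extremal model of $T$ realizing $p$.

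The delicate points are matters of bookkeeping rather than substance. First, one must confirm that $T_p$ is genuinely complete over $L(\c)$, which is exactly where the hypothesis that $p$ is a \emph{complete} type — a norm-one positive functional defined on all of $\mathbb{D}_n(T)$ — is used. Second, and more importantly, one must justify the identification of extreme types of $T_p$ with types extreme over the parameters $\c$ in $L$; this is the standard correspondence between an expansion by constants and working over parameters, but it is the hinge of the whole argument, since it is what lets Propositions \ref{extreme3} and \ref{ext2ext1} convert the extremality supplied by Theorem \ref{strong omitting} in the larger language back into extremality over $\emptyset$ in $L$. I expect this translation, together with checking that Proposition \ref{extreme3} applies to finite tuples and not merely singletons, to be the main thing requiring care.
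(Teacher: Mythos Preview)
Your proof is correct and follows essentially the same route as the paper: name the realization of $p$ by constants, apply the strong omitting types theorem (Theorem \ref{strong omitting}) in the expanded language, and then use Proposition \ref{extreme3} together with restriction of extreme types (Proposition \ref{ext2ext1}) to pull extremality back to $L$. The paper phrases the first step as taking $\bar T=Th(M,\a)$ for some $\a$ realizing $p$, which is exactly your $T_p$, and is slightly terser about the final restriction step, but the argument is the same.
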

\begin{proof}
Let $\a\in M\vDash T$ realize $p(\z)$ and $\bar T=Th(M,\a)$.
Let $\bar{N}$ be an extremal model of $\bar T$ and $N$ be its reduction to the language of $T$.
Then, $N$ realizes $p(\z)$. On the other hand, for each $\b\in N$,\ \ $tp(\b/\a)$ is extreme.
So, since $\a$ is extreme, $\a\b$ is extreme. This implies that $\b$ extreme.
Therefore, $N$ omits every non-extreme type of $T$.
\end{proof}

The omitting types theorem can be extended to incomplete theories. Let $T$ be an incomplete theory.
A complete theory $\bar{T}$ is an \emph{extremal extension} of $T$ if it is an extreme
point of the compact convex set $$K(T)=\{T'\supseteq T:\ T'\ \mbox{is\ complete} \}.$$
The set of complete $n$-types for $T$ is a compact convex set which is denoted by $K_n(T)$.
We also extend the notion of extremal model for incomplete theories.
$M\vDash T$ is \emph{extremal for $T$} if for every $\a\in M$, $tp(\a)\in K_n(T)$ is extreme.

\begin{proposition}\label{incomplete-extreme}
Let $T$ be a (possibly incomplete) theory in $L$ and $\bar{T}$ an extremal extension of $T$.
Then, $M\vDash\bar{T}$ is extremal for $\bar{T}$ if and only if it is extremal for $T$.
In particular, $T$ has an extremal model.
\end{proposition}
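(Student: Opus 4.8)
The plan is to realize $K_n(\bar T)$ as a face of $K_n(T)$ sitting over the extreme point $\bar T\in K(T)$, and then to invoke the elementary fact that a point of a face is extreme in the face exactly when it is extreme in the ambient convex set. First I would consider the affine, logic-continuous restriction map $\rho\colon K_n(T)\to K(T)$ obtained by restricting each $n$-type $p$ (a positive functional on $\mathbb{D}_n(T)$ with $p(1)=1$) to the subspace $\mathbb{D}_0(T)$ of sentences, identifying $K(T)$ with $K_0(T)$. Since $\bar T$ is complete, a type $p\in K_n(T)$ satisfies $\rho(p)=\bar T$ precisely when $p$ agrees with $\bar T$ on every sentence; using positivity of $p$ together with the $T$-provable inequalities $\inf_\x\phi\leqslant\phi\leqslant\sup_\x\phi$, one checks that such a $p$ annihilates the kernel of the quotient $\mathbb{D}_n(T)\to\mathbb{D}_n(\bar T)$ and hence factors through it. Thus the fiber $\rho^{-1}(\bar T)$ is canonically identified with $K_n(\bar T)$, and under this identification a type realized by $\a\in M\vDash\bar T$ is the same functional $\phi\mapsto\phi^M(\a)$ whether it is computed in $K_n(\bar T)$ or in $K_n(T)$.

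Next I would show $\rho^{-1}(\bar T)$ is a face of $K_n(T)$. If $p=\lambda p_1+(1-\lambda)p_2$ with $p_1,p_2\in K_n(T)$, $\lambda\in(0,1)$ and $\rho(p)=\bar T$, then applying the affine map $\rho$ gives $\bar T=\lambda\rho(p_1)+(1-\lambda)\rho(p_2)$; since $\bar T$ is an extreme point of $K(T)$, we get $\rho(p_1)=\rho(p_2)=\bar T$, so $p_1,p_2\in\rho^{-1}(\bar T)$. With the face property in hand, the equivalence follows from the general fact that for a point $x$ in a face $F$ of a convex set $K$, $x$ is extreme in $F$ if and only if it is extreme in $K$: if $x$ is extreme in $K$ it is extreme in the smaller set $F$, and conversely any decomposition of an extreme-in-$F$ point $x$ inside $K$ forces both summands into $F$ by the face property and hence to equal $x$. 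Applying this with $x=tp(\a)$, $F=K_n(\bar T)=\rho^{-1}(\bar T)$ and $K=K_n(T)$ shows that $tp(\a)$ is extreme in $K_n(\bar T)$ if and only if it is extreme in $K_n(T)$, for every $\a\in M$; this is exactly the asserted equivalence between extremality for $\bar T$ and extremality for $T$.

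For the existence claim I would observe that $K(T)$ is a nonempty compact convex set, so by the Krein--Milman theorem it has an extreme point $\bar T$, i.e.\ $T$ admits an extremal extension. Being complete, $\bar T$ possesses an extremal model $M$ by Theorem \ref{strong omitting}, and by the equivalence just established $M$ is extremal for $T$ as well.

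I anticipate the main obstacle to be the bookkeeping of the first paragraph: correctly identifying $K_n(\bar T)$ with the fiber $\rho^{-1}(\bar T)$—in particular verifying that an abstract functional agreeing with $\bar T$ on sentences annihilates the kernel of $\mathbb{D}_n(T)\to\mathbb{D}_n(\bar T)$—and confirming that realized types coincide as functionals on $\mathbb{D}_n(T)$. Once this identification and the affine continuity of $\rho$ are in place, the face argument and the transfer of extremality are routine.
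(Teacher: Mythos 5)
Your proposal is correct and takes essentially the same route as the paper: the paper's proof also decomposes a type $p=\frac{1}{2}p_1+\frac{1}{2}p_2$, restricts the decomposition to $L$-sentences, uses extremality of $\bar T$ in $K(T)$ to force $p_1,p_2$ to be types of $\bar T$, and then transfers extremality in both directions — your packaging via "the fiber $\rho^{-1}(\bar T)=K_n(\bar T)$ is a face, and extreme-in-face equals extreme-in-ambient" is exactly this argument in abstract form (indeed, the paper remarks immediately after its proof that $K_n(\bar T)$ is a face of $K_n(T)$). Your explicit handling of the existence clause, via Krein--Milman applied to $K(T)$ together with Theorem \ref{strong omitting}, spells out a step the paper leaves implicit.
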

\begin{proof}
Assume $M$ is extremal for $\bar{T}$. Let $\a\in M$ realize $p(\x)\in K_n(T)$ and
$p=\frac{1}{2}p_1+\frac{1}{2}p_2$ where $p_1,p_2\in K_n(T)$.
Let $T_1,T_2$ be the restrictions of $p_1,p_2$ to $L$-sentences.
Then, $T_1,T_2\in K(T)$ and $$\bar{T}=\frac{1}{2}T_1+\frac{1}{2}T_2.$$
Since $\bar{T}$ is extreme, we must have that $\bar{T}=T_1=T_2$.
This shows that $p_1,p_2$ are types of $\bar{T}$. Since $p$ is extreme for $\bar{T}$, we conclude that $p=p_1=p_2$.
Hence, $p$ is extreme for $T$.
Conversely, assume $M$ is extremal for $T$ and $\a\in M$ realizes $p=\frac{1}{2}p_1+\frac{1}{2}p_2$ where $p_1,p_2$ are types of $\bar{T}$.
Then, $p_1,p_2$ are types of $T$ too. So, $p=p_1=p_2$.
\end{proof}

One also shows that in fact $K_n(\bar{T})$ is a face of $K_n(T)$.

\subsection{Extremal saturation}\label{Extremal saturation}
The following remark is a consequence of general facts for compact convex sets (see \cite{Simon}).

\begin{remark} \label{facial}
If $\Gamma(\x)$ is a face of $K_{\x}(T)$ and $\y$ is disjoint from $\x$,
then $\Gamma(\x,\y)=\Gamma(\x)$ is a face of $K_{\x\y}(T)$.
If $\Gamma_i$ is a face of $K_{\x}(T)$ for each $i\in I$, then so is $\bigcup_{i\in I}\Gamma_i$ (if satisfiable).
If $\Gamma$ is a face and $\Gamma\vDash\theta(\x)\leqslant0$, then $\Gamma\cup\{0\leqslant\theta(\x)\}$
is a face (if satisfiable).
\end{remark}

It is well-known in CL that if $\mathcal F$ is a countably incomplete ultrafilter, then $\prod_{\mathcal F} M_i$ is $\aleph_1$-saturated.
More generally, if the ultrafilter is countably incomplete and $\kappa$-good, then the ultraproduct is $\alpha$-saturated.
The same proofs can be used in AL to show that such an ultraproduct is saturated if only the extreme types are considered.

\begin{definition}
\emph{A structure $M$ is \emph{extremally $\kappa$-saturated}\index{extremally saturated}
if for each $A\subseteq M$, with $|A|<\kappa$, every extreme type in $K_n(A)$ is realized in $M$.}
\end{definition}

Every compact model is extremally $\kappa$-saturated for all $\kappa$.
Also, every first order model whose first order theory is $\aleph_0$-categorical is extremally
$\aleph_0$-saturated (use Proposition \ref{dense-realization}).
Since every face contains an extreme type, an extremally $\kappa$-saturated model
realizes every facial type with less than $\kappa$ parameters.

Let $\mathcal F$ be an ultrafilter on a nonempty set $I$ and $\kappa,\lambda$ be infinite cardinals with $\lambda<\kappa$.
A function $f:S_\omega(\lambda)\rightarrow\mathcal F$ is monotonic if $f(\tau)\supseteq f(\eta)$
whenever $\tau\subseteq\eta$. It is additive if $f(\tau\cup\eta)= f(\tau)\cap f(\eta)$.\ \
$\mathcal F$ is $\kappa$-good if for every $\lambda<\kappa$ and every monotonic $f:S_\omega(\lambda)\rightarrow\mathcal F$
there is an additive $g$ such that $g(\tau)\subseteq f(\tau)$ whenever $\tau\in S_\omega(\lambda)$.

\begin{proposition} \label{sat1}
Let $\kappa$ be an infinite cardinal and $\mathcal F$ be a countably incomplete
$\kappa$-good ultrafilter on a set $I$. Let $|L|+\aleph_0<\kappa$ and for each $i\in I$,
$M_i$ be an $L$-structure. Then, $M=\prod_{\mathcal F}M_i$ is extremally $\kappa$-saturated.
\end{proposition}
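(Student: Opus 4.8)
The plan is to run the classical good-ultrafilter realization argument (Chang--Keisler style), the one genuinely affine ingredient being a \emph{local realizability} step powered by Proposition \ref{dense-realization}. Fix $A\subseteq M$ with $|A|<\kappa$ and an extreme type $p\in K_n(A)$; since $K_n(A)=K_n(Th(M,A))$ and $Th(M,A)$ is complete, $p$ is an extreme point there and the expansion $(M,a)_{a\in A}$ is a model of it. Let $\Phi$ be the set of \emph{requirements} $(\phi,m)$, where $\phi(\x)$ is an $L(A)$-formula, $m\in\Nn$, and $(\phi,m)$ demands $|\phi(\cdot)-p(\phi)|\le 1/m$. Because $|A|+|L|+\aleph_0<\kappa$ we have $|\Phi|<\kappa$, which is exactly what licenses $\kappa$-goodness. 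I will produce coordinatewise a tuple $\b=[b_i]\in M$ with $\phi^M(\b)=p(\phi)$ for every $\phi$; doing this for every $n$ and every $p\in E_n(A)$ gives extremal $\kappa$-saturation by definition.

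The key lemma, and the place where extremeness is indispensable, is this: for each finite $s\subseteq\Phi$ the set $X_s=\{i:\ \text{some }b\in M_i\text{ meets every requirement in }s\}$ lies in $\mathcal F$. To see it, note $s$ mentions only finitely many parameters $A_0\subseteq A$, and $p|_{A_0}$ is again extreme by Proposition \ref{ext2ext1}; applying Proposition \ref{dense-realization} to the complete theory $Th(M,A_0)$ and its model $M$ (passing to a separable elementary substructure if desired), the extreme type $p|_{A_0}$ is a logic limit of types realized in $M$, so there is $\b'\in M$ with $|\phi^M(\b')-p(\phi)|<\epsilon$ for the finitely many $\phi$ in $s$, where $\epsilon<\min\{1/m:(\phi,m)\in s\}$. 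Writing the parameters as classes $[a_i]$ and using Theorem \ref{th1} (for an ultrafilter the integral is the ultralimit), the set $\{i:\ |\phi^{M_i}(b'_i,a_i)-p(\phi)|<\epsilon\text{ for all such }\phi\}$ is in $\mathcal F$ and is contained in $X_s$. I expect this lemma to be the main obstacle: for a \emph{non}-extreme type a finite fragment need not be even approximately realized in $M$ (e.g. $\mu(x)=\tfrac12$ is unrealized in any ultrapower of $\{0,1\}\vDash\mathrm{PrA}$), and it is precisely \ref{dense-realization}, valid only for extreme types, that repairs this.

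With the lemma the remaining combinatorics are standard. Using countable incompleteness fix $I=I_0\supseteq I_1\supseteq\cdots$ in $\mathcal F$ with $\bigcap_m I_m=\emptyset$, and define a monotonic $f:S_\omega(\Phi)\to\mathcal F$ by $f(s)=I_{|s|}\cap X_s$, which takes values in $\mathcal F$ by the lemma. Since $|\Phi|<\kappa$ and $\mathcal F$ is $\kappa$-good, there is an additive $g\le f$. For $i\in I$ set $s(i)=\{(\phi,m)\in\Phi:\ i\in g(\{(\phi,m)\})\}$; the factor $I_{|s|}$ forces each $s(i)$ finite, and additivity gives $i\in g(s(i))\subseteq f(s(i))\subseteq X_{s(i)}$, so I may choose $b_i\in M_i$ meeting every requirement in $s(i)$ (arbitrary $b_i$ when $s(i)=\emptyset$). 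Put $\b=[b_i]$. For a fixed requirement $(\phi,m)$, on the $\mathcal F$-set $g(\{(\phi,m)\})$ we have $(\phi,m)\in s(i)$, hence $|\phi^{M_i}(b_i,a_i)-p(\phi)|\le 1/m$; by Theorem \ref{th1} this yields $|\phi^M(\b)-p(\phi)|\le 1/m$, and since $m$ was arbitrary $\phi^M(\b)=p(\phi)$. Thus $\b\vDash p$, and as $n$ and $p$ were arbitrary, $M=\prod_{\mathcal F}M_i$ is extremally $\kappa$-saturated.
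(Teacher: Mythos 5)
Your overall strategy is exactly the paper's: the Chang--Keisler good-ultrafilter combinatorics, with countable incompleteness supplying the chain $I_0\supseteq I_1\supseteq\cdots$ and extremality supplying the one non-classical ingredient, namely that finite fragments of $p$ are approximately realized in $M$ itself. The paper proves that ingredient in-line (the ultracharges on $M$ representing $p$ form a closed face of $\mathbb{U}(M)$, whose extreme points are ultrafilters, so $p$ is a $\mathcal D$-limit of realized types), while you cite Proposition \ref{dense-realization}, which is the same fact packaged as a stated result; so this is not a genuinely different route, and your combinatorial half (monotonic $f(s)=I_{|s|}\cap X_s$, additive refinement $g$, finiteness of $s(i)$, choice of $b_i$) matches the paper's step for step.

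There is, however, a genuine error in your justification of the key lemma. You claim that $p|_{A_0}$, the restriction of $p$ to the finitely many parameters $A_0$ occurring in $s$, ``is again extreme by Proposition \ref{ext2ext1}.'' That proposition concerns restriction of the \emph{variable} tuple, not of the \emph{parameter} set, and the parameter-restriction statement is false in general: in a model of $\mathrm{PrA}$ containing $a$ with $\mu(a)=\tfrac{1}{2}$ (e.g.\ an ultraproduct of measure algebras of $[0,1]$), the type $p(x)\in K_1(\{a\})$ determined by $d(x,a)=0$ is extreme (even exposed), yet its restriction to $A_0=\emptyset$ is the type $\mu(x)=\tfrac{1}{2}$, the midpoint of $K_1(\mathrm{PrA})\cong[0,1]$, which is not extreme. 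So if some requirement in $s$ mentions few (or no) parameters, your appeal to Proposition \ref{dense-realization} over $Th(M,a)_{a\in A_0}$ is unjustified. The repair is immediate and stays inside your argument: apply Proposition \ref{dense-realization} to the complete theory $Th(M,a)_{a\in A}$ over the \emph{full} parameter set, where $p$ is extreme by hypothesis; a logic-topology neighbourhood of $p$ in $K_n(A)$ is already determined by the finitely many formulas occurring in $s$, so one gets $\bar{b}'\in M$ with $|\phi^M(\bar{b}')-p(\phi)|<\epsilon$ for exactly those $\phi$, and the rest of your proof goes through unchanged. A second, smaller slip: $|\Phi|<\kappa$ fails if $\Phi$ ranges over all $L(A)$-formulas, since arbitrary real scalars already produce $2^{\aleph_0}$ many formulas and $\kappa$ may be at most the continuum (say $\kappa=\aleph_1$ without CH); as the paper notes, one should take $\Phi$ over formulas with rational coefficients, realize those exactly, and recover all formulas by uniform approximation.
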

\begin{proof}
The proof is an adaptation of the proof of Theorem 6.1.8 of \cite{CK1} for the present situation.
For any set $A\subseteq M$ with $|A|<\kappa$ one has that
$$(M,a)_{a\in A}\simeq\prod_{\mathcal F}(M_i,a_i)_{a\in A}.$$
So, we may forget the parameters and prove that every extreme type of $Th(M)$ is realized in $M$.
For simplicity assume $|\x|=1$ and let $p(x)$ be extreme.
Let $\mathbb U(M)$ be the set of ultracharges on $M$. This is a compact convex set whose extreme
points are ultrafilters. Let
$$V=\big\{\wp\in\mathbb{U}(M):\ \ \ p(\phi)=\int\phi^M(x)d\wp\ \ \ \ \forall\phi\big\}.$$
The type $p$ induces a positive linear functional on the space of functions $\phi^M(x)$.
By the Kantorovich extension theorem (\S \ref{Appendix})
it extends to a positive linear functional $\bar p$ on $\ell^{\infty}(M)$.
Then, $\bar p$ is represented by integration over an ultracharge on $M$
so that $V$ is non-empty.
Moreover, $V$ is a closed face of $\mathbb{U}(M)$.
In particular, assume for $r\in(0,1)$ one has that $r\mu+(1-r)\nu=\wp\in V$.
Define the types $p_\mu$, $p_\nu$ by setting for each $\phi(x)$
$$p_\mu(\phi)=\int\phi^M d\mu,\ \ \ \ \ \ \ \ p_\nu(\phi)=\int\phi^M d\nu.$$
Then, $rp_\mu+(1-r)p_\nu=p$.
We have therefore that $p_\mu=p_\nu=p$ and hence $\mu,\nu\in V$.

Let $\wp$ be an extreme point of $V$.
Then, $\wp$ is an extreme point of $\mathbb{U}(M)$ and hence it corresponds to
an ultrafilter (\S \ref{Appendix}, \ref{ultrafilters-extreme}), say $\mathcal D$
(not to be confused with the ultrafilter $\mathcal F$ on $I$).
We have therefore that $$\ \ \ \ p(\phi)=\int_M\phi^M(x)d\wp=
\lim_{\mathcal D,x}\phi^M(x) \ \ \ \ \ \ \ \forall\phi.$$
Since $|L|+\aleph_0<\kappa$, we may assume $p$ is axiomatized by a family of conditions
$$\{0\leqslant\phi(x):\ \ \phi\in\Sigma\}\equiv p(x)$$ where $\Sigma$ is a set of formulas with $|\Sigma|<\kappa$.
For this purpose, one may use formulas with rational coefficients.
Let $$\Sigma^+=\{\phi+r:\ \ \phi\in\Sigma,\ r>0\ \mbox{is\ rational}\}.$$
Let $I_1\supseteq I_2\supseteq\cdots$ be a chain such that
$I_n\in\mathcal F$ and $\bigcap_n I_n=\emptyset$.
Let $f:S_\omega(\Sigma^+)\rightarrow\mathcal F$ be defined as follows.
$f(\emptyset)=I$ and for nonempty $\tau\in S_\omega(\Sigma^+)$
$$f(\tau)=I_{|\tau|}\cap\big\{i\in I:\ \ \ 0<\sup_x\bigwedge_{\phi\in\tau}\phi^{M_i}(x)\big\}.\ \ \ \ \ \ \ \ \ (*)$$
Since $\mathcal D$ is an ultrafilter, there exists $a\in M$ such that
$0<\phi^M(a)$ for every $\phi\in\tau$.
We have therefore that $f(\tau)\in\mathcal F$.
Also, $f(\tau)\supseteq f(\eta)$ whenever $\tau\subseteq\eta$.
Since $\mathcal F$ is $\kappa$-good, there is an additive function $g:S_\omega(\Sigma^+)\rightarrow\mathcal F$
such that $g(\tau)\subseteq f(\tau)$ for every $\tau\in\Sigma^+$.
Let $$\tau(i)=\{\phi\in\Sigma^+:\ i\in g\{\phi\}\}.$$
If $\phi_1,...,\phi_n\in\tau(i)$ are distinct, then
$$i\in g\{\phi_1\}\cap\cdots\cap g\{\phi_n\}=g\{\phi_1,...,\phi_n\}\subseteq f\{\phi_1,...,\phi_n\}\subseteq I_n.$$
In particular, if $|\tau(i)|\geqslant n$ then $i\in I_n$
and hence $\tau(i)$ is finite for each $i$ as $\bigcap_n I_n=\emptyset$.
We have also that
$$i\in\bigcap\big\{g\{\phi\}:\ \phi\in\tau(i)\big\}=g(\tau(i))\subseteq f(\tau(i))\subseteq I_{|\tau(i)|}.$$

Now, we define $a\in M$ which realizes $p(x)$. By $(*)$, we may choose $a_i\in M_i$ such that
$$0<\bigwedge_{\phi\in\tau(i)}\phi^{M_i}(a_i).$$
Fix $\phi\in\Sigma^+$. For each $i\in g\{\phi\}\in\mathcal F$ one has that
$\phi\in\tau(i)$ and so $0<\phi^{M_i}(a_i)$. This shows that $0\leqslant\phi^M(a)$.
We conclude that $0\leqslant\phi^M(a)$ for every $\phi\in\Sigma$ and hence $a$ realizes $p(x)$.
\end{proof}

This gives an other proof that every compact structure is extremally $\kappa$-saturated for every $\kappa$.
Extremal forms of homogeneity and universality are meaningful and share some interesting properties with the standard case.
These will be studied briefly in section \ref{AEC}.

It is natural to ask for which ultracharges $\mu$,\ \ $\prod_{\mu}M_i$ is $\aleph_1$-saturated?
The following propositions may be a first step to find an answer to this question.

\begin{proposition}\label{mid}
Let $\mu$ be an ultracharge on $I=[0,1]\cap\mathbb Q$ with $\mu[r,s]=s-r$ and $N=\prod_\mu M_i$.
Then, $N$ has midpoints. Moreover, every $\phi^{N}(x)$ with parameters in $N$ takes on its maximum and minimum.
\end{proposition}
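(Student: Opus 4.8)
The plan is to handle the two assertions separately, exploiting in each the countability of $I=[0,1]\cap\mathbb Q$ together with atomlessness of $\mu$. Note first that $\mu(\{r\})=\mu[r,r]=r-r=0$, so by finite additivity $\mu$ vanishes on every finite subset of $I$; this is the feature that drives the extremum part.

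For the midpoints, fix $a=[a_i]$ and $b=[b_i]$ and set $g(i)=d_i(a_i,b_i)\in[0,1]$. For $t\in[0,1]$ let $A_t=[0,t]\cap\mathbb Q\subseteq I$. Squeezing between rational endpoints and using monotonicity of $\mu$, one gets $\mu(A_t)=t$ for every real $t$, hence $\mu\big((t,t']\cap\mathbb Q\big)=t'-t$. Consequently $h(t)=\int_{A_t}g\,d\mu$ satisfies $0\le h(t')-h(t)\le t'-t$, so $h$ is $1$-Lipschitz with $h(0)=0$ and $h(1)=\int g\,d\mu=d(a,b)$. By the intermediate value theorem there is $t^*$ with $h(t^*)=\tfrac12 d(a,b)$; put $A=A_{t^*}$. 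Defining $x_i=a_i$ for $i\in A$ and $x_i=b_i$ for $i\notin A$, one computes $d(x,a)=\int_{I\setminus A}g\,d\mu$ and $d(x,b)=\int_A g\,d\mu$, both equal to $\tfrac12 d(a,b)$, so $x=[x_i]$ is the desired midpoint.

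For the extrema, write $\phi(x)$ with parameters $\bar c=[\bar c_i]$ and set $\psi_i(y)=\phi^{M_i}(y,\bar c_i)$ and $s_i=\sup_y\psi_i(y)$, which is bounded by $\mathbf{b}_\phi$. By Theorem \ref{th1} applied to the formula $\sup_x\phi$, we have $\sup\{\phi^N(b):\ b\in N\}=(\sup_x\phi)^N(\bar c)=\int s_i\,d\mu$. First I would enumerate $I=\{i_1,i_2,\dots\}$ and choose $x_{i_n}\in M_{i_n}$ with $\psi_{i_n}(x_{i_n})>s_{i_n}-\tfrac1n$. Then for each $\epsilon>0$ the set $\{i:\ s_i-\psi_i(x_i)\ge\epsilon\}$ is finite, hence $\mu$-null, so the nonnegative defect integrates to $\int(s_i-\psi_i(x_i))\,d\mu=0$. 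Therefore $x=[x_i]$ satisfies $\phi^N(x)=\int\psi_i(x_i)\,d\mu=\int s_i\,d\mu$, realizing the maximum; the minimum is obtained symmetrically (equivalently, by applying the argument to $-\phi$).

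The two integral computations and the verification $\mu(A_t)=t$ are routine. The point deserving real care, and the crux of the whole statement, is the extremum step: since $\mu$ is only finitely additive and kills finite sets, a coordinatewise selection with $\psi_i(x_i)$ approaching $s_i$ along the enumeration forces the defect $s_i-\psi_i(x_i)$ to integrate to exactly $0$, so the supremum $\int s_i\,d\mu$ is genuinely attained even though no $M_i$ need be compact and no $s_i$ need be attained in $M_i$. The naive intuition that a single selection cannot reach an unattained pointwise supremum fails precisely because $\mu$ is not countably additive, and this is where the countability of $I$ is indispensable.
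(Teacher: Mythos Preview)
Your proof is correct and follows essentially the same route as the paper's: for midpoints you use the intermediate value theorem on a cumulative integral over $[0,t]\cap\mathbb Q$ (the paper's function $f(s)=\int_0^s g\,d\mu-\int_s^1 g\,d\mu$ is just $2h(s)-d(a,b)$ in your notation), and for the extremum you use exactly the paper's enumeration trick, picking $x_{i_n}$ within $1/n$ of the supremum so that the defect is $\mu$-null. Your write-up is in fact more explicit than the paper's about why the defect integrates to zero.
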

\begin{proof}
Assume $d(a,b)=r$ where $a,b\in N$. For $s\in[0,1]$ define
$$f(s)=\int_0^sd(a_i,b_i)d\mu-\int_s^1d(a_i,b_i)d\mu.$$
Then, $f$ is continuous and $f(0)=-r$,\ \ $f(1)=r$. So, there exists $s$ such that $f(s)=0$.
Let $c_i=a_i$ if $i\leqslant s$ and $c_i=b_i$ otherwise.
Then, one has that $d(c,a)=d(c,b)=\frac{r}{2}$.
On the other hand, assume for example that $\inf\phi^{N}(x,b)=0$.
So, $$\int\inf_x\phi^{M_i}(x,b_i)=0.$$
Let $g:I\rightarrow\{1,2,...\}$
be a bijection. For each $i\in I$ take $a_i\in M_i$ such that
$$\phi^{M_i}(a_i,b_i)\leqslant\inf_x\phi^{M_i}(x,b_i)+\frac{1}{g(i)}.$$
Then, by integrating, one has that $\phi^N(a,b)\leqslant0$. Note that $N$ may be incomplete.
\end{proof}

\begin{proposition}\label{sat1}
Let $T$ be complete and $M\vDash T$. Let $\wp$ be an ultracharge on $\mathbb N$ such that
$\wp(\{k\})=0$ for all $k$. Then $M^{\wp}$ realizes every type in $E_n(T)$.
\end{proposition}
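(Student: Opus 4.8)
The plan is to realize a fixed extreme type $p\in E_n(T)$ by a single \emph{sequence} of $n$-tuples from $M$, regarded as an element of the powermean $M^\wp=\prod_\wp M_i$ with all $M_i=M$, and to exploit the hypothesis $\wp(\{k\})=0$ through the following elementary fact about integration against a diffuse charge: if $(x_k)_{k\in\mathbb N}$ is a bounded real sequence with $x_k\to L$, then $\int x_k\,d\wp=L$. Indeed, given $\epsilon>0$ choose $N$ with $|x_k-L|<\epsilon$ for $k\geqslant N$; the set $F=\{0,\dots,N-1\}$ is finite, so $\wp(F)=\sum_{k\in F}\wp(\{k\})=0$ by finite additivity, whence $\wp(\mathbb N\setminus F)=1$. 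Splitting the integral over $F$ and its complement and using that the integral of a bounded function over a null set vanishes, one gets $|\int x_k\,d\wp-L|\leqslant\epsilon$, and $\epsilon$ was arbitrary. This is the only place the hypothesis on $\wp$ enters.

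Next I would produce, for the given extreme $p$, a sequence $(\bar a_k)_{k\in\mathbb N}$ in $M^n$ with $\phi^M(\bar a_k)\to p(\phi)$ for \emph{every} formula $\phi(\x)$; that is, $tp(\bar a_k)\to p$ in the logic topology. By Proposition \ref{dense-realization}, $p$ lies in the logic closure of the set $\{tp(\bar a):\bar a\in M^n\}$ of types realized in $M$. Working in a countable language, $\mathbb D_n(T)$ is separable, so the logic topology on the compact set $K_n(T)$ is metrizable; closure then coincides with sequential closure and the required sequence $(\bar a_k)$ exists. Equivalently, one fixes a countable family of formulas that determines $p$ and extracts $(\bar a_k)$ by a diagonal argument from the approximating realized types, arranging $\phi^M(\bar a_k)\to p(\phi)$ simultaneously for all $\phi$.

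Finally I would assemble the two ingredients. Consider the class $[\bar a_k]\in (M^\wp)^n$ determined componentwise by the sequence just constructed. By the Ultramean theorem (Theorem \ref{th1}), applied with every $M_i=M$, together with the charge fact of the first paragraph, for every formula $\phi(\x)$ one has
$$\phi^{M^\wp}([\bar a_k])=\int\phi^M(\bar a_k)\,d\wp=\lim_k\phi^M(\bar a_k)=p(\phi),$$
so $[\bar a_k]$ realizes $p$ in $M^\wp$. Since $p\in E_n(T)$ was arbitrary, $M^\wp$ realizes every extreme $n$-type. The step I expect to be the main obstacle is the extraction of a genuine \emph{sequence} in the second paragraph: Proposition \ref{dense-realization} only yields membership in a topological closure, and converting this into sequential convergence against all formulas at once is exactly where separability of the type space (hence the countable-language hypothesis, or a diagonal argument over a countable determining family) is essential. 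Everything else—the charge computation and the appeal to {\L}o\'{s}/ultramean—is routine.
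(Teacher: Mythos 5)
Your proposal is correct, and its skeleton --- a single sequence $\bar{a}_k\in M^n$ with $tp(\bar{a}_k)\to p$ in the logic topology, then the diffuse-charge fact $\int x_k\,d\wp=\lim_k x_k$, then the ultramean theorem --- is exactly the skeleton of the paper's proof. The difference lies in how the sequence is produced. The paper re-runs the argument of Proposition \ref{dense-realization} inline: it represents $p$ as evaluation at a point $a$ of the Stone-\v{C}ech compactification $\overline{M}$ (Kantorovich plus Riesz representation, and the fact that an extreme point of the face $\zeta^{-1}(p)\subseteq\mathcal{P}(\overline{M})$ is a Dirac measure $\delta_a$), and then picks a sequence $a_k\in M$ with $a_k\to a$ in $\overline{M}$, so that $\phi^M(a_k)\to\overline{\phi^M}(a)=p(\phi)$ for all $\phi$ at once. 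Your route --- citing \ref{dense-realization} as a black box and converting closure into sequential closure via metrizability of $K_n(T)$ when $L$ is countable --- is not only adequate but arguably more careful: in the Stone-\v{C}ech compactification of a metric space no sequence from $M$ converges to a point of $\overline{M}\setminus M$, so the paper's literal extraction of $(a_k)$ is the delicate step, and your metrizability (equivalently, diagonal) argument is precisely what justifies it; one can also repair the paper's version by replacing $\beta M$ with the metrizable compactification generated by the countably many functions $\phi^M$. You are likewise right that countability of the language is essential rather than cosmetic (it is a standing assumption of this section): without it the statement can fail outright, e.g.\ for $M=\mathbb{N}$ with the discrete metric and a unary predicate for every subset of $\mathbb{N}$, the space $E_1(T)$ has cardinality $2^{2^{\aleph_0}}$ while $M^{\wp}$ has at most $2^{\aleph_0}$ elements, so it cannot realize all extreme types.
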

\begin{proof}
(i) Assume $n=1$ and let $\mathcal{P}(\overline{M})$ be the set of regular Borel probability measures
on $\overline{M}$ (the Stone-\v{C}ech compactification of $M$).
For $\nu\in\mathcal{P}(\overline{M})$ define a type $p_\nu$ by $$p_\nu(\phi)=\int\overline{\phi^M} d\nu$$
where $\overline{\phi^M}$ is the natural extension of $\phi^M$ to $\overline{M}$. Then,
$$\zeta:\mathcal{P}(\overline{M})\rightarrow K_1(T), \ \ \ \ \ \ \ \ \ \ \zeta(\nu)=p_\nu$$ is an affine continuous function.
Conversely, let $p\in K_1(T)$ and for each $L$-formula $\phi(x)$ define $$\Lambda(\overline{\phi^M})=p(\phi).$$
This is a positive linear functional on the set of function $$\{\overline{\phi^M}:\ \phi(x)\ \mbox{is\ an $L$-formula}\}$$
which is a majorizing linear subspace of $\mathbf{C}(\overline{M})$.
So, by Kantorovich extension theorem (\S \ref{Appendix}), it extends to a
positive linear functional on $\mathbf{C}(\overline{M})$.
By Riesz representation theorem, there exists a regular Borel probability measure
$\nu$ on $\overline{M}$ such that $p(\phi)=\int\overline{\phi^M}d\nu$ for all $\phi(x)$.
This shows that $\zeta$ is surjective.
Let $p\in E_1(T)$. Then, $\zeta^{-1}(p)$ is a face of $\mathcal{P}(\overline{M})$.
Let $\mu$ be an extreme point of $\zeta^{-1}(p)$.
Then $\mu$ is an extreme point of $\mathcal{P}(\overline{M})$ too.
The extreme points of $\mathcal{P}(\overline{M})$ are of course pointed measures, i.e.
$\mu=\delta_a$ for some $a\in\overline{M}$ (\S \ref{Appendix}, \ref{regular-extreme}). 
We conclude that for each $\phi$
$$p(\phi)=\int\overline{\phi^M}\ d\delta_a=\overline{\phi^M}(a).$$
Let $a_k\rightarrow a$ where $a_k\in M$.
Then, for every $\phi$ one has that $\phi^M(a_k)\rightarrow\overline{\phi^M}(a)$ and hence
$$\phi^{M^\wp}([a_k])=\int\phi^M(a_k)d\wp=\overline{\phi^M}(a)=p(\phi)$$
which shows that $[a_k]\in M^{\wp}$ realizes $p$.
\end{proof}

\subsection{Extremal models}
There is a close relation between the topological nature of the extreme boundary of type spaces
and the model theoretic nature of the class of extremal models.
Generally, extremal models form an abstract elementary class.
They form an elementary class if and only if the extreme boundaries are closed.
In this case, the extreme types correspond to the CL-types of a complete CL-theory.

One verifies that if $f:M\rightarrow N$ is a partial elementary map with domain $A$ and $p(\x)\in K_n(A)$,
then its shift $$f(p)(\phi(\x,f(\a)))=p(\phi(\x,\a))$$
is a type over $f(A)$. If $p$ is extreme, then $f(p)$ is extreme.

In CL, conjunction and disjunction of formulas are allowed. So, the set of formulas then form
a normed Riesz space. For a CL-theory $\mathbb{T}$, the notion of type is defined similar to the AL case.
A $n$-type is a positive linear functional $p$ on the space of $\mathbb{T}$-equivalence classes
of CL-formulas $\phi(\x)$, $|\x|=n$, which preserves conjunction and $p(1)=1$.
The set of $n$-types of $\mathbb{T}$ is denoted by $S_n(\mathbb{T})$\index{$S_n(\mathbb{T})$}.
The logic topology and metric topology on $S_n(\mathbb{T})$
are defined similar to the AL case (see \cite{BBHU}).
For example, if $M\vDash\mathbb{T}$ is $\aleph_0$-saturated (in the CL sense),
$$\mathbf{d}(p,q)=\inf\{d(\a,\b):\ \a\vDash p,\ \b\vDash q,\ \a,\b\in M\}.$$
$S_n(\mathbb{T})$ is a compact Hausdorff space with the logic topology.

If $\mathbb{T}$ is a CL-theory, its \emph{affine part} \index{affine part of $\mathbb{T}$}
is the set of all affine consequences of $\mathbb{T}$, i.e. affine conditions $\sigma\leqslant\eta$ such that $\mathbb{T}\vDash\sigma\leqslant\eta$.
This is denoted by $\mathbb{T}_{\mathrm{af}}$\index{$\mathbb{T}_{\mathrm{af}}$}.
So, $\phi(\x)$ is an affine formula, then $\mathbb{T}\vDash\phi=0$ if and only if $\mathbb{T}_{\mathrm{af}}\vDash\phi=0$.
In particular, the restriction of a $n$-type of $\mathbb{T}$ to affine formulas is a $n$-type of its affine part.

\begin{theorem} \label{extremal embedding}
Let $T$ be a complete $L$-theory. Then, $E_n(T)$ is closed for all $n$ if and only if
the class of extremal models of $T$ is axiomatized by a CL-theory $\mathbb T$.
In this case, $\mathbb T$ is CL-complete.
\end{theorem}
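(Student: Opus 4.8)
The plan is to route both implications through a single principle: since the logic value of a tuple in a CL-ultraproduct is the ultralimit of its values in the factors, the AL-type of $[\bar a_i]$ in $\prod_{\mathcal F}M_i$ is the $\mathcal F$-limit of the types $tp(\bar a_i)$ in $K_n(T)$. Thus whether an ultraproduct of extremal models is extremal is governed exactly by whether $E_n(T)$ is closed. For the direction $(\Leftarrow)$, suppose the extremal models of $T$ are the models of a CL-theory $\mathbb T$ and, toward a contradiction, that some non-extreme $p\in K_n(T)$ lies in the closure of $E_n(T)$. I would pick a net $(p_\alpha)$ in $E_n(T)$ converging to $p$, realize each $p_\alpha$ by a tuple $\bar a_\alpha$ in an extremal model $M_\alpha\vDash\mathbb T$ using Proposition \ref{extremal5}, and fix an ultrafilter $\mathcal F$ on the index set whose limit of the net is $p$. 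Then $\prod_{\mathcal F}M_\alpha\vDash\mathbb T$ is extremal, yet the AL-type of $[\bar a_\alpha]$ is $\lim_{\mathcal F}p_\alpha=p$, which is non-extreme, a contradiction. Hence $E_n(T)$ is closed for every $n$.

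For $(\Rightarrow)$, assume every $E_n(T)$ is closed. First I would show the class of extremal models is closed under CL-ultraproducts: the AL-type of $[\bar a_i]$ is the $\mathcal F$-limit of the extreme types $tp(\bar a_i)$, and compactness of $E_n(T)$ forces this limit to be extreme. Next, closure under CL-elementary equivalence: if $N\equiv_{\mathrm{CL}}M$ with $M$ extremal, then $M$ and $N$ have isomorphic CL-ultrapowers (the CL Keisler--Shelah theorem), these are extremal by the previous step, and since CL-elementary maps preserve the values of affine formulas, the AL-type of any tuple of $N$ coincides with its type in the ultrapower and is therefore extreme. By the CL analogue of the Axiomatizability theorem, the extremal models then form an elementary class $Mod(\mathbb T)$.

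It remains to establish CL-completeness of $\mathbb T$, i.e. that all extremal models satisfy the same CL-sentences. I would first reduce to extremally saturated models: for any extremal $M$, a countably incomplete $\kappa$-good CL-ultrapower $M^{\mathcal U}$ is extremal (again by compactness of the $E_n(T)$) and extremally $\kappa$-saturated (Proposition \ref{sat1}), while $M\equiv_{\mathrm{CL}}M^{\mathcal U}$; so it suffices to evaluate CL-sentences on extremally $\aleph_0$-saturated extremal models. For such a model I would prove by induction on CL-formulas that each $\phi(\bar x)$ has a value on $E_n(T)$ depending only on the type. For quantifier-free $\phi$ this is immediate, as a lattice-linear combination of affine formulas is computed pointwise from the values $p(\psi)$ and hence defines a continuous function of $p$. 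For the quantifier step, Proposition \ref{extreme3} identifies the extreme types over a realization $\bar a\vDash p$ with the fibre $\{q\in E_{n+1}(T):q|_{\bar x}=p\}$, and extremal $\aleph_0$-saturation realizes this entire fibre; thus $(\sup_y\phi)^M(\bar a)$ equals the supremum of the inductively defined values over that fibre, a quantity independent of $M$. Running this through all CL-sentences shows extremally saturated extremal models agree on every CL-sentence, and by the reduction all extremal models are CL-elementarily equivalent, so $\mathbb T$ is CL-complete.

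The hard part will be exactly this last inductive evaluation of quantified CL-formulas on $E_n(T)$. Everything quantifier-free collapses to pointwise lattice operations on affine values, but once $\sup_y$ and $\inf_y$ enter, the value genuinely depends on which extreme types are realized, and pinning it down requires both the fibrewise description of extreme types (Propositions \ref{ext2ext1} and \ref{extreme3}) and the passage to extremally saturated extremal models so that each fibre is realized in full. Coordinating these—in particular arranging that the good-ultrafilter ultrapower simultaneously preserves extremality and supplies saturation over the finite parameter sets needed at each inductive stage—is where I expect the real work to lie.
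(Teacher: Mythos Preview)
Your proposal is correct. Two points of comparison with the paper's proof.

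For $(\Leftarrow)$, the paper takes a more direct route than your ultraproduct contradiction: it observes that the restriction map $\zeta:S_n(\mathbb T)\to K_n(T)$ is continuous, lands in $E_n(T)$ (every CL-type of $\mathbb T$ is realized in some extremal model), and is onto $E_n(T)$ by Proposition~\ref{extremal5}; hence $E_n(T)=\zeta(S_n(\mathbb T))$ is compact. This is shorter and sets up the map $\zeta$ that reappears in Theorem~\ref{complete}. Your argument is perfectly valid but uses more machinery for the same conclusion.

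For CL-completeness, the paper does not run your induction on CL-formulas. It takes good ultrapowers $M^{\mathcal F},N^{\mathcal F}$ (extremal and extremally $\aleph_1$-saturated, exactly as you arrange) and verifies directly that the relation $\bar a\sim\bar b\iff tp(\bar a)=tp(\bar b)$ is a partial isomorphism: given $\bar a\sim\bar b$ and $c$, the type $tp(c/\bar a)$ is extreme by Proposition~\ref{extreme3}, its shift to $\bar b$ is extreme, and extremal saturation realizes it. This forth step is precisely the content of your $\sup_y$ case; the paper just packages it as back-and-forth and invokes the general lemma that partial isomorphism implies CL-equivalence, rather than unfolding the formula induction. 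Your version has the merit of making explicit that the extreme AL-type determines the CL-type (essentially Theorem~\ref{complete}), while the paper's version is more compact. The ``hard part'' you anticipate is thus handled in one line in the paper once the back-and-forth framework is in place.
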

\begin{proof}
Assume every $E_n(T)$ is closed. We show that the class of extremal models is closed under ultraproduct and CL-equivalence.
Let $\mathcal F$ be an ultrafilter on a set $I$ and $M_i\vDash T$ be extremal for each $i$.
Let $a=[a_i]\in M=\prod_{\mathcal F}M_i$ and $p_i(x)=tp(a_i)\in E_1(T)$.
Then $$p=\lim_{\mathcal{F},i}p_i\in E_1(T)$$ and for each $\phi(x)$
$$p(\phi)=\lim_{\mathcal{F},i} p_i(\phi)=\lim_{\mathcal{F},i}\phi^{M_i}(a_i)=\phi^{M}(a).$$
So, every $a\in M$ (and similarly every tuple $\a\in M$) has an extreme type.
On the other hand, assume $M\vDash T$ is extremal and $N\equiv_{\mathrm{CL}}M$.
Then, there is an ultrafilter $\mathcal F$ such that $N$ is elementarily embedded in $M^{\mathcal F}$. So, $N$ is extremal.
We conclude that there exists a CL-theory axiomatizing the extremal models of $T$.

Conversely, assume extremal models of $T$ are axiomatized by a CL-theory $\mathbb{T}$.
Since $T$ is complete, it is the affine part of $\mathbb{T}$.
So, the restriction map $$\zeta:S_n(\mathbb{T})\rightarrow E_n(T)$$ is well-defined.
Moreover, it is continuous. 
By Proposition \ref{extremal5}, every extreme type is realized in an extremal model.
So, $\zeta$ is surjective. We conclude that every $E_n(T)$ is closed.

For the second part of the theorem, assume $M,N\vDash\mathbb{T}$. We show that $M\equiv_{\mathrm{CL}}N$.
Take an appropriate ultrafilter $\mathcal F$ as above such that $M^{\mathcal F}$ and $N^{\mathcal F}$
are extremally $\aleph_1$-saturated models of $T$. They are also extremal.
We show that they are partially isomorphic.
For $\a\in M^{\mathcal F}$ and $\b\in N^{\mathcal F}$ of the same finite length set $\a\sim\b$
if $tp(\a)=tp(\b)$. Clearly, $\emptyset\sim\emptyset$.
Suppose $\a\sim\b$ and $c\in M^{\mathcal F}$ is given. Then, $tp(c/\a)$ is extreme.
So, its shift $$\{\phi(\b,x)=r:\ \ \phi^{M^{\mathcal F}}(\a,c)=r\}$$ is extreme.
Assume it is realized by $e\in N^{\mathcal F}$. Then, $\a c\equiv\b e$.
The back property is verified similarly. We conclude that
$M^{\mathcal F}\equiv_{\mathrm{CL}}N^{\mathcal F}$ and hence $M\equiv_{\mathrm{CL}}N$.
\end{proof}
\vspace{1mm}

One verifies that one direction of Proposition \ref{extremal embedding} holds for incomplete $T$ too.
That is, if every $E_n(T)$ is closed, then extremal models form an elementary class.
If every $E_n(T)$ is closed in $K_n(T)$, the CL-theory axiomatizing
the extremal models of $T$ is called the \emph{extremal theory} of $T$. It is denoted by $T^{ex}$\index{$T^{ex}$}.

\begin{proposition}
Let $T$ be an AL-theory which is CL-complete, i.e. $M\equiv_{\mathrm{CL}}N$
for any $M,N\vDash T$. If every $E_n(T)$ is closed, then $T$ is trivial, i.e. it satisfies $\sup_{xy} d(x,y)=0$.
\end{proposition}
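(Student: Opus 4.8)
The plan is to first observe that the hypothesis forces $T$ to be a complete AL-theory, then to upgrade the closedness of the extreme boundaries into the statement that \emph{every} model of $T$ is extremal, and finally to contradict this by a two-fold ultramean whenever $T$ is nontrivial.

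First I would note that since every AL-sentence is a CL-sentence, CL-completeness of $T$ implies AL-completeness: any $M,N\vDash T$ satisfy $M\equiv_{\mathrm{CL}}N$, hence $\sigma^M=\sigma^N$ for every AL-sentence $\sigma$. Thus $T$ is a complete AL-theory, and since every $E_n(T)$ is closed, Theorem \ref{extremal embedding} applies: the extremal models of $T$ are axiomatized by a CL-complete theory $\mathbb{T}=T^{ex}$ whose affine part is $T$.

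Next I would show that every model of $T$ is extremal. Let $M\vDash T$. By the omitting types theorem (Theorem \ref{strong omitting}) $T$ has an extremal model $M_0$. Since $T$ is CL-complete, $M\equiv_{\mathrm{CL}}M_0$, and $M_0\vDash\mathbb{T}$; because $\mathrm{Mod}_{\mathrm{CL}}(\mathbb{T})$ is a CL-elementary class it is closed under $\equiv_{\mathrm{CL}}$, so $M\vDash\mathbb{T}$, i.e. $M$ is extremal. Finally, suppose toward a contradiction that $T$ is not trivial. Then some $M\vDash T$ has elements $a,b$ with $d(a,b)=r>0$. Form the ultramean $N=\tfrac12 M+\tfrac12 M$; by Theorem \ref{th1} it agrees with $M$ on all AL-sentences, so $N\vDash T$. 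Consider $\bar a=[(a,a)]$ and $u=[(a,b)]$ in $N$. For every $\phi(x,y)$, Theorem \ref{th1} gives $\phi^N(\bar a,u)=\tfrac12\phi^M(a,a)+\tfrac12\phi^M(a,b)$, so $tp^N(\bar a,u)=\tfrac12\,tp^M(a,a)+\tfrac12\,tp^M(a,b)$ in $K_2(T)$. The two $2$-types on the right are distinct, since they disagree on the formula $d(x,y)$ (values $0$ and $r$); hence $tp^N(\bar a,u)$ is a proper convex combination of distinct points of $K_2(T)$ and is \emph{not} extreme. Thus $N$ (and its completion, which realizes the same type) is a model of $T$ that is not extremal, contradicting the previous step. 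Therefore $T\vDash\sup_{xy}d(x,y)=0$.

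The main obstacle is the middle step: turning the topological hypothesis that $E_n(T)$ is closed, together with CL-completeness, into the rigid conclusion that every model of $T$ is extremal. Everything afterward is a direct computation with the ultramean theorem, the only delicate point being the choice of the pair $(\bar a,u)$ rather than a single element, which guarantees a genuinely non-extreme type even when $a$ and $b$ happen to have the same $1$-type over $\emptyset$.
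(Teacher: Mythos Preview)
Your proof is correct and follows the same skeleton as the paper's: both first observe that $T$ is AL-complete, invoke Theorem~\ref{extremal embedding} to get $T^{ex}$, and then use CL-completeness of $T$ to conclude that \emph{every} model of $T$ is extremal. The only difference is in how the final contradiction is extracted. The paper argues abstractly: if every model is extremal then every type (being realized in some model) is extreme, so $K_n(T)=E_n(T)$; but a compact convex set equal to its own extreme boundary is a singleton, forcing $T$ to be trivial. You instead construct an explicit witness: form $N=\tfrac12 M+\tfrac12 M$ and exhibit a pair $(\bar a,u)$ whose $2$-type is a genuine midpoint of two distinct types, hence non-extreme. Your route is a bit more hands-on (and nicely illustrates the ultramean theorem in action), while the paper's is slightly shorter since it avoids any construction; both rest on the same pivotal observation that CL-completeness plus closed extreme boundaries forces all models to be extremal.
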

\begin{proof}
$T^{ex}$ is equal to the CL-theory of any extremal model $M$ of $T$. By the assumption,
for every $N\vDash T$ one has that $N\equiv_{\mathrm{CL}}M$. So, every model of $T$ is extremal,
i.e. $T$ has no non-extreme type. This happens only if every $K_n(T)$ is a singleton
which means that $T$ is trivial.
\end{proof}

\begin{example}
\emph{Let APrAA be the theory probability algebras with an aperiodic automorphism \cite{BBHU}.
This is a CL-complete theory.
It is not hard to see that APrAA is axiomatized by affine axioms (see \S \ref{Examples} below).
So, regarding it as a non-trivial affine theory which is CL-complete, we conclude that
$E_n(\mathrm{APrAA})$ is not closed for some $n$.}
\end{example}
\vspace{1mm}

So, in general, $E_n(T)$ may not be closed.
Our next goal is to show that $E_n(T)$ coincides with $S_n(T^{ex})$ if $T^{ex}$ exists.

\begin{lemma}\label{extremal distance}
Let $p,q\in E_n(T)$ and $M\vDash T$ be extremally $\aleph_0$-saturated.
Then, $$\mathbf{d}(p,q)=\inf\{d(\a,\b):\ \a,\b\in M,\ \a\vDash p, \ \b\vDash q\}.$$
\end{lemma}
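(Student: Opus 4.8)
The plan is to prove the two inequalities separately, the substantial one being $\inf\{d(\bar a,\bar b):\bar a,\bar b\in M,\ \bar a\vDash p,\ \bar b\vDash q\}\leqslant\mathbf d(p,q)$. First I recall that, by the independence of $\mathbf d$ from the ambient saturated model recorded right after its definition, $\mathbf d(p,q)$ equals $\inf\{s:\ p(\bar x)\cup q(\bar y)\cup\{d(\bar x,\bar y)\leqslant s\}\text{ is satisfiable with }T\}$. For the easy inequality $\mathbf d(p,q)\leqslant\inf\{\cdots\}$ I would note that any $\bar a,\bar b\in M$ with $\bar a\vDash p$ and $\bar b\vDash q$ realize the partial type $p(\bar x)\cup q(\bar y)\cup\{d(\bar x,\bar y)\leqslant d(\bar a,\bar b)\}$, so this type is satisfiable and hence $\mathbf d(p,q)\leqslant d(\bar a,\bar b)$; taking the infimum over all such $\bar a,\bar b$ gives the claim. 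This direction does not use any saturation of $M$.

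For the reverse inequality the point is that $M$, being only extremally saturated, need not realize the $2n$-type witnessing the minimal distance, so I must package such a witness inside a \emph{facial} partial type. Consider the set $F$ of conditions $p(\bar x)\cup q(\bar y)$ in the variables $\bar x\bar y$; it is nonempty since $\mathbf d(p,q)\leqslant 1$. Since $p$ is extreme it is a face of $K_{\bar x}(T)$, and by the first clause of Remark \ref{facial} the condition $p(\bar x)$, and likewise $q(\bar y)$, remains a face of $K_{\bar x\bar y}(T)$; by the second clause their union $F$ is again a face. Writing $\hat d(t)=t\big(d(\bar x,\bar y)\big)$ for the affine, logic-continuous evaluation of the formula $d(\bar x,\bar y)=\sum_i d(x_i,y_i)$, compactness of the closed set $F\subseteq K_{2n}(T)$ shows that $\hat d$ attains its minimum on $F$; since $p(\bar x)\cup q(\bar y)\cup\{d(\bar x,\bar y)\leqslant s\}$ is satisfiable exactly when some $t\in F$ has $\hat d(t)\leqslant s$, this minimum equals $\mathbf d(p,q)$. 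Hence $F\vDash \mathbf d(p,q)-d(\bar x,\bar y)\leqslant 0$, so the set $$G=F\cup\{d(\bar x,\bar y)\leqslant\mathbf d(p,q)\}$$ is satisfiable, and by the third clause of Remark \ref{facial} it is again a face of $K_{2n}(T)$.

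Finally I would invoke the fact recorded in \S\ref{Extremal saturation} that every face contains an extreme type and that an extremally $\aleph_0$-saturated model realizes every facial type with finitely many (here zero) parameters. Thus $G$ is realized in $M$ by a pair $(\bar a,\bar b)$, which then satisfies $\bar a\vDash p$, $\bar b\vDash q$ and $d(\bar a,\bar b)\leqslant\mathbf d(p,q)$. This yields $\inf\{d(\bar a,\bar b):\cdots\}\leqslant\mathbf d(p,q)$ and, with the first inequality, the desired equality; indeed the infimum is attained. I expect the main obstacle to be the verification that $G$ is a face: one must add the distance bound at exactly the minimal value $\mathbf d(p,q)$, so that the hypothesis $F\vDash \mathbf d(p,q)-d(\bar x,\bar y)\leqslant 0$ of Remark \ref{facial} holds while keeping $G$ satisfiable. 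Everything else reduces to the extreme-type realization guaranteed in $M$.
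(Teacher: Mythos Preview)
Your proof is correct and follows essentially the same strategy as the paper: show that $\Sigma=p(\bar x)\cup q(\bar y)\cup\{d(\bar x,\bar y)\leqslant\mathbf d(p,q)\}$ is a satisfiable facial partial type and then invoke extremal $\aleph_0$-saturation to realize it in $M$. The only difference is packaging: you assemble the face property from the three clauses of Remark~\ref{facial}, whereas the paper verifies directly that if $\frac12 p_1+\frac12 p_2\supseteq\Sigma$ then (restricting to $\bar x$ and $\bar y$ and using extremality of $p,q$) both $p_i$ contain $p\cup q$, and since each $p_i(d(\bar x,\bar y))\geqslant\mathbf d(p,q)$ while their average is $\leqslant\mathbf d(p,q)$, equality is forced---which is precisely the content of the third clause of Remark~\ref{facial} unwound in this instance.
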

\begin{proof}
Let $r=\mathbf{d}(p,q)$ and $$\Sigma(\x,\y)=p(\x)\cup q(\y)\cup\{d(\x,\y)\leqslant r\}.$$
$\Sigma$ is satisfiable. We show that it is a face. It is clearly convex.
Suppose that $$\frac{1}{2}p_1(\x,\y)+\frac{1}{2}p_2(\x,\y)\supseteq\Sigma.$$
Restricting $p_1$ and $p_2$ to $\x$ (and then to $\y$), we conclude that $p_1$ and $p_2$ contain $p(\x)\cup q(\y)$.
Suppose that $(\a_1,\a_2)$ realizes $p_1$ and $(\b_1,\b_2)$ realizes $p_2$. Then, we must have that
$$\frac{1}{2}d(\a_1,\a_2)+\frac{1}{2}d(\b_1,\b_2)\leqslant r.$$
Since $\a_1,\b_1$ realize $p(\x)$ and $\a_2,\b_2$ realize $q(\y)$, we must have that
$$d(\a_1,\a_2)=d(\b_1,\b_2)=r.$$ Hence, $p_1$, $p_2$ contain $\Sigma$.
We conclude that $\Sigma(\x,\y)$ is realized in $M$.
\end{proof}

In particular, if $T$ has a first order model $M$, then all extreme types are realized in
$M^{\mathcal F}$ for some suitable $\mathcal F$.
So, $\mathbf{d}(p,q)=1$ for every distinct extreme types $p,q$.

\begin{theorem} \label{complete}
Assume every $E_n(T)$ is closed. Then, for each $n$, the restriction map $\zeta:S_n(T^{ex})\rightarrow E_n(T)$
is an isometry of the metric topologies and homeomorphism of the logic topologies.
\end{theorem}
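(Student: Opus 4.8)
The plan is to build a single model that simultaneously realizes the types on both sides and to read off the topology and the metric through it. Concretely I would fix $M_0\vDash T^{ex}$ and pass to an ultrapower $M=M_0^{\mathcal F}$ where $\mathcal F$ is a countably incomplete $\aleph_1$-good ultrafilter. Since $T^{ex}$ is a CL-theory, {\L}o\'s's theorem for CL gives $M\vDash T^{ex}$, so $M$ is extremal; $M$ is CL-$\aleph_1$-saturated by the classical theory of good ultrapowers, and it is extremally $\aleph_1$-saturated by the good-ultrafilter saturation result (Proposition~\ref{sat1}). Recall from the proof of Theorem~\ref{extremal embedding} that $\zeta$ is already known to be logic-continuous and surjective. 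Since $S_n(T^{ex})$ is compact and $E_n(T)$ is Hausdorff and, by hypothesis, closed hence compact, it remains only to establish injectivity of $\zeta$ to get the logic homeomorphism.

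The heart of the argument is injectivity, and here I would re-run the back-and-forth of Theorem~\ref{extremal embedding} inside the single model $M$; I expect this to be the main point requiring care. Suppose $\zeta(p)=\zeta(q)$ for $p,q\in S_n(T^{ex})$. Realize $p$ by $\a$ and $q$ by $\b$ in $M$ (possible by CL-$\aleph_1$-saturation). Because $\zeta(p)=\zeta(q)$, the tuples $\a$ and $\b$ satisfy the same affine formulas, i.e. $\a\equiv_{\mathrm{AL}}\b$. Consider the family $I$ of finite partial maps $\bar u\mapsto\bar v$ with $\bar u\equiv_{\mathrm{AL}}\bar v$. Each such map preserves atomic values, since atomic formulas are affine, and $I$ has the back-and-forth property: given $\bar u\mapsto\bar v\in I$ and $e\in M$, extremality of $M$ makes $tp(e/\bar u)$ extreme, its shift along $\bar u\mapsto\bar v$ is again extreme, and extremal $\aleph_0$-saturation realizes this shift by some $e'$, giving $\bar u e\equiv_{\mathrm{AL}}\bar v e'$. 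A routine induction on CL-formula complexity, in which the connective cases (including $\wedge,\vee$) are immediate and the $\sup/\inf$ cases use exactly the back-and-forth step, then yields $\psi^M(\bar u)=\psi^M(\bar v)$ for every CL-formula $\psi$ and every $\bar u\mapsto\bar v\in I$. Applied to $\a\mapsto\b$ this gives $\a\equiv_{\mathrm{CL}}\b$, hence $p=q$.

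With injectivity in hand, $\zeta$ is a continuous bijection from the compact space $S_n(T^{ex})$ onto the Hausdorff space $E_n(T)$, hence a homeomorphism of the logic topologies. For the isometry I would compute both metrics inside the same $M$. By the CL definition of the metric, $\mathbf{d}(p,q)=\inf\{d(\a,\b):\a\vDash_{\mathrm{CL}}p,\ \b\vDash_{\mathrm{CL}}q \text{ in } M\}$, while by Lemma~\ref{extremal distance}, since $\zeta(p),\zeta(q)$ are extreme and $M$ is extremally $\aleph_0$-saturated, $\mathbf{d}(\zeta(p),\zeta(q))=\inf\{d(\a,\b):\a\vDash\zeta(p),\ \b\vDash\zeta(q) \text{ in } M\}$. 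The two infima range over the same sets of pairs: because $M$ is extremal, every tuple has an extreme affine type and $\zeta(tp(\a))=tp(\a)$ restricted to affine formulas, so for $\a\in M$ one has $\a\vDash\zeta(p)$ iff $\zeta(tp_{\mathrm{CL}}(\a))=\zeta(p)$, which by the injectivity just proved is equivalent to $\a\vDash_{\mathrm{CL}}p$. Hence $\mathbf{d}(\zeta(p),\zeta(q))=\mathbf{d}(p,q)$, so $\zeta$ is an isometry of the metric topologies, completing the proof.
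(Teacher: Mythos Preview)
Your argument is correct and follows essentially the same route as the paper: take an extremal model, pass to a good ultrapower to obtain a model that is simultaneously extremal, extremally saturated, and CL-saturated; use the back-and-forth of Theorem~\ref{extremal embedding} to show that equality of affine types in this model forces equality of CL-types, giving injectivity; and then read off the homeomorphism by compactness and the isometry via Lemma~\ref{extremal distance}, exactly as you do. The only minor slip is that an $\aleph_1$-good ultrafilter suffices for Proposition~\ref{sat1} only when $|L|\leqslant\aleph_0$; for arbitrary $L$ take $\kappa$-good with $|L|+\aleph_0<\kappa$, as the paper does.
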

\begin{proof}
Let $M\vDash T$ be extremal and $\mathcal F$ be a countably incomplete $\kappa$-good ultrafilter on a set $I$ where $|L|+\aleph_0<\kappa$.
Then, $M\preccurlyeq_{\mathrm{CL}} M^{\mathcal F}$ is extremal and extremally $\kappa$-saturated. Hence, $\zeta$ is surjective.
Moreover, if $\a,\b\in M$ and $tp(\a)=tp(\b)$, as in the proof of Proposition \ref{extremal embedding},
$(M^{\mathcal F},\a)$ and $(M^{\mathcal F},\b)$ are partially isomorphic. So, $\a$ and $\b$ have the same CL-type.
This shows that $\zeta$ is injective.
Now, by Lemma \ref{extremal distance}, $\zeta$ is an isometry. Also, since $\zeta$ is logic-continuous
and $S_n(T^{ex})$ is compact, $\zeta$ is a homeomorphism of the logic topologies.
\end{proof}

By a $n$-\emph{state}\index{state} for a CL-complete theory $\mathbb{T}$ in $L$ we mean a norm $1$
positive linear functional on the set of $\mathbb{T}$-equivalence classes of CL-formulas $\phi(\x)$ where $|\x|=n$.
Since the set of functions of the form $\hat\phi(p)=p(\phi)$ is dense in $\mathbf{C}(S_n(\mathbb{T}))$,
every $n$-state $\zeta$ corresponds in a unique way to a norm $1$ positive linear functional on $\mathbf{C}(S_n(\mathbb{T}))$
and hence to a unique regular Borel probability measure $\mu_\zeta$ on $S_n(\mathbb{T})$.
Therefore, by Theorem \ref{Bauer simplex}, the set of $n$-states is a Bauer simplex whose
extreme boundary is homeomorphic to $S_n(\mathbb{T})$ .

Let $T$ be an AL-complete theory for which $T^{\mathrm{ex}}$ exists.
The reduction of every $n$-state of $T^{\textrm{ex}}$ to AL-formulas is a $n$-type of $T$.
Conversely, every type of $T$ is obtained in this way.
In particular, by Choquet-Bishop-de Leeuv theorem \ref{Choquet-Bishop-de Leeuv}, every $p\in K_n(T)$
is represented by a Borel probability measure $\mu$ on $E_n(T)=S_n(T^{\mathrm{ex}})$ so that
$$f(p)=\int f(q)d\mu\ \ \ \ \ \ \ \ \ \ \forall f\in\mathbf{A}(K_n(T)).$$
Therefore, $$\zeta(\phi)=\int q(\phi)d\mu \hspace{15mm} \phi\ \textrm{any\ CL-formula\ in}\ L$$
extends $p$ to a $n$-state of $\mathbb{T}$. This extension is not unique.

\begin{theorem}
Assume every $E_n(T)$ is closed. Then, $K_n(T)$ is a Bauer simplex if and only if
every $n$-type of $T$ has a unique extension to a $n$-state of $T^{\textrm{ex}}$.
\end{theorem}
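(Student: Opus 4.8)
The plan is to reduce the statement to the Choquet-theoretic characterization of Bauer simplices and then translate uniqueness of representing measures into uniqueness of state extensions through the homeomorphism of Theorem \ref{complete}. Since $E_n(T)$ is assumed closed, it is precisely the extreme boundary of the compact convex set $K_n(T)$, and a compact convex set with closed extreme boundary is a Bauer simplex exactly when each of its points is the barycenter of a \emph{unique} probability measure carried by that boundary. Existence of such a representing measure for every $p\in K_n(T)$ is already guaranteed by the Choquet-Bishop-de Leeuv theorem \ref{Choquet-Bishop-de Leeuv} (as recalled before the statement), so under our hypothesis the content of ``$K_n(T)$ is a Bauer simplex'' is exactly the \emph{uniqueness} of the representing measure on $E_n(T)$. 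It therefore suffices to show that, for a fixed $p\in K_n(T)$, the $n$-state extensions of $p$ to $T^{ex}$ are in natural bijection with the probability measures on $E_n(T)$ that represent $p$.

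To build this bijection I would first invoke Theorem \ref{complete}: the restriction map $\zeta\colon S_n(T^{ex})\to E_n(T)$ is a homeomorphism of the logic topologies, so pushforward along $\zeta$ is an affine homeomorphism $\mathcal P(S_n(T^{ex}))\cong\mathcal P(E_n(T))$. Composing with the correspondence recalled just before the statement---each $n$-state $\omega$ of $T^{ex}$ is represented by a unique regular Borel probability measure $\mu_\omega$ on $S_n(T^{ex})$, and conversely---every $n$-state $\omega$ yields a measure $\nu_\omega:=\zeta_*\mu_\omega$ on $E_n(T)$ with $\omega(\phi)=\int_{E_n(T)}q(\phi)\,d\nu_\omega(q)$ for every CL-formula $\phi$, where on the right each extreme type $q$ is read through $\zeta^{-1}$ as the corresponding point of $S_n(T^{ex})$. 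This assignment $\omega\mapsto\nu_\omega$ is a bijection from the $n$-states of $T^{ex}$ onto $\mathcal P(E_n(T))$.

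It then remains to check that $\omega$ extends $p$ if and only if $\nu_\omega$ represents $p$. By definition $\omega$ extends $p$ exactly when $\omega(\phi)=p(\phi)$ for every \emph{affine} formula $\phi$; and for affine $\phi$ the value $\zeta^{-1}(q)(\phi)$ equals $q(\phi)=\hat\phi(q)$, so this condition reads $\int_{E_n(T)}\hat\phi\,d\nu_\omega=\hat\phi(p)$ for all affine $\phi$. The functions $\hat\phi$ with $\phi$ affine lie in $\mathbf A(K_n(T))$, contain the constants, and separate the points of $K_n(T)$ (distinct types differ on some affine formula); hence, by the same density argument already used in the text, their span is dense in $\mathbf A(K_n(T))$, and the displayed equalities over all affine $\phi$ are equivalent to $\int f\,d\nu_\omega=f(p)$ for every $f\in\mathbf A(K_n(T))$, i.e. to $\nu_\omega$ being a representing measure for $p$ on $E_n(T)$. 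This yields the desired bijection between extensions of $p$ and representing measures of $p$, so uniqueness holds on one side precisely when it holds on the other; combined with the first paragraph this gives both implications. The main point requiring care is exactly this last equivalence---matching the affine-restriction condition against the barycentric identity via the density of $\{\hat\phi:\phi\ \text{affine}\}$ in $\mathbf A(K_n(T))$, together with the verification that $\zeta_*$ respects the state--measure correspondence on both the CL and the affine side.
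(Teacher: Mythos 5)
Your proof is correct and follows essentially the same route as the paper's: both rest on the correspondence between $n$-states of $T^{\textrm{ex}}$ and regular Borel probability measures on $S_n(T^{\textrm{ex}})=E_n(T)$, together with the observation that a state extends $p$ precisely when its measure represents $p$, so uniqueness on either side transfers to the other. You simply make explicit what the paper leaves implicit — the pushforward along the homeomorphism of Theorem \ref{complete} and the density of $\{\hat\phi:\phi\ \text{affine}\}$ in $\mathbf{A}(K_n(T))$ behind the equivalence of the two uniqueness statements.
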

\begin{proof}
Assume $K_n(T)$ is a Bauer simplex and $p\in K_n(T)$ extends to distinct $n$-states $\zeta_1,\zeta_2$ of $T^{\textrm{ex}}$.
Then, regarding the above notation, we must have that $\mu_{\zeta_1}\neq\mu_{\zeta_2}$.
However, these measures both represent $p$. This is a contradiction.
For the converse, assume $p$ is represented by $\mu_1$ and $\mu_2$ so that
$$f(p)=\int f(q)d\mu_1,\ \ \ \ \ \ f(p)=\int f(q)d\mu_2\ \ \ \ \ \ \ \forall f\in K_n(T).$$
Since $\mu_1$ and $\mu_2$ define $n$-states which agree on $p$, they must define the same state.
Hence, $\mu_1=\mu_2$.
\end{proof}
\vspace{1mm}

So, $K_n(T)$ is a Bauer simplex if and only if every $n$-state is uniquely determined by its values on affine formulas.
We recall that by \cite{BBHU} Th. 12.10, a complete CL-theory $\mathbb T$ is $\aleph_0$-categorical
if and only if every $S_n(\mathbb{T})$ is compact in the metric topology. The theorem includes
the case where $\mathbb T$ has a (unique) compact model if we call such a theory $\aleph_0$-categorical.

\begin{proposition} \label{categoric case}
Assume $L$ is countable. Then, every $E_n(T)$ is compact in the metric topology if and only if
every $E_n(T)$ is closed in the logic topology and $T^{ex}$ is $\aleph_0$-categorical.
\end{proposition}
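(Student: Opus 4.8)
The plan is to reduce both directions to results already in hand: Theorem \ref{complete}, which tells us that once every $E_n(T)$ is closed (so that $T^{ex}$ exists), the restriction map $\zeta:S_n(T^{ex})\to E_n(T)$ is a bijective isometry of the metric topologies and a homeomorphism of the logic topologies, together with the cited result \cite{BBHU} Th. 12.10, that a complete CL-theory $\mathbb T$ is $\aleph_0$-categorical precisely when every $S_n(\mathbb T)$ is compact in the metric topology. Granting these, the proposition becomes essentially a transfer of metric compactness across the isometry $\zeta$, with the existence of $T^{ex}$ to be secured separately.

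For the forward direction, assume every $E_n(T)$ is metrically compact. First I would show that logic-closedness comes for free from the topometric structure of $K_n(T)$: since the logic metric refines the logic topology, the identity map from $(E_n(T),\text{metric})$ to $(E_n(T),\text{logic})$ is continuous, so the metrically compact set $E_n(T)$ has logic-compact image; as the logic topology on $K_n(T)$ is compact Hausdorff, $E_n(T)$ is logic-closed. By Theorem \ref{extremal embedding} this already guarantees that $T^{ex}$ exists, so Theorem \ref{complete} applies and gives the bijective isometry $\zeta:S_n(T^{ex})\to E_n(T)$. Metric compactness of $E_n(T)$ then transports back along $\zeta$ to metric compactness of $S_n(T^{ex})$, and \cite{BBHU} Th. 12.10 yields that $T^{ex}$ is $\aleph_0$-categorical.

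For the converse, assume every $E_n(T)$ is logic-closed and $T^{ex}$ is $\aleph_0$-categorical. Logic-closedness again furnishes $T^{ex}$ via Theorem \ref{extremal embedding} and makes Theorem \ref{complete} available. By \cite{BBHU} Th. 12.10, $\aleph_0$-categoricity of $T^{ex}$ forces each $S_n(T^{ex})$ to be metrically compact, and pushing this forward through the isometry $\zeta$ shows each $E_n(T)$ is metrically compact, as required.

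The only genuinely delicate point, and the one I would treat most carefully, is the first step of the forward direction: that metric compactness forces logic-closedness. This rests on the topometric structure of $K_n(T)$ established earlier (the logic metric refines the logic topology) together with Hausdorffness of the compact logic topology; everything else is a formal transfer along $\zeta$ and an appeal to the CL characterization of $\aleph_0$-categoricity. I would also verify explicitly that the ``metric topology'' on $E_n(T)$ appearing in the statement is the logic metric $\mathbf d$, matching the metric used in Theorem \ref{complete} and Lemma \ref{extremal distance}, so that the isometry indeed carries compactness between $S_n(T^{ex})$ and $E_n(T)$.
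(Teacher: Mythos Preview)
Your proposal is correct and follows essentially the same route as the paper: deduce logic-compactness (hence logic-closedness) of $E_n(T)$ from metric compactness via the topometric structure, then transfer metric compactness across the isometry $\zeta$ of Theorem \ref{complete} and invoke the CL characterization of $\aleph_0$-categoricity from \cite{BBHU}. Your write-up is simply more explicit about the intermediate steps (in particular the appeal to Theorem \ref{extremal embedding} for the existence of $T^{ex}$), but the argument is the same.
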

\begin{proof}
Assume every $E_n(T)$ is compact in the metric topology.
Then, it is compact in the logic topology (hence closed in $K_n(T)$).
Also, by Theorem \ref{complete}, $S_n(T^{ex})$ is isometric to $E_n(T)$ (hence metrically compact).
Therefore, $T^{ex}$ is $\aleph_0$-categorical.
Conversely, if every $E_n(T)$ is closed and $T^{ex}$ is $\aleph_0$-categorical,
then $E_n(T)$ is isometric to $S_n(T^{ex})$ which is compact in the metric topology.
\end{proof}

Similar to the CL case, a AL-complete theory is called \emph{$\kappa$-categorical} if
it has a unique model of density character $\kappa$ up to isomorphism.

\begin{proposition}\label{not categorical}
Let $T$ be a complete theory in a countable language $L$. Then $T$ is not $\aleph_0$-categorical.
\end{proposition}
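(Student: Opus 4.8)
The plan is to show that every non-trivial complete $T$ has two non-isomorphic models of density character exactly $\aleph_0$, distinguished by whether or not they realize a single fixed non-extreme type. First I would dispose of the trivial case: if $T\vDash\sup_{xy}d(x,y)=0$, then after the usual quotient every model is a one-point space, so $T$ has no model of density character $\aleph_0$ at all and is a fortiori not $\aleph_0$-categorical. So assume $T$ is non-trivial and fix $N\vDash T$ with $a,b\in N$ and $d(a,b)=r>0$.

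The engine of the argument is the existence of a non-extreme type, which is the genuinely affine phenomenon. The points $p_0=tp(a,a)$ and $p_1=tp(a,b)$ of $K_2(T)$ are distinct, since the formula $d(x_1,x_2)$ takes the values $0$ and $r$ on them respectively. As $K_2(T)$ is convex, the midpoint $p=\frac12 p_0+\frac12 p_1$ is again a $2$-type, and being a proper convex combination of two distinct points it is not extreme.

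Next I would realize and omit $p$ in two separate models. On one side, $p\in K_2(T)$ is realized in some elementary extension of $N$ by Lemma \ref{realize} (equivalently, directly in the affine combination $\frac12 N+\frac12 N$ via Theorem \ref{th1}); enlarging to an infinite model by the Upward theorem (Proposition \ref{upward}) and then applying the Downward theorem (Proposition \ref{downward}, available since $|L|+\aleph_0=\aleph_0$) to a countably infinite set containing the realizing pair, I obtain a model $M_2$ of density character $\aleph_0$ that realizes $p$. On the other side, the omitting types theorem (Theorem \ref{omitting}) applied to the non-extreme type $p$ yields a model omitting $p$; passing to a separable elementary substructure through a countably infinite subset gives $M_1$ of density character $\aleph_0$ that still omits $p$, since an elementary substructure realizes no type that is not realized above it. Because an isomorphism is a surjective elementary embedding and hence preserves the set of realized types, $M_1\not\simeq M_2$, so $T$ is not $\aleph_0$-categorical.

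The step I expect to be the main obstacle is guaranteeing that the model omitting $p$ can be taken \emph{infinite}, so that $M_1$ genuinely has density character $\aleph_0$ rather than something smaller: one must ensure the canonical model produced by Theorem \ref{omitting} has infinitely many points. For ample theories this is precisely the content of Proposition \ref{extremal3}, where the facial condition $1\leqslant d(c,c')$ forces infinitely many well-separated points while omission is maintained; in the general non-trivial case one argues instead that $T$ already possesses an infinite model omitting $p$ — for instance via the extremal model of Theorem \ref{strong omitting}, which omits every non-extreme type and, once seen to be infinite, yields $M_1$ after Downward. The delicate bookkeeping that pins both witnesses at density character exactly $\aleph_0$ is where the real work lies.
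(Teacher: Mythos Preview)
Your diagnosis of the obstacle is exactly right, and your argument is essentially the paper's in the case where $T$ has no finite model; but you have not actually overcome the obstacle, and neither of your suggested routes works in general. If $T$ has a finite model, then the extremal model produced by Theorem~\ref{strong omitting} may itself be finite (for instance, $\{0,1\}$ is the unique extremal model of $\textrm{PrA}$), so ``once seen to be infinite'' is precisely what fails. And Proposition~\ref{extremal3} requires $T$ to be ample, which can fail when $T$ has a finite model: for the pure metric theory of the two-point space $\{0,1\}$, the condition $\sup_{xyz}[d(x,y)+d(y,z)+d(x,z)]\leqslant 2$ holds, so no model contains even three points pairwise at distance $1$, let alone an infinite $1$-separated sequence. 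In this regime you have given no argument that an \emph{infinite} model omitting $p$ exists.

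The paper handles this by a case split. When $T$ has no finite model, your argument goes through verbatim: the model from Theorem~\ref{omitting} is automatically infinite, Downward brings it to density character $\aleph_0$, and realize versus omit distinguishes the two separable models. When $T$ \emph{does} have a finite model $M$ (so $|M|\geqslant 2$), the paper abandons type-omitting altogether and uses a different isometry invariant: compactness. Taking an ultracharge $\mu$ on $\Nn$ with $\mu(\{n\})>0$ for every $n$, the powermean $M^{\mu}$ is a compact infinite separable model of $T$. On the other side, Upward (Proposition~\ref{upward}) produces a model too large to be compact, and the CL variant of Downward cuts it to a non-compact separable elementary submodel. Since isomorphisms are isometries, a compact and a non-compact model cannot be isomorphic, and $T$ fails $\aleph_0$-categoricity.
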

\begin{proof}
There is nothing to prove if $T$ is trivial.
First, assume $T$ has no finite models. Let $a,b\in M\vDash T$ be distinct.
Then $tp(a,a)$ and $tp(a,b)$ are distinct. So, $S_2(T)$ has a non-extreme type say $p(x,y)$.
There is an infinite separable model of $T$ which omits $p$ and an infinite separable one which realizes $p$.
These models are not isomorphic.
Now, assume $T$ has a finite model $M$ (so, $|M|\geqslant2$).
Let $\mu$ be an arbitrary ultracharge on $\Nn$ with the property that $\mu(\{n\})>0$ for every $n$.
Then $M^{\mu}$ is a compact infinite separable model of $T$.
On the other hand, by the upward theorem \ref{upward}, $T$ has a noncompact model say $N$.
Then, using the CL variant of the downward L\"{o}wenheim-Skolem theorem, we obtain a non-compact
separable elementary submodel $N_0\preccurlyeq N$. Now, $M^\mu$ and $N_0$ are non-isomorphic models of $T$.
\end{proof}

\begin{proposition}
Assume $T$ is $\kappa$-categorical where $\kappa\geqslant|L|+\aleph_1$.
Then, all non-compact models of $T$ are CL-equivalent.
If $T$ has no compact model, it is CL-complete.
\end{proposition}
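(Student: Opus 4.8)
The plan is to exploit $\kappa$-categoricity through \emph{CL}-elementary extensions rather than through the AL-saturation machinery, the essential point being that CL-elementary maps preserve the CL-theory whereas AL-elementary maps (e.g.\ passage to a powermean) need not. The key observation is that any model of density character $\kappa\geqslant\aleph_1$ is automatically non-compact, since a compact metric space is separable; so by $\kappa$-categoricity there is, up to isomorphism, a \emph{single} model $M_\kappa\vDash T$ of density character $\kappa$, and it is non-compact. I would then show that every non-compact $M\vDash T$ satisfies $M\equiv_{\mathrm{CL}}M_\kappa$, which immediately yields the first assertion (any two non-compact models are CL-equivalent to $M_\kappa$, hence to each other), and the second assertion follows since if $T$ has no compact model then every model is non-compact.

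So fix a non-compact $M\vDash T$; I would produce a CL-elementary extension of $M$ of density character exactly $\kappa$. As structures are complete and $M$ is non-compact, $M$ is not totally bounded, so there are $\epsilon>0$ and an infinite $\epsilon$-separated sequence $a_0,a_1,\dots\in M$, chosen greedily. Adjoining $\kappa$ new constants $c_i$ and forming $ediag_{\mathrm{CL}}(M)\cup\{d(c_i,c_j)\geqslant\epsilon:\ i<j<\kappa\}$, every finite subset is satisfiable in $M$ itself, by interpreting the finitely many $c_i$ involved as distinct $a_n$; hence by CL-compactness there is $M\preccurlyeq_{\mathrm{CL}}M^+$ containing $\kappa$ points pairwise at distance $\geqslant\epsilon$, so $\mathsf{dc}(M^+)\geqslant\kappa$. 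Applying the CL downward L\"owenheim--Skolem theorem (as in the proof of Proposition \ref{not categorical}), using $|L|+\aleph_0\leqslant\kappa$, I obtain $M'\preccurlyeq_{\mathrm{CL}}M^+$ containing those $\kappa$ separated points with $\mathsf{dc}(M')\leqslant\kappa$, forcing $\mathsf{dc}(M')=\kappa$. Since $T$ is a set of AL-conditions and AL-formulas are CL-formulas, both $M^+$ and $M'$ are again models of $T$, and $M\equiv_{\mathrm{CL}}M^+\equiv_{\mathrm{CL}}M'$ because $M$ and $M'$ are both CL-elementary submodels of $M^+$.

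Finally I would invoke categoricity: $M'$ is a model of $T$ of density character $\kappa$, hence $M'\cong M_\kappa$; isomorphic structures are CL-equivalent, so $M\equiv_{\mathrm{CL}}M'\cong M_\kappa$ gives $M\equiv_{\mathrm{CL}}M_\kappa$, completing both parts. The main obstacle, and the only place needing genuine care, is the upward step: the AL upward argument of Proposition \ref{upward} produces only satisfiability of \emph{affine combinations} and so cannot literally place arbitrarily many points pairwise far apart, while the CL construction requires ordinary finite satisfiability; this is exactly why I use completeness together with non-compactness to extract an infinite $\epsilon$-separated set, which is what lets the CL-elementary extension reach density character $\kappa$.
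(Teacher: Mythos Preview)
Your proof is correct and follows essentially the same line as the paper's: use the CL L\"owenheim--Skolem theorems to produce, from any non-compact model, a CL-equivalent model of density character exactly $\kappa$, and then invoke $\kappa$-categoricity. The paper compresses the upward step into the phrase ``the CL variant of the L\"owenheim--Skolem theorems,'' whereas you spell out why non-compactness is needed there (to extract an infinite $\epsilon$-separated set so that CL-compactness, which requires finite rather than affine satisfiability, applies); this is exactly the right clarification.
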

\begin{proof}
Let $M,N\vDash T$ be noncompact. Using the CL variant of the L\"{o}wenheim-Skolem theorems,
one obtains $M_0\equiv_{\mathrm{CL}}M$ and $N_0\equiv_{\mathrm{CL}}N$ such that
$M_0$ and $N_0$ have both density character $\kappa$.
Then, $M_0\equiv N_0$ and hence $M_0\simeq N_0$. Therefore, $M\equiv_{\mathrm{CL}}N$.
\end{proof}

\subsection{The boundary AEC} \label{AEC}
Although extremal models of a theory do not form an elementary class in general, it is easy to verify
that they form an abstract elementary class \cite{Vilaveces} with the relation $\preccurlyeq$.
By proposition \ref{extremal4}, if $T$ is ample, this class has arbitrarily large models.
It is natural to ask whether this class has the elementary joint embedding property and elementary amalgamation property.

\begin{proposition}\label{JEP-AP}
Let $T$ be a complete $L$-theory and $A,B,C\vDash T$ be extremal.

\emph{(i)} There exists an extremal $M$ such that $B\preccurlyeq M$ and $C\preccurlyeq M$.

\emph{(ii)} If $A\preccurlyeq B$ and $A\preccurlyeq C$, then there are extremal $M$ and
elementary embeddings $f:B\rightarrow M$, $g:C\rightarrow M$ such that $f|_A=g|_A$.
\end{proposition}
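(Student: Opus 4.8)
The plan is to reduce both parts to realizing a single extreme type in an extremal model, using that extremeness is \emph{additive} along tuples (Proposition \ref{extreme3}) together with the fact that every extreme type is realized in an extremal model (Proposition \ref{extremal5}). Throughout I enumerate the complete models by possibly infinite tuples: fix enumerations $\bar{b}$ of $B$, $\bar{c}$ of $C$, and $\bar{a}$ of $A$. Since $B$, $C$, $A$ are extremal, the types $tp(\bar{b})$, $tp(\bar{c})$, $tp(\bar{a})$ are extreme in the appropriate infinite-variable spaces $K_{\bar{x}}(T)$, because a type is extreme precisely when all its finite restrictions are (the remark following Proposition \ref{ext2ext1}). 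I also use repeatedly the Choquet-theoretic fact already implicit in the proof of Proposition \ref{ext2ext1}: if $\pi$ is an affine continuous surjection of compact convex sets and $e$ is an extreme point of the target lying in the range, then $\pi^{-1}(e)$ is a nonempty face, so by Krein--Milman it has an extreme point, which is then extreme in the source.

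For (i), I would form the restriction map
$$\pi:K_{\bar{x}\bar{y}}(T)\longrightarrow K_{\bar{x}}(T)\times K_{\bar{y}}(T),$$
sending a joint type to the pair of its restrictions to the block $\bar{x}$ (matched with $\bar{b}$) and the block $\bar{y}$ (matched with $\bar{c}$); this map is affine and continuous. The pair $(tp(\bar{b}),tp(\bar{c}))$ is an extreme point of the product, being a pair of extreme points, and it lies in the range of $\pi$: by the elementary joint embedding property proved earlier there is a model into which both $B$ and $C$ embed elementarily, and the joint type of the two images restricts to $tp(\bar{b})$ and $tp(\bar{c})$. Hence $\pi^{-1}(tp(\bar{b}),tp(\bar{c}))$ is a nonempty face; let $q$ be one of its extreme points, so $q$ is extreme in $K_{\bar{x}\bar{y}}(T)$ and restricts to $tp(\bar{b})$ and $tp(\bar{c})$. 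By Proposition \ref{extremal5} there is an extremal $M\vDash T$ realizing $q$, say by $\bar{b}^{*}\bar{c}^{*}$. Then $\bar{b}^{*}\equiv\bar{b}$ and $\bar{c}^{*}\equiv\bar{c}$, so matching the enumerations gives the desired elementary embeddings $B\to M$ and $C\to M$ into the extremal model $M$.

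For (ii) I would run the same argument relativized to the common part $A$. Since $\bar{a}$ is extreme and $\bar{a}\bar{b}$, $\bar{a}\bar{c}$ are extreme, Proposition \ref{extreme3} gives that $tp(\bar{b}/A)$ and $tp(\bar{c}/A)$ are extreme over $A$. Applying the preimage-of-a-face argument to the restriction map $K_{\bar{x}\bar{y}}(A)\to K_{\bar{x}}(A)\times K_{\bar{y}}(A)$ — whose relevant fibre is nonempty by the elementary amalgamation property over $A$ — produces a type $q$ over $A$ that is extreme over $A$ and restricts to $tp(\bar{b}/A)$ and $tp(\bar{c}/A)$. Realizing $tp(\bar{a})\cup q$ yields a tuple $\bar{a}^{*}\bar{b}^{*}\bar{c}^{*}$ with $\bar{b}^{*}\bar{c}^{*}$ extreme over $\bar{a}^{*}$; since $\bar{a}^{*}$ is extreme, Proposition \ref{extreme3} shows the whole concatenation is extreme over $\emptyset$. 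Feeding this extreme type into Proposition \ref{extremal5} gives an extremal $M$ realizing it by some $\bar{a}^{**}\bar{b}^{**}\bar{c}^{**}$; the maps $\bar{a}\bar{b}\mapsto\bar{a}^{**}\bar{b}^{**}$ and $\bar{a}\bar{c}\mapsto\bar{a}^{**}\bar{c}^{**}$ are elementary embeddings $f,g$ agreeing on $A$, since both send $\bar{a}$ to $\bar{a}^{**}$. The main obstacle, and the point needing the most care, is the passage to infinite tuples: one must verify that Proposition \ref{extreme3}, the face criterion, and Proposition \ref{extremal5} all survive for the infinite-variable type spaces $K_{\bar{x}}(T)$ and $K_{\bar{x}}(A)$, which rests on the reduction of extremeness (and of facialness, Remark \ref{facial}) to finite subtuples.
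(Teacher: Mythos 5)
Your proof is correct and follows essentially the same route as the paper's: both arguments extract, via Krein--Milman, an extreme complete joint type extending the two elementary diagrams (your fiber $\pi^{-1}(tp(\bar{b}),tp(\bar{c}))$ is exactly the paper's compact convex set of complete extensions of $\Sigma=ediag(B)\cup ediag(C)$, identified by trading constants for variables), and then realize that extreme type in an extremal model using the additivity of extremeness from Proposition \ref{extreme3}. The only difference is packaging: the paper inlines what amounts to the proof of Proposition \ref{extremal5} (extremal model of an extreme complete extension of $\Sigma$, followed by a hands-on verification that the $L$-reduct is extremal), whereas you cite that proposition directly and make the face structure explicit -- including for infinite tuples, a usage the paper itself relies on implicitly.
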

\begin{proof}
(i) Let $$\Sigma=\mbox{ediag}(B)\cup \mbox{ediag}(C).$$
By affine compactness, $\Sigma$ is satisfiable.
Let $\overline T$ be a complete extremal extension of $\Sigma$ (in the language $L(B\cup C)$).
Let $\overline M$ be an extremal model of $\overline T$ and $M$ be its restriction to $L$.
Then, there are elementary embeddings $f:B\rightarrow M$ and $g:C\rightarrow M$ which we may assume without harm that
$f,g$ are inclusion maps. So, $B\preccurlyeq M$ and $C\preccurlyeq M$. We show that $M$ is an extremal model of $T$.
Let $\e\in M$. Since $\overline{M}$ is extremal, $tp^{\overline{M}}(\e)=tp^M(\e/B\cup C)$ is extreme.
Therefore, $\e$ is extreme over $B\cup C$. We show that $B\cup C\subseteq M$ is extreme over $\emptyset$.
Then, by Proposition \ref{extreme3}, we conclude that $\e$ is extreme over $\emptyset$.
So, assume $B$ is enumerated by a tuple $\a$ and $C$ is enumerated by a tuple $\b$.
To prove that $\a\b$ is extreme, we have only to show that $\a$ is extreme over $\b$.
Assume $$tp(\a/\b)=p(\x,\b)=\frac{1}{2}p_1(\x,\b)+\frac{1}{2}p_2(\x,\b). \ \ \ \ \ \ \ (*)$$
Let $q, q_1,q_2$ be the restrictions of $p,p_1,p_2$ to $L$ respectively.
Since $\a$ is extreme, one has that $q=q_1=q_2$.
We may identify $a_i\in \a$ and $b_j\in\b$ with their corresponding constant symbols
and regard $p(\a,\b)$ as $L(B\cup C)$-theory. In this case, $p(\a,\b)$ is in fact equal to $\overline{T}\supseteq \Sigma$.
Similarly, $p_1(\a,\b)$ and $p_2(\a,\b)$ are other complete $L(B\cup C)$-theories containing $\Sigma$
which we may denote by $T_1$ and $T_2$ respectively.
Then, by $(*)$ above, one has that $\overline{T}=\frac{1}{2}T_1+\frac{1}{2}T_2$.
Since $\overline{T}$ is extreme, we must have that $\overline{T}=T_1=T_2$.
We then conclude that $$p(\x,\b)=p_1(\x,\b)=p_2(\x,\b)$$ and hence $\a$ is extreme over $\b$.

(ii) Similar.
\end{proof}

One verifies that part (ii) of Proposition \ref{JEP-AP} holds for incomplete $T$ too.

Let $T$ be a complete theory. It would be interesting to study saturation and homogeneity
inside the class of extremal models of $T$ using tools provided by AL.
We show that some results of \S \ref{Saturation and homogeneity} hold in this class too.

\begin{lemma}\label{realize2}
Assume $M\vDash T$ is extremal and $A\subseteq M$. Then, every $p(\x)\in E_n(A)$ is realized in some extremal $M\preccurlyeq N$.
\end{lemma}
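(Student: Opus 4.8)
The plan is to realize $p$ inside an extremal model produced by an omitting-types construction over the elementary diagram of $M$, while arranging that the newly named elements carry an extreme $L$-type over $\emptyset$. The one genuine difficulty is that $p$ is only extreme over $A$, whereas to retain an elementary extension of all of $M$ and to control extremality I must argue over the whole of $M$; so the crucial preliminary step is to extend $p$ to an extreme type over $M$.

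First I would extend $p$. The restriction map $\pi\colon K_n(M)\to K_n(A)$ is affine, logic-continuous and surjective (surjectivity by positive-functional extension, as for $\mathbb{D}_n$). Hence $\pi^{-1}(p)$ is a nonempty compact convex subset of $K_n(M)$, and since $p$ is extreme it is in fact a \emph{face}; by Krein--Milman it contains an extreme point $\tilde p\in E_n(M)$ with $\tilde p|_A=p$. This is precisely the fibre-of-an-extreme-point argument used in the proof of the proposition preceding \S\ref{omitting types}.

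Next, let $\bar e$ enumerate $M$ and let $\bar c$ be a fresh tuple of $n$ constants. Since $\mathrm{ediag}(M)$ is a complete $L(M)$-theory and $\tilde p$ is a complete type over $M$, the theory
$$\bar T=\mathrm{ediag}(M)\cup\tilde p(\bar c)$$
is a complete $L(M\cup\bar c)$-theory, satisfiable by the computation in Lemma~\ref{realize}. I claim the named tuple $\bar e\bar c$ has an \emph{extreme} $L$-type over $\emptyset$. Because $M$ is extremal, every finite subtuple of $\bar e$ is extreme, so $\bar e$ is extreme, using that a type is extreme iff all its finite restrictions are. For any finite $\bar e_0\subseteq\bar e$, restricting $\tilde p$ shows $\bar c$ is extreme over $\bar e_0$ (Proposition~\ref{ext2ext1}), while $\bar e_0$ is extreme, so $\bar e_0\bar c$ is extreme by Proposition~\ref{extreme3}; as this holds for all finite $\bar e_0$, the tuple $\bar e\bar c$ is extreme over $\emptyset$.

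Finally, apply the omitting-types theorem (Theorem~\ref{strong omitting}; every complete theory has an extremal model) to $\bar T$, obtaining an extremal $\bar N\vDash\bar T$, and let $N$ be its $L$-reduct. Then $\mathrm{ediag}(M)\subseteq\bar T$ gives $M\preccurlyeq N$, and $\bar c^{\bar N}$ realizes $\tilde p$, hence $p=\tilde p|_A$. To see $N$ is extremal for $T$: given $\bar b\in N$, extremality of $\bar N$ makes $\bar b$ extreme over $M\cup\bar c$, that is, over $\bar e\bar c$; combined with $\bar e\bar c$ extreme over $\emptyset$, Proposition~\ref{extreme3} yields $\bar b\,\bar e\bar c$ extreme, so its subtuple $\bar b$ is extreme. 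Thus $N$ is the desired extremal elementary extension $M\preccurlyeq N$ realizing $p$. The main obstacle throughout is exactly the passage from extremality over $A$ to extremality over $M$, which is resolved by the face/fibre extension of $p$ to $\tilde p$.
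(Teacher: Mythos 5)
Your proof is correct and follows essentially the same route as the paper's: lift $p$ to an extreme $\tilde p\in E_n(M)$ via the affine restriction map $K_n(M)\to K_n(A)$ (face plus Krein--Milman), obtain an extremal model of the complete theory $\mathrm{ediag}(M)\cup\tilde p(\bar c)$ from the omitting-types theorem (the paper packages this step as a citation of Proposition \ref{extremal5}, which you simply inline), and push extremality down to $L$ via Proposition \ref{extreme3}. The only blemish is your appeal to Proposition \ref{ext2ext1} to justify that $\bar c$ is extreme over a finite $\bar e_0$: that proposition restricts \emph{variables}, not parameter sets, and restriction of parameters does not preserve extremeness in general (in PrA, $\mathrm{tp}(a/M)$ is extreme for any $a\in M$ while its restriction to $\emptyset$ need not be); the claim you need is instead obtained by applying Proposition \ref{extreme3} directly to the full tuple --- $\bar e$ extreme and $\bar c$ extreme over $\bar e$ give $\bar e\bar c$ extreme, and then one restricts variables --- which is exactly what the paper does.
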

\begin{proof}
The restriction map $K_n(M)\rightarrow K_n(A)$ is affine. So, $p$ is extended to some $q(\x)$ in $E_n(M)$.
By Proposition \ref{extremal5}, $q$ is realized in some extremal model $\bar{N}$ of $ediag(M)$ by say $\a\in\bar{N}$.
Let $N$ be the reduction of $\bar{N}$ to the language of $T$. So, without loss, we have that $M\preccurlyeq N$.
Also, for each $\b\in N$, $tp(\b/M)$ is extreme. Since $M$ is extremal, $\b M$ is extreme.
Therefore, $\b\in N$ is extreme. We conclude that $N$ is an extremal model of $T$ which realizes $p$.
\end{proof}

We have already defined extremally $\kappa$-saturated models in \S \ref{Extremal saturation}.

\begin{definition}
\emph{$M\vDash T$ is \emph{extremally $\kappa$-homogeneous}\index{extremally homogeneous}
if for every extreme tuples $\a$, $\b$ in $M$ with $|\a|=|\b|<\kappa$,
whenever $\a\equiv\b$ and $c$ is extreme over $\a$, there exists $e$ extreme over $\b$ such that $\a c\equiv\b e$.
It is \emph{extremally $\kappa$-universal} \index{extremally universal} if for every extremal $N\vDash T$ with $\mathsf{dc}(N)<\kappa$,
there is an elementary embedding $f:N\rightarrow M$.}
\end{definition}

It is clear that if $M$ is extremal, extremal $\kappa$-homogeneity is the same as $\kappa$-homogeneity.
So, by Proposition \ref{homogeneous}, separable extremal extremally $\aleph_0$-homogeneous models
are isomorphic if and only if they realize the same types in every $E_n(T)$.

\begin{proposition}
For every extremal $M\vDash T$ and infinite $\kappa$ there is $M\preccurlyeq N$ which is extremal
and extremally $\kappa$-saturated.
\end{proposition}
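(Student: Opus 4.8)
The plan is to imitate the standard construction of $\kappa$-saturated elementary extensions, replacing Lemma \ref{realize} by its extremal analogue Lemma \ref{realize2}, and then to check carefully that the two places where such a construction passes to a limit --- unions of elementary chains and metric completions --- preserve extremality. Without loss of generality I would assume $\kappa$ is regular, since an extremally $\kappa^{+}$-saturated model is a fortiori extremally $\kappa$-saturated, so one may always enlarge $\kappa$ to a regular cardinal. First I would carry out the one-step construction: given an extremal $M'\vDash T$, I build an extremal $M'\preccurlyeq M'^{+}$ realizing every $p\in E_n(A)$ for every $A\subseteq M'$ with $|A|<\kappa$ and every $n$, by enumerating all such pairs $(A,p)$ and realizing them one at a time, each single step being furnished by Lemma \ref{realize2}; at limit stages one takes unions and completions.

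I would then iterate this one-step operation to form an elementary chain $M=N_0\preccurlyeq N_1\preccurlyeq\cdots\preccurlyeq N_\alpha\preccurlyeq\cdots$ of length $\kappa$, with each $N_\alpha$ complete and extremal and with $N_{\alpha+1}$ realizing every extreme type over every subset of $N_\alpha$ of size $<\kappa$, and finally let $N$ be the completion of $\bigcup_{\alpha<\kappa}N_\alpha$; by Proposition \ref{chain} this is an elementary extension of every $N_\alpha$, hence a model of $T$. The saturation of $N$ is then routine: given $A\subseteq N$ with $|A|<\kappa$, each point of $A$ is a limit of a sequence from $\bigcup_\alpha N_\alpha$, so (for $\kappa$ regular and uncountable, the case $\kappa=\aleph_0$ being similar) $A$ together with the countably many approximants of each of its points lies in a single $N_\beta$; as $N_\beta$ is complete this gives $A\subseteq N_\beta$, and every extreme type in $K_n(A)$ is realized already in $N_{\beta+1}\preccurlyeq N$.

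The one genuine obstacle is to check that extremality survives the limit operations; this is exactly where the hypothesis ``$E_n(T)$ is closed'' (which is not available here) would otherwise be used. For a union $\bigcup_\alpha N_\alpha$ of an elementary chain of extremal models the claim is immediate: any tuple lies in some $N_\alpha$ and has there, hence in the union, an extreme type. The delicate case is completion, since a new point is a limit of a Cauchy sequence $(a_k)$ from $\bigcup_\alpha N_\alpha$ and a logic-limit of extreme types need not be extreme. Here I would reuse the $W$-case argument of Theorem \ref{omitting}: arranging $d(a_k,a_\ell)\leqslant 2^{-\min(k,\ell)}$, the infinite tuple $(a_k)_k$ has an extreme type, because each of its finite subtuples lives in an extremal model and is therefore extreme, and extremality of an infinite tuple reduces to that of its finite subtuples (the remark following Proposition \ref{ext2ext1}). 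Working in a $\kappa$-saturated model, the same computation as in the $W$-case shows that $\Gamma(\x,y)=tp((a_k)_k)\cup\{d(x_k,y)\leqslant 2^{-k}:k\in\omega\}$ is a face.

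It remains to observe that $\Gamma$ is in fact a complete type: any realization $((b_k),b)$ of $\Gamma$ has $b=\lim_k b_k$ forced, so $\phi^{}(b)=\lim_k\phi(b_k)=\lim_k\phi(a_k)=\phi(a)$ for every $\phi$, and likewise for joint formulas, whence $\Gamma=tp((a_k)_k,a)$. A complete type that is a face is an extreme point, so by Proposition \ref{ext2ext1} its restriction $\Gamma|_y=tp(a)$ is extreme. Thus every point of the completion has an extreme type, $N$ is extremal, and the construction is complete. The main work, as indicated, is concentrated in this last extremality-under-completion step; the chain bookkeeping and the verification of saturation are the routine parts.
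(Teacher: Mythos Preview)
Your proposal is correct and follows the same route as the paper: iterate Lemma \ref{realize2} along an elementary chain of extremal models and take the union. The paper's proof is terser and simply asserts that $M'=\bigcup_i M_i$ is an extremal model without justifying preservation of extremality under metric completion; your argument via the $W$-case computation of Theorem \ref{omitting} --- showing that for a Cauchy sequence $(a_k)$ with extreme joint type, the partial type $tp((a_k)_k)\cup\{d(x_k,y)\leqslant 2^{-k}\}$ is a face and in fact a complete (hence extreme) type --- correctly fills this gap and is worth retaining.
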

\begin{proof}
Assume $\kappa$ is regular. Let $\a\in M$ and $p(x,\a)$ be an extreme type where $|\a|<\kappa$.
Let $\{p_i:\ i<\lambda\}$ be an enumeration of all extreme types over all parameter sets $A\subseteq M$ where $|A|<\kappa$.
By Lemma \ref{realize2}, there is a chain $$M=M_0\preccurlyeq M_1\preccurlyeq\cdots\preccurlyeq M_i\preccurlyeq\cdots\ \ \ \ \ i<\lambda$$
of extremal models of $T$ such that $M_{i+1}$ realizes $p_i$.
We conclude that $M'=\cup_iM_i$ is an extremal model of $T$ which realizes every $p_i$.
Iterating the argument $\kappa$-many times, we obtain an extremal extremally $\kappa$-saturated $M\preccurlyeq N$.
\end{proof}

\begin{proposition} An extremal $M\vDash T$ is extremally $\kappa$-saturated if and only if it is
extremally $\kappa$-homogeneous and extremally $\kappa^+$-universal. For $\kappa\geqslant\aleph_1$,
$M$ is extremally $\kappa$-saturated if and only if it is extremally $\kappa$-homogeneous and extremally $\kappa$-universal.
\end{proposition}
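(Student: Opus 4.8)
The plan is to transcribe the proof of the corresponding statement for ordinary $\kappa$-saturation (the proposition preceding Proposition~\ref{strong homogeneity}), replacing each type by an \emph{extreme} type and each appeal to saturation, homogeneity and universality by its extremal counterpart. Two facts make this transcription legitimate: Proposition~\ref{extreme3}, which lets me pass between ``$\a\b$ is extreme'' and ``$\b$ is extreme and $\a$ is extreme over $\b$'', and the remark preceding Theorem~\ref{extremal embedding}, that the shift of an extreme type along a partial elementary map is again extreme. I will also use repeatedly that an elementary sub- or super-structure of an extremal model is extremal, since $tp(\e)$ is computed identically on both sides.

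For the forward direction, assume $M$ is extremal and extremally $\kappa$-saturated. To verify extremal $\kappa$-homogeneity, take extreme tuples $\a\equiv\b$ with $|\a|=|\b|<\kappa$ and $c$ extreme over $\a$. The type $tp(c/\a)$ is extreme over $\a$, so its shift along the partial elementary map $\a\mapsto\b$ is an extreme type $q(y,\b)$ over $\b$; as $|\b|<\kappa$, extremal saturation gives a realization $e\in M$, and then $e$ is extreme over $\b$ with $\a c\equiv\b e$. For extremal $\kappa^+$-universality, let $N\vDash T$ be extremal with $\mathsf{dc}(N)\leqslant\kappa$ and fix a dense set $\{b_i:i<\kappa\}\subseteq N$. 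I build a chain $f_0\subseteq f_1\subseteq\cdots$ of partial elementary maps into $M$ with $b_i$ in the domain of $f_{i+1}$, exactly as in the classical proof. At stage $i$ the domain is a tuple $\c$ of length $<\kappa$, and since $N$ is extremal, Proposition~\ref{extreme3} shows $tp^N(b_i/\c)$ is extreme; its shift along $f_i$ is an extreme type over the $<\kappa$-sized tuple $f_i(\c)$, which extremal saturation realizes in $M$. Completeness of $M$ then extends $\bigcup_i f_i$ to an elementary embedding $N\to M$.

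For the converse, assume $M$ is extremal, extremally $\kappa$-homogeneous and extremally $\kappa^+$-universal. Let $A\subseteq M$ with $|A|<\kappa$ and $p(x)\in E_1(A)$, and enumerate $A$ by a tuple $\a$, which is extreme because $M$ is extremal. By Lemma~\ref{realize2} there is an extremal $M\preccurlyeq\bar N$ realizing $p$ by some $b$, and by the downward L\"owenheim--Skolem theorem (Proposition~\ref{downward}) I choose $A\cup\{b\}\subseteq N\preccurlyeq\bar N$ with $\mathsf{dc}(N)\leqslant\kappa$; then $N$ is extremal, so extremal $\kappa^+$-universality supplies an elementary embedding $f:N\to M$. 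Now $f(\a)\equiv\a$ are extreme tuples of length $<\kappa$, and $f(b)$ is extreme over $f(\a)$ with $f(\a)f(b)\equiv\a b$. Applying extremal $\kappa$-homogeneity to the pair $f(\a)\equiv\a$ together with the point $f(b)$ produces $e\in M$, extreme over $\a$, with $f(\a)f(b)\equiv\a e$; hence $tp(\a,e)=tp(\a,b)$, so $e$ realizes $p$.

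For the second assertion, when $\kappa\geqslant\aleph_1$ and $|A|<\kappa$ one has $|A|+|L|+\aleph_0<\kappa$, so in the converse direction $N$ may be taken with $\mathsf{dc}(N)<\kappa$, whence ordinary extremal $\kappa$-universality already applies; the forward direction is unchanged. The only place where the argument differs substantively from the classical one, and the point demanding care throughout, is the bookkeeping that \emph{every} auxiliary type produced along the two chains is extreme over its parameters, so that extremal saturation is applicable at each step; this is exactly what Proposition~\ref{extreme3} and the stability of extremeness under shifts guarantee, and I expect it to be the main obstacle to getting every stage right.
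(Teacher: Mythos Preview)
Your proof is correct and follows essentially the same approach as the paper. The only minor difference is that for the converse direction the paper invokes Proposition~\ref{extremal5} to produce an extremal model of $Th(M,\a)$ realizing $p$ (not necessarily containing $M$), whereas you use Lemma~\ref{realize2} to obtain an extremal elementary extension $M\preccurlyeq\bar N$ and then cut down via downward L\"owenheim--Skolem; both routes yield a small extremal model carrying a copy of $\a$ and a realization of $p$, after which the universality-then-homogeneity step is identical. The paper simply declares the forward direction obvious, so your detailed treatment of that half is additional but unnecessary content.
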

\begin{proof} Assume $M$ is $\kappa$-homogeneous and extremally $\kappa^+$-universal.
Let $\a\in M$, $|\a|<\kappa$ and $p(x)\in E_1(\a)$. By Proposition \ref{extremal5}, there is a model $(N,\b)$ of $Th(M,\a)$
which realizes $p(x)$ (by say $c\in N$) and omits every non-extreme type of $Th(M,\a)$.
Since $\a\equiv\b\in N$ is extreme, $N|_L$ is an extremal model of $T$. We may further assume $\mathsf{dc}(N)\leqslant\kappa$.
So, by $\kappa$-universality, there is an elementary embedding $f:N\rightarrow M$.
By extremal $\kappa$-homogeneity, there exists $e\in M$ such that
$\a e\equiv f(\b)f(c)$. It is then clear that $e$ realizes $p(x)$. The reverse direction is obvious.
The second part is a rearrangement of the proof for uncountable $\kappa$.
\end{proof}

One also proves that extremal extremally $\kappa$-saturated models of $T$ having density character $\kappa$ are isomorphic.

\newpage\section{Powermean}\label{Powermean}
In this section we prove the AL-variant of Keisler-Shelah isomorphism theorem.
We first give some new forms of the powermean construction.
Recall that $L$-structures are always assumed to be complete.
However, the ultramean (or powermean) construction may produce incomplete structures.
In this case, we must apply Proposition \ref{exist} to complete them.

\subsection{Powermean constructions}
\noindent{\bf Measurable powermean}\index{measurable powermean}\\
It is always assumed in the ultramean construction that the given charge is maximal.
A basic reason for this choice is that formulas like $\phi^{M_i}(a_i)$ must be integrable with respect to the variable $i$.
If $M_i$'s are all the same, there are other interesting options for which this occurs.

Let $(I,\mathcal A,\mu)$ be a charge space and $M$ be an $L$-structure.
A map $a:I\rightarrow M$ is called $\mathcal{A}$-\emph{measurable} (or measurable for short)
if $a^{-1}(B)\in\mathcal A$ for every Borel $B\subseteq M$.
Generally, for a continuous $u:M^n\rightarrow\Rn$ and measurable
$a^1,...,a^n$, the real function $u(a^1_i,...,a^n_i)$ may not be $\mu$-integrable.
The topological \emph{weight} of $M$ is defined by
$$w(M)=\min\{|\mathcal{O}|:\ \mathcal{O}\ \mbox{is\ a\ basis\ for } M \}+\aleph_0.$$
In fact, for infinite $M$, one has that $w(M)=\mathsf{dc}(M)$.

\begin{definition}
\emph{An $L$-structure $M$ is \emph{$\mathcal A$-meanable}\index{meanable} if $M$ is finite or $\mathcal{A}$ is
$w(M)^+$-complete.} 
\end{definition}

In other words, $\mathcal A$ must be closed under the intersections of $\mathsf{dc}(M)$ many elements.
So, in particular, $M$ is $\mathcal A$-meanable in the following cases:
(i) $\mathcal A =P(I)$ (ii) $M$ is separable and $\mathcal A$ is a $\sigma$-algebra
(iii) $\mathcal A$ is a Boolean algebra and $M$ is finite.

Let $\mathcal{H}(\Rn)$\index{$\mathcal{H}(\Rn)$} be the Boolean algebra of subsets of
$\Rn$ generated by the half-intervals $[r,s)$. A function $u:I\rightarrow\Rn$ is measurable
if $u^{-1}(X)\in\mathcal A$ for every $X\in\mathcal H(\Rn)$ (see \S \ref{Appendix}).

\begin{lemma}
Let $M$ be $\mathcal A$-meanable.
Then, for every continuous $f:M^n\rightarrow\Rn$ and measurable $a^1,...,a^n:I\rightarrow M$,
the function $u(i)=f(a^1_i,...,a^n_i)$ is measurable.
Similarly, for every continuous $g:M^n\rightarrow M$,\ \ $a(i)=g(a^1_i,...,a^n_i)$ is measurable.
\end{lemma}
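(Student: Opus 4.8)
The plan is to reduce both assertions to a single fact: the tuple map $\mathbf a=(a^1,\dots,a^n)\colon I\to M^n$ is measurable, in the sense that $\mathbf a^{-1}(C)\in\mathcal A$ for every Borel $C\subseteq M^n$. Granting this, the first assertion is immediate, since $u^{-1}(X)=\mathbf a^{-1}(f^{-1}(X))$ for $X\in\mathcal H(\Rn)$; as $f$ is continuous and each generating half-interval $[r,s)$ is Borel, $f^{-1}(X)$ is Borel in $M^n$, whence $u^{-1}(X)\in\mathcal A$. The second assertion follows in the same way from $a^{-1}(B)=\mathbf a^{-1}(g^{-1}(B))$, using that the continuous map $g$ pulls Borel subsets of $M$ back to Borel subsets of $M^n$ (the class of $B$ with $g^{-1}(B)$ Borel is a $\sigma$-algebra containing the opens).

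For the key fact I would first record that, in the infinite case, $\mathcal A$ is actually a $\sigma$-algebra: since $w(M)\geq\aleph_0$, meanability gives closure of $\mathcal A$ under unions of fewer than $w(M)^+$ members, and in particular under countable unions because $\aleph_0<w(M)^+$. Now set $\mathcal C=\{C\subseteq M^n:\ \mathbf a^{-1}(C)\in\mathcal A\}$. Because preimages commute with complements and arbitrary unions, $\mathcal C$ is a $\sigma$-algebra. Each basic open box $U_1\times\cdots\times U_n$, with $U_k$ open in $M$, lies in $\mathcal C$, since $\mathbf a^{-1}(U_1\times\cdots\times U_n)=\bigcap_{k=1}^n(a^k)^{-1}(U_k)$ is a finite intersection of members of $\mathcal A$, the measurability of each $a^k$ giving $(a^k)^{-1}(U_k)\in\mathcal A$.

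It remains to pass from boxes to arbitrary open sets of $M^n$, and this is where the weight hypothesis does the work. Fixing a basis $\mathcal O$ of $M$ with $|\mathcal O|=w(M)$, the boxes with sides in $\mathcal O$ form a basis of $M^n$ of cardinality $w(M)^n=w(M)$; hence every open $V\subseteq M^n$ is a union of at most $w(M)$ such boxes, and by $w(M)^+$-completeness $\mathbf a^{-1}(V)\in\mathcal A$, i.e. $V\in\mathcal C$. Thus $\mathcal C$ is a $\sigma$-algebra containing all open sets, so it contains every Borel set, which proves the key fact. The finite case is handled separately but immediately: a finite metric space is discrete, so $M^n$ is finite and discrete, every subset of $M^n$ is a finite union of singletons, and each preimage $\bigcap_{k}(a^k)^{-1}\{m_k\}$ already lies in the Boolean algebra $\mathcal A$.

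I expect the main obstacle to be the cardinality bookkeeping in this last step: one must check $w(M^n)=w(M)$ so that open sets are unions of only $w(M)$ boxes, and one must use meanability in its two distinct roles simultaneously — the automatic countable completeness (which is what lets $\mathcal C$ reach \emph{all} Borel sets, not merely the open ones) and the closure under $w(M)$-sized unions (which is what lets $\mathcal C$ reach all open sets in the first place).
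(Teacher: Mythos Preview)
Your proof is correct and follows essentially the same route as the paper: both arguments hinge on writing an open subset of $M^n$ as a union of at most $w(M)$ basic boxes and invoking $w(M)^+$-completeness of $\mathcal A$. The only cosmetic difference is packaging---you prove once and for all that the tuple map $\mathbf a$ pulls every Borel set of $M^n$ back into $\mathcal A$, whereas the paper works directly with $u^{-1}([r,s))$, noting that $f^{-1}([r,s))$ is a countable intersection of opens and unwinding from there; the underlying mechanism is identical.
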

\begin{proof}
Consider the case $M$ is infinite. Let $\mathcal{O}$ be a base of topology for $M$
and $\mathcal{A}$ be $\kappa^+$-complete where $\kappa=|\mathcal{O}|+\aleph_0$.
By continuity, the inverse image of $[r,s)$ under $f$ is a countable intersection of open subsets of $M^n$, say $\bigcap_kU_k$.
Each $U_k$ is a union of at most $\kappa$ sets of the form $A_1\times\cdots\times A_n$
where $A_j\in\mathcal{O}$ for $j=1,...,n$. It is now clear that $u^{-1}([r,s))\in\mathcal{A}$.
The second part is similar.
\end{proof}

Assume $M$ is $\mathcal A$-meanable. For measurable $a,b:I\rightarrow M$
let $a\sim b$ if $\int d(a_i,b_i)d\mu=0$. The equivalence class of $a=(a_i)$ is denoted by $[a_i]$.
Let $M^{\mu}$ be the set of equivalence classes of measurable maps $a:I\rightarrow M$.
We set a metric on $M^{\mu}$ by $$d([a_i],[b_i])=\int d(a_i,b_i)d\mu.$$
Also, for $c,F,R\in L$ (assuming $F,R$ are unary for simplicity) we define:
$$c^{M^{\mu}}=[c^M]$$
$$F^{M^\mu}([a_i])=[F^M(a_i)]$$ $$R^{M^\mu}([a_i])=\int R^M(a_i)d\mu.$$
Then, $M^{\mu}$ is a (possibly incomplete) $L$-structure. If $\mu$ is an ultracharge, it coincides with
the maximal powermean defined in \S \ref{Affine compactness}.
To prove the powermean theorem for such a general $M^\mu$, we need a selection theorem.

A \emph{multifunction}\index{multifunction} $G:I\rightarrow M$ is a map which assigns to each $i$
a nonempty $G(i)\subseteq M$. It is \emph{closed-valued} if $G(i)$ is closed for each $i$.
It is $\mathcal A$-measurable if for every open $U\subseteq M$, the set
$$G^{-1}(U)=\{i\in I|\ G(i)\cap U\neq\emptyset\}$$ is $\mathcal A$-measurable.
A \emph{selection}\index{selection} for $G$ is a function $g:I\rightarrow M$ such that $g(i)\in G(i)$ for every $i$.

\begin{theorem} \emph{(Kuratowski, Ryll-Nardzewski)}
Let $M$ be a separable completely metrizable topological space
and $G:I\rightarrow M$ be an $\mathcal A$-measurable closed-valued multifunction.
If $\mathcal A$ is countably complete, then $G$ admits an $\mathcal{A}$-measurable selection.
\end{theorem}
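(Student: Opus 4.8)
The plan is to prove this by the classical approximation argument, realizing a measurable selection as the pointwise limit of a uniformly Cauchy sequence of countably-valued measurable functions. First I would fix a complete metric $\rho$ compatible with the topology of $M$ (which exists since $M$ is completely metrizable) together with a countable dense set $\{x_n:\ n\in\Nn\}\subseteq M$. The entire construction will take values in this dense set, so measurability reduces to checking that each level set lies in $\mathcal A$.

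The core is an inductive construction of $\mathcal A$-measurable maps $g_k:I\rightarrow M$, each taking only countably many values among the $x_n$, such that for every $i$
\[
\rho(g_k(i),G(i))<2^{-k}\qquad\text{and}\qquad \rho(g_{k}(i),g_{k+1}(i))<2^{-k+1}.
\]
For the base step I would set $A_n=G^{-1}(B(x_n,1))=\{i:\ G(i)\cap B(x_n,1)\neq\emptyset\}$, which lies in $\mathcal A$ by the $\mathcal A$-measurability of $G$; since $\{x_n\}$ is dense and each $G(i)$ is nonempty we have $\bigcup_n A_n=I$, so disjointifying via $C_n=A_n\setminus\bigcup_{m<n}A_m$ yields a measurable partition of $I$ on whose pieces I define $g_0$ to be the constant $x_n$. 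For the inductive step, on each piece where $g_k$ equals a fixed $x_m$ I intersect the measurable sets $\{i:\ G(i)\cap B(x_n,2^{-k-1})\neq\emptyset\}$ with the ball condition $\rho(x_n,x_m)<2^{-k}+2^{-k-1}$, disjointify once more, and set $g_{k+1}=x_n$ there. Density of $\{x_n\}$ guarantees that for each $i$ some admissible $x_n$ exists, and the triangle inequality forces the two displayed estimates to persist.

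I would then pass to the limit: by the second estimate each sequence $(g_k(i))_k$ is $\rho$-Cauchy, hence convergent by completeness of $\rho$, so $g(i)=\lim_k g_k(i)$ is well defined. Since $\rho(g_k(i),G(i))\to0$ and $G(i)$ is closed, the limit lies in $G(i)$, whence $g$ is a selection. Finally $g$ is $\mathcal A$-measurable, because for open $U$ the set $g^{-1}(U)$ can be written as a countable Boolean combination of the level sets of the $g_k$ (exhausting $U$ by open sets whose closures sit inside $U$), and $\mathcal A$ is countably complete.

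The hard part will be the simultaneous bookkeeping in the inductive step: keeping each approximant within $2^{-k}$ of the closed set $G(i)$ (so that the limit lands in $G(i)$) while also keeping consecutive approximants geometrically close (so that the sequence converges), all without leaving $\mathcal A$. Every set entering the construction is built from the primitive measurable sets $G^{-1}(B(x_n,r))$ by countable unions, intersections and differences, so countable completeness of $\mathcal A$ is precisely what makes both the stagewise construction and the final pointwise limit measurable, while separability furnishes the countable dense set that keeps all these combinations countable.
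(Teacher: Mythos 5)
Your proof is correct: it is the classical successive-approximation argument for the Kuratowski--Ryll-Nardzewski theorem, with the estimates $\rho(g_k(i),G(i))<2^{-k}$ and $\rho(g_k(i),g_{k+1}(i))<2^{-k+1}$ carried through the induction exactly as needed, and countable completeness of $\mathcal A$ used precisely where the countable disjointifications and the pointwise limit require it. The paper itself gives no proof but cites Srivastava \cite{Srivastava}, and the proof there is essentially the one you have reconstructed, so there is nothing to flag.
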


A proof of the above theorem can be found in \cite{Srivastava}. 
It is however not hard to see that the same proof works for any complete $M$
if $\mathsf{dc}(M)\leqslant\kappa$ and $\mathcal A$ is $\kappa^+$-complete.
Also, existence of measurable selections for finite $M$ is obvious.
So, every $\mathcal A$-meanable $L$-structure $M$ has measurable selections.

Let $(I,\mathcal A,\mu)$ be a charge space and $M$ an $\mathcal A$-meanable $L$-structure.
Let $u:M^2\rightarrow\Rn^+$ be a $\lambda$-Lipschitz function.
Let $B_r(y)$ be the open ball of radius $r$ around $y$. Then, it is easy to verify that
$$\inf_{t\in B_r(y)}u(x,t)=\inf_z\big[u(x,z)+\lambda d(z,B_r(y))\big].$$
Let $a:I\rightarrow M$ be measurable.
Fix $0<\epsilon<1$ and assume the set
$$G(i)=\{t\in M|\ \ u(a_i,t)<\epsilon\}$$
is nonempty. Then, for each $r>0$,
$$d(y,G(i))<r \ \ \Longleftrightarrow\ \ \inf_{t\in B_r(y)}u(a_i,t)<\epsilon$$
We deduce that for each $y$, the map $i\mapsto d(y,\overline{G(i)})$ is measurable.
Let $D\subseteq M$ be a dense set with $|D|\leqslant\kappa$ and $U\subseteq M$ be open.
For each $y\in D\cap U$ choose $r_y$ such that
$$\frac{d(y, U^c)}{2}<r_y< d(y,U^c).$$ Then, $U=\bigcup_{y\in D\cap U} B_{r_y}(y)$ and
$$\{i|\ \overline{G(i)}\cap U\neq\emptyset\}=
\bigcup_y\{i|\ \overline{G(i)}\cap B_{r_y}(y)\neq\emptyset\}
=\bigcup_y\{i|\ d(y,\overline{G(i)})<r_y\}\in\mathcal A.$$
This shows that the multifunction $i\mapsto \overline{G(i)}$ is measurable.
Applying this for the function $$u(x,y)=[\sup_y\phi^M(x,y)]-\phi^M(x,y),$$ we conclude
that there is a measurable $b:I\rightarrow M$ such that
$$\sup_y\phi^M(a_i,y)-\epsilon\leqslant\phi^M(a_i,b_i)\hspace{12mm}\forall i\in I.$$

\begin{theorem} \label{powermean} \emph{(Powermean)}
Let $(I,\mathcal A ,\mu)$ be a charge space and $M$ be an $\mathcal A $-meanable structure.
Then, for each $L$-formula $\phi(\x)$ and $[a^1_i],...,[a^n_i]\in M^{\mu}$,
$$\phi^{M^{\mu}}([a^1_i],...,[a^n_i])=\int\phi^M(a^1_i,...,a^n_i)d\mu.$$
\end{theorem}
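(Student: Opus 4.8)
The plan is to prove the identity by induction on the complexity of $\phi$, mirroring the structure of the ultramean theorem (Theorem \ref{th1}) but handling the measurability issues that arise because $\mu$ need not be maximal. First I would dispatch the base cases and the connective cases, which are essentially formal. For an atomic formula $R(t_1,\dots,t_n)$ the defining equation $R^{M^\mu}([a_i])=\int R^M(a_i)\,d\mu$ together with the (already established) measurability of $i\mapsto\phi^M(a^1_i,\dots,a^n_i)$ for $\mathcal A$-meanable $M$ gives the claim directly; the metric atomic formula $d(t_1,t_2)$ is handled by the very definition of the metric on $M^\mu$. For $\phi+\psi$ and $r\phi$ one uses linearity of the integral, and the induction hypothesis. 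The one genuinely nontrivial part is the quantifier step.

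For the quantifier step I would treat $\sup_y\phi$, the case $\inf_y\phi$ being symmetric (or obtained by negation). Assume the theorem holds for $\phi(\x,y)$. Fix representatives $a^1_i,\dots,a^n_i$ and write $a=([a^1_i],\dots,[a^n_i])$. One inequality is easy: for any measurable selection $b:I\to M$, the induction hypothesis gives
$$\int\phi^M(a_i,b_i)\,d\mu=\phi^{M^\mu}(a,[b_i])\leqslant(\sup_y\phi)^{M^\mu}(a),$$
and taking the supremum over all measurable $b$, together with the pointwise bound $\phi^M(a_i,b_i)\leqslant\sup_y\phi^M(a_i,y)$, yields
$$\int\sup_y\phi^M(a_i,y)\,d\mu\ \geqslant\ (\sup_y\phi)^{M^\mu}(a)$$
only after one knows the supremum on the left is actually approached by a \emph{measurable} family $b_i$. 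This is exactly where the selection machinery developed just before the statement enters, and I expect it to be the main obstacle.

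The key is the measurable-selection construction preceding the theorem: applying it to $u(x,y)=[\sup_y\phi^M(x,y)]-\phi^M(x,y)$, one obtains, for each $\epsilon>0$, a measurable $b:I\to M$ with
$$\sup_y\phi^M(a_i,y)-\epsilon\leqslant\phi^M(a_i,b_i)\qquad\forall i\in I.$$
Integrating and using the induction hypothesis gives
$$\int\sup_y\phi^M(a_i,y)\,d\mu-\epsilon\leqslant\int\phi^M(a_i,b_i)\,d\mu=\phi^{M^\mu}(a,[b_i])\leqslant(\sup_y\phi)^{M^\mu}(a),$$
and letting $\epsilon\to0$ establishes $\int\sup_y\phi^M(a_i,y)\,d\mu\leqslant(\sup_y\phi)^{M^\mu}(a)$. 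For the reverse inequality, for any $[c_i]\in M^\mu$ the induction hypothesis and the pointwise bound $\phi^M(a_i,c_i)\leqslant\sup_y\phi^M(a_i,y)$ give $\phi^{M^\mu}(a,[c_i])=\int\phi^M(a_i,c_i)\,d\mu\leqslant\int\sup_y\phi^M(a_i,y)\,d\mu$; taking the supremum over all $[c_i]$ yields $(\sup_y\phi)^{M^\mu}(a)\leqslant\int\sup_y\phi^M(a_i,y)\,d\mu$. Combining the two directions closes the induction. Throughout I would invoke $\mathcal A$-meanability to guarantee that all the integrands ($i\mapsto\phi^M(a_i,y)$, $i\mapsto\sup_y\phi^M(a_i,y)$, and the various maps $d(y,\overline{G(i)})$) are measurable, which is precisely the hypothesis that makes integration against the non-maximal charge $\mu$ legitimate.
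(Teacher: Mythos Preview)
Your proposal is correct and follows the same route as the paper: induction on formula complexity, with the quantifier step handled via the Kuratowski--Ryll-Nardzewski selection argument applied to $u(x,y)=\sup_y\phi^M(x,y)-\phi^M(x,y)$ to produce a measurable near-maximizer $b$, followed by integration. The only blemish is expository: the first displayed inequality in your quantifier discussion (the direction $\int\sup_y\phi^M(a_i,y)\,d\mu\geqslant(\sup_y\phi)^{M^\mu}(a)$) is actually the easy one and does \emph{not} require selection---your subsequent two paragraphs get both directions right and match the paper exactly.
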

\begin{proof}
Clearly, the claim holds for atomic formulas.
Also, if it holds for $\phi,\psi$, it holds for $r\phi+s\psi$ too.
Assume the claim is proved for $\phi(\x,y)$. For simplicity assume $|\x|=1$.
Let $[a_i]\in M^{\mu}$ and $0<\epsilon<1$. As $M$ is complete, by the above argument,
there is a measurable $b$ such that
$$\sup_y\phi^M(a_i,y)-\epsilon\leqslant\phi^M(a_i,b_i)\hspace{14mm}\forall i\in I.$$
So,
$$\int\sup_y\phi^M(a_i,y)d\mu-\epsilon\leqslant\int\phi^M(a_i,b_i)d\mu=
\phi^{M^{\mu}}([a_i],[b_i])\leqslant\sup_y\phi^{M^{\mu}}([a_i],y)$$
and hence $$\int\sup_y\phi^M(a_i,y)d\mu\leqslant\sup_y\phi^{M^{\mu}}([a_i],y).$$
The inverse inequality is obvious. So, the claim holds for $\sup_y\phi(x,y)$ too.
\end{proof}

\begin{corollary} \label{diagonal}
The diagonal map $a\mapsto[a]$ is an elementary embedding of $M$ into $M^\mu$.
\end{corollary}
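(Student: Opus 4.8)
The plan is to read this off directly from the Powermean theorem (Theorem \ref{powermean}). First I would check that the diagonal map is well-defined as a map into $M^\mu$: the image of $a\in M$ is the class of the constant function $i\mapsto a$, and such a constant function is trivially $\mathcal A$-measurable, since the preimage of any subset of $M$ is either $\emptyset$ or $I$, both of which lie in $\mathcal A$. Hence $[a]$ is a legitimate element of $M^\mu$ and the map $a\mapsto[a]$ makes sense.

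Next, fix a formula $\phi(\x)$ with $|\x|=n$ and a tuple $a^1,\dots,a^n\in M$, and apply Theorem \ref{powermean} to the constant measurable maps defined by $a^k_i:=a^k$ for all $i$. This gives
$$\phi^{M^\mu}([a^1],\dots,[a^n])=\int\phi^M(a^1_i,\dots,a^n_i)\,d\mu=\int\phi^M(a^1,\dots,a^n)\,d\mu.$$
The crucial observation is that the integrand is now a constant function of the index $i$, namely the fixed real number $\phi^M(a^1,\dots,a^n)$. Since $\mu$ is a probability charge, the integral of a constant $c$ against $\mu$ equals $c\cdot\mu(I)=c$, so
$$\phi^{M^\mu}([a^1],\dots,[a^n])=\phi^M(a^1,\dots,a^n).$$
This is precisely the defining condition for the diagonal map to be an elementary embedding.

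Because the value of \emph{every} formula is preserved, in particular the atomic formula $d(x,y)$ is preserved, so the map is distance-preserving and therefore injective; it is thus a genuine elementary embedding in the sense of the definition. There is essentially no obstacle here: the entire content resides in Theorem \ref{powermean}, and the only points needing verification are the measurability of constant maps and the elementary fact that integration against a probability charge fixes constants. I would take care only to flag these two routine observations, since they are exactly what legitimizes the reduction to the Powermean theorem.
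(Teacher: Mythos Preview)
Your proof is correct and is exactly the argument the paper intends: the corollary is stated without proof precisely because it follows immediately from Theorem~\ref{powermean} applied to constant tuples, together with the fact that a probability charge integrates constants to themselves. The only additions you make---checking that constant maps are $\mathcal A$-measurable and spelling out why the integral of a constant is that constant---are the routine verifications that justify calling it a corollary.
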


It is well-known that if every $M_i$ is complete and $\mathcal{F}$ is an ultrafilter,
then $\prod_{\mathcal F}M_i$ is complete. In the powermean case we have the following.

\begin{proposition} \label{metric completeness}
Let $(I,\mathcal A ,\mu)$ be a probability measure space.
If $M$ is $\mathcal A $-meanable, then $M^{\mu}$ is complete.
\end{proposition}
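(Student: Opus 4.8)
The plan is to prove completeness of $M^\mu$ by showing that every Cauchy sequence has a convergent subsequence, which suffices since a Cauchy sequence converges as soon as one of its subsequences does. So I would start with a Cauchy sequence $([a^n_i])_n$ in $M^\mu$ and pass to a subsequence (still written $a^n$) arranged so that $\int d(a^n_i,a^{n+1}_i)\,d\mu=d([a^n_i],[a^{n+1}_i])<2^{-n}$ for every $n$. Each integrand is measurable because $M$ is $\mathcal A$-meanable: $i\mapsto d(a^n_i,a^{n+1}_i)$ is a continuous function applied to measurable maps, hence measurable by the meanability Lemma.

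The decisive step, and the reason the statement requires a genuine probability measure rather than a mere charge, is the passage to a pointwise limit. Since $\mu$ is countably additive, the monotone convergence theorem yields $\int\sum_n d(a^n_i,a^{n+1}_i)\,d\mu=\sum_n\int d(a^n_i,a^{n+1}_i)\,d\mu<\sum_n 2^{-n}<\infty$. Consequently $\sum_n d(a^n_i,a^{n+1}_i)<\infty$ for $\mu$-almost every $i$, so for such $i$ the sequence $(a^n_i)_n$ is Cauchy in $M$ and, by completeness of $M$, converges to a point $a_i$; on the (measurable, null) set where convergence fails I would set $a_i$ equal to a fixed element of $M$.

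Next I would check that $i\mapsto a_i$ is $\mathcal A$-measurable. As $\mathcal A$ is a $\sigma$-algebra it is enough to see that $a^{-1}(F)\in\mathcal A$ for every closed $F\subseteq M$, and this follows from the pointwise convergence and the identity
$$a^{-1}(F)=\bigcap_{k}\bigcup_{N}\bigcap_{n\geqslant N}\big\{i:\ d(a^n_i,F)<\tfrac1k\big\},$$
each set on the right lying in $\mathcal A$ because $i\mapsto d(a^n_i,F)$ is measurable (meanability again, applied to the continuous function $d(\cdot,F)$) and only countably many Boolean operations intervene. Thus $[a_i]$ is a well-defined element of $M^\mu$.

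Finally, for convergence I would note that for almost every $i$ the triangle inequality gives $d(a^n_i,a_i)\leqslant\sum_{m\geqslant n}d(a^m_i,a^{m+1}_i)$, so integrating and invoking monotone convergence once more produces $\int d(a^n_i,a_i)\,d\mu\leqslant\sum_{m\geqslant n}2^{-m}=2^{1-n}\to 0$, i.e. $[a^n_i]\to[a_i]$ in $M^\mu$. The main obstacle is twofold: the measurability of the limit $a$, which in the non-separable case genuinely needs the $w(M)^+$-completeness packaged in meanability, and the essential reliance on countable additivity. For a merely finitely additive charge the interchange of sum and integral fails and the almost-everywhere limit need not exist, which is exactly why the hypothesis here is a measure space rather than a charge space.
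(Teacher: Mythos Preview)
Your proof is correct and follows the same overall strategy as the paper's: pass to a rapidly convergent subsequence, show that the representatives converge $\mu$-almost everywhere in $M$, define the limit pointwise, and then integrate a tail bound. The paper differs in the mechanism for obtaining a.e.\ convergence: instead of monotone convergence, it chooses the subsequence so that $d(a^k,a^{k+1})<2^{-2k}$, applies a Chebyshev-type estimate to get $\mu\{i:d(a^k_i,a^{k+1}_i)\geqslant 2^{-k}\}<2^{-k}$, and then uses a Borel--Cantelli argument. For the final convergence it appeals to the hazy-convergence result (Proposition~\ref{convergence}) rather than monotone convergence. The paper also simply asserts that the pointwise limit $(a_i)$ is measurable, whereas you spell this out via the $\liminf$-type identity for preimages of closed sets; your extra care here is warranted, particularly in the non-separable case where meanability is genuinely needed. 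Both routes are standard completeness arguments for $L^1$-type spaces and neither offers a real advantage over the other.
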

\begin{proof}
Let $a^1,a^2,...$ be a Cauchy sequence in $M^\mu$ where $a^k=[a^k_i]$.
Without loss of generality assume that for each $k$,\ \ $d(a^k,a^{k+1})<2^{-2k}$.
Let $$A_k=\{i: d(a^k_i,a^{k+1}_i)\geqslant 2^{-k}\}.$$
We then have that $\mu(A_k)< 2^{-k}$ since otherwise
$$d(a^k, a^{k+1})\geqslant\int_{A_k}d(a_i^k, a_i^{k+1})d\mu\geqslant 2^{-2k}.$$
Moreover, $B_n=\bigcup_{k=n}^\infty A_k$ is descending and by computation
$\mu (B_n)\leqslant2^{-n+1}$.
Therefore, $\mu(\bigcap_n B_n)=0$ and for each fixed
$i\not\in\bigcap_n B_n$, the sequence $a_i^k$ is Cauchy.
We set $a_i=\lim_k a^k_i$ if $i$ is outside $\bigcap_n B_n$ and arbitrary otherwise.
Then, $(a_i)$ is measurable and by Proposition \ref{convergence}
$$\int d(a^n_i,a_i)d\mu\leqslant\int\sum_{k=n}^\infty
d(a^k_i, a^{k+1}_i)d\mu=\sum_{k=n}^\infty d(a^k, a^{k+1})\leqslant 2^{-n+1}.$$
Hence, $a^n$ tends to $[a_i]\in M^\mu$.
\end{proof}

Turning back to Proposition \ref{mid}, one shows similarly that if $M$ is separable and $\mu$ is the Lebesgue measure on $[0,1]$,
then $M^\mu$ is a geodesic metric space. So, every theory in a countable language has a model which is metrically geodesic.

There are other families of functions for which a similar powermean theorem holds.
For example, assuming $\mathcal A$ is countably complete and $M$ is arbitrary.
Let $N$ be the set of equivalence classes of measurable maps $a:I\rightarrow M$ for which $\Phi(a)$
holds where $\Phi(a)$ is one of the following properties:
(i) $a$ has a finite range (ii) $a$ has a countable range (iii) the range of $a$ is contained in a compact set
(iv) the range of $a$ is contained in a separable set.
The first case has the advantage that the cardinality is controlled.
We use this case to find large groups of automorphisms on separable models.
\vspace{5mm}

\noindent{\bf Discrete powermean and automorphisms}\index{discrete powermean}\\
Let $(I,\mathcal A,\mu)$ be charge space and $M$ be an arbitrary $L$-structure.
A measurable map $a:I\rightarrow M$ with finite range is called \emph{simple}.
As before, simple $a,b$ are identified if $$\int d(a_i,b_i)d\mu=0.$$
Let $M^{\mu{\mathrm s}}$ be the set of equivalence classes of simple measurable maps.
Then $M^{\mu{\mathrm s}}$ is an (incomplete) $L$-structure as in the measurable powermean case
and the ultramean theorem holds for it. Note also that if $|\mathcal A|\leqslant\kappa$ and $\mathsf{dc}(M)\leqslant\lambda$
(where $\kappa,\lambda$ are infinite), then $\mathsf{dc}(M^{\mu{\mathrm s}})\leqslant\kappa\lambda$.

It is well-known that the only automorphism of an ultrafilter is the identity map.
Despite the rigidity of ultrafilters, ultracharges have usually many
automorphisms which raise correspondingly automorphisms of the powermeans.

\begin{proposition}\label{automorphism}
Let $(I,\mathcal A ,\mu)$ be a charge space and $|M|\geqslant2$.
Let $G$ be a group of measurable measure preserving bijections $g:I\rightarrow I$
such that for each $g\neq id$ there is $A$ with $\mu(A\triangle gA)>0$. Then, $G$ is embedded
in the automorphism group of $M^{\mu{\mathrm s}}$ (or its completion).
Similarly property holds for $M^{\mu}$ if $M$ is $\mathcal A $-meanable.
\end{proposition}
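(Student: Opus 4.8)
The plan is to realize each $g\in G$ as the substitution map it naturally induces on simple maps. For a simple measurable $a:I\to M$ I would set $\hat g[a_i]=[a_{g^{-1}(i)}]$, and define $\Phi:G\to\mathrm{Aut}(M^{\mu\mathrm{s}})$ by $\Phi(g)=\hat g$. The choice of $g^{-1}$ rather than $g$ is what makes $\Phi$ a homomorphism: a direct substitution check gives $\widehat{g_1g_2}[a_i]=[a_{(g_1g_2)^{-1}(i)}]=\hat g_1(\hat g_2[a_i])$. Since $G$ is a group, $g^{-1}\in G$ is again a measurable measure preserving bijection, so $i\mapsto a_{g^{-1}(i)}$ is again simple and measurable, and $\hat g$ has the obvious two-sided inverse $\widehat{g^{-1}}$; thus each $\hat g$ is a bijection of $M^{\mu\mathrm{s}}$.

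First I would check that $\hat g$ is well defined and is an $L$-automorphism, all of which reduces to the measure preservation of $g$ via the change of variables $j=g^{-1}(i)$. If $\int d(a_i,b_i)\,d\mu=0$ then $\int d(a_{g^{-1}(i)},b_{g^{-1}(i)})\,d\mu=\int d(a_j,b_j)\,d\mu=0$, so $\hat g$ respects the equivalence relation and is moreover an isometry of $(M^{\mu\mathrm{s}},d)$. For a constant $c$ the defining map $i\mapsto c^M$ is fixed by substitution, so $\hat g(c^{M^{\mu\mathrm{s}}})=c^{M^{\mu\mathrm{s}}}$; for a function symbol $F$ one has $\hat g(F^{M^{\mu\mathrm{s}}}[a_i])=[F^M(a_{g^{-1}(i)})]=F^{M^{\mu\mathrm{s}}}(\hat g[a_i])$; and for a relation symbol $R$ the same change of variables gives $R^{M^{\mu\mathrm{s}}}(\hat g[a_i])=\int R^M(a_{g^{-1}(i)})\,d\mu=\int R^M(a_j)\,d\mu=R^{M^{\mu\mathrm{s}}}[a_i]$. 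Hence $\hat g$ preserves all the structure and, being bijective, is an automorphism.

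The only step that actually uses the hypothesis, and the one I regard as the real point, is injectivity of $\Phi$. Fix distinct $a,b\in M$ (available since $|M|\geqslant2$) and, for $A\in\mathcal A$, let $f_A:I\to M$ be the two-valued simple map taking value $a$ on $A$ and $b$ off $A$. A short computation shows $f_A\circ g^{-1}=f_{gA}$, where $gA\in\mathcal A$ because $g^{-1}\in G$ is measurable; hence $d^{M^{\mu\mathrm{s}}}(\hat g[f_A],[f_A])=\int d(f_{gA},f_A)\,d\mu=d(a,b)\,\mu(A\triangle gA)$. If $\hat g=\mathrm{id}$ this forces $\mu(A\triangle gA)=0$ for every $A$, contradicting the assumption that some $A$ has $\mu(A\triangle gA)>0$ whenever $g\neq\mathrm{id}$. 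Therefore $\Phi$ is an injective homomorphism, i.e. an embedding of $G$ into $\mathrm{Aut}(M^{\mu\mathrm{s}})$.

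Finally I would pass to the completion and to $M^\mu$. Each $\hat g$ is an $L$-isometry, so it extends uniquely to an isometric $L$-automorphism of the completion of $M^{\mu\mathrm{s}}$, and the extension map remains an injective homomorphism. For the measurable powermean $M^\mu$ with $M$ being $\mathcal A$-meanable, the identical definition works once one checks that $a\circ g^{-1}$ is measurable for measurable $a$: this follows from $(a\circ g^{-1})^{-1}(B)=g(a^{-1}(B))\in\mathcal A$, using that $g^{-1}\in G$ is measurable. All the verifications above are word for word the same, and the injectivity argument again uses the two-valued maps $f_A$, which lie in $M^\mu$ as well.
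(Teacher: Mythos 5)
Your proof is correct and follows essentially the same route as the paper's: realize each $g\in G$ as a substitution automorphism of $M^{\mu\mathrm{s}}$ (justified by change of variables), prove injectivity with a two-valued simple map built from $a,b$ and a set $A$ with $\mu(A\triangle gA)>0$, extend isometrically to the completion, and repeat verbatim for $M^{\mu}$. Your only deviation is the use of $g^{-1}$ in the substitution, which is in fact a small improvement, since the paper's convention $\bar g([a_i])=[a_{g(i)}]$ actually yields an anti-homomorphism ($\bar g\bar h=\overline{hg}$), whereas yours gives a genuine homomorphism.
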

\begin{proof}
Fix $g\in G$. By the change of variables theorem, for each $\phi(\x)$
(assume $|\x|=1$ for simplicity) and $[a_i]\in M^{\mu{\mathrm s}}$ one has that
$$\phi^{M^{\mu{\mathrm s}}}([a_{g(i)}])=\int\phi^M(a_{g(i)})d\mu=\int\phi^M(a_i)d\mu=\phi^{M^{\mu{\mathrm s}}}([a_{i}]).$$
So, the map $\bar{g}([a_i])=[a_{g(i)}]$ is an automorphism of $M^{\mu{\mathrm s}}$.
It is easy to verify that $\overline{gh}=\bar{g}\bar{h}$.
Hence, $g\mapsto\bar{g}$ is a homomorphism from $G$ to $aut(M^{\mu{\mathrm s}})$.
We show that it is injective. For each $id\neq g\in G$ take $A\subseteq I$ such that $\mu(A\triangle g(A))>0$.
Clearly, $0<\mu(A)<1$. Let $a,b\in M$ be distinct and define $a_i=a$ if $i\in A$ and $a_i=b$ if $i\not\in A$.
Then, $\bar{g}([a_i])\neq[a_i]$. Note that every automorphism of $M^{\mu{\mathrm s}}$ extends uniquely to it completion.
The second part is proved similarly.
\end{proof}

Indeed, $G$ is embedded in $aut(M^{\mu{\mathrm s}},M)$ (they fix every $a\in M$).

\begin{example}
\em{(1) Recall that a left-invariant mean on a group $G$ is a
finitely additive probability measure (i.e. an ultracharge) $\mu:P(G)\rightarrow[0,1]$
such that $\mu(A)=\mu(gA)$ for every $A\subseteq G$ and $g\in G$.
If there is such a $\mu$, $G$ is called \emph{amenable}.
Let $G$ be amenable and $\mu$ a left-invariant mean on it.
Suppose that $G$ is residually finite, i.e. the intersection of subgroups of finite index is trivial.
Identify every $g\in G$ with the corresponding translation $x\mapsto gx$ on $G$.
Then, $G$ satisfies the conditions of Proposition \ref{automorphism}. Hence, it is embedded in $aut(M^{\mu{\mathrm s}})$.

(2) Let $G$ be a compact Hausdorff group and $\mu$ its Haar measure.
Note that every neighborhood of the identity has positive measure.
So, $G$ satisfies the conditions of the proposition if $g\in G$
is identified with the translation $x\mapsto gx$.
Hence, $G$ is (algebraically) embedded in $aut(M^{\mu{\mathrm s}})$.

(3) Let $(G,\mu)$ be as in (2). Every continuous automorphism $f$ of $G$ is measure preserving.
Also, if $f\neq id$, then there is a nonempty open $U$ such that $f(U)\cap U=\emptyset$.
So, the group of continuous automorphisms of $G$
is embedded in $aut(M^{\mu{\mathrm s}})$.}
\end{example}

It is well-known that every infinite first order model $M$ in a
countable signature has an elementary extension $N$ of the same
cardinality such that $aut(\mathbb Q,<)$ is embedded in $aut(N)$.

\begin{proposition}\label{permutation}
Assume $2\leqslant\mathsf{dc}(M)\leqslant\kappa$ and $\aleph_0\leqslant\kappa=|I|$.
Then there exists $M\preccurlyeq N$ such that $\mathsf{dc}(N)\leqslant\kappa$ and $per(I)$
(the group of permutations of $I$) is embedded in $aut(N)$.
\end{proposition}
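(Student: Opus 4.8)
The plan is to realize $per(I)$ as a group of measure-preserving bijections of a suitable charge space and then invoke Proposition \ref{automorphism}. The first thing to notice is that one cannot simply put a $per(I)$-invariant charge on $I$ itself: invariance forces every singleton, and hence every finite set, to be null, so a transposition $\sigma$ would satisfy $\mu(A\triangle g_\sigma A)=0$ for all $A$ and the separation hypothesis of Proposition \ref{automorphism} would fail. To get around this I would pass to the Bernoulli space $J=\{0,1\}^{I}$. Let $\mathcal A$ be the field of subsets of $J$ generated by the cylinders $C_i=\{x\in J:\ x(i)=1\}$ for $i\in I$, and let $\mu$ be the fair-coin product charge, i.e. the finitely additive probability determined by $\mu(C_{i_1}\cap\cdots\cap C_{i_k})=2^{-k}$ for distinct $i_1,\dots,i_k$. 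Since every element of $\mathcal A$ depends on finitely many coordinates and $|[I]^{<\omega}|=\kappa$, one has $|\mathcal A|=\kappa$.

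Next I would let $per(I)$ act on $J$ by permuting coordinates, $g_\sigma(x)=x\circ\sigma^{-1}$. Each $g_\sigma$ permutes the generating cylinders, so it is an $\mathcal A$-measurable bijection carrying $\mathcal A$ onto itself, and it preserves $\mu$ because the product charge is symmetric in the coordinates. The map $\sigma\mapsto g_\sigma$ is an injective homomorphism, so its image $G$ is a copy of $per(I)$ inside the group of $\mu$-preserving bijections of $J$. The separation condition is the one computation worth recording: given $\sigma\neq id$ choose $i_0$ with $\sigma(i_0)=j_0\neq i_0$ and put $A=C_{i_0}$; then $g_\sigma A=C_{j_0}$, so $A\triangle g_\sigma A=\{x:\ x(i_0)\neq x(j_0)\}$ has $\mu$-measure $\frac12>0$. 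Thus $G$ satisfies the hypotheses of Proposition \ref{automorphism} on the charge space $(J,\mathcal A,\mu)$.

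With this in hand I would form the discrete powermean $M^{\mu{\mathrm s}}$ over $(J,\mathcal A,\mu)$; here I use that $M^{\mu{\mathrm s}}$ is defined for an arbitrary charge space and arbitrary $M$ (so no meanability is needed) and that $\mathsf{dc}(M)\geqslant 2$ gives $|M|\geqslant 2$. Proposition \ref{automorphism} then embeds $G\cong per(I)$ into $aut(M^{\mu{\mathrm s}})$, and since every automorphism extends uniquely to the completion, into $aut(N)$ where $N$ is the completion of $M^{\mu{\mathrm s}}$ (obtained via Proposition \ref{exist}). The diagonal map is elementary by the ultramean theorem for $M^{\mu{\mathrm s}}$, so $M\preccurlyeq N$. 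Finally, the density-character bound noted for the discrete powermean gives $\mathsf{dc}(N)\leqslant|\mathcal A|\cdot\mathsf{dc}(M)=\kappa$, as required.

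The substantive point, and the only real obstacle, is the choice of index space: the separation clause of Proposition \ref{automorphism} is exactly what fails for any invariant charge on $I$, and the Bernoulli space is chosen precisely so that coordinate permutations preserve the symmetric product charge while still moving the positive-measure \emph{disagreement} sets $\{x:\ x(i)\neq x(j)\}$. Using the cylinder \emph{field} rather than the full product $\sigma$-algebra is the other small but essential point, since it keeps $|\mathcal A|=\kappa$ and thereby controls $\mathsf{dc}(N)$; with the $\sigma$-algebra the index algebra would have size $\kappa^{\aleph_0}$ and the crude density bound would no longer suffice.
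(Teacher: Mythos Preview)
Your proof is correct and is essentially the same as the paper's: the paper also passes to $\mathbb{Z}_2^I$ with the product (Haar) charge on the finite-cylinder algebra, lets $per(I)$ act by coordinate permutation, and applies Proposition~\ref{automorphism} to the discrete powermean. Your presentation is a bit more explicit (you compute $\mu(A\triangle g_\sigma A)=\tfrac12$ directly rather than appealing to continuity of the induced automorphism), but the construction and the cardinality bookkeeping are identical.
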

\begin{proof}
Let $G=\mathbb Z_2^I$ be endowed with the product topology.
For finite $\tau\subseteq I$, let $G_\tau$ be the subgroup consisting of those $(x_i)\in G$
for which $x_i=0$ when $i\in\tau$.
Then, $\{G_{\tau}\}_{\tau\in J}$ is a local base consisting of open subgroups of
finite index where $J=S_\omega(I)$.
Let $\mathcal A $ be the Boolean algebra generated by cosets of these
subgroups and $\mu$ be the restriction of the Haar measure to $\mathcal A$.
Then $\mathsf{dc}(M^{\mu{\mathrm s}})\leqslant\kappa$ and every permutation $f$ of $I$ induces a continuous
(hence measure preserving) automorphism $\bar{f}$ of $G$. Clearly, if $f\neq id$, then $\bar{f}\neq id$
and the condition mentioned in Proposition \ref{automorphism} holds.
Therefore, $per(I)$ is embedded in the automorphism group of $M^{\mu{\mathrm s}}$
and hence in the automorphism group of its completion $N$. Obviously, $\mathsf{dc}(N)\leqslant\kappa$.
\end{proof}
\bigskip

\noindent{\bf Continuous powermean}\index{continuous powermean}\\
Assume $I$ is a paracompact zero-dimensional space and $\mu$ is a probability charge on the Borel algebra of $I$.
Note that a locally compact Hausdorff space is zero-dimensional if and only if it is totally disconnected.
Let $M$ be an $L$-structure. A multifunction $f:I\rightarrow M$ is called \emph{lower semi-continuous}
if for each open $U\subseteq M$ the set $$G^{-1}(U)=\{i\in I|\ G(i)\cap U\neq\emptyset\}$$ is open.

\begin{theorem} \emph{(\cite{Parthasarathy-selection} Th. 1.2)} \label{parthasarathy-selection}
Let $I$ be a paracompact zero-dimensional space and $M$ a complete metric space.
Then, every lower semi-continuous closed valued multifunction $f:I\rightarrow M$
has a continuous selection\index{continuous selection}.
\end{theorem}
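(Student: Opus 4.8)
The plan is to prove this as the zero-dimensional version of Michael's selection theorem, where the zero-dimensionality of $I$ takes over the role that convexity of the target plays in the classical statement. Write $F:I\to M$ for the given lower semi-continuous closed-valued multifunction (the $f$ of the statement), and for $y\in M$ and $A\subseteq M$ abbreviate $d(y,A)=\inf_{z\in A}d(y,z)$. The whole argument rests on an \emph{approximate selection lemma}: for every $\varepsilon>0$ there is a continuous $g:I\to M$ with $d(g(i),F(i))<\varepsilon$ for all $i$. To prove it, observe that by lower semi-continuity the sets $F^{-1}(B(y,\varepsilon))=\{i:F(i)\cap B(y,\varepsilon)\neq\emptyset\}$, for $y\in M$, form an open cover of $I$. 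Since $I$ is paracompact and zero-dimensional, this cover admits a refinement by a partition $\{V_\alpha\}$ of $I$ into clopen sets; choosing $y_\alpha$ with $V_\alpha\subseteq F^{-1}(B(y_\alpha,\varepsilon))$ and setting $g\equiv y_\alpha$ on $V_\alpha$ produces a map which is continuous (because $g^{-1}(W)=\bigcup\{V_\alpha:y_\alpha\in W\}$ is a union of clopen sets for every open $W\subseteq M$) and satisfies $d(g(i),F(i))<\varepsilon$ by the choice of $y_\alpha$.

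Next I would iterate this lemma to manufacture an exact selection as a uniform limit of approximate ones. Fix $\varepsilon_n=2^{-n}$ and use the lemma to pick a continuous $g_0$ with $d(g_0(i),F(i))<\varepsilon_0$. Inductively, given a continuous $g_n$ with $d(g_n(i),F(i))<\varepsilon_n$, form the multifunction $F_n(i)=F(i)\cap B(g_n(i),\varepsilon_n)$, which is nonempty-valued precisely because $g_n$ is an $\varepsilon_n$-approximation. The key technical point is that $F_n$ is again lower semi-continuous; this follows from the standard moving-ball intersection lemma, combining the lower semi-continuity of $F$ with the continuity of $g_n$ and the openness of the balls. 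Applying the approximate selection lemma to the closed-valued lower semi-continuous multifunction $i\mapsto\overline{F_n(i)}$ with accuracy $\varepsilon_{n+1}$ yields a continuous $g_{n+1}$ with $d\bigl(g_{n+1}(i),\overline{F_n(i)}\bigr)<\varepsilon_{n+1}$. Since $F(i)$ is closed we have $\overline{F_n(i)}\subseteq F(i)$, and also $\overline{F_n(i)}\subseteq\overline{B}(g_n(i),\varepsilon_n)$, so this single estimate simultaneously gives $d(g_{n+1}(i),F(i))<\varepsilon_{n+1}$ and $d(g_{n+1}(i),g_n(i))<\varepsilon_n+\varepsilon_{n+1}$.

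Finally, the bound $d(g_{n+1}(i),g_n(i))<\varepsilon_n+\varepsilon_{n+1}$ together with $\sum_n\varepsilon_n<\infty$ shows that $(g_n)$ is uniformly Cauchy, and by completeness of $M$ it converges uniformly to a continuous map $g:I\to M$. For each $i$ one has $d(g(i),F(i))\le d(g(i),g_n(i))+d(g_n(i),F(i))\to0$, and since $F(i)$ is closed this forces $g(i)\in F(i)$; thus $g$ is the desired continuous selection. I expect the two genuine obstacles to be the following. First, the approximate selection lemma is the only place where zero-dimensionality is really used: one must know that a paracompact zero-dimensional space admits locally finite partitions into clopen sets refining an arbitrary open cover (the covering-dimension-zero formulation), and this is exactly what replaces the partition-of-unity-plus-convexity step of Michael's theorem. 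Second, the lower semi-continuity of the moving intersection $F(i)\cap B(g_n(i),\varepsilon_n)$ must be checked with some care, since it is what keeps the induction inside the class of lower semi-continuous multifunctions.
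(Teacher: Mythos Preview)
The paper does not give its own proof of this theorem: it is quoted verbatim from \cite{Parthasarathy-selection} (Theorem 1.2 there) and used as a black box in the continuous powermean construction. So there is no in-paper argument to compare against.

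That said, your proof is correct and is essentially the standard one appearing in Parthasarathy's notes and in Michael's original work on zero-dimensional selection. The two points you flag as potential obstacles are indeed the places requiring care, and you handle them properly: the existence of a clopen partition refining an arbitrary open cover is precisely the covering-dimension-zero characterisation available for paracompact zero-dimensional spaces, and the lower semi-continuity of $i\mapsto F(i)\cap B(g_n(i),\varepsilon_n)$ (and of its closure) is the standard moving-ball lemma. One small cosmetic remark: when you pass to $\overline{F_n(i)}$ in order to apply the approximate-selection lemma to a closed-valued multifunction, it is worth stating explicitly that the closure of a lower semi-continuous multifunction is again lower semi-continuous, which is immediate from the identity $\overline{F_n}^{-1}(U)=F_n^{-1}(U)$ for open $U$.
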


Let $u:M^2\rightarrow\Rn^+$ be $\lambda$-Lipschitz and $a:I\rightarrow M$ be continuous.
Fix $0<\epsilon<1$ and for $i\in I$ assume
$$G(i)=\{t\in M|\ \ u(a_i,t)<\epsilon\}\neq\emptyset.$$ Then, for each $r>0$ and $y$,
$$d(y,G(i))<r \ \ \Longleftrightarrow\ \ \inf_{t\in B_r(y)}u(a_i,t)<\epsilon$$
We deduce that for each $y$, the set $\{i:\ d(y,\overline{G(i)}<r\})$ is open.
Let $U\subseteq M$ be open and for each $y\in U$ choose $r_y$ such that
$$\frac{d(y, U^c)}{2}<r_y< d(y,U^c).$$ Then, $U=\bigcup_{y\in U} B_{r_y}(y)$ and
$$\{i|\ \overline{G(i)}\cap U\neq\emptyset\}=
\bigcup_{y\in U}\{i|\ \overline{G(i)}\cap B_{r_y}(y)\neq\emptyset\}
=\bigcup_{y\in U}\{i|\ d(y,\overline{G(i)})<r_y\}$$
which is open. This shows that the multifunction $i\mapsto \overline{G(i)}$ is lower semi-continuous.
Applying this for the function $$u(x,y)=[\sup_y\phi^M(x,y)]-\phi^M(x,y),$$ we conclude
that there is a continuous $b:I\rightarrow M$ such that
$$\sup_y\phi^M(a_i,y)-\epsilon\leqslant\phi^M(a_i,b_i)\hspace{12mm}\forall i\in I.$$

Note that for every $\phi(\x)$ and continuous $a^1,...,a^n:I\rightarrow M$,
the map $i\mapsto\phi(a^1_i,...,a^n_i)$ is continuous (hence integrable).
For continuous $a,b:I\rightarrow M$ let $a\sim b$ if $$\int d(a_i,b_i)d\mu=0.$$
The equivalence class of $a=(a_i)$ is denoted by $[a_i]$.
Let $M^{\mu{\mathrm c}}$ be the set of equivalence classes of all continuous maps $a:I\rightarrow M$.
The metric on $M^{\mu{\mathrm c}}$ is defined by $$d(a,b)=\int d(a_i,b_i)d\mu.$$
Also, for $e,F,R\in L$ (say unary) and $a\in M^{\mu{\mathrm c}}$ define
$$e^{M^{\mu{\mathrm c}}}=[e^M], \hspace{8mm} F^{M^{\mu{\mathrm c}}}([a_i])=[F^M(a_i)], \hspace{8mm} R^{M^{\mu{\mathrm c}}}([a_i])=\int R^M(a_i)d\mu.$$
Then, $M^{\mu{\mathrm c}}$ is an (incomplete) $L$-structure. The following is then proved as before.

\begin{theorem} \label{powermean-continuous} \emph{(Continuous powermean)}
Let $I$ be a paracompact zero-dimensional space.
Let $\mu$ be a charge on the Borel algebra of $I$ and $M$ be an $L$-structure.
Then, for each $L$-formula $\phi(\x)$ and $a^1,...,a^n\in M^{\mu{\mathrm c}}$
$$\phi^{M^{\mu{\mathrm c}}}(a^1,...,a^n)=\int\phi^M(a^1_i,...,a^n_i)d\mu.$$
\end{theorem}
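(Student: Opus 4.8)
The plan is to mirror the proof of Theorem \ref{powermean}, proceeding by induction on the complexity of $\phi$. For atomic formulas the identity holds by the very definitions of $d$, $F^{M^{\mu{\mathrm c}}}$ and $R^{M^{\mu{\mathrm c}}}$ on $M^{\mu{\mathrm c}}$, each of which is given by an integral; and if the claim holds for $\phi$ and $\psi$ then it holds for $r\phi+s\psi$ by linearity of the integral. The only substantive case is the quantifier step, and here a crucial simplification is already in hand: since every continuous $a^1,\dots,a^n:I\rightarrow M$ makes $i\mapsto\phi^M(a^1_i,\dots,a^n_i)$ continuous, each such function is automatically $\mu$-integrable, so (in contrast with the measurable powermean) no meanability hypothesis on $M$ is needed and every integral appearing below makes sense.

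For the quantifier case, I would assume the identity established for $\phi(\x,y)$ and, for notational simplicity, take $|\x|=1$. Fix $[a_i]\in M^{\mu{\mathrm c}}$ and $\epsilon>0$. The discussion preceding the theorem, built on the continuous selection theorem (Theorem \ref{parthasarathy-selection}) applied to the lower semi-continuous closed-valued multifunction $i\mapsto\overline{\{t:[\sup_y\phi^M(a_i,y)]-\phi^M(a_i,t)<\epsilon\}}$, furnishes a continuous $b:I\rightarrow M$ with
$$\sup_y\phi^M(a_i,y)-\epsilon\leqslant\phi^M(a_i,b_i)\qquad\forall i\in I.$$
Integrating and invoking the induction hypothesis for $\phi(x,y)$ at the pair $([a_i],[b_i])$ gives
$$\int\sup_y\phi^M(a_i,y)\,d\mu-\epsilon\leqslant\int\phi^M(a_i,b_i)\,d\mu=\phi^{M^{\mu{\mathrm c}}}([a_i],[b_i])\leqslant\sup_y\phi^{M^{\mu{\mathrm c}}}([a_i],y).$$
As $\epsilon>0$ was arbitrary, $\int\sup_y\phi^M(a_i,y)\,d\mu\leqslant\sup_y\phi^{M^{\mu{\mathrm c}}}([a_i],y)$. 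The reverse inequality is immediate, since for any $[c_i]\in M^{\mu{\mathrm c}}$ the induction hypothesis yields $\phi^{M^{\mu{\mathrm c}}}([a_i],[c_i])=\int\phi^M(a_i,c_i)\,d\mu\leqslant\int\sup_y\phi^M(a_i,y)\,d\mu$. This settles the $\sup_y$ case, and the $\inf_y$ case is symmetric.

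I expect the genuinely delicate point to be the production of the continuous witness $b$, i.e. the verification that the multifunction $i\mapsto\overline{G(i)}$ is lower semi-continuous and closed-valued so that Theorem \ref{parthasarathy-selection} applies. In the present write-up this is exactly the content of the paragraph immediately before the statement, so within the proof itself it can simply be quoted; the zero-dimensionality and paracompactness of $I$ enter only through their role there, in guaranteeing the existence of the continuous selection. Everything else — the base cases and the passage to the limit in $\epsilon$ — is routine, just as in the measurable powermean theorem.
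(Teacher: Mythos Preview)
Your proposal is correct and follows essentially the same approach as the paper, which simply states that the theorem ``is then proved as before,'' referring to the proof of Theorem~\ref{powermean}. You have accurately spelled out those details, replacing the measurable selection by the continuous selection furnished in the paragraph preceding the statement.
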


\subsection{Operations on ultracharges}\label{Operations on ultracharges}
{\bf Product and inverse limit}
\vspace{2mm}

Let $(I,\mathcal A ,\mu)$ be a charge space and $f:I\rightarrow J$ a map.
Define a charge $\nu$ on $J$ by setting
$$\mathcal B=\{X\subseteq J:\ f^{-1}(X)\in\mathcal A\}$$
$$\hspace{10mm} \nu(X)=\mu(f^{-1}(X))\hspace{12mm}\mbox{for}\ \ X\in\mathcal B.$$
In this case, one writes $\nu=f(\mu)$.
If $\nu=f(\mu)$ for some $f$, one writes $\nu\leqslant\mu$.

Let $J$ be an infinite index set and for each $r\in J$, $(I_r,\mu_r)$ be an ultracharge space.
Let $\mathcal{I}=\prod_{r\in J}I_r$. A subset $\prod_{r\in J}X_r\subseteq \mathcal{I}$
is called a \emph{cylinder} if $X_r=I_r$ for all except finitely many $r$.
Let $\mathcal C$ be the Boolean algebra generated by cylinders.
Equivalently, $\mathcal C$ is generated by the sets of the form $\pi^{-1}_r(X)$
where $\pi_r:\mathcal I\rightarrow I_r$ is the projection map and $X\subseteq I_r$.
Define a charge $\mu$ by first setting
$$\mu(\prod_{r\in J}X_r)=\prod_{r\in J}\mu_r(X_r)$$
and then extending it to $\mathcal C$ in the natural way.
We call $\mu$ the \emph{cylinder charge}.
It is clear that $\mu_r\leqslant\mu$ for all $r$.
Extending $\mu$ to an ultracharge (denoted again by $\mu$), we obtain the following.

\begin{lemma} \label{charge product}
For each $r\in J$, let $\mu_r$ be an ultracharge on a set $I_r$.
Then there is an ultracharge $\mu$ on $\mathcal I=\prod_{r\in J}I_r$ such that
$\mu_r\leqslant\mu$ for every $r\in J$.
\end{lemma}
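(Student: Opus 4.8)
The plan is to verify that the cylinder set function $\mu$ defined just before the statement is a genuine finitely additive probability charge on the algebra $\mathcal{C}$, and then to extend it to the whole power set of $\mathcal{I}$ by the charge extension theorem. First I would record the semialgebra structure of the cylinders: they are closed under finite intersection (the intersection of $\prod_r X_r$ and $\prod_r Y_r$ is $\prod_r(X_r\cap Y_r)$, again a cylinder), the complement of a cylinder is a finite disjoint union of cylinders, and $\emptyset,\mathcal{I}$ are cylinders. Hence every member of $\mathcal{C}$ is a finite disjoint union of cylinders, and the natural definition of $\mu$ on $\mathcal{C}$ is the sum of the values $\prod_r\mu_r(X_r)$ over the pieces. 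The substance of the lemma is that this is well defined and finitely additive; once this is known, $\mu$ is a probability charge on $\mathcal{C}$ (since $\mu(\mathcal{I})=\prod_r\mu_r(I_r)=1$), and extending it to an ultracharge on $P(\mathcal{I})$ is immediate from Theorem \ref{ultracharge-extension}.

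The key reduction is that every finite family of cylinders involves only finitely many coordinates. Given such a family, let $F\subseteq J$ be the (finite) union of the coordinate sets on which these cylinders are nontrivial. Each cylinder is then the preimage under the projection $\pi_F:\mathcal{I}\rightarrow\prod_{r\in F}I_r$ of a rectangle, and its $\mu$-value equals the value of the finite product charge $\mu_F=\prod_{r\in F}\mu_r$ on that rectangle, the remaining coordinates each contributing the factor $\mu_s(I_s)=1$. Thus well-definedness and finite additivity of $\mu$ on $\mathcal{C}$ follow once I know that $\mu_F$ is a well-defined finitely additive charge on the algebra generated by rectangles in $\prod_{r\in F}I_r$.

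The finite product case is the only real work, and by induction it reduces to the product of two charges $\mu_1$ on $P(I_1)$ and $\mu_2$ on $P(I_2)$. Here the point is that if a rectangle $A\times B$ is a finite disjoint union $\bigsqcup_i A_i\times B_i$ of rectangles, then $\mu_1(A)\mu_2(B)=\sum_i\mu_1(A_i)\mu_2(B_i)$. I would prove this by the Fubini argument for simple functions: the indicator identity $\chi_{A\times B}=\sum_i\chi_{A_i\times B_i}$ gives, for each fixed $x\in I_1$, the identity $\chi_A(x)\,\mu_2(B)=\sum_i\chi_{A_i}(x)\,\mu_2(B_i)$ by finite additivity of $\mu_2$ on the sections; both sides are simple functions of $x$, and integrating against $\mu_1$ (the integral of a simple function against a charge being well defined and linear) yields the claim. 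This is the main obstacle, in that one must keep everything in sight a simple function, so that only finite additivity of each $\mu_r$, and never any countable additivity, is invoked.

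Finally, with $\mu$ an ultracharge on $\mathcal{I}$ extending the cylinder charge, I would verify $\mu_r\leqslant\mu$ for each $r\in J$. For every $X\subseteq I_r$ the set $\pi_r^{-1}(X)$ is the cylinder whose $r$-coordinate is $X$ and all of whose other coordinates are full, so $\pi_r^{-1}(X)\in\mathcal{C}$ and $\mu(\pi_r^{-1}(X))=\mu_r(X)\prod_{s\neq r}\mu_s(I_s)=\mu_r(X)$. Hence $\mu_r=\pi_r(\mu)$ in the notation of the preceding paragraph, which by definition means $\mu_r\leqslant\mu$, completing the argument.
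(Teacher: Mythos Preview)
Your proposal is correct and follows essentially the same approach as the paper: define the cylinder charge on the algebra $\mathcal{C}$, extend it to an ultracharge via Theorem~\ref{ultracharge-extension}, and observe that $\mu_r=\pi_r(\mu)$. The paper simply asserts that the extension to $\mathcal{C}$ works ``in the natural way'' and that $\mu_r\leqslant\mu$ is ``clear,'' whereas you spell out the semialgebra structure, the reduction to finite products, and the Fubini-for-simple-functions verification of well-definedness and additivity; this is a careful elaboration rather than a different argument.
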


Inverse limit of measures is a well-studied notion in the literature.
Here, we deal with a similar case, the inverse limit of ultracharges.
We consider a special case. Let $(J,<)$ be a affinely ordered
set and $\mu_r$ be an ultracharge on $I_r$ for each $r\in J$.
Assume for each $r\leqslant s$ there is a surjective map $f_{rs}:I_s\rightarrow I_r$
such that $\mu_r=f(\mu_s)$. Also assume that $f_{rr}=id$ and
$f_{rt}=f_{rs}\circ f_{st}$ whenever $r\leqslant s\leqslant t$.
Let $$\mathbf{I}=\{(i_r)_{r\in J}\in\mathcal I\ :\ \ \ f_{rs}(i_s)=i_r\ \ \ \ \forall\ r\leqslant s\}.$$

\begin{lemma} \label{inverse limit}
There exists a charge $(\mathcal A ,\mu)$ on $\mathbf I$
such that $\mu_r\leqslant\mu$ for all $r\in J$.
\end{lemma}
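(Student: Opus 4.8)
The plan is to build $\mu$ on the algebra of cylinder sets of $\mathbf I$ and to obtain $\mu_r\le\mu$ by realizing each $\mu_r$ as the pushforward $\pi_r(\mu)$, where $\pi_r:\mathbf I\to I_r$ is the restriction to $\mathbf I$ of the coordinate projection. First I would record the basic compatibility: for $r\le s$ every point of $\mathbf I$ satisfies $\pi_r=f_{rs}\circ\pi_s$, hence $\pi_r^{-1}(X)=\pi_s^{-1}(f_{rs}^{-1}(X))$ for all $X\subseteq I_r$. Writing $\mathcal A_s=\{\pi_s^{-1}(Y):Y\subseteq I_s\}$, this identity shows $\mathcal A_r\subseteq\mathcal A_s$ whenever $r\le s$. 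Because $J$ is linearly ordered, any finite family of cylinders sits at a common level (take the maximum of the finitely many indices involved), so $\mathcal A:=\bigcup_{s\in J}\mathcal A_s$ is already closed under complements and finite unions; it is exactly the Boolean algebra generated by all cylinders $\pi_s^{-1}(Y)$.

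Next I would define $\mu$ on $\mathcal A$ by declaring $\mu(\pi_s^{-1}(Y))=\mu_s(Y)$, so that $\mu|_{\mathcal A_s}$ transports $\mu_s$ along $\pi_s$. The prescriptions at different levels are consistent: for $r\le s$ and $X\subseteq I_r$ the set $\pi_r^{-1}(X)=\pi_s^{-1}(f_{rs}^{-1}(X))$ receives the value $\mu_r(X)$ from $\mathcal A_r$ and the value $\mu_s(f_{rs}^{-1}(X))$ from $\mathcal A_s$, and these agree precisely because $\mu_r=f_{rs}(\mu_s)$ says $\mu_r(X)=\mu_s(f_{rs}^{-1}(X))$. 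Since $\mathbf I=\pi_s^{-1}(I_s)$ and every finite collection of elements of $\mathcal A$ can be displayed inside a single $\mathcal A_s$, both $\mu(\mathbf I)=1$ and finite additivity reduce to the corresponding facts at the single level $s$, i.e. to well-definedness of $\mu$ on $\mathcal A_s$. Granting that $\mu$ is a charge, the pair $(\mathcal A,\mu)$ does the job: for each $r$ we have $\pi_r^{-1}(X)\in\mathcal A$ for every $X\subseteq I_r$, and $\pi_r(\mu)(X)=\mu(\pi_r^{-1}(X))=\mu_r(X)$, so $\mu_r=\pi_r(\mu)$ and therefore $\mu_r\le\mu$ for all $r\in J$.

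The one point that is not formal, and which I expect to be the main obstacle, is the well-definedness of $\mu$ within a fixed level $\mathcal A_s$. The map $Y\mapsto\pi_s^{-1}(Y)$ from $P(I_s)$ onto $\mathcal A_s$ has kernel $\{Y:Y\subseteq I_s\setminus\pi_s(\mathbf I)\}$, so $\mu_s$ descends to $\mathcal A_s$ (and the finite additivity computed above goes through, the inclusion--exclusion correction $\mu_s(Y_A\cap Y_B)$ then vanishing because $Y_A\cap Y_B\subseteq I_s\setminus\pi_s(\mathbf I)$ for disjoint cylinders) exactly when $\mu_s(I_s\setminus\pi_s(\mathbf I))=0$; by monotonicity of $\mu_s$ this is the statement that the projections of the inverse limit are surjective up to a $\mu_s$-null set. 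I would deduce it from surjectivity of the bonding maps: given $i_s\in I_s$ one extends it downward canonically by $i_r=f_{rs}(i_s)$ for $r\le s$ and must extend it upward to a coherent family $(i_t)_{t\ge s}$ with $f_{st}(i_t)=i_s$. Each single step is possible since the $f_{st}$ are onto, and when $\{t:t\ge s\}$ has countable cofinality a sequential (diagonal) choice yields a full coherent extension, giving genuine surjectivity of $\pi_s$ and hence $I_s\setminus\pi_s(\mathbf I)=\emptyset$. The delicate case is coherence at limit levels, where a decreasing chain of nonempty fibres $f_{t\lambda}^{-1}(i_t)$, $t<\lambda$, may have empty intersection; here I would argue that the non-extendable set is $\mu_s$-negligible, using the relations $\mu_s=f_{st}(\mu_t)$ to bound its measure through its images at the intermediate levels. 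This last nullity is the heart of the matter and the step demanding the most care.
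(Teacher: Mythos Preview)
Your approach is exactly the paper's: define $\mathcal A_r=\{\pi_r^{-1}(X):X\subseteq I_r\}$, set $\nu_r(\pi_r^{-1}(X))=\mu_r(X)$, observe $\mathcal A_r\subseteq\mathcal A_s$ and $\nu_r=\nu_s|_{\mathcal A_r}$ for $r\le s$ via the identity $\pi_r^{-1}(X)=\pi_s^{-1}(f_{rs}^{-1}(X))$, and take $(\mathcal A,\mu)=(\bigcup_r\mathcal A_r,\bigcup_r\nu_r)$.

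You have in fact been more careful than the paper. The well-definedness issue you isolate---that $Y\mapsto\pi_s^{-1}(Y)$ must kill only $\mu_s$-null sets, equivalently that $\pi_s(\mathbf I)$ has full outer measure---is simply not addressed in the paper's proof; it just asserts ``$\nu_r$ is a charge on $\mathcal A_r$.'' Your extension argument in the countable-cofinality case is correct and gives genuine surjectivity of $\pi_s$. The uncountable-cofinality case is indeed delicate in full generality, but note that in the paper's single use of this lemma (Theorem~\ref{saturated}) the system is built so that each $J_\alpha$ is, up to a bijection, a product $\prod_{\gamma\le\alpha}I_\gamma$ with projection bonding maps; there $\mathbf I=\prod_\gamma I_\gamma$ and every $\pi_\alpha$ is visibly onto. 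So your concern is real at the level of generality stated, but absent in the application, and the paper does not pause over it.
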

\begin{proof}
Let $\pi_r: \mathbf{I}\rightarrow I_r$ be the projection map.
Let $\mathcal A_r$ be the subalgebra of $P(\mathbf I)$ consisting of sets of the form
$\pi_r^{-1}(X)$ where $X\subseteq I_r$ and define $$\nu_r(\pi_r^{-1}(X))=\mu_r(X).$$
Then, $\nu_r$ is a charge on $\mathcal A_r$.
By the assumptions, for $r\leqslant s$ and $X\subseteq I_s$
one has that $$\pi_s^{-1}(X)=\pi_r^{-1}(f_{rs}^{-1}(X)).$$
Hence, $\mathcal A_r\subseteq\mathcal A_s$ and $\nu_r=\nu_s|_{\mathcal A_r}$.
Let $\mathcal A=\cup_{r\in J}\mathcal A_r$ and $\mu=\cup_{r\in J}\nu_r$.
Then, $\mu_r=\pi_r(\mu)$ and hence $\mu_r\leqslant\mu$ for each $r$.
\end{proof}
\bigskip

\noindent{\bf Fubini's product}
\vspace{2mm}

Let $(I,\mathcal{A},\mu)$ be a charge space and $(J,\mathcal{B},\nu)$ be an ultracharge space.
For $A\subseteq I\times J$ and $j\in J$ let $A_j=\{i:\ (i,j)\in A\}$ and define
$$\mathcal C=\{A\subseteq I\times J:\ \ \ \forall j\  A_j\in\mathcal A \}.$$
Then, $\mathcal C$ is a Boolean algebra of subsets of $I\times J$ and it is $\kappa$-complete
(resp. the whole power set) if $\mathcal A $ is so.
Since $\nu$ is an ultracharge, we may define a probability charge
on $(I\times J,\ \mathcal C)$ by setting
$$\wp(A)=\int\mu(A_j)d\nu\ \hspace{10mm} \forall A\in\mathcal C.$$
We denote $\wp$ by $\mu\otimes\nu$. For ultrafilters $\mathcal D$ on $I$ and $\mathcal F$ on $J$,
$\mu_{\mathcal D}\otimes\mu_{\mathcal F}$ corresponds to their Fubini product defined by
$$\mathcal D\times\mathcal F=\{A\subseteq I\times J:\ \{i:\ \{j: (i,j)\in A\}\in\mathcal F\}\in\mathcal D\}.$$
Note that $\mu,\nu\leqslant\mu\otimes\nu$ via the projection maps.
Then, a one sided Fubini theorem holds.
\bigskip

\begin{lemma}\label{Fubini}
For every bounded $\mathcal{C}$-measurable $f:I\times J\rightarrow\Rn$,
$$\int f(i,j)\ d(\mu\otimes\nu)=\iint f(i,j)\ d\mu d\nu.$$
\end{lemma}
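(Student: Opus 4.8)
The plan is to prove the identity first for indicator functions of members of $\mathcal{C}$, then extend it to simple functions by linearity, and finally reach an arbitrary bounded $\mathcal{C}$-measurable $f$ by uniform approximation. The feature that makes the statement work, and simultaneously forces it to be one-sided, is that $\nu$ is an ultracharge, i.e. defined on all of $P(J)$: consequently every bounded function on $J$ is automatically $\nu$-integrable, so the only integrability that requires checking is that of the sections in the $I$-variable.

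First I would clear the measurability bookkeeping. For $X\in\mathcal{H}(\Rn)$ and fixed $j$ one has $(f_j)^{-1}(X)=\big(f^{-1}(X)\big)_j$, and $f^{-1}(X)\in\mathcal{C}$ means exactly that all its sections lie in $\mathcal{A}$; hence each section $f_j=f(\cdot,j):I\rightarrow\Rn$ is $\mathcal{A}$-measurable, and being bounded it is $\mu$-integrable (\S \ref{Appendix}). Thus $g(j)=\int f(i,j)\,d\mu(i)$ is a well-defined bounded function of $j$, which is $\nu$-integrable because $\nu$ is defined on all of $P(J)$. So the right-hand side $\iint f\,d\mu\,d\nu=\int g\,d\nu$ is meaningful.

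For the indicator case, let $A\in\mathcal{C}$. Since $A_j\in\mathcal{A}$ we have $\int\chi_A(i,j)\,d\mu(i)=\mu(A_j)$, and therefore
$$\int\chi_A\,d(\mu\otimes\nu)=\wp(A)=\int\mu(A_j)\,d\nu=\int\Big(\int\chi_A(i,j)\,d\mu(i)\Big)\,d\nu=\iint\chi_A\,d\mu\,d\nu,$$
the second equality being the definition of $\wp=\mu\otimes\nu$. By linearity of both the $(\mu\otimes\nu)$-integral and the iterated integral, the identity holds for every simple $\mathcal{C}$-measurable function $s=\sum_k r_k\chi_{A_k}$ with $A_k\in\mathcal{C}$.

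Finally I would pass to the limit. Given bounded $\mathcal{C}$-measurable $f$ with $|f|\leqslant M$ and $\epsilon>0$, partition $[-M,M]$ into finitely many half-intervals $[r_k,r_{k+1})$ of length $<\epsilon$; then $s=\sum_k r_k\,\chi_{f^{-1}([r_k,r_{k+1}))}$ is a simple $\mathcal{C}$-measurable function with $\|f-s\|_\infty\leqslant\epsilon$, since each $f^{-1}([r_k,r_{k+1}))\in\mathcal{C}$. As integration against a probability charge is $\|\cdot\|_\infty$-contractive, $\big|\int f\,d(\mu\otimes\nu)-\int s\,d(\mu\otimes\nu)\big|\leqslant\epsilon$; and since $\big|\int f(i,j)\,d\mu-\int s(i,j)\,d\mu\big|\leqslant\epsilon$ for every $j$, integrating in $j$ gives $\big|\iint f\,d\mu\,d\nu-\iint s\,d\mu\,d\nu\big|\leqslant\epsilon$. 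Combining these with the identity already proved for $s$ yields $\big|\int f\,d(\mu\otimes\nu)-\iint f\,d\mu\,d\nu\big|\leqslant2\epsilon$, and letting $\epsilon\to0$ completes the argument. I do not expect a genuine obstacle: this is the standard charge-theoretic Fubini computation, and the only point to watch is the asymmetry — integrating in the opposite order would require $\mu$ to be a full ultracharge as well, which is precisely why the lemma is stated one-sided, and I would flag this rather than try to symmetrize it.
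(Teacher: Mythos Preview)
Your proof is correct and follows essentially the same three-step route as the paper: indicators by definition of $\wp$, simple functions by linearity, and general bounded $\mathcal{C}$-measurable $f$ by uniform approximation with simple functions. The only cosmetic difference is that the paper invokes Proposition~\ref{convergence} for the limit step, whereas your direct $\|\cdot\|_\infty$-contractivity estimate is a bit more self-contained; your explicit measurability bookkeeping for the sections and your remark on the one-sidedness are helpful additions the paper leaves implicit.
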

\begin{proof} By definition, the claim holds for every $\chi_A$ where $A\in\mathcal C$.
So, it holds for simple functions too.
Let $f$ be as above with range contained in the interval $(-u,u)$.
Let $$\hspace{10mm} f_n(i,j)=\sum_{k=-n}^{n}\frac{k}{n}u\cdot\chi_{A_{nk}}(i,j) \hspace{14mm}
\mbox{where}\ \ A_{nk}=f^{-1}[\frac{k}{n}u, \frac{k+1}{n}u)\in\mathcal C.$$
Then, $f_n$ tends to $f$ uniformly.
Also, for each fixed $j$, $f_n(i,j)$ tends to $f(i,j)$ uniformly and
$$\Big|\int f_n(i,j)d\mu-\int f(i,j)d\mu\Big|\leqslant
\int\big|f_n(i,j)-f(i,j)\big|d\mu\leqslant\frac{u}{n}$$
which shows that $\int f_n(i,j)d\mu$ tends to $\int f(i,j)d\mu$ uniformly on $J$.
So, by Proposition \ref{convergence}
$$\int f(i,j)d\wp=\lim_n\int f_nd\wp=\lim_n\iint f_n(i,j)d\mu d\nu
=\iint\lim_n f_n(i,j)d\mu d\nu=\iint fd\mu d\nu.$$
\end{proof}

A consequence of the lemma is that for any ultracharges $\mu,\nu,\wp$
$$\mu\otimes(\nu\otimes\wp)=(\mu\otimes\nu)\otimes\wp.$$

\subsection{The isomorphism theorem}\label{The isomorphism theorem}
The existence of $\kappa$-saturated models is easily proved by iterated realizations of types.
It is however important to know whether there exists an ultracharge $\mu$ for which $\prod_{\mu}M_i$ is $\kappa$-saturated.
We answer to this question in the powermean case.

\begin{proposition}\label{compose}
Let $(I,\mathcal A,\mu)$ be a charge and $(J,\mathcal B,\nu)$ be an ultracharge on $J$.
Let $(I\times J,\mathcal C,\mu\otimes\nu)$ be their Fubini product. Assume $M$ is $\mathcal A$-meanable.
Then, $M$ is $\mathcal C$-meanable and $M^{\mu\otimes\nu}\simeq(M^{\mu})^{\nu}$.
\end{proposition}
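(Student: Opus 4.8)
The plan is to prove the two assertions separately: $\mathcal{C}$-meanability by transferring completeness from $\mathcal{A}$ to $\mathcal{C}$, and the isomorphism by exhibiting an explicit slicing/gluing bijection and checking it preserves the metric and the $L$-structure.

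For $\mathcal{C}$-meanability, if $M$ is finite there is nothing to prove, so assume $M$ is infinite; then $\mathcal{A}$ is $w(M)^+$-complete, i.e. closed under intersections of at most $w(M)$ many members. The algebra $\mathcal{C}=\{A\subseteq I\times J:\ A_j\in\mathcal{A}\ \forall j\}$ inherits this: given $\{A^\alpha\}_{\alpha<\lambda}\subseteq\mathcal{C}$ with $\lambda\leqslant w(M)$, for each $j$ one has $\big(\bigcap_\alpha A^\alpha\big)_j=\bigcap_\alpha (A^\alpha)_j\in\mathcal{A}$, whence $\bigcap_\alpha A^\alpha\in\mathcal{C}$. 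So $\mathcal{C}$ is $w(M)^+$-complete and $M$ is $\mathcal{C}$-meanable (this is the completeness transfer already noted when $\mu\otimes\nu$ was introduced). Note also that since $\nu$ is an ultracharge, $\mathcal{B}=P(J)$, so $M^\mu$ is automatically $\mathcal{B}$-meanable and $(M^\mu)^\nu$ is a legitimate powermean over the full power set.

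The crux is that $\mathcal{C}$-measurability is, by definition, just slicewise $\mathcal{A}$-measurability in the $J$-direction. For $\mathcal{C}$-measurable $a:I\times J\to M$ and fixed $j$, the slice $a_j:=a(\cdot,j):I\to M$ satisfies $a_j^{-1}(B)=(a^{-1}(B))_j\in\mathcal{A}$ for every Borel $B$, so $[a_j]\in M^\mu$. I would therefore set
$$\Psi:M^{\mu\otimes\nu}\longrightarrow (M^\mu)^\nu,\qquad \Psi[a]=\big[\,j\mapsto[a_j]\,\big],$$
the inner map being trivially $\mathcal{B}$-measurable as $\mathcal{B}=P(J)$. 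Surjectivity is the converse remark: an element of $(M^\mu)^\nu$ is represented by an arbitrary $c:J\to M^\mu$; choosing a representative $\gamma_j:I\to M$ of each $c(j)$, the glued map $a(i,j):=\gamma_j(i)$ satisfies $(a^{-1}(B))_j=\gamma_j^{-1}(B)\in\mathcal{A}$ for all Borel $B$, hence $a^{-1}(B)\in\mathcal{C}$; so $a$ is $\mathcal{C}$-measurable and $\Psi[a]=[c]$. That $\Psi$ preserves the metric follows from Lemma \ref{Fubini}: the integrand $(i,j)\mapsto d(a(i,j),b(i,j))$ is bounded by $1$ and $\mathcal{C}$-measurable (a continuous function of measurable maps, using part (a)), so
$$d(\Psi[a],\Psi[b])=\int_J\Big(\int_I d(a_j,b_j)\,d\mu\Big)d\nu=\int d(a,b)\,d(\mu\otimes\nu)=d([a],[b]),$$
giving well-definedness and injectivity at once. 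Preservation of constants and function symbols is immediate from $c^{M^{\mu\otimes\nu}}=[(i,j)\mapsto c^M]$ and $F^{M^{\mu\otimes\nu}}([a])=[(i,j)\mapsto F^M(a(i,j))]$ together with the slicewise form of $\Psi$, while for a relation symbol $R$ a further application of Lemma \ref{Fubini} gives $R^{M^{\mu\otimes\nu}}([a])=\iint R^M(a(i,j))\,d\mu d\nu=\int_J R^{M^\mu}([a_j])\,d\nu=R^{(M^\mu)^\nu}(\Psi[a])$. Thus $\Psi$ is a surjective $L$-embedding preserving all relations including $d$, i.e. an isomorphism (extending uniquely to completions by Proposition \ref{exist} if complete structures are wanted).

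I do not expect a genuine obstacle: the whole construction is forced once one observes that $\mathcal{C}$ was designed precisely so that its members are the sets with $\mathcal{A}$-measurable $J$-slices, which trivializes both slicing and gluing. The only points requiring care are the $\mathcal{C}$-measurability of the real integrands $d(a_j,b_j)$ and $R^M(a(\cdot,\cdot))$ — supplied by part (a) via the measurability lemma for continuous functions of measurable maps — and the correct, \emph{asymmetric} use of the one-sided Fubini theorem \ref{Fubini} (integrate over $\mu$ first, then $\nu$), which matches the asymmetry between the charge $\mu$ and the ultracharge $\nu$ built into $\mu\otimes\nu$.
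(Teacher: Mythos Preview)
Your proof is correct and follows essentially the same route as the paper: define the slicing map $[a]\mapsto\big[j\mapsto[a_j]\big]$, observe that $\mathcal{C}$-measurability is exactly slicewise $\mathcal{A}$-measurability (making the map well-defined and surjective), and invoke the one-sided Fubini lemma for the key computation. The only minor difference is that the paper, rather than checking preservation of $d$, constants, function symbols, and relation symbols separately, verifies in one stroke that $\Psi$ preserves \emph{all} formulas by chaining the powermean theorem for $M^{\mu\otimes\nu}$, Lemma~\ref{Fubini}, and the powermean theorems for $M^\mu$ and $(M^\mu)^\nu$; this yields an elementary bijection, hence an isomorphism, with slightly less bookkeeping.
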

\begin{proof}
Let $\wp=\mu\otimes\nu$ and $[a_{ij}]\in M^{\wp}$. By definition, for each fixed $j$,\ \
$a^j=(a_{ij})_{i\in I}$ is a measurable tuple and its class, which we denote by
$[a^j]_{\mu}$, belongs to $M^{\mu}$.
It is not hard to see that the map $a_\mu\mapsto[[a^j]_{\mu}]_{\nu}$ is
a well-defined bijection. We check that it preserves all formulas.
Let $\phi(\x)$ be a formula (assume $|x|=1$). Then by Lemma \ref{Fubini}
$$\phi^{M^{\wp}}(a_\wp)=\int\phi^{M}(a_{ij})d\wp=\int\int\phi^M(a_{ij})d\mu d\nu$$
$$=\int\phi^{M^{\mu}}([a^j]_{\mu})d\nu=\phi^{(M^\mu)^\nu}([[a^j]_{\mu}]_{\nu}).$$
\end{proof}

If $M$ is separable and $\mu,\nu$ are measures, one may use the classical
Tonelli theorem for the product measure $\mu\times\nu$ to show that
$(M^\mu)^\nu\simeq M^{\mu\times\nu}\simeq (M^\nu)^\mu$.

Recall that for every positive linear function
$\Lambda:\ell^\infty(X)\rightarrow\Rn$ with $f(1)=1$ there is a unique
ultracharge $\mu$ such that $\Lambda(f)=\int fd\mu$ for every $f$
(see \cite{Rao}, Th. 4.7.4).

\begin{proposition}\label{sat3}
For each $M$, there is an ultracharge $\mu$ such that $M^\mu$
realizes all types in every $K_n(M)$. 
\end{proposition}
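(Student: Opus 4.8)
The plan is to first realize each individual type over $M$ in a suitable powermean of $M$, and then to amalgamate all of these powermeans into a single one by means of the product charge of Lemma \ref{charge product}. The realization of a single type rests on representing it as integration against an ultracharge, using the Kantorovich extension theorem together with the representation fact recalled just before the statement.

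Fix $n$ and a type $p\in K_n(M)$; I want an ultracharge $\mu_p$ on an index set $I_p$ and a tuple in $M^{\mu_p}$ realizing $p$. Take $I_p=M^n$. Each formula $\phi(x_1,\dots,x_n)$ with parameters in $M$ yields a bounded function $\hat\phi\in\ell^\infty(M^n)$, $\hat\phi(b_1,\dots,b_n)=\phi^M(b_1,\dots,b_n)$, and $\phi\mapsto\hat\phi$ is a well-defined linear injection of $\mathbb D_n(M)$ onto a subspace $V\subseteq\ell^\infty(M^n)$ (well-defined since $\phi,\psi$ are identified in $\mathbb D_n(M)$ exactly when $\hat\phi=\hat\psi$). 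As $p$ is positive and $p(1)=1$, the functional $\Lambda_0(\hat\phi)=p(\phi)$ is positive on $V$ with $\Lambda_0(1)=1$; since $V$ contains all constant functions it majorizes $\ell^\infty(M^n)$, so by the Kantorovich extension theorem $\Lambda_0$ extends to a positive linear $\Lambda$ on $\ell^\infty(M^n)$ with $\Lambda(1)=1$. By the recalled representation fact there is an ultracharge $\mu_p$ on $M^n$ with $\Lambda(f)=\int f\,d\mu_p$. Letting $a^k:M^n\to M$ be the $k$-th coordinate projection (measurable, as $\mu_p$ lives on the full power set, so $M$ is meanable), Theorem \ref{th1} gives
$$\phi^{M^{\mu_p}}([a^1_i],\dots,[a^n_i])=\int\phi^M(b_1,\dots,b_n)\,d\mu_p=\Lambda(\hat\phi)=p(\phi),$$
so $([a^1_i],\dots,[a^n_i])$ realizes $p$, and it does so over $M$ because the diagonal embedding $M\preccurlyeq M^{\mu_p}$ is elementary (Corollary \ref{diagonal}).

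Next I would amalgamate. Let $J=\bigcup_n K_n(M)$ be the set of all types and choose $\mu_p$ on $I_p=M^{n(p)}$ as above for each $p\in J$. By Lemma \ref{charge product} there is an ultracharge $\mu$ on $\mathcal I=\prod_{p\in J}I_p$ with $\mu_p\leqslant\mu$, i.e. $\mu_p=\pi_p(\mu)$ for the projection $\pi_p:\mathcal I\to I_p$. For a tuple $(a^1,\dots,a^n)$ on $I_p$ realizing $p$, the precomposed tuple $(a^1\circ\pi_p,\dots,a^n\circ\pi_p)$ on $\mathcal I$ satisfies, by the change of variables theorem and Theorem \ref{th1},
$$\phi^{M^\mu}([a^1\circ\pi_p],\dots,[a^n\circ\pi_p])=\int\phi^M(a^1(\pi_p\omega),\dots,a^n(\pi_p\omega))\,d\mu=\int\phi^M(a^1,\dots,a^n)\,d\mu_p=p(\phi).$$
Equivalently, $\mu_p=\pi_p(\mu)$ produces an elementary embedding $M^{\mu_p}\preccurlyeq M^\mu$ which is the identity on the diagonal copy of $M$, so any type over $M$ realized in $M^{\mu_p}$ is realized in $M^\mu$. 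Hence $M^\mu$ realizes every type in every $K_n(M)$, and one may pass to the completion of $M^\mu$ if a complete model is desired.

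The routine parts are that $V$ majorizes $\ell^\infty(M^n)$ and that the coordinate maps are measurable; the substantive step is the single-type representation, whose only real content is that an arbitrary positive functional on the formula-functions extends to $\ell^\infty(M^n)$ and is then realized by integration. The main point to keep under control is uniformity of the index structure: using $I_p=M^n$ with coordinate projections is what allows the change-of-variables passage through $\mu_p\leqslant\mu$ to transfer realizations while fixing the parameters from $M$, so that a \emph{single} $\mu$ simultaneously realizes all types.
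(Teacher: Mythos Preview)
Your proof is correct and follows essentially the same route as the paper: represent each type as integration against an ultracharge via Kantorovich extension plus the $\ell^\infty$ representation fact, then amalgamate all of these into a single powermean using Lemma~\ref{charge product} and change of variables. The only difference is organizational: the paper indexes the product over all ultracharges on $M$ (with each $I_\mu=M$), treats $K_1(M)$ explicitly, and asserts that $K_n(M)$ then follows ``automatically''; your choice to index over $\bigcup_n K_n(M)$ with $I_p=M^{n(p)}$ and coordinate projections handles all arities uniformly and avoids having to justify that reduction.
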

\begin{proof}
Let $M$ be a $L$-structure. We only need to realize the types in $K_1(M)$ in some $M^\mu$.
The types in $K_n(M)$ are then automatically realized in it.
By Lemma \ref{charge product}, there is a set $I$ and an ultracharge $\wp$ on $I$
such that for every ultracharge $\mu$ on $M$ one has that $\mu\leqslant\wp$.
Given $p(x)\in K_1(M)$, there is an ultracharge $\mu$
on $M$ such that for every $\phi(x)$ $$p(\phi)=\int\phi^M(x)d\mu.$$
Let $f:I\rightarrow M$ be such that $f(\wp)=\mu$.
Let $a=[a_i]_\wp$ where $a_i=f(i)$.
Then, $$p(\phi)=\int_M\phi^M(x)d\mu=\int_I\phi^M(a_i)d\wp=\phi^{M^\wp}(a).$$
\end{proof}

\begin{theorem} \label{saturated}
Assume $\mathsf{dc}(M)\leqslant\kappa$. Then there is a charge space
$(I,\mathcal A,\wp)$ such that $M$ is $\mathcal A $-meanable and $M^{\wp}$ is $\kappa^+$-saturated.
\end{theorem}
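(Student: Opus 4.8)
By definition $M^\wp$ is $\kappa^+$-saturated iff every type over a parameter set $A\subseteq M^\wp$ with $|A|\leqslant\kappa$ is realized in $M^\wp$. The plan is to exhibit $\wp$ as a transfinite iteration, of length $\kappa^+$, of the one-step realization supplied by Proposition \ref{sat3}, gluing the successive powermeans into a single one by means of the Fubini product (Proposition \ref{compose}) and the inverse limit of charges (Lemma \ref{inverse limit}). Concretely I would build by recursion charge spaces $(\mathbf I_\alpha,\mathcal A_\alpha,\wp_\alpha)$ for $\alpha\leqslant\kappa^+$, with projections $\pi_{\alpha\beta}\colon\mathbf I_\beta\to\mathbf I_\alpha$ satisfying $\wp_\alpha=\pi_{\alpha\beta}(\wp_\beta)$, maintaining throughout the invariant that $M$ is $\mathcal A_\alpha$-meanable (so that Theorem \ref{powermean} applies). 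Writing $M_\alpha:=M^{\wp_\alpha}$, the maps $M_\alpha\to M_\beta$ obtained by precomposition with $\pi_{\alpha\beta}$ are elementary by the change-of-variables computation underlying Corollary \ref{diagonal}. I will then set $\wp:=\wp_{\kappa^+}$ and $M^\wp:=M_{\kappa^+}$, completing by Proposition \ref{exist} if necessary.

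\textbf{Successor step.} Given $(\mathbf I_\alpha,\mathcal A_\alpha,\wp_\alpha)$ with $M$ being $\mathcal A_\alpha$-meanable, I apply Proposition \ref{sat3} to $M_\alpha=M^{\wp_\alpha}$ to obtain an ultracharge $\nu_\alpha$ on a set $J_\alpha$ for which $(M_\alpha)^{\nu_\alpha}$ realizes every type in every $K_n(M_\alpha)$. Put $\wp_{\alpha+1}:=\wp_\alpha\otimes\nu_\alpha$ on $\mathbf I_{\alpha+1}:=\mathbf I_\alpha\times J_\alpha$, with $\pi_{\alpha,\alpha+1}$ the first projection. Proposition \ref{compose} then gives both that $M$ is $\mathcal A_{\alpha+1}$-meanable and that
$$M_{\alpha+1}=M^{\wp_{\alpha+1}}\simeq (M^{\wp_\alpha})^{\nu_\alpha}=(M_\alpha)^{\nu_\alpha},$$
so $M_{\alpha+1}$ realizes every type over the whole of $M_\alpha$; in particular every type over any $A\subseteq M_\alpha$ is realized in $M_{\alpha+1}$.

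\textbf{Limit step.} For a limit $\lambda$ I would take the inverse limit of $\{(\mathbf I_\alpha,\wp_\alpha),\pi_{\alpha\beta}\}_{\alpha<\lambda}$ via Lemma \ref{inverse limit}, obtaining $\wp_\lambda$ on $\mathbf I_\lambda$ with $\wp_\alpha\leqslant\wp_\lambda$ for all $\alpha<\lambda$; the induced maps $M_\alpha\to M^{\wp_\lambda}$ stay elementary by change of variables. When $\mathrm{cf}(\lambda)>\kappa$, the union algebra $\mathcal A_\lambda=\bigcup_{\alpha<\lambda}\pi_\alpha^{-1}(\mathcal A_\alpha)$ is $\kappa^+$-complete, and since any measurable $a\colon\mathbf I_\lambda\to M$ uses only $\leqslant\kappa$ preimages of basic opens of $M$ it factors through some $\mathbf I_\beta$, $\beta<\lambda$; thus the chain is continuous, $M_\lambda=\bigcup_{\alpha<\lambda}M_\alpha$ up to completion. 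The same factorisation at the top stage $\kappa^+$ (which is regular, so of cofinality $\kappa^+>\kappa$) shows that every element of $M^\wp$ already lies in some $M_\beta$ with $\beta<\kappa^+$.

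\textbf{Saturation and the main obstacle.} Granting the above, saturation is immediate: if $A\subseteq M^\wp$ and $|A|\leqslant\kappa$, then by the previous paragraph and regularity of $\kappa^+$ all of $A$ sits inside some $M_\beta$; every type over $A$ extends to a type over $M_\beta$ (a positive-functional extension, e.g. by the Kantorovich extension theorem, \S \ref{Appendix}), and that type is realized in $M_{\beta+1}\preccurlyeq M^\wp$. The hard part will be meanability at limit stages: $\mathcal A_\lambda$ is $\kappa^+$-complete, and hence $M$ is $\mathcal A_\lambda$-meanable, exactly when $\mathrm{cf}(\lambda)>\kappa$, so Theorem \ref{powermean} and the next successor step's use of Proposition \ref{compose} are not directly licensed at limits of cofinality $\leqslant\kappa$. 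I would repair this by extending $\wp_\lambda$ to an ultracharge on the full power set of $\mathbf I_\lambda$ (Theorem \ref{ultracharge-extension}), which restores $\kappa^+$-completeness of $\mathcal A_\lambda$ while keeping the embeddings $M_\alpha\preccurlyeq M^{\wp_\lambda}$ elementary and still attaching every element to a bounded stage. The delicate bookkeeping on which the argument turns is to thread these extensions through the entire recursion so that the meanability invariant is preserved at every $\alpha\leqslant\kappa^+$ and the top charge $\wp$ simultaneously yields meanability and the descent of each element to a bounded stage.
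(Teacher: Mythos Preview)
Your proposal is correct and follows essentially the same approach as the paper's proof: iterate Proposition~\ref{sat3} via Fubini products (Proposition~\ref{compose}), take inverse limits at limit stages (Lemma~\ref{inverse limit}), extend to ultracharges at intermediate limits to preserve meanability, and at the top stage $\kappa^+$ use regularity together with $\mathsf{dc}(M)\leqslant\kappa$ to show every $\mathcal A$-measurable $a:\mathbf I\to M$ descends to a bounded stage, whence $M^{\wp}=\bigcup_{\alpha<\kappa^+}M^{\wp_\alpha}$. The paper spells out the case $\kappa=\aleph_0$ in detail (noting that every intermediate limit below $\omega_1$ has countable cofinality, so the ultracharge extension is needed at each of them) and then remarks that the general case uses a chain of length $\kappa^+$; your outline is exactly this argument.
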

\begin{proof}
First assume $\kappa=\aleph_0$. By a repeated use of Proposition \ref{sat3},
we obtain a countable chain
$$M\preccurlyeq M^{\mu_1}\preccurlyeq (M^{\mu_1})^{\mu_2}\preccurlyeq\cdots$$
where $\mu_n$ is an ultracharge on a set $I_n$.
In the light of Proposition \ref{compose}, we may rewrite it as
$$M\preccurlyeq M^{\wp_1}\preccurlyeq M^{\wp_2}\preccurlyeq\cdots$$
where $\wp_n=\mu_1\otimes\cdots\otimes\mu_n$ is an ultracharge on $J_n=I_1\times\cdots\times I_n$
and $M^{\wp_{n+1}}$ realizes all types in $K_1(M^{\wp_n})$.
It is clear that $\{(J_n,\wp_n), f_{mn}\}$, where $f_{mn}:J_n\rightarrow J_m$ is the
projection map, is an inverse system of ultracharges.
The inverse limit of this system is a charge space which can be completed
to an ultracharge space, say $(J_\omega,\wp_\omega)$. Then, $\wp_n\leqslant\wp_\omega$ and
$$M\preccurlyeq M^{\wp_1}\preccurlyeq M^{\wp_2}\preccurlyeq\cdots\preccurlyeq M^{\wp_\omega}.$$
Iterating the argument, we obtain an inverse system
$\{(J_\alpha, \wp_\alpha), f_{\alpha\beta}\}_{\alpha<\beta<\omega_1}$
of ultracharges and a chain
$$M\preccurlyeq M^{\wp_1}\preccurlyeq\cdots\preccurlyeq
M^{\wp_{\alpha}}\preccurlyeq\cdots\ \hspace{14mm} \alpha\in\omega_1$$
such that every $M^{\wp_{\alpha+1}}$ realizes all types in $K_1(M^{\wp_\alpha})$.

Let $(\mathbf J,\mathcal A,\wp)$ be the inverse limit of
$\{(J_\alpha,\wp_\alpha), f_{\alpha\beta}\}_{\alpha<\beta<\omega_1}$
given by Lemma \ref{inverse limit}.
By regularity of $\aleph_1$, $\mathcal A$ is $\aleph_1$-complete (since every $\wp_\alpha$ is an ultracharge).
It is clear that $N=\bigcup_{\alpha<\omega_1} M^{\wp_\alpha}$ is $\aleph_1$-saturated and that
$N\preccurlyeq M^{\wp}$. We will show that $N=M^\wp$.

Recall that the embedding $M^{\wp_\alpha}\preccurlyeq M^{\wp_\beta}$
takes place via the map $[a_i]\mapsto[b_j]$ where $b_j=a_{f_{\beta\alpha}(j)}$ for every $j\in J_\beta$.
In this way, we identify $[b_j]$ with $[a_i]$.
An element of $M^\wp$ is of the form $[a_{\mathbf r}]$ where ${\mathbf r}=(r_\gamma)\in\mathbf{J}$
and $f_{\beta\gamma}(r_\gamma)=r_{\beta}$.
We show that for every such $[a_{\mathbf r}]$ there exists $\alpha<\omega_1$
such that $a_{\mathbf r}$ does not depend on $r_\gamma$ when $\alpha\leqslant\gamma$.
In other words, for each ${\mathbf r},{\mathbf s}\in\mathbf{J}$, if $r_\gamma=s_\gamma$
for all $\gamma\leqslant\alpha$ (or equivalently if $r_\alpha=s_\alpha$),
then $a_{\mathbf r}=a_{\mathbf s}$. This clearly implies that $[a_{\mathbf r}]\in M^{\wp_\alpha}$.

Fix a countable base $\{U_k\}_{k\in\omega}$ for $M$.
For every $\alpha$, let $\mathcal A_\alpha$ be the $\sigma$-algebra of subsets of $\mathbf{J}$ consisting
of sets of the form $\pi_\alpha^{-1}(X)$ where $X\subseteq J_\alpha$
(recall that $\pi_\alpha:\mathbf{J}\rightarrow J_\alpha$ is the projection map).
For $\alpha\leqslant\beta$ and $X\subseteq J_\beta$,
one has that $$\pi_\beta^{-1}(X)=\pi_\alpha^{-1}(f_{\alpha\beta}^{-1}(X)).$$
So, $\mathcal{C}_\alpha\subseteq\mathcal{C}_\beta$.
By regularity of $\omega_1$, we must have that $\mathcal A=\bigcup_{\alpha<\omega_1}\mathcal A_\alpha$.
In particular, there exists $\alpha<\omega_1$ such that every $a^{-1}(U_k)$
belongs to $\mathcal A_\alpha$.
Suppose that $a_{\mathbf r}\neq a_{\mathbf s}$.
Then, ${\mathbf r}\in a^{-1}(U)$ and ${\bf s}\in a^{-1}(V)$ for some disjoint basic open subsets $U$, $V$ of $M$.
Hence, $$r_\alpha=\pi_\alpha({\mathbf r})\neq \pi_\alpha({\mathbf s})=s_\alpha.$$
We conclude that if $r_\alpha=s_\alpha$, then $a_{\mathbf r}=a_{\mathbf s}$.
For arbitrary $\kappa$, one must use an elementary chain of length $\kappa^+$.
The algebra $\mathcal A$ is then $\kappa^+$-complete and $M$ is $\mathcal A $-meanable.
\end{proof}
\vspace{2mm}

A \emph{partial isomorphism}\index{partial isomorphism} between $M$ and $N$ is a relation
$\sim$ between tuples $\a\in M$ and $\b\in N$ of the same (finite) length such that:

(i) $\emptyset\sim\emptyset$

(ii) if $\a\sim\b$, then $\theta^M(\a)=\theta^N(\b)$ for every atomic formula $\theta$

(iii) if $\a\sim\b$, then for every $c\in M$ there exists $e\in N$ such that $\a c\sim\b e$

(iv) if $\a\sim\b$, then for every $e\in N$ there exists $c\in M$ such that $\a c\sim\b e$.
\vspace{1mm}

\begin{lemma}
If $M$ and $N$ are partially isomorphic then $M\equiv_{\mathrm{CL}} N$.
\end{lemma}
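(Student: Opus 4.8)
The plan is to fix a relation $\sim$ witnessing the partial isomorphism and to prove, by induction on the complexity of CL-formulas, the stronger statement that whenever $\a\sim\b$ (so $|\a|=|\b|$ is finite) and $\phi(\x)$ is a CL-formula with $|\x|=|\a|$, one has $\phi^M(\a)=\phi^N(\b)$. Specialising to $\a=\b=\emptyset$, which is legitimate by clause (i), then gives $\sigma^M=\sigma^N$ for every CL-sentence $\sigma$, i.e. $M\equiv_{\mathrm{CL}}N$.

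For the base case, recall that the atomic CL-formulas are $1$, $d(t_1,t_2)$ and $R(t_1,\ldots,t_n)$. Here $1^M=1=1^N$ is trivial, while for the two remaining forms the equality $\theta^M(\a)=\theta^N(\b)$ is exactly clause (ii) applied to the pair $\a\sim\b$. Note that this already settles all terms occurring as arguments of $d$ and $R$, so no separate induction on terms is needed.

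For the inductive step, the connective cases are immediate, since every CL-connective is evaluated pointwise: if the claim holds for $\phi$ and $\psi$ then $(\phi+\psi)^M(\a)=\phi^M(\a)+\psi^M(\a)=\phi^N(\b)+\psi^N(\b)=(\phi+\psi)^N(\b)$, and likewise $(r\phi)^M(\a)=r\phi^N(\b)$; for the CL-specific connectives, $(\phi\wedge\psi)^M(\a)=\min(\phi^M(\a),\psi^M(\a))=\min(\phi^N(\b),\psi^N(\b))=(\phi\wedge\psi)^N(\b)$, and symmetrically for $\vee$. The only case using the back-and-forth conditions is the quantifier case. For $\sup_y\phi(\x,y)$ with $\a\sim\b$, any $c\in M$ produces, by clause (iii), some $e\in N$ with $\a c\sim\b e$, whence by the induction hypothesis $\phi^M(\a,c)=\phi^N(\b,e)\leqslant\sup_y\phi^N(\b,y)$; taking the supremum over $c$ gives $\sup_y\phi^M(\a,y)\leqslant\sup_y\phi^N(\b,y)$. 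Clause (iv) yields the reverse inequality in the same way, and the case $\inf_y\phi$ is identical with the two inequalities interchanged.

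This is a routine back-and-forth induction, so I do not anticipate a genuine obstacle; the only points requiring care are organisational. First, the inductive statement must range over free-variable tuples of arbitrary finite length, because passing a quantifier appends a witness variable and one must stay within the domain of $\sim$ (equal-length tuples). Second, one should keep in mind that $\equiv_{\mathrm{CL}}$ concerns the full connective set $\{+,r\cdot,\wedge,\vee\}$, so that the $\wedge$ and $\vee$ clauses are genuinely part of the induction rather than being reduced to the affine fragment. With these remarks the induction closes and the lemma follows.
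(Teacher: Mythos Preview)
Your proof is correct and follows essentially the same route as the paper: induction on the complexity of CL-formulas showing $\phi^M(\a)=\phi^N(\b)$ whenever $\a\sim\b$, with the quantifier step handled via clauses (iii) and (iv). Your treatment of the $\sup$ case (bounding each $\phi^M(\a,c)$ directly and then taking the supremum) is in fact slightly cleaner than the paper's $\epsilon$-formulation, but the idea is identical.
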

\begin{proof} We show by induction on the complexity of the CL-formula $\phi(\x)$ in $L$
that whenever $\a\sim\b$, one has that $\phi^M(\a)=\phi^N(\b)$.
The atomic and connective cases $+,r., \wedge,\vee$ are obvious.
Assume the claim is proved for $\phi(\x,y)$. Let $\sup_y\phi^M(\a,y)=r$ and $\a\sim\b$.
Given $\epsilon>0$, take $c\in M$ such that $r-\epsilon<\phi^M(\a,c)$.
Take $e\in N$ such that $\a c\sim\b e$.
By the induction hypothesis, $\phi^M(\a,c)=\phi^N(\b,e)$.
Since $\epsilon$ is arbitrary, one has that $r\leqslant\sup_y\phi^N(\b,y)$.
Similarly, one has that $\sup_y\phi^N(\b,y)\leqslant\sup_y\phi^M(\a,y)$
and hence they are equal. We conclude that $M\equiv_{\mathrm{CL}} N$.
\end{proof}

The isomorphism theorem in first order logic (as well as continuous logic)
states that for $L$-structures $M$ and $N$, if $M\equiv_{\mathrm{CL}} N$, then there is an
ultrafilter $\mathcal F$ such that $M^{\mathcal F}\simeq N^{\mathcal F}$ (see \cite{BBHU} Theorem 5.7).
To prove a similar result in AL we need the following.

\begin{proposition} \label{10}
Assume $M\equiv N$ and they are $\aleph_0$-saturated. Then $M\equiv_{\mathrm{CL}} N$.
\end{proposition}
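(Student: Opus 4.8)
The plan is to produce a partial isomorphism between $M$ and $N$ and then quote the preceding lemma, which converts any partial isomorphism into CL-elementary equivalence. The natural candidate for the back-and-forth relation is equality of AL-type: for tuples $\a\in M$, $\b\in N$ of the same finite length, declare $\a\sim\b$ exactly when $\phi^M(\a)=\phi^N(\b)$ for every AL-formula $\phi$. Clauses (i) and (ii) of the definition of partial isomorphism are then immediate: $\emptyset\sim\emptyset$ is precisely the hypothesis $M\equiv N$, and (ii) holds because atomic formulas are among the AL-formulas. All the content sits in the forth-condition (iii), the back-condition (iv) being symmetric.

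So suppose $\a\sim\b$ and let $c\in M$ be given. Put $T=Th(M)=Th(N)$ (these coincide since $M\equiv N$) and let $p(\x,y)=tp^M(\a,c)\in K_{n+1}(T)$. Because $\a\sim\b$ we have $tp^N(\b)=tp^M(\a)=p|_{\x}$, so $\b$ realizes $p|_{\x}$ in $N$; exactly as in the discussion of facial types, this lets one form the shifted functional $p(\b,y)$, a type over $\b$ in $N$ assigning the value $\phi^M(\a,c)$ to each $\phi(\b,y)$. Since $\b$ is a finite tuple, $\aleph_0$-saturation of $N$ realizes $p(\b,y)$ by some $e\in N$, whence $\phi^N(\b,e)=\phi^M(\a,c)$ for every AL-formula $\phi$, i.e. $\a c\sim\b e$. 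Running the same argument with the roles of $M$ and $N$ exchanged, and using $\aleph_0$-saturation of $M$, gives (iv).

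The delicate point — and the one place where $M\equiv N$ is genuinely used — is the well-definedness (and positivity) of the cross-model shifted type $p(\b,y)$: if $\phi(\b,y)$ and $\psi(\b,y)$ are $Th(N,\b)$-equivalent, I must verify $\phi^M(\a,c)=\psi^M(\a,c)$, even though the data defining $p$ live in $M$ while the equivalence is tested in $N$. Writing $\chi=\phi-\psi$, the equivalence gives $(\sup_y\chi)^N(\b)=(\inf_y\chi)^N(\b)=0$; as $\sup_y\chi$ and $\inf_y\chi$ are AL-formulas in $\x$ and $tp^M(\a)=tp^N(\b)$, the same values hold at $\a$, so $(\inf_y\chi)^M(\a)\le\chi^M(\a,c)\le(\sup_y\chi)^M(\a)$ pinches $\chi^M(\a,c)$ to $0$. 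The identical pinching (applied to $\inf_y\phi$) yields positivity, and $p(1)=1$ is trivial, so $p(\b,y)$ is indeed a type over $\b$ and the saturation step applies. With $\sim$ verified to be a partial isomorphism, the preceding lemma gives $M\equiv_{\mathrm{CL}}N$.
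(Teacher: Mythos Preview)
Your proof is correct and follows essentially the same strategy as the paper's: define $\a\sim\b$ by equality of AL-types, verify the forth condition by shifting $tp^M(\a,c)$ to a type over $\b$ in $N$ and realizing it by $\aleph_0$-saturation, then invoke the preceding lemma. The paper phrases the key step as affine satisfiability in $N$ of the set of conditions $\{\phi(\b,x)=r:\phi^M(\a,c)=r\}$ (each condition being satisfiable because $\inf_x\phi^N(\b,x)\leqslant r\leqslant\sup_x\phi^N(\b,x)$, which is exactly your pinching observation), whereas you phrase it as well-definedness and positivity of the shifted functional; these are two presentations of the same argument.
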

\begin{proof}
For $\a\in M$ and $\b\in N$ set $\a\sim\b$ if $tp^M(\a)=tp^N(\b)$.
We show that this is a partial isomorphism.
Obviously, $\emptyset\sim\emptyset$. Let $\a\sim\b$ and $c\in M$. Let
$$p(x)=\{\phi(\b,x)=r|\ \ \phi^M(\a,c)=r\}.$$
$p(x)$ is closed under linear combinations.
Assume $\phi^M(\a,c)=r$. Then $N\vDash\inf_x\phi(\b,x)\leqslant r$.
So, there exists $e_1\in N$ such that $\phi^N(\b,e_1)\leqslant r$.
Similarly, there exists $e_2$ such that $r\leqslant\phi^N(\b,e_2)$.
By affine compactness (or by Proposition \ref{connected range}), $\phi(\b,x)=r$ is satisfied in $N$.
So, $p(x)$ is satisfied in $N$. Let $e\in N$ realize $p(x)$. Then, $\a c\sim\b e$.
This is the forth property and the back property is verified similarly.
\end{proof}

For every affinely complete theory $T$ let $T^{\mathrm{sat}}$ be the common theory
(in the CL sense) of affinely $\aleph_0$-saturated models of $T$.
Proposition \ref{10} states that $T^{\mathrm{sat}}$ is CL-complete.

\begin{theorem} \label{isomorphism} \emph{(Isomorphism)}\index{isomorphism theorem}
Assume $M\equiv N$. Then, there are charge spaces $(I,\mathcal A,\mu)$, $(J,\mathcal B, \nu)$ such $M$
is $\mathcal A$-meanable, $N$ is $\mathcal B$-meanable and $M^{\mu}\simeq N^{\nu}$.
\end{theorem}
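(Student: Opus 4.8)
The plan is to reduce the affine statement to the continuous-logic (CL) isomorphism theorem: first pass to sufficiently saturated powermeans, where AL-equivalence upgrades to CL-equivalence, then invoke the CL Keisler--Shelah theorem to get a common CL-ultrapower, and finally fold that ultrapower back into a single powermean of the original structures using the Fubini product of charges.

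First I would record that powermeans preserve AL-elementary equivalence. Since the diagonal map is an AL-elementary embedding (Corollary \ref{diagonal}), we have $M\preccurlyeq M^{\mu}$ and $N\preccurlyeq N^{\nu}$ for any admissible charges, so from $M\equiv N$ we obtain $M^{\mu}\equiv N^{\nu}$ in the AL sense whatever charges we choose. Next, set $\kappa=\mathsf{dc}(M)+\mathsf{dc}(N)+\aleph_0$. By Theorem \ref{saturated} there are charge spaces $(I,\mathcal A,\mu)$ and $(J,\mathcal B,\nu)$ with $M$ being $\mathcal A$-meanable, $N$ being $\mathcal B$-meanable, and both $M^{\mu}$ and $N^{\nu}$ being $\kappa^+$-saturated, hence in particular $\aleph_0$-saturated. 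Combining this with the previous observation, $M^{\mu}$ and $N^{\nu}$ are AL-elementarily equivalent $\aleph_0$-saturated structures, so Proposition \ref{10} yields $M^{\mu}\equiv_{\mathrm{CL}}N^{\nu}$.

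Then I would apply the CL Keisler--Shelah isomorphism theorem (\cite{BBHU}, Theorem 5.7): from $M^{\mu}\equiv_{\mathrm{CL}}N^{\nu}$ there is an ultrafilter $\mathcal F$ on some set $S$ with $(M^{\mu})^{\mathcal F}\simeq(N^{\nu})^{\mathcal F}$. Regarding $\mathcal F$ as the $\{0,1\}$-valued ultracharge $\mu_{\mathcal F}$ on $P(S)$, these CL-ultrapowers coincide with the AL powermeans $(M^{\mu})^{\mu_{\mathcal F}}$ and $(N^{\nu})^{\mu_{\mathcal F}}$ (and meanability is automatic since $\mu_{\mathcal F}$ lives on a full power set). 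Applying Proposition \ref{compose} twice, with the Fubini products $\mu\otimes\mu_{\mathcal F}$ and $\nu\otimes\mu_{\mathcal F}$, gives
\[
M^{\mu\otimes\mu_{\mathcal F}}\simeq(M^{\mu})^{\mu_{\mathcal F}}\simeq(N^{\nu})^{\mu_{\mathcal F}}\simeq N^{\nu\otimes\mu_{\mathcal F}},
\]
so $\mu\otimes\mu_{\mathcal F}$ and $\nu\otimes\mu_{\mathcal F}$ are the desired charges, with $M$ and $N$ meanable for the respective product algebras by Proposition \ref{compose}.

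The main obstacle I anticipate is bookkeeping around completeness and meanability rather than a conceptual gap. Powermeans can be incomplete, so each occurrence of $M^{\mu}$, $N^{\nu}$ and their ultrapowers should be read together with its completion (Proposition \ref{exist}), and I must check that the isomorphism of dense substructures produced by the CL theorem extends uniquely to the completions (using that the operations are Lipschitz and the relations $1$-Lipschitz). I would also confirm that Proposition \ref{10} is legitimately applied to the (possibly completed) saturated powermeans and that the identification of the CL-ultrapower with the ultracharge powermean is valid --- this is exactly the earlier remark that an ultrafilter is a $\{0,1\}$-valued ultracharge, so $\prod_{\mathcal F}$ coincides with $\prod_{\mu_{\mathcal F}}$.
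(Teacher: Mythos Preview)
Your proof is correct and follows essentially the same route as the paper: pass to saturated powermeans via Theorem~\ref{saturated}, upgrade AL-equivalence to CL-equivalence via Proposition~\ref{10}, apply the CL Keisler--Shelah theorem, and collapse the iterated powermean using Proposition~\ref{compose} and the Fubini product. Your completeness worries are unnecessary, since $\kappa^+$-saturation (for $\kappa\geqslant\aleph_0$) already forces metric completeness of $M^{\mu}$ and $N^{\nu}$; on the other hand, your choice of $\kappa=\mathsf{dc}(M)+\mathsf{dc}(N)+\aleph_0$ is actually the careful thing to do when invoking Theorem~\ref{saturated}, whereas the paper's bare ``$\aleph_1$-saturated'' tacitly assumes a suitable $\kappa$ has been chosen.
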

\begin{proof}
By Theorem \ref{saturated}, there are charges $\wp_1,\wp_2$ such that
$M^{\wp_1}$, $N^{\wp_2}$ are $\aleph_1$-saturated (hence complete).
Therefore, $M^{\wp_1}\equiv N^{\wp_2}$.
By Proposition \ref{10}, $M^{\wp_1}\equiv_{\mathrm{CL}} N^{\wp_2}$.
By the CL variant of the isomorphism theorem, there is an ultrafilter $\mathcal F$
such that $(M^{\wp_1})^{\mathcal F}\simeq(N^{\wp_2})^{\mathcal F}$.
We conclude by Proposition \ref{compose} that $M^{\wp_1\otimes\mathcal F}\simeq N^{\wp_2\otimes\mathcal F}$.
\end{proof}

In the proof of proposition \ref{sat3},
if $M\equiv N$, one can find (using Lemma \ref{charge product})
a $\mu$ such that $M^\mu$ and $N^\mu$ realize all types over $M$
and $N$ respectively.  As a consequence, it is possible to arrange
in Theorem \ref{isomorphism} to have that $\mu=\nu$.

\subsection{Approximations}
Recall that $\mathbb{D}(L)$ is the vector space of $L$-sentences (see \S \ref{Affine compactness}).
A complete AL-theory is just a positive linear functional
$T:\mathbb{D}(L)\rightarrow\Rn$ with $T(1)=1$ and $M\vDash T$ means that $T(\sigma)=\sigma^M$
for every $\sigma$.
Let $K=K(L)$ be the set of all complete $L$-theories. Then, for $T_1,T_2\in K$ and $0\leqslant\gamma\leqslant1$,
$$\gamma T_1+(1-\gamma)T_2\in K.$$ So, $K$ is convex.
It is also a closed subset of the unit ball of $\mathbb{D}(L)^*$ and hence compact by the Banach-Alaoglu theorem.
So, $K$ is compact convex.
Recall that for a compact convex set $K$, $\mathbf{A}(K)$ is the set of affine continuous real functions.

Let $\Gamma$ be a set of AL-formulas in the language $L$. A CL-formula $\phi(\x)$ is approximated by formulas in $\Gamma$
if for each $\epsilon>0$, there is a formula $\theta(\x)$ in $\Gamma$ such that
$$|\phi^M(\a)-\theta^M(\a)|\leqslant\epsilon\ \ \ \ \ \ \ \ \ \forall M\ \ \forall\a\in M.$$
A CL-sentence $\phi$ is preserved by ultramean if for every ultracharge space
$(I,\mu)$ and $L$-structures $M_i$, if $M=\prod_\mu M_i$, then $$\phi^{M}=\int\phi^{M_i}d\mu.$$
Similarly, $\phi$ is preserved by powermean if for every charge space $(I,\mathcal A,\mu)$ and
model $M$ which is $\mathcal A$-meanable, one has that $\phi^{M^\mu}=\phi^M$.
Affine sentences are preserved by ultramean and powermean.

\begin{theorem} \label{char1}
A CL-sentence $\phi$ is preserved by ultramean and powermean if and only if it is approximated by affine sentences.
\end{theorem}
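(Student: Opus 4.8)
The plan is to prove the nontrivial direction --- that preservation under ultramean and powermean forces approximability --- by realizing $\phi$ as an element of $\mathbf{A}(K)$, the affine continuous functions on the compact convex set $K=K(L)$ of complete $L$-theories, and then invoking density of $\mathbb{D}(L)$ in $\mathbf{A}(K)$. The converse direction is a routine $\epsilon$-estimate: if for each $\epsilon>0$ an affine $\theta$ satisfies $|\phi^M-\theta^M|\leqslant\epsilon$ uniformly in $M$, then, since affine sentences are preserved, $|\phi^{\prod_\mu M_i}-\int\phi^{M_i}d\mu|\leqslant|\phi^{\prod_\mu M_i}-\theta^{\prod_\mu M_i}|+|\int(\theta^{M_i}-\phi^{M_i})d\mu|\leqslant 2\epsilon$, using $\theta^{\prod_\mu M_i}=\int\theta^{M_i}d\mu$; the powermean case is identical, and letting $\epsilon\to0$ gives preservation.

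For the forward direction I would first define $\hat\phi:K\to\Rn$ by $\hat\phi(T)=\phi^M$ for any $M\vDash T$, reducing an incomplete $M$ to its completion via Proposition \ref{exist} (which preserves the value of every CL-sentence). Well-definedness is the first substantive point: if $M\equiv N$ are complete, Theorem \ref{isomorphism} supplies charges with $M^\mu\simeq N^\nu$, and since $\phi$ is preserved by powermean we get $\phi^M=\phi^{M^\mu}=\phi^{N^\nu}=\phi^N$, the middle equality holding because isomorphisms preserve CL-sentences. Thus $\hat\phi$ is a genuine function on $K$, defined through any model of a given complete theory. That $\hat\phi$ is affine comes from the binary ultramean: given $T_1,T_2$ and $\gamma\in[0,1]$, pick $M_i\vDash T_i$ and $\mu$ on $\{1,2\}$ with weights $\gamma,1-\gamma$; the ultramean $M=\prod_\mu M_i$ satisfies $\gamma T_1+(1-\gamma)T_2$, and preservation under ultramean gives $\hat\phi(\gamma T_1+(1-\gamma)T_2)=\phi^M=\gamma\phi^{M_1}+(1-\gamma)\phi^{M_2}$.

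The main obstacle is continuity of $\hat\phi$ in the logic (weak$^*$) topology, and here I would exploit ultrameans taken along nets. Let $T_\alpha\to T$ and let $c$ be any cluster value of the bounded net $(\phi^{M_\alpha})$, attained along a subnet whose index set I reindex as a directed set $E$. The tails of $E$ generate a filter which I extend to an ultrafilter $\mathcal U$, so that every net-convergent function on $E$ has its limit as $\mathcal U$-limit. Forming $P=\prod_{\mathcal U}M_\alpha$, the AL-ultramean theorem (Theorem \ref{th1}) gives $\sigma^P=\lim_{\mathcal U}\sigma^{M_\alpha}=\sigma^M$ for every AL-sentence $\sigma$, so $P\equiv M$; preservation under ultramean gives $\phi^P=\lim_{\mathcal U}\phi^{M_\alpha}=c$; and well-definedness of $\hat\phi$ (applied to the completion of $P$) gives $\phi^P=\phi^M$. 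Hence $c=\hat\phi(T)$, every cluster value of $(\hat\phi(T_\alpha))$ equals $\hat\phi(T)$, and therefore $\hat\phi(T_\alpha)\to\hat\phi(T)$.

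With $\hat\phi$ shown to be affine and continuous, i.e.\ $\hat\phi\in\mathbf{A}(K)$, I would conclude exactly as in the earlier argument on approximation by $\Sigma_n$-combinations: the image of $\mathbb{D}(L)$ in $\mathbf{A}(K)$ is a linear subspace containing the constants and separating the points of $K$ (distinct complete theories differ on some AL-sentence), hence it is dense in $\mathbf{A}(K)$. Thus for each $\epsilon>0$ there is an AL-sentence $\sigma$ with $|\hat\phi(T)-\hat\sigma(T)|\leqslant\epsilon$ for all $T\in K$, which reads $|\phi^M-\sigma^M|\leqslant\epsilon$ for every $L$-structure $M$; this is precisely approximation of $\phi$ by affine sentences. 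The only delicate points are the reduction of the (possibly incomplete) ultrameans to their completions and the check that the tail ultrafilter $\mathcal U$ computes the net limit, both of which are handled as indicated above.
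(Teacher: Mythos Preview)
Your proof is correct and follows essentially the same strategy as the paper: show that $\hat\phi\in\mathbf{A}(K)$ by establishing well-definedness via the isomorphism theorem (using powermean preservation), affineness via binary ultrameans, and continuity via ultraproducts, then invoke density of the affine sentences in $\mathbf{A}(K)$. The only notable difference is that you handle continuity with nets and cluster values, whereas the paper argues with sequences and closedness of level sets; your treatment is slightly more careful in that it applies even when $L$ is uncountable and $K$ need not be first countable.
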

\begin{proof}
The `if' part is easy. Conversely, assume $\phi$ is preserved by ultramean and powermean.
For each affine sentence $\sigma$ define a function
$$f_\sigma:K\rightarrow\Rn$$ $$f_\sigma(T)=T(\sigma).$$
Clearly, $f_\sigma$ is affine and continuous. Let $$X=\{f_\sigma:\ \ \sigma\ \mbox{an\ AL-sentence\ in}\ L\}.$$
$X$ is a linear subspace of $\mathbf{A}(K)$ which contains constant functions.
Moreover, if $T_1\neq T_2$, there is an AL-sentence $\sigma$ such that
$T_1(\sigma)\neq T_2(\sigma)$. So, $f_\sigma(T_1)\neq f_\sigma(T_2)$.
This shows that $X$ separates points. Hence, it is dense in $\mathbf{A}(K)$.

Similarly, for $T\in K$ define $f_\phi(T)=\phi^M$ where $\phi$ is the sentence mentioned in the theorem and $M\vDash T$.
By the isomorphism theorem, if $M\equiv N$, for some $\mu,\nu$ one has that $M^{\mu}\simeq N^{\nu}$.
Hence, $$\phi^M=\phi^{M^{\mu}}=\phi^{N^{\nu}}=\phi^N.$$
So, $f_\phi$ is well-defined. We show that it is an affine map.
Let $\gamma\in[0,1]$ and $T_1,T_2\in K$. Let $M_1\vDash T_1$ and $M_2\vDash T_2$.
Then, $M=\gamma M_1+(1-\gamma)M_2$ is a model of the theory $\gamma T_1+(1-\gamma)T_2$.
Moreover, since $\phi$ is preserved by ultramean, one has that
$$f_\phi(\gamma T_1+(1-\gamma)T_2)=\phi^M=\gamma\phi^{M_1}+(1-\gamma)\phi^{M_2}
=\gamma f_\phi(T_1)+(1-\gamma)f_\phi(T_2).$$
Note also that $f_\phi$ is continuous, i.e. for each $r$ the sets
$$\{T\in K:\ f_\phi(T)\leqslant r\},\ \ \ \ \ \ \ \ \{T\in K:\ f_\phi(T)\geqslant r\}$$
are closed. For example, assume $T_k\rightarrow T$ in the weak* topology and $f_\phi(T_k)\leqslant r$ for each $k$.
We show that $f_\phi(T)\leqslant r$. Take a nonprincipal ultrafilter $\mathcal F$ on $\Nn$.
Let $M_k\vDash T_k$ and $M=\prod_{\mathcal F}M_k$. Then, one has that $M\vDash T$. As a consequence,
$$f_\phi(T)=\phi^M=\lim_{k,\mathcal F}\phi^{M_k}=\lim_{k,\mathcal F} f_\phi(T_k)\leqslant r.$$

We conclude that $f_\phi\in\mathbf{A}(K)$.
So, since $X$ is dense, for each $\epsilon>0$ there is an affine sentence $\sigma$ such that for every $T\in K$,\ \
$|f_\phi(T)-f_\sigma(T)|\leqslant\epsilon$. In other words, for every $M$, \  $|\phi^M-\sigma^M|\leqslant\epsilon$.
\end{proof}
\bigskip

A CL-sentence $\sigma$ in $L$ is preserved by AL-equivalence if for every
$M,N$, whenever $M\equiv N$, one has that $\sigma^M=\sigma^N$.
Note that if $\sigma,\eta$ are preserved by $\equiv$ then so does
$\sigma\wedge\eta$ and $\sigma\vee\eta$.
In fact, every sentence in the Riesz space generated by the set of affine sentences
is preserved by AL-equivalence. We denote this Riesz space by $\Lambda$.

\begin{theorem}
A CL-sentence $\phi$ is preserved by AL-equivalence if and only if it is
approximated by the Riesz space $\Lambda$ generated by the set of affine sentences.
\end{theorem}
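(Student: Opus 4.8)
The plan is to imitate the proof of Theorem \ref{char1}, transporting the question to the compact convex set $K=K(L)$ of complete $L$-theories, but replacing the appeal to density in $\mathbf{A}(K)$ by the lattice form of the Stone--Weierstrass theorem in $\mathbf{C}(K)$. The \emph{if} direction is routine: every affine sentence $\sigma$ is preserved by $\equiv$ by the very definition of AL-equivalence, and the class of CL-sentences preserved by $\equiv$ is closed under the connectives $+,\,r\cdot,\,\wedge,\,\vee$ (which are interpreted pointwise on truth values) and under uniform approximation, since if $|\phi^M-\eta_n^M|\leqslant\epsilon_n$ for all $M$ with $\eta_n\in\Lambda$ and $\epsilon_n\to0$, then $M\equiv N$ forces $|\phi^M-\phi^N|\leqslant 2\epsilon_n$ for every $n$. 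Hence every sentence in $\Lambda$, and every uniform limit of such, is preserved by $\equiv$.

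For the \emph{only if} direction, assume $\phi$ is preserved by $\equiv$ and define $f_\phi:K\to\Rn$ by $f_\phi(T)=\phi^M$ for any $M\vDash T$; this is well-defined precisely because $\phi$ is preserved by $\equiv$. I would then check that $f_\phi$ is continuous on $K$ by the argument already used for $f_\phi$ in Theorem \ref{char1}: given a net $T_\alpha\to T$ in the weak* topology, choose $M_\alpha\vDash T_\alpha$ and an ultrafilter $\mathcal F$ refining the tail filter of the net, and form the classical metric ultraproduct $M=\prod_{\mathcal F}M_\alpha$. For every AL-sentence $\sigma$ one has $\sigma^M=\lim_{\mathcal F}\sigma^{M_\alpha}=T(\sigma)$, so $M\vDash T$; and since $\mathcal F$ is an honest ultrafilter (not a proper charge), the CL {\L}o\'{s} theorem gives $\phi^M=\lim_{\mathcal F}\phi^{M_\alpha}$. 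This shows that $\{T:f_\phi(T)\leqslant r\}$ and $\{T:f_\phi(T)\geqslant r\}$ are closed, whence $f_\phi\in\mathbf{C}(K)$. Here the only property of $\phi$ used beyond well-definedness is that it is a CL-sentence, so that the metric ultraproduct preserves it.

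Finally, set $X_\Lambda=\{f_\eta:\ \eta\in\Lambda\}$. By the \emph{if} direction each $f_\eta$ is well-defined and continuous, and since $+,\wedge,\vee$ are interpreted pointwise one has $f_{\eta_1\wedge\eta_2}=f_{\eta_1}\wedge f_{\eta_2}$, $f_{\eta_1\vee\eta_2}=f_{\eta_1}\vee f_{\eta_2}$ and $f_{r\eta_1+s\eta_2}=rf_{\eta_1}+sf_{\eta_2}$; thus $X_\Lambda$ is a linear sublattice of $\mathbf{C}(K)$ containing the constant functions. As distinct complete theories disagree on some affine sentence, $X_\Lambda$ separates the points of $K$. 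By the lattice version of the Stone--Weierstrass theorem (\cite{Royden}), $X_\Lambda$ is therefore dense in $\mathbf{C}(K)$. Since $f_\phi\in\mathbf{C}(K)$, for each $\epsilon>0$ there is $\sigma\in\Lambda$ with $|f_\phi(T)-f_\sigma(T)|\leqslant\epsilon$ for all $T\in K$, that is $|\phi^M-\sigma^M|\leqslant\epsilon$ for every $M$, which is exactly the desired approximation.

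The main obstacle is the continuity of $f_\phi$: one must run the ultraproduct step with a genuine ultrafilter (so that the CL {\L}o\'{s} theorem applies to the CL-sentence $\phi$) and over the index set of a net refining $T$, rather than merely over a sequence. The remaining novelty relative to Theorem \ref{char1} is the use of the lattice Stone--Weierstrass theorem in place of the affine separation argument, which is precisely what permits the non-affine connectives $\wedge,\vee$ of $\Lambda$ to appear in the approximating sentences.
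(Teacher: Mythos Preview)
Your proposal is correct and follows essentially the same route as the paper: transport the problem to the compact convex set $K$ of complete theories, verify $f_\phi\in\mathbf{C}(K)$ via an ultraproduct argument, and apply the lattice Stone--Weierstrass theorem to the sublattice $X_\Lambda$. Your treatment is in fact slightly more careful than the paper's in that you run the continuity argument with nets and an ultrafilter refining the tail filter (rather than sequences, which tacitly assumes $K$ is metrizable), and you spell out the easy direction explicitly.
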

\begin{proof}
For the non-trivial direction, as in the proof of Theorem \ref{char1},
for each $\sigma\in\Lambda$,
$$f_\sigma:K\rightarrow\Rn$$
$$f_\sigma(T)=\sigma^M$$ where $M\vDash T$ is arbitrary. It is well-defined.
Let $$X=\{f_\sigma:\ \sigma\in\Lambda\}.$$
Then, $X$ is a sublattice of $\mathbf{C}(K)$ which contains $1$ and separates points.
In particular, $$-f_\sigma=f_{-\sigma}, \ \ \ \ \ f_\sigma+f_\eta=f_{\sigma+\eta},\ \ \ \ \
f_\sigma\wedge f_\eta=f_{\sigma\wedge\eta}.$$
By the assumption, the function $f_\phi(T)=\phi^M$ for $M\vDash T$ is well-defined.
Since $\phi$ is preserved by ultraproducts, it is shown similar to the proof
of Proposition \ref{char1} that $f_\phi$ is continuous.
So, by the lattice version of Stone-Weierstrass theorem (see \cite{Aliprantis-Inf}
Th. 9.12), $f_\phi$ is approximated by elements of $X$.
In other words, for each $\epsilon>0$, there is $\sigma\in\Lambda$ such that $|\phi^M-\sigma^M|\leqslant\epsilon$ for every $M$.
\end{proof}

\subsection{Rudin-Keisler ordering}
Let $\mu$ and $\nu$ be ultracharges on $I$ and $J$ respectively.
As in \S \ref{Operations on ultracharges}, we write $\nu\leqslant\mu$ if there is a
map $f:I\rightarrow J$ such that for every $X\subseteq J$
$$\nu(X)=\mu(f^{-1}(X)).$$
In this case we write $\nu=f(\mu)$. This defines a partial pre-order on the class of ultracharges
which generalizes the Rudin-Keisler ordering on ultrafilters \cite{Comfort-Negrepontis}.
By the change of variables formula, if $\nu=f(\mu)$, then for each bounded integrable
$h:J\rightarrow\Rn$ one has that $$\int h\ d\nu=\int h\circ f\ d\mu.$$

Let $\nu\leqslant\mu$ via the function $f:I\rightarrow J$ and $M$ be an $L$-structure.
Define a map $f^*:M^{\nu}\rightarrow M^{\mu}$ as follows:
$$f^*([a]_{\nu})=[a\circ f]_{\mu}\ \ \ \ \ \ \ \ \ \forall a:J\rightarrow M.$$

\begin{lemma} $f^*$ is an elementary embedding.
\end{lemma}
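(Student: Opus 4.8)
The plan is to reduce the whole statement to the powermean theorem together with the change of variables formula recalled just above. Writing $a$ for a representing map $a\colon J\to M$, the goal is to establish, for every $L$-formula $\phi(\x)$ and every class $[a]_\nu\in M^\nu$, the single identity
$$\phi^{M^\nu}([a]_\nu)=\phi^{M^\mu}(f^*([a]_\nu))=\phi^{M^\mu}([a\circ f]_\mu).$$
An elementary embedding is by definition a map satisfying exactly this for all $\phi$, and the atomic instance $\phi=d(x,y)$ will already force $f^*$ to be well defined and distance preserving, hence a genuine embedding. So once this identity is in hand there is nothing left to prove.

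First I would observe that $\mu$ and $\nu$, being ultracharges, are defined on the full power sets $P(I)$ and $P(J)$; thus $M$ is trivially $\mathcal A$-meanable (the case $\mathcal A=P(I)$) and every map into $M$ is measurable. Consequently Theorem \ref{powermean} applies on both sides and gives
$$\phi^{M^\nu}([a]_\nu)=\int\phi^M(a_j)\,d\nu,\qquad
\phi^{M^\mu}([a\circ f]_\mu)=\int\phi^M(a_{f(i)})\,d\mu.$$
Next I set $h(j)=\phi^M(a_j)$. By Proposition \ref{completion of prestructures} one has $|h|\leqslant\mathbf{b}_\phi$, so $h$ is bounded and integrable against $\nu$, while $(h\circ f)(i)=\phi^M(a_{f(i)})$. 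The change of variables formula $\int h\,d\nu=\int h\circ f\,d\mu$, valid precisely because $\nu=f(\mu)$, then yields
$$\int\phi^M(a_j)\,d\nu=\int\phi^M(a_{f(i)})\,d\mu,$$
which is the desired identity after substituting the two powermean expressions. Specializing $\phi$ to the atomic formula $d(x,y)$ gives $d([a]_\nu,[a']_\nu)=d([a\circ f]_\mu,[a'\circ f]_\mu)$, so $f^*$ is independent of the chosen representative and distance preserving; running the identity for arbitrary $\phi$ shows $f^*$ is elementary.

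There is no serious obstacle here: the argument is just the composition of two facts already established. The only points requiring care are to check the hypotheses before invoking them — namely that the ultracharge structure makes $M$ meanable and every tuple measurable, so that Theorem \ref{powermean} is legitimately available, and that the integrands $\phi^M(a_j)$ are bounded (by $\mathbf{b}_\phi$) so that the change of variables formula applies. With these verified, the well-definedness of $f^*$ is not a separate lemma but the atomic case of the main identity.
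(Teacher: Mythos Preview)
Your proposal is correct and takes essentially the same approach as the paper: apply the ultramean/powermean theorem on each side to rewrite $\phi^{M^\nu}$ and $\phi^{M^\mu}$ as integrals of $\phi^M$, then bridge the two via the change of variables formula $\int h\,d\nu=\int h\circ f\,d\mu$ coming from $\nu=f(\mu)$. The paper simply cites ``the ultramean theorem'' (Theorem~\ref{th1}) rather than Theorem~\ref{powermean}, but for ultracharges on the full power set these coincide, and your extra remarks on well-definedness and boundedness are harmless elaborations of the same argument.
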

\begin{proof}
Let $\phi(x_1,...,x_n)$  be a formula and $a^1,...,a^n \in\prod_J M$.
Then by the ultramean theorem
$$\phi^{M^{\nu}}([a^1]_\nu ,...,[a^n]_\nu)=\int\phi^M(a^1(j) ,...,a^n(j))\ d\nu.$$
$$=\int\phi^M\big((a^1\circ f)(i) ,...,(a^n\circ f)(i)\big)\ d\mu$$
$$=\phi^{M^{\mu}}([a^1\circ f ]_{\mu},...,[a^n\circ f]_{\mu})$$
$$=\phi^{M^{\mu}}(f^*([a^1]_{\nu}),...,f^*([a^n]_{\nu})).$$
\end{proof}
\vspace{1mm}

Let $J$ be a nonempty set and $L$ be the first order (hence Lipschitz) language consisting
of relation and function symbols for every relation (i.e. a subset on $J^n$) and operation on $J$.
Then, $J$ is endowed with a first order $L$-structure $M$ in the natural way called the \emph{complete structure} on $J$.
Every $a\in M$ is the interpretation of a constant symbol (i.e. a $0$-ary function symbol).
Therefore, every structure affinely equivalent to $M$
contains an elementary substructure isomorphic to $M$.

\begin{lemma}
Let $\mu$ and $\nu$ be ultracharges on $I$ and $J$ respectively.
Let $M$ be the complete structure on $J$. Then for every elementary embedding
$$\zeta:{M}^{\nu}\rightarrow {M}^{\mu}$$
there exists a unique (up to $\mu$-null sets) $f:I\rightarrow J$ such that $\nu=f(\mu)$
and $\zeta=f^*$.
\end{lemma}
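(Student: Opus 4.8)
The plan is to recover $f$ from the action of $\zeta$ on a single distinguished element, namely $[\mathrm{id}]_\nu\in M^\nu$, where $\mathrm{id}\colon J\to J$ is the identity map; this is a legitimate element because the underlying set of the complete structure $M$ is $J$ itself. First I would put $\zeta([\mathrm{id}]_\nu)=[f]_\mu$ and fix a representative $f\colon I\to J$. This already delivers the uniqueness clause: if $g\colon I\to J$ also satisfies $\zeta=g^*$, then $[g]_\mu=g^*([\mathrm{id}]_\nu)=\zeta([\mathrm{id}]_\nu)=[f]_\mu$, so $\mu\{i:f(i)\neq g(i)\}=0$, i.e. $f=g$ up to $\mu$-null sets.

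Next I would show $\zeta=f^*$. The decisive feature of the complete structure is that \emph{every} map $a\colon J\to J$ is the interpretation $F_a^M$ of a unary function symbol $F_a\in L$. By the definition of the interpretation of function symbols in the ultramean, $[a]_\nu=[F_a^M(\mathrm{id}(j))]_\nu=F_a^{M^\nu}([\mathrm{id}]_\nu)$. Since $\zeta$ is an embedding it commutes with $F_a$, whence
\[
\zeta([a]_\nu)=F_a^{M^\mu}\big(\zeta([\mathrm{id}]_\nu)\big)=F_a^{M^\mu}([f]_\mu)=[a\circ f]_\mu=f^*([a]_\nu).
\]
As every element of $M^\nu$ is of the form $[a]_\nu$, this yields $\zeta=f^*$.

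Then I would establish $\nu=f(\mu)$ via the relation symbols. For each $X\subseteq J$ the complete structure supplies a unary relation symbol $R_X$ with $R_X^M=\chi_X$. By Theorem \ref{th1}, $R_X^{M^\nu}([\mathrm{id}]_\nu)=\int\chi_X(j)\,d\nu=\nu(X)$, while $R_X^{M^\mu}([f]_\mu)=\int\chi_X(f(i))\,d\mu=\mu(f^{-1}(X))$. Because $\zeta$ is elementary it preserves the atomic formula $R_X(x)$, so these two values agree; as $X$ is arbitrary, $\nu(X)=\mu(f^{-1}(X))$ for all $X$, i.e. $\nu=f(\mu)$. This condition is exactly what makes $f^*$ well defined, so it also retroactively validates the previous step.

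There is no deep obstacle here; the substance is the single observation that the richness of the complete language lets one read $f$ off from $\zeta([\mathrm{id}]_\nu)$. The hard part, such as it is, is purely bookkeeping: I must note that $\mu,\nu$ are charges on the full power sets, so that every $a\colon I\to J$ is measurable and every $X\subseteq J$ belongs to the algebra (hence $M$ is trivially $\mathcal A$-meanable and no selection or measurability concern arises), and I must keep straight which preservation property is used where — the homomorphism property of $\zeta$ for the function symbols $F_a$, and elementarity together with Theorem \ref{th1} for the predicates $R_X$.
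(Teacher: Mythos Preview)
Your argument is correct and essentially identical to the paper's: both recover $f$ from $\zeta([\mathrm{id}]_\nu)$, use the unary function symbols of the complete structure to obtain $\zeta=f^*$, and use the unary relation symbols $\chi_X$ together with the ultramean theorem to get $\nu=f(\mu)$. The only difference is cosmetic ordering—the paper proves $\nu=f(\mu)$ before $\zeta=f^*$, whereas you do the reverse and note that the measure identity retroactively justifies well-definedness of $f^*$.
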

\begin{proof} Let $\sf id$ be the identity map on $J$. Then $\sf id$ determines an element
$[{\sf id}]_{\nu}$ of $M^{\nu}$ and $\zeta([{\sf id}]_{\nu})\in {M}^{\mu}$.
Let $f:I\rightarrow M$ be such that $[f]_\mu=\zeta([{\sf id}]_{\nu})$.
We show that $\nu=f(\mu)$ and $\zeta=f^*$.

Let $A\subseteq J$. Then we have
$$\chi_A^{{M}^{\nu}}([{\sf id}]_{\nu})=\int_J \chi_A^{M}(j)\ d\nu=\nu(A).$$
On the other hand, since $\zeta$ is an elementary embedding, one has that
$$\chi_A^{{M}^{\nu}}([{\sf id}]_{\nu})=
\chi_A^{{M}^{\mu}}(\zeta([{\sf id}]_{\nu}))=\chi_A^{{M}^{\mu}}([f]_{\mu})$$
$$=\int_I \chi_A^{{M}}(f(i))\ d\mu=\int_J \chi_ A^{M}(j)\ d(f(\mu))=f(\mu)(A).$$
This shows that $\nu=f(\mu)$.
Now, we show that $\zeta=f^*$.
Let $[a]_{\nu}\in M^{\nu}$. Then, $a$ is also a unary operation on $J$ and
$$[a]_{\nu}=[a\circ {\sf id}]_{\nu}=[a^M(j)]_{\nu}=a^{M^{\nu}}([{\sf id}]_{\nu}).$$
Again, since $\zeta$ is elementary, one has that
$$\zeta([a]_{\nu})=\zeta(a^{M^{\nu}}([{\sf id}]_{\nu}))=a^{M^{\mu}}(\zeta([{\sf id}]_{\nu}))=a^{M^{\mu}}([f]_{\mu})$$$$=[a^M(f(i))]_{\mu}=[a\circ f]_{\mu}=f^*([a]_{\nu}).$$
Therefore, $\zeta=f^*$.

For uniqueness, let $g:I\rightarrow J$ be another function such that $\zeta=g^*$.
Then $$[f]_{\mu}=\zeta([{\sf id}]_{\nu})=g^*([{\sf id}]_{\nu})=[g]_{\mu}.$$
This means that $f=g$ a.e.,\ \ i.e. $\mu\{i: |f(i)-g(i)|>\epsilon\}=0$
for every $\epsilon>0$.
\end{proof}

\begin{corollary} \label{embedding} Let $\mu$ and $\nu$ be ultracharges
on $I$ and $J$ respectively. Then $\nu\leqslant\mu$ if and only if $M^{\nu}$ is
elementarily embedded in $M^{\mu}$ for every $M$.
\end{corollary}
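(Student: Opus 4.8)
The plan is to deduce this corollary directly from the two lemmas immediately preceding it, so the work is entirely in correctly invoking them in the two directions and, for the reverse direction, in choosing the right test structure.

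For the forward direction, suppose $\nu\leqslant\mu$. By definition there is a map $f:I\rightarrow J$ with $\nu=f(\mu)$. First I would recall that for any $L$-structure $M$, the map $f^*:M^{\nu}\rightarrow M^{\mu}$ given by $f^*([a]_{\nu})=[a\circ f]_{\mu}$ was shown (in the lemma stating ``$f^*$ is an elementary embedding'') to be an elementary embedding, the computation there resting only on the ultramean theorem (Theorem \ref{th1}) together with the change-of-variables formula $\int h\,d\nu=\int h\circ f\,d\mu$. Hence $M^{\nu}$ is elementarily embedded in $M^{\mu}$, and since $M$ was arbitrary this holds for every $M$. This half is immediate once the right lemma is cited.

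For the reverse direction, the key move is to specialize the hypothesis ``for every $M$'' to a single, carefully chosen structure, namely the \emph{complete structure} $M$ on the index set $J$ (the first order structure whose language has a symbol for every relation and operation on $J$). Applying the hypothesis to this particular $M$ yields an elementary embedding $\zeta:M^{\nu}\rightarrow M^{\mu}$. Now I would invoke the preceding lemma, which asserts precisely that for the complete structure on $J$ every such elementary embedding $\zeta$ arises from a (unique, up to $\mu$-null sets) function $f:I\rightarrow J$ with $\nu=f(\mu)$ and $\zeta=f^{*}$; there $f$ is recovered by setting $[f]_{\mu}=\zeta([\mathsf{id}]_{\nu})$ and testing against characteristic functions $\chi_A$ to force $\nu(A)=f(\mu)(A)$ for all $A\subseteq J$. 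The existence of such an $f$ is exactly the statement $\nu\leqslant\mu$, which completes this direction.

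I do not expect a genuine obstacle, since both directions are one-line applications of results already in place. If anything deserves care, it is making sure the reverse direction is read correctly: the hypothesis must be used for the specific complete structure on $J$ rather than for an arbitrary $M$, as it is precisely the richness of that structure (every element realized by a constant symbol, every subset coded by a relation symbol) that lets the preceding lemma reconstruct the map $f$ and thereby witness $\nu\leqslant\mu$.
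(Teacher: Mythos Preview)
Your proposal is correct and matches the paper's intended argument exactly: the corollary is stated without proof immediately after the two lemmas you cite, and your two directions are precisely the deductions the paper expects the reader to make from them.
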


Let $M$ be the complete structure on $J$. Assume $\zeta:M^{\nu}\rightarrow M^{\mu}$
is an isomorphism and $\eta:M^{\mu}\rightarrow M^{\nu}$ is its inverse.
Let $f:I\rightarrow J$ and $g:J\rightarrow I$ be such that $f^*=\zeta$ and $g^*=\eta$.
Then $(g\circ f)^*=f^*\circ g^*= ({\sf id}_I)^*$ and hence $g\circ f={\sf id}_I$ a.e.
Similarly, $f\circ g={\sf id}_J$.
We deduce that if $M^\mu\simeq M^\nu$ for all $M$, then $\mu\equiv\nu$ (i.e. $\mu\leqslant\nu\leqslant\mu$).
The converse this observation holds for ultrafilters. This is essentially because ultrafilters are
rigid, i.e. have no nontrivial automorphism \cite{Comfort-Negrepontis}. 
This property does not hold for ultracharges.
It is natural to ask whether it is true that $\mu\equiv\nu$ implies $M^\mu\simeq M^\nu$ for all $M$.
\vspace{1mm}

\newpage\section{Consistency and interpolation} \label{Robinson et al.}
As in first order logic, Robinson's consistency theorem can be proved by using the compactness theorem.

\begin{lemma} \label{Robinson lemma}
Let $M$ be an $L_1$-structure, $N$ be an $L_2$-structure and $L=L_1\cap L_2$.\\
(i) If $M|_{L}\equiv N|_{L}$, there is a model $K$ in $L_2$
such that $M|_{L}\preccurlyeq K|_{L}$ and $N\equiv K$.\\
(ii) If $M|_{L}\preccurlyeq N|_{L}$, there is a model $K$ in $L_1$
such that $M\preccurlyeq K$ and $N|_{L}\preccurlyeq K|_{L}$.
\end{lemma}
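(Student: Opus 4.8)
The plan is to realize $K$ as a model of a suitable theory in an expanded language and then invoke affine compactness (Theorem \ref{compactness}), exactly in the style of the Elementary JEP/AP proof. For part (i) I would add a constant $c_a$ for each $a\in M$ and consider, in the language $L_2\cup\{c_a:a\in M\}$, the theory
$$\Sigma \;=\; ediag_L(M)\;\cup\;Th_{L_2}(N),$$
where $ediag_L(M)=\{0\leqslant\phi(\bar c):\phi\text{ an }L\text{-formula},\ 0\leqslant\phi^M(\bar a)\}$ is the $L$-elementary diagram of $M$ and $Th_{L_2}(N)=\{0\leqslant\sigma:\sigma\text{ an }L_2\text{-sentence},\ 0\leqslant\sigma^N\}$. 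A model $K\vDash\Sigma$ satisfies $N\equiv K$ (the conditions of $Th_{L_2}(N)$ pin down the value of every $L_2$-sentence) and carries an $L$-elementary embedding of $M|_L$ coming from $ediag_L(M)$, so $M|_L\preccurlyeq K|_L$; hence $K$ is the structure sought.

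The heart of the matter is to show that $\Sigma$ is affinely satisfiable. Both $ediag_L(M)$ and $Th_{L_2}(N)$ are closed under nonnegative combinations, so a typical member of the affine closure has the form $0\leqslant\Phi(\bar c)+\Sigma_0$, where $\Phi$ is an $L$-formula with $0\leqslant\Phi^M(\bar a)$ and $\Sigma_0$ is an $L_2$-sentence with $0\leqslant\Sigma_0^N$. Now $\sup_{\bar x}\Phi(\bar x)$ is an $L$-sentence and $0\leqslant\Phi^M(\bar a)\leqslant\sup_{\bar x}\Phi^{M|_L}(\bar x)$; since $M|_L\equiv N|_L$, the same value is taken in $N$, so $0\leqslant\sup_{\bar x}\Phi^{N|_L}(\bar x)$. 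Interpreting the constants $\bar c$ in $N$ by a tuple $\bar e$ with $\Phi^{N|_L}(\bar e)\geqslant-\epsilon$ gives $(\Phi(\bar c)+\Sigma_0)^N\geqslant-\epsilon$. Thus $-\epsilon\leqslant\Phi(\bar c)+\Sigma_0$ is satisfiable for every $\epsilon>0$, and by the ultramean fact recorded just before Lemma \ref{witness} this upgrades to satisfiability of $0\leqslant\Phi(\bar c)+\Sigma_0$. Affine compactness then produces $K$.

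For part (ii) I would run the same scheme symmetrically: add constants for the elements of both $M$ and $N$ and take, in $L_1$ expanded by these constants,
$$\Sigma'\;=\;ediag_{L_1}(M)\;\cup\;ediag_L(N),$$
the full $L_1$-elementary diagram of $M$ together with the $L$-elementary diagram of $N$. A model $K\vDash\Sigma'$ yields $M\preccurlyeq K$ and $N|_L\preccurlyeq K|_L$ simultaneously. To check affine satisfiability, a member of the affine closure reads $0\leqslant\Phi(\bar c)+\Psi(\bar d)$ with $\Phi$ an $L_1$-formula, $0\leqslant\Phi^M(\bar a)$, and $\Psi$ an $L$-formula, $0\leqslant\Psi^N(\bar b)$. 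Interpreting $\bar c$ in $M$ by the actual tuple $\bar a$ and, using $0\leqslant\sup_{\bar y}\Psi^{N|_L}=\sup_{\bar y}\Psi^{M|_L}$ (from $M|_L\equiv N|_L$, which is implied by the hypothesis $M|_L\preccurlyeq N|_L$), interpreting $\bar d$ by a tuple $\bar e\in M$ with $\Psi^{M|_L}(\bar e)\geqslant-\epsilon$, one gets value $\geqslant-\epsilon$ in $M$; the same $\epsilon$-to-$0$ passage and affine compactness finish the argument.

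The one delicate point is the consistency verification, and within it the $\epsilon$ of slack: the $\sup$-sentence equality only guarantees values approaching the supremum, so in a single model one obtains $-\epsilon\leqslant(\cdot)$ rather than $0\leqslant(\cdot)$. I expect this to be the main thing to state carefully, and I would dispatch it precisely by the recorded fact that affine satisfiability of $-\epsilon\leqslant\sigma$ for all $\epsilon>0$ entails affine satisfiability of $0\leqslant\sigma$. Everything else—the affine closedness of the diagrams, the collapse of an arbitrary affine combination to a single pair $\Phi,\Sigma_0$ (resp. $\Phi,\Psi$), and reading the two elementarity conclusions off a model of $\Sigma$ (resp. $\Sigma'$)—is routine and mirrors the proof of Elementary JEP and AP.
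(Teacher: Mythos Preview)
Your argument for part (i) is correct and is exactly the paper's proof: one shows that $ediag_L(M)\cup Th_{L_2}(N)$ is affinely satisfiable by pushing $0\leqslant\sup_{\bar x}\Phi(\bar x)$ from $M|_L$ to $N|_L$ via $\equiv$, realizing $\bar c$ approximately in $N$, and then passing from $-\epsilon\leqslant\cdot$ to $0\leqslant\cdot$.

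Part (ii), however, has a genuine gap. Since $M|_L\preccurlyeq N|_L$, the set $M$ sits inside $N$, so the constants you add for $N$ include those for $M$; a condition $0\leqslant\Psi(\bar d)$ from $ediag_L(N)$ will in general involve constants for elements of $M$ as well as for elements of $N\setminus M$. When you then combine it with $0\leqslant\Phi(\bar c)$ from $ediag_{L_1}(M)$, the tuples $\bar c$ and $\bar d$ may overlap, and you are not free to interpret a shared constant $c_m$ once as $m$ (via $\bar c\mapsto\bar a$) and once as some other $e\in M$ (via $\bar d\mapsto\bar e$). Your verification does precisely this, and it is also why you were able to get by with only $M|_L\equiv N|_L$: that weakening is a symptom of the oversight, not a simplification.

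The fix is the one the paper uses. Write the condition from $ediag_L(N)$ as $0\leqslant\psi(\bar a,\bar b)$ with $\bar a\in M$ and $\bar b\in N\setminus M$. Now use the full hypothesis $M|_L\preccurlyeq N|_L$ (not merely $\equiv$) to conclude $0\leqslant\sup_{\bar y}\psi^{M}(\bar a,\bar y)$ with the $M$-parameters $\bar a$ held fixed. Then, interpreting only the $\bar b$-constants by a suitable $\bar b'\in M$, you get $-\epsilon\leqslant\phi^M(\bar a)+\psi^M(\bar a,\bar b')$, and the rest of your argument goes through. With this correction your proof coincides with the paper's.
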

\begin{proof} (i) By linear compactness, it is sufficient to prove that
$ediag(M|_{L})\cup Th(N)$ is linearly satisfiable.
Let $M\vDash0\leqslant\phi(\a)$ and $N\vDash0\leqslant\sigma$ where
$\phi(\x)$ is an $L$-formula and $\sigma$ is an $L_2$-sentence.
Then $M$ and $N$ satisfy $0\leqslant\sup_{\x}\phi(\x)$.
So, for each $\epsilon>0$, the condition $-\epsilon\leqslant\phi(\a)+\sigma$ is
satisfiable in $N$ with a suitable interpretation of $\a$.
We conclude that $0\leqslant\phi(\a)+\sigma$ is satisfiable.

(ii) It is sufficient to prove that $ediag(M)\cup ediag(N|_{L})$ is linearly satisfiable.
Again, let $M\vDash0\leqslant\phi(\a)$ and $N\vDash0\leqslant\psi(\a,\b)$ where
$\phi(\x)$ is an $L_1$-formula, $\psi$ is an $L$-formula and $\a\in M, \b\in N-M$.
Then, by the assumption, $M$ satisfies $0\leqslant\sup_{\y}\psi(\a,\y)$.
Hence, for each $\epsilon>0$, it satisfies the condition
$-\epsilon\leqslant\psi(\a,\b')$ with a suitable interpretation of $\b'$ in $M$.
So, $M$ satisfies $-\epsilon\leqslant\phi(\a)+\psi(\a,\b')$.
We conclude that $0\leqslant\phi(\a)+\psi(\a,\b)$ is satisfiable.
\end{proof}

\begin{theorem} \label{Robinson th} \emph{(Affine Robinson's consistency)}\index{Robinson's consistency}
Let $T_1$ and $T_2$ be satisfiable theories in $L_1$
and $L_2$ respectively. Assume $T=T_1\cap T_2$ is complete
in $L=L_1\cap L_2$. Then $T_1\cup T_2$ is satisfiable.
\end{theorem}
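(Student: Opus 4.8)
The plan is to produce a single $(L_1\cup L_2)$-structure whose $L_1$-reduct models $T_1$ and whose $L_2$-reduct models $T_2$, obtained by fusing the two threads of one interleaved elementary chain of $L$-reducts. First I would fix models $M_1\vDash T_1$ and $M_2\vDash T_2$. Since $T=T_1\cap T_2\subseteq T_i$, both $M_1$ and $M_2$ model $T$, hence so do their $L$-reducts $M_1|_L$ and $M_2|_L$; as $T$ is complete in $L$, every $L$-sentence takes the same value in both, so $M_1|_L\equiv M_2|_L$. This equivalence is the only place the completeness hypothesis is used.

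Next I would build, by induction, an interleaved chain
\[
A_0|_L\preccurlyeq B_0|_L\preccurlyeq A_1|_L\preccurlyeq B_1|_L\preccurlyeq\cdots
\]
of genuinely nested $L$-elementary extensions, where each $A_n$ is an $L_1$-structure with $A_n\vDash T_1$ and $A_0\preccurlyeq A_1\preccurlyeq\cdots$, and each $B_n$ is an $L_2$-structure with $B_n\vDash T_2$ and $B_0\preccurlyeq B_1\preccurlyeq\cdots$. To start I set $A_0=M_1$ and apply Lemma~\ref{Robinson lemma}(i) to $M_1,M_2$ (using $M_1|_L\equiv M_2|_L$) to obtain an $L_2$-structure $B_0$ with $A_0|_L\preccurlyeq B_0|_L$ and $M_2\equiv B_0$, whence $B_0\vDash T_2$; this single use of part (i) converts the initial $\equiv$ into a $\preccurlyeq$. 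Thereafter I alternate Lemma~\ref{Robinson lemma}(ii): from $A_n|_L\preccurlyeq B_n|_L$ it produces an $L_1$-extension $A_{n+1}\succcurlyeq A_n$ with $B_n|_L\preccurlyeq A_{n+1}|_L$ (so $A_{n+1}\vDash T_1$), and from $B_n|_L\preccurlyeq A_{n+1}|_L$ the same lemma with the roles of $L_1$ and $L_2$ interchanged produces an $L_2$-extension $B_{n+1}\succcurlyeq B_n$ with $A_{n+1}|_L\preccurlyeq B_{n+1}|_L$ (so $B_{n+1}\vDash T_2$). Since $d\in L$, all of these structures carry one and the same metric, so the extensions can be taken literally nested.

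Finally I would pass to unions. Let $A$ be the completion of $\bigcup_n A_n$ and $B$ the completion of $\bigcup_n B_n$. By Proposition~\ref{chain}, $A_0\preccurlyeq A$ and $B_0\preccurlyeq B$, so $A\vDash T_1$ and $B\vDash T_2$. Because the reduct operation commutes with union and with completion and the displayed chain is cofinally interleaved, $A|_L=\overline{\bigcup_n A_n|_L}=\overline{\bigcup_n B_n|_L}=B|_L$; in particular $A$ and $B$ share the same underlying complete metric space, and the symbols of $L$ receive identical interpretations in both. I then define the $(L_1\cup L_2)$-structure $P$ on this common space by interpreting the $L_1$-symbols as in $A$ and the $L_2$-symbols as in $B$; these agree on $L=L_1\cap L_2$, so $P$ is well defined, $P|_{L_1}=A\vDash T_1$ and $P|_{L_2}=B\vDash T_2$, hence $P\vDash T_1\cup T_2$.

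The main obstacle is bookkeeping rather than a single hard estimate: one must be sure the interleaved $L$-extensions are genuinely nested, not merely abstractly elementary, so that the two unions have literally the same $L$-reduct, which is exactly what legitimizes the final fusion. The bootstrap via part (i) and the symmetric reuse of part (ii) are precisely what the two halves of Lemma~\ref{Robinson lemma} are designed to supply.
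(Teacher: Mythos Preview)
Your proposal is correct and follows essentially the same approach as the paper's first proof: one application of Lemma~\ref{Robinson lemma}(i) to bootstrap $\equiv$ into $\preccurlyeq$, then alternating part (ii) to build an interleaved elementary chain whose two cofinal unions share the same $L$-reduct and fuse into a model of $T_1\cup T_2$. Your treatment is in fact slightly more careful than the paper's about completions and about why the two unions literally coincide as $L$-structures.
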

\begin{proof} Let $M_1\vDash T_1$ and $N\vDash T_2$.
Then $M_1|_{L}\equiv N|_{L}$.
By Lemma \ref{Robinson lemma} (i), there is a model $M_2$ in $L_2$
such that $M_1|_{L}\preccurlyeq M_2|_{L}$ and $N\equiv M_2$. So, $M_2\vDash T_2$.
Also, by \ref{Robinson lemma} (ii), there is a model $M_3$ in $L_1$
such that $M_1\preccurlyeq M_3$ and $M_2|_{L}\preccurlyeq M_3|_{L}$.
So, $M_3\vDash T_1$. Repeating (ii), one finds
a sequence $M_n$ of models ($M_{2n}$ in $L_2$ and $M_{2n+1}$ in $L_1$) such that
$$M_{2n}\vDash T_2, \ \ \ \ \ \ \ \ M_{2n+1}\vDash T_1$$
$$\ \ \ M_n\preccurlyeq M_{n+2},\ \ \ \ \ \ \ \ M_n|_{L}\preccurlyeq M_{n+1}|_{L}.$$
Let $K_1=\bigcup M_{2n+1}$ and $K_2=\bigcup M_{2n}$.
By elementary chain theorem, $K_1\vDash T_1$ and $K_2\vDash T_2$.
Furthermore, $K_1|_{L}=K_2|_{L}$. It follows that $K_1$, $K_2$
have a common expansion to a model of $T_1\cup T_2$.

{\sc A shorter proof} is obtained by using the isomorphism theorem.
Let $M_i$ be a model of $T_i$ and $N_i$ be its reduction to $L$ for $i=1,2$.
Then $N_1\equiv N_2$. By the isomorphism theorem, there are charge spaces
$(I,\mathcal A,\mu)$ and $(J,\mathcal B,\nu)$ such that $N_1$ is $\mathcal A$-meanable,
$N_2$ is $\mathcal B$-meanable and $N^\mu_1\simeq N^\nu_2$.
Let $f:N_1^\mu\rightarrow N_2^\nu$ be an isomorphism.
$M_1^\mu$ and $N_1^\mu$ have the same ambient set.
So, for each $c,F,R\in L_2$ and $\a\in M_1^\mu$ define
$$c^{M_1^\mu}=f^{-1}(c^{N_2^\nu})$$
$$F^{M_1^\mu}(\a)=f^{-1}(F^{N_2^\nu}(f(\a)))$$
$$R^{M_1^\mu}(\a)=R^{N_2^\nu}(f(\a)).$$
Then, $M^\mu_1$ is a structure in $L_1\cup L_2$ and a model of $T_1\cup T_2$.
A better imagination is obtained if we suppose that $f$ is the identity map.
\end{proof}

By $\Gamma\leqslant\phi$ is meant the set $\{\theta\leqslant\phi:\theta\in\Gamma\}$.
Similar notations have their obvious meanings.

\begin{proposition} \label{interpolation1} \emph{(Affine Craig interpolation)\index{Craig's interpolation}}
Assume $\vDash\phi\leqslant\psi$ where $\phi$
is an $L_1$-sentence and $\psi$ is an $L_2$-sentence.
Let $L=L_1\cap L_2$.
Then for each $\epsilon>0$ there exists $\theta$ in $L$ such that
$$\vDash\phi-\epsilon\leqslant\theta\leqslant\psi.$$
\end{proposition}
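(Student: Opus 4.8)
The plan is to derive the interpolation statement from the affine Robinson consistency theorem (Theorem \ref{Robinson th}), imitating the classical passage from joint consistency to Craig interpolation, with the real-valued slack absorbed into the parameter $\epsilon$. First I would reduce the problem: it suffices to produce an $L$-sentence $\theta$ with $\vDash\phi\leqslant\theta$ and $\vDash\theta\leqslant\psi+\epsilon$, because then $\theta-\epsilon$ is again an $L$-sentence and satisfies $\vDash\phi-\epsilon\leqslant\theta-\epsilon\leqslant\psi$, which is exactly the required interpolant. Accordingly I set $\Gamma=\{\theta:\ \theta\ \text{is an }L\text{-sentence and}\ \vDash\phi\leqslant\theta\}$, noting that $\Gamma$ is convex and closed under subtraction of nonnegative constants, and I assume toward a contradiction that no $\theta\in\Gamma$ satisfies $\vDash\theta\leqslant\psi+\epsilon$.

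Second, I would manufacture an $L_2$-model in which $\psi$ lies uniformly $\epsilon$ below $\Gamma$. Consider the $L_2$-theory $\Sigma_2=\{\psi+\epsilon\leqslant\theta:\ \theta\in\Gamma\}$, each condition lying in $L_2$ since $\theta\in L\subseteq L_2$. To see that $\Sigma_2$ is affinely satisfiable, take a nonnegative combination of finitely many of its conditions; dividing by the total weight and using convexity of $\Gamma$ collapses it to a single condition $\psi+\epsilon\leqslant\theta^{*}$ with $\theta^{*}\in\Gamma$, and this is satisfiable, for otherwise $\vDash\theta^{*}\leqslant\psi+\epsilon-\rho$ for some $\rho>0$, contradicting the standing assumption. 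By affine compactness (Theorem \ref{compactness}) there is $N\vDash\Sigma_2$, so $\psi^N+\epsilon\leqslant\theta^N$ for every $\theta\in\Gamma$.

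Third, and this is the heart of the argument, I would produce an $L_1$-model $M$ with $M|_{L}\equiv N|_{L}$ in which $\phi$ is large. Let $T=Th(N|_{L})$, a complete $L$-theory, and consider the $L_1$-theory $T\cup\{\psi^N+\tfrac{\epsilon}{2}\leqslant\phi\}$. If this were not affinely satisfiable, affine compactness would yield a nonnegative combination of conditions of $T$ together with $\psi^N+\tfrac\epsilon2\leqslant\phi$ that is unsatisfiable; the coefficient of the last condition cannot vanish, since otherwise $N|_{L}$ itself would witness satisfiability, so after normalizing it to $1$ the combination rearranges to $\vDash\phi\leqslant\theta$ for an explicit $L$-sentence of the form $\theta=\psi^N+\tfrac\epsilon2+\sum_j b_j(\chi_j-\chi_j^N)+\sum_k c_k(\chi_k^N-\chi_k)-\rho$ with $\chi_j,\chi_k$ $L$-sentences and $\rho>0$. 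Thus $\theta\in\Gamma$; but evaluating $\theta$ in $N$ makes every difference $\chi-\chi^N$ vanish (because $N|_{L}\vDash T$), so $\theta^N=\psi^N+\tfrac\epsilon2-\rho<\psi^N+\epsilon$, contradicting the defining property of $N$. Hence the $L_1$-theory is affinely satisfiable and, by Theorem \ref{compactness}, has a model $M$ with $M|_{L}\vDash T$ and $\phi^M\geqslant\psi^N+\tfrac\epsilon2$.

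Finally I would close with Robinson consistency. The theories $Th(M)$ in $L_1$ and $Th(N)$ in $L_2$ are satisfiable, and their intersection, being a consistent $L$-theory containing the complete theory $T$, equals $T$; so Theorem \ref{Robinson th} gives a model $K$ of $Th(M)\cup Th(N)$. Then $\phi^K=\phi^M$ and $\psi^K=\psi^N$, whence $\phi^K\geqslant\psi^K+\tfrac\epsilon2>\psi^K$, contradicting $\vDash\phi\leqslant\psi$. This contradiction shows the desired $\theta$ exists, completing the proof. The step I expect to be the main obstacle is the third one: organizing the affine combination so that the common-language sentences cancel exactly against their $N$-values, which is what converts an abstract failure of affine satisfiability into a concrete member of $\Gamma$ violating the lower bound secured in the second step.
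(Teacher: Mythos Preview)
Your proof is correct and follows essentially the same route as the paper's: both deduce interpolation from affine Robinson consistency (Theorem \ref{Robinson th}) by introducing a convex set of candidate $L$-interpolants, using affine compactness to produce a model where the interpolants are uniformly separated from one side, and then showing the other side can be matched over the same complete $L$-theory. The only cosmetic difference is that the paper works with $\Gamma=\{\theta\in L:\ \vDash\theta\leqslant\psi\}$ and first builds the $L_1$-model, whereas you dualize to $\Gamma=\{\theta\in L:\ \vDash\phi\leqslant\theta\}$, build the $L_2$-model first, and add a small preliminary reduction shifting the $\epsilon$; the core mechanism is identical.
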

\begin{proof} Let $$\Gamma=\{\theta:\ \ \theta\ \mbox{is\ an}\ L\mbox{-sentence}\ \ \ \&\ \ \vDash \theta\leqslant\psi\}.$$
We claim that for each $\epsilon>0$ there exists $\theta\in\Gamma$ such that $\vDash\phi-\epsilon\leqslant\theta$.
Assume not. Then, by affine compactness,
$\Gamma\leqslant\phi-\epsilon$ is satisfiable for some $\epsilon>0$.
Let $T_1$ be a completion of this theory in $L_1$ and $T$ be its restriction to $L$.
There is unique real number $r$ such that $\phi-\epsilon=r\in T_1$.
Also, $T$ is a complete $L$-theory. Below, we show that $T\cup\{\psi\leqslant r\}$ is satisfiable.
Then, we conclude by Theorem \ref{Robinson th} that
$$T\cup\{\phi-\epsilon=r,\ \psi\leqslant r\}$$ is satisfiable which is a contradiction.

Assume $T\cup\{\psi\leqslant r\}$ is not satisfiable.
Then, there are $\alpha,\beta\geqslant0$ and $0\leqslant\sigma$ in $T$
such that $\beta\psi\leqslant\alpha\sigma+\beta r$ is not satisfiable.
So, for some $\delta>0$, $$\vDash\alpha\sigma+\beta r\leqslant\beta\psi-\delta$$
So, since $\beta\neq0$, $$\vDash\frac{\alpha}{\beta}\sigma+r+\frac{\delta}{\beta}\leqslant\psi.$$
Therefore, $\frac{\alpha}{\beta}\sigma+r+\frac{\delta}{\beta}\in\Gamma$ and hence
$\frac{\alpha}{\beta}\sigma+r+\frac{\delta}{\beta}\leqslant\phi-\epsilon=r$ belongs to $T_1$.
This is a contradiction. Now, $T_1$ and $T\cup\{\psi\leqslant r\}$ are satisfiable.
\end{proof}

\begin{corollary} \label{interpolation2}
Let $\phi$ be an $L_1$-sentence and $\psi$ be an $L_2$-sentence.
Assume $0\leqslant\phi\vDash0\leqslant\psi$.
Then, for each $\epsilon>0$ there exists $\theta$ in $L_1\cap L_2$
such that $$0\leqslant\phi\vDash0\leqslant\theta, \ \ \ \ \&\ \ \ \ \
0\leqslant\theta\vDash0\leqslant\phi+\epsilon.$$
\end{corollary}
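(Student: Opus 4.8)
The plan is to argue directly from Robinson's consistency theorem (Theorem \ref{Robinson th}) together with affine compactness and the finitary forms of affine entailment in Lemma \ref{proofs}, in the same spirit as the proof of Affine Craig interpolation (Proposition \ref{interpolation1}). Write $L=L_1\cap L_2$ and fix $\epsilon>0$. The object to analyse is the convex cone
$$\Gamma=\{\theta:\ \theta\ \mbox{is\ an}\ L\mbox{-sentence\ and}\ 0\leqslant\phi\vDash0\leqslant\theta\}$$
of $L$-sentences whose nonnegativity is already forced by $0\leqslant\phi$. This cone is closed under addition and nonnegative scaling and contains every nonnegative constant, and \emph{every} $\theta\in\Gamma$ satisfies the first required entailment $0\leqslant\phi\vDash0\leqslant\theta$ by definition. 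So the whole content of the statement is to produce one $\theta\in\Gamma$ which in addition satisfies $0\leqslant\theta\vDash0\leqslant\phi+\epsilon$.

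I would establish this by contradiction. Assume no member of $\Gamma$ works, i.e. for every $\theta\in\Gamma$ the theory $\{0\leqslant\theta,\ \phi\leqslant-\epsilon\}$ is satisfiable. Because $\Gamma$ is a convex cone, any finite affine combination of the conditions $0\leqslant\theta$ (for $\theta\in\Gamma$) is again of the form $0\leqslant\theta'$ with $\theta'\in\Gamma$; hence the full set $\{0\leqslant\theta:\ \theta\in\Gamma\}\cup\{\phi\leqslant-\epsilon\}$ is affinely satisfiable, and by affine compactness (Theorem \ref{compactness}) it has a model $M^{*}$ with $\phi^{M^{*}}\leqslant-\epsilon$ while $0\leqslant\theta^{M^{*}}$ for all $\theta\in\Gamma$. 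The goal is now to exhibit a single $\theta\in\Gamma$ that is negative in $M^{*}$, which contradicts the previous line. To feed in the hypothesis, I would pass to the complete $L$-theory $T=Th(M^{*}|_{L})$ and use Lemma \ref{proofs} to verify that $T\cup\{0\leqslant\phi\}$ is satisfiable as an $L_1$-theory; a model $N$ of it then has $N|_{L}\equiv M^{*}|_{L}$ and $0\leqslant\phi^{N}$, whence $0\leqslant\psi^{N}$ by the hypothesis $0\leqslant\phi\vDash0\leqslant\psi$.

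The decisive step is to turn this $L_1$-model $N$ back into information about $M^{*}$. Here I would amalgamate $N$ and $M^{*}$ over their common complete $L$-theory $T$ via Robinson's consistency theorem \ref{Robinson th}, obtaining a single $(L_1\cup L_2)$-structure into which the relevant reducts embed elementarily; passing from the behaviour of $\phi$ (controlled through $\psi$ and $N$) back along this amalgam to the members of $\Gamma$ should contradict $\phi^{M^{*}}\leqslant-\epsilon$, closing the argument. I expect this amalgamation to be the main obstacle: one must ensure that the separating sentence one extracts genuinely lies in $L$ rather than merely in $L_1$, and that the several $\epsilon$-slacks introduced by the two clauses of Lemma \ref{proofs} and by Robinson's theorem are kept mutually consistent so that the upper threshold returned for $\theta$ is $\phi+\epsilon$. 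The bookkeeping of these slacks, and the rescaling by the nonnegative coefficient produced when the entailment $0\leqslant\phi\vDash0\leqslant\theta$ is converted into an inequality through Lemma \ref{proofs}(i), is where the care will be concentrated.
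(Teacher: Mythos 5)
There is a genuine gap, and it sits exactly where you predicted: the amalgamation step. But the root cause is not your bookkeeping of $\epsilon$-slacks; it is that the statement you are trying to prove is false as printed. The conclusion ``$0\leqslant\theta\vDash0\leqslant\phi+\epsilon$'' is a typo for ``$0\leqslant\theta\vDash0\leqslant\psi+\epsilon$'': the paper's own proof applies Lemma \ref{proofs}(i) to get $\alpha\geqslant0$ with $\vDash\alpha\phi\leqslant\psi+\frac{\epsilon}{2}$, then Proposition \ref{interpolation1} to get $\theta$ with $\vDash\alpha\phi\leqslant\theta\leqslant\psi+\epsilon$, which yields the $\psi$-version, not the $\phi$-version. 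To see that the $\phi$-version fails, take $L_1=\{d,P\}$ with $P$ a unary predicate, $L_2=\{d\}$ (so $L=L_1\cap L_2=\{d\}$), $\phi=\sup_xP(x)-\frac{1}{2}$ and $\psi=1$; then $0\leqslant\phi\vDash0\leqslant\psi$ trivially. If some $L$-sentence $\theta$ satisfied both entailments with $\epsilon=\frac{1}{4}$, then any metric space expanded by $P\equiv1$ models $0\leqslant\phi$, hence $0\leqslant\theta$; but $\theta$ depends only on the metric reduct, so re-expanding the same metric space by $P\equiv0$ still gives $0\leqslant\theta$ while $\phi=-\frac{1}{2}<-\frac{1}{4}$, contradicting $0\leqslant\theta\vDash0\leqslant\phi+\frac{1}{4}$. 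Since your target is false, your argument must break somewhere, and it breaks at the Robinson step: your $N$ and $M^{*}$ are both $L_1$-structures, and their complete $L_1$-theories contradict each other outright ($\phi^{N}\geqslant0$ versus $\phi^{M^{*}}\leqslant-\epsilon$). Theorem \ref{Robinson th} needs two theories in \emph{different} languages whose intersection is complete in the common language; here the relevant common language is all of $L_1$, the intersection of the two theories is nowhere near complete in $L_1$, and indeed no $(L_1\cup L_2)$-structure can satisfy both. No amount of care with the slacks rescues this.

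The good news is that your architecture is exactly right for the corrected statement. Replace your contradiction hypothesis by: for every $\theta\in\Gamma$ the theory $\{0\leqslant\theta,\ \psi\leqslant-\epsilon\}$ is satisfiable. Your cone argument and affine compactness then produce an $L_2$-structure $M^{*}$ with $0\leqslant\theta^{M^{*}}$ for all $\theta\in\Gamma$ and $\psi^{M^{*}}\leqslant-\epsilon$. Your argument that $T\cup\{0\leqslant\phi\}$ is satisfiable, where $T=Th(M^{*}|_{L})$, goes through verbatim: an unsatisfiability would, via Lemma \ref{proofs} and rescaling, give an entailment $\vDash\phi\leqslant\sigma$ for an $L$-sentence $\sigma$ that is strictly negative in $M^{*}$, putting $-\sigma$'s witness into $\Gamma$ and contradicting $M^{*}\vDash\{0\leqslant\theta:\theta\in\Gamma\}$. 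This yields an $L_1$-structure $N\vDash T\cup\{0\leqslant\phi\}$, and now Robinson genuinely applies: $Th_{L_1}(N)\cap Th_{L_2}(M^{*})$ is exactly $T$, which is complete in $L$, so some $(L_1\cup L_2)$-structure $K$ models both theories; then $0\leqslant\phi^{K}$ and $\psi^{K}\leqslant-\epsilon$, contradicting $0\leqslant\phi\vDash0\leqslant\psi$. This corrected version is a legitimate alternative to the paper's proof, which is much shorter (one application of Lemma \ref{proofs}(i), one of Proposition \ref{interpolation1}, and a shift by $\frac{\epsilon}{2}$); your route in effect re-runs the proof of Proposition \ref{interpolation1} inside the corollary instead of invoking it as a black box.
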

\begin{proof} By Lemma \ref{proofs}, there exists $\alpha\geqslant0$ such that
$\vDash\alpha\phi\leqslant\psi+\frac{\epsilon}{2}$. By Proposition \ref{interpolation1}, there exists $\theta$ in $L_1\cap L_2$ such that
$\vDash\alpha\phi\leqslant\theta\leqslant\psi+\epsilon$. Therefore,
$$0\leqslant\phi\ \vDash\ 0\leqslant\theta\ \vDash\ 0\leqslant\phi+\epsilon.$$
\end{proof}
\vspace{2mm}

Let $\Sigma(P)$ be an $L\cup\{P\}$-theory.
$\Sigma(P)$ defines $P$ \emph{implicitly} if
$$\Sigma(P)\cup\Sigma(P')\vDash P(\x)=P'(\x).$$
$\Sigma(P)$ defines $P$ \emph{explicitly} if for each $\epsilon>0$ there exists an
$L$-formula $\theta(\x)$ such that
$$\Sigma(P)\vDash 0\leqslant P-\theta\leqslant\epsilon.$$

\begin{proposition} \label{Beth} \emph{(Affine Beth's definability)}\index{Beth's definability}
Assume $\Sigma(P)$ is satisfiable. Then $\Sigma(P)$ defines $P$ explicitly
if and only if $\Sigma(P)$ defines $P$ implicitly.
\end{proposition}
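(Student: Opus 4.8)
The plan is to reduce explicit definability to the implicit one by running the implicit definition through affine Craig interpolation (Proposition~\ref{interpolation1}), following the classical argument but carefully tracking the additive $\epsilon$-losses that affine logic forces in place of exact Boolean deductions. The direction ``explicit $\Rightarrow$ implicit'' is immediate: if for every $\epsilon>0$ there is an $L$-formula $\theta$ with $\Sigma(P)\vDash 0\leqslant P-\theta\leqslant\epsilon$, then in any model carrying interpretations of both $P$ and $P'$ that satisfy $\Sigma(P)\cup\Sigma(P')$, both $P(\x)$ and $P'(\x)$ lie within $\epsilon$ of $\theta^{\cdot}(\x)$; letting $\epsilon\to 0$ forces $\Sigma(P)\cup\Sigma(P')\vDash P(\x)=P'(\x)$.

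For the converse, I would introduce a tuple $\c$ of new constants for $\x$ and set $L_1=L(\c)\cup\{P\}$ and $L_2=L(\c)\cup\{P'\}$, so that $L_1\cap L_2=L(\c)$; we may assume $\Sigma(P)$ is written with conditions of the form $0\leqslant\sigma$. Implicit definability gives $\Sigma(P)\cup\Sigma(P')\vDash 0\leqslant P'(\c)-P(\c)$. First I would apply Lemma~\ref{proofs}(i) with $T=\emptyset$, with $S=\Sigma(P)\cup\Sigma(P')$, and with the fixed closed condition $P'(\c)-P(\c)$: for each $\epsilon>0$ there is $0\leqslant\sigma$ in the affine closure of $S$ such that $\vDash\sigma\leqslant P'(\c)-P(\c)+\epsilon$. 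Splitting the nonnegative combination $\sigma$ according to whether each summand comes from $\Sigma(P)$ or from $\Sigma(P')$ writes $\sigma=\sigma_1+\sigma_2$, where $\sigma_1$ is an $L_1$-sentence with $\Sigma(P)\vDash 0\leqslant\sigma_1$ and $\sigma_2$ is an $L_2$-sentence with $\Sigma(P')\vDash 0\leqslant\sigma_2$. Rearranging gives $\vDash \sigma_1+P(\c)\leqslant P'(\c)-\sigma_2+\epsilon$, a valid inequality between the $L_1$-sentence $\phi=\sigma_1+P(\c)$ and the $L_2$-sentence $\psi=P'(\c)-\sigma_2+\epsilon$.

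Now affine Craig interpolation produces an $L(\c)$-sentence $\theta$ with $\vDash \phi-\epsilon\leqslant\theta\leqslant\psi$, which I read as an $L$-formula $\theta(\x)$ after replacing $\c$ by $\x$. Feeding $\Sigma(P)\vDash 0\leqslant\sigma_1$ into the left inequality yields $\Sigma(P)\vDash P(\c)\leqslant\theta+\epsilon$; feeding $\Sigma(P')\vDash 0\leqslant\sigma_2$ into the right one yields $\Sigma(P')\vDash\theta\leqslant P'(\c)+\epsilon$, and relabelling $P'$ back to $P$ (legitimate since $\Sigma(P')$ is a verbatim copy of $\Sigma(P)$) gives $\Sigma(P)\vDash\theta\leqslant P(\c)+\epsilon$. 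Generalizing on the generic constants $\c$ turns these into $\Sigma(P)\vDash\theta(\x)-\epsilon\leqslant P(\x)\leqslant\theta(\x)+\epsilon$; replacing $\theta$ by $\theta-\epsilon$ and renaming $2\epsilon$ as $\epsilon$ delivers the required $\Sigma(P)\vDash 0\leqslant P-\theta\leqslant\epsilon$. The main obstacle, and the only genuinely affine point, is the step of splitting the affine-closure witness $\sigma$ cleanly into its $L_1$- and $L_2$-parts so that interpolation can be invoked; the rest is bookkeeping of the additive $\epsilon$-errors that here replace the exact entailments of the classical proof.
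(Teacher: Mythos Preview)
Your proof is correct and follows essentially the same route as the paper: extract single witness conditions from $\Sigma(P)$ and $\Sigma(P')$ via affine compactness (you package this through Lemma~\ref{proofs}(i), the paper through direct unsatisfiability of $\Sigma(P)\cup\Sigma(P')\cup\{P\leqslant P'-\epsilon\}$), apply affine Craig interpolation, then relabel $P'$ back to $P$. Your explicit introduction of constants $\c$ and the decomposition $\sigma=\sigma_1+\sigma_2$ make the bookkeeping slightly more transparent than the paper's presentation, but the argument is the same.
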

\begin{proof} Let us prove the nontrivial direction. Assume $\Sigma(P)$ defines $P$ implicitly.
Then, $$\Sigma(P)\cup\Sigma(P')\cup\{P\leqslant P'-\epsilon\}$$ is unsatisfiable for each $\epsilon>0$.
We may further assume $\Sigma(P)$ is affinely closed.
So, for some $0\leqslant\phi$ in $\Sigma(P)$ and $0\leqslant\psi$ in $\Sigma(P')$, the condition
$P\leqslant\phi+\psi+P'-\epsilon$
is unsatisfiable.
This means that $$\vDash\ \psi+P'-\epsilon\leqslant P-\phi.$$
By the Proposition \ref{interpolation1}, there is a formula $\theta(\x)$ in $L$ such that
$$\vDash\ \psi+P'-2\epsilon\leqslant\theta\leqslant P-\phi.$$
Let $\psi_0$ be the result of replacing $P'$ with $P$ in $\psi$.
Then $0\leqslant\psi_0$ belongs to $\Sigma(P)$ and $$\vDash\psi_0+P-2\epsilon\leqslant\theta\leqslant P-\phi.$$
Hence $$0\leqslant\phi,\ 0\leqslant\psi_0\ \vDash\ 0\leqslant P-\theta\leqslant2\epsilon$$
$$\Sigma(P)\ \vDash\ 0\leqslant P-\theta\leqslant2\epsilon.$$
\end{proof}
\vspace{2mm}

A class $\mathcal K$ of $L$-structures is called a \emph{projective class}\index{projective class}
if there is a language $\bar L\supseteq L$ and a set $\Sigma$ of $\bar L$-conditions such that $\mathcal K$ is
the class of $L$-reductions of models of $\Sigma$. If $L=\bar L$, it is an \emph{elementary class}
and a \emph{basic} one if furthermore $\Sigma$ consists of a single condition.
Two projective classes $\mathcal K_1, \mathcal K_2$ (coming from $L_1,L_2\supseteq L$ respectively)
are \emph{separated}\index{separated} by a basic elementary class in $L$ if there is a sentence $\sigma$ in $L$ and $r<s$ such that
$$\mathcal K_1\subseteq \textrm{Mod}(\sigma\leqslant r)\ \ \ and \ \ \ \mathcal K_2\subseteq\textrm{Mod}(s\leqslant\sigma).$$

For each sentence $\sigma$ define a functional $f_{\sigma}$ on the set of complete theories by
$$f_{\sigma}(T)=T(\sigma).$$

\begin{corollary}
Any two disjoint projective classes $\mathcal K_1$ and $\mathcal K_2$
are separated by a basic elementary class in the common language.
\end{corollary}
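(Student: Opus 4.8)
The plan is to realise the separation as a Hahn--Banach separation inside the compact convex set $K=K(L)$ of complete $L$-theories, using the functionals $f_\sigma$ introduced above. First I would reduce to the case $L_1\cap L_2=L$: since a projective class depends only on the $L$-reducts of the models, the auxiliary symbols of the two witnessing languages $L_1\supseteq L$ and $L_2\supseteq L$ can be renamed apart without changing $\mathcal K_1$ or $\mathcal K_2$. Let $\Sigma_1$ (an $L_1$-theory) and $\Sigma_2$ (an $L_2$-theory) witness $\mathcal K_1$ and $\mathcal K_2$; we may assume both are affinely closed and (discarding the trivial case of an empty class) that $\mathcal K_1,\mathcal K_2\neq\emptyset$. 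For $i=1,2$ set $X_i=\{Th_L(M):M\in\mathcal K_i\}\subseteq K$. The goal is to show that $X_1$ and $X_2$ are disjoint nonempty closed convex subsets of $(\mathbb{D}(L)^*,\mathrm{weak}^*)$, so that Theorem \ref{Rudin} applies.

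Convexity is immediate from the affine-combination (ultramean on a two-point space) construction: if $M_1,M_2\models\Sigma_i$ then $\gamma M_1+(1-\gamma)M_2\models\Sigma_i$, because every closed condition is evaluated by $\sigma\mapsto\gamma\sigma^{M_1}+(1-\gamma)\sigma^{M_2}$, and the $L$-reduct of this combination has $L$-theory $\gamma\,Th_L(M_1)+(1-\gamma)Th_L(M_2)$. Closedness is the technical heart and is where I would spend the effort. Given $T_0\notin X_i$, the $L_i$-theory $T_0\cup\Sigma_i$ is unsatisfiable, hence by affine compactness (Theorem \ref{compactness}) not affinely satisfiable; separating the $\Sigma_i$-part from the $T_0$-part of an unsatisfiable condition in the affine closure yields an $L$-sentence $\theta$, its $T_0$-value $T_0(\theta)$, and some $\epsilon>0$ with $\Sigma_i\vDash T_0(\theta)+\epsilon\leqslant\theta$. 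Thus $\{T\in K:\ T(\theta)<T_0(\theta)+\epsilon\}$ is a weak$^*$-open neighbourhood of $T_0$ missing $X_i$, so $X_i$ is closed.

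For disjointness I would invoke the affine Robinson consistency theorem (Theorem \ref{Robinson th}). If $T\in X_1\cap X_2$, choose $M_1\models\Sigma_1$ and $M_2\models\Sigma_2$ whose $L$-reducts both have complete $L$-theory $T$; then $Th_{L_1}(M_1)\cap Th_{L_2}(M_2)=T$ is complete in $L=L_1\cap L_2$, so $Th(M_1)\cup Th(M_2)$ is satisfiable, and any model of it reduces to a member of $\mathcal K_1\cap\mathcal K_2$, contradicting disjointness. Finally, Theorem \ref{Rudin} provides a continuous linear functional $f$ on $(\mathbb{D}(L)^*,\mathrm{weak}^*)$ and reals $r<s$ with $f(X_1)<r<s<f(X_2)$; by Theorem \ref{Conway} this dual is $\mathbb{D}(L)$ itself, so $f=f_\sigma$ for some $L$-sentence $\sigma$. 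Reading this off, $\sigma^M<r$ for all $M\in\mathcal K_1$ and $\sigma^M>s$ for all $M\in\mathcal K_2$, whence $\mathcal K_1\subseteq\mathrm{Mod}(\sigma\leqslant r)$ and $\mathcal K_2\subseteq\mathrm{Mod}(s\leqslant\sigma)$. The main obstacle is the closedness of $X_i$, namely the bookkeeping that converts unsatisfiability of $T_0\cup\Sigma_i$ into a single separating $L$-sentence.
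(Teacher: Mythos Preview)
Your proof is correct and follows essentially the same approach as the paper: your sets $X_i$ coincide with the paper's $A=\{T\in K:\ T\cup\Sigma\ \text{satisfiable}\}$ and $B=\{T\in K:\ T\cup\Delta\ \text{satisfiable}\}$, you establish they are disjoint compact convex subsets of $\mathbb{D}(L)^*$ using Robinson's consistency for disjointness, and you separate them via Theorems \ref{Rudin} and \ref{Conway} to obtain the sentence $\sigma$. The paper's version is terser---it simply asserts that $A,B$ are compact convex---whereas you spell out the closedness and convexity arguments and the initial renaming to make $L_1\cap L_2=L$.
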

\begin{proof}
Let $\Sigma$ be an $L_1$-theory, $\Delta$ be an $L_2$-theory, $L=L_1\cap L_2$ and
$$\mathcal K_1=\{M|_L:\ M\vDash\Sigma\}, \ \ \ \ \mathcal K_2=\{M|_L:\ M\vDash\Delta\}.$$
Let $A$ be the set of complete $L$-theories $T$ such that $T\cup\Sigma$ is satisfiable
and $B$ be the set of complete $L$-theories $T$ such that $T\cup\Delta$ is satisfiable.
Clearly, $A,B\subseteq\mathbb{D}(L)^*$ are compact convex.
Moreover, by Theorem \ref{Robinson th}, they are disjoint
since otherwise $\mathcal K_1\cap \mathcal K_2$ would be nonempty.
By Theorems \ref{Rudin} and \ref{Conway}, there exist an $L$-sentence
$\sigma$ and $r,s\in\Rn$ such that
$$\ \ \ \ f_{\sigma}(T_1)\leqslant r<s\leqslant f_{\sigma}(T_2)
\ \ \ \ \ \ \ \ \ \ \ \forall T_1\in A\ \ \forall T_2\in B.$$
In particular, $\Sigma\vDash\sigma\leqslant r$ and
$\Delta\vDash s\leqslant\sigma$ and hence
$$\mathcal K_1\subseteq\mbox{Mod}(\sigma\leqslant r), \ \ \ \ \
\mathcal K_2\subseteq\mbox{Mod}(s\leqslant\sigma).$$
\end{proof}

Now, we come to the proof of Lyndon interpolation theorem.
Here, the form stated in Proposition \ref{interpolation2} is intended.
We modify the first order proof stated in \cite{CK1} for the present situation.

A relation symbol is said to occur \emph{positively} (resp. \emph{negatively})
in the formula $\phi$ if it is within the scope of an even
(resp. odd) number of minus symbols.
A formula has \emph{negative normal form} (nnf) if it is built up
from atomic and negative atomic formulas
using the connectives $+$ and $r\cdot$ where $r\geqslant0$.
Every formula is equivalent to an nnf formula.
$\sigma^*$ denotes the nnf of $-\sigma$.
For a set $\Theta$ of formulas $0$$\leqslant$$\Theta$ denotes
$\{0\leqslant\sigma|\ \sigma\in\Theta\}$.

Lyndon interpolation holds for relational languages.
Let $\phi,\psi$ be sentences in the relational language $L$.
Let $C$ be a countable set of new constant symbols and $\bar{L}=L\cup C$.
Let $\Phi$ be the set of all rational nnf sentences $\sigma$ in $\bar L$ such that
every relation symbol (excluding $d$) which occurs positively (resp. negatively)
in $\sigma$, occurs positively (resp. negatively) in $\phi$.
The set $\Psi$ is defined in the same way for $\psi$.
Let $\Psi^*=\{\sigma^*|\ \ \sigma\in\Psi\}$.
Then, $\Phi,\Psi,\Psi^*$ are closed under $+$ and $r\cdot$ for $r\geqslant0$.
Two theories $T\subseteq(0$$\leqslant$$\Phi)$ and $U\subseteq(0$$\leqslant$$\Psi^*)$
are called \emph{separable}
if there are $\theta\in\Phi\cap\Psi$ and $\delta>0$ such that
$T\vDash0\leqslant\theta$ and $U\vDash\theta\leqslant-\delta$.
Otherwise, they are called \emph{inseparable}.

\begin{theorem} \label{main}
Let $0\leqslant\phi\vDash0\leqslant\psi$.
Then for each $\epsilon>0$ there exists $\theta$ such that:\\
- $0\leqslant\phi\ \vDash\ 0\leqslant\theta$ and
$0\leqslant\theta\ \vDash\ 0\leqslant\psi+\epsilon$\\
- every relation symbol (excluding $d$) which occurs positively (resp. negatively)
in $\theta$ occurs positively (resp. negatively) in both $\phi$ and $\psi$.
\end{theorem}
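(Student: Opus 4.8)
The plan is to reduce the statement to a separation property of two theories living in the positive cones $\Phi$ and $\Psi^*$, and to produce the separator (the desired $\theta$) by a Henkin-style construction modelled on Lemma \ref{witness}. Fix $\epsilon>0$ and, after replacing $\phi,\psi$ by rational nnf approximants (absorbing the error into $\epsilon$), set
$$T_0=\{0\leqslant\phi\}\subseteq(0\leqslant\Phi),\qquad U_0=\{0\leqslant\psi^*-\epsilon\}\subseteq(0\leqslant\Psi^*),$$
noting that $\phi\in\Phi$ and $\psi^*-\epsilon\in\Psi^*$ trivially. First I would check that if $(T_0,U_0)$ is separable we are done: a separator $\theta\in\Phi\cap\Psi$ with $T_0\vDash0\leqslant\theta$ and $U_0\vDash\theta\leqslant-\delta$ already has the right polarity pattern, and the two entailments read $0\leqslant\phi\vDash0\leqslant\theta$ and ``$\psi\leqslant-\epsilon$ implies $\theta\leqslant-\delta<0$''. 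The contrapositive of the latter gives $0\leqslant\theta\vDash-\epsilon\leqslant\psi$. The Henkin constants of $C$ occurring in $\theta$ are then eliminated by replacing $\theta$ with $\theta_1=\inf_{\bar x}\theta$ (legitimate since these constants occur in neither $\phi$ nor $\psi$, and $\inf$ preserves relation polarities); since $\theta_1\leqslant\theta\leqslant\sup_{\bar x}\theta$, it still satisfies both clauses, so $\theta_1$ is the required $L$-interpolant. Thus it suffices to show $(T_0,U_0)$ is separable, equivalently that its inseparability contradicts the hypothesis $0\leqslant\phi\vDash0\leqslant\psi$.

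Assume then that $(T_0,U_0)$ is inseparable. As in Lemma \ref{witness} I would enlarge it to a pair $(T,U)$ that is maximal inseparable, with $T$ affinely closed in $(0\leqslant\Phi)$, $U$ affinely closed in $(0\leqslant\Psi^*)$, and both possessing the witness property: whenever $T\vDash q\leqslant\sup_x\chi$ with $\chi(x)\in\Phi$ rational there is $c\in C$ with $q-\eta\leqslant\chi(c)$ in $T$ (and dually for $U$). Here the witness condition $0\leqslant\chi(c)-(q-\eta)$ again lies in $(0\leqslant\Phi)$, because subtracting a constant introduces no relation symbols. From the completed $T$ and $U$ I would read off, à la Henkin and Proposition \ref{exist}, two prestructures $M\vDash T$ and $N\vDash U$ whose common universe is $C$ modulo $d=0$. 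Because $d$ is exempt from the polarity restriction, both $d(c,c')$ and $-d(c,c')$ lie in $\Phi\cap\Psi$, so maximality forces $d^M=d^N$ on $C$ (and likewise pins any constants of $L$); hence $M$ and $N$ share one pseudometric space, differing only in the values of the relation symbols.

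The core of the proof is then a merging step driven by polarity. For a relation symbol $R$ and tuple $\bar c\in C$ one has $R(\bar c)\in\Phi\cap\Psi$ exactly when $R$ occurs positively in both $\phi$ and $\psi$, and $-R(\bar c)\in\Phi\cap\Psi$ exactly when $R$ occurs negatively in both. Since $T$ and $U$ pin the atomic values, inseparability forbids the corresponding separators $R(\bar c)-q$ and $q-R(\bar c)$, which forces $R^M(\bar c)\leqslant R^N(\bar c)$ in the first case and $R^N(\bar c)\leqslant R^M(\bar c)$ in the second. These are precisely the inequalities needed to choose, for every $R$ and $\bar c$, a value $R^P(\bar c)\in[0,1]$ with $R^P\geqslant R^M,\ R^P\leqslant R^N$ when $R$ is positive in $\phi$ resp.\ $\psi$, and the reverse when it is negative (taking pointwise maxima, minima, or one of $R^M,R^N$ according to the case); each such choice is $\lambda_R$-Lipschitz, so $P$ is a genuine prestructure. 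Since $\phi$ and $\psi$ are nnf, each is monotone increasing in its positively occurring relations and decreasing in its negatively occurring ones, and the quantifiers preserve this monotonicity; therefore $\phi^P\geqslant\phi^M\geqslant0$ while $\psi^P\leqslant\psi^N\leqslant-\epsilon$. Completing $P$ gives a model of $\{0\leqslant\phi,\ \psi\leqslant-\epsilon\}$, contradicting $0\leqslant\phi\vDash0\leqslant\psi$.

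I expect the main obstacle to be the affine bookkeeping hidden in ``maximal inseparable with the witness property''. Each step---deciding the value of a $\Phi$-sentence by rationals, and adjoining a Henkin witness---must preserve inseparability, and the verification is a linear argument in the spirit of Lemma \ref{proofs} and Lemma \ref{witness}: one assumes a separator for the enlarged pair, applies the affine deduction lemma to extract a nonnegative coefficient, and generalizes the fresh constant to a quantifier. The delicate point is that all these manipulations must stay inside the polarity cones $\Phi$, $\Psi^*$ and $\Phi\cap\Psi$, which is exactly why only $+$ and nonnegative scalar multiplication are permitted in their definition and why the witnesses are phrased through rational lower bounds rather than through the forbidden combination $\chi(c)-\sup_x\chi$. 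Handling languages $L$ of arbitrary cardinality (the construction of $C$ and of the maximal pair proceeds by transfinite recursion) and checking that generalizing constants to $\inf$ and $\sup$ respects nnf are the remaining bookkeeping hazards.
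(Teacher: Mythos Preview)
Your global strategy—reduce to inseparability of $(T_0,U_0)$, enlarge to a maximal inseparable pair with Henkin witnesses, and manufacture a countermodel—matches the paper. The divergence is in how the countermodel is built: you propose to extract \emph{two} prestructures $M\vDash T$ and $N\vDash U$ and then merge them via a polarity-driven monotonicity argument, whereas the paper builds a single model directly from the joint atomic data of $T_\omega\cup U_\omega$ (showing first that conflicting atomic conditions from the two sides would themselves furnish a separator in $\Phi\cap\Psi$, so the atomic data is jointly satisfiable).

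The genuine gap is the phrase ``read off, \`a la Henkin, two prestructures $M\vDash T$ and $N\vDash U$''. The standard Henkin construction needs a maximal affinely satisfiable theory in the full language; your $T$ lives only in the cone $(0\leqslant\Phi)$ and is maximal only relative to inseparability with $U$, so it neither determines all atomic values nor supports the usual induction on complexity. What is missing is a splitting lemma: if $0\leqslant\sigma+\eta$ lies in $T$ with $\sigma,\eta\in\Phi$, then one of $0\leqslant\sigma$, $0\leqslant\eta$ already lies in $T$. The paper proves exactly this (its Claim 1) from inseparability via a delicate coefficient argument, and then uses it in the inductive verification that the canonical model satisfies every condition in $T_\omega$ (the $+$ case needs it explicitly; the $\sup$ case needs it through the witness clause). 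Your ``main obstacle'' paragraph focuses on preserving inseparability while adding witnesses, which is the easier half; the hard half—Claim 1 and the induction it powers—is absent, and without it neither your $M\vDash T$ nor the monotonicity inequality $\phi^P\geqslant\phi^M\geqslant0$ is justified. Once Claim 1 is in place, the paper's one-model route is also shorter: the merge via pointwise $\max/\min$ of $R^M,R^N$ is a correct idea but an unnecessary detour.
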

\begin{proof} Let $\epsilon>0$ and assume there is no $\theta$ as required.
We will construct a model for $\{0\leqslant\phi,\ \psi\leqslant-\epsilon\}$.
We may assume $\phi$ and $\psi$ are rational nnf sentences.
Enumerate $\Phi$ and $\Psi$ defined above as $\phi_0,\phi_1,...$ and
$\psi_0,\psi_1,...$ respectively (where each sentence is repeated infinitely many times).
We will construct two increasing sequences of theories
$$\{0\leqslant\phi\big\}=T_0 \subseteq T_1\subseteq\cdots\subseteq\
\ (0\mbox{$\leqslant$}\Phi)$$
$$\{0\leqslant\psi^*-\epsilon\}=U_0\subseteq U_1\subseteq\cdots\subseteq\
(0\mbox{$\leqslant$}\Psi^*)$$
such that
\begin{enumerate}
 \item[(1)] $T_m$ and $U_m$ are inseparable finite sets of closed conditions,
 \item[(2)] If $T_m\cup\{0\leqslant\phi_m\}$ and $U_m$ are
 inseparable, then $0\leqslant\phi_m\in T_{m+1}$

 If $T_{m+1}$ and $U_{m}\cup\{0\leqslant\psi_m^*\}$ are inseparable,
 then $0\leqslant\psi_m^*\in U_{m+1}$,

\item[(3)] If $\phi_m=\sup_x\sigma(x)+r$ and $0\leqslant\phi_m\in T_{m+1}$,
then $0\leqslant\sigma(c)+r\in T_{m+1}$ for some $c$

If $\psi_m^*=\sup_x\eta(x)+r$ and $0\leqslant\psi_m^*\in U_{m+1}$,
then $0\leqslant\eta(c)+r\in U_{m+1}$ for some $d$
\end{enumerate}

In (3), we use constant symbols $c,d$ which have not been used yet.
The argument of construction is clear and inseparability is preserved
in each step. For example, suppose that
$T_m\cup\{0\leqslant\sigma(c)\}$ and $U_m$ are separable.
Then, for some $\theta\in\Phi\cap\Psi$ and $\delta>0$
$$T_m,\ 0\leqslant\sigma(c)\vDash0\leqslant\theta,\ \ \ \ \
\ \ \ \ \ U_m\vDash\theta\leqslant-\delta.$$
So, $$T_m,\ 0\leqslant\sup_x\sigma(x)\vDash0\leqslant\theta$$
and hence $T_m\vDash0\leqslant\theta$.
Thus, $T_m$ and $U_m$ are separable which is a contradiction.

Let $$T_\omega=\bigcup_{m<\omega}T_m,\ \ \ \ \ \ \ \ U_\omega=\bigcup_{m<\omega}U_m.$$
We show that $T_\omega$ and $U_\omega$ are inseparable.
Suppose $T_\omega\vDash0\leqslant\theta$ and $U_\omega\vDash\theta\leqslant-\delta$
for some $\theta\in\Phi\cap\Psi$ and $\delta>0$.
Then, there exists $m$ such that
$$T_m\vDash-\frac{\delta}{3}\leqslant\theta, \ \ \ \ \ \ \ \ \
U_m\vDash\theta\leqslant-\frac{2\delta}{3}$$
which contradicts the inseparability of $T_m$ and $U_m$.
We also conclude that $T_\omega,U_\omega$ are both satisfiable.
We will show that $T_\omega\cup U_\omega$ is satisfiable too.
We first prove the following claims:
\bigskip

\noindent{\sc Claim 1:}
For $\sigma,\eta\in\Phi$,\ if $0\leqslant\sigma+\eta\in T_{\omega}$
then either $0\leqslant\sigma\in T_{\omega}$ or $0\leqslant\eta\in T_{\omega}$.\\
Similarly, for $\sigma,\eta\in\Psi^*$,\ if $0\leqslant\sigma+\eta\in U_{\omega}$
then either $0\leqslant\sigma\in U_{\omega}$ or $0\leqslant\eta\in U_{\omega}$.\\
{\sc Proof:}
We prove the first one. Suppose not.
Then for some $\theta_1,\theta_2\in\Phi\cap\Psi$ and $\delta>0$ one has that
$$T_{\omega}, 0\leqslant\sigma\vDash0\leqslant\theta_1, \ \ \ \ \ \ \
U_{\omega}\vDash\theta_1\leqslant-\delta$$
$$T_{\omega}, 0\leqslant\eta\vDash0\leqslant\theta_2, \ \ \ \ \ \ \
U_{\omega}\vDash\theta_2\leqslant-\delta.$$
By Corollary \ref{proofs}, for some $\alpha,\beta\geqslant0$
$$T_{\omega}\vDash\alpha\sigma\leqslant\theta_1+\frac{\delta}{2},\ \ \ \ \ \ \
T_{\omega}\vDash\beta\eta\leqslant\theta_2+\frac{\delta}{2}.$$
By inseparability of $T_\omega$ and $U_\omega$,\ \ $\alpha,\beta$ are nonzero.
Hence,
$$T_{\omega}\vDash0\leqslant\alpha\beta(\sigma+\eta)\leqslant
(\alpha+\beta)\frac{\delta}{2}+\beta\theta_1+\alpha\theta_2.$$
On the other hand,\\
$$U_{\omega}\vDash\beta\theta_1+\alpha\theta_2\leqslant-\delta(\beta+\alpha)$$
This means that $T_\omega$ and $U_\omega$ are separable.
\bigskip

\noindent{\sc Claim 2:} For each $a,b\in\bar{L}$, there is a unique real $s$
such that $T_\omega\vDash d(a,b)=s$ and $U_\omega\vDash d(a,b)=s$.\\
{\sc Proof:} For each $a,b\in\bar{L}$ and rational $r$, one of
$0\leqslant d(a,b)-r$ and $0\leqslant r-d(a,b)$ belongs to $T_\omega$.
This is because the condition $$0\leqslant(d(a,b)-r)+(r-d(a,b))$$ belongs to $T_\omega$.
Similarly, one of these conditions belongs to $U_\omega$.
Since $T_\omega$ and $U_\omega$ are inseparable, for each $r$,
we must have that $T_\omega\vDash r\leqslant d(a,b)$ if and only if
$U_\omega\vDash r\leqslant d(a,b)$ and similarly for $d(a,b)\leqslant r$.
This proves the claim.
\bigskip

Now, we show that $T_\omega\cup U_\omega$ is satisfiable.
Members of this set are of the form $0\leqslant\theta$.
For simplicity, we write $0\leqslant\theta-r$ as $r\leqslant\theta$.
Clearly, $T_\omega\vDash d(a,b)=0$ is an equivalence relation on
the set of constant symbols of $\bar L$. Let $M$ be the set of all these classes.
Of course, $U_\omega$ defines the same set $M$.
Note also that the set $\Delta$ of atomic and negative atomic
conditions, i.e. $r\leqslant\theta$, $r\leqslant-\theta$ where $\theta$ is atomic,
belonging to $T_\omega\cup U_\omega$ is satisfiable.
Since, otherwise, for some atomic $\theta$, $r\in\Rn$ and $\delta>0$ we must have that
$r\leqslant\theta\in T_\omega$ and $\delta-r\leqslant-\theta\in U_\omega$ (or conversely).
Then $\theta\in\Phi\cap\Psi$. This contradicts the inseparability of
$T_\omega$ and $U_\omega$. The converse case is similar.
We conclude that $\Delta$ has a model based on the set $M$.
We prove by induction on the complexity of $\theta$ that every
$r\leqslant\theta\in T_\omega$ is satisfied in $M$.
Suppose the claim is proved for $\theta_1$ and $\theta_2$. Let
$$r=\sup\{s|\ s\leqslant\theta_1+\theta_2\in T_\omega\}, \ \ \ \
r_i=\sup\{s|\ s\leqslant\theta_i\in T_\omega\}.$$
Suppose $r,r_1,r_2$ are rational.
Then $r_1\leqslant\theta_1,$ $r_2\leqslant\theta_2$, $r\leqslant\theta_1+\theta_2$
as well as $r_1+r_2\leqslant\theta_1+\theta_2$ belong to $T_\omega$.
So, $r_1+r_2\leqslant r$. Suppose $r-r_1-r_2=\epsilon>0$.
Then, at least one of $$r_1+\frac{\epsilon}{2}\leqslant\theta_1, \ \ \ \ \ \ r_2+\frac{\epsilon}{2}\leqslant\theta_2$$
must belong to $T_\omega$. This is a contradiction.
We conclude that $r_1\leqslant\theta^M_1$, $r_2\leqslant\theta^M_2$ and hence $r\leqslant\theta^M_1+\theta^M_2$.
If $r,r_1,r_2$ are not rational, one does the argument using approximations.
The case $r\theta$ for $r\geqslant0$ is obvious.
Suppose $r\leqslant\sup_x\theta(x)\in T_\omega$. Then, there exists $c$ such that
$r\leqslant\theta(c)\in T_\omega$. So, $M\vDash r\leqslant\theta(c)$ and hence
$M\vDash r\leqslant\sup_x\theta(x)$.
Also, if $r\leqslant\inf_x\theta(x)\in T_\omega$ then
$r\leqslant\theta(c)\in T_\omega$ for every $c$.
So, $M\vDash r\leqslant\theta(c)$ for every $c$ and hence
$M\vDash r\leqslant\inf_x\theta(x)$.
One proves by a similar argument that $M\vDash U_\omega$.
\end{proof}
\bigskip

We end the paper by an affine form of Herbrand interpolation theorem.
A sentence of the form $\inf_{\x}\phi$ (resp. $\sup_{\x}\phi$)
where $\phi$ is quantifier-free is called an $\inf$-sentence (resp. $\sup$-sentence).

\begin{proposition}
Let $\phi$ be an $\inf$-sentence and $\psi$ be a $\sup$-sentence.
If $0\leqslant\phi\vDash0\leqslant\psi$, then for each $\epsilon>0$
there exists a quantifier-free $\theta$ such that
$$0\leqslant\phi\ \vDash\ 0\leqslant\theta\ \vDash\ 0\leqslant\psi+\epsilon.$$
\end{proposition}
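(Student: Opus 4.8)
The plan is to exploit the fact that, writing $\phi=\inf_{\x}\alpha(\x)$ and $\psi=\sup_{\y}\beta(\y)$ with $\alpha,\beta$ quantifier-free, the hypothesis $0\leqslant\phi\vDash0\leqslant\psi$ is a ``universal implies existential'' statement, so that the quantifier-free interpolant can be taken to be a single nonnegative scalar multiple of the matrix $\alpha$. No Craig/Robinson machinery is needed; the only real input is the scalar, and it is extracted exactly as in the first step of the proof of Corollary \ref{interpolation2}. Concretely, I would apply Lemma \ref{proofs}(i) with the empty theory, $S=\{0\leqslant\phi\}$, and the rôle of its $\theta$ played by $\psi$: since $\{0\leqslant\phi\}\vDash0\leqslant\psi$, it yields for the given $\epsilon>0$ a condition $0\leqslant\sigma$ in the affine closure of $S$ with $\vDash\sigma\leqslant\psi+\epsilon$. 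Because the affine closure of the single condition $0\leqslant\phi$ consists precisely of the conditions $0\leqslant r\phi$ with $r\geqslant0$, this datum is just a nonnegative real $r$ together with the valid inequality
\[ \vDash\ r\phi\leqslant\psi+\epsilon. \]

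Next I would propose the interpolant $\theta(\x):=r\,\alpha(\x)$, which is quantifier-free, and verify the two required entailments. For $0\leqslant\phi\vDash0\leqslant\theta$: if $M\vDash0\leqslant\phi$ then $\alpha^M(\a)\geqslant\inf_{\x}\alpha^M=\phi^M\geqslant0$ for every $\a\in M$, whence $r\,\alpha^M(\a)\geqslant0$ as $r\geqslant0$, i.e. $M\vDash0\leqslant\theta$. For $0\leqslant\theta\vDash0\leqslant\psi+\epsilon$: reading the condition $0\leqslant\theta$ on its free variables as the closed condition $0\leqslant\inf_{\x}\theta$, it asserts $\inf_{\x}r\,\alpha=r\inf_{\x}\alpha=r\phi\geqslant0$, where the scalar is pulled through the infimum using $r\geqslant0$; combined with $\vDash r\phi\leqslant\psi+\epsilon$ this gives $0\leqslant r\phi\leqslant\psi+\epsilon$, hence $M\vDash0\leqslant\psi+\epsilon$. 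Chaining the two links produces $0\leqslant\phi\vDash0\leqslant\theta\vDash0\leqslant\psi+\epsilon$, as required.

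The main point to get right — and essentially the whole content of the argument — is the recognition that the quantifier structure does the work: the $\inf$ guarding the universal hypothesis lets one pass from $\phi\geqslant0$ to $\alpha(\a)\geqslant0$ pointwise, the $\sup$ guarding the conclusion is never opened up, and the identity $r\inf_{\x}\alpha=\inf_{\x}r\alpha$ for $r\geqslant0$ lets the bare matrix $r\alpha$ act as interpolant. The only routine care is to treat a condition $0\leqslant\theta(\x)$ with free variables as $0\leqslant\inf_{\x}\theta$, consistently with the convention used throughout (as in the definition of $0\leqslant_T\phi$); once that reading is fixed, both entailments are immediate. The degenerate case $r=0$ (so $\theta=0$ and $\vDash0\leqslant\psi+\epsilon$ already holds) causes no difficulty.
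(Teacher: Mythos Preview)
Your argument is correct and considerably more direct than the paper's. The paper proves this by imitating the Henkin-style inseparability construction used for the Lyndon-type interpolation (Theorem~\ref{main}): it introduces new constants, builds increasing inseparable towers $T_\omega$, $U_\omega$ inside the fragments $\Phi$ (sentences using only the quantifier of $\phi$, i.e.\ $\inf$) and $\Psi^*$ (only $\sup$), and extracts a model of $\{0\leqslant\phi,\ \psi\leqslant-\epsilon\}$ from their inseparability. The interpolant that this method rules out lies in $\Phi\cap\Psi$, which --- since $\phi$ contributes only $\inf$ and $\psi$ only $\sup$ --- consists precisely of the quantifier-free sentences in the expanded language; equivalently, quantifier-free $L$-formulas with free variables.

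You bypass all of this by using Lemma~\ref{proofs}(i) (a single affine-compactness step) to produce $r\geqslant 0$ with $\vDash r\phi\leqslant\psi+\epsilon$, and then taking $\theta=r\alpha$. The verification of both entailments is immediate once one reads $0\leqslant\theta(\x)$ as $0\leqslant\inf_{\x}\theta$, which is the paper's standing convention (see the definition of $\leqslant_T$ on $\mathbb{D}_n(T)$). The key observation that the $\inf$ on the hypothesis side lets you pass to the matrix pointwise, while the $\sup$ on the conclusion side is never opened, is exactly what makes the scalar $r$ alone do all the work. What the paper's approach buys is uniformity: the same machinery handles the Lyndon positive/negative occurrence result and could in principle track other syntactic features; your approach is sharper for this particular statement and makes transparent that nothing beyond the linear bound $r\phi\leqslant\psi+\epsilon$ is needed.
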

\begin{proof}
We just give the sketch of the proof which is similar to that of Theorem \ref{main}.
Assume $0\leqslant\phi$ and $0\leqslant\psi+\epsilon$
have no quantifier-free interpolant and construct a model of
$\{0\leqslant\phi,\ \psi\leqslant-\epsilon\}$ as follows.
This time, $\Phi$ is the set of rational nnf sentences $\sigma$ such that
every quantifier which occurs in $\sigma$ also occurs in $\phi$.
$\Psi$ is defined similarly.
Then construct $T_{\omega}\subseteq(0$$\leqslant$$\Phi)$
and $U_{\omega}\subseteq(0$$\leqslant$$\Psi^*)$ as before.
The rest of the proof is similar to the previous argument.
\end{proof}

\newpage\section{Definability}\label{Definability}

\subsection{Definable predicates}
As before, $T$ is a complete theory in $L$. Unless otherwise stated, by definable we mean without parameters.
We assume all parameters needed to define a notion are already named in the language.
By a \emph{predicate}\index{predicate} we mean a bounded function $P:M^n\rightarrow\Rn$
which is uniformly continuous, i.e. for all $\epsilon>0$ there is $\delta>0$ such that
$$d(\x,\y)<\delta\ \Longrightarrow\ |P(\x)-P(\y)|\leqslant\epsilon\ \ \ \ \ \ \ \forall\x,\y.$$

\begin{definition}
\emph{A predicate $P:M^n\rightarrow\Rn$ is \emph{definable}\index{definable predicate} if there is a sequence $\phi_k(\x)$
of formulas such that $\phi_k^M\rightarrow P$ uniformly on $M^n$.}
\end{definition}

Note that a definable predicate $P$ is not Lipschitz.
However, the sequence $\phi_k$ determines a definable predicate on every $N\vDash T$ which is denoted by $P^N$.
We can treat definable predicates as interpretations of new relation symbols added to the language.
Although affine compactness theorem does not hold in such a language, notions such as
$(M,P)\preccurlyeq (N,P^N)$ are meaningful. The proof of the following proposition is routine.

\begin{proposition} \label{definitional expansion}
Let $P:M^n\rightarrow\Rn$ be definable. If $N\preccurlyeq M$ then $P^N=P|_N$ and
$(N,P^N)\preccurlyeq(M,P)$. If $M\preccurlyeq N$, then $(M,P)\preccurlyeq(N,P^N)$.
\end{proposition}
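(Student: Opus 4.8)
The plan is to reduce everything to the behaviour of ordinary $L$-formulas under the given elementary inclusions, using the approximating sequence $\phi_k$ as the only bridge between $L$ and the expanded language $L\cup\{P\}$. First I would record why $P^N$ is meaningful: since $T$ is complete, each sentence $\sup_{\x}(\phi_k-\phi_\ell)$ takes the same value in all models of $T$, so the uniform Cauchy estimate witnessing $\phi_k^M\to P$ on $M^n$ transfers verbatim to every $N\vDash T$. Hence $\phi_k^N$ converges uniformly on $N^n$, and its limit is by definition $P^N$.

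For the first assertion, assume $N\preccurlyeq M$ and fix $\a\in N^n$. As each $\phi_k$ is an $L$-formula and $\a\in N$, elementarity gives $\phi_k^N(\a)=\phi_k^M(\a)$; letting $k\to\infty$ yields $P^N(\a)=P(\a)$, that is, $P^N=P|_N$.

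The core step is an approximation lemma for the expanded language. Since $P$ is a relation symbol, it occurs only in atomic subformulas $P(t_1,\dots,t_n)$ with $L$-terms $t_i$, so for any $L\cup\{P\}$-formula $\psi(\x)$ the $L$-formula $\psi_k$ obtained by replacing each such $P(t_1,\dots,t_n)$ with $\phi_k(t_1,\dots,t_n)$ is unambiguously defined. I would show by induction on the structure of $\psi$ that $\psi_k^M\to\psi^{(M,P)}$ and $\psi_k^N\to\psi^{(N,P^N)}$ uniformly. Atomic $L$-formulas are left unchanged; the base case $P(t_1,\dots,t_n)$ is exactly the uniform convergence $\phi_k\to P$ precomposed with the term evaluations, which does not affect uniform convergence; the affine steps $\alpha+\beta$ and $r\alpha$ multiply the uniform error by fixed constants; and the quantifier steps $\sup_y$, $\inf_y$ preserve it because $\sup$ and $\inf$ are $1$-Lipschitz for the supremum norm. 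This is the only place where the non-Lipschitz nature of $P$ must be watched: the point is that uniform continuity of $P$, not Lipschitzness, is all that is needed for the formula values to be well defined and for the uniform limits to propagate. I expect this induction to be the main (though routine) obstacle.

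Granting the lemma, the two elementarity claims are immediate. If $N\preccurlyeq M$ and $\a\in N^n$, then $\psi_k^N(\a)=\psi_k^M(\a)$ by elementarity at the level of $L$-formulas, and passing to the limit gives $\psi^{(N,P^N)}(\a)=\psi^{(M,P)}(\a)$; thus $(N,P^N)\preccurlyeq(M,P)$. The case $M\preccurlyeq N$ is identical with the roles of $M$ and $N$ interchanged, the compatibility $P^N|_M=P$ being the instance $\psi=P(\x)$. In each case, once the uniform convergence of the substituted formulas is in hand, the elementarity statement follows by a single interchange of the limit with the $L$-elementary equality.
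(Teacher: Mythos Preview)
Your argument is correct and is exactly the routine verification the paper has in mind; the paper does not give a proof, stating only that it is routine, and your induction on $L\cup\{P\}$-formulas via the substitution $P\mapsto\phi_k$ is the natural way to carry it out.
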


Given a formula $\phi(\x)$, the function defined on $K_n(T)$ by $\hat\phi(p)=p(\phi)$
is affine, logic-continuous and $\lambda_{\phi}$-Lipschitz.
Clearly, $\hat\phi=\hat\psi$ if and only if $\phi$ and $\psi$ are $T$-equivalent.
Also, for each $M\vDash T$, $\phi^M_k(\a)=\hat\phi_k(tp(\a))$ and hence $\phi^M_k$ is Cauchy if and only if $\hat\phi_k$ is Cauchy.

\begin{proposition} \label{Lipschitz}
The following are equivalent for every $\xi: K_n(T)\rightarrow\Rn$:

\emph{(i)} $\xi\in\mathbf{A}(K_n(T))$ (i.e. $\xi$ is affine and continuous)

\emph{(ii)} There is a sequence $\phi_k$ of formulas such that $\hat\phi_k$ converges to $\xi$ uniformly.
\end{proposition}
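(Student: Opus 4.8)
The plan is to identify the space $X=\{\hat\phi:\ \phi(\x)\ \text{a formula}\}$ as a dense linear subspace of $\mathbf{A}(K_n(T))$ and then read off both implications from this.

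First, (ii)$\Rightarrow$(i) is immediate: each $\hat\phi$ is affine, logic-continuous and $\lambda_\phi$-Lipschitz (as recorded just before the statement), and $\mathbf{A}(K_n(T))$ is closed under uniform limits, since a uniform limit of continuous functions is continuous and a pointwise limit of affine functions is affine. Hence a uniform limit $\xi$ of the $\hat\phi_k$ is again affine and continuous.

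For (i)$\Rightarrow$(ii) I would first check that $X$ is a linear subspace of $\mathbf{A}(K_n(T))$: linearity follows from $\widehat{r\phi+s\psi}=r\hat\phi+s\hat\psi$, and $X$ contains the constant functions since $\widehat{r\cdot1}$ is the constant $r$ on $K_n(T)$ (using $p(1)=1$). Moreover $X$ separates the points of $K_n(T)$, because two distinct types $p\neq q$ must differ on some formula $\phi$, whence $\hat\phi(p)\neq\hat\phi(q)$. The remaining ingredient is the density theorem: a linear subspace of $\mathbf{A}(K)$, for $K$ compact convex, that contains the constants and separates points is uniformly dense in $\mathbf{A}(K)$. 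Granting this, $X$ is dense, so every $\xi\in\mathbf{A}(K_n(T))$ is a uniform limit of a sequence $\hat\phi_k$, which is exactly (ii).

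The main obstacle is the density theorem itself, as it is not the ordinary Stone--Weierstrass theorem: $X$ is merely a linear subspace, not a subalgebra, so there is no multiplicative structure to invoke. The way I would secure it is a barycentric argument. Suppose some $\xi\in\mathbf{A}(K_n(T))$ lies outside the uniform closure $\overline X$ of $X$ in $\mathbf{C}(K_n(T))$. By Hahn--Banach together with the Riesz representation theorem there is a signed regular Borel measure $\mu$ on $K_n(T)$ with $\int\hat\phi\,d\mu=0$ for all $\phi$ but $\int\xi\,d\mu\neq0$. Since $X$ contains the constants, the total masses of the positive and negative parts of $\mu$ coincide; as $\mu\neq0$ this common mass is positive, and normalizing gives probability measures $\nu_1,\nu_2$ on $K_n(T)$ with $\int\hat\phi\,d\nu_1=\int\hat\phi\,d\nu_2$ for every $\phi$. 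Passing to the barycenters $b_1,b_2\in K_n(T)$ and using the barycentric formula for continuous affine functions, $b_1(\phi)=\hat\phi(b_1)=\hat\phi(b_2)=b_2(\phi)$ for all $\phi$, hence $b_1=b_2$; but then $\int\xi\,d\nu_1=\xi(b_1)=\xi(b_2)=\int\xi\,d\nu_2$, contradicting $\int\xi\,d\mu\neq0$. This forces $\mathbf{A}(K_n(T))\subseteq\overline X$ and completes the argument. Alternatively one may simply cite the version of this fact recorded in the appendix, exactly as was done in the proof of the $\Sigma_n$-approximation proposition earlier in the text.
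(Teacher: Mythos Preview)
Your proof is correct and follows essentially the same approach as the paper: both directions rely on the fact that the linear subspace $\{\hat\phi\}\subseteq\mathbf{A}(K_n(T))$ contains constants and separates points, hence is dense. The paper simply cites this density fact (as recorded in the appendix, \cite{Wickstead} I.1.12), while you additionally supply a barycentric justification for it; the extra argument is sound but not needed for the comparison.
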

\begin{proof} (i)$\Rightarrow$(ii): Every $\hat\phi$ is clearly affine and
logic-continuous. Moreover, the subspace of $\mathbf{A}(K_n(T))$
consisting of these functions contains constant maps and separates points.
Hence, it is dense in $\mathbf{A}(K_n(T))$. 
The reverse direction is obvious.
\end{proof}

The set of definable predicates (on $M$ or on any other model of $T$) is denoted by
$\mathbf{D}_n(T)$\index{$\mathbf{D}_n(T)$}. This is the completion of $\mathbb{D}_n(T)$.
Note that if $M$ realizes all types, then
$$\sup_{\a\in M}|\phi^M(\a)|=\sup_{p\in K_n(T)}|\hat\phi(p)|.$$
In particular, $\|\phi\|=\|\hat\phi\|$.
We deduce by Proposition \ref{Lipschitz} that $\mathbf{D}_n(T)$ and $\mathbf{A}(K_n(T))$ are isometrically isomorphic.

\begin{proposition} \label{Svenonius}\index{Svenonius theorem}
\emph{(Affine Svenonius)} Let $M\vDash T$ and $P:M^n\rightarrow\Rn$ be a predicate.
Then $P$ is definable if and only if for each $(M,P)\preccurlyeq(M',P')$ and
automorphism $f$ of $M'$ one has that $P'=P'\circ f$.
\end{proposition}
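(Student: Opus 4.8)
The plan is to reduce definability to a property of the function that $P$ induces on the type space, using the isometric identification $\mathbf D_n(T)\cong\mathbf A(K_n(T))$ that precedes the statement: by Proposition \ref{Lipschitz}, $P$ is definable precisely when there is an affine logic-continuous $\xi\in\mathbf A(K_n(T))$ with $P(\a)=\xi(tp(\a))$, because a uniformly convergent sequence $\hat\phi_k\to\xi$ then pulls back through $\phi_k^M(\a)=\hat\phi_k(tp(\a))$ to give $\phi_k^M\to P$ uniformly. The forward implication is routine and I would dispose of it first: if $\phi_k^M\to P$ uniformly then by Proposition \ref{definitional expansion} the same holds in any $(M,P)\preccurlyeq(M',P')$, and since an $L$-automorphism $f$ of $M'$ fixes the value of every affine formula, the equalities $\phi_k^{M'}=\phi_k^{M'}\circ f$ pass to the limit to yield $P'=P'\circ f$.

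For the converse I would first produce $\xi$ and show it is well defined on types. Applying Proposition \ref{saturated strong homogeneous} to the expanded structure $(M,P)$, fix $(M,P)\preccurlyeq(M',P')$ whose $L$-reduct is $\kappa$-saturated and strongly $\kappa$-homogeneous for large $\kappa$. If $\a,\b\in M'$ have the same $L$-type, strong homogeneity of the reduct gives an $L$-automorphism $f$ of $M'$ with $f(\a)=\b$, and the hypothesis forces $P'(\a)=P'(f(\a))=P'(\b)$; thus $P'$ depends only on the $L$-type. Since the reduct realizes every type in $K_n(T)$, this defines $\xi\colon K_n(T)\to\Rn$ with $P'=\xi\circ tp$, agreeing with $P$ on $M$. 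A standard homogeneity-and-amalgamation argument then shows that the same $\xi$ computes $P''$ in \emph{every} elementary extension $(M,P)\preccurlyeq(M'',P'')$, a fact I will use repeatedly below.

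It remains to check that $\xi$ is affine and logic-continuous. For affineness, given $p=\frac{1}{2}p_1+\frac{1}{2}p_2$ I would realize $p_1,p_2$ in extensions $(M_j,P_j)$ and pass to the two-point ultramean $\frac{1}{2}(M_1,P_1)+\frac{1}{2}(M_2,P_2)$; on a finite discrete charge the averaging rule for $P$ is automatically well defined, since $\int d(a_i,b_i)\,d\mu=0$ forces equality in each coordinate, so the ultramean theorem \ref{th1} applies to the atomic symbol $P$. The combined element realizes $p$ and the diagonal embedding preserves $P$, so $\xi(p)=\frac{1}{2}\xi(p_1)+\frac{1}{2}\xi(p_2)$. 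Logic-continuity is the step I expect to be the main obstacle, because it must upgrade the mere metric (uniform) continuity of $P$ to continuity in the weaker logic topology; here I would transport invariance through an ultrapower. Given a net $p_i\to p$ and an ultrafilter $\mathcal D$ refining its tail filter, I realize the $p_i$ by $\a_i$ in $M'$ and form $(M',P')^{\mathcal D}$, in which $tp([\a_i])=\lim_{\mathcal D}p_i=p$ while $(P')^{\mathcal D}([\a_i])=\lim_{\mathcal D}P'(\a_i)=\lim_{\mathcal D}\xi(p_i)$; since $(M',P')^{\mathcal D}$ is again an elementary extension of $(M,P)$, the previous paragraph gives $\xi(p)=\lim_{\mathcal D}\xi(p_i)$. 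As this holds for every such $\mathcal D$ and the net is bounded, $\xi(p_i)\to\xi(p)$, so $\xi$ is continuous. The recurring delicate point is the handling of the non-Lipschitz symbol $P$ inside these constructions; I keep it under control by using only ultrapowers, where uniform continuity makes the ultrafilter limit well defined, and finite discrete charges, where well-definedness is immediate. Having placed $\xi$ in $\mathbf A(K_n(T))$, Proposition \ref{Lipschitz} and the isometry $\mathbf D_n(T)\cong\mathbf A(K_n(T))$ complete the argument.
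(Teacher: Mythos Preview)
Your overall strategy matches the paper's: pass to a saturated, strongly $L$-homogeneous elementary extension, define $\xi$ on $K_n(T)$ via $P'$, verify $\xi\in\mathbf A(K_n(T))$, and invoke Proposition~\ref{Lipschitz}. The well-definedness step and the affineness step via a two-point ultramean are essentially identical to the paper's (the paper also notes, as you do, that $\xi$ computes $P''$ in \emph{any} elementary extension, by passing to a further strongly homogeneous one). The genuine divergence is in the continuity argument. The paper argues directly from saturation of $(N,P^N)$: if continuity at $p$ failed from above, then for some $\epsilon>0$ the partial type $p(\x)\cup\{r+\epsilon\leqslant P(\x)\}$ would be satisfiable in $(N,P^N)$, contradicting $\xi(p)=r$. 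Your ultrapower route---realize approximants $p_i$ in $M'$, pass to $(M',P')^{\mathcal D}$, and read off $\xi(p)=\lim_{\mathcal D}\xi(p_i)$---is correct and conceptually clean, but it relies on the full (CL) ultraproduct theorem for a merely uniformly continuous relation symbol, which lies outside this paper's Lipschitz framework. The paper's saturation argument stays self-contained.

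One technicality you slide past: $(M,P)$ is not a Lipschitz structure, so Proposition~\ref{saturated strong homogeneous} does not literally apply to it. The paper handles this by choosing Lipschitz predicates $P_k$ converging uniformly to $P$ and working in $\bar L=L\cup\{P_1,P_2,\ldots\}$; the saturated homogeneous extension is obtained for $(M,P_1,P_2,\ldots)$, and $P^N$ is recovered as $\lim_k P_k^N$. This is a small point, but in a framework where relation symbols are Lipschitz by fiat it is exactly the step that legitimises both the saturation construction and the ultramean/ultrapower manipulations you perform on $P$.
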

\begin{proof} The `only if' direction is obvious. We prove the reverse direction.
Since $P$ is uniformly continuous, there is a sequence $P_k$ of Lipschitz functions on $M^n$ uniformly convergent to $P$.
Using the language $\bar{L}=L\cup\{P_1,P_2,..\}$, we can find $(M,P)\preccurlyeq (N,P^N)$
which we may further assume by Proposition \ref{saturated strong homogeneous} that it is $\aleph_0$-saturated and
strongly $\aleph_0$-homogeneous on every sublanguage of $\bar{L}$.
Given $p(\x)\in K_n(T)$, define $$\xi(p)=P^N(\a)$$
where $\a\in N$ realizes $p$. If $\b$ is another realization of $p$, there is an automorphism $f$ of $N$
such that $f(\a)=\b$. By the assumption, we must have that $P^N(\a)=P^N(\b)$.
This shows that $\xi$ is well-defined.
For continuity of $\xi$, suppose $\xi(p)=r$. 
Then, for each $\epsilon>0$ there must exist $0\leqslant\phi$
in $p$ and $\delta>0$ such that
$$-\delta<\phi^N(\a) \ \ \Rightarrow\ \ P^N(\a)< r+\epsilon \ \ \ \ \ \ \ \ \ \ \forall\a\in N.$$
Since, otherwise, for some $\epsilon>0$, the type
$p(\x),\ r+\epsilon\leqslant P(\x)$ is satisfied in $(N,P^N)$ which is impossible.
Therefore, for each type $q$
$$-\delta<q(\phi)\ \ \Rightarrow\ \ \xi(q)<r+\epsilon.$$
Similarly, there are $0\leqslant\psi\in p$ and $\delta'>0$ such that
$$-\delta'<q(\psi)\ \ \Rightarrow\ \ r-\epsilon<\xi(q).$$
This shows that $\xi$ is logic-continuous.

Now, we show that $\xi$ is affine.
First, note that if $(N,P^N)$ is replaced with any $(M,P)\preccurlyeq(N_1,P^{N_1})$
which realizes $p(\x)$ (by say $\a\in N_1$), then $\xi(p)=P^{N_1}(\a)$
(take an elementary extension of $(N_1,P^{N_1})$ which is $\aleph_0$-saturated and
strongly $\aleph_0$-homogeneous on sublanguages).
Let $(N_1,P^{N_1})$, $(N_2,P^{N_2})$ be elementary extensions of $(M,P)$ and containing $\a\vDash p$ and $\b\vDash q$ respectively.
Let $0\leqslant\gamma\leqslant1$.
Then, (up to isomorphism) $$(M,P)\preccurlyeq\gamma(N_1,P^{N_1})+(1-\gamma)(N_2,P^{N_2})=(N_3,P^{N_3})$$
contains $(\a,\b)$ which realizes $\gamma p+(1-\gamma)q$.
Moreover, $$\xi(\gamma p+(1-\gamma)q)=P^{N_3}((\a,\b))=\gamma P^{N_1}(\a)+(1-\gamma)P^{N_2}(\b)=
\gamma \xi(p)+(1-\gamma)\xi(q).$$
By Proposition \ref{Svenonius}, there is a sequence $\phi_k$ of formulas such that $\hat\phi_k$ converges to $\xi$ uniformly.
Therefore, $\phi_k\stackrel{u}{\longrightarrow}\xi\circ tp=P$ and hence $P$ is definable.
\end{proof}

\begin{proposition}
Let $M\vDash T$ be $\aleph_1$-saturated strongly $\aleph_1$-homogeneous and $P:M^n\rightarrow\Rn$ be definable with parameters.
Then, $P$ is $\emptyset$-definable if and only if it is preserved by every automorphism of $M$.
\end{proposition}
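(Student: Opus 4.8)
The plan is to route everything through Proposition \ref{Lipschitz}, which identifies the $\emptyset$-definable predicates with the affine logic-continuous functions on $K_n(T)$. The easy direction is immediate: if $P$ is $\emptyset$-definable, say $\phi_k^M\to P$ uniformly with the $\phi_k$ over $\emptyset$, then every automorphism $f$ of $M$ is an isomorphism and hence elementary, so $\phi_k^M(f\a)=\phi_k^M(\a)$ for all $k$ and $\a$; passing to the uniform limit gives $P\circ f=P$.

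For the converse, assume $P$ is preserved by every automorphism of $M$. Since a defining sequence for $P$ uses only countably many parameters, I would fix a countable $A\subseteq M$ and $L$-formulas $\phi_k(\x)$ with parameters in $A$ such that $\phi_k^M\to P$ uniformly. Because $M$ is $\aleph_1$-saturated and $A$ is countable, $M$ realizes every type in $K_n(A)$, so $\sup_{\a\in M}|\phi_k^M(\a)-\phi_j^M(\a)|=\sup_{p\in K_n(A)}|\hat\phi_k(p)-\hat\phi_j(p)|$; thus the affine logic-continuous functions $\hat\phi_k$ are uniformly Cauchy and converge to some $\eta\in\mathbf{A}(K_n(A))$ with $\eta(tp(\a/A))=P(\a)$ for all $\a$. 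Next I would show $P$ factors through the type over $\emptyset$: if $\a\equiv\a'$ in $M$ with $|\a|=|\a'|=n<\aleph_1$, then strong $\aleph_1$-homogeneity yields an automorphism $f$ of $M$ with $f(\a)=\a'$, so $P(\a')=P(f\a)=P(\a)$. Since $M$ realizes every $p\in K_n(T)=K_n(\emptyset)$, the rule $\xi(tp(\a))=P(\a)$ is a well-defined function $\xi:K_n(T)\to\Rn$.

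The remaining and decisive point is that $\xi$ is affine and logic-continuous. Let $\pi:K_n(A)\to K_n(T)$ be the restriction map; it is affine, logic-continuous, and surjective (every type over $A$ is realized in $M$). On types realized in $M$, hence on all of $K_n(A)$, one has the identity $\eta=\xi\circ\pi$. As $\pi$ is a continuous surjection of compact Hausdorff spaces it is a closed map, hence a quotient map, so continuity of $\eta=\xi\circ\pi$ forces continuity of $\xi$; and since $\pi$ is an affine surjection, lifting any convex combination in $K_n(T)$ through $\pi$ and using affineness of $\eta$ gives affineness of $\xi$. Therefore $\xi\in\mathbf{A}(K_n(T))$, and Proposition \ref{Lipschitz} furnishes $L$-formulas $\psi_k$ with $\hat\psi_k\to\xi$ uniformly; then $\psi_k^M(\a)=\hat\psi_k(tp(\a))\to\xi(tp(\a))=P(\a)$ uniformly, so $P$ is $\emptyset$-definable.

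I expect the main obstacle to be exactly this last paragraph: verifying the identity $\eta=\xi\circ\pi$ on all of $K_n(A)$ and then descending continuity and affineness through the quotient map $\pi$; the factorization of $P$ through $\emptyset$-types and the realization of all types are straightforward consequences of strong $\aleph_1$-homogeneity and $\aleph_1$-saturation, respectively. This argument is in effect the proof of Svenonius (Proposition \ref{Svenonius}) carried out inside the single model $M$ rather than in an auxiliary saturated, strongly homogeneous extension, which is legitimate precisely because $M$ is itself $\aleph_1$-saturated and strongly $\aleph_1$-homogeneous.
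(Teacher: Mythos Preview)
Your proof is correct, but it takes a different route from the paper's. You factor through the type spaces: build $\eta\in\mathbf{A}(K_n(A))$ as the uniform limit of the $\hat\phi_k$, use strong $\aleph_1$-homogeneity to see that $P$ depends only on the $\emptyset$-type, and then descend $\eta$ through the affine continuous surjection $\pi:K_n(A)\to K_n(T)$ (a quotient map of compact Hausdorff spaces) to obtain $\xi\in\mathbf{A}(K_n(T))$, finally invoking Proposition~\ref{Lipschitz}. As you say, this is the Svenonius argument internalized to $M$.

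The paper instead constructs the approximating $\emptyset$-formulas directly. Writing $\phi_k(\x,\a)$ for the parameter formulas with $|\phi_k^M(\x,\a)-P(\x)|\le 1/k$ and setting $p=tp(\a)$, strong homogeneity gives $|\phi_k^M(\x,\a)-\phi_k^M(\x,\b)|\le 2/k$ whenever $\b\vDash p$, so the set $p(\y)\cup\{\phi_k(\x,\a)+3/k\le\phi_k(\x,\y)\}$ is unrealized in $M$. By $\aleph_1$-saturation (affine compactness inside $M$), there are $0\le\theta(\y)$ in $p$ and $s\ge 0$ such that $\phi_k^M(\x,\y)+s\theta^M(\y)\le\phi_k^M(\x,\a)+3/k$ for all $\x,\y$; plugging in $\y=\a$ and taking $\sup_{\y}$ shows that the $\emptyset$-formula $\sup_{\y}\big(\phi_k(\x,\y)+s\theta(\y)\big)$ lies within $4/k$ of $P$. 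Your approach is more conceptual and reuses the $\mathbf{A}(K_n(T))$ characterization wholesale; the paper's is hands-on, producing the approximants explicitly without ever naming the map $\xi$ or the quotient $\pi$.
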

\begin{proof} We prove the nontrivial part.
Assume $|\phi^M_k(\x,\a)-P(\x)|\leqslant\frac{1}{k}$ for every $\x$ where $\a$ is a countable tuple of parameters in $M$.
Let $\b\vDash p(\y)=tp(\a)$. Then, there is an automorphism $f$ of $M$ such that $f(\b)=\a$.
Since $f$ preserves both $P$ and $\phi_k(\x,\y)$, for all $\x\in M^n$
$$|\phi_k^M(\x,\a)-\phi_k^M(\x,\b)|\leqslant|\phi_k^M(\x,\a)-P(\x)|+|P(\x)-\phi_k^M(\x,\b)|\leqslant\frac{2}{k}.$$
This shows that the set
$$p(\y),\ \phi_k(\x,\a)+\frac{3}{k}\leqslant\phi_k(\x,\y)$$
is not satisfied in $(M,\a)$. So, there are $0\leqslant\theta(\y)$ in $p$ and $r,s\geqslant0$ such that for all $\x,\y\in M$
$$r(\phi^M_k(\x,\a)+\frac{3}{k})\leqslant r\phi^M_k(\x,\y)+s \theta^M(\y)$$
is not satisfied in $M$ (since $M$ is $\aleph_1$-saturated).
Indeed, we may assume $r=1$. So, for all $\x,\y\in M$
$$\phi^M_k(\x,\y)+s\theta^M(\y)\leqslant\phi^M_k(\x,\a)+\frac{3}{k}.$$
Therefore, since $0\leqslant\theta^M(\a)$, for all $\x\in M^n$
$$P(\x)-\frac{1}{k}\leqslant\phi^M_k(\x,\a)\leqslant\sup_{\y}(\phi^M_k(\x,\y)+s\theta^M(\y))
\leqslant\phi^M_k(\x,\a)+\frac{3}{k}\leqslant P(\x)+\frac{4}{k}.$$
We conclude that $P(\x)$ is $\emptyset$-definable.
\end{proof}
\vspace{2mm}

The \emph{epigraph}\index{epigraph} of a function $f:X\rightarrow\Rn$ is the set
$$\mbox{epi}(f)=\{(x,r)\ : \ f(x)\leqslant r\}.$$
We say $P:M^n\rightarrow\Rn$ has a \emph{type-definable epigraph}
if there is a set $\Phi(\x)$ of formulas such that
$$\mbox{epi}(P)=\{(\a,r)\ : \  \phi^M(\a)\leqslant r\ \ \mbox{for\ every}\ \phi\in\Phi\}.$$

Let $K\subseteq V$ be convex where $V$ is a topological vector space.
A function $f:K\rightarrow\Rn$ is \emph{convex} if for every $p,q\in K$
and $0\leqslant\gamma\leqslant 1$, one has that
$$f(\gamma p+(1-\gamma)q)\leqslant\gamma f(p)+(1-\gamma)f(q).$$
Then, $f$ convex if and only if its epigraph is a convex set (\cite{Aliprantis-Inf} Lem. 5.39).

\begin{proposition} \label{dfncriterion2}
Let $M$ be $\aleph_0$-saturated. Then, a predicate $P:M^n\rightarrow\Rn$ is
definable if and only if both $P$ and $-P$ have type-definable epigraphs.
\end{proposition}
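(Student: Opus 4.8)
The plan is to pass to the type space $K_n(T)$ and exploit Proposition \ref{Lipschitz}, which identifies definable predicates with the affine continuous functions on $K_n(T)$. The first observation is that, unwinding the definition, $P$ has a type-definable epigraph if and only if $P$ is a pointwise supremum of formula interpretations: the description $\mbox{epi}(P)=\{(\a,r):\ \phi^M(\a)\leqslant r\ \forall\phi\in\Phi\}$ says precisely that $P(\a)=\sup_{\phi\in\Phi}\phi^M(\a)$ for every $\a$. With this in hand the forward direction is immediate and needs no saturation: if $\phi_k^M\to P$ uniformly, choose $\epsilon_k\downarrow0$ with $\|\phi_k^M-P\|_\infty\leqslant\epsilon_k$; then $\phi_k^M(\a)-\epsilon_k\leqslant P(\a)$ for all $k$ while $\phi_k^M(\a)-\epsilon_k\to P(\a)$, so $P=\sup_k(\phi_k-\epsilon_k)^M$ and likewise $-P=\sup_k(-\phi_k-\epsilon_k)^M$; thus both $P$ and $-P$ have type-definable epigraphs.

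For the converse I would first record that, since $M$ is $\aleph_0$-saturated, every $p\in K_n(T)$ is realized in $M$. The hypothesis gives sets $\Phi,\Psi$ of formulas with $P(\a)=\sup_{\phi\in\Phi}\phi^M(\a)$ and $-P(\a)=\sup_{\psi\in\Psi}\psi^M(\a)$, i.e. $P(\a)=\inf_{\psi\in\Psi}(-\psi^M(\a))$. Define $\xi:K_n(T)\to\Rn$ by $\xi(p)=\sup_{\phi\in\Phi}\hat\phi(p)$, where $\hat\phi(p)=p(\phi)$. If $p$ is realized by $\a$ then $\xi(p)=\sup_{\phi}\phi^M(\a)=P(\a)=\inf_{\psi}(-\psi^M(\a))=\inf_{\psi\in\Psi}(-\hat\psi(p))$; since every type is realized, the two prescriptions $\sup_{\Phi}\hat\phi$ and $\inf_{\Psi}(-\hat\psi)$ agree on all of $K_n(T)$, so $\xi$ is well defined and $P=\xi\circ tp$.

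Now the convex-analytic core: each $\hat\phi$ is affine and logic-continuous, so as a pointwise supremum of affine continuous functions $\xi$ is convex and lower semicontinuous, while as a pointwise infimum of affine continuous functions it is concave and upper semicontinuous. Being simultaneously convex and concave forces $\xi$ to be affine, and being simultaneously lower and upper semicontinuous on the compact Hausdorff space $K_n(T)$ forces it to be continuous; hence $\xi\in\mathbf{A}(K_n(T))$. Proposition \ref{Lipschitz} then supplies formulas $\phi_k$ with $\hat\phi_k\to\xi$ uniformly, and since $\phi_k^M(\a)=\hat\phi_k(tp(\a))$ and $P(\a)=\xi(tp(\a))$, we obtain $\sup_{\a\in M^n}|\phi_k^M(\a)-P(\a)|\leqslant\|\hat\phi_k-\xi\|_\infty\to0$, so $P$ is definable. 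The step I expect to be the crux is exactly the one where $\aleph_0$-saturation is used: without it the two one-sided descriptions of $P$ need only agree on the realized types, and $\xi$ could fail to be single-valued (equivalently, $P$ could fail to be affine on the type space even though each side defines it correctly on $M$). The remaining subtlety is the passage convex$+$concave$\Rightarrow$affine and lsc$+$usc$\Rightarrow$continuous, both of which are routine once the supremum and infimum representations are in place.
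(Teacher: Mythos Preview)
Your proof is correct and follows essentially the same route as the paper: define $\xi$ on $K_n(T)$, show it is affine and continuous, then invoke Proposition~\ref{Lipschitz}. The only presentational difference is that the paper defines $\xi(p)=P(\a)$ for $\a\vDash p$ and then verifies convexity by hand (picking a realization of $\gamma p_1+(1-\gamma)p_2$ and using the epigraph description), declaring logic-continuity ``clear''; your formulation $\xi=\sup_{\Phi}\hat\phi=\inf_{\Psi}(-\hat\psi)$ makes both convexity/concavity and lsc/usc immediate from standard convex-analytic facts, so in particular your continuity argument is a bit more explicit than the paper's.
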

\begin{proof} Assume $P$ is definable.
Take a sequence $\phi_k$ of formulas such that $\|P-\phi_k\|\leqslant\frac{1}{k}$ for all $k$.
Then, $$\mbox{epi}(P)=\big\{(\a,r)\ \big|\ \phi_k^M(\a)-\frac{1}{k}\leqslant r\ \ \ \ \ \forall k<\omega\big\}.$$
Similarly, the epigraph of $-P$ is type-definable.
Conversely assume the epigraphs of $P$ and $-P$ are type-definable.
Define a map $$\xi: K_n(T)\rightarrow\Rn$$
by $\xi(p)=P(\a)$ if $\a\vDash p$.
It is clear that $\xi$ is well-defined and logic-continuous.
We show that it is affine. Assume $$\mbox{epi}(P)=\{(\a,r)\ :\ \forall\phi\in\Phi,\ \ \phi^M(\a)\leqslant r\}.$$
Then, for every $p$ and $\phi\in\Phi$ one has that $p(\phi)\leqslant\xi(p)$.
Fix $p_1$, $p_2$ and let $\bar{c}$ realizes $\gamma p_1+(1-\gamma)p_2$.
Then, for all $\phi\in\Phi$ one has that
$$\phi^M(\bar{c})=\gamma p_1(\phi)+(1-\gamma)p_2(\phi)\leqslant\gamma\xi(p_1)+(1-\gamma)\xi(p_2).$$
Therefore, by the assumption
$$\xi(\gamma p_1+(1-\gamma)p_2)=P(\bar{c})\leqslant\gamma\xi(p_1)+(1-\gamma)\xi(p_2)$$
which shows that $\xi$ is convex. Similarly, $-\xi$ is convex.
We conclude that $\xi$ is affine. By Proposition \ref{Lipschitz},
$\hat\phi_k\stackrel{u}\rightarrow\xi$ for some sequence $\phi_k$.
Therefore, $\phi^M_k\stackrel{u}\rightarrow P$.
\end{proof}

\subsection{Definable functions}
A function $f:M^m\rightarrow M^n$ is \emph{definable}\index{definable function} if $d(f(\x),\y)$ is definable where $|\y|=n$.

\begin{lemma} \label{P-D}
If $f$ is definable, then for each definable $P(u,\y)$,\ \ $P(f(\x),\y)$ is definable.
\end{lemma}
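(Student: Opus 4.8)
The plan is to approximate the predicate $P(f(\x),\y)$ uniformly by genuine $L$-formulas, constructing the approximants in two stages: first replace $P$ by a Lipschitz formula, and then eliminate the composition with $f$ by exploiting the definability of the distance to $f(\x)$. At the outset I would note that $P(f(\x),\y)$ is indeed a predicate: being (as will be shown) a uniform limit of formula-values, which are Lipschitz and hence bounded and uniformly continuous, it is itself bounded and uniformly continuous, so the notion of definability applies.

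Since $P$ is definable, fix formulas $\psi_k(u,\y)$ with $|u|=n$ such that $\psi_k^M\to P$ uniformly on $M^n\times M^{|\y|}$; then $\psi_k^M(f(\x),\y)\to P(f(\x),\y)$ uniformly as well, so it suffices to prove each $\psi_k^M(f(\x),\y)$ definable. The crucial observation is that, for fixed $\y$, the map $u\mapsto\psi_k^M(u,\y)$ is $\lambda_{\psi_k}$-Lipschitz (Proposition \ref{completion of prestructures}), whence for all $\x,\y$
$$\psi_k^M(f(\x),\y)=\inf_{u}\big[\psi_k^M(u,\y)+\lambda_{\psi_k}\,d(f(\x),u)\big],$$
because the bracketed quantity dominates $\psi_k^M(f(\x),\y)$ for every $u$ and equals it at $u=f(\x)$.

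Now I would invoke the definability of $f$: there are formulas $\theta_j(\x,u)$ with $\theta_j^M\to d(f(\x),u)$ uniformly. Consider the honest $L$-formulas (with free variables $\x,\y$)
$$\chi_{k,j}(\x,\y)=\inf_{u}\big[\psi_k(u,\y)+\lambda_{\psi_k}\,\theta_j(\x,u)\big].$$
Since $|\inf_u g_1-\inf_u g_2|\le\sup_u|g_1-g_2|$, and since $\lambda_{\psi_k}\theta_j^M\to\lambda_{\psi_k}\,d(f(\x),u)$ uniformly in $(\x,u)$ while the summand $\psi_k^M(u,\y)$ does not depend on $j$, we get $\chi_{k,j}^M(\x,\y)\to\psi_k^M(f(\x),\y)$ uniformly as $j\to\infty$. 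An $\epsilon/2$–$\epsilon/2$ argument then finishes: given $\epsilon>0$ choose $k$ with $\|\psi_k^M-P\|\le\epsilon/2$, and then $j$ with $\|\chi_{k,j}^M-\psi_k^M(f(\x),\y)\|\le\epsilon/2$, giving a single formula within $\epsilon$ of $P(f(\x),\y)$ uniformly on $M$. Running $\epsilon=1/\ell$ produces the required approximating sequence, so $P(f(\x),\y)$ is definable.

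The only genuinely substantive step is the Lipschitz-composition identity of the second paragraph; once it is in hand, the rest is the elementary remark that $\inf_u$ is $1$-Lipschitz for the supremum norm, so uniform convergence of the summand passes through the quantifier. I expect no real obstacle beyond correctly tracking that the Lipschitz constant used in the identity is exactly $\lambda_{\psi_k}$, matching the formula's own constant, so that $\chi_{k,j}$ really is an $L$-formula of the prescribed form.
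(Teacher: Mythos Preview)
Your proof is correct and follows essentially the same route as the paper: both reduce to the formula case and invoke the Lipschitz identity $\phi^M(f(\x),\y)=\inf_u[\phi^M(u,\y)+\lambda_\phi\,d(f(\x),u)]$, then pass to the limit. You are simply more explicit than the paper in spelling out the approximation of $d(f(\x),u)$ by formulas $\theta_j$ and the final $\epsilon/2$ diagonalization, which the paper leaves implicit.
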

\begin{proof}
First assume $P$ is the formula $\phi$ and show that
$$\phi^M(f(\x),\z)\leqslant\inf_{\y}[\phi^M(\y,\z)+\lambda_{\phi} d(f(\x),\y)]\leqslant\phi^M(f(\x),\z).$$
Then, assume $\phi_k^M\stackrel{u}{\rightarrow}P$ and deduce that $\phi_k^M(f(\x),\y)\stackrel{u}{\rightarrow}P(f(\x),y)$.
\end{proof}

As a consequence, if $f$ and $g$ are definable, then so is $g\circ f$.
Also, since projections are definable, $f=(f_1,...,f_n)$ is definable if and only if $f_1,...,f_n$ are so.

Proof of the following proposition is similar to Propositions 9.7, 9.8 and 9.25 of \cite{BBHU}.

\begin{proposition}
If $f:M^n\rightarrow M$ is definable and $N\preccurlyeq M$ then $f|_N$ maps
$N^n$ into $N$ and it is definable in $N$. Similarly, if $M\preccurlyeq N$ then
there is a unique definable extension $f\subseteq\bar f:N^n\rightarrow N$.
\end{proposition}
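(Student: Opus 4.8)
The plan is to work throughout with the definable predicate $P(\x,y)=d(f(\x),y)$ on $M^{n+1}$, fixing once and for all a sequence of formulas $\phi_k$ with $\phi_k^M\to P$ uniformly; the same sequence defines $P^{N}$ on every $N\vDash T$, and Proposition \ref{definitional expansion} controls how $P$ transfers across $\preccurlyeq$. The whole argument rests on recording the three ``graph conditions'' that $P$ satisfies in $M$ simply because $P^{M}(\x,y)=d(f(\x),y)$: namely $\sup_{\x}\inf_{y}P(\x,y)=0$, together with the two triangle-type inequalities $\sup_{\x y z}\big(P(\x,z)-P(\x,y)-d(y,z)\big)\leqslant0$ and $\sup_{\x y z}\big(d(y,z)-P(\x,y)-P(\x,z)\big)\leqslant0$. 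Each of these is the uniform limit of the corresponding $L$-sentence formed from $\phi_k$ (using that $\sup$, $\inf$ and $+$ are $1$-Lipschitz for the sup-norm), hence its value is preserved by every $N\equiv M$. First I would prove as a lemma that whenever $P^{N}$ satisfies these conditions in a model $N$, then for each $\a\in N^{n}$ the map $y\mapsto P^{N}(\a,y)$ has a zero which is \emph{unique} (by the second inequality two zeros are at distance $0$), that nonnegativity of $P^{N}$ is automatic from that same inequality with $z=y$, and that an approximate-zero sequence is Cauchy (again by the second inequality), so completeness of $N$ yields a genuine zero $b_{\a}$ with $P^{N}(\a,y)=d(b_{\a},y)$ for all $y$.

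For the first assertion, assume $N\preccurlyeq M$. By Proposition \ref{definitional expansion}, $P^{N}=P|_{N}$ and $(N,P^{N})\preccurlyeq(M,P)$. Fix $\a\in N^{n}$. Since each $\inf_{y}\phi_k(\x,y)$ is an $L$-formula, elementarity gives $\inf_{y\in N}\phi_k^{N}(\a,y)=\inf_{y\in M}\phi_k^{M}(\a,y)$, and letting $k\to\infty$ yields $\inf_{y\in N}P^{N}(\a,y)=\inf_{y\in M}P^{M}(\a,y)=0$. Thus $d(f(\a),N)=0$; as $N$ is a model it is metrically complete, hence closed in $M$, so $f(\a)\in N$ and $f|_{N}$ maps $N^{n}$ into $N$. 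Finally $d(f|_{N}(\x),y)=P^{N}(\x,y)$ is definable in $N$ by the same sequence $\phi_k$, so $f|_{N}$ is definable in $N$.

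For the second assertion, assume $M\preccurlyeq N$. Now Proposition \ref{definitional expansion} gives $(M,P)\preccurlyeq(N,P^{N})$, so the three graph conditions, holding in $M$, pass to $N$. By the lemma, each $\a\in N^{n}$ admits a unique $\bar f(\a)\in N$ with $P^{N}(\a,\bar f(\a))=0$ and $P^{N}(\a,y)=d(\bar f(\a),y)$ for all $y$; hence $\bar f:N^{n}\to N$ is definable, witnessed again by $\phi_k$. For $\a\in M$ the point $f(\a)\in M\subseteq N$ satisfies $P^{N}(\a,f(\a))=P^{M}(\a,f(\a))=0$, so uniqueness forces $\bar f(\a)=f(\a)$ and $f\subseteq\bar f$. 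For uniqueness of the extension, if $g:N^{n}\to N$ is definable with $g|_{M}=f$, then $R(\x)=d(\bar f(\x),g(\x))$ is definable (by Lemma \ref{P-D} and closure of definable functions under composition) and vanishes on $M^{n}$; the sentence $\sup_{\x}R=0$ holds in $M$ and, being a uniform limit of $\sup_{\x}$ of approximating $L$-formulas, transfers to $N$, whence $R\equiv0$ on $N$ and $g=\bar f$.

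The main obstacle will be the lemma, i.e. upgrading the \emph{approximate} existence statement $\sup_{\x}\inf_{y}P(\x,y)=0$ to an \emph{actual} single-valued total function $\bar f$ on all of $N$. This is exactly where the metric structure is essential: the triangle inequalities both guarantee uniqueness and force approximate witnesses to be Cauchy, and completeness of $N$ converts the resulting limit into a genuine value of $\bar f$. A secondary technical point, recurring at every step, is the need to commute the uniform limit $\phi_k\to P$ with the quantifiers $\sup_{\x}$ and $\inf_{y}$; this is harmless because these operations are $1$-Lipschitz for the sup-norm, but it must be invoked explicitly, since $P$ itself is not a formula and the language $L\cup\{P\}$ lacks affine compactness.
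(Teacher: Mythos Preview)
Your proposal is correct and follows essentially the route the paper has in mind: the paper does not spell out a proof but defers to Propositions~9.7, 9.8 and 9.25 of \cite{BBHU}, and those arguments proceed exactly as you do---work with the definable predicate $P(\x,y)=d(f(\x),y)$, transfer it across $\preccurlyeq$ via Proposition~\ref{definitional expansion}, use the triangle-type identities satisfied by $P$ to recover a single-valued function in the target model, and invoke completeness of the metric to turn approximate zeros into actual ones. Your treatment of uniqueness via $R(\x)=d(\bar f(\x),g(\x))$ and the care you take in commuting $\sup/\inf$ with the uniform limit $\phi_k\to P$ are both appropriate and match the standard argument.
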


The following equality shows that if $f$ is definable, then if its graph $G_f$ is definable:
$$d((\x,\y),G_f)=\inf_{\bar{u}}[d(\x,\bar{u})+d(\y,f(\bar{u}))]$$
We have also the following stronger result which holds if $M$ is $\aleph_0$-saturated.
A set $X\subseteq M^n$ is \emph{type-definable} if it is the set of common solutions of
a family of conditions.

\begin{proposition} \label{graph}
Let $M$ be $\aleph_0$-saturated. Then, $f:M^n\rightarrow M$ is definable if and only if $G_f$ is type-definable.
\end{proposition}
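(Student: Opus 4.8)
The two implications have very different characters, so I would treat them separately and only the second one will require $\aleph_0$-saturation. For the forward direction, assume $f$ is definable, which by definition means the predicate $Q(\x,y):=d(f(\x),y)$ is definable; choose formulas $\phi_k(\x,y)$ with $\|\phi_k-Q\|\leqslant 1/k$. Since a definable function is uniformly continuous (uniform continuity of $Q$ in $(\x,y)$, specialized at $y=f(\x')$, forces $d(f(\x),f(\x'))$ small when $d(\x,\x')$ is small), the graph $G_f$ is closed and equals $\{Q=0\}$. I would then check $G_f=\{(\x,y):\phi_k^M(\x,y)\leqslant 1/k\ \text{for all }k\}$: on $G_f$ each condition holds because $\phi_k\leqslant Q+1/k=1/k$, and conversely those conditions force $Q\leqslant\phi_k+1/k\leqslant 2/k$ for all $k$, hence $Q=0$. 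Thus $G_f$ is type-definable, with no use of saturation. (Equivalently, the displayed identity exhibits $d((\x,y),G_f)$ as a definable predicate cutting out $G_f$.)

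For the reverse direction, write $G_f=\{(\x,y):\psi^M(\x,y)\leqslant 0,\ \psi\in\Psi\}$ for a family $\Psi$ of genuine $L$-formulas, so that affine compactness (Theorem \ref{compactness}) and $\aleph_0$-saturation remain available. The engine is a uniform-determination lemma: for every $\epsilon>0$ there are $\psi_1,\dots,\psi_m\in\Psi$ and $\delta>0$ such that $\psi_i^M(\x,z)\leqslant\delta$ for all $i$ implies $d(z,f(\x))\leqslant\epsilon$. I would prove it by contraposition: were it to fail, the conditions $\{\psi(\x,z)\leqslant 0:\psi\in\Psi\}\cup\{\psi(\x,w)\leqslant 0:\psi\in\Psi\}\cup\{\epsilon\leqslant d(z,w)\}$ would be satisfiable up to arbitrarily small tolerance on every finite subset, hence consistent, hence realized in the $\aleph_0$-saturated $M$ by some $(\x_*,z_*,w_*)$; but $(\x_*,z_*),(\x_*,w_*)\in G_f$ force $z_*=w_*=f(\x_*)$, contradicting $\epsilon\leqslant d(z_*,w_*)$.

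I would then show $Q=d(f(\x),y)$ is definable by proving it factors as $Q(\x,y)=\xi(tp(\x,y))$ for some $\xi\in\mathbf{A}(K_{n+1}(T))$; Proposition \ref{Lipschitz} then supplies formulas with $\hat\phi_k\to\xi$ uniformly, i.e. $\phi_k^M\to Q$ uniformly, so $f$ is definable. Well-definedness of $\xi$ follows from the automorphism-invariance of the type-definable $G_f$ together with strong homogeneity of a suitable extension (Proposition \ref{saturated strong homogeneous}). \emph{Affineness} is the genuinely affine-logic ingredient: realizing $p_1,p_2$ in $N_1,N_2$ and passing to the ultramean $N_3=\gamma N_1+(1-\gamma)N_2$, the ultramean theorem \ref{th1} gives $\psi^{N_3}=\gamma\psi^{N_1}+(1-\gamma)\psi^{N_2}\leqslant 0$ on the averaged tuple, so the graph condition is preserved, $f^{N_3}$ sends the averaged argument to the average of the values, and since $d^{N_3}$ is also the $\gamma$-average we get $\xi(\gamma p_1+(1-\gamma)p_2)=\gamma\xi(p_1)+(1-\gamma)\xi(p_2)$. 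Continuity of $\xi$ comes from the lemma: to see $\{\xi\leqslant r\}$ closed, realize a limit type $p=tp(\a,b)$ with $w=f(\a)$, and use the lemma plus saturation to produce, for each $\epsilon$, a $z$ with $\psi_i(\a,z)\leqslant\delta$ and $d(z,b)\leqslant r$, whence $d(w,b)\leqslant\epsilon+r$; letting $\epsilon\to 0$ gives $\xi(p)\leqslant r$, and symmetrically for $\{\xi\geqslant r\}$. (One could instead verify that $\mathrm{epi}(Q)$ and $\mathrm{epi}(-Q)$ are type-definable and quote Proposition \ref{dfncriterion2}, but that reduces to the same core estimate.)

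The main obstacle I expect is precisely the construction of the approximants in the reverse direction, and the difficulty is specific to affine logic. In full continuous logic one would simply take $\inf_z$ of $d(z,y)$ plus a truncated penalty $\psi_i\vee 0$; affine logic forbids the truncation, and an untruncated penalty $\inf_z[d(z,y)+\lambda\sum_i\psi_i(\x,z)]$ can be driven far below $Q$ at points where the bounded but possibly very negative $\psi_i$ overshoot. The plan therefore never builds penalties by hand: it transports everything to the compact convex type space and lets affineness-preservation under ultrameans together with the density of the functions $\hat\phi$ in $\mathbf{A}(K_{n+1}(T))$ produce the uniform approximation for free. Establishing affineness and logic-continuity of $\xi$ cleanly is thus the crux of the argument.
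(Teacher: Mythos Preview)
Your proposal is essentially correct but takes a considerably longer route than the paper's proof. You explicitly consider and then set aside the approach via Proposition~\ref{dfncriterion2} (type-definable epigraphs), remarking that it ``reduces to the same core estimate'' and worrying that an untruncated penalty $\inf_z[d(z,y)+\lambda\sum_i\psi_i(\x,z)]$ can dip far below $Q$. The paper does precisely what you set aside: it type-defines $\{Q\leqslant r\}$ by the family
\[
\Lambda_r=\big\{\inf_u[\alpha\phi(\x,u)+d(u,y)]\leqslant r:\ \phi\leqslant0\in\Gamma,\ \alpha\geqslant0\big\}
\]
and then invokes Proposition~\ref{dfncriterion2}. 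Your worry is misplaced in this use: no single penalty is asked to approximate $Q$; each is used only as an upper bound, and the simultaneous conditions $\inf_u[\alpha\phi+d(u,y)]\leqslant r$ for all $\alpha,\phi$ say exactly that $\Gamma(\a,u)\cup\{d(u,b)\leqslant r\}$ is affinely satisfiable, whence by $\aleph_0$-saturation some $c$ realizes it, forcing $c=f(\a)$ and $d(f(\a),b)\leqslant r$. The converse is immediate with $u=f(\a)$. That is the whole proof of the nontrivial direction.

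Your detour through $\mathbf{A}(K_{n+1}(T))$ works but costs more. Well-definedness via a strongly homogeneous extension requires you to check that $\Psi$ still cuts out a function graph upstairs (both existence and uniqueness need argument); a shorter route, staying in $M$, is to realize $tp(\a,b,f(\a))$ over $(\a',b')$ by saturation and observe the third coordinate must be $f(\a')$. Your affineness step via ultrameans is correct but is an extra ingredient the paper never needs. And when you unpack your continuity argument---producing $z$ with $\psi_i(\a,z)\leqslant\delta$ and $d(z,b)\leqslant r$---the verification that such a $z$ exists is precisely the statement $(\a,b)\vDash\Lambda_r$, i.e.\ the paper's epigraph characterization. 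So the ``core estimate'' really is the same, but the paper extracts the conclusion in one move rather than three.
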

\begin{proof} For the nontrivial direction, assume $\Gamma(\x,u)$ is a set of conditions
of the form $\phi(\x,u)\leqslant0$ which type-defines $G_f$. Let
$$\Lambda_r(\x,y)=\big\{\inf_u[\alpha\phi(\x,u)+d(u,y)]\leqslant r\ :\ \ \
\phi(\x,u)\leqslant0\in\Gamma,\ \alpha\geqslant0\big\}.$$
Clearly, if $d(f(\a),b)\leqslant r$, then $(\a,b)$ satisfies $\Lambda_r(\x,y)$ (set $u=f(\a)$).
Conversely, if $(\a,b)$ satisfies $\Lambda_r(\x,y)$, then the type
$$\{\phi(\a,u)\leqslant 0\ : \ \phi(\x,u)\leqslant0\in\Gamma\}\cup\{d(u,b)\leqslant r\}$$
is affinely satisfied in $M$. So, by saturation, it is satisfied by some $c\in M$.
Then $f(\a)=c$ and $d(f(\a),b)\leqslant r$. We therefore have that
$$d(f(\a),b)\leqslant r\ \ \Leftrightarrow\ \ (\a,b)\vDash\Lambda_r(\x,y)\ \ \ \ \ \ \ \forall\a,b$$
and hence the epigraph of $d(f(\x),y)$ is type-definable.
Similarly, one shows that the epigraph of $-d(f(\x),y)$ is type-definable.
We conclude by Proposition \ref{dfncriterion2} that $d(f(\x),y)$ is a definable predicate.
\end{proof}

In particular, in the case $M$ is $\aleph_0$-saturated, if $f$ is definable and invertible, then $f^{-1}$ is definable.
The following is a definable variant of the existence of invariant probability measures
for continuous functions on compact metric spaces.

\begin{proposition}\label{invariant types}
Let $f:M\rightarrow M$ be definable. Then there exists a type $p(x)\in K_1(T)$
which is $f$-invariant, i.e. $p(\phi(x))=p(\phi(f(x))$ for every $\phi(x)$.
In particular, if $M$ is $\aleph_0$-saturated, then there exists $c\in M$ such that $c\equiv f(c)$.
\end{proposition}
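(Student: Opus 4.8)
The plan is to encode the definable map $f$ as an affine, logic-continuous self-map of the type space $K_1(T)$ and to obtain the desired invariant type as a fixed point of this map. First I would use Lemma \ref{P-D} (applied to $P(u)=\phi(u)$) to observe that for every formula $\phi(x)$ the predicate $\phi(f(x))$ is definable, hence belongs to $\mathbf{D}_1(T)$, the completion of $\mathbb{D}_1(T)$. Writing $\phi\circ f$ for this definable predicate and regarding each $p\in K_1(T)$ as extended to $\mathbf{D}_1(T)$, I would define
\[
\hat f:K_1(T)\longrightarrow K_1(T),\qquad \hat f(p)(\phi)=p(\phi\circ f).
\]

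Next I would verify that $\hat f$ is well defined with the right regularity. Linearity in $\phi$ follows from $(r\phi+s\psi)\circ f=r(\phi\circ f)+s(\psi\circ f)$; positivity holds because $0\leqslant_T\phi$ forces $(\phi\circ f)^N\geqslant0$ on every $N\vDash T$; and $\hat f(p)(1)=p(1)=1$, so $\hat f(p)\in K_1(T)$. Affinity of $\hat f$ is immediate from the linearity of each $p$. For continuity in the logic (weak*) topology I would fix $\phi$ and choose, by Proposition \ref{Lipschitz} and the isometry $\mathbf{D}_1(T)\cong\mathbf{A}(K_1(T))$, formulas $\psi_k$ with $\|\psi_k-\phi\circ f\|\to0$. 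Since $|p(\psi_k)-p(\phi\circ f)|\leqslant\|\psi_k-\phi\circ f\|$ uniformly in $p$, the logic-continuous functions $p\mapsto\hat\psi_k(p)$ converge uniformly to $p\mapsto\hat f(p)(\phi)$, which is therefore logic-continuous.

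Then I would invoke the Markov--Kakutani fixed point theorem: $\hat f$ is an affine continuous self-map of the nonempty compact convex set $K_1(T)$ inside the locally convex space $\mathbb{D}_1(T)^*$, so it has a fixed point. Concretely, starting from any $p_0$ one forms the Cesàro averages $q_n=\frac1n\sum_{k=0}^{n-1}\hat f^k(p_0)$; any logic-cluster point $p$ satisfies $\hat f(p)=p$ because $\hat f(q_n)-q_n=\frac1n(\hat f^n(p_0)-p_0)\to0$. For such $p$ one has $p(\phi)=\hat f(p)(\phi)=p(\phi(f(x)))$ for every $\phi$, which is precisely the asserted $f$-invariance.

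Finally, for the ``in particular'' clause, if $M$ is $\aleph_0$-saturated then the type $p\in K_1(T)$ is realized by some $c\in M$, and for every $\phi$ one computes $tp(f(c))(\phi)=\phi^M(f(c))=p(\phi\circ f)=p(\phi)=tp(c)(\phi)$, whence $c\equiv f(c)$. The step I expect to be the main obstacle is the logic-continuity of $\hat f$: because $\phi\circ f$ is in general only a uniform limit of formulas rather than a formula itself, one must transport continuity through this uniform approximation, and it is exactly Proposition \ref{Lipschitz} together with the isometry between $\mathbf{D}_1(T)$ and $\mathbf{A}(K_1(T))$ that makes this rigorous.
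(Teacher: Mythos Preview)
Your proof is correct and follows essentially the same route as the paper: both define the induced map $\hat f$ on $K_1(T)$ by $\hat f(p)(\phi)=p(\phi(f(x)))$, check it is affine and logic-continuous, and extract an invariant type as a fixed point; the paper invokes Schauder--Tychonoff whereas you use Markov--Kakutani (via Ces\`aro averages), but either suffices, and indeed the paper itself points to Markov--Kakutani for the extension to abelian groups of definable bijections. Your treatment is in fact more careful than the paper's on the point that $\phi\circ f$ is only a definable predicate, so that continuity of $p\mapsto \hat f(p)(\phi)$ has to be obtained by uniform approximation through genuine formulas.
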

\begin{proof} $f$ induces an affine continuous map
$\hat f:K_1(T)\rightarrow K_1(T)$ by $$\hat f(p)(\phi(x))=p(\phi(f(x))).$$
By the Schauder-Tychonoff fixed point theorem (\cite{Conway} p.150),
$\hat f$ has a fixed point $p$. Then, for each $\phi(x)$,
$$p(\phi(x))=p(\phi(f(x)).$$
If $M$ is $\aleph_0$-saturated, let $c\in M$ realize $p$. Then $c\equiv f(c)$.
\end{proof}

The set of $f$-invariant types forms a compact convex subset of $K_1(T)$.
Invariant types exist for Abelian groups of definable bijections too
(use Markov-Kakutani theorem (\cite{Conway} p.151).

\begin{proposition} \label{Herbrand1} \emph{(Affine Herbrand's theorem)}\index{Herbrand's theorem}
Let $T$ be a satisfiable universal theory admitting quantifier-elimination.
Let $\x=x_1\ldots x_m$ and $\y=y_1\ldots y_n$. Then for each formula $\phi(\x,\y)$
and $\epsilon>0$ there are tuples of $L$-terms $\t_1(\x),...,\t_k(\x)$ of length $n$
and $r_1,...,r_k\geqslant0$ with $\sum_ir_i=1$
such that $$T\vDash\sup_{\y}\phi(\x,\y)-\epsilon\leqslant\sum_{i=1}^kr_i\phi(\x,\t_i(\x))$$
\end{proposition}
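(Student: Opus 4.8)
The plan is to follow the quantifier-elimination argument of Proposition \ref{quant-el-loc}, but with the single witnessing term replaced by the whole convex family of all term-substitutions, the point being that universality of $T$ makes the substructure generated by a tuple a model of $T$ again. First I would reduce to the case that $\phi(\x,\y)$ is quantifier-free. Since $T$ has quantifier-elimination, pick a quantifier-free $\phi'(\x,\y)$ with $\|\phi-\phi'\|_T\leqslant\epsilon/3$. For every $M\vDash T$, every $\a$, and every $n$-tuple of terms $\t$ one has $|\sup_\y\phi^M(\a,\y)-\sup_\y\phi'^M(\a,\y)|\leqslant\|\phi-\phi'\|_T$ and $|\phi^M(\a,\t^M(\a))-\phi'^M(\a,\t^M(\a))|\leqslant\|\phi-\phi'\|_T$, so a convex combination of term-substitutions solving the problem for $\phi'$ up to $\epsilon/3$ solves it for $\phi$ up to $\epsilon$. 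Hence I may assume $\phi$ is quantifier-free.

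Next I would introduce the convex family. Let $\Gamma(\x)$ be the set of all formulas $\sum_{i=1}^k r_i\,\phi(\x,\t_i(\x))$ with $r_i\geqslant0$, $\sum_i r_i=1$ and each $\t_i$ an $n$-tuple of $L$-terms in $\x$; this is a convex set of quantifier-free formulas, and every $\theta\in\Gamma$ satisfies $T\vDash\theta\leqslant\sup_\y\phi$, so only the lower bound is in question. It suffices to find, for each $\epsilon>0$, some $\theta\in\Gamma$ with $T\vDash\sup_\y\phi-\epsilon\leqslant\theta$. Suppose not. Then for every $\theta\in\Gamma$ the condition $\theta(\d)+\epsilon\leqslant\sup_\y\phi(\d,\y)$ is satisfiable with $T$, where $\d$ are new constants naming $\x$; by convexity of $\Gamma$ (exactly as in Proposition \ref{quant-el-loc}) the theory $T\cup\{\theta(\d)+\epsilon\leqslant\sup_\y\phi(\d,\y):\theta\in\Gamma\}$ is affinely satisfiable, so by affine compactness (Theorem \ref{compactness}) it has a model. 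This yields $M\vDash T$ and $\a=\d^M$ with $\theta^M(\a)+\epsilon\leqslant\sup_\y\phi^M(\a,\y)$ for all $\theta\in\Gamma$.

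Finally I would derive a contradiction from quantifier-elimination. Applying the last inequality to the single substitutions $\theta=\phi(\x,\t(\x))$ gives $\phi^M(\a,\t^M(\a))+\epsilon\leqslant\sup_\y\phi^M(\a,\y)$ for every tuple of terms $\t$. Let $A$ be the substructure of $M$ generated by $\a$; its elements are the term-values $\t^M(\a)$ together with their limits, and since $T$ is universal, $A\vDash T$. Because $\phi$ is quantifier-free and the inclusion $A\subseteq M$ is an embedding, $\phi^A$ and $\phi^M$ agree on tuples from $A$; taking the supremum over the (dense) set of term-values therefore yields $\sup_\y\phi^A(\a,\y)+\epsilon\leqslant\sup_\y\phi^M(\a,\y)$. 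On the other hand $A\subseteq A\vDash T$ and $A\subseteq M\vDash T$, while $\sup_\y\phi(\x,\y)$ is $T$-approximated by quantifier-free formulas; so by Proposition \ref{quant-el-loc} its value at $\a$ is computed identically in $A$ and in $M$, i.e. $\sup_\y\phi^A(\a,\y)=\sup_\y\phi^M(\a,\y)$, contradicting $\epsilon>0$.

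I expect the main obstacle to lie in this last step, which isolates the two uses of the hypotheses: that the structure generated by $\a$ is already a model of $T$ (where universality is essential), and that $\sup_\y\phi$ is invariant between that substructure and its ambient model (where quantifier-elimination, via Proposition \ref{quant-el-loc}, is essential). The remaining points—the reduction to quantifier-free $\phi$, and passing between the possibly incomplete generated substructure and its completion when taking suprema—are routine once these two facts are secured.
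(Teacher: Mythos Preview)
Your proof is correct and follows essentially the same route as the paper: both arguments pass to the substructure generated by $\a$, use universality to get a model of $T$, and use quantifier-elimination to conclude that $\sup_{\y}\phi$ takes the same value there as in the ambient model, contradicting the assumed $\epsilon$-gap. The paper is slightly more direct in that it does not first reduce to quantifier-free $\phi$ (it simply notes $N\preccurlyeq M$ from universal + QE, which handles arbitrary $\phi$ at once) and it phrases the compactness step contrapositively (showing the family of single-term conditions is unsatisfiable and then extracting the convex combination), but these are organizational differences only.
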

\begin{proof} Let
$$\Gamma(\x)=T\cup\big\{\phi(\x,\t(\x))\leqslant\sup_{\y}\phi(\x,\y)-\epsilon:
\ \ \t(\x)\ \mbox{is\ a\ tuple\ of}\ L\mbox{-terms}\big\}.$$
We first show that $\Gamma(\x)$ is unsatisfiable. Suppose $(M,\a)$ is a model of $\Gamma(\a)$
and $N$ is the substructure of $M$ generated by $\a$.
Then, by assumptions of the proposition, $N\preccurlyeq M$. Let $r=\sup_{\y}\phi^N(\a,\y)$
and $\b\in N$ be such that $r-\frac{\epsilon}{2}\leqslant\phi^N(\a,\b)$.
We may assume without loss of generality that $b_i=t_i(\a)$.
We therefore have that
$$r-\frac{\epsilon}{2}\leqslant\phi^N(\a,\t(\a))=\phi^M(\a,\t(\a))
\leqslant\sup_{\y}\phi^M(\a,\y)-\epsilon=r-\epsilon.$$
This is a contradiction.

Now, since $\Gamma(\x)$ is not satisfiable, by affine compactness,
there are $\t_1(\x),...,\t_k(\x)$ and $r_1,...,r_k\geqslant0$ such that
$$T,\ \ \sum_{i=1}^kr_i\phi(\x,\t_i(\x))\leqslant\sum_{i=1}^k r_i(\sup_{\y}\phi(\x,\y)-\epsilon)$$
is unsatisfiable. Since $T$ is satisfiable, $r_i\neq0$ for at least one $i$.
So, multiplying the inequality by a suitable coefficient, we may further assume $\sum_i r_i=1$.
Hence,
$$T\vDash\sup_{\y}\phi(\x,\y)-\epsilon\leqslant\sum_{i=1}^kr_i\phi(\x,\t_i(\x)).$$
\end{proof}

\begin{corollary} \label{Herbrand2}
Let $T$ be a universal theory admitting quantifier-elimination. Then, every
parametrically definable $f:M^n\rightarrow M$ is approximated piecewise by terms,
i.e. given $\epsilon>0$, there are $L$-terms $t_1,...,t_k$ such that
$$\forall\a\in M\ \ \exists j\leqslant k\ \ \ \ \ \ d(f(\a),t_j^M(\a))<\epsilon.$$
\end{corollary}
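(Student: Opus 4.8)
The plan is to reduce the statement to the affine Herbrand theorem (Proposition \ref{Herbrand1}), applied to a formula approximating the definable predicate $d(f(\x),y)$, and then to exploit the elementary fact that a convex combination with nonnegative weights dominates its own minimum. By the running convention that all parameters are named in the language, I may assume $f$ is $\emptyset$-definable, so that $P(\x,y)=d(f(\x),y)$ is a definable predicate over the complete universal theory $T$ (here $y$ is a single variable, since $f:M^n\rightarrow M$). The structural fact I will lean on is that $P(\a,f(\a))=0$ while $P(\a,y)\geqslant 0$ for all $y$; hence $\inf_y P(\a,y)=0$ in every model, the infimum being attained at $y=f(\a)$.

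First I would fix $\epsilon>0$ and choose a genuine $L$-formula $\chi(\x,y)$ with $|P-\chi|\leqslant\delta$ uniformly across all models of $T$, where $\delta>0$ is to be specified; this is possible precisely because $P$ is definable, i.e. a uniform limit of formulas. Next I apply Proposition \ref{Herbrand1} to the formula $\phi(\x,y)=-\chi(\x,y)$ with some precision $\epsilon'>0$. This produces $L$-terms $t_1(\x),\dots,t_k(\x)$, each of length one, and weights $r_1,\dots,r_k\geqslant 0$ with $\sum_i r_i=1$ such that, after rewriting $\sup_y(-\chi)=-\inf_y\chi$, one obtains $T\vDash\sum_i r_i\,\chi(\x,t_i(\x))\leqslant\inf_y\chi(\x,y)+\epsilon'$.

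Then I would evaluate everything in $M$ at an arbitrary $\a$. On one hand $\inf_y\chi^M(\a,y)\leqslant\chi^M(\a,f(\a))\leqslant P(\a,f(\a))+\delta=\delta$; on the other hand $\chi^M(\a,t_i^M(\a))\geqslant d(f(\a),t_i^M(\a))-\delta$. Feeding both into the Herbrand inequality yields $\sum_i r_i\,d(f(\a),t_i^M(\a))\leqslant 2\delta+\epsilon'$. Choosing $\delta,\epsilon'$ with $2\delta+\epsilon'<\epsilon$, and using that a weighted average with nonnegative weights summing to $1$ is at least its minimum, I conclude $\min_{j\leqslant k}d(f(\a),t_j^M(\a))\leqslant\sum_i r_i\,d(f(\a),t_i^M(\a))<\epsilon$, so some $j$ works for that $\a$. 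Since the $t_j$ are chosen once and for all, independently of $\a$, this is exactly the asserted piecewise approximation.

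The sign-chasing in the Herbrand step is harmless; the point I expect to require the most care is the transfer of hypotheses. I must make sure that after naming the parameters $T$ is still universal and still admits quantifier-elimination in the expanded language, so that Proposition \ref{Herbrand1} genuinely applies: adding constant symbols preserves quantifier-elimination, hence model-completeness, and keeps substructures models of $T$, which is precisely what the proof of Herbrand's theorem used to guarantee the step $N\preccurlyeq M$. The only other subtlety is that the approximation $|P-\chi|\leqslant\delta$ must hold in \emph{every} model of $T$, not merely on $M$; this is built into the definition of a definable predicate and is what legitimizes passing from the semantic statement $T\vDash\cdots$ back to the fixed model $M$.
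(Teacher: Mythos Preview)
Your proof is correct and follows essentially the same route as the paper: approximate $d(f(\x),y)$ by a formula, apply the affine Herbrand theorem (Proposition~\ref{Herbrand1}) to its negation, and then observe that the minimum of finitely many nonnegative numbers is bounded by any convex combination of them. The paper simply fixes $\delta=\epsilon'=\epsilon/3$ where you leave these free, and its final inequality is non-strict rather than strict, but otherwise the arguments coincide.
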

\begin{proof} Given $\epsilon>0$, let $$|d(f(\x),y)-\phi(\x,y)|<\frac{\epsilon}{3}.$$
Applying Herbrand's theorem for $-\phi$ and $\frac{\epsilon}{3}$ we obtain terms
$t_1,...,t_k$ and $r_1,...,r_k\geqslant0$ with $\sum_ir_i=1$ such that
$$T\vDash\sum_{i=1}^kr_i\phi(\x,t_i(\x))\leqslant\inf_{\y}\phi(\x,y)+\frac{\epsilon}{3}.$$
So, $$T\vDash\sum_{i=1}^kr_i d(f(\x),t_i(\x))\leqslant\inf_{\y} d(f(\x),y)+\epsilon=\epsilon.$$
Now, for each $\a\in M$ there is a $j$ such that for all $i$,
$$d(f(\a),t^M_j(\a))\leqslant d(f(\a),t^M_i(\a)).$$
Therefore, $d(f(\a),t^M_j(\a))\leqslant\epsilon$.
\end{proof}
\bigskip

\subsection{Definable sets}
A closed $D\subseteq M^n$ is definable if $d(\x,D)=\inf_{\a\in D}d(\x,\a)$ is definable\index{definable set}.
We use the convention $\inf_{\a\in\emptyset}P(\a)=\|P\|$.
Definable sets are not closed under Boolean combinations. However, if $D,E$ are definable, then so are
$D\times E$ and $\{\x:\ \exists y\ \x y\in D\}$.

\begin{remark} \label{3conditions}
Let $D\subseteq M^n$ be definable and set $P(\x)=d(\x,D)$.
Then the following properties hold for every $\x,\y\in M^n$:

\emph{(i)} $0\leqslant P(\x)$

\emph{(ii)} $P(\x)-P(\y)\leqslant d(\x,\y)$

\emph{(iii)} $0\leqslant\inf_{\x}\sup_{\y}[sP(\x)-rP(\y)-s\hspace{.6mm} d(\x,\y)]$
\ \ \ \ \ \ \ \ $\forall r,s\geqslant0$.
\end{remark}

The inequality (iii) states that for each $\a$,
$\{P(\y)\leqslant0,\ d(\a,\y)\leqslant P(\a)\}$ is affinely approximately satisfiable in $M$.
The properties (i)-(iii) characterize definable sets.

\begin{proposition} \label{dfnconditions}
Let $M$ be extremally $\aleph_0$-saturated and $P:M^n\rightarrow\Rn$ be definable.
If $P$ satisfies (i)-(iii) above, then $P(\x)=d(\x,D)$ where $D=Z(P)=\{\a:P(\a)=0\}\neq\emptyset$.
\end{proposition}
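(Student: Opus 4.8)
The plan is to fix an arbitrary $\a\in M^{n}$ and produce a single point $\b\in D$ with $d(\a,\b)\leqslant P(\a)$; combined with the reverse estimate coming from (ii) this forces $P(\a)=d(\a,D)$, and since $\a$ is arbitrary we are done. Throughout I would work in the type space $K_{\y}(\a)$ over the finite parameter set $A=\{a_{1},\dots,a_{n}\}$, with $|\y|=n$. First I would record that, because $P$ is definable over $\emptyset$ and $T$ is complete, each of the conditions (i)--(iii) of Remark~\ref{3conditions} becomes a closed condition once the quantifiers $\inf_{\x},\sup_{\y}$ are applied to the approximating formulas $\phi_{k}$ (turning them into definable constants); hence all three hold in every model of $T$, not only in $M$. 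In particular $0\leqslant_{T}P$, so the induced map $\hat P(p)=p(P)=\lim_{k}p(\phi_{k})$ is a \emph{nonnegative} element of $\mathbf A(K_{\y}(\a))$, being a uniform limit of the affine logic-continuous functions $\hat\phi_{k}$; likewise $p\mapsto p\big(d(\a,\y)\big)$ is affine and logic-continuous.

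The central step is a face argument. Since $\hat P\geqslant 0$ is affine, the set $F_{0}=\{p:\hat P(p)=0\}$ is a face of $K_{\y}(\a)$: if $\tfrac12 p_{1}+\tfrac12 p_{2}\in F_{0}$ then $\tfrac12\hat P(p_{1})+\tfrac12\hat P(p_{2})=0$ with both summands $\geqslant 0$, forcing $p_{1},p_{2}\in F_{0}$. It is nonempty because $\inf_{\y}P=0$ (taking $s=1$ and $r\to\infty$ in (iii) produces realizations with $P$ arbitrarily small, so the continuous $\hat P$ attains its minimum $0$ on the compact $K_{\y}(\a)$). Let $m=\min_{p\in F_{0}}p\big(d(\a,\y)\big)$, attained by compactness, and let $G=\{p\in F_{0}:p(d(\a,\y))=m\}$; minimizing an affine continuous function over the face $F_{0}$ again yields a face, so $G$ is a nonempty face. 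To bound $m$ I would use the instance $\x=\a$ of (iii): with $s=1$ it gives $\sup_{\b\in M}[P(\a)-rP(\b)-d(\a,\b)]\geqslant 0$ for every $r$, a fortiori $\sup_{p}[P(\a)-r\hat P(p)-p(d(\a,\y))]\geqslant 0$ over $K_{\y}(\a)$. This furnishes types $p_{r}$ with $\hat P(p_{r})\to 0$ and $p_{r}(d(\a,\y))\leqslant P(\a)+o(1)$; a weak${}^{*}$ cluster point $p^{*}$ then satisfies $\hat P(p^{*})=0$ and $p^{*}(d(\a,\y))\leqslant P(\a)$, whence $m\leqslant P(\a)$.

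Now I invoke the hypothesis that $M$ is extremally $\aleph_{0}$-saturated: every face of a type space over fewer than $\aleph_{0}$ parameters contains an extreme type, and such extreme types are realized in $M$. Applied to the face $G$ (whose parameter set $A$ has size $n<\aleph_{0}$), this produces $\b\in M^{n}$ with $P(\b)=0$, i.e. $\b\in D$ (so $D\neq\emptyset$), and $d(\a,\b)=m\leqslant P(\a)$. Assembling the estimates: (ii) gives $P(\a)=P(\a)-P(\c)\leqslant d(\a,\c)$ for every $\c\in D$, hence $P(\a)\leqslant d(\a,D)$, while $d(\a,D)\leqslant d(\a,\b)=m\leqslant P(\a)$. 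Therefore $P(\a)=d(\a,D)$ for all $\a$, and $D=Z(P)\neq\emptyset$.

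The delicate point I anticipate is precisely the passage from the \emph{value} statement (iii), which lives in $M$, to a statement about the compact convex type space: one must keep track of the fact that $P$ is a definable predicate rather than a formula, so that $\hat P$ is only a uniform limit of affine continuous functions, and that suprema over realizations in $M$ may be replaced by suprema over $K_{\y}(\a)$ thanks to the completeness of $T$. Once $\hat P$ is identified as a nonnegative member of $\mathbf A(K_{\y}(\a))$ and $G$ is recognized as a genuine face, the realizability of $G$ is exactly where extremal (as opposed to full) $\aleph_{0}$-saturation is enough, since faces are guaranteed to contain extreme types.
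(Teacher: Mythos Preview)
Your proof is correct and follows essentially the same strategy as the paper's: build a face in $K_{\y}(\a)$ that forces $P(\y)=0$ and $d(\a,\y)\leqslant P(\a)$, then realize it via extremal $\aleph_0$-saturation. The presentations differ slightly. The paper writes down the partial type $\{P(\y)\leqslant 0,\ d(\a,\y)\leqslant P(\a)\}$, observes from (iii) that it is affinely satisfiable, and then invokes Remark~\ref{facial} to see it is facial (the point being that on $F_0=\{P(\y)=0\}$ condition~(ii) gives $P(\a)-d(\a,\y)\leqslant 0$, so adjoining $d(\a,\y)\leqslant P(\a)$ preserves faciality). You instead take $G$ to be the minimum-set of $p\mapsto p(d(\a,\y))$ on $F_0$, which is automatically a face, and then use a cluster-point argument from (iii) to bound $m\leqslant P(\a)$. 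Your route avoids appealing to Remark~\ref{facial} at the cost of the extra compactness step; the paper's route is a line shorter once that remark is in hand. Either way the realizing $\b$ comes out the same (in fact $m=P(\a)$, since (ii) forces $d(\a,\y)\geqslant P(\a)$ on $F_0$).
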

\begin{proof} As stated in the previous subsection, $P$ can be regarded as a formula so that $(M,P)$ is extremally saturated.
Then, taking $s=0$, $r=1$ in (iii) and using extremal saturation,
one checks that $D$ is nonempty. By (ii), one has that $P(\x)\leqslant d(\x,\y)$
for all $\y\in D$. Hence $P(\x)\leqslant d(\x,D)$. For the inverse inequality, fix $\a\in M$.
By (iii) (and using extremal saturation),
$$\{P(\y)\leqslant 0,\ d(\a,\y)\leqslant P(\a)\}$$ is affinely satisfiable in $M$.
By Remark \ref{facial}, this is a facial type since $P(\y)\leqslant0$ implies that $\y\in D$.
So, by (i) and extremal saturation, there exists $\b$ such that
$$P(\b)=0, \ \ d(\a,\b)\leqslant P(\a).$$
Therefore, $d(\a,D)\leqslant d(\a,\b)\leqslant P(\a)$
and hence $d(\x,D)\leqslant P(\x)$ for all $\x\in M$.
\end{proof}

\begin{proposition} \label{dfnrestriction}
Let $M$ be extremally $\aleph_0$-saturated and $M\preccurlyeq N$.
If $D\subseteq N^n$ is definable, then $C=D\cap M^n$ is definable and for each
$\x\in M$, $d(\x,D)=d(\x,C)$. In particular, $(M,d(\x,C)\preccurlyeq(N,d(\x,D))$.
If $D\neq\emptyset$ then $C\neq\emptyset$.
\end{proposition}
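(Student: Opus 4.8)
The plan is to recognize $d(\x,D)$ as a definable predicate on $N$, push it down to $M$ along the elementary inclusion, and then apply the intrinsic characterization of definable sets available in the extremally saturated model $M$. Concretely, since $D$ is definable, $d(\x,D)$ is a definable predicate on $N$; as $M\preccurlyeq N$, Proposition \ref{definitional expansion} shows that its restriction $P:=d(\x,D)|_{M^n}$ is a definable predicate on $M$ and that $(M,P)\preccurlyeq(N,d(\x,D))$. In particular $P(\a)=d(\a,D)$ for every $\a\in M^n$.

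Assume first that $D\neq\emptyset$, so that $d(\x,D)$ is a genuine distance function and therefore satisfies conditions (i)--(iii) of Remark \ref{3conditions} in $N$. Treating $P$ as a new relation symbol, each of these is a closed condition in the augmented language (condition (iii) being one such condition for every pair $r,s\geqslant0$), so the elementary inclusion $(M,P)\preccurlyeq(N,d(\x,D))$ transfers all of them to $(M,P)$. Thus $P$ is a definable predicate on the extremally $\aleph_0$-saturated structure $M$ satisfying (i)--(iii), and Proposition \ref{dfnconditions} yields $P(\x)=d(\x,Z(P))$ where $Z(P)=\{\a\in M^n:\ P(\a)=0\}\neq\emptyset$.

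It then remains to identify $Z(P)$ with $C$. Since $D$ is closed and the metric of $N$ is complete, $d(\a,D)=0$ forces $\a\in D$; hence for $\a\in M^n$ one has $P(\a)=0$ if and only if $\a\in D$, so $Z(P)=D\cap M^n=C$. Consequently $d(\x,C)=P(\x)=d(\x,D)$ on $M^n$, the set $C$ is definable with distance predicate $P$, and $C\neq\emptyset$; the asserted elementary inclusion $(M,d(\x,C))\preccurlyeq(N,d(\x,D))$ is exactly $(M,P)\preccurlyeq(N,d(\x,D))$. When $D=\emptyset$ one has $C=\emptyset$ and both distances reduce to the empty-set convention, so the equality holds trivially there.

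The delicate point will be the middle step. I must verify that conditions (i)--(iii) are literally closed conditions preserved by $\preccurlyeq$ in the language augmented with the symbol for $P$ --- recall that, as noted after Proposition \ref{definitional expansion}, affine compactness is unavailable in such a language but elementary notions like $(M,P)\preccurlyeq(N,d(\x,D))$ remain meaningful --- and that \emph{extremal} $\aleph_0$-saturation, rather than ordinary saturation, is what is needed: it is precisely the hypothesis Proposition \ref{dfnconditions} uses to realize the facial type $\{P(\y)\leqslant0\}$ and thereby produce a point of $C$, which simultaneously secures both the distance formula and the nonemptiness clause.
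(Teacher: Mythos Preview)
Your proposal is correct and follows essentially the same approach as the paper: restrict the definable predicate $d(\x,D)$ to $M$ via Proposition~\ref{definitional expansion}, observe that conditions (i)--(iii) of Remark~\ref{3conditions} transfer along the elementary inclusion, and invoke Proposition~\ref{dfnconditions} to identify the restriction as $d(\x,C)$ with $C=Z(P)=D\cap M^n$. Your treatment is in fact more explicit than the paper's on the transfer of (i)--(iii) and the identification $Z(P)=C$, and you handle the empty case separately, whereas the paper simply remarks that $\inf_x d(x,D)<1$ to obtain $C\neq\emptyset$.
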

\begin{proof} By Proposition \ref{definitional expansion}, $Q(\x)=d(\x, D)|_{M^n}$ is definable in $M$
and $(M,Q)\preccurlyeq (N,d(\x,D))$.
Note that $Q$ satisfies conditions (i)-(iii) in Remark \ref{3conditions}.
So, since the zeroset of $Q$ is $C$, by Proposition \ref{dfnconditions}, $Q(\x)=d(\x,C)$.
For the last part, use the fact that $\inf_xd(x,D)<1$.
\end{proof}

Similarly, if $M\preccurlyeq N$ and $N$ is extremally $\aleph_0$-saturated, one promotes a definable
$C\subseteq M^n$ to a definable $D\subseteq N^n$ such that $C=D\cap M^n$.

Assume $D\subseteq M^n$ and $$P(\x,\y)\leqslant P(\x,\z)+\lambda d(\z,\y)\ \ \ \ \ \ \ \forall \x,\y,\z.$$
Take the infimum first over $\y\in D$ and then over $\z\in M^n$ to obtain
$$\inf_{\y\in D} P(\x,\y)\leqslant \inf_{\z}[P(\x,\z)+\lambda d(\z,D)].$$
Allowing $\z\in D$, we see that
$$\inf_{\y\in D} P(\x,\y)=\inf_{\z}[P(\x,\z)+\lambda d(\z,D)].\ \ \ \ \ \ \ (*)$$

\begin{proposition} \label{dfnprojection}
$D\subseteq M^n$ is definable if and only if for each definable
$P:M^{m+n}\rightarrow\Rn$, the predicate $\inf_{\y\in D}P(\x,\y)$ is definable.
\end{proposition}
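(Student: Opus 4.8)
The plan is to prove both directions of the equivalence, with the interesting content being the ``only if'' direction (definability of $D$ implies definability of $\inf_{\y\in D}P(\x,\y)$). First I would dispose of the converse: if $\inf_{\y\in D}P(\x,\y)$ is definable for \emph{every} definable $P$, then in particular I may take $P(\x,\y)=d(\x,\y)$ with $\x,\y$ ranging over $M^n$, so that $\inf_{\y\in D}d(\x,\y)=d(\x,D)$ is definable, which is exactly the definition of $D$ being a definable set. This is immediate.

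For the main direction, suppose $D$ is definable, so $d(\z,D)$ is a definable predicate, and let $P:M^{m+n}\rightarrow\Rn$ be definable with Lipschitz constant controlled, say $\lambda$. The crucial tool is the identity $(*)$ established just above the statement, namely
\[
\inf_{\y\in D} P(\x,\y)=\inf_{\z}\big[P(\x,\z)+\lambda\, d(\z,D)\big],
\]
which rewrites the constrained infimum over $D$ as an \emph{unconstrained} infimum over all of $M^n$ of a penalized predicate. The plan is to show that the right-hand side is a definable predicate. Since $P$ is definable, choose formulas $\psi_k(\x,\z)$ with $\psi_k^M\to P$ uniformly; since $D$ is definable, choose formulas $\chi_j(\z)$ with $\chi_j^M\to d(\cdot,D)$ uniformly. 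Then $\psi_k(\x,\z)+\lambda\chi_k(\z)$ is a genuine $L$-formula whose interpretation converges uniformly to $P(\x,\z)+\lambda d(\z,D)$, and applying $\inf_\z$ (which is itself a formula-forming operation and is $1$-Lipschitz, hence preserves uniform convergence: $|\inf_\z f-\inf_\z g|\leqslant\|f-g\|_\infty$) yields a sequence of formulas $\inf_\z[\psi_k(\x,\z)+\lambda\chi_k(\z)]$ converging uniformly to $\inf_\z[P(\x,\z)+\lambda d(\z,D)]$. By $(*)$ this limit equals $\inf_{\y\in D}P(\x,\y)$, establishing its definability.

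The step I expect to require the most care is verifying that $\inf_\z$ genuinely preserves the uniform convergence and that the penalized predicate has the right uniform-continuity/boundedness behaviour, i.e.\ that $\inf_\z[P(\x,\z)+\lambda d(\z,D)]$ is a bona fide predicate (uniformly continuous and bounded) and not merely a pointwise limit. The bound on the Lipschitz constant $\lambda$ of $P$ in the second argument is what makes the penalty term work, and one must confirm that the same $\lambda$ can be used uniformly in $k$ when passing to the approximating formulas; replacing $\psi_k$ by a slightly larger Lipschitz bound if necessary handles this. Once the uniform estimate $\big|\inf_\z f_k-\inf_\z f\big|\leqslant\|f_k-f\|_\infty$ is in hand, the argument is a routine assembly of the two approximating sequences, and the identity $(*)$ does the rest of the work.
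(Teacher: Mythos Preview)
Your approach is the same as the paper's, and the argument is essentially correct, but there is one slip worth flagging. You write ``let $P$ be definable with Lipschitz constant controlled, say $\lambda$'', and then build the penalized formula $\psi_k(\x,\z)+\lambda\chi_k(\z)$ with that fixed $\lambda$. But definable predicates need not be Lipschitz at all (the paper notes this explicitly just after the definition), so there is no such $\lambda$ in general, and your closing remark about ``a slightly larger Lipschitz bound'' does not rescue this: the constants $\lambda_{\psi_k}$ of the approximating formulas may well tend to infinity.

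The clean fix, which is exactly what the paper does, is to split into two steps: first apply the identity $(*)$ to each approximating formula $\psi_k$ using its own Lipschitz constant $\lambda_{\psi_k}$, so that
\[
\inf_{\y\in D}\psi_k^M(\x,\y)=\inf_{\z}\big[\psi_k^M(\x,\z)+\lambda_{\psi_k}\,d(\z,D)\big]
\]
is definable for each $k$; then observe that $\big|\inf_{\y\in D}\psi_k^M-\inf_{\y\in D}P\big|\leqslant\|\psi_k^M-P\|_\infty\to 0$, so the limit is definable. The point is that $(*)$ is invoked only at the level of the Lipschitz formulas $\psi_k$, never for $P$ itself, and the varying $\lambda_{\psi_k}$ cause no trouble because they disappear once you have definability of each $\inf_{\y\in D}\psi_k^M$.
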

\begin{proof} For the if part, take $P=d(\x,\y)$. For the converse, use the equality $(*)$
above if $P$ is $\lambda$-Lipschitz.
If $P$ is arbitrary definable, let $$\phi_k^M(\x,\y)\stackrel{u}{\longrightarrow}P(\x,\y).$$
Then, verify that $$\inf_{\y\in D}\phi_k^M(\x,\y)\stackrel{u}{\longrightarrow}\inf_{\y\in D}P(\x,\y).$$
\end{proof}

In particular, if $\a$ is definable, every predicate which is definable using $\a$ as parameters
is $\emptyset$-definable. Also, if $f$ is a definable function and $D$ is a definable set then $f(D)$
is definable: $$d(x,f(D))=\inf_{t\in D} d(x,f(t)).$$

\begin{corollary} \label{restriction}
Assume $M\preccurlyeq N$,\ $D\subseteq N^m$ is definable and $d(\x,D)|_M=d(\x,C)$ where $C\subseteq M^m$.
Then, for each definable predicate $P:N^{n+m}\rightarrow\Rn$ and $\x\in M^n$ one has that\ \
$\inf_{\y\in D}P(\x,\y)=\inf_{\y\in C}P|_M(\x,\y)$. In particular, $C$ and $D$ have the same diameter.
\end{corollary}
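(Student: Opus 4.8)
The plan is to reduce everything to the elementary relation $M\preccurlyeq N$ by rewriting each infimum over the definable \emph{set} $D$ as an infimum over all of $N^m$ of a suitable definable \emph{predicate}, via the identity $(*)$ established just before Proposition \ref{dfnprojection}. This is the crux: $\inf_{\y\in D}$ ranges over a set and is not directly transferred by $M\preccurlyeq N$, whereas after applying $(*)$ the infimum ranges over the whole model and is computed from formulas, whose values at tuples of $M$ \emph{are} preserved by $M\preccurlyeq N$.

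First I would treat the case where $P=\phi$ is a single formula, say $\lambda_\phi$-Lipschitz. By $(*)$ applied in $N$,
$$\inf_{\y\in D}\phi^N(\x,\y)=\inf_{\z}\big[\phi^N(\x,\z)+\lambda_\phi\, d(\z,D)\big].$$
Since $D$ is definable, $d(\z,D)$ is a definable predicate on $N$; pick formulas $\psi_j$ with $\psi_j^N\to d(\cdot,D)$ uniformly. Then $\chi_j:=\inf_{\z}[\phi(\x,\z)+\lambda_\phi\psi_j(\z)]$ is a formula and $\chi_j^N\to\inf_{\y\in D}\phi^N(\x,\cdot)$ uniformly on $N$. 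For $\x\in M^n$, elementarity gives $\chi_j^N(\x)=\chi_j^M(\x)=\inf_{\z\in M^m}[\phi^M(\x,\z)+\lambda_\phi\psi_j^M(\z)]$. Letting $j\to\infty$ and using the hypothesis $d(\cdot,D)|_M=d(\cdot,C)$ together with $(*)$ in $M$, the right-hand side tends to $\inf_{\y\in C}\phi^M(\x,\y)$, while $\chi_j^N(\x)$ tends to $\inf_{\y\in D}\phi^N(\x,\y)$. Comparing the two limits yields $\inf_{\y\in D}\phi^N(\x,\y)=\inf_{\y\in C}\phi^M(\x,\y)$ for every $\x\in M^n$.

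For a general definable $P$ I would take formulas with $\phi_k^N\to P$ uniformly, apply the previous step to each $\phi_k$, and pass to the limit; since $Q\mapsto\inf_{\y\in D}Q(\x,\y)$ is $1$-Lipschitz for the uniform norm (and likewise over $C$), this transfers the equality to $P$, giving $\inf_{\y\in D}P(\x,\y)=\inf_{\y\in C}P|_M(\x,\y)$. Replacing $P$ by $-P$ gives the corresponding identity for suprema. Then, for the statement on diameters, I would apply the supremum form twice: writing $r(\x)=\sup_{\y\in D}d(\x,\y)$, which is definable on $N$ by Proposition \ref{dfnprojection} (applied to $-d$), the supremum identity with $P=d$ gives $r|_M(\x)=\sup_{\y\in C}d(\x,\y)$ for $\x\in M^m$; applying the supremum identity once more, now with no parameters, to the definable predicate $r$ gives $\sup_{\x\in D}r(\x)=\sup_{\x\in C}r|_M(\x)$, i.e. $\mathrm{diam}(D)=\mathrm{diam}(C)$.

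I expect the only delicate point to be the bookkeeping of the two nested uniform approximations---over $k$ for $P$ and over $j$ for $d(\cdot,D)$---and their interchange with the infimum and with the elementary transfer. None of these interchanges is hard, but each must be arranged so that elementarity ($M\preccurlyeq N$) is invoked only for genuine formulas $\chi_j$ evaluated at tuples from $M^n$, and so that $(*)$ is invoked only in its Lipschitz-predicate form; the passage to definable predicates must happen strictly through uniform limits, never by applying elementarity to a limit object directly.
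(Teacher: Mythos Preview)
Your proposal is correct and follows essentially the same approach as the paper: reduce $\inf_{\y\in D}$ to an unrestricted infimum via the identity $(*)$, transfer through $M\preccurlyeq N$ by approximating the definable predicates with genuine formulas, then pass from formulas to general definable $P$ by uniform approximation. The paper's proof is just a terse pointer to Proposition~\ref{dfnprojection} (``consider Lipschitz and non Lipschitz cases as in the proof of the preceding proposition''), and your write-up simply unfolds that pointer explicitly, including the inner approximation of $d(\cdot,D)$ by formulas $\psi_j$ that the paper would absorb into Proposition~\ref{definitional expansion}. For the diameter, the paper writes $\sup_{\x\y\in D}d(\x,\y)$ without further comment, while you spell out the two-step application (first over $\y$, then over $\x$); both arguments are the same in substance.
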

\begin{proof}
For the first part consider Lipschitz and non Lipschitz cases as in the proof of the preceding proposition.
Also, the diameter of $D$ is obtained by $\sup_{\x\y\in D}d(\x,\y)$.
\end{proof}

\begin{proposition} \label{dfn3conditions}
For a closed $D\subseteq M^n$ the following are equivalent:

\emph{(i)} $D$ is definable.

\emph{(ii)} There exists a definable predicate $P:M^n\rightarrow\Rn^+$ such that
$$\forall\x\in D,\ P(\x)=0\ \ \ \ \ \ \mbox{and}
\ \ \ \ \ \ \forall\x\in M^n, \ d(\x,D)\leqslant P(\x).$$

\emph{(iii)} For each $k$ there exists a definable predicate $P_k:M^n\rightarrow\Rn$ such that
$$\forall\x\in D,\ P_k(\x)\leqslant0\ \ \ \ \ \ \mbox{and} \ \ \ \ \ \ \forall\x\in M^n, \ d(\x,D)\leqslant P_k(\x)+\frac{1}{k}.$$

\end{proposition}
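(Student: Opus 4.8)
The plan is to establish the cycle (i) $\Rightarrow$ (ii) $\Rightarrow$ (iii) $\Rightarrow$ (i), with only the last implication requiring genuine work. For (i) $\Rightarrow$ (ii) I would simply take $P(\x)=d(\x,D)$, which is definable by hypothesis and non-negative; since $D$ is closed, $P$ vanishes exactly on $D$, and $d(\x,D)\leqslant P(\x)$ holds with equality. For (ii) $\Rightarrow$ (iii) I would set $P_k=P$ for every $k$: the condition $P(\x)=0$ on $D$ gives $P_k(\x)\leqslant 0$ there, and $d(\x,D)\leqslant P(\x)=P_k(\x)\leqslant P_k(\x)+\frac1k$ everywhere. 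Both steps are immediate.

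The substance is (iii) $\Rightarrow$ (i). Assuming $D\neq\emptyset$ (the case $D=\emptyset$ is handled by the convention $\inf_{\a\in\emptyset}=\|\cdot\|$, under which $d(\x,\emptyset)$ coincides with the formula $\sup_y d(\x,y)$ and is therefore definable), I would form for each $k$ the inf-convolution
\[
Q_k(\x)=\inf_\y\big[P_k(\y)+d(\x,\y)\big].
\]
Since $P_k(\y)+d(\x,\y)$ is a definable predicate in the variables $(\x,\y)$ and $M^n$ is itself a definable set (because $d(\cdot,M^n)=0$), the forward implication of Proposition \ref{dfnprojection} guarantees that each $Q_k$ is a definable predicate.

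The heart of the argument is to sandwich $Q_k$ between $d(\cdot,D)-\frac1k$ and $d(\cdot,D)$. The upper bound $Q_k(\x)\leqslant d(\x,D)$ follows by restricting the infimum to $\y\in D$, where $P_k(\y)\leqslant 0$, so that $P_k(\y)+d(\x,\y)\leqslant d(\x,\y)$. For the lower bound I would rewrite the hypothesis of (iii) as $P_k(\y)\geqslant d(\y,D)-\frac1k$ and combine it with the elementary inequality $d(\x,D)\leqslant d(\x,\y)+d(\y,D)$, which yields $Q_k(\x)\geqslant d(\x,D)-\frac1k$. Hence $\|Q_k-d(\cdot,D)\|\leqslant\frac1k$, so $d(\cdot,D)$ is a uniform limit of the definable predicates $Q_k$; as the definable predicates form a uniformly closed space (the completion $\mathbf{D}_n(T)$ of $\mathbb{D}_n(T)$), $d(\cdot,D)$ is definable and $D$ is a definable set.

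The main obstacle is precisely this two-sided estimate: the error term $\frac1k$ built into (iii) is exactly what allows a single inf-convolution to dominate $d(\cdot,D)$ from below while the nonnegativity of $P_k$ on $D$ controls it from above, so that the gap collapses as $k\to\infty$. The definability of the approximants then rides entirely on Proposition \ref{dfnprojection}, which is why the construction is routed through inf-convolutions rather than through truncations such as $P_k\vee 0$, the latter being unavailable since $\vee$ is not an affine connective.
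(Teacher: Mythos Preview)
Your proof is correct and follows essentially the same route as the paper: the implications (i)$\Rightarrow$(ii)$\Rightarrow$(iii) are dispatched by taking $P=d(\cdot,D)$ and $P_k=P$, and (iii)$\Rightarrow$(i) is obtained via the same inf-convolution $Q_k(\x)=\inf_\y[P_k(\y)+d(\x,\y)]$ together with the identical two-sided estimate $d(\x,D)-\frac1k\leqslant Q_k(\x)\leqslant d(\x,D)$. Your explicit treatment of $D=\emptyset$ and your appeal to Proposition~\ref{dfnprojection} to justify definability of $Q_k$ are small additions, but the argument is the paper's.
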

\begin{proof} (i)$\Rightarrow$(ii): Take $P(\x)=d(\x,D)$.

(ii)$\Rightarrow$(iii): Take $P_k=P$.

(iii)$\rightarrow$(i): For each $k$ set $$Q_k(\x)=\inf_{\y}\ [d(\x,\y)+P_k(\y)].$$
We then have that
$$Q_k(\x)\leqslant\inf_{\y\in D}\ [P_k(\y)+d(\x,\y)]\leqslant\inf_{\y\in D}d(\x,\y)=d(\x,D).$$
On the other hand, $d(\x,D)\leqslant d(\x,\y)+d(\y,D)$.
So, using the assumption,
$$d(\x,D)-\frac{1}{k}\leqslant\inf_{\y}\ [d(\x,\y)+d(\y,D)-\frac{1}{k}]\leqslant Q_k(\x).$$
We conclude that $d(\x,D)$ is the uniform limit of $Q_k(\x)$, hence definable.
\end{proof}

\begin{corollary}\label{first order definable}
If $M$ is first order, $D\subseteq M^n$ is affinely definable if and only if it is the zero-sets of
a definable predicate $P:M^n\rightarrow\Rn^+$ such that $$\inf\{P(\a):\ \ \a\in M,\ 0<P(\a)\}=r>0.$$
\end{corollary}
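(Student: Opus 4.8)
The plan is to derive both directions from Proposition \ref{dfn3conditions}, the decisive point being that for a first order $M$ the metric is the discrete $\{0,1\}$-valued metric, so that on $M^n$ the distance $d(\x,\y)$ is the Hamming distance and $d(\x,D)$ takes only the integer values $0,1,\dots,n$. This integrality is exactly the source of the gap $r>0$ in one direction, and conversely a gap is what lets one dominate $d(\x,D)$ by a crude rescaling of $P$.

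For the direction ``$\Rightarrow$'', I would suppose $D$ is definable, so that $P(\x)=d(\x,D)$ is a definable predicate into $\Rn^+$. Since the discrete topology makes every subset closed, $Z(P)=\{\a:d(\a,D)=0\}=D$. Moreover every positive value of $P$ lies in $\{1,\dots,n\}$ by integrality, whence $\inf\{P(\a):\a\in M^n,\ 0<P(\a)\}\geqslant 1>0$, and one takes $r$ to be this infimum. So $P=d(\x,D)$ is the required predicate.

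For ``$\Leftarrow$'', I would assume $D=Z(P)$ with $P:M^n\to\Rn^+$ definable and $\inf\{P(\a):\a\in M^n,\ 0<P(\a)\}=r>0$; one may assume $D\neq\emptyset$, the empty case being trivial since $d(\x,\emptyset)$ is a constant and constants are definable. First I would observe that the gap condition says precisely that $P(\a)\geqslant r$ for every $\a\notin D$, because $P\geqslant 0$ and $P(\a)=0$ iff $\a\in D$. Then I set $P_0=\tfrac{n}{r}\,P$, which is definable (a scalar multiple of a definable predicate), nonnegative, and vanishes on $D$. For $\a\notin D$ one has $P_0(\a)=\tfrac{n}{r}P(\a)\geqslant n\geqslant d(\a,D)$, the last inequality holding because the Hamming distance to a nonempty set is at most $n$; and for $\a\in D$ both sides vanish. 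Hence $d(\x,D)\leqslant P_0(\x)$ for all $\x$, so condition (ii) of Proposition \ref{dfn3conditions} is satisfied and $D$ is definable.

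The routine parts are the closure under scalars of definable predicates and the bookkeeping of the cases $\a\in D$ and $\a\notin D$; the only genuine content, and the place where the first order hypothesis is indispensable, is the double use of discreteness, which supplies the lower bound $r\geqslant 1$ in one direction and, together with the finite upper bound $d(\x,D)\leqslant n$, makes $\tfrac{n}{r}P$ dominate $d(\x,D)$ in the other. I expect no serious obstacle beyond correctly isolating this discreteness phenomenon and disposing of the degenerate cases $D=\emptyset$ and $|M|=1$.
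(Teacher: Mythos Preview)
Your proposal is correct and follows essentially the same route as the paper: for ``$\Leftarrow$'' you produce the predicate $\tfrac{n}{r}P$ dominating $d(\x,D)$ and invoke Proposition~\ref{dfn3conditions}(ii), which is exactly the paper's argument (the paper writes only the inequality $d(\x,Z(P))\leqslant\tfrac{n}{r}P(\x)$ and cites the same proposition), and for ``$\Rightarrow$'' you take $P=d(\x,D)$ and use integrality of the Hamming distance, which is what the paper's ``the converse is obvious'' unpacks to. Your handling of the degenerate cases and the explicit justification of the bound $d(\a,D)\leqslant n$ are more detailed than the paper but add no new idea.
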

\begin{proof}
Let $P$ be as above.
Then $$d(\x,Z(P))\leqslant\frac{n}{r}\ P(\x)\ \ \ \ \ \ \ \forall\x\in M.$$
So, by Part (ii) of Proposition \ref{dfn3conditions}, $Z(P)$ is definable in $M$.
The converse is obvious.
\end{proof}

\begin{lemma} \label{zerosets}
Let $P,Q:M^n\rightarrow\Rn^+$ be definable where $M$ is extremally $\aleph_0$-saturated.
Then, $Z(P)\subseteq Z(Q)$ if and only if for each $\epsilon>0$ there is $\lambda\geqslant0$
such that for all $\x\in M$ one has that $Q(\x)\leqslant\lambda P(\x)+\epsilon$.
\end{lemma}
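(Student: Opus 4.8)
The plan is to pass to the type space $K_n(T)$, where the two definable predicates become nonnegative affine continuous functions, and to read off the inequality from the geometry of a single face. The easy direction is immediate: if for every $\epsilon>0$ there is $\lambda\geqslant0$ with $Q(\x)\leqslant\lambda P(\x)+\epsilon$ for all $\x$, then any $\a\in Z(P)$ has $P(\a)=0$, whence $Q(\a)\leqslant\epsilon$ for every $\epsilon>0$, so $Q(\a)=0$ and $\a\in Z(Q)$; thus $Z(P)\subseteq Z(Q)$.

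For the converse I would first record that, since $P$ and $Q$ are definable, by Proposition \ref{Lipschitz} the functions $\hat P,\hat Q$ on $K_n(T)$ (obtained as uniform limits $\hat P=\lim_k\hat\phi_k$ of approximating formulas) lie in $\mathbf{A}(K_n(T))$, are nonnegative (each $p$ is a positive functional and $P,Q\geqslant0$), and satisfy $\hat P(tp(\a))=P(\a)$ and $\hat Q(tp(\a))=Q(\a)$ for all $\a\in M$. Consider $F=\{p\in K_n(T):\ \hat P(p)=0\}$. As $\hat P$ is affine and nonnegative and $F$ is exactly where it attains the value $0$, the set $F$ is a (closed, possibly empty) face of $K_n(T)$.

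The crucial step is to show $\hat Q\equiv0$ on $F$. Every extreme point of the face $F$ is an extreme point of $K_n(T)$, i.e. a type in $E_n(T)$; because $M$ is extremally $\aleph_0$-saturated such a type $p$ is realized by some $\a\in M$, and then $P(\a)=\hat P(p)=0$ gives $\a\in Z(P)\subseteq Z(Q)$, so $\hat Q(p)=Q(\a)=0$. Hence $\hat Q$ vanishes on $\mathrm{ext}(F)$; since $\hat Q$ is continuous and affine and, by Krein--Milman, $F=\overline{\mathrm{conv}}(\mathrm{ext}(F))$, it follows that $\hat Q\equiv0$ on $F$. This passage from the extreme points of $F$ to all of $F$ is where extremal $\aleph_0$-saturation enters in exactly the right strength, and I expect it to be the main obstacle of the argument: extremal saturation only realizes extreme types, and it is Krein--Milman on the face that upgrades this to a statement about the whole face.

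Finally I would extract a uniform $\lambda$ by compactness of $K_n(T)$ (Banach--Alaoglu). Fix $\epsilon>0$ and claim there is $\eta>0$ with $\hat P(p)<\eta\Rightarrow\hat Q(p)<\epsilon$; otherwise one finds $p_k$ with $\hat P(p_k)\to0$ and $\hat Q(p_k)\geqslant\epsilon$, and a convergent subnet has limit $p\in F$ with $\hat Q(p)\geqslant\epsilon$, contradicting $\hat Q|_F\equiv0$. Setting $\lambda=\|Q\|/\eta$, on $\{\hat P\geqslant\eta\}$ we get $\hat Q\leqslant\|Q\|\leqslant\lambda\hat P$, while on $\{\hat P<\eta\}$ we get $\hat Q<\epsilon$, so $\hat Q\leqslant\lambda\hat P+\epsilon$ throughout $K_n(T)$. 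Evaluating at $tp(\x)$ yields $Q(\x)\leqslant\lambda P(\x)+\epsilon$ for every $\x\in M$, as required. The degenerate case $F=\emptyset$ needs no separate treatment: then $\hat P$ has a positive minimum and the claim holds vacuously.
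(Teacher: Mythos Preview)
Your proof is correct and takes a genuinely different route from the paper's. Where you work directly on $K_n(T)$ and use Krein--Milman on the face $F=\{\hat P=0\}$ to propagate $\hat Q=0$ from extreme points (where extremal saturation gives realizations) to all of $F$, the paper instead passes through an $\aleph_0$-saturated elementary extension $M\preccurlyeq N$: it first handles the fully saturated case by a direct compactness/contradiction argument, and then, to transfer $Z(P)\subseteq Z(Q)$ from $M$ to $N$, it observes that if the inclusion failed in $N$ one could take the maximal $r$ with $P^N(\b)=0$ and $Q^N(\b)\geqslant r$, so that $\{P(\x)\leqslant0,\ r\leqslant Q(\x)\}$ is a satisfiable \emph{facial} partial type (via Remark~\ref{facial}), hence realized in the extremally saturated $M$, contradicting $Z(P)\subseteq Z(Q)$ there. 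Your approach is arguably cleaner and makes the role of extremal saturation transparent without invoking the facial-type machinery or an auxiliary saturated extension; the paper's approach has the virtue of staying within the model-theoretic toolkit it has already developed and illustrating how facial types are realized in extremally saturated models.
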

\begin{proof}
We prove the non-trivial part. First assume $M$ is $\aleph_0$-saturated.
Assume the claim does not hold. So, there exists $\epsilon>0$ such that
the set $$\{Q(\x)\geqslant\lambda P(\x)+\epsilon:\ \ \lambda\geqslant0\}$$
is satisfiable in $N$ by say $\bar{c}\in N$. We must therefore have that $P(\c)=0$ and hence
$Q(\c)=0$ which is impossible.

Now, assume $M$ is extremally $\aleph_0$-saturated and let $M\preccurlyeq N$ where $N$ is $\aleph_0$-saturated.
We have only to show that $Z(P^N)\subseteq Z(Q^N)$. Assume not.
Then, for some $r>0$, there is $b\in N$ such that $P^N(\b)=0$ and $r\leqslant Q^N(\b)$.
We may assume $r$ is the biggest real number with this property.
So, $$T,P(\x)\leqslant0\vDash Q(\x)\leqslant r.$$
We conclude that $\{P(\x)\leqslant0,\ r\leqslant Q(\x)\}$ is a satisfiable facial type,
hence satisfiable in $M$. This is a contradiction.
\end{proof}

The following corollary gives a simpler condition for definability of an end-set if the model is extremally saturated.

\begin{corollary} \label{satzeroset}
Let $M$ be extremally $\aleph_0$-saturated and $P:M^n\rightarrow\Rn^+$ be definable.
Then $D=Z(P)$ is definable if and only if for each $\epsilon>0$,
there exists $\lambda\geqslant0$ such that $$d(\x,D)\leqslant\lambda P(\x)+\epsilon\ \ \ \ \ \ \ \ \forall\x\in M.$$
\end{corollary}
\begin{proof} If $D$ is definable, the mentioned condition holds by Lemma \ref{zerosets} since
$P$ and $d(\x,D)$ have the same zeroset. Conversely, assume the above condition holds.
For each $k$ take $\lambda_k$ such that $$d(\x,D)\leqslant\lambda_k P(\x)+\frac{1}{k}.$$
Then, part (iii) of Proposition \ref{dfn3conditions} holds for the predicate $P_k=\lambda_k P(\x)$.
Hence, $D$ is definable.
\end{proof}

In general, zerosets have little chance to be definable.
The following proposition shows that in big models, type-definable sets are either trivial or big.

\begin{proposition} \label{dfncompact}
Let $M$ be $\aleph_0$-saturated and $D\subseteq M^n$ a nonempty compact type-definable set.
Then $D$ is a singleton.
\end{proposition}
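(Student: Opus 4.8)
The plan is to argue by contradiction: assume $D$ is not a singleton and produce, inside the compact set $D$, arbitrarily large finite subsets that are uniformly separated, contradicting total boundedness. Since $D$ is compact and the map $(\x,\y)\mapsto d(\x,\y)$ is continuous, the diameter $r=\sup\{d(\x,\y):\x,\y\in D\}$ is attained, say at $\a,\b\in D$ with $d(\a,\b)=r$; suppose toward a contradiction that $r>0$.

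The key observation is that $D$ is closed under affine barycenters. Write the type-definition of $D$ as a family $\Gamma(\x)$ of conditions, each of the form $\phi(\x)\leqslant 0$ with $\phi$ affine (rewriting $\phi\leqslant\psi$ as $\phi-\psi\leqslant0$). Given finitely many points $\a_1,\dots,\a_m\in D$ and weights $u_1,\dots,u_m\geqslant 0$ with $\sum_iu_i=1$, form the ultramean $N=\prod_\mu M_i$ with $I=\{1,\dots,m\}$, $\mu(\{i\})=u_i$ and each $M_i=M$, so that $M\preccurlyeq N$ diagonally. The point $w=[(\a_1,\dots,\a_m)]\in N$ satisfies, by Theorem \ref{th1}, $\phi^N(w)=\sum_iu_i\phi^M(\a_i)\leqslant0$ for every $\phi\in\Gamma$, hence $w$ lies in the type-definable set $D^N$ of $N$, and $d^N(w,\a_j)=\sum_iu_id(\a_i,\a_j)$. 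Realizing the type of $w$ over the parameters (the finitely many points $\a_1,\dots,\a_m$ together with those naming $\Gamma$) in the $\aleph_0$-saturated model $M$, one obtains $\c\in D$ with $d(\c,\a_j)=\sum_iu_id(\a_i,\a_j)$ for all $j$. Thus the prescribed-distance barycenter of any finite weighted family in $D$ again belongs to $D$.

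Next I would manufacture separated points. For $m\geqslant1$ and $S\subseteq\{1,\dots,m\}$ let $w_S$ be the barycenter (with uniform weights $1/m$) of the family whose $i$-th point is $\b$ if $i\in S$ and $\a$ otherwise; by the previous paragraph there is a corresponding $\c_S\in D$ with
$$d(\c_S,\c_{S'})=\frac{|S\triangle S'|}{m}\,r\qquad(S\neq S').$$
Choosing, for each $K$, an integer $m$ and subsets $S_1,\dots,S_K\subseteq\{1,\dots,m\}$ that are pairwise at Hamming distance at least $m/3$ (such families exist once $m$ is large, by a standard greedy/counting argument), yields $\c_{S_1},\dots,\c_{S_K}\in D$ pairwise at distance at least $r/3$. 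Since $K$ is arbitrary, $D$ contains arbitrarily large $(r/3)$-separated sets, so it is not totally bounded, contradicting compactness. Hence $r=0$ and $D$ is a singleton.

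The main obstacle is the barycenter step together with its transfer back into $M$: one must check both that the barycenter point genuinely satisfies every condition of $\Gamma$ in $N$ (which is exactly where the restriction to affine connectives and the ultramean theorem are used, since a nonlinear $\phi$ would not pass to the average) and that its distance type is realized inside the given model $M$, where compactness actually lives---this is the role of $\aleph_0$-saturation and is why the realized configuration must be pinned down by a type over finitely many parameters. The combinatorial input, namely the existence of many subsets of $\{1,\dots,m\}$ that are pairwise far in Hamming distance, is routine and is the ingredient that keeps the separation uniformly bounded below as the number of points grows.
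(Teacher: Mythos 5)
Your argument is correct in substance, and it takes a genuinely different route from the paper's. The paper never leaves $M$ and never invokes the ultramean: it fixes $a,b\in D$ with $d(a,b)=r>0$ and proves directly, by a greedy induction, that the partial type $\{\frac{r}{2}\leqslant d(x_i,x_j):\ i<j<\omega\}\cup\bigcup_i\Gamma(x_i)$ is affinely satisfiable in $M$ using only assignments of each $x_i$ to $a$ or $b$; the point is that $d(e,a)+d(e,b)\geqslant d(a,b)=r$ for every $e$, so for any affine combination at least one of the two choices of the next witness preserves the required inequality. Then $\aleph_0$-saturation realizes the finite parts inside $M$, producing arbitrarily large $(r/2)$-separated subsets of $D$, contradicting compactness. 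You instead manufacture the separated points as honest elements of an ultramean extension $N$ (barycenters of $\{\a,\b\}$-valued tuples, which lie in $D^N$ precisely because $\Gamma$ consists of affine conditions, by Theorem \ref{th1}) and pull them back into $M$ by saturation, paying for this with the Hamming-code combinatorics needed to keep the separation uniformly $\geqslant r/3$. The paper's route buys economy (no ultramean, no coding lemma, and the better constant $r/2$); yours buys conceptual clarity, since the averaging that the paper performs by hand is packaged once and for all in the ultramean theorem. In effect your barycenters are exactly the limit objects behind the paper's greedy assignments.

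One step does need patching. As written, you obtain each $\c_S$ by realizing the type of the single barycenter $w_S$ over the parameters $\a_1,\dots,\a_m$; that only pins down the distances $d(\c_S,\a_j)$, not the mutual distances $d(\c_S,\c_{S'})$, so realizations chosen separately for each $S$ need not be $(r/3)$-separated. The correct move, entirely within your framework, is to realize the joint type $tp^N(w_{S_1},\dots,w_{S_K}/\a\,\b)$: this is a type in $Kn$ variables over a finite parameter set, it contains $\Gamma(\x_j)$ for each $j$ together with the conditions $d(\x_j,\x_{j'})=\frac{|S_j\triangle S_{j'}|}{m}r$, and it is realized in $M$ by $\aleph_0$-saturation (saturation for $1$-types yields it for $n$-types, as in the paper's definition of saturation). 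With that single correction the proof is complete.
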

\begin{proof} Assume $n=1$. Let $D$ be type-defined by $\Gamma(x)$ and $d(a,b)=r>0$ for $a,b\in D$.
We first show that the partial type
$$\Sigma=\{\frac{r}{2}\leqslant d(x_i,x_j)\ : \ \ \  i<j<\omega\}$$
is affinely realized in the set $\{a,b\}$. Take a condition
$$\ \ \ \ \ \ \ A_n=\frac{r}{2}\sum_{i<j\leqslant n}\alpha_{ij}\leqslant\sum_{i<j\leqslant n}\alpha_{ij} d(x_i,x_j)
=\sigma_{n}(x_0,...,x_{n})\ \ \ \ \ \ \ \ \alpha_{ij}\geqslant0$$
and assume by induction that
$A_{n-1}\leqslant\sigma_{n-1}^M(e_0,...,e_{n-1})$ where $e_i$ is either $a$ or $b$.
Verify that for one of $e_n=a$ and $e_n=b$ we must have that
$$\frac{r}{2}\sum_{i<n}\alpha_{in}\leqslant\sum_{i<n}\alpha_{in}d(e_i,e_n).$$
Hence $$A_n=A_{n-1}+\frac{r}{2}\sum_{i<n}\alpha_{in}\leqslant\sigma^M_{n-1}(e_0,...,e_{n-1})+\sum_{i<n}\alpha_{in}d(e_i,e_n)=\sigma_n^M(e_0,...,e_n).$$
We have just shown that $\Sigma\cup\Gamma(x_1)\cup\Gamma(x_2)\cup\cdots$
is affinely satisfiable in $M$.
By $\aleph_0$-saturation, any finite part of $\Sigma$ is satisfied by elements of $D$.
This contradicts the compactness of $D$.
\end{proof}

In particular, the notion of algebraic closure defined in CL or first order logic is meaningless in AL.
We can however define definable closure of a set.
The notion definable set over a set $A\subseteq M$ of parameter is defined in the usual way.
A tuple $\a\in M^n$ is said to be $A$-definable if $d(\x,\a)$ is $A$-definable.
As stated above, the projection of a definable set is definable.
So, if $\a$ is $A$-definable, then every $a_i$ is $A$-definable.
Conversely, if every $a_i$ is $A$-definable, then
$$d(\x,\a)=\sum_{i=1}^n d(x_i,a_i)$$ which shows that $\a$ is $A$-definable.
For $A\subseteq M$,\ \ $\textrm{dcl}_M(A)$ denotes the set of points which are $A$-definable.
Clearly, it is topologically closed.

\begin{proposition} \label{well-dcl}
Let $A\subseteq M\preccurlyeq N$. Then $dcl_M(A)=dcl_N(A)$.
\end{proposition}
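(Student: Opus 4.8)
The plan is to prove the two inclusions $dcl_M(A)\subseteq dcl_N(A)$ and $dcl_N(A)\subseteq dcl_M(A)$ separately, in each case working with a single point (recall that a tuple $\a$ is $A$-definable if and only if each coordinate is, so it suffices to treat points), and exploiting Proposition \ref{definitional expansion} together with the fact that once a point $a$ is named, $d(\,\cdot\,,a)$ is literally an atomic formula.

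For $dcl_M(A)\subseteq dcl_N(A)$, I would take $a\in dcl_M(A)$, so that $P(x)=d(x,a)$ is a definable predicate in $M$ with a defining sequence $\phi_k$ of $L(A)$-formulas. Since $M\preccurlyeq N$, Proposition \ref{definitional expansion} promotes $P$ to a definable predicate $P^N$ on $N$ (via the same $\phi_k$) with $(M,P)\preccurlyeq(N,P^N)$. The key point is then to identify $P^N$ with the genuine distance $d(\,\cdot\,,a)$ on all of $N$: evaluating the sentences $\sup_x\big(P(x)-d(x,a)\big)$ and $\sup_x\big(d(x,a)-P(x)\big)$ in the expanded language, both equal $0$ in $(M,P)$ because $P=d(\,\cdot\,,a)$ on $M$, hence both remain $0$ in $(N,P^N)$ by elementarity, forcing $P^N(x)=d(x,a)$ for every $x\in N$. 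Thus $d(\,\cdot\,,a)$ is $A$-definable in $N$ and $a\in dcl_N(A)$.

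The interesting direction is $dcl_N(A)\subseteq dcl_M(A)$, where I must first show that an $A$-definable point $b$ of $N$ actually lies in $M$. Given $b\in dcl_N(A)$, the predicate $P(x)=d(x,b)$ is definable in $N$ with an $L(A)$-defining sequence $\psi_k$; since $A\subseteq M\preccurlyeq N$, Proposition \ref{definitional expansion} gives that $P^M=P|_M$ is definable in $M$ and $(M,P^M)\preccurlyeq(N,P)$. Now $\inf_x P(x)=\inf_x d(x,b)=0$ in $N$ (attained at $b$), so by elementarity $\inf_{x\in M}P^M(x)=0$, that is $\inf_{x\in M}d(x,b)=0$; this says $b$ lies in the closure of $M$ inside $N$. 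Because structures are assumed to be complete metric spaces, $M$ is a complete---hence closed---subspace of $N$, so $b\in M$. With $b\in M$ in hand, $P^M(x)=d(x,b)$ is exactly the distance to $b$ computed in $M$ and is $A$-definable there, giving $b\in dcl_M(A)$.

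The main obstacle is precisely the step $dcl_N(A)\subseteq M$: one must argue that passing to the elementary extension $N$ creates no new $A$-definable points. The argument above reduces this to preservation of the closed condition $\inf_x d(x,b)=0$ under $(M,P^M)\preccurlyeq(N,P)$ and to the metric completeness of $M$; I would take care to invoke that models are complete (so $M$ is closed in $N$) and that Proposition \ref{definitional expansion} applies with $P$ definable on the larger model $N$ and $M\preccurlyeq N$. Everything else is routine bookkeeping with definable predicates.
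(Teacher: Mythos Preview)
Your proof is correct, and the second inclusion $dcl_N(A)\subseteq dcl_M(A)$ is handled exactly as in the paper: restrict $d(\,\cdot\,,b)$ to $M$, use $(M,P^M)\preccurlyeq(N,P)$ to transfer $\inf_x P(x)=0$, and invoke completeness of $M$ to land $b$ in $M$.

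For the first inclusion you take a genuinely more elementary route than the paper. The paper first reduces to the case where $N$ is $\aleph_0$-saturated, then argues that the promoted predicate $P^N$ satisfies conditions (i)--(iii) of Remark~\ref{3conditions}, so by Proposition~\ref{dfnconditions} it equals $d(\,\cdot\,,D)$ for some definable $D\subseteq N$, and finally uses Corollary~\ref{restriction} (comparing diameters) to conclude $D=\{a\}$. Your argument sidesteps all of this machinery: since $a\in M$, you may name $a$ as a parameter, note that $\sup_x\big(P(x)-d(x,a)\big)=0$ and $\sup_x\big(d(x,a)-P(x)\big)=0$ hold in $(M,P,a)$, and transfer them to $(N,P^N,a)$ by elementarity, forcing $P^N=d(\,\cdot\,,a)$ on all of $N$. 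This is shorter, avoids the saturation reduction, and does not appeal to Propositions~\ref{dfnconditions} or Corollary~\ref{restriction}. The paper's approach, on the other hand, illustrates how the general definable-set machinery (promoting distance predicates and matching zerosets) handles such questions uniformly.
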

\begin{proof} It is sufficient to prove the claim for the case where $N$ is $\aleph_0$-saturated.
There is no harm if we further assume $A=\emptyset$. Let $a\in dcl_M(\emptyset)$.
The unique definable extension of $d(x,a)$ to $N$ satisfies the conditions (i)-(iii) of Remark \ref{3conditions}.
So, for some definable $D\subseteq N$ one has that $d(x,D)|_M=d(x,a)$.
By Corollary \ref{restriction}, $D=\{a\}$ and hence $a\in dcl_N(\emptyset)$.
Conversely assume $a\in dcl_N(\emptyset)$. Let $P(x)= d(x,a)|_M$.
Then $(M,P)\preccurlyeq (N,d(\cdot,a))$
and hence $$\inf_{x\in M} P(x)=\inf_{x\in N}d(x,a)=0.$$
For each $k$ take $a_k\in M$ such that $0\leqslant P(a_k)\leqslant\frac{1}{k}$.
Then, $d (a_k,a)=P(a_k)\leqslant\frac{1}{k}$ which means that $a_k\rightarrow a$.
Therefore, $a\in M$ and $P(a)=0$.
We have also that $$d^M(x,a)=d^N(x,a)=P(x) \ \ \ \ \ \ \ \forall x\in M$$
which shows that $a$ is definable in $M$.
\end{proof}

So, $\textrm{dcl}_M(A)$ does not depend on $M$ and we may simply denote it
by $\textrm{dcl}(A)$. The following properties are also proved easily:

\indent 1. $A\subseteq \textrm{dcl}(A)$.\\
\indent 2. If $A\subseteq\textrm{dcl}(B)$ then $\textrm{dcl}(A)\subseteq\textrm{dcl}(B)$.\\
\indent 3. If $a\in\textrm{dcl}(B)$ then $a\in\textrm{dcl}(A)$ for some countable $A\subseteq B$.\\
\indent 4. If $A$ is a dense subset of $B$ then $\textrm{dcl}(A)=\textrm{dcl}(B)$.\\
\indent 5. If $h:M^n\rightarrow M$ is $A$-definable and $\a\in\textrm{dcl}(A)$ then $h(\a)\in\textrm{dcl}(A)$.

\subsection{Principal types}
Despite full continuous logic, it is not true that if the logic and metric topologies coincide at a type $p$,
then $p$ is realized in every model.
For example, for the theory of probability algebras, $K_1(\textrm{PrA})=[0,1]$
and the two topologies coincide.
However, only the extreme types are realized in the model $\{0,1\}$.
For a complete type $p(\x)$ set $$p(M)=\{\a\in M^n:\ tp(\a)=p\}.$$

\begin{proposition} \label{principal type}
Assume $p(M)$ is nonempty definable for some $M\vDash T$.
Then $p(N)$ is nonempty definable for any $N\vDash T$ which is extremally $\aleph_0$-saturated.
\end{proposition}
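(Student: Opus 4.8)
The plan is to reduce everything to a single definable predicate and a statement about the type space. Set $P=d(\cdot,p(M))$; since $p(M)$ is definable, $P$ is a definable predicate of $T$, so by Proposition \ref{Lipschitz} it corresponds to an affine continuous function $\hat P\in\mathbf{A}(K_n(T))$ with $P^{N'}(\bar a)=\hat P(tp(\bar a))$ for every $N'\vDash T$ and $\bar a\in N'$ (this value is well defined for any model of $T$ because, by Proposition \ref{dense-realization}, $\sup_{\bar a\in N'}|\psi^{N'}(\bar a)|=\|\psi\|_T$ for every formula $\psi$, so uniform convergence on $(N')^n$ coincides with uniform convergence on $K_n(T)$). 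Since $P\geqslant0$ we get $\hat P\geqslant0$, and since $p(M)$ is a nonempty closed set, any $\bar a\in p(M)$ gives $\hat P(p)=P^M(\bar a)=0$. The entire proof hinges on the claim that $\hat P^{-1}(0)=\{p\}$ inside $K_n(T)$.

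Proving this claim is the step I expect to be the genuine obstacle. Write $F=\hat P^{-1}(0)$. As $\hat P$ is affine, continuous and nonnegative with $p\in F$, the set $F$ is a nonempty closed face of $K_n(T)$, so every extreme point of $F$ is an extreme point of $K_n(T)$, i.e. lies in $E_n(T)$. I would show $p$ is the only extreme point of $F$. Let $q\in E_n(T)$ be extreme in $F$. By Proposition \ref{dense-realization}, $q$ is a logic limit of types $q_k=tp(\bar a_k)$ realized in $M$. Continuity of $\hat P$ gives $d(\bar a_k,p(M))=\hat P(q_k)\to\hat P(q)=0$, so I may choose $\bar b_k\in p(M)$ with $d(\bar a_k,\bar b_k)\to0$; as $tp(\bar b_k)=p$, the inequality $\mathbf{d}(tp(\bar a_k),tp(\bar b_k))\leqslant d(\bar a_k,\bar b_k)$ forces $q_k\to p$ in the logic metric, hence in the logic topology. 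Since that topology is Hausdorff and $q_k\to q$ also holds, $q=p$. Thus $\mathrm{ext}(F)=\{p\}$, and Krein--Milman gives $F=\{p\}$.

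With the claim established the conclusion is immediate. For the given extremally $\aleph_0$-saturated $N$ and every $\bar a\in N$ we have $P^N(\bar a)=0$ iff $\hat P(tp(\bar a))=0$ iff $tp(\bar a)=p$, so $Z(P^N)=p(N)$. It remains to transfer the defining inequalities (i)--(iii) of Remark \ref{3conditions} from $M$ to $N$: because $P^M=d(\cdot,p(M))$, these hold in $M$, and each is (a uniform limit of) a closed condition in $L\cup\{P\}$ whose value is computed from $L$-sentences built with the approximating formulas $\phi_k$; since $T$ is complete, $M\equiv N$, so uniform convergence yields the same values in $N$ and the conditions hold there too. Now $N$ is extremally $\aleph_0$-saturated and $P^N$ is definable and satisfies (i)--(iii), so Proposition \ref{dfnconditions} gives $P^N(\bar x)=d(\bar x,D)$ with $D=Z(P^N)\neq\emptyset$. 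As $D=Z(P^N)=p(N)$, the set $p(N)$ is nonempty and $d(\cdot,p(N))=P^N$ is definable, which is exactly the assertion.
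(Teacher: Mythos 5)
Your proof is correct, and it takes a genuinely different route from the paper's. The paper argues in two stages: first it treats an $\aleph_0$-saturated elementary extension $N$ of $M$, transferring conditions (i)--(iii) of Remark \ref{3conditions} along $(M,P)\preccurlyeq(N,P^N)$ and identifying the zeroset $D=Z(P^N)$ with $p(N)$ by purely syntactic means (the inclusion $D\subseteq p(N)$ via the Lipschitz estimate $\phi^M(\a)\leqslant\lambda_\phi P(\a)$ for each condition $\phi\leqslant 0$ in $p$, and $p(N)\subseteq D$ via the approximating formulas $\phi_k$, since the conditions $|\phi_k|\leqslant\frac{1}{k}$ belong to $p$); it then handles a general extremally $\aleph_0$-saturated $N$ by placing $M$ and $N$ inside a common $\aleph_0$-saturated $K$ and cutting down with Proposition \ref{dfnrestriction}. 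You instead prove the single type-space statement $\hat P^{-1}(0)=\{p\}$ by convexity: the zeroset of the nonnegative affine continuous $\hat P$ is a closed face, its extreme points lie in $E_n(T)$, Proposition \ref{dense-realization} plus the inequality $\mathbf{d}(tp(\a),tp(\b))\leqslant d(\a,\b)$ force every such extreme point to equal $p$, and Krein--Milman collapses the face to $\{p\}$; the conclusion for an arbitrary extremally $\aleph_0$-saturated $N$ then follows directly, transferring (i)--(iii) by mere elementary equivalence (completeness of $T$) rather than along embeddings, and invoking Proposition \ref{dfnconditions} exactly as the paper does. What the paper's route buys is elementarity: no Choquet-theoretic input beyond what \ref{dfnconditions} already uses, at the cost of two auxiliary saturated models. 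What your route buys is conceptual clarity and model-independence: it shows outright that $p$ is an \emph{exposed} point of $K_n(T)$ (a fact the paper only remarks after the proof), it eliminates the common-extension/restriction step, and your preliminary observation that $\sup_{\a\in N'}|\psi^{N'}(\a)|=\|\psi\|_T$ for \emph{every} model $N'$ (via dense realization of extreme types and affineness of $\hat\psi$) is a clean lemma of independent use.
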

\begin{proof} First, suppose that $M\preccurlyeq N$ where $N$ is $\aleph_0$-saturated.
Let $P(\x)=d(\x,p(M))$. So, $(M,P)\preccurlyeq (N,P^N)$
and $P^N$ satisfies the conditions (i)-(iii) of Remark \ref{3conditions}.
Hence $P^N(\x)=d(\x,D)$ where $D$ is the zeroset of $P^N$. We show that $D=p(N)$.

Take a condition $\phi(\x)\leqslant0$ in $p(\x)$.
For each $\a\in M$ and $\b\in p(M)$
$$\phi^M(\a)\leqslant\phi^M(\a)-\phi^M(\b)\leqslant\lambda_{\phi}\ d(\a,\b).$$
So, $$\phi^M(\a)\leqslant\lambda_{\phi}\ d(\a,p(M))
=\lambda_{\phi}\ P(\a) \ \ \ \ \ \ \ \forall \a\in M$$
and hence $$\phi^N(\a)\leqslant\lambda_{\phi}P^N(\a)\ \ \ \ \ \ \ \forall \a\in N.$$
In particular, $\phi^N(\a)\leqslant0$ for each $\a$ with $P^N(\a)=0$.
We conclude that $D\subseteq p(N)$ .
For the reverse inclusion, assume $M\vDash |\phi_k(\x)-P(\x)|\leqslant\frac{1}{k}$ for each $k$.
Since $p(M)$ is nonempty, $-\frac{1}{k}\leqslant\phi_k(\x)\leqslant\frac{1}{k}$ must belong to $p(\x)$.
Therefore, for any $k$ and $\b\in p(N)$
$$0\leqslant P^N(\b)\leqslant|P^N(\b)-\phi_k^N(\b)|+|\phi_k^N(\b)|\leqslant\frac{2}{k}.$$
This shows that $P^N(\b)=0$ for each $\b\in p(N)$.

Now assume $N$ is extremally $\aleph_0$-saturated and take an $\aleph_0$-saturated $K$ such that $M\preccurlyeq K$ and $N\preccurlyeq K$.
Then, $p(K)$ is definable. So, $p(N)=p(K)\cap N^n$ is definable by Proposition \ref{dfnrestriction}.
\end{proof}

A type $p(\x)$ is called \emph{principal}\index{principal type} if $p(M)$ is nonempty definable for some $M\vDash T$.
Every principal type $p$ is extreme since it is exposed by $d(\x,p(M))$.
A consequence of Proposition \ref{principal type} is that if $p(\x,\y)$ is principal then so is $q(\x)=p|_{\x}$.
In fact, $q(M)$ is the projection on $p(M)$ on $M^n$ if $M$ is $\aleph_0$-saturated and $|\x|=n$.

As stated before, if $P$ is a definable predicate we may treat it as a formula in the
definitional expansion of $T$ to $L\cup\{P\}$.
Also, every type $p:\mathbb{D}_n(T)\rightarrow\Rn$ has a natural extension to $\mathbf{D}_n(T)$
so that $p(Q)=\lim p(\phi_k)$ whenever $\phi_k\stackrel{u}\longrightarrow Q$.
So, $p(\x)\vDash P(\x)\leqslant0$ if and only if for each $\a\in M\vDash T$,\
$\a\vDash p$ implies that $P(\a)\leqslant0$.
The following proposition states that $p$ is principal if and only if logic and metric topologies coincide at $p$.

\begin{proposition} \label{princip}
Let $p\in K_n(T)$. Then the following are equivalent:

\emph{(i)} $p$ is principal

\emph{(ii)} For each $k$ there is a definable predicate $P_k(\x)$ such that
$$T\vDash0\leqslant P_k(\x),\ \ \ \ \ \ p(\x)\vDash P_k(\x)\leqslant0\ \ \ \ \textrm{and}
\ \ \ \ \ [P_k<1]\subseteq B(p,\frac{1}{k}).$$
\end{proposition}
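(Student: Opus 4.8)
The plan is to prove the two implications separately, using Proposition \ref{principal type} for (i)$\Rightarrow$(ii) and the definability criterion of Proposition \ref{dfn3conditions} for (ii)$\Rightarrow$(i). Throughout I identify a definable predicate with its value function on $K_n(T)$, so that $\hat{P}(tp(\b))=P(\b)$, and I read $[P_k<1]$ as the logic-open set $\{q\in K_n(T):\ \hat{P}_k(q)<1\}$ and $B(p,\tfrac1k)$ as the ball in the logic metric $\mathbf d$.

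For (i)$\Rightarrow$(ii), suppose $p$ is principal. Fix an $\aleph_0$-saturated $N\vDash T$; since $N$ realizes every type of $K_n(T)$ it is in particular extremally $\aleph_0$-saturated, so by Proposition \ref{principal type} the set $p(N)$ is nonempty and $D(\x)=d(\x,p(N))$ is a definable predicate with $\hat D(p)=0$ and $0\leqslant D$. For any $q\in K_n(T)$, choosing $\b\in N$ with $\b\vDash q$ gives
$$\hat D(q)=d(\b,p(N))=\inf_{\a\in p(N)}d(\b,\a)\geqslant\mathbf d(q,p),$$
because $\mathbf d(q,p)\leqslant d(\b,\a)$ for every $\a\vDash p$. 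Setting $P_k=kD$ then yields $T\vDash 0\leqslant P_k$ and $p\vDash P_k\leqslant 0$, while $q\in[P_k<1]$ means $\hat D(q)<\tfrac1k$, whence $\mathbf d(q,p)<\tfrac1k$ and $q\in B(p,\tfrac1k)$. This is exactly (ii).

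For (ii)$\Rightarrow$(i), choose $M\vDash T$ to be $\aleph_1$-saturated and strongly $\aleph_1$-homogeneous by Proposition \ref{saturated strong homogeneous}. Then $M$ realizes every type of $K_n(T)$, so $p(M)$ is a nonempty (and logic-closed) subset of $M^n$, and I first record that for every $\b\in M$ one has $d(\b,p(M))=\mathbf d(tp(\b),p)$: the inequality $\geqslant$ is immediate, and for $\leqslant$ I take realizations $\b_0\vDash tp(\b)$ and $\a_0\vDash p$ in $M$ with $d(\b_0,\a_0)$ nearly $\mathbf d(tp(\b),p)$, use strong homogeneity to obtain an automorphism sending $\b_0$ to $\b$, and note it carries $\a_0$ into $p(M)$. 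With this in hand I verify the hypotheses of Proposition \ref{dfn3conditions}(iii) for the set $D:=p(M)$ using the predicates $P_k$ themselves: the condition $p\vDash P_k\leqslant0$ gives $P_k\leqslant0$ on $p(M)$, and for the bound $d(\x,p(M))\leqslant P_k(\x)+\tfrac1k$ I split into cases. If $P_k(\b)\geqslant1$ then $P_k(\b)+\tfrac1k\geqslant1\geqslant d(\b,p(M))$, since the diameter is at most $1$; if $P_k(\b)<1$ then $tp(\b)\in[P_k<1]\subseteq B(p,\tfrac1k)$, so $d(\b,p(M))=\mathbf d(tp(\b),p)<\tfrac1k\leqslant P_k(\b)+\tfrac1k$. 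Proposition \ref{dfn3conditions} then makes $p(M)$ definable, so $p$ is principal.

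The main obstacle is the identity $d(\b,p(M))=\mathbf d(tp(\b),p)$ in the second implication: the hypothesis only controls $\mathbf d$, which optimises over all pairs of realizations, whereas Proposition \ref{dfn3conditions} demands a pointwise estimate on the distance from each individual $\b$ to $p(M)$. Bridging the two is precisely what strong homogeneity of $M$ provides, and selecting $M$ this way at the outset is the key move; the remaining verifications are routine case analysis using $0\leqslant P_k$, $\hat P_k(p)=0$ and the diameter bound.
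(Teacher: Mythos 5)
Your proof is correct and follows the same overall architecture as the paper's: for (i)$\Rightarrow$(ii) the witness is $P_k=k\,d(\x,p(M))$ computed in a saturated model (the paper states exactly this in one line, and, like you, it tacitly relies on Proposition \ref{principal type} to know that $p(\cdot)$ is definable in that particular model), and for (ii)$\Rightarrow$(i) you run the same two-case estimate ($P_k(\b)\geqslant 1$ versus $P_k(\b)<1$) feeding into Proposition \ref{dfn3conditions}(iii). The one genuine divergence is how you pass from $\mathbf{d}(tp(\b),p)<\frac{1}{k}$ to the pointwise bound $d(\b,p(M))<\frac{1}{k}$. You work in an $\aleph_1$-saturated, strongly $\aleph_1$-homogeneous model and transport a near-optimal pair of realizations onto $\b$ by an automorphism. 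The paper instead does this in a merely $\aleph_0$-saturated $M$ (the words ``by saturation'' compress the following argument): if $s>\mathbf{d}(tp(\b),p)$, the partial type $tp(\b)(\x)\cup p(\y)\cup\{d(\x,\y)\leqslant s\}$ is satisfiable; complete it to a type $r(\x,\y)$, note $r|_{\x}=tp(\b)$, so the shifted type $r(\b,\y)$ is satisfiable (this is the remark preceding Proposition \ref{ext2ext1}) and is realized in $M$ by some $\b'\vDash p$ with $d(\b,\b')\leqslant s$. So your homogeneity hypothesis is sound but heavier than necessary: the paper's mechanism buys the same identity from saturation alone, while your route has the merit of making the step that the paper hides behind ``by saturation'' completely explicit.
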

\begin{proof} Let $M$ be $\aleph_0$-saturated.

(i)$\Rightarrow$(ii): The requirement holds with $P_k(\x)=kd(\x,p(M))$.

(ii)$\Rightarrow$(i):
For each $\a\in M$, we have either $P_k(\a)<1$ or $1\leqslant P_k(\a)$.
In the first case one has that $d(tp(\a),p)<\frac{1}{k}$.
So, by saturation $$d(\a,p(M))<\frac{1}{k}\leqslant P_k(\a)+\frac{1}{k}.$$
In the second case, $d(\a,p(M))\leqslant1\leqslant P_k(\a)$.
So, for any $\a\in M$, $d(\a,p(M))\leqslant P_k(\a)+\frac{1}{k}$.
We conclude by part (iii) of Proposition \ref{dfn3conditions} that $p(M)$ is definable.
\end{proof}

\subsection{Compact models} \label{Definability in compact models}
In this subsection we assume $T$ has a compact model.
In the framework of CL, if $M$ is compact, zerosets of $\emptyset$-definable predicates are definable and
if the language is countable, they are the only type-definable sets.
The situation is different in AL.
In a compact model, a type-definable set need not be a zeroset and a zeroset need not be definable.
Moreover, definable sets are exactly the end-sets of definable predicates.
By an \emph{end-set} we mean a set of the form $\{\x:\ P(\x)=r\}$
where $r$ is either $\inf_{\x}P(\x)$ or $\sup_{\x}P(\x)$.

\begin{theorem} \label{minimal sets}
Let $M\vDash T$ be extremally $\aleph_0$-saturated.
Then a nonempty $D\subseteq M^n$ is definable if and only if there is a definable
$P:M^n\rightarrow\Rn^+$ such that $D=Z(P)$.
\end{theorem}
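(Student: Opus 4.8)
The forward implication is immediate: if $D$ is definable then $P(\x)=d(\x,D)$ is itself a definable predicate with values in $\Rn^+$, and since $D$ is closed one has $Z(P)=\{\x:d(\x,D)=0\}=D$. So the content is the converse, and the plan is to manufacture $d(\cdot,D)$ as a definable predicate by transporting a compactness argument from the fixed compact model of $T$. Fix a compact $N_0\vDash T$ and write $D_0=Z(P^{N_0})$. Since $\inf_{\x}P$ is a constant determined by $T$ and equals $0$ (it is attained at any point of the nonempty $D$), continuity of $P^{N_0}$ on the compact space $N_0$ makes $D_0$ nonempty. I would then introduce the penalized predicates
$$Q_k(\x)=\inf_{\y}\big[d(\x,\y)+kP(\y)\big],$$
each of which is definable, being an $\inf_{\y}$ of a definable predicate; they increase in $k$ and satisfy $Q_k(\x)\leqslant d(\x,Z(P))$ in every model of $T$.

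First I would show $Q_k\to d(\cdot,D_0)$ uniformly on $N_0$ by a routine compactness argument: if this failed there would be $\epsilon_0>0$ and points $\a_k\in N_0^n$ with $d(\a_k,D_0)>kP^{N_0}(\a_k)+\epsilon_0$; boundedness of the metric forces $P^{N_0}(\a_k)\to0$, a convergent subsequence $\a_k\to\a^*$ would satisfy $P^{N_0}(\a^*)=0$, hence $\a^*\in D_0$ and $d(\a^*,D_0)=0$, contradicting $d(\a_k,D_0)>\epsilon_0$ together with continuity of $d(\cdot,D_0)$. The decisive point is then that the $T$-seminorm of a definable predicate is realized on the compact model: for a formula $\phi$ the function $\hat\phi$ is affine and logic-continuous on $K_n(T)$, so it attains its supremum and infimum on the extreme boundary $E_n(T)$, and by Corollary \ref{compact-realization1} every extreme type is realized in $N_0$; hence $\|\phi\|_T=\sup_{\a\in N_0^n}|\phi^{N_0}(\a)|$, and the same equality passes to definable predicates. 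Therefore the uniform Cauchyness of $(Q_k)$ on $N_0$ is precisely Cauchyness in $\|\cdot\|_T$, and $(Q_k)$ converges in $\mathbf{D}_n(T)$ to a definable predicate $\tilde Q$ with $\tilde Q^{N_0}=d(\cdot,D_0)$.

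It then remains to identify $\tilde Q^M$ with $d(\cdot,D)$ on the given model $M$. The predicate $\tilde Q^M$ is nonnegative and $1$-Lipschitz directly from the $Q_k$; condition (iii) of Remark \ref{3conditions} is, for each fixed $r,s$, a closed condition in the language expanded by a symbol for $\tilde Q$, and it holds in $(N_0,\tilde Q^{N_0})$ because $D_0$ is definable in $N_0$ (its distance function is the limit of the $Q_k$) so that $d(\cdot,D_0)$ satisfies Remark \ref{3conditions}. Since the expansions of models of the complete theory $T$ by the uniformly-defined $\tilde Q$ are elementarily equivalent, (iii) transfers to $(M,\tilde Q^M)$. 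Now Proposition \ref{dfnconditions}, which is exactly where the hypothesis that $M$ is extremally $\aleph_0$-saturated is used, gives $\tilde Q^M=d(\cdot,Z(\tilde Q^M))$. Finally I would check $Z(\tilde Q^M)=Z(P)=D$: in $N_0$ the definable predicates $\tilde Q^{N_0}$ and $P^{N_0}$ share the zero-set $D_0$, so Lemma \ref{zerosets} furnishes, for each $\epsilon$, constants $\lambda,\lambda'$ with $\tilde Q\leqslant\lambda P+\epsilon$ and $P\leqslant\lambda'\tilde Q+\epsilon$ on $N_0$; these inequalities transfer to $M$, and reading Lemma \ref{zerosets} in $M$ yields the equality of zero-sets. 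Hence $d(\cdot,D)=\tilde Q^M$ is definable.

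The main obstacle is exactly this passage between the abstract extremally-saturated $M$, where the underlying space need not be compact, and the compact model $N_0$, where the uniform limit $Q_k\to d(\cdot,D_0)$ is produced honestly. Two facts make the passage legitimate and carry the whole argument: that the $T$-seminorm of a definable predicate is attained on $N_0$ (the maximum principle for affine continuous functions together with realization of all extreme types), which converts uniform control on $N_0$ into genuine convergence in $\mathbf{D}_n(T)$; and the elementary equivalence of definitional expansions, which lets the conditions of Remark \ref{3conditions} and the comparisons of Lemma \ref{zerosets} travel from $N_0$ back to $M$.
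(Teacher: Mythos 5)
Your proof is correct, and its first half coincides with the paper's own: the paper also works on a compact model and establishes, by exactly your compactness argument, that for each $\epsilon>0$ there is $\lambda$ with $d(\x,D_0)\leqslant\lambda P(\x)+\epsilon$ there, and its Corollary \ref{satzeroset} then invokes the same penalization $\inf_{\y}[d(\x,\y)+\lambda_k P(\y)]$ that you write out explicitly as $Q_k$. Where you genuinely diverge is in how definability is transported from the compact model to the given $M$. The paper proceeds in three stages: it first treats the case of $\aleph_0$-saturated $M$, elementarily embedding a compact model $K\preccurlyeq M$ by a back-and-forth argument and lifting $d(\cdot,D_0)$ along this elementary pair via Propositions \ref{definitional expansion} and \ref{dfnconditions}, identifying zero-sets with Lemma \ref{zerosets}; it then handles extremally $\aleph_0$-saturated $M$ by ascending to an $\aleph_0$-saturated $N\succcurlyeq M$ and descending again with Proposition \ref{dfnrestriction}. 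You instead stay with $M$ throughout and make the convergence of the $Q_k$ global: your key observation that $\|\cdot\|_T$ is attained on the compact model (Bauer's maximum principle for affine continuous functions on $K_n(T)$ together with Corollary \ref{compact-realization1}) turns uniform convergence on $N_0$ into genuine convergence in $\mathbf{D}_n(T)$, and the transfer of the conditions of Remark \ref{3conditions} and of the inequalities of Lemma \ref{zerosets} is then effected through completeness of $T$ (values of closed conditions in definitional expansions by uniformly-defined predicates are theory-determined --- a routine verification the paper never states explicitly, which you should record) rather than through elementary embeddings. Both routes terminate in Proposition \ref{dfnconditions} and Lemma \ref{zerosets}, and you correctly note that only the trivial direction of the latter is needed in $M$, while the nontrivial direction is applied in $N_0$, which is extremally saturated because it is compact. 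The paper's route reuses standing machinery (Corollary \ref{satzeroset}, Proposition \ref{dfnrestriction}) at the cost of the three-stage bootstrap; yours needs the extra functional-analytic fact about norm attainment, but in exchange avoids both the back-and-forth embedding and the ascent and descent through a full $\aleph_0$-saturated extension, using the extremal $\aleph_0$-saturation of $M$ exactly once, inside Proposition \ref{dfnconditions}.
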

\begin{proof} We prove the non-trivial part.
Let $P$, $D$ be as above and consider the case $n=1$.
First assume $M$ is compact and $\epsilon>0$ is fixed.
Then there must exist $\lambda\geqslant0$ such that
$d(x,D)\leqslant\lambda P(x)+\epsilon$ for all $x\in M$.
Otherwise, for each $\lambda\geqslant0$, the set
$$X_\lambda=\{x\ : \ d(x,D)\geqslant\lambda P(x)+\epsilon\}$$
is nonempty closed. Since $M$ is compact,
there exists $b\in\cap_\lambda X_\lambda$. Clearly then $P(b)=0$ and hence
$b\in D$ and $d(b,D)\geqslant\epsilon$. This is a contradiction.
Therefore, by Corollary \ref{satzeroset}, $D$ is definable.

Now, assume $M$ is $\aleph_0$-saturated.
An easy back and forth argument shows that every compact model of $T$ can be elementarily embedded in $M$.
Let $K\preccurlyeq M$ where $K$ is compact. We then have that $(K,P^K)\preccurlyeq(M,P)$.
Moreover, $D_0=Z(P^K)$ is nonempty definable in $K$.
By Propositions \ref{dfnconditions} and \ref{definitional expansion},
for some definable $D_1\subseteq M$
$$(K,P^K,d(x,D_0))\preccurlyeq(M,P,d(x,D_1)).$$
Since $P^K$ and $d(x,D_0)$ have the same zeroset, by Lemma \ref{zerosets},
$P$ and $d(x,D_1)$ must have the same zeroset. We conclude that $D=D_1$.

Finally, assume $M$ is just extremally $\aleph_0$-saturated. Let $M\preccurlyeq N$ where $N$
is $\aleph_0$-saturated. Then $Z(P^N)$ is definable. Hence $Z(P)=Z(P^N)\cap M^n$
is definable by Proposition \ref{dfnrestriction}.
\end{proof}

We proved in subsection \ref{omitting types} that a theory having a compact model has a unique compact extremal model.
Such a model is elementarily embedded in every extremally $\aleph_0$-saturated model of the theory.
Proof of the above theorem could be then shortened a bit by using this fact.

A consequence of Theorem \ref{minimal sets} is that in an extremally $\aleph_0$-saturated model $M$
if $f:M^n\rightarrow M^m$ and $D\subseteq M^m$ are definable then $f^{-1}(D)$ definable.
It is the zeroset of $d(f(\x),D)$.
Also, writing $P(x)=\sum_k 2^{-k}d(x,D_k)$ one checks that countable intersections of definable sets are definable.
Definable sets are not closed under finite unions. We have however the following.

\begin{proposition} \label{union}
Let $D_1\subseteq D_2\subseteq\ldots$ be a chain of definable sets in $M$.
Assume $M$ contains a compact elementary submodel. Then $D=\overline{\cup_n D_n}$ is definable.
\end{proposition}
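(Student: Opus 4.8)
The plan is to show that $P(\x)=d(\x,D)$ is a definable predicate on $M$, where $D=\overline{\bigcup_n D_n}$. Since the chain is increasing, each $P_n(\x)=d(\x,D_n)$ is definable (as $D_n$ is definable) and the sequence $(P_n)_n$ decreases pointwise, and because $D=\overline{\bigcup_n D_n}$ we have $P(\x)=\inf_n P_n(\x)$ for every $\x\in M$ (distance to a set equals distance to its closure). Thus $P$ is a pointwise decreasing limit of definable predicates, and since a uniform limit of definable predicates is again definable, the whole problem reduces to upgrading this pointwise convergence to \emph{uniform} convergence on $M$. This upgrade is exactly where the compact elementary submodel is needed.

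First I would fix a compact $K\preccurlyeq M$. As $K$ is compact it is extremally $\aleph_0$-saturated, so Proposition \ref{dfnrestriction} applies with the compact model $K$ in the role of the saturated structure and $M$ in the role of the larger one: each $C_n:=D_n\cap K^n$ is definable in $K$ and $d(\x,D_n)=d(\x,C_n)$ for all $\x\in K$. On the compact space $K^n$ the functions $P_n^K=d(\cdot,C_n)$ are continuous and decrease pointwise to $Q=\inf_n d(\cdot,C_n)=d(\cdot,\overline{\bigcup_n C_n})$, which is $1$-Lipschitz and hence continuous. By Dini's theorem the convergence $P_n^K\downarrow Q$ is uniform on $K^n$, so for each $\epsilon>0$ there is an $N$ with $\sup_{\x\in K}\big(P_n^K(\x)-P_m^K(\x)\big)\leqslant\epsilon$ whenever $N\leqslant n\leqslant m$.

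Next I would transfer this Cauchy estimate from $K$ to $M$ by elementarity. For $n\leqslant m$ the difference $R_{nm}(\x)=d(\x,D_n)-d(\x,D_m)$ is a definable predicate on $M$ that, by the restriction identity above, agrees with $P_n^K-P_m^K$ on $K$. Approximating $R_{nm}$ uniformly by formulas $\phi_k$ and using $(\sup_{\x}\phi_k)^K=(\sup_{\x}\phi_k)^M$, which holds because $K\preccurlyeq M$, I pass to the limit to obtain $\sup_{\x\in M}R_{nm}(\x)=\sup_{\x\in K}R_{nm}(\x)\leqslant\epsilon$ for $N\leqslant n\leqslant m$. Hence the decreasing sequence $P_n=d(\cdot,D_n)$ is uniformly Cauchy on $M$, so it converges uniformly on $M$ to its pointwise limit $\inf_n P_n=P=d(\cdot,D)$.

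Finally, since each $P_n$ is definable and $P$ is their uniform limit, $P=d(\x,D)$ is a definable predicate, which is exactly the assertion that $D$ is definable. I expect the main obstacle to be the bookkeeping in the transfer step: one must verify that the operation $\sup_{\x}$ applied to a definable predicate is preserved along $K\preccurlyeq M$ (through the approximating formulas, with Proposition \ref{definitional expansion} guaranteeing the expansions stay elementary), and that Proposition \ref{dfnrestriction} is being invoked with the compact model playing the role of the extremally $\aleph_0$-saturated structure. Once these two points are settled, Dini's theorem carries the essential analytic content and the rest is routine.
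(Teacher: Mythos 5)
Your proof is correct and follows essentially the same route as the paper: restrict the decreasing sequence $d(\cdot,D_n)$ to a compact elementary submodel $K$, apply Dini's theorem there, and transfer the resulting uniform Cauchy estimate back to $M$ by elementarity, so that $d(\cdot,D)$ is a uniform limit of definable predicates and hence definable. The only difference is bookkeeping: the paper works directly with the restrictions $d(\cdot,D_n)|_K$, whose continuity and monotone convergence to the continuous $d(\cdot,D)|_K$ suffice for Dini, whereas you additionally invoke Proposition \ref{dfnrestriction} to identify these restrictions with $d(\cdot,D_n\cap K)$ --- a valid but unnecessary step.
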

\begin{proof} It is clear that $d(\x,D_n)$ converges to $d(\x,D)$ pointwise.
Assume $K\preccurlyeq M$ where $K$ is compact. Let $P_n(\x)=d(\x,D_n)|_K$ and $P(\x)=d(\x,D)|_K$.
Then $P_n$ is monotone and converges to $P$ pointwise.
Since $P$ is continuous, the convergence is uniform (by Dini's theorem). For each $\epsilon>0$ take $\ell$
such that
$$|P_m(\x)-P_n(\x)|\leqslant\epsilon\ \ \ \ \ \ \forall m,n\geqslant \ell,\ \ \forall \x\in K.$$
Then we must similarly have that
$$|d(\x,D_m)-d(\x,D_n)|\leqslant\epsilon\ \ \ \ \ \ \forall m,n\geqslant \ell,\ \ \forall \x\in M.$$
This shows that the convergence of $d(\x,D_n)$ to $d(\x,D)$ is uniform.
\end{proof}

It is proved in \cite{Ibarlucia} that if $T$ has a compact model, its unique extremal model is prime, i.e. embeds in every model of $T$.
So, the assumption of the above proposition is superfluous.

As in the case of formulas, $\hat Q(p)=p(Q)$ is an affine logic-continuous function on $K_n(T)$.
A partial type $\Sigma(\x)$ is exposed if the set $$[\Sigma]=\{p\in K_n(T):\ \Sigma\subseteq p\}$$
is a face exposed by $\hat Q$ for some definable predicate $Q$.

\begin{proposition} \label{exposed types}
Let $M$ be $\aleph_0$-saturated and $\Sigma(\x)$ be a partial type. Then the following are equivalent:

\emph{(i)} $\Sigma(M)=\{\a: \ \a\vDash\Sigma(\x)\}$ is definable

\emph{(ii)} $[\Sigma]$ is either $K_n(T)$ or an exposed face

\emph{(iii)} There exists a definable predicate $Q(\x)$ such that
$$T\vDash0\leqslant Q(\x) \ \ \ \ \ \ \& \ \ \ \ \
\Sigma\equiv\{Q(\x)=0\}.$$
\end{proposition}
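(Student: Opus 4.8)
The plan is to prove the cycle (i)$\Rightarrow$(iii)$\Rightarrow$(ii)$\Rightarrow$(iii)$\Rightarrow$(i), concentrating the real content in the implication (iii)$\Rightarrow$(i) and treating the remaining links as translations between the three languages of partial types, exposed faces, and zero sets of definable predicates. Throughout I would use the standing assumption of this subsection that $T$ has a compact model, together with the observation that an $\aleph_0$-saturated $M$ is in particular extremally $\aleph_0$-saturated and realizes every type of $K_n(T)$; this is exactly what lets Theorem \ref{minimal sets} apply to $M$.

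For (i)$\Rightarrow$(iii) I would set $Q(\x)=d(\x,\Sigma(M))$. Since each condition of $\Sigma$ cuts out a closed set, $\Sigma(M)$ is closed, so $Q$ is a definable predicate with $Q\geqslant0$, $T\vDash 0\leqslant Q$, and $Q(\a)=0$ exactly when $\a\vDash\Sigma$. As $M$ realizes all types, a type lies in $[\Sigma]$ iff it is realized by some $\a$ with $Q(\a)=0$, so $[\Sigma]=[\{Q=0\}]$ and hence $\Sigma\equiv\{Q(\x)=0\}$, which is (iii).

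For (ii)$\Leftrightarrow$(iii) I would use that $\hat Q(p)=p(Q)$ is affine and logic-continuous, and that $T\vDash0\leqslant Q$ is equivalent to $\hat Q\geqslant0$ on $K_n(T)$ (both express $Q^M\geqslant0$ in a saturated model realizing all types, transferred to every model by Proposition \ref{definitional expansion}). Given (iii), the set $[\Sigma]=\{p:\hat Q(p)=0\}$ is the zero set of the nonnegative affine continuous $\hat Q$ whose minimum is $0$, as $\Sigma$ is satisfiable; thus $[\Sigma]$ is either all of $K_n(T)$ (when $\hat Q\equiv0$) or the face exposed by $\hat Q$. Conversely, given (ii), I would take $Q=0$ when $[\Sigma]=K_n(T)$, and otherwise normalize an exposing predicate $Q$ by passing to $Q-m$ with $m=\min_{K_n(T)}\hat Q$ (still definable, since constants are definable); then $\widehat{Q-m}\geqslant0$ has zero set $[\Sigma]$, yielding $T\vDash0\leqslant Q-m$ and $\Sigma\equiv\{(Q-m)=0\}$.

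The decisive step is (iii)$\Rightarrow$(i). From $\Sigma\equiv\{Q=0\}$ and the realization of all types in $M$ one gets $\Sigma(M)=Z(Q)$, which is nonempty because $[\Sigma]\neq\emptyset$ is realized in $M$. Here I would invoke Theorem \ref{minimal sets}: since $Q:M^n\to\Rn^+$ is a definable predicate and $M$ is extremally $\aleph_0$-saturated, the zero set $Z(Q)$ is definable, i.e. $\Sigma(M)$ is definable. I expect this to be the main obstacle chiefly in that it rests entirely on Theorem \ref{minimal sets} (and hence on the compact-model hypothesis of the subsection); the surrounding arguments are the careful dictionary between $\Sigma$, $[\Sigma]$, and $Z(Q)$, which must be checked to respect the equivalence $\equiv$ of partial types rather than mere equality of realization sets.
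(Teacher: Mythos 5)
Your proposal is correct and takes essentially the same approach as the paper: the paper proves the cycle (i)$\Rightarrow$(ii)$\Rightarrow$(iii)$\Rightarrow$(i) using the same key predicate $Q(\x)=d(\x,\Sigma(M))$, the same dictionary $[\Sigma]=\{p\in K_n(T):\hat Q(p)=0\}$ obtained by realizing types in the saturated model, and the same appeal to Theorem \ref{minimal sets} (hence to the compact-model hypothesis of the subsection) for the decisive implication (iii)$\Rightarrow$(i). Your rearrangement of the equivalences, routing through (iii) in both directions instead of the single cycle, is purely cosmetic.
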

\begin{proof} (i)$\Rightarrow$(ii) Let $Q(\x)=d(\x,\Sigma(M))$.
We show that for each $p$,\ \ $\Sigma\subseteq p$ if and only if $p(Q(\x))=0$.
Fix $\a\vDash p$. If $\Sigma\subseteq p$ then $p(Q)=Q(\a)=0$.
Conversely, if $p(Q)=0$, then $Q(\a)=p(Q)=0$ and hence $\a\vDash\Sigma$.
This implies (by saturation of $M$) that $\Sigma\subseteq p$.
Now, one has that $$[\Sigma]=\{p\in K_n(T):\ \hat Q(p)=0\}.$$

(ii)$\Rightarrow$(iii): If $[\Sigma]=K_n(T)$, the required conditions hold with $Q=0$.
Otherwise, there exists $Q(\x)\in\mathbf{D}_n(T)$ such that $\hat Q$ is nonnegative nonconstant on $K_n(T)$
and $$[\Sigma]=\{p\in K_n(T)\ :\ \hat{Q}(p)=0\}.$$
In this case, the required conditions hold with $Q(\x)$.

(iii)$\Rightarrow$(i): The assumption implies that $\Sigma(M)=Z(Q)$. By Theorem \ref{minimal sets}, this is a definable set.
\end{proof}

As a consequence, complete principal types are exactly the exposed ones.
Also, regarding Theorem \ref{minimal sets}, there is an order preserving correspondence
between nonempty definable sets in $M^n$ and exposed faces of $K_n(T)$
so that bigger sets correspond to bigger exposed faces.
\bigskip

\begin{example} \label{Examples1}
\em{In an extremally $\aleph_0$-saturated structure finite or countable intersections
of sets determined by equations $t_1(\x)=t_2(\x)$, where $t_1,t_2$ are terms, are definable.
In an extremally $\aleph_0$-saturated metric group (assuming the operations are Lipschitz),
closure of the torsion subgroup is definable.
In a dynamical system $(M,f)$ (assuming $f$ is Lipschitz), the closure of the set of
periodic points, $\overline{\textrm{per}(f)}$, is definable.
In the sphere $\mathbb{S}^2$ equipped with the geodesic metric, every point
is defined by means of its antipode as parameter.
Also, all arcs of length less that $\pi$ (and hence by Proposition
\ref{union} all arcs of length at most $\pi$) are definable with parameters.
In the closed unit disc, the boundary as well as the center is definable.
Some interesting subsets of acute triangles (with the Euclidean metric) are definable.
For example, the circumcenter and centroid are definable.}
\end{example}

A model $M$ is called \emph{minimal}\index{minimal model} if it has no proper elementary submodel.

\begin{lemma}
Every compact model $M$ has a minimal elementary submodel.
\end{lemma}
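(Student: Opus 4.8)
The plan is to apply Zorn's lemma to the poset $\mathcal P$ of all elementary submodels of $M$, ordered by inclusion, and extract a minimal element. The crucial point—and the only place where compactness is used—is that the intersection of a chain of elementary submodels of a \emph{compact} model is again an elementary submodel. First I would record two elementary consequences of compactness of $M$. Since structures are complete, every $N\preccurlyeq M$ is a complete, hence closed, hence compact subset of $M$; and for any $\bar a$ the continuous function $\phi^M(\bar a,\cdot)$ attains its infimum on any nonempty compact subset of $M$.

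Next I would set up the order-theoretic reduction. The poset $\mathcal P$ is nonempty because $M\in\mathcal P$. If $N',N\in\mathcal P$ with $N'\subseteq N$, then $N'\preccurlyeq N$, since for any formula and any tuple from $N'$ the value computed in $N'$ agrees with the value computed in $M$, which in turn agrees with the value in $N$. Consequently an element that is minimal in $(\mathcal P,\subseteq)$ is precisely a model admitting no proper elementary submodel. So it suffices to produce a minimal element of $\mathcal P$.

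To invoke Zorn I would verify that every chain $\{N_i\}_{i\in I}$ in $\mathcal P$ has a lower bound in $\mathcal P$. Put $N=\bigcap_{i}N_i$. Each $N_i$ contains the interpretations of the constant symbols and is closed under the interpretations of the function symbols, hence so is $N$; and $N$, being an intersection of closed sets, is closed in the complete space $M$, hence complete. As $\{N_i\}$ is a chain of nonempty compact sets it has the finite intersection property, so $N\neq\emptyset$. To see $N\preccurlyeq M$ I would apply the Tarski--Vaught test (Proposition \ref{tarski}): fix $\phi(\bar x,y)$ and $\bar a\in N$, and let $r=\inf\{\phi^M(\bar a,b):b\in M\}$. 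For each $i$ one has $\bar a\in N_i\preccurlyeq M$, so $\inf\{\phi^M(\bar a,y):y\in N_i\}=r$, and since $N_i$ is compact this infimum is attained; thus $C_i=\{y\in N_i:\phi^M(\bar a,y)=r\}$ is nonempty and compact. The family $\{C_i\}$ is again a chain of nonempty compact sets, so $\bigcap_i C_i\neq\emptyset$, and any element of it lies in $N$ and realizes the value $r$. Hence $\inf\{\phi^M(\bar a,c):c\in N\}=r$, so $N\preccurlyeq M$, i.e.\ $N\in\mathcal P$; and clearly $N\subseteq N_i$ for all $i$, so $N$ is the desired lower bound. Zorn's lemma now yields a minimal element of $\mathcal P$, which is a minimal elementary submodel of $M$.

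The main obstacle is exactly this verification that the chain-intersection $N$ is an elementary submodel. Compactness does the work twice: it guarantees $N\neq\emptyset$, and it lets the Tarski--Vaught witnesses be chosen coherently along the whole chain through the finite intersection property of the sets $C_i$. Once this is in place the remaining order-theoretic bookkeeping is routine.
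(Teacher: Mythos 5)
Your proof is correct and follows essentially the same route as the paper's: both rest on a maximality argument (Zorn) together with the key fact that the intersection of a chain of elementary submodels of a compact model is again a nonempty elementary submodel, with compactness supplying witnesses inside the intersection. The only difference is one of implementation: the paper passes to a countable cofinal subchain and argues by induction on formulas, extracting a convergent subsequence of witnesses, whereas you handle arbitrary chains directly via the Tarski--Vaught test and the finite intersection property of the compact witness sets $C_i$.
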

\begin{proof}
Let $\{N_i\}_{i\in I}$ be a maximal decreasing elementary chain of submodels of $M$.
In fact, $I$ is countable and there is no harm if we assume $I=\Nn$ (take a cofinal subset).
Let $N=\cap_nN_n$.
We show by induction on the complexity of formulas that $\phi^N=\phi^M$ for every $\phi$ with parameters in $N$.
The atomic and connective cases are obvious. Consider the case $\sup_x\phi(x)$.
Since the chain is elementary, there are $r,s$ such that for all $n$
$$r=\sup_x\phi^N(x)\leqslant\sup_x\phi^{N_n}(x)=s.$$
If $r<s$, there exists $a_n\in N_n$ such that $\frac{r+s}{2}\leqslant\phi^{N_n}(a_n)$.
Then $(a_n)$ has a subsequence converging to say $a$.
Then, $a\in N$ and $\frac{r+s}{2}\leqslant\phi^M(a)$. This is a contradiction.
We have therefore that $N\preccurlyeq M$ and $N$ is minimal.
\end{proof}
\vspace{1mm}

\begin{example}\label{sphere}
\em{(i) We show that the unit circle $\mathbf{S}\subseteq\Rn^2$ equipped with the Euclidean metric
is minimal (temporarily, we allow metric spaces of diameter 2).
Let $M\preccurlyeq\mathbf{S}$ be minimal and $a\in M$. Then $d(x,a)$ is maximized by $-a$.
So, $-a\in M$. Also, there is $b\in M$ which maximizes $d(x,a)+d(x,-a)$ (with value $2\sqrt{2}$).
So, $-b\in M$. Generally, for each distinct $a,b\in M$, the midpoints of the arcs joining $a$ to $b$
are obtained as the maximizer of $d(x,a)+d(x,b)$ and its antipode.
We conclude that $M$ contains a dense subset of $\mathbf{S}$.
Since $M$ is closed, we must have that $M=\mathbf{S}$.
\vspace{1mm}

\noindent(ii) Similar arguments show that the unit disc $\mathbf B\subseteq\Rn^2$ as well as
the unit sphere $\mathbf{S}^2\subseteq\Rn^3$ equipped with the Euclidean metrics are minimal.
\vspace{1mm}

\noindent(iii) $[0,1]$ with the Euclidean metric is not minimal.
A minimal submodel must contain $\{0,1\}$ and $\{\frac{1}{2}\}$ (as the end-sets of $\sup_y d(x,y)$).
We show that $M=\{0,\frac{1}{2},1\}$ is the minimal elementary submodel of $[0,1]$.
For this purpose, we must prove that every formula $\phi(x,\a)$, where $\a\in M$, takes its maximum in $M$.
Indeed, we only need to prove this for every linear combination of the formulas
$$d(x,0),\ \ d(x,1),\ \ d(x,\frac{1}{2}),\ \ \sup_yd(x,y).$$
Note that the first two formulas are linearly dependent and the fourth formula is
equivalent to $\frac{1}{2}+d(x,\frac{1}{2})$.
Therefore, we have only to verify the claim for any formula of the form $rd(x,0)+sd(x,\frac{1}{2})$.
The value of this function for $x$ in $[0,\frac{1}{2}]$ or in $[\frac{1}{2},1]$
is of the form $rx+s$ 
which takes its maximum in $\{0,1\}$ in any case.
\vspace{1mm}

\noindent(iv) Let $C$ be the Cantor ternary set with the Euclidean metric and $M$ be the minimal elementary submodel of $C$.
Then, $M$ contains $\{0,1\}$ and $\{\frac{1}{3},\frac{2}{3}\}$ (as the end-sets of $\sup_y d(x,y)$).
We claim that $M=\{0,\frac{1}{3},\frac{2}{3},1\}$.
As in example (iii), we must verify that every linear combination of the formulas $\sup_yd(x,y)$
and $d(x,a)$, where $a\in M$, takes its maximum in $M$.
One verifies that such a function takes its maximum in $\{0,1\}$ in both cases $x\in[0,\frac{1}{3}]$ and $x\in[\frac{2}{3},1]$.
\vspace{1mm}

\noindent(v) A minimal dynamical system $(M,f)$, where $f$ is Lipschitz, is minimal in the present sense.}
\end{example}

\begin{corollary} \label{minimal model}
Assume $L$ is countable and $T$ has a compact model. Then $M\vDash T$ extremal
if and only if it is compact and minimal. Moreover, $T$ has a unique such model.
\end{corollary}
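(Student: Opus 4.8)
The plan is to identify the extremal models of $T$ with the models of the associated continuous-logic theory $T^{ex}$ and then to exploit the rigidity of compact structures in CL. Throughout, let $K_0$ denote the unique compact extremal model of $T$, which exists because $T$ has a compact model (\S\ref{omitting types}). I will show that every extremal model is isomorphic to $K_0$, that $K_0$ is compact and minimal, and conversely that every compact minimal model is extremal; this yields both the equivalence and the uniqueness clause at once.

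First I would check that $E_n(T)$ is compact in the metric topology for every $n$. Since $K_0$ is extremal it realizes only extreme types, while by Corollary \ref{compact-realization1} a compact model realizes every extreme type; hence the logic-continuous map $\bar a\mapsto tp(\bar a)$ carries $K_0^n$ \emph{onto} $E_n(T)$. Because $K_0$ is compact, $K_0^n$ is compact and $E_n(T)$ is logic-closed; and since this map is $1$-Lipschitz from $(K_0^n,d)$ into $(K_n(T),\mathbf d)$, its image $E_n(T)$ is also compact in the logic metric. As $L$ is countable, Proposition \ref{categoric case} then shows that $T^{ex}$ is $\aleph_0$-categorical, and by Theorem \ref{extremal embedding} the extremal models of $T$ are exactly the models of the CL-complete theory $T^{ex}$, whose separable compact model is $K_0$.

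Next I would prove that every extremal model is compact and isomorphic to $K_0$. Given $M\vDash T^{ex}$, the continuous-logic downward L\"owenheim--Skolem theorem yields a separable $N\preccurlyeq_{\mathrm{CL}}M$ with $N\vDash T^{ex}$; by $\aleph_0$-categoricity $N\cong K_0$, so $N$ is compact. Since a compact structure has no proper elementary extension in the CL sense, $M=N\cong K_0$. This gives simultaneously the forward implication ``extremal $\Rightarrow$ compact'' and the uniqueness statement. For minimality I would invoke the lemma that the compact model $K_0$ has a minimal elementary submodel $N_0$; since $N_0\preccurlyeq K_0$ and $K_0$ is extremal, $N_0$ is extremal, whence $N_0\cong K_0$ by the previous step. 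As minimality is an isomorphism invariant, $K_0$ — and therefore every extremal model — is minimal.

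For the converse, suppose $M$ is compact and minimal. Being compact, $M$ is extremally $\aleph_0$-saturated, so by \S\ref{omitting types} the unique compact extremal model $K_0$ embeds elementarily into $M$; identifying $K_0$ with its image gives $K_0\preccurlyeq M$. If $M$ were not extremal, then $M\neq K_0$ (as $K_0$ is extremal and $M$ is not), so $K_0$ would be a proper elementary submodel of $M$, contradicting minimality; hence $M$ is extremal. The main obstacle is precisely the forward ``compact'' direction: there is no purely affine reason for an extremal model to be compact, and the argument must route through the extremal theory $T^{ex}$ together with the categoricity and no-proper-extension facts of continuous logic. Establishing metric compactness of $E_n(T)$ and $\aleph_0$-categoricity of $T^{ex}$ is the crux; the remaining steps — minimality by transport along an isomorphism, and the converse via extremal saturation — are then comparatively routine.
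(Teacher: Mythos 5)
Your argument reaches the right conclusions, and most of its machinery is sound: the identification of extremal models with models of $T^{ex}$ via Theorem \ref{extremal embedding}, the CL downward L\"owenheim--Skolem plus no-proper-elementary-extension argument, the transfer of minimality along $N_0\cong K_0$, and the converse via extremal saturation and minimality all work. But the route is genuinely different from the paper's, and the difference hides a circularity. The paper's proof is a direct ``forth'' construction: it fixes a separable extremal $M$ and a compact minimal $N$ (supplied by the lemma on minimal elementary submodels), and builds an elementary map on a countable dense subset of $M$ one point at a time; at each step $tp(a_{i+1}/a_0\ldots a_i)$ is extreme by Proposition \ref{extreme3}, its shift along the partial elementary map is again extreme, and Corollary \ref{compact-realization1} applied to $(N,f_i(a_0),\ldots,f_i(a_i))$ realizes it in $N$. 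Minimality of $N$ makes the resulting embedding onto, so $M\simeq N$; only then does the paper deduce closedness of $E_n(T)$ and invoke CL-completeness of $T^{ex}$ to get compactness and uniqueness of all extremal models. You replace this construction by an appeal to the sentence in \S\ref{Definability in compact models} asserting that $T$ has a unique compact extremal model $K_0$ which embeds elementarily into every extremally $\aleph_0$-saturated model.

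That appeal is the weak point. The assertion you quote is claimed by the paper to have been ``proved in subsection \ref{omitting types}'', but no such statement occurs there: that subsection produces an extremal model (Theorem \ref{strong omitting}) with no control on its size or compactness, and in the paper's actual development it is precisely the forth argument in this corollary (and the extreme-point construction in the proposition following it) that first manufactures a compact extremal model and its embedding property. So, measured against the paper, your proof assumes the crux of what is being proved; to make it self-contained you must insert a forth construction producing $K_0$ (once existence of $K_0$ is granted, the embedding into extremally $\aleph_0$-saturated models is itself just another forth argument, so that half of the appeal is harmless). Two smaller remarks: your detour through metric compactness of $E_n(T)$ and Proposition \ref{categoric case} is avoidable, since logic-closedness of $E_n(T)$ (immediate, as you note, from continuity of $\bar{a}\mapsto tp(\bar{a})$ on $K_0^n$) together with Theorem \ref{extremal embedding} and the CL fact that a compact structure is, up to isomorphism, the unique model of its CL-complete theory already yields everything; and, granting $K_0$, your treatment of the uniqueness/compactness half is actually cleaner than the paper's, since it never needs to produce a separable extremal model by downward L\"owenheim--Skolem.
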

\begin{proof}
Assume $M$ is separable extremal and $N\vDash T$ is compact minimal.
We show that $M\simeq N$. Let $\{a_0,a_1,...\}\subseteq M$ be dense.
Let $f_{-1}=\emptyset$ and assume a partial elementary map $f_i:\{a_0,...,a_i\}\rightarrow N$ is defined.
Since $(a_0,...,a_{i+1})$ is extreme, $p(x)=tp(a_{i+1}/a_0...a_i)$ is extreme.
So, its shift $f_i(p)$ is an extreme type over $f_i(a_0)... f_i(a_i)$ realized by say $b\in N$.
Therefore, $f_{i+1}=f_i\cup\{(a_{i+1},b)\}$ is elementary.
Let $f$ be the continuous extension of $\cup_i f_i$ to all $M$.
Then, $f:M\rightarrow N$ is elementary. Since $N$ is minimal, one has that $M\simeq N$.
It is now clear by Corollary \ref{compact-realization1} that every $E_n(T)$ is closed.
Since, $T^{ex}$ is CL-complete, $M=N$ is its unique model. In particular, every extremal model
of $T$ is separable and the claim is proved.
\end{proof}

An other description of the compact minimal model is the following.

\begin{proposition}
Let $N\vDash T$ be compact and $A\subseteq N$ be extreme.
Then, there is an extremal elementary submodel $A\subseteq M\preccurlyeq N$
such that every $a\in N-M$ is non-extreme over $M$. This is the unique (up to isomorphism) extremal model of $T$.
\end{proposition}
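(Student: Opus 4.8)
The plan is to realize $M$ as the minimal elementary submodel of the expansion of $N$ by the parameters in $A$, and then read off the three required properties. First I would form $N_A=(N,a)_{a\in A}$, a compact model of the complete $L(A)$-theory $T_A=Th(N_A)$. Using the lemma that every compact model has a minimal elementary submodel, choose $M_A\preccurlyeq N_A$ minimal, and set $M=M_A|_L$. Since reduction does not change the universe, this gives $A\subseteq M\preccurlyeq N$, and $M$ is compact (a minimal elementary submodel of a compact model is compact, being the intersection of a decreasing chain of closed subsets of $N$).

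Next I would verify that $M$ is extremal for $T$. As a compact minimal model of $T_A$, $M_A$ is extremal for $T_A$ by Corollary \ref{minimal model}; hence $tp(b/A)$ is extreme for every $b\in M$. Because $A$ is extreme, Proposition \ref{extreme3} tells us that the restriction map $K_n(A)\rightarrow K_n(\emptyset)$ sends extreme types to extreme types, so $tp(b)$ is extreme for every $b\in M$. That is exactly the statement that $M$ is extremal for $T$.

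The non-extreme property is then almost immediate from the compactness of $M$ (and does not even use its minimality). Suppose $a\in N$ is extreme over $M$, i.e. $tp(a/M)\in E_n(M)$. Since $M\preccurlyeq N$ we have $(M,m)_{m\in M}\preccurlyeq(N,m)_{m\in M}$, so $tp(a/M)$ is an extreme type of $Th(M,m)_{m\in M}$; as $(M,m)_{m\in M}$ is compact, Corollary \ref{compact-realization1} gives a realization $b\in M$ of this extreme type. Evaluating the formula $d(x,b)$, which has its parameter in $M$, yields $d(a,b)=d(b,b)=0$, so $a=b\in M$. Thus every $a\in N-M$ is non-extreme over $M$. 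Finally, $M$ is a compact extremal model of $T$, so by Corollary \ref{minimal model} it is compact and minimal and is the unique extremal model of $T$ up to isomorphism.

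The delicate point, and the step I would be most careful with, is the extremality of $M$: it is precisely here that minimality over the extreme parameter set $A$ does the work, through the equivalence \emph{extremal} $\Leftrightarrow$ \emph{compact and minimal} combined with the descent of extremality along $K_n(A)\rightarrow K_n(\emptyset)$ provided by Proposition \ref{extreme3}. If one wishes to avoid any implicit countability assumption hidden in citing Corollary \ref{minimal model} for the expanded language $L(A)$, the alternative is to prove directly that a compact minimal model of a complete theory is extremal (suppose some element has a non-extreme type and contradict minimality by passing to the closure of the extreme elements); but in the present setting the cleaner route is simply to invoke the already-established characterization of the compact minimal model.
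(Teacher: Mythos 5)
Your route is genuinely different from the paper's: you cut $M$ down from above, as the $L$-reduct of a minimal elementary submodel of the expansion $(N,a)_{a\in A}$, whereas the paper builds $M$ up from below by a transfinite construction, adjoining at each stage a point of $N$ that is extreme over everything constructed so far, and then verifies closedness and elementarity via the Tarski--Vaught test (an affine continuous function on the type space over $M$ attains its maximum at an extreme type, which is realized in the compact model and hence, by construction, in $M$). Your argument that no $a\in N-M$ is extreme over $M$ --- realize $tp(a/M)$ in the compact $(M,m)_{m\in M}$ via Corollary \ref{compact-realization1} and evaluate $d(x,b)$ --- is correct, and is in fact the same device the paper uses; note that it needs only that $M$ is a compact elementary submodel, not minimality. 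In the case where $L(A)$ is countable, your proof is complete and arguably cleaner than the paper's.

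The gap is in the extremality step (and in the uniqueness claim), both of which you obtain by citing Corollary \ref{minimal model}. That corollary carries the hypothesis that the language is countable, and you apply it to $T_A=Th(N,a)_{a\in A}$ in the language $L(A)$; but the proposition assumes neither that $L$ is countable nor that $A$ is countable, and $A\subseteq N$ can have cardinality $2^{\aleph_0}$. The $A$-part is repairable (replace $A$ by a countable dense subset $A'$; since your $M$ is closed and $A'\subseteq M$, still $A\subseteq M$), but the countability of $L$ is not. Your suggested fallback --- proving directly that a compact minimal model is extremal by ``passing to the closure of the extreme elements'' --- does not work: the set of elements whose individual types are extreme need not be extreme as a set, since a tuple of individually extreme elements can have a non-extreme type. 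The paper's own example $N=\frac{1}{2}\mathbf{S}+\frac{1}{2}\mathbf{S}$ shows this: the diagonal $\{(x,x)\}$ and the antidiagonal $\{(x,-x)\}$ are compact minimal (hence extremal) elementary submodels, so each of their points is extreme, yet for $u=(x,x)$ and $v=(y,-y)$ the ultramean theorem gives $tp(u,v)=\frac{1}{2}tp^{\mathbf{S}}(x,y)+\frac{1}{2}tp^{\mathbf{S}}(x,-y)$, a proper average of two distinct types whenever $d(x,y)\neq d(x,-y)$, hence non-extreme. This is exactly the difficulty the paper's transfinite construction is designed to avoid: each new point is chosen extreme over the \emph{entire} previously built set, so that by Proposition \ref{extreme3} every finite tuple of $M$ remains extreme. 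As written, then, your proof establishes the proposition only for countable $L$; closing the gap in general essentially requires reproducing the paper's construction (or redoing Corollary \ref{minimal model} without countability, by a transfinite forth argument embedding an arbitrary extremal model into the compact one).
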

\begin{proof}
Construct an increasing chain $A_i$,\ \ $i\in ord$, of extremal subsets of $N$ as follows.
Let $A_0=A$ and assume $A_i$ is defined. If $N-A_i$ does not contains any extreme point over $A_i$, set $A_{i+1}=A_i$.
Otherwise, choose $a\in N-A_i$ extreme over $A_i$. Then, $A_{i+1}=A_i\cup\{a\}$ is extreme.
Also, for limit $i$ set $A_i=\cup_{j<i} A_j$.
There is a least $i$ such that $A_i=A_{i+1}$. Clearly, $M=A_i\neq\emptyset$.
Assume a sequence $b_k\in M$ tends to $b\in N$. Then, $\frak{tp}(b/M)$ is extreme
and $b$ is its unique realization. Therefore, $b\in M$ which shows that $M$ is closed.
Indeed, $M$ is a substructure. We show that $M\preccurlyeq N$.
Let $\phi(x)$ be a formula with parameters in $M$.
Then, the map $\hat\phi: S_1(M)\rightarrow\Rn$, \ $\hat\phi(p)=p(\phi)$, is affine continuous.
Hence, it takes its maximum in an extreme type $p\in S_1(M)$.
Since $N$ is compact, $p$ is realized in it. By definition, $M$ contains a realization $b$ of $p$
and $p(\phi)=\phi(b)=\sup_x\phi^N(x)$. Also, every $a\in N-M$ is non-extreme over $M$.
\end{proof}

The minimal model is not unique inside a compact model. For example,
consider the unit circle $\mathbf{S}$ with the Euclidian metric and let $N=\frac{1}{2}\mathbf{S}+\frac{1}{2}\mathbf{S}$.
Then, $\{(x,x)\in x\in\mathbf{S}\}$ and $\{(x,-x): x\in\mathbf{S}\}$, where $-x$ is the antipode of $x$,
are isometric to $\mathbf{S}$ hence minimal.

By maximality of compact Hausdorff topologies, if $T$ has a compact model,
metric and logic topologies coincide on $E_n(T)$, since the map $\a\mapsto tp(\a)$ (defined on the minimal model) is Lipschitz.

\begin{proposition}
The compact minimal $M\vDash T$ is strongly homogeneous.
\end{proposition}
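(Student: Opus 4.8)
The plan is to establish ordinary $\aleph_0$-homogeneity of $M$ and then invoke Proposition \ref{strong homogeneity}. Since $M$ is compact it is separable, so $\mathsf{dc}(M)=\aleph_0$; hence once $M$ is known to be $\aleph_0$-homogeneous, Proposition \ref{strong homogeneity} (every $\kappa$-homogeneous model of density character $\kappa$ is strongly $\kappa$-homogeneous) yields at once that $M$ is strongly $\aleph_0$-homogeneous, which is the assertion.

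For the homogeneity clause, let $\a,\b\in M$ be finite tuples with $|\a|=|\b|$ and $\a\equiv\b$, and let $c\in M$ be arbitrary; I must produce $e\in M$ with $\a c\equiv\b e$. First I would note that $M$ is extremal: by Corollary \ref{minimal model} a compact minimal model is extremal. Consequently $tp(\a c)$ is extreme, and after reordering the variables (a permutation induces an affine homeomorphism of the type space and so preserves extreme points) Proposition \ref{extreme3} gives that $c$ is extreme over $\a$, i.e. $p(x)=tp(c/\a)\in E_1(\a)$. The partial elementary map $f:\a\mapsto\b$ then carries $p$ to its shift $f(p)$, a type over $\b$; and, as observed in the discussion preceding Theorem \ref{extremal embedding}, the shift of an extreme type is extreme, so $f(p)\in E_1(\b)$.

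It remains only to realize $f(p)$ inside $M$. Here I would pass to the named expansion $(M,\b)$, which is still a compact model, now of $Th(M,\b)$, and recall that naming finitely many elements preserves extremality, so that $f(p)$ is genuinely an extreme $1$-type of that theory. By Corollary \ref{compact-realization1}, every extreme type of a compact model is realized in it, so $f(p)$ is realized by some $e\in M$. Unwinding the definition of the shift, $\phi^M(\a,c)=\phi^M(\b,e)$ for every formula $\phi(\x,y)$, i.e. $\a c\equiv\b e$, as required. The argument carries no serious obstacle: its entire content is the combination of extremality of the compact minimal model with the realization of extreme types in compact models, and the only point demanding attention is the routine bookkeeping of applying Corollary \ref{compact-realization1} and Proposition \ref{extreme3} to the expansion by parameters rather than to $T$ itself.
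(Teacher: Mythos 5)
Your proof is correct and takes essentially the same route as the paper's own argument: extremality of the compact minimal model (Corollary \ref{minimal model}) makes $tp(c/\a)$ extreme, its shift along the partial elementary map $\a\mapsto\b$ is again extreme and is realized in $M$ by compactness, and separability together with Proposition \ref{strong homogeneity} upgrades $\aleph_0$-homogeneity to strong homogeneity. Your only addition is the explicit bookkeeping for the realization step --- invoking Corollary \ref{compact-realization1} for the expansion $(M,\b)$ --- which the paper compresses into the phrase ``assume it is realized by $e$''.
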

\begin{proof}
Let $\a,\b,c\in M$ be finite tuples and $\a\equiv\b$. Since $M$ is extremal, $tp(c/\a)$
and hence the type $$\{\phi(x,\b)=r:\phi^M(c,\a)=r\}$$ is extremal.
Assume it is realized by $e$. Then, $\a c\equiv\b e$.
Now, by separability and Proposition \ref{strong homogeneity}, $M$ is strongly homogeneous.
\end{proof}
Let us call a type $p\in K_n(T)$ \emph{compact}\index{compact type} if it is realized in a compact model.

\begin{proposition} \label{counatble extremes}
Assume $T$ has a compact model.

\emph{(i)} The set of compact types is dense in $K_n(T)$.

\emph{(ii)} If $E_n(T)$ is countable, then every $p\in K_n(T)$ is compact.

\emph{(iii)} If $M\vDash T$ is finite with at least two elements, then every
$p\in K_n(T)$ is realized in a finite model of the form $M^{\mu}$.
In particular, the number of non-isomorphic finite models of $T$ is $2^{\aleph_0}$.

\end{proposition}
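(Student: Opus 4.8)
The common thread in all three parts is that the hypothesis ``$T$ has a compact model'' places a fixed compact $M_0\vDash T$ at our disposal, and by Corollary \ref{compact-realization1} every extreme type in $E_n(T)$ is realized in $M_0$; so the extreme types are compact, and the whole task reduces to realizing suitable convex combinations of extreme types in compact models. The realization mechanism is always the same: if $p=\sum_k\nu_k p_k$ with each $p_k$ realized in $M_0$ by a tuple $\bar a_k\in M_0^n$, form a mean $M_0^{\mu}$ in which $\mu$ gives weight $\nu_k$ to the index $k$, and let $\bar c$ be the tuple whose $j$-th coordinate is the class of $k\mapsto(\bar a_k)_j$; the ultramean theorem (Theorem \ref{th1}) or the powermean theorem (Theorem \ref{powermean}) then yields
\[ \phi^{M_0^{\mu}}(\bar c)=\int\phi^{M_0}(\bar a_k)\,d\mu=\sum_k\nu_k\,p_k(\phi)=p(\phi), \]
so $\bar c$ realizes $p$. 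Thus everything hinges on (a) writing $p$ as a convex combination of extreme types, and (b) checking that the resulting mean is compact.

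For (i) I would use that a finite affine combination of copies of $M_0$ is compact (a finite weighted product of a compact space is compact) and that, by the displayed computation, it realizes any finite convex combination $\sum_{i=1}^{k}\lambda_i p_i$ of types $p_i\in E_n(T)$. Hence $\mathrm{conv}(E_n(T))$ consists of compact types, and since by the Krein--Milman theorem $\mathrm{conv}(E_n(T))$ is dense in $K_n(T)$, the compact types are dense.

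For (ii) I would first invoke the Choquet--Bishop--de Leeuw theorem to represent an arbitrary $p\in K_n(T)$ by a probability measure carried by the extreme boundary $E_n(T)$. The force of the hypothesis is that a measure concentrated on the countable set $E_n(T)=\{p_1,p_2,\dots\}$ is purely atomic, hence of the form $\sum_k\nu_k\delta_{p_k}$ with $\nu_k\ge 0$ and $\sum_k\nu_k=1$, giving $p=\sum_k\nu_k p_k$. I would then realize this countable convex combination in the powermean $M_0^{\mu}$, where $\mu=\sum_k\nu_k\delta_k$ is the corresponding atomic probability measure on $\mathbb N$ and $M_0$ is $P(\mathbb N)$-meanable (the case $\mathcal A=P(I)$), so Theorem \ref{powermean} applies and the displayed computation realizes $p$. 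It then remains to check that $M_0^{\mu}$ is compact: completeness comes from Proposition \ref{metric completeness}, and total boundedness follows by truncating to the first $N$ coordinates, choosing $N$ with $\sum_{k>N}\nu_k<\varepsilon/2$, covering the compact $M_0^N$ by finitely many $\tfrac{\varepsilon}{2}$-balls, and bounding the tail contribution by $\sum_{k>N}\nu_k$.

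For (iii), when $M$ is finite it is compact, so by Corollary \ref{compact-realization1} every extreme type is realized in the finite set $M^n$; hence $E_n(T)$ is finite and $K_n(T)$, a compact convex set with finitely many extreme points, equals their convex hull. Thus each $p\in K_n(T)$ is a \emph{finite} convex combination $\sum_{i=1}^{m}\lambda_i p_i$ of extreme types, which the displayed realization puts into the ultramean $M^{\mu}$ over the finite index set $\{1,\dots,m\}$ with weights $\lambda_i$; all weights being positive and $M$ finite, $M^{\mu}$ has underlying set $M^{m}$ and is finite. For the count, the upper bound $2^{\aleph_0}$ is clear since a finite metric structure is coded by finitely many reals. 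For the lower bound I would exhibit, for $a\neq b\in M$ with $r=d(a,b)>0$, the continuum of finite models $M^{\mu_\lambda}$ with $\mu_\lambda=\lambda\delta_1+(1-\lambda)\delta_2$ (two copies of $M$) for $\lambda\in(0,\tfrac12)$: in $M^{\mu_\lambda}\cong M\times M$ the smallest positive distance equals $\lambda\cdot\min\{d(x,y):x\neq y\}$, which is strictly increasing in $\lambda$, so these models are pairwise non-isomorphic. I expect the main obstacle to be the measure-theoretic step in (ii)---ensuring the representing measure is genuinely atomic on $E_n(T)$ rather than merely maximal, which is transparent when $K_n(T)$ is metrizable (e.g.\ for countable $L$) but needs care otherwise---together with the compactness verification of the infinite powermean.
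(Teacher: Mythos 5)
Your proposal is correct and follows the same overall strategy as the paper: every extreme type is realized in the compact model (Corollary \ref{compact-realization1}), and convex combinations of extreme types are realized in ultrameans/powermeans of that model weighted by the corresponding (atomic or finitely supported) measure, exactly as in your displayed computation. The differences are worth recording. In (i) you invoke Krein--Milman directly, while the paper represents $p$ by a measure on the compact metric space $E_n(T)$ and approximates it by finitely supported measures; your route is marginally more elementary, since it needs no representing measure at all. In (ii), the technical concern you flag --- whether the Choquet--Bishop--de Leeuw measure genuinely lives on $E_n(T)$ --- is resolved in the paper by the fact that $E_n(T)$ is \emph{compact} whenever $T$ has a compact model: it is the image of the $n$-th power of the compact minimal extremal model (Corollary \ref{minimal model}) under the continuous map $\a\mapsto tp(\a)$; a Borel probability measure on a countable compact set is then automatically atomic, which is all you need, so your argument closes without any metrizability hypothesis on $K_n(T)$. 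Your explicit verification that the countable powermean is compact (completeness via Proposition \ref{metric completeness} plus truncation for total boundedness) supplies a detail the paper dismisses with ``clearly''. Finally, in (iii) your counting argument is genuinely different and more concrete: you exhibit a continuum of pairwise non-isomorphic finite models $M^{\mu_\lambda}$ distinguished by the isomorphism invariant ``smallest positive distance'' $\lambda\min\{d(x,y):x\neq y\}$, whereas the paper argues abstractly that $K_2(T)$ has cardinality $2^{\aleph_0}$, each such type is realized in some finite model, and a finite model realizes only finitely many types; your construction also sidesteps what is evidently a typo in the paper's own proof, which says ``$E_2(T)$ has cardinality $2^{\aleph_0}$'' even though $E_2(T)$ is finite in this situation ($K_2(T)$ is clearly intended).
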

\begin{proof}
(ii) We know that $I=E_n(T)$ is compact. Let $M\vDash T$ be compact and $p\in K_n(T)$.
By the Choquet-Bishop-de Leeuw theorem, there is a probability measure $\mu$
on $I$ which represents $p$. Clearly, $\mu$ is an ultracharge and $N=\prod_{\mu}M$ is compact.
For each $q\in I$, let $a_q\in M$ realize $q$ and $a=[a_q]\in N$. Then, for every $\phi$
$$p(\phi)=\hat\phi(p)=\int_I\hat\phi(q)\ d\mu=\int_I\phi^M(a_q)d\mu=\phi^N(a).$$

(i) $I=E_n(T)$ is a compact metric space. By the density theorem (\cite{Aliprantis-Inf} Th 15.10)
the set of probability measures on $I$ with finite support is dense in $\mathcal P(I)$ (the set of all probability measures on $I$).
Let $p\in K_n(T)$ be represented by $\mu\in\mathcal P(I)$.
Let $\mu_k$ converge to $\mu$ (also written by $\mu_k\stackrel{*}{\longrightarrow}\mu$)
where every $\mu_k\in\mathcal P(I)$ has finite support.
So, for every $f\in \mathbf{C}_b(M)$ one has that $\int fd\mu_k\rightarrow\int fd\mu$ (\cite{Aliprantis-Inf} Th. 15.3).
As in the proof of (ii), the type defined by $p_k(\phi)=\int\hat\phi d\mu_k$ is realized in some compact model.
Moreover, one has that $p_k\rightarrow p$ in the $w^*$-topology.

(iii) Every $E_n(T)$ is finite. So, as in the proof of (ii), every type is realized in a model of the form
$M^{\mu}$ for some ultracharge $\mu$ on $E_n(T)$.
Since every finite model realizes a countable number of types and $E_2(T)$ has cardinality $2^{\aleph_0}$,
we conclude that the number of non-isomorphic finite models has cardinality $2^{\aleph_0}$.
\end{proof}

\newpage\section{Examples} \label{Examples}
Abstract results are expected to lead to the better understanding of concrete examples.
In this section, we give some easy examples or non-examples in this respect.
More analytic examples can be found in \cite{Ibarlucia}.

\vspace{4mm}\noindent{\bf Affinic theories}\\
A natural way of finding examples is affinization.
Let $\mathbb{T}$ be a CL-theory. Recall that $\mathbb{T}_{\textrm{af}}$ is its affine part.
$\mathbb{T}$ is called \emph{affinic}\index{affinic} if for every $M\vDash\mathbb{T}_{\mathrm{af}}$
there exists $M\preccurlyeq N\vDash\mathbb{T}$. Every affine theory is affinic.
By proposition \ref{10}, the $\aleph_0$-saturated models of an affinely complete theory $T$ are CL-equivalent.
The common CL-theory of such models is then an affinic theory.

\begin{proposition} \label{13}
Assume $\mathbb{T}$ is CL-complete affinic and inductive. Then, $\mathbb{T}$ it is the
common theory of $\aleph_0$-saturated models of $\mathbb{T}_{\mathrm{af}}$.
\end{proposition}
\begin{proof}
By Proposition \ref{10}, we have only to find an $\aleph_0$-saturated model of $\mathbb{T}_{\mathrm{af}}$
which is also a model of $\mathbb{T}$. Let $M_0\vDash \mathbb{T}_{\mathrm{af}}$ be $\aleph_0$-saturated.
Then, there is a chain $$M_0\preccurlyeq M_1\preccurlyeq M_2\preccurlyeq\ \cdots$$
such that $M_i\vDash \mathbb{T}_{\mathrm{af}}$ is $\aleph_0$-saturated when $i$ is even and
$M_i\vDash\mathbb{T}$ when $i$ is odd. Let $M=\cup_i M_i$.
Since $\mathbb{T}$ is inductive, $M\vDash\mathbb{T}$.
On the other hand, $M\vDash \mathbb{T}_{\mathrm{af}}$ is $\aleph_0$-saturated.
\end{proof}

Affinic theories transfer their properties to their affine part.

\begin{proposition}\label{comp-to-lin}
Let $\mathbb{T}$ be an affinic theory.
If $\mathbb{T}$ has any of the following properties, then so has $\mathbb{T}_{\mathrm{af}}$.
(i) Quantifier-elimination (ii) model-completeness (iii) universal
(iv) existential  (v) amalgamation property (vi) joint embedding property.
\end{proposition}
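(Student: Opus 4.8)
The plan is to route everything through one bridge between the two model classes. For an affinic $\mathbb{T}$ two facts hold uniformly: first $\mathrm{Mod}(\mathbb{T})\subseteq\mathrm{Mod}(\mathbb{T}_{\mathrm{af}})$ (a CL-model of $\mathbb{T}$ satisfies every affine consequence), and second, by the definition of affinic, every $M\vDash\mathbb{T}_{\mathrm{af}}$ has an AL-elementary extension $M\preccurlyeq M^{\dagger}\vDash\mathbb{T}$. Since $\preccurlyeq$ preserves the values of all AL-sentences, any structure lying AL-elementarily below a model of $\mathbb{T}$ is already a model of $\mathbb{T}_{\mathrm{af}}$; hence ``$P\vDash\mathbb{T}_{\mathrm{af}}$'' and ``$P\preccurlyeq P^{\dagger}\vDash\mathbb{T}$ for some $P^{\dagger}$'' are interchangeable. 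The technical engine I would prove first is an amalgamation lemma: if $M\subseteq N$ with $M\preccurlyeq M^{\dagger}\vDash\mathbb{T}$, then $\mathrm{ediag}(N)\cup\mathrm{diag}(M^{\dagger})$ (identifying the common $M$) is affinely satisfiable. By affine compactness this reduces to realizing, up to $\epsilon$, a quantifier-free condition $0\leqslant\beta(\bar m,\bar p)$ of $\mathrm{diag}(M^{\dagger})$ together with $\mathrm{ediag}(N)$ inside $N$ itself: $0\leqslant\beta^{M^{\dagger}}(\bar m,\bar p)$ gives $(\sup_{\bar z}\beta)^{M^{\dagger}}(\bar m)\geqslant0$, and because $\sup_{\bar z}\beta$ is a \emph{supremal AL-formula} it keeps this value along $M\preccurlyeq M^{\dagger}$, whence $(\sup_{\bar z}\beta)^{N}(\bar m)\geqslant0$ (quantifier-free values agree on $M$ and $M\subseteq N$). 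A model $K$ then satisfies $N\preccurlyeq K$ and $M^{\dagger}\hookrightarrow K$ over $M$.

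With this the closure- and embedding-type properties are quick. If $\mathbb{T}$ is universal and $A\subseteq M\vDash\mathbb{T}_{\mathrm{af}}$, take $M\preccurlyeq M^{\dagger}\vDash\mathbb{T}$; then $A\subseteq M^{\dagger}\vDash\mathbb{T}$ forces $A\vDash\mathbb{T}$, so $A\vDash\mathbb{T}_{\mathrm{af}}$, i.e. $\mathbb{T}_{\mathrm{af}}$ is closed under substructure and therefore universal by the characterization of $T_{\forall}$. For JEP I extend two $\mathbb{T}_{\mathrm{af}}$-models $M\preccurlyeq M^{\dagger}$, $N\preccurlyeq N^{\dagger}$ to models of $\mathbb{T}$, apply JEP of $\mathbb{T}$ to get a common $K\vDash\mathbb{T}$, and compose; $K\vDash\mathbb{T}_{\mathrm{af}}$. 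AP is identical with the base fixed: a common substructure $A$ of $M,N$ stays a common substructure of $M^{\dagger},N^{\dagger}\vDash\mathbb{T}$, and amalgamating these over $A$ gives the amalgam. For quantifier elimination I would use the local criterion of Proposition \ref{quant-el-loc}: given $A\subseteq M,N\vDash\mathbb{T}_{\mathrm{af}}$ and $\bar a\in A$, pass to $M^{\dagger},N^{\dagger}\vDash\mathbb{T}$; an AL-formula $\phi$ is a CL-formula, so CL-quantifier elimination of $\mathbb{T}$ gives $\phi^{M^{\dagger}}(\bar a)=\phi^{N^{\dagger}}(\bar a)$ (its value depends only on $A$), and AL-elementarity transports this to $\phi^{M}(\bar a)=\phi^{N}(\bar a)$.

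The two genuinely delicate cases, model-completeness and existentiality, are handled by the amalgamation lemma. For model-completeness let $M\subseteq N\vDash\mathbb{T}_{\mathrm{af}}$, form $M\preccurlyeq M^{\dagger}\vDash\mathbb{T}$, and take $K\vDash\mathbb{T}\cup\mathrm{ediag}(N)\cup\mathrm{diag}(M^{\dagger})$ (consistency as above, now absorbing $\mathbb{T}$ via $N\preccurlyeq N^{\dagger}\vDash\mathbb{T}$). Then $K\vDash\mathbb{T}$, $N\preccurlyeq K$ and $M^{\dagger}\hookrightarrow K$; model-completeness of $\mathbb{T}$ makes $M^{\dagger}\hookrightarrow K$ CL-elementary, so $M\preccurlyeq M^{\dagger}\preccurlyeq K$ gives $M\preccurlyeq K$, and the sandwich $M\preccurlyeq K$, $N\preccurlyeq K$ with $M\subseteq N\subseteq K$ yields $M\preccurlyeq N$. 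For existentiality I keep $\mathbb{T}$ out of the amalgam, build $K\vDash\mathrm{ediag}(N)\cup\mathrm{diag}(M^{\dagger})$ with $N\preccurlyeq K$ and $M^{\dagger}\subseteq K$ (consistency witnessed directly in $N$, avoiding circularity), and deduce $K\vDash\mathbb{T}$ from $M^{\dagger}\vDash\mathbb{T}$ by preservation under extension; then $N\preccurlyeq K\vDash\mathbb{T}$ gives $N\vDash\mathbb{T}_{\mathrm{af}}$, so $\mathbb{T}_{\mathrm{af}}$ is preserved under extension and hence existential.

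The main obstacle, and the point demanding the most care, is the CL--AL gap: AL-elementary extensions do \emph{not} preserve CL-formulas (those using $\wedge,\vee$), so one cannot simply amalgamate the elementary diagram of $M^{\dagger}$ with that of $N$. Everything must be funneled through quantifier-free and supremal AL-formulas, whose suprema remain AL-formulas and are therefore preserved along $M\preccurlyeq M^{\dagger}$; this is precisely why $\mathrm{diag}(M^{\dagger})$ (not $\mathrm{ediag}(M^{\dagger})$) is the correct object, and it is the crux of the model-complete and existential cases. Two subsidiary points I would fix at the outset are the reading of the amalgamation property of $\mathbb{T}$ as amalgamation over a common substructure (the sense under which it links to quantifier elimination here), and the affine preservation theorems ``closed under substructure $=$ universal'' and ``closed under extension $=$ existential'', which I use to turn the closure properties established for $\mathbb{T}_{\mathrm{af}}$ into the desired syntactic axiomatizations.
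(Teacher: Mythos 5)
Your route is essentially the paper's: the same bridge (models of $\mathbb{T}$ are models of $\mathbb{T}_{\mathrm{af}}$, and affinicity supplies $M\preccurlyeq M^{\dagger}\vDash\mathbb{T}$), and your amalgamation lemma --- affine satisfiability of $\mathrm{ediag}(N)\cup\mathrm{diag}(M^{\dagger})$, proved by passing a quantifier-free condition of $\mathrm{diag}(M^{\dagger})$ to its supremal formula, carrying that along $M\preccurlyeq M^{\dagger}$, and then up to $N$ --- is exactly the device the paper uses inline in parts (ii), (iv) and (v). Your treatments of quantifier-elimination (via Proposition \ref{quant-el-loc} and its CL analogue), model-completeness, universality, existentiality and JEP coincide with the paper's, including the same implicit appeal to the affine preservation facts that turn closure under substructure (resp.\ extension) into a universal (resp.\ existential) axiomatization.

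The one divergence is (v), and there it matters. You read the amalgamation property of $\mathbb{T}$ as amalgamation over an \emph{arbitrary} common substructure, which trivializes the transfer: the base $A$ passes unchanged to $M^{\dagger},N^{\dagger}$. The paper's proof instead works with amalgamation of models over a common \emph{submodel}: it assumes $A,B,C\vDash\mathbb{T}_{\mathrm{af}}$ and, crucially, the base it hands to the AP of $\mathbb{T}$ is $A'\vDash\mathbb{T}$, not $A$. Under that weaker (and arguably standard) reading your shortcut breaks down, because a model of $\mathbb{T}_{\mathrm{af}}$ need not be a model of $\mathbb{T}$: with $\mathbb{T}=\mathrm{APrA}$ and $\mathbb{T}_{\mathrm{af}}\equiv\mathrm{PrA}$, the base could be the atomic algebra $\{0,1\}$, to which the AP of $\mathrm{APrA}$ simply does not apply. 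The repair is your own amalgamation lemma, applied on both sides: take $A\preccurlyeq A'\vDash\mathbb{T}$, use the lemma to get $A'\subseteq B_{0}$ with $B\preccurlyeq B_{0}$, then $B_{0}\preccurlyeq B'\vDash\mathbb{T}$ by affinicity (similarly $C'$), amalgamate $B'$ and $C'$ over $A'$ inside a model of $\mathbb{T}$, and restrict the resulting embeddings to $B$ and $C$. That is precisely the paper's proof of (v). So either prove this submodel-base version as well, or state explicitly that you are establishing the variant with the stronger hypothesis and stronger conclusion; as written, your (v) does not cover the statement the paper actually proves.
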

\begin{proof}
(i): Use \ref{quant-el-loc} and the corresponding property in CL.

(ii) Take $A,B\vDash\mathbb{T}_{\mathrm{af}}$ such that $A\subseteq B$.
Let $A\preccurlyeq A'\vDash\mathbb{T}$ and $\Sigma=diag(A')\cup ediag(B)$.
Let $0\leqslant\phi^{A'}(\a,\a')$ where $\phi$ quantifier-free and $\a\in A$, $\a'\in A'-A$.
Then, $A'$ and hence $A$, $B$ satisfy $0\leqslant\sup_{\y}\phi(\a,\y)$.
So, $\Sigma$ is affinely satisfiable. Let $B_0\vDash\Sigma$.
Then $A'\subseteq B_0$ and $B\preccurlyeq B_0$.
Let $B_0\preccurlyeq B'\vDash\mathbb{T}$. So, $A'\subseteq B'$ and $A',B'\vDash\mathbb{T}$.
Since $\mathbb{T}$ is model-complete, $A'\preccurlyeq_{\mathrm{CL}} B'$. We conclude that $A\preccurlyeq B$.

(iii) Let $A\subseteq B\vDash \mathbb{T}_{\mathrm{af}}$. Let $B\preccurlyeq C\vDash\mathbb{T}$.
Then $A\vDash\mathbb{T}$ and hence $A\vDash \mathbb{T}_{\mathrm{af}}$.

(iv) Let $A\vDash\mathbb{T}_{\mathrm{af}}$ and $A\subseteq B$. We show that $B\vDash\mathbb{T}_{\mathrm{af}}$.
Let $A\preccurlyeq C\vDash\mathbb{T}$. Note that $ediag(B)\cup diag(C)$ is affinely satisfiable.
Assume $B\preccurlyeq D$ and $C\subseteq D$. Since $\mathbb{T}$ is existential, $D\vDash\mathbb{T}$ and hence $B\vDash \mathbb{T}_{\mathrm{af}}$.

(v) Assume $A\subseteq B$ and $A\subseteq C$ where $A,B,C\vDash \mathbb{T}_{\mathrm{af}}$.
Let $A\preccurlyeq A'\vDash\mathbb{T}$. Then, $diag(A')\cup ediag(B)$ is affinely satisfiable.
Let $B_0$ be a model of this theory. Then $A'\subseteq B_0$ and $B\preccurlyeq B_0$.
Let $B_0\preccurlyeq B'\vDash\mathbb{T}$.
Similarly, there exists $A'\subseteq C'$, $C\preccurlyeq C'\vDash\mathbb{T}$.
Assume $D\vDash\mathbb{T}$ is an amalgamation of $B'$ and $C'$ over $A'$.
Then $D$ is an amalgamation of $B$ and $C$ over $A$.

(vi) Similar to (v).
\end{proof}

In particular, if $T$ is an AL theory which has any of the above properties in the CL sense,
it has that property in the AL sense.

\begin{definition}
\em{An AL-complete (resp. CL-complete) theory $T$ is \emph{$\kappa$-stable} if for every
$A\subseteq M\vDash T$ with $|A|\leqslant\kappa$,\ \ $K_n(A)$ (resp. $S_n(A)$)
has density character at most $\kappa$ in the metric topology. $T$ is stable\index{stable}
if it is $\kappa$-stable for some $\kappa$.}
\end{definition}

\begin{proposition} \label{stability}
Let $\mathbb{T}$ a CL-complete affinic theory in $L$. If $\mathbb{T}$ is $\kappa$-stable,
then $\mathbb{T}_{\mathrm{af}}$ is $\kappa$-stable in the AL sense.
Moreover, if $\mathbb{T}$ has quantifier-elimination, then $\mathbb{T}$ and $\mathbb{T}_{\mathrm{af}}$
have the same type spaces over any set of parameters from models of $\mathbb{T}$.
\end{proposition}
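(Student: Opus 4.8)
The plan is to analyze the natural restriction map $r\colon S_n(A)\to K_n(A)$ sending a CL-type over $A$ to its restriction to affine formulas, and to show that $r$ is a surjective $1$-Lipschitz map for the metric topologies in general, and a bijective isometry once $\mathbb{T}$ has quantifier-elimination. First I would record that $\mathbb{T}_{\mathrm{af}}$ is AL-complete: any $M\vDash\mathbb{T}_{\mathrm{af}}$ has, by affinicity, an AL-elementary extension $M\preccurlyeq N\vDash\mathbb{T}$, and since $\mathbb{T}$ is CL-complete all such $N$ share one AL-theory, namely $\mathbb{T}_{\mathrm{af}}$. The same move lets me assume the parameter set lives in a model of $\mathbb{T}$: given $A\subseteq M\vDash\mathbb{T}_{\mathrm{af}}$ with $|A|\leqslant\kappa$, I pick $M\preccurlyeq N\vDash\mathbb{T}$, so that $K_n(A)$ computed in $M$ or in $N$ is the same space while $S_n(A)$ makes sense over $N$. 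Surjectivity of $r$ then follows by realization: a $p\in K_n(A)$ is realized by some $\bar a$ in an AL-elementary extension of $N$, which can be pushed into a model of $\mathbb{T}$ by affinicity, and the CL-type of $\bar a$ over $A$ restricts to $p$.

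For the first assertion I would argue that $r$ does not increase the logic metric. If $\bar a\vDash_{\mathrm{CL}}\hat p$ and $\bar b\vDash_{\mathrm{CL}}\hat q$ witness $\mathbf{d}(\hat p,\hat q)$ in a CL-saturated model of $\mathbb{T}$, then these same tuples are AL-realizations of $r\hat p$ and $r\hat q$, so by the realization-independence of the logic metric $\mathbf{d}(r\hat p,r\hat q)\leqslant d(\bar a,\bar b)$; taking the infimum gives $\mathbf{d}(r\hat p,r\hat q)\leqslant\mathbf{d}(\hat p,\hat q)$. A surjective $1$-Lipschitz map carries a metrically dense set to a metrically dense set without increasing cardinality, so $\mathsf{dc}(K_n(A))\leqslant\mathsf{dc}(S_n(A))\leqslant\kappa$ by $\kappa$-stability of $\mathbb{T}$. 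Hence $\mathbb{T}_{\mathrm{af}}$ is $\kappa$-stable.

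For the second assertion I would use that a CL-type is a Riesz-space homomorphism on quantifier-free formulas: it is linear and preserves $\wedge$, hence also $\vee$, since $\phi\vee\psi=-((-\phi)\wedge(-\psi))$ and the functional is linear. Thus its values on the lattice of quantifier-free CL-formulas are determined by its values on atomic formulas, and atomic formulas are affine, so a CL-type's quantifier-free part is already fixed by its restriction to affine formulas. Since $\mathbb{T}$ has quantifier-elimination (which persists after naming the parameters of $A$), every CL-type is determined by its quantifier-free part; therefore $\hat p=\hat q$ whenever $r\hat p=r\hat q$, i.e. $r$ is injective, and combined with surjectivity $r$ is a bijection. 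Quantifier-elimination also upgrades the $1$-Lipschitz bound to an isometry: if $\bar a,\bar b$ are AL-realizations of $r\hat p,r\hat q$ nearly witnessing $\mathbf{d}(r\hat p,r\hat q)$, then after pushing into a model of $\mathbb{T}$ their CL-types restrict to $r\hat p,r\hat q$ and so equal $\hat p,\hat q$ by the uniqueness just proved, giving $\mathbf{d}(\hat p,\hat q)\leqslant d(\bar a,\bar b)$. Finally $r$ is logic-continuous, $S_n(A)$ is logic-compact and $K_n(A)$ is Hausdorff, so $r$ is also a homeomorphism of the logic topologies; thus $\mathbb{T}$ and $\mathbb{T}_{\mathrm{af}}$ have the same type spaces over $A$.

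The main obstacle is the careful bookkeeping between the two logics: one must repeatedly invoke affinicity to move parameters and realizations into models of $\mathbb{T}$, and the realization-independence of the logic metric to compare $\mathbf{d}$ computed in CL-saturated versus AL-saturated models, and one must correctly exploit that a CL-type is a lattice homomorphism so that the quantifier-free AL-data already pins down the quantifier-free CL-data. I would also note that the quantifier-elimination of $\mathbb{T}_{\mathrm{af}}$ needed to make the AL side behave is supplied by Proposition \ref{comp-to-lin}(i).
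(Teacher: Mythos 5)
Your overall strategy is the paper's: study the restriction map $r\colon S_n(A)\to K_n(A)$, prove it surjective and $1$-Lipschitz to transfer $\kappa$-stability, and use the lattice-homomorphism property of CL-types plus quantifier-elimination for injectivity and the isometry. Most of your steps are sound (the $1$-Lipschitz bound via realization-independence of $\mathbf{d}$, the density transfer, the Riesz normal-form argument for injectivity, and the compact-to-Hausdorff argument for the homeomorphism, which is a nice addition). However, your proof of surjectivity has a genuine gap. You realize $p\in K_n(A)$ by a tuple $\bar{a}$ in an AL-elementary extension $N'$ of $N$ and then push into some $N''\vDash\mathbb{T}$ by affinicity, treating the CL-type of $\bar{a}$ over $A$ computed in $N''$ as an element of $S_n(A)$. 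But $S_n(A)$ is by definition the space of CL-types of $Th_{\mathrm{CL}}(N,a)_{a\in A}$, so this step requires $(N'',a)_{a\in A}\equiv_{\mathrm{CL}}(N,a)_{a\in A}$. What you actually have is only $(N'',a)_{a\in A}\equiv_{\mathrm{AL}}(N,a)_{a\in A}$ (via $N\preccurlyeq N'\preccurlyeq N''$ in the AL sense) together with CL-completeness of the \emph{parameter-free} theory $\mathbb{T}$, and this does not yield CL-equivalence over the parameters. Indeed, if partial AL-elementary maps between models of a CL-complete affinic theory were automatically partial CL-elementary, then applying this to the sets $A\cup\bar{a}$ and $A\cup\bar{b}$, where $\bar{a},\bar{b}$ realize two CL-types over $A$ with the same affine restriction, would show those CL-types are equal; that is, $r$ would be injective with no quantifier-elimination hypothesis at all, which would make the second assertion of the proposition vacuous as stated. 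The same unjustified push occurs in your isometry argument; there it happens to be repairable, since once QE is assumed every CL-sentence with parameters in $A$ is approximated by lattice-linear combinations of atomic formulas, whose values at tuples from $A$ are preserved by AL-elementarity, so the identity on $A$ \emph{is} CL-elementary -- but this needs to be said, and it is exactly the QE hypothesis doing the work.

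The gap disappears if you reorder your choices, which is what the paper does: since $K_n(A)$ depends only on $Th_{\mathrm{AL}}(M,a)_{a\in A}$, you may assume from the outset that $M$ is AL $\kappa^{+}$-saturated, and only then pick $M\preccurlyeq N\vDash\mathbb{T}$ by affinicity. Then every $p\in K_n(A)$ is already realized by some $\bar{a}\in M\subseteq N$, and the CL-type of $\bar{a}$ over $A$ computed in $N$ itself is literally an element of $S_n(A)$ restricting to $p$. No transfer between two different models of $\mathbb{T}$, and hence no comparison of CL-theories over parameters, is ever needed.
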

\begin{proof} For the first part, take $A\subseteq M\vDash\mathbb{T}_{\mathrm{af}}$ where $M$ is
$\kappa^+$-saturated (in the AL sense) and $|A|\leqslant\kappa$.
Let $M\preccurlyeq N\vDash\mathbb{T}$ be $\kappa^+$-saturated in the CL sense. Then every $p\in K_n(A)$ is realized in $N$.
This shows that the map $$\textrm{af}:S_n(A)\rightarrow K_n(A)$$
which sends every CL-type to its restriction to affine formulas is surjective and $1$-Lipschitz.
Let $Z\subseteq S_n(A)$ be dense with cardinality $\kappa$.
Then $\textrm{af} [Z]\subseteq K_n(A)$ is dense. So, $\mathbb{T}_{\mathrm{af}}$ is $\kappa$-stable.

For the second part, note that every type $p\in S_n(A)$ is uniquely determined by its
values on the normal form formulas, i.e. those of the form $\theta=\bigvee_{i}\bigwedge_{j}\phi_{ij}$
where each $\phi_{ij}$ is a quantifier-free affine formula (see \cite{Aliprantis} for properties of vector lattices).
Since $p$ is a lattice homomorphism, it is uniquely determined by its values on the affine formulas.
We conclude that the map $\mathrm{af}$ is injective. Moreover, $p$ and $\textrm{af}(p)$ have the same
realizations in the model $N$ above. Therefore, $\textrm{af}$ is an isometry.
\end{proof}
\bigskip

\noindent{\bf Probability algebras}\\
Let $L=\{\wedge,\vee,\ ' ,0,1,\mu\}$ be the language of probability algebras.
It is easily seen that the interpretations of function symbols $\wedge, \vee,\ '$ and $\mu$ are Lipschitz.
The theory of probability algebras\index{probability algebras}, denoted by PrA, consists of the following set of axioms:
\bigskip

- Boolean algebra axioms

- $\mu(0)=0$ and $\mu(1)=1$

- $\mu(x)\leqslant\mu(x\vee y)$

- $\mu(x\wedge y)+\mu(x\vee y)=\mu(x)+\mu(y)$

- $d(x,y)=\mu(x\triangle y)$.
\bigskip

Adding a further non-affine axiom stating atomlessness, one obtains the theory of
atomless probability algebras APrA$=\mathbb{T}$ which is a complete $\aleph_0$-stable CL theory
admitting quantifier-elimination \cite{BBHU}.
Here, we show that similar properties hold for PrA as an AL-theory.
We may use the Proposition \ref{comp-to-lin}. For this purpose, we need to show that $\mathbb{T}_{\mathrm{af}}\equiv$ PrA.
Clearly, $\mathbb{T}_{\mathrm{af}}\supseteq$ PrA. Conversely, given a model $M$ of PrA an atomless ultracharge
$\wp$ on a set $I$, $M\preccurlyeq M^{\wp}$ is atomless. This shows that PrA is complete hence equivalent to
$\mathbb{T}_{\mathrm{af}}$. In particular, APrA is affinic.

\begin{proposition}\label{elimination}
PrA admits quantifier-elimination and is complete.
\end{proposition}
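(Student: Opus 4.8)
The plan is to deduce both claims from the affinic machinery already developed, specifically Proposition~\ref{comp-to-lin} and the stated properties of $\mathbb{T}=\mathrm{APrA}$. Since the excerpt records just before this proposition that $\mathrm{APrA}$ is a complete $\aleph_0$-stable CL-theory admitting quantifier-elimination, and that $\mathbb{T}_{\mathrm{af}}\equiv\mathrm{PrA}$ with $\mathrm{APrA}$ affinic, the two conclusions should follow by pushing each CL-property down to the affine part.

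First I would establish completeness of $\mathrm{PrA}$ as an AL-theory. The key observation is the identity $\mathbb{T}_{\mathrm{af}}\equiv\mathrm{PrA}$ proved in the paragraph preceding the proposition: the inclusion $\mathbb{T}_{\mathrm{af}}\supseteq\mathrm{PrA}$ is immediate, and conversely any model $M$ of $\mathrm{PrA}$ embeds elementarily into an atomless powermean $M^{\wp}$ (taking $\wp$ atomless), so $M\preccurlyeq M^{\wp}\vDash\mathrm{APrA}$. This witnesses that every model of $\mathrm{PrA}$ has an $\mathrm{APrA}$-elementary extension, i.e.\ $\mathrm{APrA}$ is affinic. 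Because $\mathrm{APrA}$ is CL-complete, for any $M,N\vDash\mathrm{PrA}=\mathbb{T}_{\mathrm{af}}$ we get $M^{\wp}\equiv_{\mathrm{CL}}N^{\wp'}\vDash\mathrm{APrA}$, and restricting to AL-sentences together with the fact that the diagonal embedding is elementary (Corollary~\ref{diagonal}) yields $M\equiv N$. Hence $\mathrm{PrA}$ is AL-complete.

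Next I would obtain quantifier-elimination by invoking Proposition~\ref{comp-to-lin}(i) directly: $\mathbb{T}=\mathrm{APrA}$ is affinic and has CL-quantifier-elimination, so its affine part $\mathbb{T}_{\mathrm{af}}=\mathrm{PrA}$ has AL-quantifier-elimination. Unwinding the proof of that clause, this rests on the local criterion Proposition~\ref{quant-el-loc}: given substructures $A\subseteq M,N\vDash\mathrm{PrA}$ with $\a\in A$, one passes to $\mathrm{APrA}$-extensions $M\preccurlyeq M'$, $N\preccurlyeq N'$, uses CL-quantifier-elimination of $\mathrm{APrA}$ to match the value of any affine formula $\phi^{M'}(\a)=\phi^{N'}(\a)$ over the common substructure, and then restricts back via the elementarity of the extensions to conclude $\phi^M(\a)=\phi^N(\a)$. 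Since every affine formula is approximable by combinations of atomic ones under this condition, density of quantifier-free formulas in $\mathbb{D}_n(\mathrm{PrA})$ follows.

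The main obstacle is not any single hard inference but making sure the reductions are clean: one must check that the affinic hypothesis genuinely applies (which requires the atomless-powermean argument that $M\preccurlyeq M^{\wp}$ remains a model of $\mathrm{APrA}$, using Proposition~\ref{metric completeness} for completeness of $M^{\wp}$ and the fact that atomlessness is preserved), and that the quoted CL-properties of $\mathrm{APrA}$ from \cite{BBHU} are cited in the exact form Proposition~\ref{comp-to-lin} consumes. Once affinicity is secured, both statements are formal consequences of the general transfer results already proved, so the proof can be kept to a few lines.
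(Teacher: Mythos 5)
Your proposal is correct and follows the paper's own primary route: the paper likewise obtains completeness from the affinic argument established just before the proposition ($M\preccurlyeq M^{\wp}$ is atomless, so PrA $\equiv\mathbb{T}_{\mathrm{af}}$ with $\mathbb{T}=$ APrA CL-complete) and cites Proposition~\ref{comp-to-lin} for quantifier-elimination. The paper only adds an optional direct syntactic argument (normal forms for terms, reduction of atomic formulas to conjunctive summands, explicit computation of $\sup_y\phi$, and $\{0,1\}$ as a prime model), which your reduction renders unnecessary.
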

\begin{proof}
We saw that it is complete. Its quantifier-elimination is a consequence of Proposition \ref{comp-to-lin}.
We can however prove both claims directly.

$L$-terms have the normal form $\vee_i\wedge_j z_{ij}$ where $z_{ij}=x_{ij}$
or $z_{ij}=x'_{ij}$ and $x_{ij}$ is a variable. Atomic formulas may be written in
the form $\mu(t(\x))$ where $t(\x)$ is an $L$-term.
By inclusion-exclusion formula, each $\mu(t(\x))$ can be written as a finite sum of
conjunctive summands, i.e. those of the form $\mu(z_1\wedge\ldots\wedge z_n)$ where $z_i=x_i$ or $x_i'$
and $x_i$ is a variable.
Now, let $\phi(\x,y)$ be a quantifier-free formula.
We show that $\sup_y\phi$ is equal to a quantifier-free formula. Assume
$$\phi(\x,y)=r_1\mu(t_1(\x,y))+\cdots+r_m\mu(t_m(\x,y))$$
where the terms $t_k(\x,y)$ are conjunctive.
Using the equality $\mu(u)=\mu(u\wedge v)+\mu(u\wedge v')$,
we may further assume that each term $t_k(\x,y)$ has the form $z_1\wedge\ldots\wedge z_n\wedge y$
where $z_i=x_i$ or $x_i'$, i.e. $y'$ does not appear while one of $x_i$, $x'_i$ appears.
So, the claim reduces to expressions of the form
$$\phi(\x,y)=\sum_k r_k\cdot\mu(z_1^k\wedge\ldots\wedge z_n^k\wedge y)$$ where,
$z_i^k=x_i$ or $x_i'$, and the terms appearing in distinct summands are `disjoint'.

Now, to obtain the supremum, for the summands with positive coefficient we must have
$y\geq z^k_1\wedge\ldots\wedge z^k_n$ and for summands with negative coefficient we must
have $y\wedge z^k_1\wedge...\wedge z^k_n=0$.
So the supremum is obtained when $y$ is the disjunction of those terms
$z^k_1\wedge\ldots\wedge z^k_n$
appearing positively, and $\sup_y\phi$ is equivalent to the sum of the
corresponding summands. For the second part of the proposition note that
$\{0,1\}$ is a prime model of PrA.
\end{proof}
\bigskip

Let us determine the space of 1-types. Fix $0\leqslant r\leqslant1$.
Then the set $$\{r\leqslant\mu(x),\ \ \mu(x)\leqslant r\}$$
is affinely satisfiable and hence a partial type.
Indeed, by quantifier-elimination, $\mu(x)=r$ determines
a complete type. So, $K_1(\mbox{PrA})$ (with the logic topology) is homeomorphic to $[0,1]$.
As stated above, if $\wp$ is atomless and $M\vDash$ PrA, then $M^{\wp}$ is atomless.
So, models of PrA embed in saturated models of APrA (indeed, affine and full saturation coincide
by Proposition \ref{stability}).
Therefore, by $\aleph_0$-categoricity of APrA, the logic-topology coincides with the metric topology
in $K_n(\mbox{PrA})$.
Note also that, only the extreme points of the type space are realized in the prime model $\{0,1\}$.
This shows that the types realized in a model may in general not be dense.
Let us compute the distance between types in $K_1$.
The collection of events in $[0,1]$ is a model of PrA. By quantifier elimination,
$\mathbb{D}_1(PrA)$ is the real vector space generated by $\mu(x)$ and $1$. Hence,
formulas are of the form $\alpha\mu(x)+\beta$.
Types are determined by their value on $\mu(x)$.
Let $p(\mu(x))=r$ and $q(\mu(x))=s$. For example, for the events
$a=[0,r]$ and $b=[0,s]$ one has that $a\vDash p$ and $b\vDash q$.
Then, $\mathbf{d}(p,q)\leqslant\mu(a\triangle b)=|r-s|$
and since $\mu$ is $1$-Lipschitz, $\mathbf{d}(p,q)\geqslant|p(\mu(x))-q(\mu(x))|=|r-s|$.
Hence $\mathbf{d}(p,q)=|r-s|$.
Also, a formula $\mu(x)+\beta$ has sup-norm $1+\beta$ if
$\beta\geqslant-\frac{1}{2}$ and $|\beta|$ if $\beta\leqslant-\frac{1}{2}$.
So, $$\|p-q\|=\sup_{\phi}\frac{|p(\phi)-q(\phi)|}{\|\phi\|_\infty}=
\sup_{\beta\in\Rn}\frac{|p(\mu(x))-q(\mu(x))|}{\|\mu(x)+\beta\|_\infty}=2|r-s|.$$
So, the three topologies coincide. Finally, by Proposition \ref{stability},

\begin{corollary}
PrA is $\aleph_0$-stable.
\end{corollary}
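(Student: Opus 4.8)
The plan is to obtain the corollary as a direct instance of Proposition \ref{stability} applied to the full continuous theory $\mathbb{T} = \mathrm{APrA}$ of atomless probability algebras, taking $\kappa = \aleph_0$. All the hypotheses of that proposition have already been assembled in the preceding discussion, so the argument is essentially a matter of checking that the pieces fit together rather than of proving anything new.

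First I would recall the three ingredients. Ingredient one: $\mathrm{APrA}$ is CL-complete, admits quantifier-elimination, and is $\aleph_0$-stable in the CL sense; these are the standard facts about atomless probability algebras cited from \cite{BBHU}. Ingredient two: $\mathrm{APrA}$ is affinic, as verified above by observing that for any $M \vDash \mathrm{PrA}$ and any atomless ultracharge $\wp$ on a set $I$, the powermean $M \preccurlyeq M^{\wp}$ is atomless, so every model of $\mathrm{PrA}$ has an elementary extension modelling $\mathrm{APrA}$. Ingredient three: the affine part of $\mathrm{APrA}$ is exactly $\mathrm{PrA}$, that is $\mathbb{T}_{\mathrm{af}} \equiv \mathrm{PrA}$, which follows from the completeness of $\mathrm{PrA}$ established in Proposition \ref{elimination}.

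With these in hand I would simply invoke Proposition \ref{stability}: since $\mathbb{T} = \mathrm{APrA}$ is a CL-complete affinic theory that is $\aleph_0$-stable, its affine part $\mathbb{T}_{\mathrm{af}} = \mathrm{PrA}$ is $\aleph_0$-stable in the AL sense. Unwinding the proof of that proposition, for $A \subseteq M \vDash \mathrm{PrA}$ with $|A| \leqslant \aleph_0$ one embeds $M$ elementarily into an $\aleph_1$-saturated (in the CL sense) model of $\mathrm{APrA}$, so that the restriction map $\mathrm{af}\colon S_n(A) \to K_n(A)$ is surjective and $1$-Lipschitz; a countable metrically dense subset of $S_n(A)$, which exists by CL-$\aleph_0$-stability, then maps onto a countable metrically dense subset of $K_n(A)$.

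There is essentially no serious obstacle here, since the whole force of the result is carried by Proposition \ref{stability} together with the CL-stability of $\mathrm{APrA}$, which is an external input from \cite{BBHU}. The one point deserving a moment's care is the surjectivity of $\mathrm{af}$, i.e. that every affine type over $A$ is actually realised in some CL-saturated extension; this is precisely where affinicity of $\mathrm{APrA}$ enters and is the step one should not take for granted. Alternatively, since $\mathrm{APrA}$ has quantifier-elimination, I could use the second part of Proposition \ref{stability} to conclude that $\mathrm{PrA}$ and $\mathrm{APrA}$ share the same type spaces over parameters, which makes the transfer of $\aleph_0$-stability immediate.
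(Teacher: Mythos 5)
Your proposal is correct and is essentially identical to the paper's argument: the paper derives the corollary precisely by applying Proposition \ref{stability} to $\mathbb{T}=\mathrm{APrA}$, using the facts established just before (that $\mathrm{APrA}$ is CL-complete, $\aleph_0$-stable with quantifier-elimination by \cite{BBHU}, that it is affinic via atomless powermeans, and that $\mathbb{T}_{\mathrm{af}}\equiv\mathrm{PrA}$ by the completeness of $\mathrm{PrA}$). Your recap of the ingredients and of the surjectivity of the restriction map $\mathrm{af}\colon S_n(A)\to K_n(A)$ matches the paper's proof of that proposition, so there is nothing to add.
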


For each $A\subseteq M\vDash$ PrA, let $\bar A$ be the topological closure of
the probability algebra generated by $A$. Then, $\bar A$ is a model of PrA.
We conclude by Proposition \ref{well-dcl} that $\textrm{dcl}(A)=\bar A$.
There is also an easy description of parametrically definable subsets of $M$.
We recall some definitions from \cite{Fremlin}. A bounded functions
$f:M\rightarrow\Rn$ is additive if $f(x\vee y)=f(x)+f(y)$
whenever $x\wedge y=0$. Countable additivity is defined similarly.
$f$ is said to be positive on $a$ if $0\leqslant f(t)$ for each $t\leqslant a$.
It is negative on $a$ if $-f$ is positive on $a$. For each $a$, the function
$\mu(x\wedge a)$ is countably additive. Additive functions form a vector space.
By inclusion-exclusion principle, the formula $\mu(t(\x))$ is equivalent to a finite sum of
formulas of the form $\mu(z_1\wedge\ldots\wedge z_n)$ where $z_i$ is either $x_i$ or $x_i'$.
Therefore, for each quantifier-free formula $\phi(x,\a)$, the function $\phi^M(x)-\phi^M(0)$ is countably additive.
For $a,b\in M$ let $[a,b]=\{x\in M: a\leqslant x\leqslant b\}$.

\begin{proposition} \label{PrA-def}
A closed $D\subseteq M\vDash$ PrA is definable with parameters if and only if $D=[a,b]$ for some $a,b$.
\end{proposition}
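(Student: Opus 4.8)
The plan is to prove both implications, the \emph{if} direction by a direct computation of a distance predicate and the \emph{only if} direction by combining the additivity of definable predicates with the measure-algebra representation of $M$. Throughout $D\subseteq M$, i.e. $n=1$.

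For the \emph{if} direction, assume $a\leqslant b$ (if $a\not\leqslant b$ then $[a,b]=\emptyset$, which is the degenerate interval $[1,0]$ and is handled by the constant predicate). I would first guess the nearest point of $[a,b]$ to a given $x$, namely $y^{\ast}=(x\wedge b)\vee a$, check that $a\leqslant y^{\ast}\leqslant b$, and compute $d(x,y^{\ast})=\mu(x\wedge b')+\mu(a\wedge x')$ using $x\wedge(y^{\ast})'=x\wedge b'$ and $x'\wedge y^{\ast}=a\wedge x'$ (the identity $b'\wedge a'=b'$ coming from $a\leqslant b$). Then for an arbitrary $y\in[a,b]$, from $y\leqslant b$ one gets $x\wedge y'\geqslant x\wedge b'$ and from $y\geqslant a$ one gets $y\wedge x'\geqslant a\wedge x'$, so by monotonicity and additivity of $\mu$ we obtain $d(x,y)=\mu(x\wedge y')+\mu(y\wedge x')\geqslant \mu(x\wedge b')+\mu(a\wedge x')$. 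Hence $d(x,[a,b])=\mu(x\wedge b')+\mu(a\wedge x')$, a sum of two atomic formulas with parameters $a,b$, so $[a,b]$ is a definable set.

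For the \emph{only if} direction, let $D$ be nonempty, closed and definable and put $P(x)=d(x,D)$; then $P$ is a definable predicate and $D=Z(P)$ since $D$ is closed. The first key step is to show that $f(x):=P(x)-P(0)$ is an order-continuous additive function vanishing at $0$. As $P$ is definable it is a uniform limit of formulas; by quantifier-elimination for PrA (Proposition \ref{elimination}) these may be taken quantifier-free, and the paper's observation that $\phi^{M}(x)-\phi^{M}(0)$ is countably additive for quantifier-free $\phi$ shows each approximant is countably additive. Additivity and order-continuity then pass to the uniform limit $f$ by a routine $3\epsilon$-argument.

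The second key step identifies $D=Z(P)$ with the set of minimizers of $f$. Using the representation theorem for probability algebras, I would represent $M$ as the measure algebra of a probability space $(\Omega,\Sigma,\mu)$; since $f$ is order-continuous, additive, bounded, and vanishes on the unique null element, it defines a countably additive signed measure absolutely continuous with respect to $\mu$, so by Radon--Nikodym $f(x)=\int_{x}h\,d\mu$ for some $h\in L^{1}(\mu)$. Now $P(x)=P(0)+\int_{x}h\,d\mu\geqslant 0$ with equality somewhere on $D$ forces $P(0)=\int_{\{h<0\}}|h|\,d\mu=-\min_{x}f(x)$, and $x\in D$ iff $\int_{x}h\,d\mu$ is minimal, i.e. iff $\{h<0\}\subseteq x\subseteq\{h\leqslant 0\}$ modulo null sets. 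Writing $a=[\{h<0\}]$ and $b=[\{h\leqslant 0\}]$ in $M$, this says exactly $D=[a,b]$. The main obstacle is precisely this passage from a "definable predicate" to an "order-continuous additive set function with an $L^{1}$ density": justifying countable additivity of $f$ (which is why quantifier-elimination is essential, to replace the possibly quantified approximants by quantifier-free ones) and order-continuity (not merely finite additivity, which is what licenses the Radon--Nikodym representation and the clean description of the argmin as an order interval); everything else is bookkeeping.
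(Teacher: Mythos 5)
Your proof is correct, and it shares the paper's skeleton: the \emph{if} direction is the same computation (the paper also notes the minimizer $a\vee(b\wedge x)$ and the formula $\mu(x\wedge b')+\mu(a\wedge x')$), and in the \emph{only if} direction both arguments reduce to the additive functional $f(x)=P(x)-P(0)$, prove it countably additive using quantifier-free approximants supplied by quantifier-elimination (Proposition \ref{elimination}), and identify $D$ as the order interval cut out by a sign decomposition of $f$. The divergence is in how that decomposition is produced, and for which models. The paper first treats $\aleph_0$-saturated $M$, applying the Hahn decomposition theorem for measure algebras (\cite{Fremlin} 326I) and extracting the endpoints from maximality via order completeness; it then transfers to arbitrary $M$ by passing to an $\aleph_0$-saturated elementary extension, invoking Proposition \ref{dfnconditions} to see that the extended predicate is again a distance predicate, and cutting the resulting interval back down using Dedekind completeness. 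You work in an arbitrary $M$ at once: represent $M$ as the measure algebra of a probability space (the same representation theorem the paper invokes in Proposition \ref{upward}), apply Radon--Nikodym to get $f(x)=\int_x h\,d\mu$, and exhibit the endpoints explicitly as $a=[\{h<0\}]$ and $b=[\{h\leqslant 0\}]$. Your route shows the saturation detour is dispensable --- $D=Z(d(\cdot,D))$ holds in every model, and the decomposition step needs only Dedekind completeness, which every model of PrA has --- and it produces concrete endpoints; its price is the external representation plus Radon--Nikodym, whereas the paper stays inside the abstract algebra with the lighter Hahn decomposition (of which Radon--Nikodym is a quantitative refinement). Your two auxiliary claims --- countable additivity of $\phi^M(x)-\phi^M(0)$ for quantifier-free $\phi$, and its preservation under uniform limits --- are both sound (the paper reaches the same conclusion from finite additivity plus metric continuity via \cite{Fremlin} 327B), so there is no gap.
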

\begin{proof} If $a\leqslant b$, then $d(x,[a,b])=\mu(x\wedge b')+\mu(a\wedge x')$
(the minimum distance is obtained at $a\vee(b\wedge  x)$).
So, $[a,b]$ is definable for every $a,b$. Conversely assume $D\subseteq M$
is nonempty and definable.
First assume $M$ is $\aleph_0$-saturated. Suppose that $D$ is the maximum-set
(points at which $P$ takes its maximum) of a definable function $P:M\rightarrow\Rn$.
Let $\phi_k^M\rightarrow P$ uniformly.
Then $\phi_k^M(x)-\phi_k^M(0)$ tends to $f(x)=P(x)-P(0)$ and hence $f(x)$ is finitely additive.
In fact, since $f$ is continuous, it is countably additive (see \cite{Fremlin} 327B).
$D$ is the maximum-set of $f$ too. We must determine $D$.
By the Hahn decomposition theorem (\cite{Fremlin} 326I), there exists $a$
such that $f$ is positive on $a$ and negative on $a'$.
By completeness of $M$, we may further assume that $a$ is maximal with this property.
Also, there is a maximal $b$ such that $f$ is negative on $b$ and positive on $b'$.
So, $b'\leqslant a$ and $f(t)=0$ for every $t\leqslant a\wedge b$.
Moreover, by maximality of $a$ and $b$,\ \ $f(t\wedge a')<0$ whenever $t\wedge a'>0$ and
$f(t\wedge b')>0$ whenever $t\wedge b'>0$. Now, by additivity of $f$, for each $t$
$$f(t)=f(t\wedge a')+f(t\wedge a\wedge b)+f(t\wedge b')=f(t\wedge a')+f(t\wedge b').$$
We conclude that, $f$ takes its maximum value at $t$ if and only if
$b'\leqslant t\leqslant a$.

Now, assume $M$ is arbitrary. Let $M\preccurlyeq N$ be $\aleph_0$-saturated
and $Q(x)$ be the definable extension of $d(x,D)$ to $N$.
By Proposition \ref{dfnconditions}, $Q(x)=d(x,\bar D)$ where $\bar D=Z(Q)$.
Let $\bar D=[a,b]$ where $a,b\in N$. We show that $M\cap[a,b]$ is an interval in $M$. Since $M$ is Dedekind complete,
$a_1=\inf\{t\in M:\ a\leqslant t\}$ and $b_1=\sup\{t\in M:\ t\leqslant b\}$ belong to $M$.
Clearly, then $D=M\cap[a,b]=[a_1,b_1]$.
\end{proof}

Finally, since $\{0,1\}$ is a first order model of PrA, $K_n(PrA)$ is a simplex. 
In fact, it is the standard $(2^n-1)$-simplex.
\vspace{4mm}

\noindent{\bf Probability algebras with an automorphism}\\
Let $L$ be the language of probability algebras augmented with a unary function
symbol $\tau$. Then the theory of atomless probability algebras equipped with an
aperiodic automorphism, $T=$ APrAA, is a complete stable CL-theory with quantifier-elimination \cite{Aw}.
It turns out that, thanks to the automorphism, $T$ is completely stated
by affine axioms. The axioms stating that $\tau$ is a measure preserving
(hence $1$-Lipschitz) automorphism are obviously affine.
Aperiodicity is also expressed by the Rokhlin property where for probability spaces
states that for every $n\geqslant1$ and $\epsilon>0$, there is a set $X$ such that
$X,\tau(X),...,\tau^{n-1}(X)$ are disjoint and $\mu(\cup_{i<n}\tau^i(X))>1-\epsilon$.

\begin{lemma} Assume $\mu(x)=\frac{1}{n}+\epsilon$ where $\epsilon>0$. Then for each $n\geqslant2$
$$\alpha(x)=\mu(x)-\sum_{0\leqslant i<j<n}\mu(\tau^i(x)\wedge\tau^j(x))
\leqslant\frac{1}{n}-\epsilon.$$
\end{lemma}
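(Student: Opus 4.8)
The plan is to reduce the statement to the second Bonferroni inequality applied to the iterates $\tau^0(x),\dots,\tau^{n-1}(x)$. Since $\tau$ is a measure-preserving automorphism, each iterate has the same measure, $\mu(\tau^i(x))=\mu(x)=\frac1n+\epsilon$. The key observation is that the hypothesis forces the sets $\tau^i(x)$ to overlap substantially: their measures sum to $n(\frac1n+\epsilon)=1+n\epsilon$, which exceeds the total measure $1$ of the algebra, so the excess $n\epsilon$ must be absorbed by the pairwise meets.

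First I would establish the lower Bonferroni bound
$$\mu\Big(\bigvee_{i=0}^{n-1} B_i\Big)\ \geqslant\ \sum_{i=0}^{n-1}\mu(B_i)-\sum_{0\leqslant i<j<n}\mu(B_i\wedge B_j)$$
for arbitrary elements $B_0,\dots,B_{n-1}$ of a probability algebra, purely from the PrA axioms. This follows by induction on $n$: the modularity axiom $\mu(u\wedge v)+\mu(u\vee v)=\mu(u)+\mu(v)$ handles the base cases, and together with the union bound $\mu\big(\bigvee_{i} C_i\big)\leqslant\sum_i\mu(C_i)$ (itself an easy induction from modularity and monotonicity), applied to the meets of the newly adjoined element with the earlier ones, it gives the inductive step.

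Then I would specialize $B_i=\tau^i(x)$. Using $\mu\big(\bigvee_i B_i\big)\leqslant 1$ and $\sum_i\mu(B_i)=1+n\epsilon$, the Bonferroni bound rearranges to
$$\sum_{0\leqslant i<j<n}\mu(\tau^i(x)\wedge\tau^j(x))\ \geqslant\ n\epsilon.$$
Substituting into the definition of $\alpha$ and using $\mu(x)=\frac1n+\epsilon$ yields
$$\alpha(x)\ \leqslant\ \frac1n+\epsilon-n\epsilon\ =\ \frac1n-(n-1)\epsilon\ \leqslant\ \frac1n-\epsilon,$$
the last inequality holding because $n\geqslant 2$ and $\epsilon>0$.

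The only nonroutine point is verifying the Bonferroni inequality abstractly rather than pointwise, since an arbitrary model of PrA has no underlying points to run the usual counting argument ``$k-\binom{k}{2}\leqslant 1$''. The induction above circumvents this, relying solely on finite additivity and monotonicity of $\mu$, both available in every probability algebra; everything else is the elementary arithmetic displayed above.
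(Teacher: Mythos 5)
Your proof is correct and follows essentially the same route as the paper's: both reduce the claim to the Bonferroni (inclusion--exclusion) lower bound $\mu\big(\bigvee_i \tau^i(x)\big)\geqslant\sum_i\mu(\tau^i(x))-\sum_{i<j}\mu(\tau^i(x)\wedge\tau^j(x))$, combine it with $\mu\big(\bigvee_i\tau^i(x)\big)\leqslant1$ and measure preservation to get $\sum_{i<j}\mu(\tau^i(x)\wedge\tau^j(x))\geqslant n\epsilon$, and finish with the same arithmetic using $n\geqslant2$. The only difference is that you verify the Bonferroni inequality abstractly from the PrA axioms, a step the paper simply cites as ``the inclusion-exclusion principle.''
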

\begin{proof} By the inclusion-exclusion principle
$$\sum_{i<j<n}\mu(\tau^i(x)\cap\tau^j(x))\geqslant\sum_{i<n}\mu(\tau^i(x))-
\mu(x\cup\ldots\cup\tau^{n-1}(x))\geqslant n(\frac{1}{n}+\epsilon)-1=n\epsilon.$$
So,
$$\alpha(x)\leqslant\frac{1}{n}+\epsilon-n\epsilon\leqslant\frac{1}{n}-\epsilon.$$
\end{proof}

So, for any $x$, if $|\mu(x)-\frac{1}{n}|\geqslant\epsilon$ then $\alpha(x)\leqslant\frac{1}{n}-\epsilon$.
For probability algebras, Rokhlin's property is expressed by the family of affine conditions
$$\sup_x\big[\mu(x)-\sum_{0\leqslant i<j<n}\mu(\tau^i(x)\wedge\tau^j(x))\big]
=\frac{1}{n} \ \ \ \ \ \ \ \ \ \ n\geqslant1.$$
These conditions state that for each $n$  and $\epsilon>0$ there exists
$x$ such that $|\mu(x)-\frac{1}{n}|<\epsilon$ and
$\mu(\tau^i(x)\wedge\tau^j(x))<\epsilon$ whenever $0\leqslant i<j<n$.
It is clear that these properties imply atomlessness.
So, $T$ is an AL-theory which is CL-complete.
Since $T$ has quantifier-elimination in CL, by Proposition \ref{comp-to-lin}
it has quantifier-elimination in AL as well.
Stability of $T$ is a result of Proposition \ref{stability}.

\begin{proposition}
$T$ is a complete stable theory with quantifier-elimination.
\end{proposition}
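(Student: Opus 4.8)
The final statement to prove is that $T = \mathrm{APrAA}$ is a complete stable theory with quantifier-elimination. The plan is to assemble this from machinery already developed in the excerpt rather than prove anything from scratch, since the surrounding text has deliberately arranged matters so that the affine theory inherits its good behavior from the corresponding CL-theory $\mathbb{T} = \mathrm{APrAA}$.

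First I would establish that $T$ is CL-complete and is precisely the affine part of the CL-theory $\mathbb{T}$, i.e. $T = \mathbb{T}_{\mathrm{af}}$, so that $\mathbb{T}$ is affinic. This is exactly the content assembled in the paragraphs immediately preceding the statement: the Rokhlin conditions are written as affine conditions $\sup_x[\mu(x)-\sum_{0\leqslant i<j<n}\mu(\tau^i(x)\wedge\tau^j(x))]=\frac{1}{n}$, and the accompanying lemma shows these affine conditions force the approximate Rokhlin towers, hence atomlessness and aperiodicity. Since the measure-preserving automorphism axioms are manifestly affine as well, $T$ is axiomatized by affine conditions, and one argues as for PrA (via passing to an atomless powermean $M^\wp$) that any model of $\mathbb{T}_{\mathrm{af}}$ sits inside a model of $\mathbb{T}$, giving both CL-completeness of $T$ and the identity $T = \mathbb{T}_{\mathrm{af}}$.

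With $\mathbb{T}$ recognized as an affinic theory whose affine part is $T$, completeness, quantifier-elimination, and stability all follow by citing the transfer results. Quantifier-elimination for $T$ follows from Proposition \ref{comp-to-lin}(i), using the known fact (from \cite{Aw}) that $\mathbb{T} = \mathrm{APrAA}$ has quantifier-elimination in CL. Completeness is immediate from CL-completeness of $T$ together with the observation that a CL-complete affine theory is in particular AL-complete (any two models are CL-equivalent, hence AL-equivalent). Stability transfers through Proposition \ref{stability}: since $\mathbb{T}$ is a stable CL-theory, it is $\kappa$-stable for some $\kappa$, and that proposition yields that $\mathbb{T}_{\mathrm{af}} = T$ is $\kappa$-stable in the AL sense; moreover the quantifier-elimination hypothesis of that proposition gives that $T$ and $\mathbb{T}$ share the same type spaces, reinforcing the stability count.

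The step I expect to require the most care is verifying the affinic property cleanly, namely that the affine axiomatization really does capture aperiodic atomless probability algebras and that every model of $\mathbb{T}_{\mathrm{af}}$ embeds elementarily into a model of $\mathbb{T}$. The subtlety is that the affine Rokhlin conditions only assert approximate tower decompositions (suprema equal to $\frac1n$, achieved in the limit), so one must confirm that these approximate statements, once combined with the preceding lemma, are genuinely equivalent to exact aperiodicity in the saturated or powermean completion; the passage to $M^\wp$ for an atomless ultracharge $\wp$ is the mechanism that upgrades approximate towers to exact ones, exactly as atomlessness of PrA was forced. Once this identification $T = \mathbb{T}_{\mathrm{af}}$ with $\mathbb{T}$ affinic is secured, the three claimed properties are a direct application of the transfer propositions and require no further computation.
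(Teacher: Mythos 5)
Your overall assembly coincides with the paper's: write the measure-preserving-automorphism axioms and the Rokhlin conditions as affine conditions, observe that this makes $\mathbb{T}=\mathrm{APrAA}$ an affine (hence affinic) theory that is CL-complete, and then read off quantifier-elimination from Proposition \ref{comp-to-lin}(i), stability from Proposition \ref{stability}, and affine completeness from CL-completeness. That is exactly the paper's proof.

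However, the step you single out as the crux — securing $T=\mathbb{T}_{\mathrm{af}}$ and the affinic property — is justified by a mechanism that is both unnecessary and unsound. You propose to ``upgrade approximate towers to exact aperiodicity'' by passing to a powermean $M^{\wp}$ over an atomless ultracharge, ``exactly as atomlessness of PrA was forced.'' No upgrading is needed: in the metric-algebra setting, aperiodicity simply \emph{is} the approximate Rokhlin tower property (this is how APrAA is axiomatized in \cite{Aw}), and the paper's lemma shows the affine conditions $\sup_x\big[\mu(x)-\sum_{0\leqslant i<j<n}\mu(\tau^i(x)\wedge\tau^j(x))\big]=\frac{1}{n}$ express precisely this and moreover imply atomlessness. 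Since these conditions belong to $\mathbb{T}_{\mathrm{af}}$, every model of $\mathbb{T}_{\mathrm{af}}$ is outright a model of $\mathbb{T}$, so affinicity holds trivially (``every affine theory is affinic''); no elementary extension is required. More importantly, the powermean trick would not deliver aperiodicity even if it were needed: it works for $\mathrm{PrA}\to\mathrm{APrA}$ because a powermean over an atomless charge splits atoms, but the induced automorphism on $M^{\wp}$ acts coordinatewise, so for instance the identity automorphism of a probability algebra induces the identity on $M^{\wp}$, which is maximally periodic. Powermeans create atomlessness, never aperiodicity. Your proof survives only because the fact you need is already established by your other citation (the lemma plus the discussion following it); the powermean fallback should be deleted, as relying on it would be an actual error.
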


Also, by Proposition \ref{stability}, $K_n(T)$ is isomorphic to $S_n(T)$. Note that $T^{\textrm{ex}}$ does not exist.
\bigskip

\noindent{\bf Vector spaces over a finite field}\\
Let $\mathbb F$ be a finite field with $|\mathbb F|=q$ and
$L=\{+,0, \alpha\cdot\}_{\alpha\in\mathbb F}$. Let $T$ be the following theory where $|x|=d(x,0)$:

- Axioms of vector spaces over $\mathbb F$

- $d(x,y)=|x-y|$

- $|\alpha x|=|x|$ \ \ \ for $\alpha\neq0$


- $\sup_{x_1\cdots x_q}|x_i-x_j|=\binom{q}{2}$.

- $\sup_{x_1\cdots x_{q+1}}|x_i-x_j|\leqslant\binom{q+1}{2}-1$.
\bigskip

$\mathbb F$ with the trivial metric is the unique first order model of $T$.
Non-trivial continuous models can be obtained by powermeans $\mathbb F^\mu$ where
$\mu$ is any maximal probability measure on a set say $[0,1]$.
The theory $T$ is complete. We show that it has elimination of quantifiers.

For $\a,\b\in\mathbb F^n$ let $\a\cdot\b=\sum_{i=1}^n a_ib_i$.
Every quantifier-free $L$-formula $\phi(\x)$ is a linear combination
of formulas of the form $|\a\cdot\x|$, i.e.
$$\ \ \ \ \ \phi(\x)=r+\sum r_\ell\ |\a_\ell\cdot\x|,
\ \ \ \ \ \ \ \ \a_\ell\in\mathbb F^n, \ \ r, r_\ell\in\Rn.$$

\begin{proposition} \label{qe}
$T$ has elimination of quantifiers.
\end{proposition}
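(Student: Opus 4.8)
The plan is to reduce to the elimination of a single quantifier and then evaluate the resulting supremum explicitly. By the criterion recorded just before Corollary \ref{quant-el}, $T$ has quantifier-elimination as soon as every formula $\sup_y\phi(\x,y)$ with $\phi$ quantifier-free is $T$-approximated by quantifier-free formulas (the $\inf$ case follows by negation). First I would put $\phi$ in the normal form noted above, $\phi(\x,y)=r+\sum_\ell r_\ell\,|\a_\ell\cdot\x+b_\ell y|$ with $\a_\ell\in\mathbb{F}^n$, $b_\ell\in\mathbb{F}$, and collect the terms with $b_\ell=0$ into a quantifier-free $\psi(\x)$. For the terms with $b_\ell\neq0$ the axiom $|\alpha z|=|z|$ ($\alpha\neq0$) gives $|\a_\ell\cdot\x+b_\ell y|=|b_\ell^{-1}\a_\ell\cdot\x+y|$, so after regrouping I may assume $\phi(\x,y)=\psi(\x)+\sum_\ell r_\ell\,|\c_\ell\cdot\x+y|$, the coefficient of $y$ normalized to $1$.

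The supremum I would compute in a sufficiently saturated model realized as a powermean $\mathbb{F}^\mu$, so that an element $z$ satisfies $|z|=\mu\{i:z_i\neq0\}$ and the range of $y\mapsto(|\c_\ell\cdot\x+y|)_\ell$ is compact and convex by Proposition \ref{connected range}; this guarantees the supremum is attained and may be optimized coordinatewise. Writing $|\c_\ell\cdot\x+y|=1-\mu\{i:y_i=-(\c_\ell\cdot\x)_i\}$ and setting $R=\sum_\ell r_\ell$, the optimal choice of $y_i\in\mathbb{F}$ at coordinate $i$ contributes $R-\min_{e\in\mathbb{F}}S_e(i)$, where $S_e(i)=\sum_{\ell:(\c_\ell\cdot\x)_i=-e}r_\ell$ and a field value not occurring among the $(\c_\ell\cdot\x)_i$ contributes the empty sum $0$. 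Hence $\sup_y\phi(\x,y)=\psi(\x)+R-\int\min_{e\in\mathbb{F}}S_e\,d\mu$, and the integrand depends on the coordinate $i$ only through the pattern of coincidences among the values $(\c_\ell\cdot\x)_i$, that is, through which of the forms $(\c_\ell-\c_k)\cdot\x$ vanish at $i$.

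The heart of the argument, and the step I expect to be the main obstacle, is to show that this integral depends only on the quantifier-free type of $\x$, i.e. on the numbers $|\d\cdot\x|$ for $\d\in\mathbb{F}^n$. Each coincidence-pattern cell is exactly the set where $\x$ lies in a prescribed subspace, punctured by the smaller such subspaces, so $\int\min_{e}S_e\,d\mu$ is a fixed real linear combination of the $\mu$-masses of subspaces through the origin. I would then prove that every such mass is recovered $\Rn$-linearly from the hyperplane-complement probabilities $|\d\cdot\x|=1-\mu\{i:\d\cdot\x=0\}$: this is a finite incidence computation in $\mathbb{F}^n$, invertible because the point--hyperplane incidence matrix of $PG(n-1,\mathbb{F})$ is nonsingular over $\Rn$, so the hyperplane-masses determine all line-masses and hence, by summation, the mass of every subspace. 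For $q=2$ this is the elementary identity $\mu\{x_1\neq x_2,\,x_2\neq x_3\}=\tfrac12\big(|x_1-x_2|+|x_2-x_3|-|x_1-x_3|\big)$ read through the relation $\mathbf 1[x_1\neq x_3]=\mathbf 1[x_1\neq x_2]+\mathbf 1[x_2\neq x_3]-2\,\mathbf 1[x_1\neq x_2,\,x_2\neq x_3]$.

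Granting this lemma, $\sup_y\phi$ is a function of the quantifier-free type of $\x$ alone, so by the equivalence in Proposition \ref{quant-el-loc} it is $T$-approximated by quantifier-free formulas, and quantifier-elimination follows. As a cross-check, and a possible shortcut, one could instead invoke Proposition \ref{comp-to-lin}(i): the $\aleph_0$-saturated models of $T$ are models of a CL-companion (a metric structure of $\mathbb{F}$-valued random variables) whose quantifier-elimination would transfer to $\mathbb{T}_{\mathrm{af}}=T$. The self-contained computation above is nonetheless preferable, since it exhibits the eliminating quantifier-free formula explicitly as $\psi(\x)+R-\int\min_e S_e\,d\mu$.
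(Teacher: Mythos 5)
Your proposal is correct in substance, but it takes a genuinely different route from the paper's, even though both hinge on exactly the same crucial ingredient: the nonsingularity over $\Rn$ of the point--hyperplane incidence matrix of $\mathbb{F}^n$ (Kantor's theorem). The paper never computes the supremum. It posits the ansatz $\sup_y\sum_\ell r_\ell\,|\a_\ell\cdot\x-y|=s_0+\sum_{\ell=1}^m s_\ell\,(1-|\b_\ell\cdot\x|)$, where $\b_1,\dots,\b_m$ represent the $1$-dimensional subspaces, notes that it suffices to verify the identity in the unique first-order model $\mathbb{F}$, and --- since both sides are invariant under $\x\mapsto\lambda\x$ for $\lambda\neq0$ --- only at the $m+1$ points $0,\b_1,\dots,\b_m$; this yields a square linear system for $s_0,\dots,s_m$ whose matrix is the incidence matrix bordered by a column of ones, and Kantor gives solvability. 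You instead evaluate $\sup_y\phi$ explicitly in a saturated powermean $\mathbb{F}^\mu$ (such a model exists by Theorem \ref{saturated}), reduce the integrand to coincidence patterns of the values $(\c_\ell\cdot\x)_i$, and reconstruct a quantifier-free eliminant by inclusion--exclusion and inversion of the same incidence system. The paper's route is shorter and purely finite-combinatorial; yours is longer but produces the eliminating formula explicitly and makes visible why the incidence matrix enters.

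Two steps in your write-up need to be made explicit. First, the hyperplane masses $\nu(H_{\d})=1-|\d\cdot\x|$ alone do \emph{not} determine the line masses: with $z=\nu(\{0\})$ and $t_j=\nu(L_j\setminus\{0\})$ you have only the $m$ equations $\nu(H_k)=z+\sum_{j:L_j\subseteq H_k}t_j$ for $m+1$ unknowns, and you must adjoin the normalization $z+\sum_j t_j=1$; the bordered system is then invertible because $U$ is invertible and $\mathbf{1}^TU^{-1}\mathbf{1}=\frac{q^n-1}{q^{n-1}-1}\neq1$. (This normalization plays precisely the role of the constant $s_0$ and the column of ones in the paper's matrix; since quantifier-free formulas admit constants, the fix costs nothing.) Second, your appeal to Proposition \ref{quant-el-loc} is compressed: you compute $\sup_y\phi$ only in powermeans of $\mathbb{F}$, so to verify clause (ii) for arbitrary $M,N\vDash T$ you should add that $\sup_y\phi^M(\a,y)=p(\sup_y\phi)$ where $p=tp^M(\a)$, that every $p\in K_n(T)$ is realized in the fixed saturated powermean, and hence that the dependence on the quantifier-free type alone holds in every model. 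Alternatively, since your eliminant $\theta$ and $\sup_y\phi$ are both affine and agree on the compact model $\mathbb{F}$, they agree on $E_n(T)$ by Corollary \ref{compact-realization1} and hence on all of $K_n(T)$ by Krein--Milman; this is also the implicit justification behind the paper's assertion that it suffices to check the equivalence in $\mathbb{F}$, so both proofs share this bridge.
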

\begin{proof}
We show that every formula is equivalent to a quantifier-free formula.
It is sufficient that this equivalence occur in $\mathbb F$.
Regard $\mathbb F^n$ as a vector space over $\mathbb F$ and let $\b_1,...,\b_m\in\mathbb F^n$
be a maximal list of pairwise linearly independent elements of $\mathbb F^n$.
Clearly, $m=\frac{q^n-1}{q-1}$. This is the number of
$1$-dimensional subspaces of $\mathbb F^n$ which is also equal
to the number of $(n-1)$-dimensional subspaces of $\mathbb F^n$.
Let us assume
$$\sup_y\sum r_\ell\ |\a_\ell\cdot\x-y|=s_0+\sum_{\ell=1}^m s_\ell\ (1-|\b_\ell\cdot\x|)$$
and find the coefficients $s_\ell$ so that the condition holds for every $\x\in\mathbb F^n$.
Clearly, it is sufficient that the condition hold for $\x=0,\b_1,...,\b_m$.
Putting these tuples in the condition, we obtain $m+1$ linear equations with
indeterminates $s_0,s_1,...,s_{m}$. We must show that this system of equations has a solution.
For this purpose, we have only to show that the $(m+1)\times(m+1)$ matrix with entries:
\[ A_{\ell k}= \left\{
  \begin{array}{ll}
    1 \ \ \ & \hbox{if}\ k=1 \\
    1-|\b_\ell\cdot\b_k| \ \ \ \ \ \ & \hbox{if}\ k\neq 1
  \end{array}
\right.\]
is invertible. In fact, we only need to show that the $m\times m$ matrix
$U=[u_{\ell k}]$ where $u_{\ell k}=1-|\b_\ell\cdot\b_k|$ is invertible.
This is the matrix of incidence between $1$-dimensional and $(n-1)$-dimensional
subspaces of $\mathbb F^n$. By a result of W. M. Kantor \cite{W.Kantor}, $U$ has rank $m$.
\end{proof}
\bigskip

\noindent{\bf Affine theories of first order structures}\\
It is natural to ask which first order theories have quantifier-free (or model complete) affine parts.
Here we use a fact proved in \cite{Ibarlucia} that if $T$ has a first order model then
$T^{\textrm{ex}}$ exists and equals to the first order theory of any its first order models.
Also, that $M\equiv_{\textrm{AL}}N$ implies $M\equiv_{\textrm{CL}}N$ for any first order $M,N$.

\begin{proposition}
A theory $T$ has quantifier-elimination if and only if every type is determined by quantifier-free formulas,
i.e. if $p(\phi)=q(\phi)$ for every quantifier-free (or equivalently atomic) $\phi$, then $p=q$.
Similarly, $T$ is model-complete if and only if types are determined by infimal formulas.
\end{proposition}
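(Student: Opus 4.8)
The plan is to deduce both equivalences from a single duality principle: for a linear subspace $V\subseteq\mathbb{D}_n(T)$ that contains the constants, $V$ is $\|\cdot\|_T$-dense in $\mathbb{D}_n(T)$ if and only if $V$ separates the points of $K_n(T)$, i.e. $p|_V=q|_V$ forces $p=q$. To prove this principle I would use the map $\phi\mapsto\hat\phi$ into $\mathbf{A}(K_n(T))$, where $\hat\phi(p)=p(\phi)$; this is isometric since $\|\phi\|_T=\sup_{p\in K_n(T)}|\hat\phi(p)|$ (the remark following Proposition \ref{Lipschitz}), and its range is dense in $\mathbf{A}(K_n(T))$ by Proposition \ref{Lipschitz}. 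Hence $V$ is dense in $\mathbb{D}_n(T)$ exactly when $\{\hat\theta:\theta\in V\}$ is dense in $\mathbf{A}(K_n(T))$, and the latter reduces to the affine Stone--Weierstrass statement already used in Proposition \ref{Lipschitz}: a linear subspace of $\mathbf{A}(K)$ containing the constants and separating points is uniformly dense. (For a self-contained argument one checks that any signed measure annihilating such a subspace has positive and negative parts of equal mass, hence a common barycenter, so it annihilates all of $\mathbf{A}(K)$, and Hahn--Banach yields density.)

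For the first assertion I would take $V$ to be the subspace of quantifier-free formulas. Since these are by definition the linear combinations $\sum r_i\phi_i$ of atomic formulas and $1$ is itself atomic, $V$ is a subspace containing the constants, and it has the same separating behaviour as the atomic formulas alone. By definition $T$ has quantifier-elimination precisely when $V$ is dense in $\mathbb{D}_n(T)$ for every $n$, and the duality principle identifies this with $V$ separating $K_n(T)$, which is exactly the requirement that every type be determined by the quantifier-free (equivalently atomic) formulas.

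For the second assertion I would set $V=\mathcal E$ equal to the linear span of the infimal formulas. Because $-\inf_{\bar y}\psi=\sup_{\bar y}(-\psi)$ and $-\psi$ is quantifier-free whenever $\psi$ is, $\mathcal E$ is also the span of the supremal formulas and contains the constants. As each type is linear, $p$ and $q$ agree on every infimal formula if and only if they agree on all of $\mathcal E$; thus ``types are determined by infimal formulas'' is exactly ``$\mathcal E$ separates $K_n(T)$'', which by the duality principle is ``$\mathcal E$ is dense''. It then remains to match this density with model-completeness, for which I would invoke the model-completeness theorem above, whose clause (iv) states that $T$ is model-complete iff every formula is $T$-approximated by supremal formulas: model-completeness plainly gives density of $\mathcal E$, and for the converse I would feed the density of $\mathcal E$ into the convex-set argument of that theorem, using the infimal/supremal symmetry to produce a supremal lower approximant of each infimal formula.

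The delicate point — and the step I expect to be the main obstacle — is precisely this last reconciliation. The duality principle only delivers density of the linear span $\mathcal E$, i.e. approximation by linear combinations of supremal formulas, whereas the model-completeness theorem is phrased through the order-sensitive approximation of a formula by supremal formulas on the nose, and supremal formulas increase while infimal formulas decrease along an inclusion $M\subseteq N$. Bridging this gap requires showing that, once $\mathcal E$ is dense, the convex set $\Gamma=\{\theta\ \text{supremal}:T\vDash\theta\le\phi\}$ attached to an infimal $\phi$ has supremum $T$-equal to $\phi$; this is where I would have to combine the separation argument with clause (ii) of that theorem rather than rely on span-density alone. The quantifier-elimination half, by contrast, is immediate from the duality principle, because there the quantifier-free formulas already constitute the relevant subspace.
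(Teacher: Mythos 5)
Your treatment of the quantifier-elimination half is correct and is exactly the paper's argument: the paper takes $B=\{\hat\phi:\phi\ \mbox{quantifier-free}\}$, observes it is a linear subspace of $\mathbf{A}(K_n(T))$ containing the constants, and invokes the fact that a point-separating subspace of $\mathbf{A}(K)$ containing the constants is dense; your ``duality principle'' is this same statement packaged once and for all, and your Hahn--Banach justification is the standard proof of that fact.

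The second assertion is where your proposal is incomplete, and the obstruction is the one you flag yourself: determination of types by infimal formulas is, by linearity of types, separation of $K_n(T)$ by the linear span $\mathcal{E}$ of the infimal formulas, and the duality principle converts this into density of $\mathcal{E}$; but clause (iv) of the model-completeness theorem demands approximation of every formula by supremal formulas \emph{themselves}, and these form only a convex cone, not a subspace. A typical element of $\mathcal{E}$ is $\theta_1+\theta_2$ with $\theta_1$ infimal and $\theta_2$ supremal, and such approximants give no comparison between $\phi^M(\a)$ and $\phi^N(\a)$ along an inclusion $M\subseteq N\vDash T$, since $\theta_1$ decreases and $\theta_2$ increases, so the two errors have opposite signs and cancel. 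The gap cannot be closed by convexity alone: in $\mathbf{A}([0,1])$ the cone of increasing affine functions $\{at+b:\ a\geqslant0\}$ has dense span (its span is everything) yet is closed and proper, so ``dense span'' does not imply ``dense cone''; any correct proof must therefore use model-theoretic properties of the infimal cone, not merely its convexity. Your proposed bridge---rerunning the convex-set argument for $\Gamma=\{\theta\ \mbox{supremal}:\ T\vDash\theta\leqslant\phi\}$---does not obviously work either: that argument terminates by producing $M\subseteq N\vDash T$ with $\phi^N(\d)<\phi^M(\d)$ for the fixed infimal $\phi$, which contradicts clause (ii) but does not contradict determination, because $tp^M(\d)$ and $tp^N(\d)$ then legitimately differ on an infimal formula, namely $\phi$ itself. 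So as written, your proposal establishes the first assertion and only the easy direction (model-completeness implies determination) of the second. For what it is worth, the paper's own proof of the second assertion is the single sentence ``note that $T$ is model-complete if and only if every formula is approximated by infimal formulas,'' i.e.\ it applies the subspace argument to the infimal cone and silently passes over exactly the point you identified; your diagnosis thus locates a step the paper also leaves unjustified, but flagging the gap is not the same as closing it.
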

\begin{proof}
Assume for every distinct $p,q\in K_n(T)$ there exists a quantifier-free $\phi$ such that
$p(\phi)\neq q(\phi)$.
Let $B$ be the vector space of functions $\hat\phi$ where $\phi(x)$ is quantifier-free.
Then $B$ is a subspace of $\mathbf{A}(K_n(T))$ which contains the constant functions and separates the points.
So, it is dense in $\mathbf{A}(K_n(T))$. We conclude that $T$ has quantifier-elimination. The other direction is obvious.

For the second part note that $T$ is model-complete if and only if every formula
is approximated by infimal formulas.
\end{proof}

Equivalently, $T$ has quantifier-elimination if and only if quantifier-free formulas separate types,
i.e. if $p\neq q$, then there is a quantifier-free formula $\phi$ such that $\hat\phi(p)\neq\hat\phi(q)$.
Also, $T$ is model-complete if and only if infimal formulas separate types.
Similar results holds in full continuous logic where one uses Stone-Weierstrass theorem in the proof.
In particular, a first order theory is model-complete if and only if universal formulas separate types.

Let $\mathbb{T}$ be a complete first order theory and $\mu$ be a regular Borel probability measure on $S_n(\mathbb{T})$.
For any first order $\phi(\x)$ set $$\mu(\phi)=\mu\{u\in S_n(\mathbb{T}): \phi\in u\}=\int\hat\phi(u)d\mu.$$
By regularity, $\mu$ is uniquely determined by its values on such sets.
If $\mathbb{T}=T^{\textrm{ex}}$ where $T$ is an AL-complete theory, then $S_n(\mathbb{T})=E_n(T)$.
In this case, and we say that $\mu$ and $\nu$ coincide on an affine formula $\phi(\x)$ if
$$\int\hat\phi(u)d\mu=\int\hat\phi(u)d\nu.$$

By the inclusion-exclusion principle, for every first order formulas $\phi_1,...,\phi_n$ one has that
$$\mu(\bigvee_{i=1}^n\phi_i)=\sum_{\emptyset\neq J\subseteq\{1,...,n\}}(-1)^{|J|+1}\mu(\bigwedge_{j\in J}\phi_j).$$
In fact, this is a consequence of the more general equality holding for any $f_1,...,f_n$ in a Riesz space:
$$\bigvee_{i=1}^nf_i=\sum_{\emptyset\neq J\subseteq\{1,...,n\}}(-1)^{|J|+1}\bigwedge_{j\in J}f_j.$$
The duals of these equalities hold similarly.

\begin{lemma}\label{atomic values}
Let $\mathbb T$ be a complete first order theory which has quantifier-elimination and $\eta\vee\theta$ is
$\mathbb{T}$-equivalent to an atomic formula whenever $\eta$ and $\theta$ are atomic.
Let $\mu$ and $\nu$ be regular Borel probability measures on $S_n(\mathbb{T})$ which coincide on atomic formulas.
Then, $\mu=\nu$.
\end{lemma}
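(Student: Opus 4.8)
The plan is to reduce, via quantifier-elimination and regularity, the equality $\mu=\nu$ to the statement that $\mu$ and $\nu$ agree on the numbers $\mu(\phi)=\mu\{u\in S_n(\mathbb{T}):\phi\in u\}$ for $\phi$ quantifier-free, and then to compute $\mu(\phi)$ for an arbitrary quantifier-free $\phi$ purely combinatorially from its values on atomic formulas, using the inclusion-exclusion identities recorded just above. Since $\mathbb{T}$ has quantifier-elimination, every formula is $\mathbb{T}$-equivalent to a quantifier-free one, and by regularity (as already observed) a regular Borel probability measure on $S_n(\mathbb{T})$ is determined by the values $\mu(\phi)$ with $\phi$ a formula. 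Hence it suffices to prove $\mu(\phi)=\nu(\phi)$ for every quantifier-free $\phi$.

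The first substantial step is to climb from single atomic formulas to finite conjunctions of atomic formulas. Given atomic $\phi_1,\ldots,\phi_n$, the dual of the inclusion-exclusion identity expresses $\mu(\bigwedge_{i=1}^n\phi_i)$ as an alternating sum of the $\mu(\bigvee_{j\in J}\phi_j)$ over nonempty $J\subseteq\{1,\ldots,n\}$. Here the standing hypothesis enters exactly as needed: each disjunction $\bigvee_{j\in J}\phi_j$ of atomic formulas is again $\mathbb{T}$-equivalent to an atomic formula, so every summand on the right is the $\mu$-measure of an atomic formula, on which $\mu$ and $\nu$ coincide by assumption. Therefore $\mu$ and $\nu$ agree on all finite conjunctions of atomic formulas.

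Next I would pass to conjunctions of literals and then to arbitrary quantifier-free formulas. A conjunction of literals has the form $A\wedge\neg\beta_1\wedge\cdots\wedge\neg\beta_m$ with $A=\bigwedge_i\alpha_i$ and all $\alpha_i,\beta_j$ atomic; writing $\mu(A\wedge\bigwedge_j\neg\beta_j)=\mu(A)-\mu(\bigvee_j(A\wedge\beta_j))$ and expanding the last disjunction by inclusion-exclusion leaves only measures of conjunctions of atomic formulas $A\wedge\bigwedge_{j\in J}\beta_j$, on which agreement is already established. Putting an arbitrary quantifier-free formula into disjunctive normal form and applying inclusion-exclusion one final time then expresses $\mu(\phi)$ through measures of conjunctions of literals; so $\mu$ and $\nu$ agree on every quantifier-free $\phi$, and with the reduction of the first paragraph this gives $\mu=\nu$.

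I expect the only genuinely delicate point to be the bookkeeping: at each of the three inclusion-exclusion reductions one must check that the formulas produced stay inside the class for which agreement has already been verified, and in particular that the empty-conjunction case is handled correctly, where $A$ is a tautology such as $x_1=x_1$ (hence atomic) so that the argument does not tacitly presuppose the presence of a positive literal. Everything beyond this is a finite computation built on the inclusion-exclusion identities already available, with regularity and quantifier-elimination supplying the only passages from the finitary Boolean level to measures and to the whole type space.
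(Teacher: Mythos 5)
Your proof is correct and follows essentially the same route as the paper's: both reduce to quantifier-free formulas by quantifier-elimination and then use inclusion-exclusion together with the hypothesis that disjunctions of atomic formulas are $\mathbb{T}$-equivalent to atomic ones, handling negations by subtracting measures. The only difference is bookkeeping: you work bottom-up through disjunctive normal form (atomics, then conjunctions of atomics, then conjunctions of literals, then arbitrary quantifier-free formulas), whereas the paper works top-down through conjunctive normal form and collapses the negative literals of a clause into a single negated atomic via $\neg\theta_1\wedge\cdots\wedge\neg\theta_n\equiv\neg(\theta_1\vee\cdots\vee\theta_n)$.
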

\begin{proof}
By the assumptions and the normal form theorem, every formula is $\mathbb{T}$-equivalent to a conjunction of formulas of the form
$$\eta\vee\neg\theta_1\vee\cdots\vee\neg\theta_n$$
where $\eta,\theta_i$ are atomic. So, by the inclusion-exclusion principle, we must show that $\mu$ and $\nu$ coincide
on disjunctions of such formulas (which is again equivalent to one of the above form).
Again by the inclusion-exclusion principle, we must show that they coincide on formulas of the form
$$\eta\wedge\neg\theta_1\wedge\cdots\wedge\neg\theta_n.$$
This one is equivalent to a formula of the form $\eta\wedge\neg\theta$ where $\eta$ and $\theta$ are atomic.
For such a formula we have that
$$\mu(\eta\wedge\neg\theta)=\mu(\eta\vee\theta)-\mu(\theta)=\nu(\eta\vee\theta)-\nu(\theta)=\nu(\eta\wedge\neg\theta).$$
\end{proof}

Similar result holds if $\eta\wedge\theta$ is $\mathbb{T}$-equivalent to an atomic formula for every atomic $\eta,\theta$.

\begin{lemma}\label{existential values}
Let $\mathbb T$ be a complete and model-complete first order theory. Assume every disjunction of atomic formulas is
$\mathbb{T}$-equivalent to an atomic formula. Let $\mu$, $\nu$ be regular Borel probability measures on
$S_n(\mathbb{T})$ which coincide on affine infimal formulas.
Then, $\mu=\nu$.
\end{lemma}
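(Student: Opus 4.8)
The plan is to mirror the proof of Lemma \ref{atomic values}, but now exploiting model-completeness instead of quantifier-elimination. Since $\mathbb{T}$ is model-complete, every formula is $\mathbb{T}$-equivalent to an infimal formula (a formula of the form $\inf_{\y}\psi$ with $\psi$ quantifier-free), because infimal formulas separate types by the proposition preceding Lemma \ref{atomic values}. Dually, $\sup$-formulas also suffice. So it is enough to show that $\mu$ and $\nu$ agree on all existential (i.e. infimal) formulas; the values on arbitrary formulas follow by approximation together with regularity of the two measures, since both are uniquely determined by their values on the clopen sets $\{u \in S_n(\mathbb{T}): \phi \in u\}$ for $\phi$ ranging over a separating family.

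First I would reduce, via the normal form theorem and the inclusion-exclusion principle (both invoked exactly as in Lemma \ref{atomic values}), the verification to a manageable class of formulas. Writing an arbitrary existential formula in prenex-disjunctive shape, I would peel off the quantifiers and the Boolean structure so that the remaining obligation is to match $\mu$ and $\nu$ on formulas of the form $\exists\y\,(\eta \wedge \neg\theta_1 \wedge \cdots \wedge \neg\theta_n)$ where the $\eta,\theta_i$ are atomic. The hypothesis that every disjunction of atomic formulas is $\mathbb{T}$-equivalent to an atomic formula is precisely what lets me collapse $\neg\theta_1 \wedge \cdots \wedge \neg\theta_n = \neg(\theta_1 \vee \cdots \vee \theta_n)$ into $\neg\theta$ for a single atomic $\theta$. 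Thus the core case is $\exists\y\,(\eta \wedge \neg\theta)$.

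The main obstacle is handling the interaction of the existential quantifier with the set subtraction $\eta \wedge \neg\theta$: unlike the purely propositional situation of Lemma \ref{atomic values}, I cannot just write $\mu(\exists\y(\eta\wedge\neg\theta))$ as a difference of two quantities on which $\mu$ and $\nu$ are assumed to agree, because $\exists$ does not distribute over the Boolean subtraction. The key trick will be to note that $\exists\y(\eta \wedge \neg\theta)$ can be expressed using affine infimal formulas: since the measures agree on affine infimal formulas by hypothesis, I would represent the indicator of the relevant clopen set as a limit (uniform in the logic topology) of values $\hat{\sigma}$ where $\sigma$ is an affine combination of infimal formulas. Concretely, the identity $\eta \wedge \neg\theta \equiv \eta \dotminus \theta$ at the level of the Riesz-space operations, combined with the fact that $\exists\y$ of an atomic formula is again atomic (as a $\sup$ of atomic formulas), should let me express $\mu(\exists\y(\eta\wedge\neg\theta))$ as $\mu(\exists\y\,\eta) - \mu(\exists\y(\eta\wedge\theta))$ once I check that the subtracted piece is itself a difference of existential-atomic formulas on which agreement is already known.

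Finally, once agreement on all infimal formulas is secured, I would conclude $\mu = \nu$ as follows: the functions $\hat{\phi}$ with $\phi$ infimal are dense in $\mathbf{C}(S_n(\mathbb{T}))$ by Stone-Weierstrass (they separate points and contain the constants, and by model-completeness they in fact separate \emph{all} types), so two regular Borel probability measures agreeing on $\int \hat{\phi}\,d\mu = \int \hat{\phi}\,d\nu$ for every such $\phi$ must coincide. This is the same concluding move as in Lemma \ref{atomic values}, merely with the separating family changed from atomic to infimal formulas to match the model-completeness hypothesis in place of quantifier-elimination.
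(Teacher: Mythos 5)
Your reduction to the core case (normal form via model-completeness, inclusion--exclusion, collapsing $\neg\theta_1\wedge\cdots\wedge\neg\theta_n$ to $\neg\theta$ via the hypothesis on disjunctions of atomics) matches the paper, but the step you call the ``key trick'' rests on a false identity. You propose to write
$$\mu\big(\exists\y(\eta\wedge\neg\theta)\big)=\mu(\exists\y\,\eta)-\mu\big(\exists\y(\eta\wedge\theta)\big),$$
which would require the pointwise identity $[\exists\y(\eta\wedge\neg\theta)]=[\exists\y\,\eta]-[\exists\y(\eta\wedge\theta)]$ of $\{0,1\}$-valued functions on $S_n(\mathbb{T})$. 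This fails: at any type whose models contain one witness satisfying $\eta\wedge\theta$ and a different witness satisfying $\eta\wedge\neg\theta$, the left side is $1$ while the right side is $1-1=0$. This is precisely the obstacle you yourself identified (``$\exists$ does not distribute over the Boolean subtraction''); the trick you then offer does not circumvent it --- it commits it, and no amount of ``checking the subtracted piece'' can repair a pointwise-false identity after integration. (The side claim that ``$\exists\y$ of an atomic formula is again atomic'' is also wrong; it is a supremal formula, not an atomic one.)

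The correct move, and the one the paper makes, is to perform the Riesz-space linearization \emph{inside} the quantifier rather than pulling the Boolean difference outside it. Pointwise one has $[\eta\wedge\neg\theta]=[\eta\vee\theta]-[\theta]$, and by hypothesis $\eta\vee\theta$ is $\mathbb{T}$-equivalent to an atomic formula; hence the matrix itself equals, pointwise, an affine combination $\psi$ of atomic formulas, and the quantified formula has the same truth value as the \emph{single} quantified affine formula $\sup_{\y}\psi$ --- the affine combination never leaves the scope of the quantifier, so no commutation of $\sup$ with $+$ or $-$ is needed. The paper does this with the universal normal form $\forall\y(\eta\vee\neg\theta_1\vee\cdots\vee\neg\theta_n)$, applying Riesz inclusion--exclusion to the matrix to get $\inf_{\y}\psi$ with $\psi$ affine quantifier-free, which matches the hypothesis on infimal formulas directly. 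If you insist on the existential normal form, you additionally need the (easy, but unstated in your proposal) observation that agreement on affine infimal formulas yields agreement on affine supremal ones, since $\sup_{\y}\psi=-\inf_{\y}(-\psi)$ and $\sigma\mapsto\int\hat\sigma\,d\mu$ is linear.
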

\begin{proof}
By the assumptions, every first order formula is $\mathbb{T}$-equivalent to a conjunction of formulas of the form
$$\forall\y(\eta\vee\neg\theta_1\vee\cdots\vee\neg\theta_n)\ \ \ \ \ \ \ \ (*)$$
where $\eta(\x,\y)$, $\theta_i(\x,\y)$ are atomic formulas.
So, by the inclusion-exclusion principle, we have to prove that $\mu$ and $\nu$ coincide on
disjunctions of such formulas, which is again equivalent to one of the form $(*)$.
Identify $\neg\theta$ with $1-\theta$.
Then, by the Riesz space variant of the inclusion-exclusion principle and the assumption of the lemma,
the formula $$\eta\vee\neg\theta_1\vee\cdots\vee\neg\theta_n$$
is $\mathbb{T}$-equivalent to a linear combination of formulas of the form $$\eta\wedge\neg\theta_1\wedge\cdots\wedge\neg\theta_n$$
This later formula is equivalent to one of the form $\eta\wedge\neg\theta\equiv(\eta\vee\theta)-\theta$
where $\eta,\theta$ are atomic.
Again, $\eta\vee\theta$ is equivalent to an atomic formula.
We conclude that the formula $\forall\y(\eta\vee\neg\theta_1\vee\cdots\vee\neg\theta_n)$ is $\mathbb{T}$-equivalent
to a formula of the form $\inf_{\y}\psi$ where $\psi$ is affine and quantifier-free.
Since $\mu$ and $\nu$ coincide on affine infimal formulas, we conclude that $\mu=\nu$.
\end{proof}

\begin{proposition}
Let $T$ be a complete affine theory such that $T^{\textrm{ex}}$ is first order.

(i) If $T$ has quantifier-elimination (resp. is model-complete) in the AL sense, then $T^{\textrm{ex}}$
has quantifier-elimination (resp. is model-complete) in the first order sense.

(ii) If $T^{\textrm{ex}}$ has quantifier-elimination and every disjunction of atomic formulas is
$T^{\textrm{ex}}$-equivalent to an atomic formula, then $T$ has quantifier-elimination in the AL-sense.

(iii) If $T^{\textrm{ex}}$ is model-complete and every disjunction of atomic formulas is
$T^{\textrm{ex}}$-equivalent to an atomic formula, then $T$ is model-complete in the AL-sense.
\end{proposition}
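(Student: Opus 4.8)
The plan is to reduce every clause to the two type-separation criteria established just above: $T$ has quantifier-elimination in the AL sense exactly when the quantifier-free affine formulas separate the points of $K_n(T)$, and $T$ is model-complete exactly when the infimal affine formulas do. Throughout I use that, since $T^{\textrm{ex}}$ exists, every $E_n(T)$ is closed and that $T^{\textrm{ex}}$ being first order means the restriction map identifies $E_n(T)$ with the first order type space $S_n(T^{\textrm{ex}})$ (Theorem \ref{complete}). I also use that, by the Choquet--Bishop--de Leeuw theorem \ref{Choquet-Bishop-de Leeuv}, each $p\in K_n(T)$ is represented by a regular Borel probability measure $\mu_p$ on $E_n(T)=S_n(T^{\textrm{ex}})$, so that $p(\phi)=\int\hat\phi\,d\mu_p$ for every affine formula $\phi$; for an atomic formula $\theta$ this reads $p(\theta)=\mu_p(\theta)$ in the notation of Lemmas \ref{atomic values} and \ref{existential values}.

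For (i) I would first restrict the separating families from $K_n(T)$ to its extreme points $E_n(T)=S_n(T^{\textrm{ex}})$. If quantifier-free affine formulas separate $K_n(T)$, they separate distinct $u_1,u_2\in S_n(T^{\textrm{ex}})$; writing such a separating formula as a linear combination $\sum_i r_i\theta_i$ of atomic formulas, some atomic $\theta_i$ must already satisfy $u_1(\theta_i)\neq u_2(\theta_i)$, so atomic — hence quantifier-free first order — formulas separate $S_n(T^{\textrm{ex}})$, giving first order quantifier-elimination. For the model-complete clause, an affine infimal formula $\inf_{\bar y}\psi(\bar x,\bar y)$ with $\psi$ quantifier-free affine takes, on a first order model, only finitely many values, and the assertion that its value at $\bar a$ is $\leqslant c$ is an existential first order statement about $\bar a$; thus its value is determined by the universal (equivalently existential) first order type of $\bar a$, and separation of $E_n(T)$ by infimal affine formulas yields separation of $S_n(T^{\textrm{ex}})$ by universal first order formulas, which is the first order criterion for model-completeness.

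For (ii) and (iii) I would run the contrapositive of the AL criteria through the measures $\mu_p$. Suppose $p,q\in K_n(T)$ agree on all quantifier-free affine formulas; in particular they agree on atomic formulas, so $\mu_p$ and $\mu_q$ coincide on atomic formulas. The hypotheses of clause (ii), namely that $T^{\textrm{ex}}$ has quantifier-elimination and disjunctions of atomic formulas are atomic, are exactly those of Lemma \ref{atomic values}, which forces $\mu_p=\mu_q$ and hence $p(\phi)=\int\hat\phi\,d\mu_p=\int\hat\phi\,d\mu_q=q(\phi)$ for every affine $\phi$, i.e. $p=q$; therefore quantifier-free formulas separate $K_n(T)$ and $T$ has quantifier-elimination. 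Clause (iii) is identical with Lemma \ref{existential values} in place of Lemma \ref{atomic values}: if $p,q$ agree on all infimal affine formulas then $\mu_p$ and $\mu_q$ coincide on the affine infimal formulas, Lemma \ref{existential values} gives $\mu_p=\mu_q$ and so $p=q$, whence infimal formulas separate $K_n(T)$ and $T$ is model-complete.

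The main obstacle I anticipate is the bookkeeping in (i) matching affine infimal formulas with first order universal formulas: one must verify that the finitely many values of $\inf_{\bar y}\psi$ on first order models are cut out by existential (and so, after negation, universal) first order conditions, and then invoke the stated first order characterization that model-completeness is equivalent to separation of types by universal formulas. The measure-theoretic clauses (ii) and (iii) are by contrast essentially immediate translations of Lemmas \ref{atomic values} and \ref{existential values}, once one records that agreement of $p$ and $q$ on a class of affine formulas is the same as agreement of the representing measures $\mu_p$ and $\mu_q$ on the corresponding functions $\hat\phi$.
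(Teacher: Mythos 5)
Your proposal is correct and follows essentially the same route as the paper: part (i) by extracting an atomic (resp. universal first order) separating formula for distinct extreme types from the affine separation criterion, and parts (ii)--(iii) by representing distinct types via Choquet--Bishop--de Leeuw boundary measures and invoking Lemmas \ref{atomic values} and \ref{existential values}. Your expansion of the step in (i) relating $\inf_{\y}\psi$ to existential/universal first order formulas via its finitely many values just fills in a detail the paper states tersely.
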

\begin{proof}
(i) For every distinct types $p,q\in E_n(T)=S_n(T^{\textrm{ex}})$, there is an atomic formula $\phi$
such that $p(\phi)\neq q(\phi)$. So, $T^{\textrm{ex}}$ has quantifier-elimination.
For the second part, assume $p(\phi)\neq q(\phi)$ where $\phi=\inf_{\y}\theta$ and $\theta$ is affine.
Then, $\theta$ is $T^{\textrm{ex}}$-equivalent to a quantifier-free first order formula say $\psi$.
So, $\inf_{\y}\theta$ is equivalent to $\forall\y\psi$. Hence, $p(\forall\y\psi)\neq q(\forall\y\psi)$.
Alternatively, for first order $M,N\vDash T$, if $M\subseteq N$, then
$M\preccurlyeq_{\mathrm{AL}}N$ and hence $M\preccurlyeq_{\mathrm{CL}}N$.

(ii) By Lemma \ref{atomic values}, every regular Borel probability measure $\mu$ on
$S_n(T^{\textrm{ex}})$ is uniquely determined by its values on atomic formulas.
Let $p,q\in K_n(T)$ be distinct. By the Choquet-Bishop-de Leeuw theorem, $p,q$ are represented
by regular boundary measures $\mu$ and $\nu$ respectively. So, for every affine formula $\phi$ one has that
$$p(\phi)=\int_{u\in E_n(T)}\hat\phi(u)d\mu, \ \ \ \ \ \ \ q(\phi)=\int_{u\in E_n(T)}\hat\phi(u)d\nu$$
Clearly, $\mu\neq\nu$ and hence there is an atomic formula $\theta$ such that $\mu(\theta)\neq\nu(\theta)$.
We conclude that $p(\theta)=\mu(\theta)\neq\nu(\theta)=q(\theta)$ and hence $T$ has quantifier-elimination.

(iii) As in part (ii), let $p,q\in K_n(T)$ be distinct types represented by boundary measures $\mu$ and $\nu$ respectively.
By Lemma \ref{existential values}, $\mu$ and $\nu$ differ on an affine infimal formulas.
So, $p,q$ differ on that formula. We conclude that $T$ is model-complete.
\end{proof}
\vspace{1mm}

\begin{corollary}
(i) Let $\mathbb{T}$ be a complete (first order) theory of fields in the language of rings $\{+,-,\times,0,1\}$.
If $\mathbb{T}$ has quantifier-elimination (resp. is model-complete) in the first order sense,
then $\mathbb{T}_{\textrm{af}}$ has quantifier-elimination (resp. is model-complete) in the affine logic sense.

(ii) Let $\mathbb{T}$ be a complete theory of Boolean algebras which has quantifier-elimination (resp. is model-complete).
Then, $\mathbb{T}_{\textrm{af}}$ has quantifier-elimination (resp. is model-complete).
\end{corollary}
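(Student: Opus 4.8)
The plan is to derive both parts directly from the preceding Proposition, whose parts (ii) and (iii) reduce the affine statement to a first order statement about $T^{\textrm{ex}}$ together with a purely combinatorial closure condition on atomic formulas. First I would set $T=\mathbb{T}_{\textrm{af}}$ and observe that $T$ is AL-complete with $T^{\textrm{ex}}=\mathbb{T}$: since $\mathbb{T}$ is a complete first order theory it has first order models, all of which are first order elementarily equivalent and hence assign a common value to every affine sentence, so $\mathbb{T}_{\textrm{af}}$ is AL-complete; and by the fact quoted from \cite{Ibarlucia}, the existence of a first order model forces $T^{\textrm{ex}}$ to exist and to equal the first order theory of any such model, namely $\mathbb{T}$. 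With this in hand the hypotheses ``$T^{\textrm{ex}}$ has quantifier-elimination (resp. is model-complete)'' in the preceding Proposition are exactly the stated hypotheses on $\mathbb{T}$.

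For part (i) the only remaining point is the closure condition: every disjunction of atomic formulas must be $\mathbb{T}$-equivalent to an atomic formula. In the language of rings an atomic formula is an equation between terms, i.e. a polynomial condition $p(\x)=0$, and in a field a finite disjunction $\bigvee_{i=1}^k (p_i(\x)=0)$ holds if and only if $\prod_{i=1}^k p_i(\x)=0$. The right-hand side is again atomic, so the disjunction-closure hypothesis holds and the preceding Proposition yields quantifier-elimination, resp. model-completeness, of $\mathbb{T}_{\textrm{af}}$ in the AL sense.

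For part (ii) the product trick fails: in a nontrivial Boolean algebra $x=0 \vee y=0$ is strictly stronger than $x\wedge y=0$, so disjunctions of atomic formulas are not atomic. Instead I would use the conjunction-dual of the argument. Normalizing every atomic formula $s=t$ to $s\triangle t=0$, one has that a conjunction $t_1=0 \wedge t_2=0$ is $\mathbb{T}$-equivalent to the atomic formula $t_1\vee t_2=0$, so atomic formulas are closed under conjunction. This is precisely the situation covered by the conjunction-duals of Lemma \ref{atomic values} and Lemma \ref{existential values}; the remark following Lemma \ref{atomic values} records the former, and the latter is entirely analogous with $\wedge$ and $\vee$ interchanged. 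Running the proof of the preceding Proposition with these dual lemmas --- using the Choquet-Bishop-de Leeuw theorem to represent two distinct types $p,q\in K_n(T)$ by boundary measures $\mu\ne\nu$ on $E_n(T)=S_n(T^{\textrm{ex}})$, then separating $\mu,\nu$ on an atomic (resp. affine infimal) formula --- gives quantifier-elimination, resp. model-completeness, of $\mathbb{T}_{\textrm{af}}$.

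The main obstacle is not any single computation but the combinatorial bookkeeping: confirming that the correct closure property (disjunction for fields, conjunction for Boolean algebras) is the one actually available, and verifying that the conjunction-dual of Lemma \ref{existential values} holds so that the model-complete half of part (ii) goes through. I would spell out that dual lemma explicitly, since only the conjunction-dual of Lemma \ref{atomic values} is stated in the text.
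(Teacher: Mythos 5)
Your proposal is correct and matches the paper's intended argument: the corollary is stated with no written proof, as a direct application of the preceding Proposition, and part (i) rests on exactly your observation that in a field $\bigvee_i(p_i(\x)=0)$ is equivalent to the atomic condition $\prod_i p_i(\x)=0$, so the disjunction-closure hypothesis holds and the Proposition applies once one knows $T=\mathbb{T}_{\textrm{af}}$ is AL-complete with $T^{\textrm{ex}}=\mathbb{T}$. You are also right about the subtlety in part (ii): disjunctions of atomic formulas are genuinely not atomic in Boolean algebras (in any model with more than two elements, $x=0\vee x'=0$ defines $\{0,1\}$, which no term equation defines), so the Proposition as literally stated does not apply and one must rerun its proof with the conjunction-duals of Lemmas \ref{atomic values} and \ref{existential values} — the first of which the paper records only in the remark following Lemma \ref{atomic values}, while the second is left unstated, exactly as you flag and correctly supply.
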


\begin{example}
\em{(i) Algebraically closed fields and finite fields are the only rings which have quantifier-elimination.
So, the affine part of ACF$_p$ as well as the affine theory of any finite field has quantifier-elimination.
The affine part of RCF is model-complete.

(ii) The affine part of DCF$_0$ has quantifier-elimination in the language of differential rings $\{+,-,\times,\delta,0,1\}$.

(iii) The affine part of the theory of atomless Boolean algebras has quantifier-elimination.
The affine theory of any finite Boolean algebra has quantifier-elimination.}
\end{example}
\bigskip

The classical theories stated above are decidable. So, their affine parts are decidable too.
This is because affine formulas form a computable part of the classical formulas.
It is natural to ask what is a complete axiomatization of the affine part of ACF$_0$.
A model of this theory is a metric commutative ring with unity and without nonzero nilpotent elements.
Moreover, setting $|x|=d(x,0)$, one has that:

(i) $|x-y|=d(x,y)$

(ii) $|xy|\leqslant|x|$

(iii) $|xy|\leqslant\frac{|x|+|y|}{2}$

(iv) $d(x,0)+d(y,0)\leqslant d(xy,0)+1$

(v) $d(nx,0)=d(x,0)$ for all $n\geqslant1$

(vi) $\inf_y d(xy,1)\leqslant1-d(x,0)$

(vii) $\inf_x d(y_nx^{n}+\cdots+y_1x+y_0,0)\leqslant 1-d(y_n,0)\ \ \ \ \ \ \ \ \ \ \ \ \forall n\geqslant1$.
\bigskip

This is however not a complete axiomatization.
On the other hand, given a type $p(\x)$ over a model $A$ of the affine part,
$$I_p=\{f(\x)\in A[\x]:\ p(d(f(\x),0))=0\}$$
is an ideal which is not prime and does not characterize $p$.
Indeed, $p$ defines a metric on $\frac{A[\x]}{I_p}$ making it a metric ring having all the properties stated above except (vi-vii),
where we may set $|f+I_p|=p(d(f,0))$.





\bigskip\noindent{\bf Urysohn space}\\
Let $\mathbb T$ be the CL-theory of the Urysohn space $\mathbb{U}$ of diameter $1$.
Then $\mathbb{T}$ is $\aleph_0$-categorical. Hence, by Proposition \ref{not categorical},
$\mathbb{T}\not\equiv\mathbb{T}_{\mathrm{af}}$.
We can further show that $\mathbb{T}$ is not affinic. Let $M=\frac{1}{2}\mathbb U+\frac{1}{2}\mathbb U$.
Let $a,b\in\mathbb U$ be such that $d(a,b)=1$. Take $x_1=aa$, $x_2=ab$, $x_3=ba$, $x_4=bb$ in $M$
and let $r_1=r_4=\frac{1}{2}$, $r_2=r_3=1$.
Let $$\phi(x)=[d(x,aa)-d(x,ab)-d(x,ba)+d(x,bb)].$$
Suppose that $M\preccurlyeq N\vDash\mathbb{T}$.
Then, there must exist $x\in N$ such that $d(x,x_i)=r_i$ for each $i$.
In particular, $N$ satisfies the condition $$\inf_x\phi(x, aa, ab, ba, bb)\leqslant-1.$$
So, $M$ (approximately) satisfies it by say $x=ce\in M$. However, computation shows that $$\phi^M(ce,aa,ab,ba,bb)=0.$$
\vspace{5mm}

\noindent{\bf Metric groups}\\
A natural way to obtain interesting examples of theories is to find model-companion of incomplete theories.
It is well-known that the theory of groups has no model-companion.
The proof (see \cite{CK1}) can be adopted for the theory of metric groups.

\begin{lemma} \label{conjugate}
For each metric group $G$ and $a,b\in G$ of the same order there exists
a metric group extension $G\subseteq H$ (of the same diameter)
such that $a$, $b$ are conjugate, i.e. $a=c^{-1}bc$ for some $c\in H$.
\end{lemma}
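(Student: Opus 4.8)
The plan is to reduce the statement to the classical Higman--Neumann--Neumann (HNN) construction and then to equip the resulting group with a bi-invariant metric extending $d$. Since $a$ and $b$ have the same order, the assignment $b^k\mapsto a^k$ is a well-defined isomorphism $\langle b\rangle\to\langle a\rangle$. Hence the HNN extension $H^*=\langle G,t\mid t^{-1}bt=a\rangle$ is defined, and by Britton's lemma the canonical map $G\to H^*$ is injective, so we may regard $G\subseteq H^*$. Putting $c=t$ we obtain $c^{-1}bc=a$, so $a$ and $b$ are conjugate in $H^*$. This disposes of the purely group-theoretic content; the whole difficulty is metric.

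Next I would extend the metric. Write $r=\operatorname{diam}(G,d)\leqslant 1$ and define a symmetric weight $N\colon H^*\to[0,r]$ by $N(g)=d(g,e)$ for $g\in G$ and $N(h)=r$ for $h\notin G$. Then set
$$\ell^*(h)=\inf\Big\{\sum_{i=1}^{k}N(x_i):\ h=w_1x_1w_1^{-1}\cdots w_kx_kw_k^{-1},\ k\geqslant 0,\ w_i,x_i\in H^*\Big\},$$
and $d^*(x,y)=\ell^*(x^{-1}y)$. Concatenating factorizations shows $\ell^*$ is subadditive; symmetry of $N$ gives $\ell^*(h^{-1})=\ell^*(h)$; and closing the factorizations under conjugation makes $\ell^*$ conjugation-invariant, so $d^*$ is a \emph{bi-invariant} pseudometric (left-invariance is automatic, right-invariance is exactly conjugation-invariance of $\ell^*$). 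Since every factor satisfies $N(x_i)\leqslant r$, one checks $\ell^*\leqslant N\leqslant r$, so $(H^*,d^*)$ has diameter at most $r$; taking $H$ to be its metric completion yields a complete metric group of the same diameter in which $a$ and $b$ remain conjugate.

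The main obstacle is to show that $d^*$ does not collapse $G$, that is, $\ell^*(g)=d(g,e)$ for every $g\in G$ (the inequality $\leqslant$ being immediate from $N(g)=d(g,e)$), together with non-degeneracy $\ell^*(h)>0$ for $h\neq e$. Both reduce to a lower bound that rules out cheap factorizations into conjugates, and I expect this to be the delicate step. The natural tool is the normal form for HNN extensions (Britton's lemma), equivalently the action of $H^*$ on its Bass--Serre tree: one uses the combinatorial displacement of the base vertex to charge for occurrences of the stable letter $t$ and the conjugation-invariance of $d$ on the vertex stabiliser $G$ to charge for the $G$-syllables, thereby manufacturing a conjugation-invariant, subadditive $\Phi\leqslant N$ with $\Phi|_G=d(\cdot,e)$ and $\Phi>0$ off the identity. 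Because $\ell^*$ is the largest conjugation-invariant subadditive function dominated by $N$, one then has $\ell^*\geqslant\Phi$, which yields the required lower bound.

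One consistency point pervades the argument and must be flagged: in any bi-invariant metric group conjugate elements are equidistant from $e$, since $d(cac^{-1},e)=d(ca,c)=d(a,e)$. Thus forcing $a$ and $b$ to be conjugate forces $d^*(a,e)=d^*(b,e)$, and the extension can satisfy $d^*|_G=d$ only when $d(a,e)=d(b,e)$. This equal-norm compatibility is precisely what the hypothesis on $a$ and $b$ must supply, and confirming it in the relevant metric groups (so that $a$ and $b$ are genuinely interchangeable by conjugation while $G$ stays isometrically embedded) is the real crux hidden behind the word ``conjugate''. Granting it, the maximal extension $d^*$ is isometric on $G$ and non-degenerate, and the completion $H$ is the desired metric group; the resulting HNN-plus-metric scheme is then the adaptation to the metric setting of the classical non-companion argument of \cite{CK1}.
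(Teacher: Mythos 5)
Your group-theoretic step is correct and matches the paper's: the paper also forms what is in effect this HNN extension, presented as $H=\langle X\cup\{z\},\ R\cup\{z^{-1}uz=v\}\rangle$ with $u,v$ lifting $a,b$, citing Lyndon--Schupp for injectivity of $G\to H$. The fatal problem is the metric step. Your $d^*(x,y)=\ell^*(x^{-1}y)$ is bi-invariant by construction, and in a bi-invariant metric group conjugate elements are equidistant from the identity. Since $b=tat^{-1}$ in $H^*$, you get $\ell^*(b)\leqslant N(a)=d(a,e)$ outright (take the one-term factorization $b=t\,a\,t^{-1}$), so $d^*$ cannot restrict to $d$ on $G$ unless $d(a,e)=d(b,e)$; likewise, left-invariance of $d^*$ already forces $d$ itself to be left-invariant. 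Neither condition is implied by the lemma's hypothesis, which is that $a$ and $b$ have the same group-theoretic \emph{order}, not the same norm (in $\Rn/\Zn$ all irrational points have the same infinite order but different distances to $0$). Your closing paragraph notices exactly this tension and then asserts that ``the hypothesis on $a$ and $b$ must supply'' equal norms; it does not, so your construction proves at best the special case of a bi-invariant $d$ with $d(a,e)=d(b,e)$. Nor can the proposed Bass--Serre function $\Phi$ exist in general: conjugation-invariance forces $\Phi(a)=\Phi(tat^{-1})=\Phi(b)$, contradicting $\Phi|_G=d(\cdot,e)$ whenever $d(a,e)\neq d(b,e)$. (A smaller issue of the same kind: your weight $N$ is symmetric only if $d(g,e)=d(g^{-1},e)$ for all $g$, which a general metric group need not satisfy.)

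The paper escapes the obstruction precisely by giving up all invariance. It pulls $d$ back to a pseudometric $\hat d$ on the free group $\langle X\rangle$ and defines the distance between two elements of $H$, written as words $x_1^{k_1}z^{\ell_1}\cdots x_n^{k_n}z^{\ell_n}$, to be $1$ when their $z$-exponent sums differ, and otherwise the $\hat d$-distance of the words obtained by deleting the $z$'s. This combinatorial, non-invariant metric restricts to $d$ on $G$ and keeps the diameter bound, while allowing elements of different norms to become conjugate. The moral of the obstruction you found is the opposite of the one you drew: invariance of the extended metric must be abandoned, not secured.
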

\begin{proof}
We may assume $G$ is presented as $\langle X,R\rangle$, i.e. $G=\langle X\rangle/N$
where $\langle X\rangle$ is the free group generated by $X$ and $N$ is
the normal closure for $R$ (the smallest normal subgroup of $\langle X\rangle $
containing elements $uv^{-1}$ where $u=v\in R$). Let $z$ be a new element.
Let $u,v$ be representatives for $a,b$ in $\langle X\rangle$ respectively.
Let $H=\langle X\cup\{z\}, S\rangle$ where $S=R\cup\{z^{-1}uz=v\}$.
Then, $H$ is an extension of $G$ in which $c^{-1}ac=b$ where $c=zK$ and $K$
is the normal closure for $S$.

The above argument is well-known (see \cite{Lyndon} p. 188).
Now, assume $d$ is a metric on $G$ and $\alpha:\langle X\rangle\rightarrow G$
is the corresponding homomorphism.
Let $\hat d$ be the pseudometric on $\langle X\rangle$ defined by $\hat d(u,v)=d(uN,vN)$.
Every element of $H$ can be presented as $x_1^{k_1}z^{\ell_1}\cdots x_n^{k_n}z^{\ell_n}K$
where $x_i\in X$ and $k_i,\ell_i\in\mathbb Z$.
Define
$$\rho(x_1^{k_1}z^{\ell_1}\cdots x_n^{k_n}z^{\ell_n}K,\ x_1^{k'_1}z^{\ell'_1}\cdots x_n^{k'_n}z^{\ell'_n}K)=
\left\{
  \begin{array}{ll}
    \hat d(x_1^{k_1}\cdots x_n^{k_n},\ x^{k'_1}\cdots x_n^{k'_n}) & \mbox{if}\ \sum_i \ell_i=\sum_i\ell_i' \\
    1 & \mbox{otherwise.}
  \end{array}
\right.$$
It is easy to check that $\rho$ is a metric on $H$ which extends $d$.
\end{proof}

\begin{proposition}
The theory of metric groups of diameter $1$ has no model-companion.
\end{proposition}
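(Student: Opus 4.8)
The plan is to assume, for contradiction, that the theory $T$ of metric groups of diameter $1$ has a model-companion $U$, and to use Lemma \ref{conjugate} to pin down conjugacy inside the existentially closed models. Since $T$ is inductive (a union of a chain of metric groups is again a metric group), $U$ is model-complete, and by the proposition characterizing model-companions of $\forall\exists$-theories, $\mathrm{Mod}(U)$ is exactly the class of existentially closed models of $T$. In particular this class is elementary, hence closed under ultramean and under elementary equivalence.

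First I would record the behaviour of the conjugacy predicate $\rho(x,y)=\inf_z d(z^{-1}xz,y)$ in a model $M\vDash U$. If $a,b\in M$ have the same order, then by Lemma \ref{conjugate} there is a metric group $M\subseteq H$ of the same diameter in which $a$ and $b$ are conjugate, so $\rho^H(a,b)=0$; since $M$ is existentially closed and $\rho$ is an infimal formula, $\rho^M(a,b)=0$ as well. Conversely, invariance of the metric makes conjugation an order-preserving isometry, so if $a^n\ne 1=b^n$ then no conjugate of $a$ equals $b$, and the inequality $d(z^{-1}az,b)\ge \tfrac1n\,d((z^{-1}az)^n,b^n)=\tfrac1n\,d(a^n,1)>0$ gives $\rho^M(a,b)>0$. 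Thus in every model of $U$ the relation $\rho(a,b)=0$ holds precisely when $a$ and $b$ have the same order.

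Then I would derive a contradiction from the instability of ``same order'' under ultrameans. The key phenomenon is that an ultramean merges elements of unboundedly growing finite order into an element of infinite order: if $a_i$ has order $r_i$ with $r_i\to\infty$ along $\mu$, then $\int d(a_i^n,1)\,d\mu>0$ for every $n$, so $[a_i]$ has infinite order in $\prod_\mu M_i$. The strategy is to choose existentially closed $M_i\vDash U$ and elements $a_i,b_i\in M_i$ of distinct orders so that, in $\prod_\mu M_i$, both $[a_i]$ and $[b_i]$ acquire the same order, while by the ultramean theorem (Theorem \ref{th1}) one has $\rho^{\prod_\mu M_i}([a_i],[b_i])=\int \rho^{M_i}(a_i,b_i)\,d\mu>0$. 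Such a pair lies in $\prod_\mu M_i\vDash U$ yet violates the equivalence of the previous paragraph, contradicting closure of $\mathrm{Mod}(U)$ under ultramean. To keep $\rho^{M_i}(a_i,b_i)$ large I would embed explicit discrete metric groups of diameter $1$ (where $\rho$ takes the extremal value $1$ on every non-conjugate pair) and select the chosen elements isometrically at distance $1$.

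The hard part will be exactly the tension between these two requirements: forcing two families of finite-order elements to become equi-ordered in the ultramean drives their orders to infinity, whereas the invariant bound only yields $\rho^{M_i}(a_i,b_i)\ge \tfrac1{\min(r_i,s_i)}\,d(a_i^{\min},1)$, a quantity whose $\mu$-integral is $0$. The crux is therefore to control the \emph{actual} value of $\rho^{M_i}(a_i,b_i)$ rather than its a priori lower bound, by arranging that existential closure of $M_i$ does not bring any conjugate of $a_i$ metrically close to $b_i$; the discrete models above keep this value equal to $1$ at the moment of embedding, and the remaining task is to verify that this separation survives in the e.c.\ hull. Once a lower bound uniform in $i$ is secured, the ultramean theorem produces a same-order pair at positive conjugacy distance, completing the contradiction and showing that no model-companion exists.
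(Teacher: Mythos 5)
Your overall architecture is viable and genuinely diverges from the paper's at the final step: the paper derives the contradiction from failure of closure under \emph{elementary equivalence} (affine compactness applied to the conditions $3\leqslant\sup_{xy}\inf_z[d(x^{n!},1)+d(y^{n!},1)+d(x,z^{-1}yz)]$ produces $G\preccurlyeq H$ in which some pair of infinite-order elements stays at positive conjugacy distance, so $H$ cannot be existentially closed), whereas you use failure of closure under \emph{ultrameans}, merging elements of unboundedly growing finite order into infinite-order elements via Theorem \ref{th1}. Both routes use Lemma \ref{conjugate} in exactly the same way, and both reduce to the same quantitative crux: exhibiting, in existentially closed models, pairs of elements of distinct, arbitrarily large finite orders whose conjugacy distance $\rho(a,b)=\inf_z d(z^{-1}az,b)$ is bounded below by a fixed $\delta>0$.

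That crux is precisely what your proposal does not prove, and the plan you sketch for it --- embed discrete groups where $\rho=1$ ``at the moment of embedding'' and then ``verify that this separation survives in the e.c.\ hull'' --- cannot work as stated, because it runs against the definition of existential closure. In an e.c.\ model $M_i$, the value of the infimal formula $\rho(a_i,b_i)$ already \emph{equals} its infimum over all metric group extensions of $M_i$; no choice of initial embedding controls this number, since forming the e.c.\ hull recomputes it over all further extensions. A uniform lower bound therefore requires an invariant obstructing approximate conjugacy in \emph{every} extension, and the only one you invoke is the power profile, $\rho(a,b)\geqslant\frac1k\,|d(a^k,1)-d(b^k,1)|$, which for pairs of orders $r_i,s_i>n$ yields only $\rho\gtrsim 1/\min(r_i,s_i)$ --- a bound that vanishes exactly because your ultramean step forces $r_i,s_i\to\infty$ on a set of full measure. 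So the argument is circular at its key point: the regime needed to create a same-order pair in the ultramean is the regime in which your only lower bound evaporates, as you yourself observe. Worse, there is concrete evidence the uniform bound may simply fail: in $U(r_is_i)$ with the rescaled (and truncated) operator-norm bi-invariant metric, an element of order $r_i$ and one of order $s_i$, each with full spectrum (hence with essentially discrete power profiles), are conjugate up to error $O(1/r_i+1/s_i)$; if such extensions can be amalgamated with $M_i$ over $\langle a_i,b_i\rangle$, existential closure drives $\rho^{M_i}(a_i,b_i)$ to $0$. To be fair, this same point is treated tersely in the paper (the assertion that the e.c.\ extension $G$ of the discrete group $G_0$ satisfies the displayed sup-inf conditions needs exactly such a lower bound on the third term), but as it stands your proposal is a reduction to the hard step, not a proof of the proposition.
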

\begin{proof}
Let $G_0$ be a pure group having elements of arbitrarily large finite order and
$G$ be an existentially closed extension of $G_0$ (in the AL setting).
Then, $G_0$ (and hence $G$) has pairs of non-conjugate elements of sufficiently large orders which have distance $1$.
This is stated by the conditions
$$3\leqslant\sup_{xy}\inf_z[d(x^{n!},1)+d(y^{n!},1)+d(x,z^{-1}yz)].$$
By affine compactness, there exists $G\preccurlyeq H$ which contains a pair of non-conjugate elements of infinite order.
Applying Lemma \ref{conjugate}, we see that $H$ can not existentially closed.
Since $G$ is existentially closed, we conclude that the class of existentially closed metric groups (of diameter at most $1$) is not elementary.
Therefore, the theory of metric groups of diameter $1$ has not model-companion.
\end{proof}

A similar result hold for the theories of metric groups equipped with a left/right invariant metric.

\newpage\section{Proof system} \label{Proof system}
\def\Sc{\mathcal {S}}
\def\Tc{\mathcal {T}}
Since the space of truth values is indiscrete, a proof system for AL must be either approximate or infinitary.
The one presented here is infinitary and countable length proofs are allowed. It is a sound and complete system.
We just give the main definitions and results. Further details can be  found in \cite{Safari-Bagheri}.
Logical axioms and rules of inference of AL are as follows.
\bigskip

{\bf Linearity axioms}:

(A1) $r_1+r_2=r$ \ \ \ if $\Rn\vDash r_1+r_2=r$

(A2) $r_1r_2=r$ \hspace{8mm} if $\Rn\vDash r_1r_2=r$

(A3) $r\leqslant s$ \hspace{13mm} if $\Rn\vDash r\leqslant s$

(A4) $\phi+(\psi+\theta)=(\phi+\psi)+\theta$

(A5) $\phi+\psi=\psi+\phi$

(A6) $0+\phi=\phi$

(A7) $r(\phi+\psi)=r\phi+r\psi$

(A8) $(r+s)\phi=r\phi+s\phi$

(A9) $r(s\phi)=(rs)\phi$

(A10) $1\phi=\phi$

(A11) $0\phi=0$
\vspace{3mm}

{\bf Quantifier axioms}:

(A12) $\phi[t/x]\leqslant(\sup_x\phi)$ \ \ \ \ if substitution of the term $t$ in place of $x$ is correct

(A13) $\sup_x(\phi+\psi)=\sup_x\phi(x)+\psi$ \ \ \ where $x$ is not free in $\psi$

(A14) $\sup_x(\phi+\psi)\leqslant\sup_x\phi+\sup_x\psi$

(A15) $\sup_x(r\phi)=r\sup_x\phi$ where $r\geqslant 0$

(A16) $\sup_x\phi=-\inf_x-\phi$
\vspace{3mm}

{\bf Pseudometric axioms}:

(A17)  $d(x,x)=0$

(A18) $d(x,y)=d(y,x)$

(A19) $d(x,z)\leqslant d(x,y)+d(y,z)$
\vspace{3mm}

{\bf Bound and Lipschitz axioms}:\vspace{1mm}

(A20) $d(F\bar{x},F\bar{y})\leqslant\lambda_Fd(\bar{x},\bar{y})$ \ \ for each function symbol $F$

(A21) $R\bar{x}-R\bar{y}\leqslant\lambda_Rd(\bar{x},\bar{y})$ \ \ \ for each relation symbol $R$

(A22) $0\leqslant R(\x)\leqslant1$ \hspace{17mm} for each relation symbol $R$ (including $d$)
\vspace{3mm}

{\bf Deduction rules}: \vspace{2mm}

(R1) $\frac{\phi\leqslant\psi,\ \psi\leqslant\theta}{\phi\leqslant\theta}$ \vspace{2mm}

(R2) $\frac{\phi\leqslant\psi}{\phi+\theta\leqslant\psi+\theta}$ \vspace{2mm}

(R3) $\frac{0\leqslant r,\ \phi\leqslant\psi}{r\phi\leqslant r\psi}$ \vspace{2mm}

(R4) $\frac{\phi\leqslant\psi}{\sup_x\phi\leqslant\sup_x\psi}$ \vspace{2mm}
\bigskip

In (R3), $r$ is allowed to be negative. The point is that assuming
the conditions in the numerator are proved, we deduce the condition in the denominator.
To define the notion of deduction, we define an increasing
chain of length $\omega_1$ of relations $\vdash_\alpha$.
Below, $\Sc,\Sc_1,...$ denote affine conditions.

\begin{definition} \label{proofdfn}
{\em We write $\Gamma\vdash_0\Sc$ if $\Sc$ belongs to $\Gamma$ or is a logical axiom.
Suppose $0<\alpha<\omega_1$ and that $\vdash_\beta$ is defined for all $\beta<\alpha$.
Then, we write $\Gamma\vdash_\alpha\Sc$\index{$\vdash_\alpha$} if any one of the following clauses holds:
\begin{quote}

- there exists $\beta<\alpha$ such that $\Gamma\vdash_\beta\Sc$

- there exist $\beta<\alpha$ and $\Sc_1, \Sc_2$ such that
$\Gamma\vdash_\beta\Sc_1$,\ \ $\Gamma\vdash_\beta\Sc_2$ and
$\frac{\Sc_1,\ \Sc_2}{\Sc}$ is an instance of one of the rules
(R1-R3)

- $\Sc$ is the condition $\sup_x\phi\leqslant\sup_x\psi$,\ \ $x$ is not free
in $\Gamma$ and there exists $\beta<\alpha$
such that $\Gamma\vdash_\beta\phi\leqslant\psi$, and

- $\Sc$ is the condition $\phi\leqslant\psi$ and there exists a dense set
$A\subseteq\Rn$ such that for each $r\in A$, there is $\beta_r<\alpha$ with
$\Gamma,r\leqslant\phi\vdash_{\beta_r}r\leqslant\psi$.
\end{quote}}
\end{definition}


\begin{definition}
{\em  $\Sc$ is \emph{provable}\index{provable} (or deducible) from
$\Gamma$, denoted by $\Gamma\vdash\Sc$, if there exists
$\alpha<\omega_1$ such that $\Gamma\vdash_\alpha\Sc$. A set of
conditions $\Gamma$ is \emph{inconsistent} if
$\Gamma\vdash 1\leqslant 0$. Otherwise, it is \emph{consistent}\index{consistent}.}
\end{definition}

It is easy to prove by induction on the complexity of $\phi$ that
$\emptyset\vdash\ -\mathbf{b}_\phi\leqslant\phi\leqslant\mathbf{b}_\phi$
where $\mathbf{b}_\phi$ is the bound given by Definition \ref{formulas and bounds}.
To have an example of proof, we show that $r=0\vdash r\phi=0$.
By (R3), $0\leqslant r\vdash r\phi\leqslant r\mathbf{b}_\phi$.
By (R1-R3) and linearity axioms,
$$r\leqslant 0\ \vdash\ 0\leqslant -r\ \ \ \ \ \ \ \ \ \ $$
$$\hspace{22mm} \vdash\ (-r)(-\phi)\leqslant (-r)\mathbf{b}_\phi$$
$$\hspace{7mm} \vdash\ r\phi\leqslant -r\mathbf{b}_\phi.$$
So, $$r=0\ \ \vdash \ \ 2r\phi\leqslant r\mathbf{b}_\phi - r \mathbf{b}_\phi \ \ \vdash \ \ r\phi\leqslant 0.$$
Similarly, we have that $r=0\vdash 0\leqslant r\phi$ which yields the claim.
\vspace{1mm}

\begin{lemma}
Assume $\Gamma\vdash0\leqslant\phi$. Then, for each $\epsilon>0$ there is a finite
$\Delta\subseteq\Gamma$ such that $\Delta\vdash-\epsilon\leqslant\phi$.
\end{lemma}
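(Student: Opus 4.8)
The plan is to prove, by transfinite induction on $\alpha<\omega_1$ and \emph{uniformly in the hypothesis set}, the following strengthening: for every set of conditions $\Gamma$, every condition $\phi\leqslant\psi$ and every $\epsilon>0$, if $\Gamma\vdash_\alpha\phi\leqslant\psi$ then there is a finite $\Delta\subseteq\Gamma$ with $\Delta\vdash\phi\leqslant\psi+\epsilon$. The lemma is then the instance $\phi:=0$, $\psi:=\phi$, since $\Delta\vdash0\leqslant\phi+\epsilon$ yields $\Delta\vdash-\epsilon\leqslant\phi$ by adding $-\epsilon$ via (R2) and (A6). Throughout I will use the provable bounds $\emptyset\vdash-\mathbf{b}_\phi\leqslant\phi\leqslant\mathbf{b}_\phi$ recorded above, the monotonicity of $\vdash_\alpha$ under enlarging the hypothesis set, and the fact that a true real inequality is available as the axiom (A3).

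For $\alpha=0$ the condition is a logical axiom (take $\Delta=\emptyset$) or lies in $\Gamma$ (take $\Delta=\{\phi\leqslant\psi\}$), and in either case $\phi\leqslant\psi$ is weakened to $\phi\leqslant\psi+\epsilon$ using $\psi\leqslant\psi+\epsilon$. The clause invoking a smaller rank is immediate from the induction hypothesis. For the finitary rules I split the slack: in (R1) I apply the hypothesis to the two premises with $\epsilon/2$ each and chain the two finite sets; in (R2) I apply it to the single premise and re-add $\theta$, rearranging with (A4),(A5); in (R3), where the conclusion is $r\phi\leqslant r\psi$ with premise $0\leqslant r$, I apply the hypothesis to $\phi\leqslant\psi$ with slack $\epsilon/(1+r)$ and multiply through by $r$ using (A7), absorbing $0\leqslant r$ as the axiom (A3). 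For the $\sup$-clause the premise gives a finite $\Delta\subseteq\Gamma$ with $\Delta\vdash\phi\leqslant\psi+\epsilon$; since $x$ is not free in $\Gamma$, hence not in $\Delta$, rule (R4) gives $\Delta\vdash\sup_x\phi\leqslant\sup_x(\psi+\epsilon)$, and (A13) rewrites the right-hand side as $\sup_x\psi+\epsilon$.

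The genuine difficulty is the infinitary density clause, where $\phi\leqslant\psi$ is obtained from derivations $\Gamma,r\leqslant\phi\vdash_{\beta_r}r\leqslant\psi$ for all $r$ in a dense $A\subseteq\Rn$. Applying the induction hypothesis to each of these (with hypothesis set $\Gamma\cup\{r\leqslant\phi\}$ and slack $\epsilon/2$) produces, for each $r\in A$, a finite $\Delta_r\subseteq\Gamma$ with $\Delta_r,\,r\leqslant\phi\vdash r\leqslant\psi+\epsilon/2$. To keep the final $\Delta$ finite I exploit boundedness: a value $r>\mathbf{b}_\phi$ is irrelevant because $r\leqslant\phi$ together with $\phi\leqslant\mathbf{b}_\phi$ is inconsistent and hence proves $r\leqslant\psi+\epsilon$, while a value $r\leqslant-\mathbf{b}_\psi$ is handled by $\Delta=\emptyset$ since $r\leqslant-\mathbf{b}_\psi\leqslant\psi\leqslant\psi+\epsilon$. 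Only the bounded band $(-\mathbf{b}_\psi,\mathbf{b}_\phi]$ matters, so I choose from $A$ a finite $\tfrac{\epsilon}{2}$-net $s_0<\cdots<s_n$ of it and set $\Delta=\bigcup_i\Delta_{s_i}$, a finite subset of $\Gamma$.

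It remains to re-derive $\phi\leqslant\psi+\epsilon$ from this finite $\Delta$ by one further application of the density clause, with dense set $B=\Rn$. Given any $s$, assume $s\leqslant\phi$. If $s>\mathbf{b}_\phi$ or $s\leqslant-\mathbf{b}_\psi$ the extreme cases above apply; otherwise pick the largest net point $s_i\leqslant s$, so $s-s_i<\tfrac{\epsilon}{2}$. From $s\leqslant\phi$ and the axiom $s_i\leqslant s$ I get $s_i\leqslant\phi$ by (R1), hence $\Delta,s\leqslant\phi\vdash s_i\leqslant\psi+\tfrac{\epsilon}{2}$; adding $\tfrac{\epsilon}{2}$ and using the axiom $s\leqslant s_i+\tfrac{\epsilon}{2}$ gives $\Delta,s\leqslant\phi\vdash s\leqslant\psi+\epsilon$. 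These derivations are built from the finitely many fixed net-derivations plus a bounded number of extra steps, so their ranks are uniformly bounded below $\omega_1$; the density clause therefore applies and yields $\Delta\vdash\phi\leqslant\psi+\epsilon$, completing the induction. I expect the $\epsilon$-net reduction of the infinitary clause to be the only real obstacle, the remaining cases being routine bookkeeping with the linearity and quantifier axioms.
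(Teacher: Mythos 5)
The paper itself never proves this lemma (it is stated in \S\ref{Proof system} with the details deferred to \cite{Safari-Bagheri}), so your argument has to stand on its own. Its architecture is the right one: strengthen to ``$\Gamma\vdash_\alpha\phi\leqslant\psi$ implies there is a finite $\Delta\subseteq\Gamma$ with $\Delta\vdash\phi\leqslant\psi+\epsilon$'' and induct on $\alpha<\omega_1$. Your treatment of the genuinely hard case, the infinitary density clause, is sound: the provable bounds confine the relevant $r$ to the band $(-\mathbf{b}_\psi,\mathbf{b}_\phi]$, finitely many premises along an $\epsilon/2$-net chosen from the dense set $A$ suffice (with the small precaution, which you should state, that the bottom net point be taken at or below $-\mathbf{b}_\psi$, so that every point of the band has a net point within $\epsilon/2$ \emph{below} it), and one further application of the density clause over $B=\Rn$ closes the argument; the rank bookkeeping is correct because only finitely many fixed subderivations are reused.

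There are, however, two debts. The concrete one is your case for (R3): you ``absorb $0\leqslant r$ as the axiom (A3)'', but the paper explicitly allows $r<0$ in (R3), and for such instances $0\leqslant r$ is a genuine premise derived from $\Gamma$, never an axiom, so your induction step simply does not cover them. The repair stays inside your framework: apply the induction hypothesis to the premise $\Gamma\vdash_\beta 0\leqslant r$ with slack $-r/2$, obtaining a finite $\Delta_0\subseteq\Gamma$ with $\Delta_0\vdash 0\leqslant r/2$; since $r/2<0$, scaling by $-2/r$ via (R3), (A3), (A2) gives $\Delta_0\vdash 0\leqslant -1$, hence $\Delta_0\vdash 1\leqslant 0$, and from this inconsistent finite set the conclusion $r\phi\leqslant r\psi+\epsilon$ follows using the bound axioms and scaling. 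The second debt is structural: you invoke monotonicity of $\vdash$ under enlarging the hypothesis set, and implicitly (in ``hence $\Delta,s\leqslant\phi\vdash s_i\leqslant\psi+\epsilon/2$'') a cut property permitting derived hypotheses to be substituted. Neither is automatic in this system, because the $\sup$-clause carries the side condition that $x$ not be free in the hypothesis set, and enlarging or substituting hypotheses can violate it; both lemmas hold, but only after a bound-variable renaming argument (alpha-conversion is derivable from (A11), (A12), (A13), (A15)), and they should be proved before being used, since your $\epsilon$-net step leans on them at its crucial point. With these two repairs the proof goes through.
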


The following lemma is some sort of cut elimination.

\begin{lemma}
Assume  $\Gamma,0\leqslant\theta\vdash\Sc$ and $\Gamma,\theta\leqslant0\vdash\Sc$.
Then $\Gamma\vdash\Sc$.
\end{lemma}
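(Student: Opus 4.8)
The plan is to reduce the lemma to a single syntactic engine — a proof-theoretic analogue of the semantic deduction theorem Lemma \ref{proofs}(i) — and then to cancel $\theta$ by a linear combination. First I would observe that every condition has the shape $\phi\leqslant\psi$, and that by (R2) together with the linearity axioms $\phi\leqslant\psi$ is interderivable with $0\leqslant\psi-\phi$; so it is enough to treat $\Sc$ of the form $0\leqslant\chi$. The key auxiliary statement, the \emph{syntactic deduction lemma}, is: if $\Gamma,0\leqslant\theta\vdash(\phi\leqslant\psi)$, then for every $\epsilon>0$ there is $a\geqslant0$ with $\Gamma\vdash\phi+a\theta\leqslant\psi+\epsilon$. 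I would prove it by induction on the least ordinal $\alpha$ with $\Gamma,0\leqslant\theta\vdash_\alpha(\phi\leqslant\psi)$, following the clauses of Definition \ref{proofdfn}.

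The base case splits in two: if $\phi\leqslant\psi$ is a logical axiom or lies in $\Gamma$, take $a=0$ and use $\vdash\psi\leqslant\psi+\epsilon$ (from A3 and R2); if it is the hypothesis $0\leqslant\theta$ itself, take $a=1$, since $\theta\leqslant\theta+\epsilon$ is provable. The rule cases are routine and keep the coefficient nonnegative: for (R1) one adds the two coefficients and chains by (R1); for (R2) the coefficient is unchanged; for (R3) with scalar $r\geqslant0$ one multiplies the coefficient by $r$; and for the $\sup$-clause (R4) one uses that the bound variable $x$ is not free in the context, hence not in $\theta$, so that A13 gives $\sup_x(\phi+a\theta)=\sup_x\phi+a\theta$.

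The hard part will be the density clause. There $\Sc=(\phi\leqslant\psi)$ arises from a dense $A\subseteq\Rn$ with $\Gamma,0\leqslant\theta,r\leqslant\phi\vdash_{\beta_r}r\leqslant\psi$ for $r\in A$. Applying the induction hypothesis to each of these lower-rank derivations yields, for each $r$ and each $\delta>0$, a coefficient $a_r\geqslant0$ with $\Gamma,r\leqslant\phi\vdash r\leqslant\psi+\delta-a_r\theta$. A direct computation shows that \emph{if} these coefficients can be chosen uniform in $r$ — a single $a$ — then taking $\delta\leqslant\epsilon$ and feeding the family $\{\Gamma,r\leqslant\phi\vdash r\leqslant\psi+\epsilon-a\theta\}_{r\in A}$ back into the density clause (applied to the antecedent $\phi$) produces exactly $\Gamma\vdash\phi\leqslant\psi+\epsilon-a\theta$, i.e. $\Gamma\vdash\phi+a\theta\leqslant\psi+\epsilon$. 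So the whole difficulty concentrates in securing this uniformity. I would obtain it by strengthening the induction hypothesis to deliver a coefficient bounded in terms of $\theta$, $\phi$, $\psi$ and $\epsilon$ only, exploiting that all formulas are bounded ($|\theta|\leqslant\mathbf{b}_\theta$), so that the bound is independent of the threshold $r$. This uniform-coefficient strengthening is the crux of the proof-theoretic argument.

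With the deduction lemma available the conclusion is quick. Writing $\Sc$ as $0\leqslant\chi$, apply the lemma to the first hypothesis to get $a\geqslant0$ with $\Gamma\vdash a\theta\leqslant\chi+\epsilon$, and to the second hypothesis — rewriting $\theta\leqslant0$ as $0\leqslant-\theta$ — to get $b\geqslant0$ with $\Gamma\vdash -b\theta\leqslant\chi+\epsilon$. Scaling the first by $b$ and the second by $a$ (R3), adding them (R2), and simplifying by the linearity axioms cancels $\theta$ and yields $\Gamma\vdash 0\leqslant(a+b)(\chi+\epsilon)$; dividing by $a+b$ when it is positive (and reading the conclusion off directly when $a+b=0$) gives $\Gamma\vdash 0\leqslant\chi+\epsilon$ for every $\epsilon>0$. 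Finally I would pass to the limit via the density clause: for $r>0$ the context $\Gamma,r\leqslant0$ is inconsistent (scale $r\leqslant0$ by $1/r$ through R3 to get $1\leqslant0$), while for $r<0$ we have $\Gamma\vdash r\leqslant\chi$ from $\Gamma\vdash 0\leqslant\chi+(-r)$; hence $\Gamma,r\leqslant0\vdash r\leqslant\chi$ on the dense set $\Rn\setminus\{0\}$, and the density clause delivers $\Gamma\vdash 0\leqslant\chi$, that is $\Gamma\vdash\Sc$. (Semantically the statement is immediate from soundness, completeness, and the trichotomy of $\theta^M$; the point of the lemma, and of the proposal above, is to carry this case analysis out inside the proof system itself.)
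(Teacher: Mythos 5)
The paper itself states this lemma without proof (details are deferred to \cite{Safari-Bagheri}), so your proposal must stand on its own; and on its own it has a genuine gap exactly at the point you call ``the crux''. Your architecture is sound: the reduction to conditions of the form $0\leqslant\chi$, the cancellation step (scale the two outputs of the deduction lemma by $b$ and by $a$, add via (R2)/(R1), divide by $a+b$), and the removal of $\epsilon$ through the density clause are all correct, modulo two repairable slips (the final density-clause application should use a countable dense set such as $\Qn\setminus\{0\}$, since Definition \ref{proofdfn} needs all ordinals $\beta_r$ below a single $\alpha<\omega_1$; and your (R3) case only treats $r\geqslant0$, while the premise $0\leqslant r$ with $r<0$, i.e.\ an inconsistent context, also needs handling via the bound axioms). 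The unrepaired problem is the density-clause case of your syntactic deduction lemma: the fix you propose rests on a false principle. You claim the coefficient can be ``bounded in terms of $\theta$, $\phi$, $\psi$ and $\epsilon$ only'', the bound coming from $|\theta|\leqslant\mathbf{b}_\theta$. No such bound exists. Take unary predicates $R_1,R_2$, a constant $c$, and the fixed formulas $\theta=-R_1(c)$, $\phi=1$, $\psi=1-R_2(c)$, with $\Gamma_s=\{R_1(c)=sR_2(c)\}$ for a parameter $s\in(0,1)$. Then $\Gamma_s,0\leqslant\theta\vDash\phi\leqslant\psi$ (so by the completeness theorem the derivability hypothesis holds), but by soundness any $a\geqslant0$ with $\Gamma_s\vdash\phi+a\theta\leqslant\psi+\epsilon$ must satisfy $1-as\leqslant\epsilon$ (evaluate in a model with $R_2(c)=1$), i.e.\ $a\geqslant(1-\epsilon)/s$. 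All formula bounds here are at most $2$, yet the required coefficient blows up as $s\to0$: the bound necessarily depends on $\Gamma$, not just on the formulas and $\epsilon$.

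This is fatal to the induction as you set it up. In the density case the induction hypothesis is applied to the contexts $\Gamma\cup\{r\leqslant\phi\}$, one for each $r$ in the dense set, so any honest bound on the coefficient depends on that context and hence potentially on $r$; your argument for $r$-independence was exactly the formula-boundedness principle refuted above. The uniform coefficient you need does exist \emph{semantically} --- the coefficient witnessing Lemma \ref{proofs}(i) for $\Gamma,0\leqslant\theta\vDash\phi\leqslant\psi$ serves all $r$ at once --- but reaching it that way uses soundness \emph{and completeness}, and completeness is what this cut lemma is needed to prove, so that route is circular. What is missing is a genuinely syntactic mechanism for extracting a single coefficient from the family of derivations $\Gamma,r\leqslant\phi\vdash r+a_{r}\theta\leqslant\psi+\delta$ ($r\in A$): for instance an induction hypothesis that tracks how coefficients transform when hypotheses of the form $r\leqslant\phi$ are adjoined, or a simultaneous induction on the two derivations of the cut lemma itself that never isolates a one-sided deduction lemma. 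Until such a mechanism is supplied, the proof is incomplete at its central step.
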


\begin{theorem} \emph{(Soundness and completeness)}\index{completeness theorem}
A set $\Gamma$ of conditions is satisfiable if and only if it is consistent.
Also, for each sentence $\phi$,\ \ $\Gamma\vDash0\leqslant\phi$ if and only if $\Gamma\vdash0\leqslant\phi$.
\end{theorem}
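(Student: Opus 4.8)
The plan is to prove soundness and the ``only if'' halves by a transfinite induction on proof length, and to obtain completeness by a Henkin-style construction that mirrors the proof of affine compactness given after Lemma \ref{witness}, with every appeal to affine satisfiability replaced by an appeal to consistency; the second statement is then reduced to the first using the finiteness and cut-elimination lemmas above.

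First I would prove soundness, that $\Gamma\vdash\mathcal S$ implies $\Gamma\vDash\mathcal S$, by transfinite induction on the least $\alpha$ with $\Gamma\vdash_\alpha\mathcal S$. The base case amounts to checking that each logical axiom (A1)--(A22) holds in every $L$-structure (for instance A22 because relations take values in $[0,1]$, and A16 because $\sup=-\inf(-\,\cdot\,)$), while the conditions of $\Gamma$ hold in every model of $\Gamma$ by definition. In the inductive step the rules (R1)--(R3) plainly preserve validity; the $\sup$-clause of Definition \ref{proofdfn} is sound because $x$ not being free in $\Gamma$ lets one range over all assignments while keeping $M\vDash\Gamma$; and the density clause is sound because if $M\vDash\Gamma$ and $\Gamma,\,r\leqslant\phi\vDash r\leqslant\psi$ for all $r$ in a dense set, then each such $r\leqslant\phi^M$ forces $r\leqslant\psi^M$, and density gives $\phi^M\leqslant\psi^M$. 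In particular $\Gamma\vdash0\leqslant\phi$ implies $\Gamma\vDash0\leqslant\phi$, and a satisfiable $\Gamma$ cannot prove $1\leqslant0$, so it is consistent.

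For the converse of the first statement I would show a consistent $\Gamma$ is satisfiable by repeating the Henkin argument behind affine compactness, but proof-theoretically. The witness step is the syntactic analogue of Lemma \ref{witness}: if $\Gamma$ is consistent and $c$ is new, then $\Gamma\cup\{\sup_x\phi(x)\leqslant\phi(c)\}$ stays consistent, since otherwise the finiteness lemma yields a finite $\Delta\subseteq\Gamma$ from which, using that $c$ is fresh together with the $\sup$-clause of Definition \ref{proofdfn}, one derives a proof of inconsistency of $\Delta$ itself. Iterating gives $\bar L\supseteq L$ and a maximal consistent $\bar T\supseteq\Gamma$ with the witness property (maximality and consistency of unions of chains again rest on the finiteness lemma). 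I would then set $v(\sigma)=\inf\{r:\bar T\vdash\sigma\leqslant r\}$ for each $\bar L$-sentence $\sigma$; the bound axioms make $v$ finite, the density clause gives $\bar T\vdash\sigma\leqslant r$ iff $v(\sigma)\leqslant r$, the linearity axioms and rules make $v$ positive, linear and $v(1)=1$, and the witness property yields $v(\sup_x\phi)=\sup_c v(\phi(c))$. Building the canonical model $M$ on the constants of $\bar L$ modulo $d=0$ exactly as in the Henkin proof and inducting on complexity gives $\phi^M(\bar c)=v(\phi(\bar c))$, whence $M\vDash\Gamma$; completing $M$ by Proposition \ref{exist} finishes. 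The hard part is precisely this half: carrying out the witness lemma and the linearity and quantifier properties of $v$ purely syntactically, since there every semantic manipulation of the affine-compactness proof must be re-derived from (R1)--(R4) and from the finiteness and cut-elimination lemmas.

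Finally the nontrivial direction of the second statement reduces to the first. If $\Gamma\vDash0\leqslant\phi$ then $\Gamma\cup\{\phi\leqslant-\epsilon'\}$ is unsatisfiable for every $\epsilon'>0$, hence inconsistent by the first statement. Applying the cut-elimination lemma with $\theta=\phi+\epsilon'$ (for $0<\epsilon'<\epsilon$) splits into the case $\theta\leqslant0$, which is $\phi\leqslant-\epsilon'$ and makes $\Gamma$ inconsistent so it proves $-\epsilon\leqslant\phi$, and the case $0\leqslant\theta$, which is $-\epsilon'\leqslant\phi$ and gives $-\epsilon\leqslant\phi$ by (R1) together with the numeric axiom $-\epsilon\leqslant-\epsilon'$; hence $\Gamma\vdash-\epsilon\leqslant\phi$ for all $\epsilon>0$. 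Since moreover $\Gamma,\,r\leqslant0$ is inconsistent for $r>0$ (scale $r\leqslant0$ by $1/r$ via (R3)), I obtain $\Gamma,\,r\leqslant0\vdash r\leqslant\phi$ for every $r\neq0$, a dense set, so the density clause of Definition \ref{proofdfn} yields $\Gamma\vdash0\leqslant\phi$, completing the proof.
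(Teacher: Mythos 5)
The paper itself does not prove this theorem (it defers to the reference on completeness for linear continuous logic), so there is no internal proof to compare against; your overall route — soundness by transfinite induction on $\vdash_\alpha$, then a syntactic Henkinization mirroring the paper's Henkin-method proof of affine compactness — is exactly the intended one. Two of your three parts are in good shape: the soundness induction is routine and correct (under the assignment reading of free variables, which the side condition ``$x$ not free in $\Gamma$'' presupposes), and your reduction of the second statement to the first is correct and nicely done: inconsistency of $\Gamma\cup\{\phi\leqslant-\epsilon'\}$ plus cut-elimination with $\theta=\phi+\epsilon'$ gives $\Gamma\vdash-\epsilon\leqslant\phi$ for all $\epsilon>0$, and the density clause (applied to a \emph{countable} dense set such as $\mathbb{Q}\setminus\{0\}$, so that the ranks $\beta_r$ stay bounded below $\omega_1$) then yields $\Gamma\vdash0\leqslant\phi$.

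The genuine gap is in the completeness half, at the witness step. You claim that if $\Gamma\cup\{\sup_x\phi(x)\leqslant\phi(c)\}$ is inconsistent, then ``the finiteness lemma, freshness of $c$, and the $\sup$-clause of Definition \ref{proofdfn}'' yield inconsistency of a finite $\Delta\subseteq\Gamma$. But the hypothesis $\sup_x\phi(x)\leqslant\phi(c)$ sits on the \emph{left} of $\vdash$, and nothing among (R1)--(R4), the finiteness lemma, or cut-elimination lets you discharge a hypothesis: the $\sup$-clause only introduces suprema in the \emph{conclusion}, and cut-elimination is useless here because the companion branch $\Gamma,\phi(c)\leqslant\sup_x\phi(x)\vdash\cdots$ is deductively just $\Gamma$ (that condition is axiom (A12)), so you would need the inconsistency of $\Gamma$ that you are trying to disprove. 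What is actually needed is an affine deduction theorem, the syntactic counterpart of Lemma \ref{proofs}(i): if $\Gamma,0\leqslant\theta\vdash0\leqslant\xi$, then for each $\epsilon>0$ there is $r\geqslant0$ with $\Gamma\vdash r\theta\leqslant\xi+\epsilon$. With it the witness step does go through (replace $c$ by a fresh variable, obtain $\Gamma\vdash r\phi(y)\leqslant r\sup_x\phi(x)-\delta$, and apply the $\sup$-clause with (A13), (A15) to get $\Gamma\vdash r\sup_x\phi\leqslant r\sup_x\phi-\delta$, a contradiction), and the same lemma is what makes your value functional $v$ work (e.g.\ showing $\bar T\cup\{v(\sigma)\leqslant\sigma\}$ is consistent, hence that $\bar T$ decides every sentence and $v$ is linear). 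This hypothesis-discharging lemma must be proved by induction on derivations — the density clause being the delicate case — and it is the technical core of the completeness proof; your plan names the difficulty but offers a mechanism that, as stated, would fail without it.
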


\newpage\section{Maximality of AL} \label{Maximality of AL}
In this section we give a brief review of a Lindstr\"{o}m type theorem for AL.
The arguments are essentially adopted according to \cite{Iovino}. More details can be found in \cite{Malekghasemi}.
Roughly speaking, an abstract logic is a pair $(\mathscr{L},\vDash_{\mathscr{L}})$,
where $\mathscr{L}$ is a mapping defined on languages $L$ such that
$\mathscr{L}(L)$ is a set (or even a class, called the class of $\mathscr{L}$-sentences of $L$)
and $\vDash_{\mathscr{L}}$ is a relation between first order $L$-structures
and $L$-sentences (see \cite{Barwise}).
For continuous structures, we may adopt the following definition.
By language we mean a Lipschitz language and by structure we mean a Lipschitz structure.

\begin{definition} \label{logic}
\emph{A {\it logic} is a pair $(\mathscr{L},\vDash_{\mathscr{L}})$,
where $\mathscr{L}$ is a mapping defined on languages $L$ such that
$\mathscr{L}(L)$ is a set and $\vDash_{\mathscr{L}}$ is a function
which assigns to every $L$-structure $M$ and every
$\sigma\in\mathscr{L}(L)$ a real number $\sigma^M\in\Rn$ such that:
\begin{itemize}
\item[(i)] For each $r\in\Rn$, $r\in\mathscr{L}(L)$ and $r^M=r$.
\item [(ii)] For every $\sigma\in\mathscr{L}(L)$, there is a bounded interval $I_{\sigma}\subseteq \mathbb R$
such that $\sigma^{M}\in I_{\sigma} $ for every $L$-structure $M$.
\item[(iii)] If $L\subseteq L'$, then $\mathscr{L}(L)\subseteq\mathscr{L}(L')$.
\item[(iv)] Occurrence property: For each sentence $\sigma$, there is a finite language
$L_\sigma$ such that $\sigma\in\mathscr{L}(L_\sigma)$.
\item[(v)] Isomorphism property: If $M$ and $N$ are  $L$-structures and $M\simeq N$,
then for every $\sigma\in\mathscr{L}(L)$, \ \ $\sigma^{M}=\sigma^{N}$.
\item[(vi)] Reduction property: If $L\subseteq L^{'}$, $M$ is a $L^{'}$-structure and
$\sigma\in\mathscr{L}(L)$, then $\sigma^{M}=\sigma^{M|_L}$.
\item[(vii)] Renaming property: If $f:L\rightarrow L'$ is a renaming, then for each $\sigma\in\mathscr{L}(L)$
there exists $\sigma^{f}\in\mathscr{L}(L')$ such that for each $L$-structure $M$, one has $\sigma^{M}=(\sigma^f)^{M^f}$.
\end{itemize}}
\end{definition}

We also need some sort of regularity for logics, i.e. closure under
linear connectives and quantifiers. A logic $\mathscr{L}$ has {\it addition}
if for every pair of sentences $\sigma,\eta\in\mathscr{L}(L)$
there is a sentence $\xi\in\mathscr{L}(L)$ such that for any $L$-structure $M$,
one has $\xi^{M}=\sigma^{M}+\eta^{M}$. $\mathscr{L}$ has {\it scalar multiplication}
if for every $\sigma$ and $r\in\Rn$ there is a sentence $\eta$ such that $\eta^{M} =r\sigma^{M}$ for every $M$.
These are unique up to $\mathscr{L}$-equivalence.
If there is no confusion, we denote them by $\sigma+\eta$ and $r\sigma$ respectively.
So, $\mathscr{L}(L)$ may be regarded as a vector space over $\Rn$
which includes a copy of $\Rn$.
$\mathscr{L}$ has \emph{supremal quantification} if for each $\sigma$ and $c\in L_{\sigma}$,
there is a sentence $\sup_{x}\text{$\sigma(x)$}$ in $L_{\sigma}-\{c\}$ such that
for every structure $M$ in $L_{\sigma}-\{c\}$ one has that
$$(\sup_{x}\text{$\sigma(x)$})^{M}=\mbox{sup}\{\sigma^M(a)\mid a\in M\}.$$
Here, $\sigma^M(a)$ is the value of $\sigma$ in the structure $(M,c)$ where $c^M=a$.
Infimal quantification $\inf_{x}\text{$\sigma(x)$}$ is defined similarly.

A logic having the above properties is called \emph{regular}.
The logics AL and CL are regular. Below, the logic AL is denoted by $\mathscr{L}^1$ (see also \S \ref{ALp}).
By logic we mean a regular one.
For a logic $\mathscr{L}$, the concepts such as condition, theory, elementary chain,
affine compactness etc. are all defined as in $\mathscr{L}^1$.

\begin{definition}\label{compare2}
\emph{Let $\mathscr{L}$, $\mathscr{L}'$ be two logics. $\mathscr{L}'$ is an {\it extension} of $\mathscr{L}$
(or $\mathscr{L}$ is reducible to $\mathscr{L}'$), denoted by $\mathscr{L}\leqslant\mathscr{L}'$, if for every
language $L$, sentence $\phi\in\mathscr{L}(L)$ and $\epsilon>0$ there exists $\sigma\in\mathscr{L}'(L)$ such that
$$\vDash\ |\sigma-\phi|\leqslant\epsilon.$$
$\mathscr{L}$ and $\mathscr{L}'$ are {\it equivalent} if $\mathscr{L}\leqslant\mathscr{L}'$
and $\mathscr{L}'\leqslant\mathscr{L}$. This is denoted by $\mathscr{L}\equiv\mathscr{L}'$.}
\end{definition}

In \cite{Iovino}, a weaker notion of extension is defined for logics having the approximation property.
For the present context, it can be restated as follows.

\begin{definition} \label{stronger2}
\emph{$\mathscr{L}\vartriangleleft\mathscr{L}'$ if for every language $L$,
$\phi\in\mathscr{L}(L)$ and $\epsilon>0$, there exists $\sigma\in\mathscr{L}'(L)$
such that for every $L$-structure $M$\\
- if $M\vDash0\leqslant\phi$ then  $M\vDash0\leqslant\sigma$\\
- if $M\vDash-\epsilon\leqslant\sigma$ then $M\vDash-\epsilon\leqslant\phi$.}
\end{definition}

\begin{proposition}
Assume $\mathscr{L}'$ satisfies the affine compactness theorem.
Then, $\mathscr{L}\vartriangleleft\mathscr{L}'$ if and only if for every
$L$ and $\mathscr{L}(L)$-theory $T$ there is $\mathscr{L}'(L)$-theory $T'$
such that Mod$(T)=$Mod$(T')$.
\end{proposition}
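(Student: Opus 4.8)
The statement to prove characterizes the relation $\mathscr{L}\vartriangleleft\mathscr{L}'$ (from Definition \ref{stronger2}) in terms of mutual axiomatizability of theories, under the hypothesis that $\mathscr{L}'$ satisfies affine compactness. My plan is to prove both directions separately, treating the forward direction as the substantive one.

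For the easier direction, assume $\mathscr{L}\vartriangleleft\mathscr{L}'$ and let $T$ be an $\mathscr{L}(L)$-theory. I would produce $T'$ by replacing each condition in $T$ with approximating $\mathscr{L}'$-conditions. The natural choice is, for each condition $0\leqslant\phi$ in $T$ (every condition can be put in this form) and each $\epsilon>0$, to pick the $\sigma=\sigma_{\phi,\epsilon}\in\mathscr{L}'(L)$ guaranteed by $\vartriangleleft$, and let $T'$ collect the conditions $-\epsilon\leqslant\sigma_{\phi,\epsilon}$ for all such $\phi$ and all $\epsilon>0$. The two bullet clauses of Definition \ref{stronger2} then give the inclusion $\textrm{Mod}(T)\subseteq\textrm{Mod}(T')$ from the first clause and $\textrm{Mod}(T')\subseteq\textrm{Mod}(T)$ from the second: if $M\vDash0\leqslant\phi$ then $M\vDash0\leqslant\sigma$, so certainly $M\vDash-\epsilon\leqslant\sigma$; conversely if $M\vDash-\epsilon\leqslant\sigma_{\phi,\epsilon}$ for every $\epsilon$, the second clause yields $M\vDash-\epsilon\leqslant\phi$ for every $\epsilon$, hence $M\vDash0\leqslant\phi$. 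This uses only the definition, not compactness.

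For the forward-to-be-proved direction, assume every $\mathscr{L}(L)$-theory $T$ is equivalent (same models) to some $\mathscr{L}'(L)$-theory $T'$; I must derive $\mathscr{L}\vartriangleleft\mathscr{L}'$. Fix $\phi\in\mathscr{L}(L)$ and $\epsilon>0$. The idea is to apply the hypothesis to a well-chosen theory and then use affine compactness in $\mathscr{L}'$ to extract a single sentence $\sigma$. First I would consider the theory $T=\{0\leqslant\phi\}$, obtaining an $\mathscr{L}'(L)$-theory $T'$ with $\textrm{Mod}(0\leqslant\phi)=\textrm{Mod}(T')$. The goal is to find one $\sigma\in\mathscr{L}'(L)$ with $\textrm{Mod}(0\leqslant\phi)\vDash 0\leqslant\sigma$ and $\{-\epsilon\leqslant\sigma\}\vDash -\epsilon\leqslant\phi$. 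I expect to argue that $T'\cup\{\phi\leqslant-\epsilon\}$ is unsatisfiable (any model would satisfy $T'$, hence lie in $\textrm{Mod}(0\leqslant\phi)$, contradicting $\phi\leqslant-\epsilon$), and then invoke affine compactness for $\mathscr{L}'$ to pass to a finite affine combination of conditions from $T'$ that already excludes $\phi\leqslant-\epsilon$ with some margin. Assembling that finite combination into a single sentence $\sigma$ (using addition and scalar multiplication, which regular logics possess) should deliver the required $\sigma$, with the margin tuned to $\epsilon$.

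The main obstacle is the bookkeeping in the compactness extraction: I must ensure the finite combination of $\mathscr{L}'$-conditions produces exactly the two one-sided implications in Definition \ref{stronger2} with the correct $\epsilon$, rather than merely an unquantified approximation. Concretely, I expect to run the argument twice or symmetrically — once to bound $\phi$ below by $\sigma$ on models of $0\leqslant\phi$, and once to control $\sigma$ on models where $-\epsilon\leqslant\sigma$ — and to reconcile the two margins. I would lean on the affine form of compactness as used elsewhere in the text (for instance in the style of Lemma \ref{proofs} and Proposition \ref{interpolation1}, where unsatisfiability of $T\cup\{\text{condition}\}$ is converted into a provable inequality with an explicit $\delta>0$), adapting those estimates to the abstract logic $\mathscr{L}'$. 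The regularity hypotheses (addition, scalar multiplication) guarantee that the resulting finite affine combination is itself a sentence of $\mathscr{L}'(L)$, which is essential for the conclusion to land inside $\mathscr{L}'$ rather than in some closure of it.
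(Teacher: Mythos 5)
Your first direction (from $\mathscr{L}\vartriangleleft\mathscr{L}'$ to axiomatizability) is correct as written, and you are right that it needs no compactness. The gap is in the converse, and it is not mere bookkeeping: you propose to apply the affine compactness theorem of $\mathscr{L}'$ to the set $T'\cup\{\phi\leqslant-\epsilon\}$, but this is not an $\mathscr{L}'(L)$-theory, since $\phi$ is an $\mathscr{L}$-sentence and nothing guarantees it is even approximately equivalent to an $\mathscr{L}'$-sentence --- that is essentially what the proposition is asserting. Affine compactness of $\mathscr{L}'$ is a statement about $\mathscr{L}'(L)$-theories only; the affine closure of your mixed set would contain hybrid sentences $r\phi+\sum_i r_i\sigma_i$ belonging to neither logic, so the hypothesis simply does not apply to it. Nor can the desired conclusion be squeezed out of $T'$ alone: for a structure $M$ with $\phi^M\leqslant-\epsilon$ one only knows that \emph{some} condition of $T'$ fails at $M$, with both the failing condition and the margin of failure depending on $M$, and without compactness on that side there is no way to extract a single combination that fails uniformly.

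The missing idea is to invoke the axiomatizability hypothesis a \emph{second} time, on the $\mathscr{L}(L)$-theory $U=\{\phi\leqslant-\epsilon\}$ (a legitimate $\mathscr{L}$-condition, since every real constant is an $\mathscr{L}$-sentence). This yields an $\mathscr{L}'(L)$-theory $U'$ with $\mathrm{Mod}(U')=\mathrm{Mod}(\phi\leqslant-\epsilon)$. Now $T'\cup U'$ is a genuine $\mathscr{L}'(L)$-theory, unsatisfiable because $\mathrm{Mod}(0\leqslant\phi)\cap\mathrm{Mod}(\phi\leqslant-\epsilon)=\emptyset$. Affine compactness of $\mathscr{L}'$ gives conditions $0\leqslant\sigma_i$ in $T'$ and $0\leqslant\eta_j$ in $U'$ (normalized using regularity) and coefficients $r_i,s_j\geqslant0$ such that $0\leqslant\sigma_0+\eta_0$ is unsatisfiable, where $\sigma_0=\sum_i r_i\sigma_i$ and $\eta_0=\sum_j s_j\eta_j$. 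A further compactness argument (the analogue of the remark preceding Lemma \ref{witness}: if $-\delta\leqslant\theta$ is satisfiable for every $\delta>0$, then $0\leqslant\theta$ is satisfiable) upgrades this to a uniform margin $\delta>0$ with $\sigma_0^M+\eta_0^M\leqslant-\delta$ for all $M$. Setting $\sigma=\frac{2\epsilon}{\delta}\,\sigma_0\in\mathscr{L}'(L)$ then witnesses Definition \ref{stronger2}: if $M\vDash0\leqslant\phi$ then $M\vDash T'$, so $\sigma_0^M\geqslant0$ and $M\vDash0\leqslant\sigma$; and if $M\nvDash-\epsilon\leqslant\phi$ then $\phi^M<-\epsilon$, so $M\vDash U'$, hence $\eta_0^M\geqslant0$, $\sigma_0^M\leqslant-\delta$, and $\sigma^M\leqslant-2\epsilon<-\epsilon$, which is the contrapositive of the second clause (the factor $2$ avoids the boundary case $\sigma^M=-\epsilon$). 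With this second application your overall architecture goes through; without it, the key compactness step is unlicensed. (For reference, the paper itself states this proposition without proof, deferring to the cited literature, so the assessment above is of your argument on its own terms.)
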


\begin{theorem}\label{2}
Let $\mathscr{L}^1\leqslant\mathscr{L}$.
Assume the affine compactness theorem and the elementary chain
property hold in $\mathscr{L}$. Then $\mathscr{L}^1\equiv\mathscr{L}$.
\end{theorem}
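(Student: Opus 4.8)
The plan is to establish the missing inclusion $\mathscr{L}\leqslant\mathscr{L}^1$, i.e. that every $\phi\in\mathscr{L}(L)$ is uniformly $\epsilon$-approximated by AL-sentences; together with the hypothesis $\mathscr{L}^1\leqslant\mathscr{L}$ this yields $\mathscr{L}^1\equiv\mathscr{L}$. The overall frame is the Stone--Weierstrass argument already used in Theorem \ref{char1}. Let $K=K(L)$ be the compact convex set of complete AL-theories, and recall that the functions $f_\sigma(T)=T(\sigma)$, for AL-sentences $\sigma$, form a dense subspace of $\mathbf{A}(K)$. Fixing $\phi\in\mathscr{L}(L)$, I would set $f_\phi(T)=\phi^M$ for any $M\vDash T$ and reduce the theorem to showing $f_\phi\in\mathbf{A}(K)$: once $f_\phi$ is a well-defined affine logic-continuous function, density of $\{f_\sigma\}$ yields, for each $\epsilon>0$, an AL-sentence $\sigma$ with $|f_\phi-f_\sigma|\leqslant\epsilon$ on $K$, that is $\vDash|\phi-\sigma|\leqslant\epsilon$.

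First I would prove well-definedness, namely $M\equiv N\Rightarrow\phi^M=\phi^N$ for all $M,N$ (here $\equiv$ is AL-equivalence). Given $M\equiv N$, I would manufacture AL-saturated extensions that remain \emph{$\mathscr{L}$-elementary}: build chains $M=M_0\preccurlyeq M_1\preccurlyeq\cdots$ and $N=N_0\preccurlyeq\cdots$ in which each successor realises all AL-types over its predecessor. This is legitimate inside $\mathscr{L}$ because $\mathscr{L}^1\leqslant\mathscr{L}$ makes every AL-type a family of $\mathscr{L}$-conditions, so affine compactness in $\mathscr{L}$ realises it in an $\mathscr{L}$-elementary extension, while limits are taken using the elementary chain property of $\mathscr{L}$. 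Choosing a common cardinal $\kappa$ (via the cardinality bounds of Proposition \ref{saturated2} together with the downward theorem \ref{downward} applied in AL), I would arrange $M\preccurlyeq_{\mathscr{L}}M^{+}$ and $N\preccurlyeq_{\mathscr{L}}N^{+}$ with $M^{+},N^{+}$ both $\kappa$-saturated of density character $\kappa$. Since $\mathscr{L}^1\leqslant\mathscr{L}$ forces $\preccurlyeq_{\mathscr{L}}$ to refine AL-elementarity, we get $M^{+}\equiv N^{+}$, whence by the AL isomorphism criterion for equal-density-character saturated models, $M^{+}\simeq N^{+}$. The isomorphism property of $\mathscr{L}$ (clause (v) of Definition \ref{logic}) then gives $\phi^{M^{+}}=\phi^{N^{+}}$, and as $\phi$ is a sentence preserved along $\mathscr{L}$-elementary extensions, $\phi^{M}=\phi^{M^{+}}=\phi^{N^{+}}=\phi^{N}$.

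Next I would address affinity and continuity of $f_\phi$. Continuity reduces, via the ultramean theorem \ref{th1}, to showing $\phi$ is preserved by AL-ultraproducts: if $T_k\to T$ in the weak* topology and $M_k\vDash T_k$, then $\prod_{\mathcal F}M_k\vDash T$, and one needs $\phi^{\prod_{\mathcal F}M_k}=\lim_{\mathcal F}\phi^{M_k}$; affinity reduces similarly to preservation of $\phi$ under finite ultrameans $\gamma M_1+(1-\gamma)M_2$. I propose to obtain both from the isomorphism theorem \ref{isomorphism}: passing to $\aleph_1$-saturated AL-elementary (hence, by the previous paragraph, $\phi$-value-preserving) extensions and using that AL-equivalent structures have isomorphic AL-powermeans, so that the relevant ultramean is AL-equivalent to a structure whose $\phi$-value is already pinned by well-definedness to the affine combination, respectively the ultralimit, of the $\phi^{M_k}$. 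Once $f_\phi$ is shown well-defined, affine and continuous, the Stone--Weierstrass step closes the proof exactly as in Theorem \ref{char1}.

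The step I expect to be the main obstacle is the affinity of $f_\phi$: unlike well-definedness, which is delivered cleanly by the isomorphism property of the abstract logic, affinity asserts that an \emph{arbitrary} $\mathscr{L}$-sentence commutes with affine combinations of models, and such preservation is not granted by the axioms of a regular logic. The delicate point is to realise the barycentric value $\gamma\phi^{M_1}+(1-\gamma)\phi^{M_2}$ as $\phi$ evaluated on a single structure that is provably AL-equivalent to $\gamma M_1+(1-\gamma)M_2$, so that well-definedness can force equality. Carrying this out will require combining the regularity clauses of Definition \ref{logic} (addition, scalar multiplication and quantification) with the isomorphism theorem, and it is here that affine compactness and the elementary chain property in $\mathscr{L}$ are used most essentially.
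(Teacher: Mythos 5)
Your Stone--Weierstrass frame is the right overall shape (and it is the shape of Theorem \ref{char1}), and your well-definedness step is the correct Lindstr\"{o}m-style core; but the argument collapses exactly at the point you yourself flag, the affinity of $f_\phi$, and the mechanism you propose there cannot be repaired as stated. Well-definedness only \emph{transfers} $\phi$-values between AL-equivalent structures; it never \emph{computes} one, so there is no structure in your argument ``whose $\phi$-value is already pinned by well-definedness to the affine combination'': producing such a structure would require a {\L}o\'{s}-type theorem for arbitrary $\mathscr{L}$-sentences, and by Theorem \ref{char1} preservation under ultrameans is essentially equivalent to the conclusion you are trying to prove. The insufficiency is not just a worry but provable: take $\phi$ to be the CL-sentence $\sigma\vee\eta$ with $\sigma,\eta$ affine. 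This $\phi$ is isomorphism-invariant and invariant under AL-equivalence (its value on $M$ is $\max(T(\sigma),T(\eta))$ where $T=Th(M)$), and every AL fact you invoke --- saturated extensions, powermeans, the isomorphism theorem \ref{isomorphism} --- is available for it verbatim; yet $f_\phi=\max(f_\sigma,f_\eta)$ is not affine: if $T_1(\sigma)=1$, $T_1(\eta)=0$, $T_2(\sigma)=0$, $T_2(\eta)=1$, then $f_\phi(\frac{1}{2}T_1+\frac{1}{2}T_2)=\frac{1}{2}$ while $\frac{1}{2}f_\phi(T_1)+\frac{1}{2}f_\phi(T_2)=1$. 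So well-definedness plus AL machinery can never yield affinity; the only hypothesis of the theorem that excludes such $\phi$ is affine compactness of $\mathscr{L}$ itself, and your affinity argument never uses it. The same objection applies to your continuity step, which again presupposes preservation of $\phi$ under AL-ultraproducts.

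What the affinity step actually needs is a direct application of affine compactness of $\mathscr{L}$, by the two-model evaluation trick the paper already uses in Propositions \ref{upward} and \ref{connected range}. Let $M_i\vDash T_i$ with $\phi^{M_i}=r_i$ (legitimate by your step 1) and $s=\gamma r_1+(1-\gamma)r_2$. Consider the theory consisting of the conditions $\sigma=\gamma T_1(\sigma)+(1-\gamma)T_2(\sigma)$ for all AL-sentences $\sigma$, together with $\phi=s$. For any nonnegative affine combination $\theta$ of these conditions one computes $\gamma\,\theta^{M_1}+(1-\gamma)\,\theta^{M_2}=0$, so $0\leqslant\theta$ holds in $M_1$ or in $M_2$; hence (modulo the $\epsilon$-bookkeeping imposed by Definition \ref{compare2}) the theory is affinely satisfiable, and affine compactness of $\mathscr{L}$ yields a model $N$ of the complete AL-theory $\gamma T_1+(1-\gamma)T_2$ with $\phi^N=s$. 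Well-definedness then forces $f_\phi(\gamma T_1+(1-\gamma)T_2)=s$, which is affinity; closedness of the sets $\{T:f_\phi(T)\leqslant r\}$ and $\{T:r\leqslant f_\phi(T)\}$ (continuity) follows by the same pattern. Separately, your step 1 has a real, though repairable, gap: you need $\kappa$-saturated models of density character exactly $\kappa$, which Proposition \ref{saturated2} provides only for strongly inaccessible $\kappa$, and your appeal to the downward theorem \ref{downward} is illegitimate --- an AL-elementary submodel of $M^{+}$ bears no $\mathscr{L}$-elementary relation to $M$, since downward L\"{o}wenheim--Skolem for $\mathscr{L}$ is not among the hypotheses. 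This is precisely why the stated hypothesis is the elementary chain property: the proof the paper points to (it gives none itself, deferring to \cite{Iovino} and \cite{Malekghasemi}) replaces your saturated-model step by $\omega$-chains of $\mathscr{L}$-elementary extensions with back-and-forth AL-elementary maps arranged so that the two unions are literally isomorphic, and only then invokes isomorphism-invariance of $\phi$.
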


In particular, any sublogic of CL including $\mathscr{L}^1$ and satisfying the affine
compactness theorem and the elementary chain theorem is equivalent to $\mathscr{L}^1$.
The arguments leading to the proof of the main theorem can be easily modified to
prove an other variant of maximality of $\mathscr{L}^1$.

\begin{theorem}
Let $\mathscr{L}^1\vartriangleleft\mathscr{L}$.
Assume the approximate affine compactness theorem and the elementary chain
property hold in $\mathscr{L}$. Then $\mathscr{L}\vartriangleleft\mathscr{L}^1$.
\end{theorem}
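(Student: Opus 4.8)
The plan is to retrace the proof of the main theorem (Theorem~\ref{2}), systematically replacing the symmetric reducibility $\leqslant$ by the one-sided relation $\vartriangleleft$ of Definition~\ref{stronger2} and invoking the \emph{approximate} affine compactness of $\mathscr{L}$ wherever that proof used exact compactness. Since $\mathscr{L}^1$ itself satisfies affine compactness (Theorem~\ref{compactness}), the Proposition immediately preceding the statement applies with $\mathscr{L}'=\mathscr{L}^1$, so it suffices to show that for every language $L$ and every $\mathscr{L}(L)$-theory $T$ there is an $\mathscr{L}^1(L)$-theory $T'$ with $\mathrm{Mod}(T)=\mathrm{Mod}(T')$; in other words, every class of structures axiomatizable in $\mathscr{L}$ is AL-axiomatizable. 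This is the clean reformulation I would work toward.

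By the Axiomatizability theorem, $\mathrm{Mod}(T)$ is AL-elementary exactly when it is closed under ultramean and under AL-elementary equivalence $\equiv$. Thus the task splits into two closure properties. Closure under ultramean I would obtain by retracing the corresponding step of Theorem~\ref{2}: preservation of the conditions $0\leqslant\sigma$ ($\sigma\in\mathscr{L}$) across the ultramean construction is extracted from the regularity of $\mathscr{L}$ (closure under $+$, $r\cdot$, $\sup$, $\inf$) together with approximate affine compactness, combined with the Ultramean theorem (Theorem~\ref{th1}) applied to the AL-reducts. The genuine content lies in closure under $\equiv$, namely the Karp-type assertion that $M\equiv N$ in $\mathscr{L}^1$ forces $\sigma^M=\sigma^N$ for every $\sigma\in\mathscr{L}(L)$.

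To establish this, given $M\equiv N$ I would pass to $\aleph_1$-saturated AL-elementary extensions $M\preccurlyeq\hat M$ and $N\preccurlyeq\hat N$ using the saturated powermean of Theorem~\ref{saturated} (or the isomorphism theorem, Theorem~\ref{isomorphism}), arranged so that $\hat M\equiv\hat N$ persists and both are $\aleph_0$-saturated. By Proposition~\ref{10} the two are then partially isomorphic, and this back-and-forth system — amplified through elementary chains of length $\omega$, which is where the elementary chain property is consumed — should be coded into a single (finite-language) structure in the style of the Lindström game, the occurrence, reduction, isomorphism and renaming properties of the logic guaranteeing that the coding is faithful and that approximate affine compactness of $\mathscr{L}$ produces a global back-and-forth at each finite stage. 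The upshot is $\sigma^{\hat M}=\sigma^{\hat N}$ for every $\sigma\in\mathscr{L}$, whence $\sigma^M=\sigma^N$ by the reduction property. Throughout I would carry the $\varepsilon$-slack of Definition~\ref{stronger2}, working one-sidedly with the conditions $0\leqslant\sigma$ and $-\varepsilon\leqslant\sigma$; the asymmetry propagates harmlessly because the partial-isomorphism argument is symmetric in $M$ and $N$.

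The step I expect to be the main obstacle is precisely this Karp argument. Because the sentences of an abstract logic $\mathscr{L}$ are opaque, one cannot induct on their syntax as one does for AL-formulas, and the invariance under $\equiv$ must instead be wrung out of the back-and-forth system by the game-coding-plus-compactness device, relying only on the structural axioms of a logic and its regularity. Making this device run with \emph{approximate} affine compactness in place of the exact form — so that the witnessing structures are produced only up to the $\varepsilon$-tolerance — is the delicate point, and verifying that the resulting one-sided conclusion still yields $\mathscr{L}\vartriangleleft\mathscr{L}^1$ in the exact sense of Definition~\ref{stronger2} is where the modifications of the proof of Theorem~\ref{2} require the most care.
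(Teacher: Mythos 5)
There is a genuine gap, and it sits exactly where you predicted: in the Karp-type lemma that $M\equiv_{\mathrm{AL}}N$ forces $\sigma^{M}=\sigma^{N}$ for every $\sigma\in\mathscr{L}(L)$. Your plan produces the saturated extensions $\hat M\succcurlyeq M$ and $\hat N\succcurlyeq N$ by AL-internal constructions (the powermeans of Theorem~\ref{saturated}, or Theorem~\ref{isomorphism}). Such extensions are only \emph{AL}-elementary, and for an abstract sentence $\sigma$ of $\mathscr{L}$ nothing in the hypotheses relates $\sigma^{M}$ to $\sigma^{\hat M}$: the equality $\sigma^{M}=\sigma^{\hat M}$ is itself an instance of the $\equiv_{\mathrm{AL}}$-invariance you are trying to prove, so the argument is circular. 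The appeal to ``the reduction property'' to descend from $\hat M,\hat N$ back to $M,N$ is a misreading of Definition~\ref{logic}: reduction asserts $\sigma^{M}=\sigma^{M|_{L}}$ for language reducts, not anything about elementary submodels. A second, related failure: even granting $\hat M,\hat N$, Proposition~\ref{10} and partial isomorphism give agreement on \emph{CL}-sentences only; for an opaque $\mathscr{L}$-sentence the classical Lindstr\"{o}m coding device requires relativization and downward L\"{o}wenheim--Skolem, neither of which is among the hypotheses (this is precisely why the elementary chain property replaces them here), so ``coding the back-and-forth into a single structure'' cannot be made to run.

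What the hypotheses do support --- and what the paper means by modifying the proof of Theorem~\ref{2}, following Iovino --- is to keep \emph{every} extension occurring in the argument $\mathscr{L}$-elementary and to let $\mathscr{L}$ itself manufacture the back-and-forth. From $M\equiv_{\mathrm{AL}}N$, the $\mathscr{L}$-theory consisting of $ediag_{\mathscr{L}}(M)$ together with the AL-diagram of $N$ (transported along the partial map built so far, and rendered as $\mathscr{L}$-conditions via $\mathscr{L}^1\vartriangleleft\mathscr{L}$) is approximately affinely satisfiable, because each affine combination reduces to a single AL-condition whose approximate satisfiability in $M$ follows from AL-elementarity of the map; approximate affine compactness of $\mathscr{L}$ then yields interleaved chains $M=M_0\preccurlyeq_{\mathscr{L}}M_1\preccurlyeq_{\mathscr{L}}\cdots$ and $N=N_0\preccurlyeq_{\mathscr{L}}N_1\preccurlyeq_{\mathscr{L}}\cdots$ with $\bigcup_i M_i\cong\bigcup_i N_i$, and the elementary chain property (this is where it is consumed) gives $\sigma^{M}=\sigma^{\bigcup_i M_i}=\sigma^{\bigcup_i N_i}=\sigma^{N}$ by the isomorphism property. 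Separately, your closure-under-ultramean step is also not an argument as stated: regularity of $\mathscr{L}$ and Theorem~\ref{th1} applied to AL-reducts say nothing about $\sigma^{\prod_{\mu}M_i}$ for $\sigma\in\mathscr{L}$. It can be repaired once the Karp lemma is available (given $M_i\vDash 0\leqslant\sigma$, the theory $Th_{\mathrm{AL}}(\prod_{\mu}M_i)\cup\{0\leqslant\sigma\}$ is approximately affinely satisfiable, since any affine combination involves one AL-sentence $\chi$ and some $i$ with $\chi^{M_i}\leqslant\chi^{\prod_{\mu}M_i}+\epsilon$), or avoided altogether by taking $T'=Th_{\mathrm{AL}}(Mod(T))$ and proving $Mod(T')\subseteq Mod(T)$ directly with the same compactness trick, which bypasses the Axiomatizability theorem entirely.
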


\newpage\section{The logic $\mathscr{L}^p$} \label{ALp}
In the framework AL, every $\aleph_0$-saturated model $M$ has midpoints, i.e.
for every $x,y$ there exists $t$ such that
$$d(x,t)=d(y,t)=\frac{1}{2}d(x,y).$$
A metric space has approximate midpoints if for every $x,y$ and $\epsilon>0$ there exists $t$ such that
$$\max\{d(x,t),d(y,t)\}\leqslant\frac{1}{2}d(x,y)+\epsilon.$$
For complete metric spaces, this is equivalent to saying that for each $\epsilon>0$
$$\forall xy\exists z\ \ \ \ \ \ \ \ d(x,z)^2+d(y,z)^2\leqslant\frac{1}{2}d(x,y)^2+\epsilon$$
which is also equivalent to being a \emph{length space} (see \cite{Bacak}).
We may rewrite this by the condition
$$\sup_{xy}\inf_z \big[d(x,z)^2+d(y,z)^2-\frac{1}{2}d(x,y)^2\big]\leqslant0.$$
Since every metric structure has an $\aleph_0$-saturated elementary extension, this condition
is not expressible in the framework of AL.

There are other interesting CL theories which are affine in some sense but
not expressible by AL conditions.
A complete metric space is a Hadamard space if for each $\epsilon>0$
$$\forall xy\exists m\forall z \ \ \ \ \ \ \ \ d(z,m)^2+\frac{d(x,y)^2}{4}\leqslant\frac{d(z,x)^2+d(z,y)^2}{2}+\epsilon.$$
Hilbert spaces have similar axioms (the parallelogram law).
Also, the theory of abstract $L^p$-spaces is stated by
$$\|x\wedge y\|^p\leqslant\|x\|^p+\|y\|^p\leqslant\|x+y\|^p.$$
Such theories are generally formalizable in the logics $\mathscr{L}^p$
defined below.

Let $L$ be a Lipschitz language and $1\leqslant p<\infty$.
The set of formulas of $\mathscr{L}^p$ is inductively defined as follows:
$$r,\ \ d(t_1,t_2)^p,\ \ R(t_1,...,t_n),\ \ r\phi,\ \ \phi+\psi,\ \ \sup_x\phi,\ \ \inf_x\phi.$$
So, exceptionally, the formula $d(t_1,t_2)$ in AL is replaced with $d(t_1,t_2)^p$.
If $M$ is an $L$-structure, $M^n$ is equipped with the metric
$$d_n(\x,\y)=\Big(\sum_{k=1}^n d(x_k,y_k)^p\Big)^{\frac{1}{p}}.$$
As before, for $n$-ary $F,R\in L$, we require $F^M:M^n\rightarrow M$ to be
$\lambda_F$-Lipschitz and $R^n:M^n\rightarrow[0,1]$ to be $\lambda_R$-Lipschitz.
In particular, the function $d(x,y)^p$ is $2p$-Lipschitz.
In fact, by the mean value theorem, $|r^p-s^p|\leqslant p|r-s|$ whenever $0\leqslant r,s\leqslant1$ and $p\geqslant1$.
Hence, using the fact that for $\mathbf r\in\Rn^2$
$$\ \ \ \ \ \ \|\mathbf{r}\|_1\leqslant 2^{1-\frac{1}{p}}\|\mathbf{r}\|_p \ \ \ \ \ \
(\textrm{where}\ \|\mathbf{r}\|_p=(|r_1|^p+|r_2|^p)^{\frac{1}{p}}),$$
one has that
$$|d(x,y)^p-d(x',y')^p|\leqslant p|d(x,y)-d(x',y')|$$$$\leqslant p [d(x,x')+d(y,y')]\leqslant2p\ d_2((x,y),(x',y')).$$
It is also verified that for each formula $\phi$ there is a $\lambda_\phi\geqslant0$ and a bound $\mathbf{b}_{\phi}$
such that $\phi^M$ is $\lambda_\phi$-Lipschitz and $|\phi^M(\a)|\leqslant\mathbf{b}_{\phi}$ for every $\a\in M$.

Logical notions such as condition, theory, elementary equivalence etc. are defined as before.
Also, the ultramean construction can be defined analogously.
Let $\mu$ be an ultracharge on $I$ and $(M_i,d_i)$ be a
$L$-structure for each $i$. For $a,b\in \prod_i M_i$ set
$$d(a,b)=\|d_i(a_i,b_i)\|_p=\Big(\int d_i(a_i,b_i)^p d\mu\Big)^{\frac{1}{p}}.$$
Then, by Minkowski's inequality, $d$ is a pseudometric on $\prod_iM_i$ and $d(a,b)=0$
defines an equivalence relation on it.
The equivalence class of $(a_i)$ is denoted by $[a_i]$ and the resulting
quotient set by $M=\prod_{\mu}^p M_i$.
The metric induced on $M$ is denoted again by $d$.
We also define an $L$-structure on $M$ as follows.
For each $c,F\in L$ and non-metric $R\in L$ (unary for simplicity)
and $(a_i)\in\prod_i M_i$ set
$$c^M=[c^{M_i}]$$    $$F^M([a_i])=[F^{M_i}(a_i)]$$    $$R^M([a_i])=\int R^{M_i}(a_i)d\mu.$$
Then, $F^M$ is $\lambda_F$-Lipschitz and $R^M$ is $\lambda_R$-Lipschitz.
In particular, if $\a=([a^1_i],...,[a^n_i])$ and $\a_i=(a^1_i,...,a^n_i)$, then
$$R^{M_i}(\a_i)-R^{M_i}(\b_i)\leqslant\lambda_R\ d^{M_i}_n(\a_i,\b_i)\hspace{12mm} \forall i\in I.$$
Since $\|f\|_1\leqslant\|f\|_p$ for any integrable $f:I\rightarrow\Rn$, by integrating
$$R^M(\a)-R^M(\b)\leqslant\lambda_R\ \|d^{M_i}_n(\a_i,\b_i)\|_1\leqslant
\lambda_R\ \|d^{M_i}_n(\a_i,\b_i)\|_p$$
$$=\lambda_R\Big[\int\sum_{k=1}^n \big(d^{M_i}(a_i^k,b_i^k)\big)^p\ \Big]^{\frac{1}{p}}d\mu=\lambda_R\ d^M_n(\a,\b).$$
Hence $M$ is an $L$-structure.

\begin{theorem}
For each $\mathscr{L}^p$-formula $\phi(x_{1}, \ldots, x_{n})$ in $L$
and $[a^{1}_{i}], \ldots, [a^{n}_{i}]\in M$
$$\phi^{M}([a^{1}_{i}],\ldots, [a^{n}_{i}])=
\int\phi^{M_{i}}(a^{1}_{i},\ldots, a^{n}_{i})d\mu.$$
\end{theorem}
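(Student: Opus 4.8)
The plan is to prove this $\mathscr{L}^p$-version of {\L}o\'s's theorem exactly as Theorem \ref{th1} (the AL Ultramean theorem) was proved: by induction on the complexity of $\mathscr{L}^p$-formulas. The inductive skeleton is identical; what changes is only the base case, because the only structural difference between $\mathscr{L}^p$ and AL is that the atomic metric formula is $d(t_1,t_2)^p$ rather than $d(t_1,t_2)$, and that the pseudometric on products is the $\ell^p$-combination $d(a,b)=\big(\int d_i(a_i,b_i)^p\, d\mu\big)^{1/p}$. First I would fix the family $(M_i,d_i)_{i\in I}$, the ultracharge $\mu$, and set $M=\prod^p_\mu M_i$, and establish the statement for terms and atomic formulas, then propagate through the connective and quantifier clauses.

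For the \textbf{atomic cases}: the formula $1$ (or any constant $r$) is immediate. For $R(t_1,\dots,t_n)$ with $R$ non-metric, the definition $R^M([a_i])=\int R^{M_i}(a_i)\,d\mu$ gives the claim directly once one has the term lemma $t^M([a_i])=[t^{M_i}(a_i)]$ (proved by a routine sub-induction on terms, as in the AL case). The genuinely new point is the metric atomic formula $\phi=d(t_1,t_2)^p$. Here I would compute, using the term lemma,
$$
\phi^M([a_i])=d^M\big(t_1^M([a_i]),t_2^M([a_i])\big)^p
=d^M\big([t_1^{M_i}(a_i)],[t_2^{M_i}(a_i)]\big)^p,
$$
and then invoke the very definition of the $\ell^p$-metric on $M$, which says exactly that this equals $\int d_i\big(t_1^{M_i}(a_i),t_2^{M_i}(a_i)\big)^p\,d\mu=\int \phi^{M_i}(a_i)\,d\mu$. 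Thus the $p$-th power is precisely what makes the base case match the integral, and this is the one spot where the choice of metric and the choice of atomic formula must be seen to cohere. The connective clauses ($r\phi$ and $\phi+\psi$) follow from linearity of the integral exactly as before.

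The \textbf{quantifier step} is where the real work lies, but it is purely a transcription of the proof of Theorem \ref{th1}: assuming the claim for $\phi(\x,y)$, I would show $\sup_y\phi^M(\x,y)=\int\sup_y\phi^{M_i}(\x,y)\,d\mu$. One inequality is obtained by picking, for given $\epsilon>0$, a representative $[b_i]$ with $\sup_y\phi^M([a_i],y)-\epsilon<\phi^M([a_i],[b_i])=\int\phi^{M_i}(a_i,b_i)\,d\mu\leqslant\int\sup_y\phi^{M_i}(a_i,y)\,d\mu$. For the reverse direction, for each $i$ choose $b_i$ with $\sup_y\phi^{M_i}(a_i,y)-\epsilon\leqslant\phi^{M_i}(a_i,b_i)$, set $b=[b_i]$, and integrate. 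The $\inf_y$ case is symmetric. I expect \emph{the main obstacle} to be a measurability/integrability hurdle that was glossed over in Theorem \ref{th1}: forming $b=[b_i]$ requires the chosen section $(b_i)$ to lie in $\prod_i M_i$ and the function $i\mapsto\phi^{M_i}(a_i,b_i)$ to be integrable against the finitely additive $\mu$. In the maximal-ultracharge setting every bounded function on $I$ is integrable (this is the reason ultracharges are taken maximal in \S\ref{Affine compactness}), so boundedness of $\phi^{M_i}$ by $\mathbf{b}_\phi$ suffices and no genuine measurable-selection theorem is needed here; I would simply remark that the section exists pointwise and its integrand is bounded, deferring to the appendix's integration facts. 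Finally, one should note in passing that $d$ is a genuine pseudometric on $\prod_i M_i$ via Minkowski's inequality (already observed in the text), so that $M$ is well-defined, and that $F^M,R^M$ are well-defined and Lipschitz (already checked), which justifies speaking of $\phi^M$ at all.
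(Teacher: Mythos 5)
Your proposal is correct and is essentially the paper's own argument: the paper states this theorem without a separate proof precisely because it is the same induction as Theorem \ref{th1}, and your transcription follows that proof step for step. Your two supplementary observations — that the atomic case works because $d(t_1,t_2)^p$ is by definition the quantity whose $\mu$-integral equals $d^M(\cdot,\cdot)^p$, and that maximality of the ultracharge makes every bounded integrand integrable so no measurable-selection argument is needed — are exactly the points the paper's setup in \S\ref{ALp} and \S\ref{Affine compactness} is designed to secure.
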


\begin{theorem}
Any affinely satisfiable set of $\mathscr{L}^p$-conditions in a language $L$ is satisfiable.
\end{theorem}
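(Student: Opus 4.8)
The plan is to mirror the proof of the affine compactness theorem (Theorem \ref{compactness}) line by line, substituting the $\mathscr{L}^p$-ultramean construction developed immediately above for the ordinary affine ultramean. The whole argument is functional-analytic and does not actually see the connective $d(t_1,t_2)^p$ as anything special: once the $\mathscr{L}^p$-ultramean theorem is in hand, sentences form a partially ordered normed vector space exactly as in the AL case, and the Hahn--Banach/Kantorovich/Riesz machinery applies verbatim.

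First I would let $\Sigma$ be an affinely satisfiable set of $\mathscr{L}^p$-conditions and, as before, assume it maximal with this property. For each $\mathscr{L}^p$-sentence $\sigma$ set $q(\sigma)=\inf\{r:\sigma\leqslant r\in\Sigma\}$; maximality gives that $q$ is everywhere defined, positively homogeneous and subadditive, just as in Theorem \ref{compactness}. The identity map $T_0$ on the scalar line $\Rn$ satisfies $T_0\leqslant q$, so by Hahn--Banach it extends to a positive linear functional $T$ on the space $\mathbb{D}^p(L)$ of $\mathscr{L}^p$-sentences (defined with the norm and order copied from $\mathbb{D}(L)$) with $T(\sigma)\leqslant q(\sigma)$ for all $\sigma$. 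Then I would choose a representative family $\{M_i\}_{i\in I}$ of the $\mathscr{L}^p$-elementary-equivalence classes, identify each $\sigma$ with the bounded function $i\mapsto\sigma^{M_i}$ on the discrete space $I$, apply the Kantorovich extension theorem to promote $T$ to a positive linear functional $\bar T$ on $\mathbf{C}_b(I)$, and apply the Riesz representation theorem to obtain a maximal probability charge $\mu$ on $I$ with $\bar T(\sigma)=\int\sigma^{M_i}d\mu$.

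The crucial step that makes the substitution legitimate is the $\mathscr{L}^p$-ultramean theorem just stated: for $M=\prod^p_\mu M_i$ and any $\mathscr{L}^p$-sentence $\sigma$ one has $\sigma^M=\int\sigma^{M_i}d\mu=T(\sigma)\leqslant q(\sigma)$. Consequently, whenever $\sigma\leqslant\eta\in\Sigma$ we get $\sigma^M-\eta^M=T(\sigma-\eta)\leqslant q(0)=0$, so $M\vDash\Sigma$; completing $M$ by Proposition \ref{exist} yields a genuine model. The main obstacle, and the only place where $p$ intervenes, is verifying that the $\mathscr{L}^p$ framework really does furnish all the objects the AL proof consumed: that $\mathbb{D}^p(L)$ is a well-defined normed ordered vector space (the bound $\mathbf{b}_\phi$ and Lipschitz constant $\lambda_\phi$ established for $\mathscr{L}^p$-formulas guarantee $\|\cdot\|$ is finite), that $\prod^p_\mu M_i$ is a legitimate $L$-structure (already checked above via Minkowski's inequality and the $2p$-Lipschitzness of $d(x,y)^p$), and that the ultramean theorem covers the atomic case $d(t_1,t_2)^p$, which it does by construction since $d^M([a_i],[b_i])^p=\int d^{M_i}(a_i,b_i)^p\,d\mu$. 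Once these are noted, no further analytic work is needed and the conclusion follows exactly as in Theorem \ref{compactness}.
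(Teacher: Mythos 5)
Your proof is correct and is exactly the argument the paper intends: the theorem is stated without proof immediately after the $\mathscr{L}^p$-ultramean theorem, the understanding being that the ultramean proof of Theorem \ref{compactness} (maximality, the sublinear functional $q$, Hahn--Banach, Kantorovich, Riesz representation, then evaluation in $\prod^p_\mu M_i$) transplants verbatim, which is precisely what you carry out. Your attention to the only $p$-specific points --- finiteness of the norm via $\mathbf{b}_\phi$, Minkowski's inequality making $\prod^p_\mu M_i$ a legitimate $L$-structure, and the atomic case $d(t_1,t_2)^p$ being covered by the $\mathscr{L}^p$-ultramean theorem --- is exactly the checking the paper leaves implicit (note also that Henkin's method, the alternative route in the AL case, is explicitly unavailable here since the metric axioms are not $\mathscr{L}^p$-conditions).
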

Similarly, powermean results as well as the isomorphism theorem hold in the framework of $\mathscr{L}^p$.
Most results of sections \ref{Basic technics}, \ref{Types}, \ref{Powermean}, \ref{Robinson et al.},
\ref{Maximality of AL} hold in $\mathscr{L}^p$. However, there are arguments which are
not easily applicable in $\mathscr{L}^p$ if $p>1$.
For example, since the metric axioms are not considered as $\mathscr{L}^p$-conditions, Henkins's method does not apply.
The same thing is true for the proof system given in \S \ref{Proof system}.
On the other hand, results concerning definability notions require H\"{o}lder conditions
$$\phi(\x)-\phi(\y)\leqslant\lambda_{\phi} d(\x,\y)^p$$
which does not hold in the metric structures.
One may suggest replacing Lipschitzness with the H\"{o}lder conditions.
However, such conditions are highly restrictive when $p>1$.
For example, if $M$ is an interval in $\Rn$, this condition implies that $\phi$ is constant.

\newpage\section{Affine integration logic}\label{Appendix}

\newpage\section{Appendix}\label{Appendix}
\noindent{\bf\large Charges and integration}\\
Let $I$ be a nonempty index set. A family $\mathcal D$ of subsets of $I$ is a \emph{filter} if

- $I\in\mathcal D$, \ \ \ $\emptyset\not\in\mathcal D$

- $A,B\in\mathcal D$ implies $A\cap B\in\mathcal D$

- $B\supseteq A\in\mathcal D$ implies $B\in\mathcal D$.

\noindent It is an \emph{ultrafilter} if $A\in\mathcal D$ or $I-A\in\mathcal D$ for every $A\subseteq I$.
By Tarski's ultrafilter theorem, every filter is extended to an ultrafilter.
Let $\mathcal D$ be an ultrafilter on $I$ and $X$ be a topological space.
The $\mathcal D$-limit of a sequence $\{x_i\}_{i\in I}$ of elements in $X$ is $x$ if for every open $U\ni x$
one has that $\{i:x_i\in U\}\in\mathcal D$. In this case one writes $\lim_{i,\mathcal D}x_i=x$.

\begin{fact}
$X$ is compact Hausdorff if and only if for every sequence $\{x_i\}_{i\in I}$ of elements of $X$,
$\lim_{i,\mathcal D}x_i$ exists and is unique.
\end{fact}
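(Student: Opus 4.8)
This is a standard characterization of compact Hausdorff spaces via ultrafilter limits, so the plan is to prove both directions separately, treating existence and uniqueness as the two halves that correspond respectively to compactness and the Hausdorff property.

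\textbf{The forward direction.} Assume $X$ is compact Hausdorff and let $\{x_i\}_{i\in I}$ be an arbitrary sequence. For existence, I would argue by contradiction: suppose no point is a $\mathcal{D}$-limit. Then every $x\in X$ has an open neighborhood $U_x$ with $\{i : x_i \in U_x\}\notin\mathcal{D}$, which (since $\mathcal{D}$ is an ultrafilter) means $\{i : x_i\notin U_x\}\in\mathcal{D}$. The family $\{U_x\}_{x\in X}$ is an open cover, so by compactness there is a finite subcover $U_{x_1},\dots,U_{x_n}$. Then $\bigcap_{k=1}^n\{i : x_i\notin U_{x_k}\}\in\mathcal{D}$ by closure under finite intersection, yet this set is empty because the $U_{x_k}$ cover $X$ — contradicting $\emptyset\notin\mathcal{D}$. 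For uniqueness, if $x\neq y$ were both $\mathcal{D}$-limits, the Hausdorff property gives disjoint open $U\ni x$, $V\ni y$; then both $\{i:x_i\in U\}$ and $\{i:x_i\in V\}$ lie in $\mathcal{D}$, so their intersection is in $\mathcal{D}$, but that intersection is empty, again a contradiction.

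\textbf{The reverse direction.} Assume every sequence has a unique $\mathcal{D}$-limit. For the Hausdorff property, suppose $x\neq y$ cannot be separated by disjoint open sets; I would construct a constant-type sequence or use a neighborhood-indexed net whose $\mathcal{D}$-limit is forced to be both $x$ and $y$, violating uniqueness. Concretely, taking the constant sequences shows limits are well-defined, and the failure of separation lets one build a single sequence clustering at both points so that both are $\mathcal{D}$-limits. For compactness, I would verify the finite intersection property formulation: given a family of closed sets with the finite intersection property, index an appropriate sequence by choosing points from finite intersections, extend the generated filter to an ultrafilter $\mathcal{D}$, and show the $\mathcal{D}$-limit lies in every closed set of the family, hence in their total intersection.

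\textbf{Anticipated obstacle.} The genuinely delicate step is the reverse direction's compactness argument, because the statement is quantified over a \emph{fixed} ultrafilter $\mathcal{D}$ rather than over all ultrafilters. The clean classical proof that ``ultrafilter limits always exist $\Rightarrow$ compact'' naturally wants to choose an ultrafilter adapted to a given open cover or filtered family of closed sets; reconciling this with the hypothesis as stated requires care about how the indexing set $I$ and $\mathcal{D}$ interact with the cover. I expect the cleanest route is to phrase compactness through the finite intersection property and to build the indexing and the auxiliary ultrafilter so that the hypothesis applies, while the Hausdorff half and the entire forward direction are routine ultrafilter bookkeeping. Since this is stated as a \emph{Fact} in an appendix, I would keep the write-up brief and cite the standard equivalence if a fully self-contained construction becomes lengthy.
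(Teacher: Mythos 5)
The paper states this as a background \emph{Fact} in its appendix and supplies no proof at all, so there is nothing to compare your argument against; the only question is whether it stands on its own. Your forward direction does: the finite-subcover argument for existence and the disjoint-neighborhoods argument for uniqueness are complete and correct.

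The reverse direction, however, contains a genuine gap, and it is precisely the one you flagged as an ``anticipated obstacle'': with the index set $I$ and the ultrafilter $\mathcal{D}$ fixed in advance, the implication you are trying to prove is simply false, so no amount of care in the indexing can rescue the plan under that reading. Concretely, if $\mathcal{D}$ is the principal ultrafilter generated by $\{i_0\}$, then a point $z$ is a $\mathcal{D}$-limit of $\{x_i\}$ if and only if $z\in\overline{\{x_{i_0}\}}$, so limits always exist and are unique precisely when $X$ is $T_1$; thus $X=\mathbb{R}$ satisfies the right-hand side while failing compactness. Even a fixed nonprincipal ultrafilter on $\mathbb{N}$ does not suffice: $X=\omega_1$ with the order topology is Hausdorff and every countable subset is contained in a compact initial segment, so every $\mathbb{N}$-indexed family has a unique $\mathcal{D}$-limit, yet $\omega_1$ is not compact. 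The Fact must therefore be read as quantified over \emph{all} index sets and \emph{all} ultrafilters on them (which is how the paper actually uses it, e.g.\ in the ultraproduct constructions), and once it is read that way your finite-intersection-property plan does go through --- but you must actually carry it out rather than defer it. For compactness: given a family $\mathcal{C}$ of closed sets with the finite intersection property and empty total intersection, take $I$ to be the set of finite subfamilies $F$, choose $x_F\in\bigcap F$, extend the filter generated by the tails $\{G : F\subseteq G\}$ to an ultrafilter, and check that any $\mathcal{D}$-limit would have to lie in every member of $\mathcal{C}$, a contradiction. For Hausdorffness: given inseparable points $x\neq y$, take $I$ to be the set of pairs $(U,V)$ of neighborhoods of $x$ and $y$, choose $x_{(U,V)}\in U\cap V$, extend the tail filter to an ultrafilter, and check that both $x$ and $y$ are $\mathcal{D}$-limits, contradicting uniqueness. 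Your write-up gestures at both constructions but completes neither, and the Hausdorff half in particular (``constant-type sequence or \ldots{} neighborhood-indexed net'') is not yet an argument.
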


$\mathcal D$-limit preserves continuous operations. Let $F:X^n\rightarrow X$ be continuous
and $\{x^k_i\}_{i\in I}$ be a sequence in $X$ for $k=1,...,n$.
Then, $$F(\lim_{i,\mathcal D}x^1_i,...,\lim_{i,\mathcal D}x^n_i)=\lim_{i,\mathcal D}F(x^1_i,...,x^n_i).$$
Chang and Keisler introduced continuous model theory with truth values in a compact Hausdorff space $X$
using continuous operations $F:X^n\rightarrow X$ as connectives (see \cite{CK}).
A final refinement of this logic was then obtained in \cite{BBHU} by assuming $X=[0,1]$.
In the affine fragment of continuous logic, ultrafilters are replaced with maximal
finitely additive probability measures defined below.

A \emph{charge space}\index{charge space} is a triple $(I,\mathcal A,\mu)$ where $\mathcal A$
is a Boolean algebra of subsets of $I$ and $\mu:\mathcal A\rightarrow\Rn^+$ is a function with the following properties:

- $\mu(\emptyset)=0$

- $\mu(A\cup B)=\mu(A)+\mu(B)$ whenever $A\cap B=\emptyset$.
\bigskip

$\mu$ is a probability charge if $\mu(I)=1$. In this text, by a charge we always mean a probability charge.
If $\mathcal A=P(I)$, $\mu$ is called an \emph{ultracharge}\index{ultracharge}.
Any filter $\mathcal D$ on $I$ defines a probability charge as follows:
$$\mathcal A_{\mathcal D}=\{A:\ A\in\mathcal D\ \mbox{or}\ A^c\in\mathcal D\}$$
\[\mu_{\mathcal D}(A)=
\left\{
  \begin{array}{ll}
    1 & \hbox{\ if $A\in\mathcal D$} \\
    0 & \hbox{\ if $A^c\in\mathcal D$.}
  \end{array}
\right.
\]

If $\mathcal D$ is an ultrafilter, $\mu_{\mathcal D}$ is an ultracharge.
Tarski's ultrafilter theorem has an ultracharge variant.

\begin{theorem} \emph{(\cite{Rao}, Th. 3.3.)} \label{ultracharge-extension}
Let $(I,\mathcal A,\mu)$ be a charge space and $B\subseteq I$.
Assume $$\sup\{\mu(A):\ A\subseteq B, \ A\in\mathcal A\}\leqslant r\leqslant
\inf\{\mu(A):\ B\subseteq A,\ A\in\mathcal A\}.$$
Then, there is an extension of $\mu$ to an ultracharge $\bar\mu$
on $I$ such that $\bar\mu(B)=r$.
\end{theorem}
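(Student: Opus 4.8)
The plan is to quote this from \cite{Rao}, but the argument itself is short once phrased functional-analytically, so I would recast it as an extension of a positive linear functional. Recall the representation cited later as \cite{Rao} Th.~4.7.4: ultracharges on $I$ correspond bijectively to positive linear functionals $L$ on $\ell^{\infty}(I)$ with $L(1)=1$, via $\bar\mu(E)=L(\chi_E)$; and the given charge $\mu$ on $\mathcal A$ yields a positive linear functional $L_0(f)=\int f\,d\mu$ on the space $S(\mathcal A)$ of $\mathcal A$-simple functions. Writing $\mu_*(B)=\sup\{\mu(A):A\in\mathcal A,\ A\subseteq B\}$ and $\mu^*(B)=\inf\{\mu(A):A\in\mathcal A,\ B\subseteq A\}$, the hypothesis is exactly $\mu_*(B)\leqslant r\leqslant\mu^*(B)$. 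If $B\in\mathcal A$ then $\mu_*(B)=\mu^*(B)=\mu(B)=r$ and there is nothing to prove, so I assume $B\notin\mathcal A$.

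First I would enlarge the domain by one dimension. On $V_0=S(\mathcal A)\oplus\Rn\,\chi_B$ (a genuine direct sum since $B\notin\mathcal A$) define $L_1(f+c\chi_B)=L_0(f)+cr$. This is well defined and linear with $L_1(1)=1$ and $L_1(\chi_B)=r$. The crux, and the only place where the bound on $r$ is used, is to check that $L_1$ is \emph{positive}. Suppose $f+c\chi_B\geqslant0$ with $f\in S(\mathcal A)$. The case $c=0$ is immediate. If $c>0$, then $f\geqslant0$ off $B$, so the $\mathcal A$-set $\{f<0\}$ is contained in $B$, whence $\mu(\{f<0\})\leqslant\mu_*(B)\leqslant r$; since $f\geqslant-c$ on $\{f<0\}$ we get $L_0(f)\geqslant-c\,\mu(\{f<0\})\geqslant-cr$, i.e. $L_1(f+c\chi_B)\geqslant0$. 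If $c<0$, write $c=-c'$ with $c'>0$; then $f\geqslant c'>0$ on $B$, so $B\subseteq\{f\geqslant c'\}\in\mathcal A$ and $\mu(\{f\geqslant c'\})\geqslant\mu^*(B)\geqslant r$; as $f\geqslant0$ everywhere, $L_0(f)\geqslant c'\mu(\{f\geqslant c'\})\geqslant c'r$, again giving $L_1(f+c\chi_B)\geqslant0$. Working with simple $f$ is what keeps the sets $\{f<0\}$ and $\{f\geqslant c'\}$ inside $\mathcal A$.

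Second, I would extend and translate back. Since $V_0$ contains the constant functions, it is a majorizing subspace of $\ell^{\infty}(I)$, so by the Kantorovich extension theorem (\S\ref{Appendix}) the positive functional $L_1$ extends to a positive linear functional $L$ on $\ell^{\infty}(I)$ with $L(1)=1$. Setting $\bar\mu(E)=L(\chi_E)$ yields an ultracharge: finite additivity on disjoint $E,F$ is just linearity of $L$ applied to $\chi_{E\cup F}=\chi_E+\chi_F$, while positivity and the normalization $\bar\mu(I)=1$ are inherited from $L$. Because $L$ extends $L_0$ we have $\bar\mu(A)=\mu(A)$ for $A\in\mathcal A$, and $\bar\mu(B)=L(\chi_B)=r$, as required.

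The main obstacle is the positivity verification of $L_1$, since that is really the entire content of the theorem; the reduction to simple functions (so that the relevant sublevel and superlevel sets lie in $\mathcal A$) and the choice of the correct side of the inner/outer bound in each sign case are where the care is needed. Everything after that — the majorizing-subspace extension and the passage from the functional back to a set function — is routine and draws only on tools already recorded in the appendix.
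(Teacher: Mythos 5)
Your proof is correct. Note first that the paper itself offers no proof of this statement: it is quoted verbatim from \cite{Rao} (Th.~3.3, essentially the {\L}o\'{s}--Marczewski one-set extension theorem), so there is no internal argument to compare against. What you have done is supply a self-contained proof, and every step checks out: the direct-sum decomposition $V_0=S(\mathcal A)\oplus\Rn\,\chi_B$ is genuine because $\chi_B\in S(\mathcal A)$ would force $B\in\mathcal A$ (refine any simple representation of $\chi_B$ to a disjoint one); the positivity verification correctly uses $\{f<0\}\subseteq B$ with $\{f<0\}\in\mathcal A$ in the case $c>0$ (so $\mu(\{f<0\})\leqslant\mu_*(B)\leqslant r$) and $B\subseteq\{f\geqslant c'\}\in\mathcal A$ together with $f\geqslant0$ in the case $c<0$ (so $\mu(\{f\geqslant c'\})\geqslant\mu^*(B)\geqslant r$); and $V_0$ contains the constants, hence majorizes $\ell^{\infty}(I)$, so the Kantorovich theorem recorded in the appendix applies. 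Passing from $L$ back to the set function $E\mapsto L(\chi_E)$ gives finite additivity, positivity, normalization, and the two required values $\bar\mu|_{\mathcal A}=\mu$ and $\bar\mu(B)=r$.

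The comparison worth making is with the classical proof in \cite{Rao}, which works at the level of set functions: one extends $\mu$ explicitly to the algebra generated by $\mathcal A\cup\{B\}$ (whose elements are $(A_1\cap B)\cup(A_2\cap B^c)$), verifies finite additivity of the extension by hand using the identity $\mu_*(A\cap B)+\mu^*(A\cap B^c)=\mu(A)$, and then extends from that algebra to all of $P(I)$ by a Zorn's lemma argument. Your route trades this combinatorial bookkeeping for two pieces of soft functional analysis — positivity of a one-dimensional extension of a functional, then Kantorovich — both of which are already part of this paper's toolkit (indeed the same pattern, Kantorovich followed by a representation of the extended functional as an integral, is used in the proof of Theorem \ref{compactness}). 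The cost is that you invoke the correspondence between ultracharges and normalized positive functionals on $\ell^{\infty}(I)$; but as you note, you only need the easy direction, which you verify directly. Either approach is legitimate; yours has the merit of fitting the paper's conventions with no new machinery.
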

\bigskip

If $(I,\mathcal A,\mu)$ is a charge space, a set $A\subseteq I$ is \emph{null}\index{null set} if
$$\mu^*(A)=\inf\{\mu(B):\ A\subseteq B\in\mathcal A\}=0.$$
A function $f:I\rightarrow\Rn$ is null if for each $\epsilon>0$ $$\mu^*(\{x\in I: \epsilon<|f(x)|\})=0.$$

We also recall some properties of integration with respect to charges (see \cite{Aliprantis-Inf, Rao}).
Let $(I,\mathcal A,\mu)$ be a charge space and $\mathcal{H}(\Rn)$\index{$\mathcal{H}(\Rn)$} be the
Boolean algebra of subsets of $\Rn$ generated by the half-intervals $[r,s)$.
A function $f:I\rightarrow\Rn$ is called $(\mathcal A,\mathcal H(\Rn))$-\emph{measurable}
(or $\mathcal A$-measurable for short) if $f^{-1}(X)\in\mathcal A$ for every $X\in\mathcal H(\Rn)$.
Bounded $\mathcal A$-measurable functions are always integrable. 
The integral of $f$ with respect to $\mu$ is denoted by $\int fd\mu$.
In particular, if $\mu$ is an ultracharge on $I$, every bounded $f:I\rightarrow\Rn$ is integrable.
If $\mathcal D$ is an ultrafilter on $I$ and $f:I\rightarrow\Rn$ is bounded,
then $$\lim_{i,\mathcal D}f(i)=\int f\ d\mu_{\mathcal D}.$$
Integration with respect to a charge is a positive linear functional.
However, since $\mu$ is not sigma-additive, Fubini's theorem and Fatou's lemma do not hold.
A sequence $f_n$ of integrable real functions on $I$ converges to $f$
hazily (or in probability) if for every $\epsilon>0$
$$\lim_n\mu\{i\in I:\ |f_n(i)-f(i)|>\epsilon\}=0.$$

\begin{proposition} \label{convergence} \emph{(\cite{Rao}, Th. 4.4.20)}
Let $f_n$ be a sequence of integrable functions on $I$ such that
$\lim_{m,n\rightarrow \infty}\int |f_n-f_m| d\mu=0$.
Assume $f_n$ converges hazily to $f$. Then $f$ is integrable and
$\lim_n\int |f_n-f|d\mu=0$. In particular, $\lim_n\int f_n=\int f$.
\end{proposition}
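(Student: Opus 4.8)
The plan is to deduce $L^1$-convergence of $f_n$ to $f$ by a Vitali-type argument, in which the Cauchy hypothesis supplies uniform integrability and the hazy hypothesis supplies the pointwise control; the $\sigma$-additive tools (dominated and monotone convergence, Fatou) are unavailable here, so the whole estimate must be run on the integrals themselves. Throughout, since $\mathcal{A}$ is only a Boolean algebra, I would phrase every statement about a level set $\{i:|f_n(i)-f(i)|>\eta\}$ in terms of the outer charge $\mu^*$ and the notion of null function recalled in this appendix, because such a level set need not belong to $\mathcal{A}$.

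First I would record two easy reductions. Since $|\int f_n\,d\mu-\int f_m\,d\mu|\leqslant\int|f_n-f_m|\,d\mu$, the reals $\int f_n\,d\mu$ form a Cauchy sequence and converge to some $L\in\Rn$; it therefore suffices to prove $\int|f_n-f|\,d\mu\to0$, since this yields integrability of $f$ and, via $|\int f_n\,d\mu-\int f\,d\mu|\leqslant\int|f_n-f|\,d\mu$, the identity $\int f\,d\mu=L$. Next I would establish \textbf{uniform integrability} of $\{f_n\}$ from the Cauchy condition: given $\epsilon>0$, choose $N$ with $\int|f_n-f_N|\,d\mu<\epsilon/2$ for all $n\geqslant N$, and use the absolute continuity of the integral of each of the finitely many functions $f_1,\dots,f_N$ to produce $\delta>0$ with $\int_A|f_k|\,d\mu<\epsilon/2$ for $k\leqslant N$ whenever $\mu(A)<\delta$. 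Then for $A\in\mathcal{A}$ with $\mu(A)<\delta$ and $n>N$ one gets $\int_A|f_n|\,d\mu\leqslant\int|f_n-f_N|\,d\mu+\int_A|f_N|\,d\mu<\epsilon$, so the family is uniformly absolutely continuous, and it is norm-bounded because it is Cauchy.

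With uniform integrability in hand I would run the splitting. Fix $\epsilon>0$, take $\delta$ from the previous step, and fix $\eta$ with $\eta\,\mu(I)<\epsilon$. For $m,n$ large, the Cauchy and hazy hypotheses force $\mu^*\{|f_n-f_m|>\eta\}<\delta$, and on a member of $\mathcal{A}$ approximating this level set from outside one obtains
$$\int|f_n-f_m|\,d\mu\;\leqslant\;\eta\,\mu(I)+\int_{\{|f_n-f_m|>\eta\}}|f_n-f_m|\,d\mu\;<\;3\epsilon,$$
the last term bounded by uniform integrability applied to both $f_n$ and $f_m$. Because $f_m\to f$ hazily, the same splitting with $f_m$ replaced by $f$ identifies $f$ as the limit: passing $m\to\infty$ through the hazy convergence gives $\int|f_n-f|\,d\mu\to0$, and in particular shows that $f$ agrees off a null set with the charge limit of the $f_n$, hence is integrable.

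The step I expect to be the main obstacle is precisely the finitely-additive bookkeeping in this last paragraph. With no $\sigma$-additivity available I cannot pass a limit inside an integral by a convergence theorem, so the transition $m\to\infty$ must be performed directly on the integral estimates; and the level sets $\{|f_n-f_m|>\eta\}$ must be replaced by $\mathcal{A}$-sets approximating them within the tolerance $\delta$. Making the bound $\int_{\{|f_n-f_m|>\eta\}}|f_n-f_m|\,d\mu<2\epsilon$ rigorous thus requires truncating the possibly unbounded integrands so that their level sets become $\mathcal{H}(\Rn)$-preimages lying in $\mathcal{A}$, and then checking that the truncation error is itself uniformly small — this is exactly where the absolute-continuity property of the integral and the null-set notions of the appendix carry the argument.
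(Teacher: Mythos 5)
This proposition is quoted by the paper from the reference (\cite{Rao}, Th.\ 4.4.20) without any proof, so your attempt can only be measured against the standard argument in that book, and against it there is a genuine gap. Your strategy is the Vitali theorem transplanted from the $\sigma$-additive setting, but the two ingredients that make Vitali close there are exactly the two that fail for charges. First, you never actually establish that $f$ is integrable. In the theory of charges, integrability is not a property one can read off from estimates on the $f_n$ alone: it is \emph{defined} by the existence of a sequence of simple functions converging to $f$ hazily and Cauchy in the $L^1$-seminorm (equivalently, in the paper's appendix, bounded $\mathcal{A}$-measurable functions are integrable --- but a hazy limit of $\mathcal{A}$-measurable functions need not be $\mathcal{A}$-measurable, nor bounded). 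Your justification, that $f$ ``agrees off a null set with the charge limit of the $f_n$,'' is circular: no such limit object has been constructed, and constructing one is precisely what integrability of $f$ means. Second, the decisive step --- passing from $\int|f_n-f_m|\,d\mu<3\epsilon$ for large $m,n$ to $\int|f_n-f|\,d\mu\leqslant 3\epsilon$ by ``passing $m\to\infty$ through the hazy convergence'' --- is in the $\sigma$-additive case an application of Fatou's lemma, which the paper's appendix explicitly notes is unavailable for charges. You correctly flag this as the main obstacle, but flagging it is not overcoming it: the expression $\int|f_n-f|\,d\mu$ is not even defined at that point of your argument, so there is nothing to estimate.

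The missing idea is a diagonal argument through the definition of the integral, which is how the cited proof runs. For each $n$ choose a simple function $s_n$ with
$$\int|f_n-s_n|\,d\mu<\tfrac{1}{n}\quad\text{and}\quad\mu^*\big(\{i:\ |f_n(i)-s_n(i)|>\tfrac{1}{n}\}\big)<\tfrac{1}{n},$$
which is possible because each $f_n$ is integrable. Then $(s_n)$ is $L^1$-Cauchy (triangle inequality through $f_n,f_m$) and $s_n\to f$ hazily (split $\{|s_n-f|>2\eta\}$ into $\{|s_n-f_n|>\eta\}\cup\{|f_n-f|>\eta\}$ and use subadditivity of $\mu^*$). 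By the definition of integrability for charges, $f$ is integrable and $\int|f-s_n|\,d\mu\to 0$; the conclusion $\int|f_n-f|\,d\mu\to 0$ then follows from $\int|f_n-f|\leqslant\int|f_n-f_m|+\int|f_m-s_m|+\int|s_m-f|$. Your uniform-integrability computations are correct as far as they go (absolute continuity of the integral does hold for charges), but they are doing work the hypothesis already supplies --- the $L^1$-Cauchy property --- rather than the work the theorem actually requires, which is manufacturing the simple-function approximation of $f$ that the finitely additive integral demands.
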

\bigskip

\noindent{\bf\large Convexity}\\
A topological vector space (tvs) is locally convex if its topology is generated by a family of seminorms.
Every Banach space as well as $V^*$ for any normed space $V$ (equipped with the weak* topology) is locally convex.
A subset $K$ of a tvs is \emph{convex}\index{convex set} if for every $p,q\in K$ and $\gamma\in[0,1]$,
one has that $\gamma p+(1-\gamma)q\in K$.
Let $K$ be a compact convex subset of a locally convex tvs $V$.

A function $f:K\rightarrow\Rn$ is \emph{affine}\index{affine function} if for all $p,q\in K$ and $\gamma\in[0,1]$,
$$f(\gamma p+(1-\gamma)q)=\gamma f(p)+(1-\gamma)f(q).$$
$\mathbf{A}(K)$ \label{affine function}\index{$\mathbf{A}(K)$} denotes the set of affine continuous functions on $K$.
This is a Banach space with the sup-norm.
It is well-known that if $X\subseteq\mathbf{A}(K)$ contains constant functions and separates points,
it is dense (\cite{Wickstead} I.1.12).

A non-empty convex $F\subseteq K$ is called a \emph{face}\index{face} if for every $p,q\in K$ and $0<\gamma<1$,
$\gamma p+(1-\gamma)q\in F$ implies that $p,q\in F$ (though it is sufficient to use $\gamma=\frac{1}{2}$).
A point $p$ is \emph{extreme}\index{extreme point} if $\{p\}$ is a face.
Also, $p$ is \emph{exposed}\index{exposed} if it is the unique maximizer of a non-zero affine continuous
function. Every exposed point is extreme.
The extreme boundary of $K$, denoted by $Ext(K)$, is the set of extreme points of $K$.
By the Krein-Milman theorem, if $K$ is nonempty, then so is $Ext(K)$.
Moreover, the topological closure of the convex hull of $Ext(K)$ is equal to $K$.
If $K$ is metrizable, $Ext(K)$ is a $G_\delta$ set.

The Baire sigma-algebra $\mathcal B_0(K)$ of $K$ is the sigma-algebra generated by the family of closed $G_\delta$ subsets of $K$.
This is also the smallest sigma-algebra for which every continuous $f:K\rightarrow\Rn$ is measurable.
One has that $\mathcal{B}_0(K)\subseteq\mathcal{B}(K)$ where $\mathcal{B}$ is the Borel sigma-algebra $K$ (generated by the open subsets).
A measure on $\mathcal B_0(K)$ (resp. $\mathcal B(K)$) is called a Baire (resp. Borel) measure.
A Baire probability measure $\mu$ on a compact Hasudorff space is always regular, i.e. for every $A\in\mathcal B_0(K)$
$$\mu(A)=\sup\{\mu(E):\ E\subseteq A, \ E\ \mbox{is\ closed}\}$$
$$=\inf\{\mu(U):\ A\subseteq U\ \mbox{is\ open}\}.$$
Also, any Baire probability measure on a compact Hausdorff space has a unique extension to a regular Borel measure,
i.e. a measure for which the above property holds for every $A\in\mathcal B(K)$.
On compact metric spaces, Borel sets are the same as Baire sets (see \cite{Dudley} for more details).

For every set $X$ let $\mathbb{U}(X)$ be the set of probability ultracharges on $X$.
This is a compact convex set.

\begin{lemma}\label{ultrafilters-extreme}
$\wp\in\mathbb{U}(X)$ is extreme if and only if it is an ultrafilter.
\end{lemma}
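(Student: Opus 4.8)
The statement to prove is that $\wp \in \mathbb{U}(X)$ is extreme in the compact convex set of probability ultracharges if and only if it is an ultrafilter (i.e.\ a $\{0,1\}$-valued ultracharge). The plan is to prove both directions by exploiting the additivity structure of $\wp$ on the Boolean algebra $P(X)$.

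First I would treat the easier direction, that a $\{0,1\}$-valued ultracharge $\wp$ is extreme. Suppose $\wp = \frac{1}{2}\mu + \frac{1}{2}\nu$ with $\mu,\nu \in \mathbb{U}(X)$. For any $A \subseteq X$, since $\wp(A) \in \{0,1\}$ and $0 \leqslant \mu(A),\nu(A) \leqslant 1$, the equation $\wp(A) = \frac{1}{2}\mu(A) + \frac{1}{2}\nu(A)$ forces $\mu(A) = \nu(A) = \wp(A)$ in both cases $\wp(A)=0$ and $\wp(A)=1$. Hence $\mu = \nu = \wp$, so $\{\wp\}$ is a face and $\wp$ is extreme.

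The substantive direction is the converse: if $\wp$ takes some value $\wp(A) = r$ with $0 < r < 1$, then $\wp$ is not extreme. Here the plan is to split $\wp$ by conditioning on $A$ and its complement. Define two set functions
$$\mu(B) = \frac{\wp(A \cap B)}{\wp(A)}, \qquad \nu(B) = \frac{\wp(A^c \cap B)}{\wp(A^c)} \qquad (B \subseteq X).$$
Each is a probability ultracharge on $X$: finite additivity of $\mu$ and $\nu$ follows from that of $\wp$ since $A\cap(\cdot)$ and $A^c\cap(\cdot)$ preserve disjointness, and $\mu(X)=\wp(A)/\wp(A)=1$, $\nu(X)=1$. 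A direct computation gives $\wp(A)\mu(B) + \wp(A^c)\nu(B) = \wp(A\cap B) + \wp(A^c \cap B) = \wp(B)$ for every $B$, so writing $\gamma = \wp(A) = r \in (0,1)$ we have $\wp = \gamma\mu + (1-\gamma)\nu$. Finally I must check $\mu \neq \nu$: evaluating at $B = A$ gives $\mu(A) = 1$ while $\nu(A) = \wp(A^c \cap A)/\wp(A^c) = 0$, so they differ. This exhibits $\wp$ as a nontrivial convex combination, contradicting extremality.

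The main obstacle, such as it is, lies in being careful that $\mu$ and $\nu$ are genuinely ultracharges (defined on all of $P(X)$ and finitely additive) rather than merely on a subalgebra, and in justifying the additivity of the conditioned functionals directly from the finite additivity of $\wp$; there is no appeal to $\sigma$-additivity, which is unavailable for charges. Contrapositively, the argument shows that any $\wp$ realizing a value strictly between $0$ and $1$ fails to be extreme, so extreme ultracharges are exactly the $\{0,1\}$-valued ones, which are precisely the ultrafilters on $X$.
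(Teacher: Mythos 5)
Your proof is correct and follows essentially the same route as the paper: the non-extremality of a charge taking a value $r\in(0,1)$ is shown by the identical conditioning decomposition $\wp=\wp(A)\,\mu+\wp(A^c)\,\nu$ with $\mu(B)=\wp(A\cap B)/\wp(A)$, $\nu(B)=\wp(A^c\cap B)/\wp(A^c)$, and the extremality of $\{0,1\}$-valued charges is the same forcing argument from $0\leqslant\mu(A),\nu(A)\leqslant1$. Your write-up is slightly more careful than the paper's (explicitly verifying additivity of the conditioned charges and checking $\mu(A)=1\neq0=\nu(A)$, where the paper only asserts $\wp\neq\wp_1,\wp_2$), but the underlying argument is the same.
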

\begin{proof}
Assume $\wp$ is extreme and $\wp(Y)=\gamma\in(0,1)$ for some $Y\subseteq X$.
For $A\subseteq X$ set
$$\wp_1(A)=\frac{\wp(A\cap Y)}{\gamma},\ \ \ \ \ \ \ \wp_2(A)=\frac{\wp(A\cap Y^c)}{1-\gamma}.$$\
Then, $\wp=\gamma\wp_1+(1-\gamma)\wp_2$ and $\wp\neq\wp_1,\wp_2$.
This is a contradiction. Conversely assume $\wp$ is 2-valued and $\wp=\gamma\wp_1+(1-\gamma)\wp_2$ where $0<\gamma<1$.
Let $A\subseteq X$. Then both $\wp(A)=0$ and $\wp(A)=1$ imply that $\wp_1(A)=\wp_2(A)=\wp(A)$.
\end{proof}

If $X$ is compact Hasudorff and $\mathcal{P}(X)$ is the set of regular Borel probability
measures on $X$, then $\mathcal{P}(X)$ is compact convex.

\begin{lemma}\label{regular-extreme}
Let $X$ be compact Hausdorff. Then $\mu\in\mathcal{P}$ is extreme if and only if $\mu=\delta_x$ for some $x\in X$.
\end{lemma}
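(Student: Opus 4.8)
The statement to prove is Lemma \ref{regular-extreme}: for $X$ compact Hausdorff, $\mu\in\mathcal{P}(X)$ is extreme if and only if $\mu=\delta_x$ for some $x\in X$.

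The plan is to prove both directions, with the forward direction being the substantive one. First I would handle the easy direction: if $\mu=\delta_x$ and $\mu=\gamma\mu_1+(1-\gamma)\mu_2$ with $0<\gamma<1$ and $\mu_1,\mu_2\in\mathcal{P}(X)$, then for any Borel set $A$ with $x\notin\bar A$ (or more carefully, for closed sets not containing $x$) we have $\mu(A)=0$, which forces $\gamma\mu_1(A)+(1-\gamma)\mu_2(A)=0$, hence $\mu_1(A)=\mu_2(A)=0$ since both coefficients are strictly positive and the measures are nonnegative. Pushing this through the regularity of the measures, one concludes $\mu_1$ and $\mu_2$ are both supported at $x$ and thus equal $\delta_x$. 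So $\delta_x$ is extreme.

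For the forward (contrapositive) direction, the strategy mirrors the proof of Lemma \ref{ultrafilters-extreme} given just above. Suppose $\mu$ is not a point mass. Then I would produce a Borel set $Y$ that properly splits $\mu$, i.e. with $\mu(Y)=\gamma\in(0,1)$, and define the conditional measures
$$\mu_1(A)=\frac{\mu(A\cap Y)}{\gamma},\qquad \mu_2(A)=\frac{\mu(A\cap Y^c)}{1-\gamma}.$$
These are regular Borel probability measures (regularity is inherited since $A\mapsto\mu(A\cap Y)$ is a regular Borel measure when $\mu$ is), and $\mu=\gamma\mu_1+(1-\gamma)\mu_2$ while $\mu_1\neq\mu_2$ (indeed $\mu_1(Y)=1\neq 0=\mu_2(Y)$), so $\mu$ is not extreme. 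The key step is therefore to manufacture such a splitting set $Y$ whenever $\mu$ is not concentrated at a single point.

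The main obstacle is exactly the construction of $Y$ with $0<\mu(Y)<1$ in the compact Hausdorff (not necessarily metric) setting. The hypothesis that $\mu$ is not a point mass means there is no $x$ with $\mu(\{x\})=1$; I would argue that then there exist two disjoint closed sets $E_1,E_2$ each of positive measure, using regularity to approximate the support. Concretely, if the support of $\mu$ contains two distinct points $x\neq y$, choose disjoint open neighborhoods $U\ni x$, $V\ni y$ by Hausdorffness; by definition of support both $\mu(U)>0$ and $\mu(V)>0$, and since they are disjoint at least one has measure strictly between $0$ and $1$ — taking $Y=U$ (or its closure, adjusting by regularity) gives $0<\mu(Y)<1$. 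If instead the support is a single point $x$, then regularity forces $\mu=\delta_x$, contradicting our assumption. This disposes of the remaining case and completes the argument. I expect the bookkeeping around regularity and support to be the only delicate point; the algebraic splitting itself is routine and parallels Lemma \ref{ultrafilters-extreme}.
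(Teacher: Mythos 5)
Your proposal is correct and follows essentially the same route as the paper: the conditional-measure splitting borrowed from Lemma \ref{ultrafilters-extreme}, combined with compactness and regularity of $\mu$ to pin a non-splittable measure down to a single point. The one detail to add is that your case analysis on the support presumes the support is nonempty, which is exactly the paper's compactness covering argument (if every point had a null open neighborhood, finitely many would cover $X$, forcing $\mu(X)=0$).
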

\begin{proof}
Clearly, every pointed measure $\delta_x$ is extreme. Conversely, assume $\mu$ is extreme.
As in Lemma \ref{ultrafilters-extreme}, $\mu$ must be $\{0,1\}$-valued.
Assume for each $x\in X$ there is an open $U\ni x$ such that $\mu(X)=0$.
Then, by compactness, $\mu(X)=0$ which is impossible. Hence, there exists $x$ such that
$\mu(U)=1$ for every $U\ni x$. Now, by regularity, $\mu\{x\}=1$.
\end{proof}

Let $\mu$ be a Baire probability measure on $K$.
The \emph{barycenter} of $\mu$ is the unique $p\in K$ such that
$$\ \ f(p)=\int f\ d\mu\ \ \ \ \ \ \ \ \ \forall f\in \mathbf{A}(K).$$
In this case, one says that $\mu$ represents $p$. Every Baire probability measure on $K$ has a barycenter.

\begin{theorem} \emph{(Choquet-Bishop-de Leeuw)}\label{Choquet-Bishop-de Leeuv}
Let $K$ be a compact convex subset of a locally convex tvs and $p\in K$.
Then there is a Baire probability measure $\mu$ which represents $p$
and which vanishes on every Baire $A\subseteq K$ with $A\cap Ext(K)=\emptyset$.
\end{theorem}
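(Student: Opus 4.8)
The plan is to follow the classical two-part strategy: first produce a \emph{maximal} representing measure via Zorn's lemma on the Choquet ordering, and then show that maximality forces the measure to be carried by $Ext(K)$ in the Baire sense. For $f\in\mathbf{C}(K)$ write $\hat f(x)=\inf\{g(x):g\in\mathbf{A}(K),\ g\geqslant f\}$ for the upper envelope, and write $b(\nu)$ for the barycenter of a measure $\nu$. One checks that $\hat f$ is concave, upper semicontinuous, $\hat f\geqslant f$, and satisfies the duality $\hat f(x)=\sup\{\int_K f\,d\nu:\nu\in\mathcal P(K),\ b(\nu)=x\}$. I would order $\mathcal P(K)$ by $\mu\prec\nu$ iff $\int g\,d\mu\leqslant\int g\,d\nu$ for every continuous convex $g$. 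Since an affine continuous function is both convex and concave, $\mu\prec\nu$ forces $b(\mu)=b(\nu)$; and since $\prec$-chains admit weak$^*$ upper bounds, Zorn's lemma yields a $\prec$-maximal $\mu$ with $\mu\succ\delta_p$. Such $\mu$ then represents $p$ because the order preserves barycenters.

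The second ingredient is the characterization of maximality: $\mu$ is maximal if and only if $\int(\hat f-f)\,d\mu=0$ for every $f\in\mathbf{C}(K)$, equivalently $\mu\{\hat f>f\}=0$ for all such $f$ (here $\hat f-f\geqslant0$ is usc, hence Borel, and $\mu$ is regular Borel). Because $x\in Ext(K)$ exactly when $\delta_x$ is the only measure with barycenter $x$, i.e. when $\hat f(x)=f(x)$ for every $f$, one gets $Ext(K)=\bigcap_{f\in\mathbf{C}(K)}\{\hat f=f\}$. Thus each set $\{\hat f=f\}$ has full $\mu$-measure, but the intersection runs over uncountably many $f$; the whole difficulty is to replace it, for a given Baire set, by a countable adapted family.

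To do this I would reduce to a metrizable quotient. By the Stone--Weierstrass theorem the algebra generated by $\mathbf{A}(K)$ is sup-norm dense in $\mathbf{C}(K)$, so $\mathcal B_0(K)=\sigma(\mathbf{A}(K))$; hence a Baire set $A$ with $A\cap Ext(K)=\emptyset$ lies in $\sigma(a_1,a_2,\dots)$ for some sequence $a_n\in\mathbf{A}(K)$. The map $\alpha=(a_n):K\to\mathbb R^{\mathbb N}$ is affine and continuous, its image $K_1=\alpha(K)$ is compact, convex and metrizable, and $A=\alpha^{-1}(A_1)$ for a Borel $A_1\subseteq K_1$. The key geometric point is that $A_1\cap Ext(K_1)=\emptyset$: for $q\in Ext(K_1)$ the fibre $\alpha^{-1}(q)$ is a face of $K$ by affinity of $\alpha$, so by Krein--Milman it contains a point of $Ext(K)$, and were $q\in A_1$ this point would lie in $A\cap Ext(K)$, a contradiction.

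Finally I would push $\mu$ forward to $\mu_1=\alpha_*\mu$ on $K_1$ and invoke the metrizable Choquet theorem. Here the main obstacle is verifying that $\mu_1$ is again maximal; the cleanest route is the envelope identity $\widehat{g\circ\alpha}=\hat g\circ\alpha$ for continuous convex $g$ on $K_1$, where the inequality $\leqslant$ is immediate and the reverse uses a barycenter-preserving lifting of measures through the affine surjection $\alpha$, whence $\mu_1(g)=\mu(g\circ\alpha)=\mu(\widehat{g\circ\alpha})=\mu_1(\hat g)$. On the metrizable set $K_1$ the set $Ext(K_1)$ is a $G_\delta$ and there is a continuous strictly convex $g_0$ with $\{\hat g_0=g_0\}=Ext(K_1)$, so maximality gives $\mu_1\{\hat g_0>g_0\}=0$; thus $\mu_1$ vanishes off $Ext(K_1)$, $\mu_1(A_1)=0$, and therefore $\mu(A)=\mu_1(A_1)=0$. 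The delicate points to pin down are the measure-lifting lemma behind the envelope identity and the systematic passage between Baire and regular Borel measures.
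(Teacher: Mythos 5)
This theorem is not proved in the paper at all: it is quoted in the appendix as a standard result, with the reader referred to \cite{Alfsen, Phelps, Simon}, so your attempt must be measured against the classical Bishop--de Leeuw argument. Your skeleton (Zorn's lemma on the Choquet ordering, Mokobodzki's maximality criterion, Bauer's characterization of $Ext(K)$, the reduction of a Baire set to the $\sigma$-algebra of countably many affine continuous functions, and the Krein--Milman face argument giving $A_1\cap Ext(K_1)=\emptyset$) is indeed the classical skeleton, and those steps are correct. The gap is exactly at the step you flagged as delicate, and it is not a gap that can be patched: the ``barycenter-preserving lifting of measures through $\alpha$'' is false, hence the envelope identity $\widehat{g\circ\alpha}=\hat g\circ\alpha$ is false, hence $\mu_1=\alpha_*\mu$ need \emph{not} be maximal. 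Counterexample: let $K\subseteq\Rn^2$ be the triangle with vertices $v_0=(0,0)$, $v_1=(1,0)$, $v_2=(\tfrac12,1)$, and let $\alpha(s,t)=s$, so $K_1=[0,1]$. Then $\mu=\delta_{v_2}$ is maximal (a point mass at an extreme point), but $\alpha_*\mu=\delta_{1/2}$ is not maximal, since $\tfrac12(\delta_0+\delta_1)$ strictly dominates it in the Choquet order. Concretely, for $g(u)=u^2$ one has $\hat g(u)=u$, so $\hat g(\alpha(v_2))=\tfrac12$, whereas $\widehat{g\circ\alpha}(v_2)=\tfrac14$ because $v_2$ is extreme; and the measure $\lambda=\tfrac12(\delta_0+\delta_1)$, whose barycenter is $\alpha(v_2)$, admits no lift with barycenter $v_2$ (its only lift is $\tfrac12\delta_{v_0}+\tfrac12\delta_{v_1}$, with barycenter $(\tfrac12,0)$). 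In this example $\mu_1=\delta_{1/2}$ charges the set $\{\tfrac12\}$, which is disjoint from $Ext(K_1)=\{0,1\}$, so your final appeal to the metrizable Choquet theorem for $\mu_1$ collapses.

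The example does not contradict the theorem, because the fibre $\alpha^{-1}(\tfrac12)$ contains the extreme point $v_2$, i.e.\ $A_1=\{\tfrac12\}$ violates $\alpha^{-1}(A_1)\cap Ext(K)=\emptyset$. This shows precisely what the missing lemma must say: not that $\alpha_*\mu$ is maximal (false), but that $\alpha_*\mu$ annihilates every Borel set disjoint from $\alpha(Ext(K))$ --- equivalently, that for $\mu$-almost every $x$ the fibre $\alpha^{-1}(\alpha(x))$ meets $Ext(K)$. Note that disjointness of $A_1$ from $\alpha(Ext(K))$ follows immediately from $A\cap Ext(K)=\emptyset$ and the saturation $A=\alpha^{-1}(A_1)$, so your face argument is not even needed once the correct lemma is available; conversely, your face argument only produces extreme points of $K$ in fibres over points of $Ext(K_1)$, and says nothing about the fibres that actually carry the mass of $\mu_1$. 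Proving this weaker, fibrewise statement is the genuine content of the Bishop--de Leeuw theorem; the proofs in \cite{Phelps} and \cite{Alfsen} accomplish it by a different mechanism, taking the countable generating family to consist of \emph{convex} (not affine) continuous functions, concentrating $\mu$ on the $G_\delta$ set where each of them equals its upper envelope via Mokobodzki, and then arguing that every atom of the generated $\sigma$-algebra meeting that set contains an extreme point of $K$. No repair of the lifting lemma can substitute for this, since both the lemma and the conclusion it was meant to deliver are false.
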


\begin{theorem} \emph{(Bishop-de Leeuw)}
For every point $p$ in a compact convex set $K$ there is a measure $\mu$
on the sigma-algebra $Ext(K)\cap\mathcal B_0(K)$ such that $\mu(Ext(K))=1$ and
$$f(p)=\int_{Ext(K)}f\ d\mu\ \ \ \ \ \ \ \ \forall f\in \mathbf{A}(K).$$
\end{theorem}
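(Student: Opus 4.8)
The plan is to derive this statement directly from the Choquet--Bishop--de Leeuw theorem (Theorem \ref{Choquet-Bishop-de Leeuv}) by transporting the Baire measure it produces on $K$ to the trace $\sigma$-algebra on the extreme boundary. First I would apply Theorem \ref{Choquet-Bishop-de Leeuv} to the given point $p$ to obtain a Baire probability measure $\mu$ on $\mathcal B_0(K)$ which represents $p$, i.e. $f(p)=\int_K f\,d\mu$ for all $f\in\mathbf A(K)$, and which vanishes on every Baire set $B\subseteq K$ with $B\cap Ext(K)=\emptyset$. This vanishing (``pseudo-support'') property is the essential ingredient; the remaining work is purely measure-theoretic.

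Writing $\mathcal E=Ext(K)\cap\mathcal B_0(K)$ for the trace $\sigma$-algebra, I would define
$$\nu(Ext(K)\cap B)=\mu(B)\qquad(B\in\mathcal B_0(K)).$$
The key point is that this is well defined: if $Ext(K)\cap B_1=Ext(K)\cap B_2$ for Baire sets $B_1,B_2$, then $B_1\triangle B_2$ is a Baire set disjoint from $Ext(K)$, so $\mu(B_1\triangle B_2)=0$ by the vanishing property and hence $\mu(B_1)=\mu(B_2)$. Countable additivity of $\nu$ follows from that of $\mu$ after disjointifying: given disjoint $E_n=Ext(K)\cap B_n$ in $\mathcal E$, replacing $B_n$ by $B_n\setminus\bigcup_{m<n}B_m$ leaves each trace $E_n$ unchanged (since the $E_n$ are already disjoint) while making the $B_n$ pairwise disjoint, whence $\nu(\bigcup_nE_n)=\mu(\bigcup_nB_n)=\sum_n\mu(B_n)=\sum_n\nu(E_n)$. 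Taking $B=K$ gives $\nu(Ext(K))=\mu(K)=1$, so $\nu$ is a probability measure on $\mathcal E$.

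It remains to check the integral formula. Every $f\in\mathbf A(K)$ is continuous, hence bounded and $\mathcal B_0(K)$-measurable, and its restriction $f|_{Ext(K)}$ is $\mathcal E$-measurable. By the definition of $\nu$ the identity $\int_{Ext(K)}g\,d\nu=\int_K g\,d\mu$ holds for $g=\chi_B$ with $B\in\mathcal B_0(K)$ (both sides equal $\mu(B)$), hence by linearity for all Baire-simple $g$. Since $f$ is bounded it is a uniform limit of Baire-simple functions, so the identity passes to $f$; combined with the representation $\int_K f\,d\mu=f(p)$ from Theorem \ref{Choquet-Bishop-de Leeuv} this yields $f(p)=\int_{Ext(K)}f\,d\nu$ for every $f\in\mathbf A(K)$. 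The main obstacle is precisely the well-definedness of $\nu$, which is why the sharper Choquet--Bishop--de Leeuw statement (vanishing on Baire sets missing $Ext(K)$), rather than mere representation by a boundary measure, is needed; note that $Ext(K)$ itself need not be a Baire subset of $K$ when $K$ is not metrizable, so one genuinely must work with the trace $\sigma$-algebra $\mathcal E$ rather than with $\mu$ restricted to $Ext(K)$.
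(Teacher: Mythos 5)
Your proof is correct, but you should be aware that the paper itself contains no proof of this statement: it appears in the appendix as quoted background from the Choquet-theory literature (cf.\ \cite{Alfsen, Phelps, Simon}), immediately after Theorem \ref{Choquet-Bishop-de Leeuv}, which is likewise stated without proof. What you have supplied is the standard derivation of the trace-$\sigma$-algebra form of Bishop--de Leeuw from the pseudo-support form of Choquet--Bishop--de Leeuw, and the measure-theoretic transport is carried out correctly: well-definedness of $\nu$ on $Ext(K)\cap\mathcal B_0(K)$ is exactly where the vanishing property is needed (two Baire sets with equal traces differ by a Baire set missing $Ext(K)$); the disjointification argument for countable additivity works because replacing $B_n$ by $B_n\setminus\bigcup_{m<n}B_m$ does not alter the traces of sets whose traces are already disjoint; and the integral identity extends from indicators to all of $\mathbf A(K)$ by uniform approximation of bounded Baire-measurable functions by simple functions, which is legitimate since both measures are finite. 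Your closing remark is also the right one: in the non-metrizable case $Ext(K)$ need not be Baire (or even Borel), so one genuinely cannot restrict $\mu$ and must pass to the trace $\sigma$-algebra; in the metrizable case $Ext(K)$ is $G_\delta$ and the issue disappears. Your argument makes the appendix self-contained relative to Theorem \ref{Choquet-Bishop-de Leeuv}, whereas the paper's choice to cite rather than prove reflects that both results are classical; the derivation you give is essentially the one found in Phelps's lectures.
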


A compact convex set $K$ is called a \emph{Choquet simplex}\index{simplex} if every $p\in K$ is represented by a unique Baire measure.
A Choquet simplex is called a \emph{Bauer simplex} if $Ext(K)$ is closed.

\begin{theorem} \label{Bauer simplex} (\cite{Alfsen} p.104)
Let $X$ be a compact Hausdorff space. Then, $M_1^+(X)$ (the set of regular Borel probability measures on $X$)
is a Bauer simplex and its extreme boundary is homeomorphic to $X$.
\end{theorem}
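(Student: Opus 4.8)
The final statement to prove is Theorem \ref{Bauer simplex}, attributed to Alfsen: for a compact Hausdorff space $X$, the set $M_1^+(X)$ of regular Borel probability measures on $X$ is a Bauer simplex whose extreme boundary is homeomorphic to $X$.

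\medskip

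The plan is to establish three things in sequence: that $M_1^+(X)$ is compact convex, that its extreme boundary is exactly the set of point masses $\{\delta_x : x \in X\}$ and is homeomorphic to $X$ (hence closed), and finally that each point of $M_1^+(X)$ has a \emph{unique} representing Baire measure, which gives the simplex property. First I would set up $M_1^+(X)$ as a subset of $\mathbf{C}(X)^*$ equipped with the weak* topology; by the Riesz-Markov representation theorem, $M_1^+(X)$ corresponds precisely to the norm-one positive linear functionals $\Lambda$ with $\Lambda(1)=1$, and this set is a weak*-closed subset of the unit ball, hence weak*-compact by Banach-Alaoglu (Theorem \ref{Conway} identifies the relevant dual). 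Convexity is immediate from linearity. This realizes $M_1^+(X)$ as a compact convex subset of a locally convex space, so the Choquet theory of the appendix applies.

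\medskip

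Next I would identify $Ext(M_1^+(X))$. The key computation is already recorded in Lemma \ref{regular-extreme}, which states that $\mu \in \mathcal{P}(X)$ is extreme if and only if $\mu = \delta_x$ for some $x \in X$; I would simply invoke it. It then remains to check that the map $x \mapsto \delta_x$ is a homeomorphism from $X$ onto $Ext(M_1^+(X))$. Injectivity follows because distinct points of a compact Hausdorff space are separated by a continuous function (Urysohn), so $\delta_x \neq \delta_y$ for $x \neq y$; continuity follows because, for each $f \in \mathbf{C}(X)$, the map $x \mapsto \delta_x(f) = f(x)$ is continuous, which is exactly continuity into the weak* topology; and since $X$ is compact and the weak* topology is Hausdorff, a continuous bijection from a compact space onto a Hausdorff image is automatically a homeomorphism. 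In particular $Ext(M_1^+(X))$ is the continuous image of the compact set $X$, hence \emph{closed}. This is the Bauer condition.

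\medskip

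The main obstacle is the \emph{uniqueness} of representing measures, i.e. that $M_1^+(X)$ is a Choquet simplex. Here the natural strategy is to exhibit a canonical affine homeomorphism and transport uniqueness through it. Given $p \in M_1^+(X)$, a representing Baire measure on $M_1^+(X)$ is a measure $\nu$ on the compact convex set $M_1^+(X)$ whose barycenter is $p$ and which, by Choquet-Bishop-de Leeuw (Theorem \ref{Choquet-Bishop-de Leeuv}), may be taken to live on $Ext(M_1^+(X)) = \{\delta_x\}$. Via the homeomorphism $x \mapsto \delta_x$, such boundary measures $\nu$ correspond bijectively to regular Borel probability measures $\tilde\nu$ on $X$, and the barycenter condition $f(p) = \int \hat f \, d\nu$ for all $f \in \mathbf{A}(M_1^+(X))$ translates, upon restricting to the affine functions of the form $\mu \mapsto \mu(g)$ with $g \in \mathbf{C}(X)$, into $p(g) = \int_X g \, d\tilde\nu$. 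Since $p$ is itself a regular Borel measure on $X$ and a regular Borel measure is determined by its integrals against $\mathbf{C}(X)$ (Riesz-Markov uniqueness), this forces $\tilde\nu = p$, and hence $\nu$ is unique. The delicate point to verify carefully is that the evaluation functionals $\mu \mapsto \mu(g)$ generate enough of $\mathbf{A}(M_1^+(X))$ to pin down the barycenter equation on all of $X$; this is handled exactly as in the density arguments used earlier (the span of such evaluations contains the constants and separates points of $M_1^+(X)$, so it is dense in $\mathbf{A}(M_1^+(X))$ by the fact cited before Lemma \ref{ultrafilters-extreme}). With uniqueness established and the extreme boundary shown closed and homeomorphic to $X$, both assertions of the theorem follow.
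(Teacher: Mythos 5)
Your proof is correct, but there is nothing in the paper to compare it against: Theorem \ref{Bauer simplex} is stated purely as a citation to Alfsen (p.~104), with no proof given — it is background material in the appendix. What you have written is the standard self-contained argument, and it fits the paper well: you correctly reuse Lemma \ref{regular-extreme} (extreme points of $\mathcal{P}(X)$ are the point masses), the compactness of $M_1^+(X)$ via Riesz--Markov and Banach--Alaoglu, the compact-to-Hausdorff trick to get that $x\mapsto\delta_x$ is a homeomorphism onto a closed set, and the density fact (constants plus point-separation imply density in $\mathbf{A}(K)$) to reduce the barycenter equation to the evaluation functionals $\mu\mapsto\mu(g)$, after which Riesz uniqueness forces the transported measure to equal $p$ itself. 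Two small points deserve a remark. First, the paper's definition of Choquet simplex (``represented by a unique Baire measure'') cannot be read literally — every non-extreme $p$ is represented both by $\delta_p$ and by a boundary measure — so uniqueness must be understood among maximal (equivalently, in the Bauer case, boundary) measures; your argument proves exactly this correct reading, but you should say so explicitly. Second, you pass from ``$\nu$ vanishes on Baire sets disjoint from $Ext(K)$'' (the Choquet--Bishop--de~Leeuw conclusion) to ``$\nu$ is supported on $Ext(K)$''; when $X$ is not metrizable the open complement of $Ext(K)$ need not be a Baire set, so this step needs a one-line regularity argument (any compact set disjoint from the closed set $Ext(K)$ sits inside a compact $G_\delta$ disjoint from $Ext(K)$, hence is null). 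Neither issue is a gap in substance; both are routine, and your overall route is the same one Alfsen takes.
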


More details on compact convex sets can be found in \cite{Alfsen, Phelps, Simon}.
\bigskip

\noindent{\bf\large Extension theorems}
\begin{theorem} \emph{(Kantorovich, \cite{Aliprantis-Inf} Th. 8.32)} \label{ext1}
Let $G$ be a majorizing vector subspace of a Riesz space $E$ (i.e. for each $x\in E$ there is $x\leqslant y\in G$).
Let $\Lambda:G\rightarrow\Rn$ be a positive linear function.
Then $\Lambda$ has an extension to a positive linear function on $E$.\end{theorem}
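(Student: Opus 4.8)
The plan is to reduce the statement to the Hahn--Banach theorem in its sublinear-domination form, exactly as it was already invoked in the proof of Theorem \ref{compactness}. The dominating functional is the standard Minkowski-type gauge built from $\Lambda$ and the order of $E$. Concretely, for $x\in E$ I would set
$$p(x)=\inf\{\Lambda(y):\ y\in G,\ x\leqslant y\}.$$
Because $G$ is majorizing, the set on the right is nonempty for every $x$, so $p(x)<+\infty$; the first point requiring care is that $p(x)>-\infty$. For this I would fix some $z\in G$ with $-x\leqslant z$ (again using the majorizing hypothesis). Then any $y\in G$ with $y\geqslant x$ satisfies $y+z\geqslant0$, whence $\Lambda(y)\geqslant-\Lambda(z)$ by positivity of $\Lambda$, giving $p(x)\geqslant-\Lambda(z)$. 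Thus $p:E\rightarrow\Rn$ is real-valued.

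Next I would check that $p$ is sublinear. Positive homogeneity $p(\lambda x)=\lambda p(x)$ for $\lambda\geqslant0$ follows by rescaling the admissible majorants $y$ (with the case $\lambda=0$ handled by $p(0)=0$, since $\Lambda\geqslant0$ on the positive cone), and subadditivity $p(x_1+x_2)\leqslant p(x_1)+p(x_2)$ follows by adding majorants $y_1\geqslant x_1$ and $y_2\geqslant x_2$ and using linearity of $\Lambda$. I would also verify that $p=\Lambda$ on $G$: the inequality $p\leqslant\Lambda$ is immediate from taking $y=x$, while for any $y\geqslant x$ in $G$ positivity gives $\Lambda(y)=\Lambda(x)+\Lambda(y-x)\geqslant\Lambda(x)$, so $p(x)\geqslant\Lambda(x)$. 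In particular $\Lambda\leqslant p$ on $G$.

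With this in place, the Hahn--Banach extension theorem (in the form dominating by a sublinear functional, as used for Theorem \ref{compactness}) extends $\Lambda$ to a linear functional $\bar{\Lambda}:E\rightarrow\Rn$ with $\bar{\Lambda}\leqslant p$ everywhere. Finally I would recover positivity: if $x\geqslant0$, then $-x\leqslant0\in G$, so $p(-x)\leqslant\Lambda(0)=0$, and hence $\bar{\Lambda}(-x)\leqslant p(-x)\leqslant0$, i.e. $\bar{\Lambda}(x)\geqslant0$. This exhibits $\bar{\Lambda}$ as the desired positive linear extension. I expect the only genuinely delicate step to be the well-definedness of $p$, namely its finiteness, which is precisely where both hypotheses — that $G$ majorizes $E$ and that $\Lambda$ is positive — are jointly needed; the remaining verifications are routine manipulations of the order structure.
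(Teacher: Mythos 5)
Your proof is correct. Note that the paper does not prove this statement at all: it is quoted in the appendix as a standard result with a citation to \cite{Aliprantis-Inf}, Th.\ 8.32, and your argument is essentially the standard proof of that theorem — the order-theoretic gauge $p(x)=\inf\{\Lambda(y):\ y\in G,\ x\leqslant y\}$ combined with Hahn--Banach domination, which is also exactly the mechanism the paper itself uses inside the proof of its affine compactness theorem. All the verifications you flag do go through as you describe: finiteness of $p$ needs both majorization and positivity of $\Lambda$, equality $p=\Lambda$ on $G$ follows from positivity, and positivity of the extension follows from $\bar{\Lambda}(-x)\leqslant p(-x)\leqslant\Lambda(0)=0$ for $x\geqslant0$. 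One small remark: your argument never invokes the lattice operations of $E$, only the ordered vector space structure, so it in fact proves the statement in slightly greater generality than the Riesz-space formulation quoted in the paper.
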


\begin{theorem} (Riesz representation theorem \cite{Aliprantis-Inf}, Th.14.9)\label{Riesz representation}
Let $X$ be a normal Hausdorff topological space and $\Lambda:{\mathbf C}_b(X)\rightarrow\Rn$ be a positive linear functional.
Then there exists a unique finite regular charge $\mu$ on the algebra $\mathcal{A}(X)$ generated by the open sets such that
$\mu(X)=\|\Lambda\|=\Lambda(1)$ and $$\Lambda(f)=\int fd\mu\ \ \ \ \ \ \ \ \ \ \forall f\in\mathbf{C}_b(X).$$
\end{theorem}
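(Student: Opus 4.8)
The plan is to produce the representing charge by the classical Riesz content construction, which yields regularity directly, while using the Kantorovich extension theorem (Theorem \ref{ext1}) as a quick route to existence of \emph{some} representing charge. First I would record the easy half. Since constant functions majorize, $\mathbf{C}_b(X)$ is a majorizing Riesz subspace of the Riesz space $B(X)$ of all bounded real functions on $X$ (for bounded $h$ we have $h\leqslant\|h\|_\infty\cdot 1\in\mathbf{C}_b(X)$). By Theorem \ref{ext1}, $\Lambda$ extends to a positive linear functional $\bar\Lambda$ on $B(X)$. Setting $\mu_0(A)=\bar\Lambda(\chi_A)$ gives a charge on $P(X)$ with $\mu_0(X)=\Lambda(1)$, and because positive linear functionals on $B(X)$ are $\|\cdot\|_\infty$-continuous while simple functions are uniformly dense, one gets $\bar\Lambda(f)=\int f\,d\mu_0$ for all bounded $f$, in particular $\Lambda(f)=\int f\,d\mu_0$ on $\mathbf{C}_b(X)$. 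This settles existence but not regularity, so the real work is to manufacture a \emph{regular} representative on $\mathcal{A}(X)$.

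For that, write $g\prec U$ when $g\in\mathbf{C}_b(X)$, $0\leqslant g\leqslant 1$ and $g$ vanishes outside a closed subset of the open set $U$, and define $\lambda(U)=\sup\{\Lambda(g):g\prec U\}$ on open sets, then $\mu^*(A)=\inf\{\lambda(U):A\subseteq U\text{ open}\}$ for arbitrary $A\subseteq X$. Using positivity and linearity of $\Lambda$ I would verify that $\lambda$ is monotone and finitely subadditive and that $\mu^*$ is an outer content; the crucial input is normality, through Urysohn's lemma, which supplies enough subordinate functions to separate disjoint closed sets and thereby force additivity on the Carathéodory-measurable class $\mathcal{M}$. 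Because $\Lambda$ is only positive linear and carries no order-continuity, $\mathcal{M}$ is merely an algebra and $\mu^*|_{\mathcal{M}}$ merely finitely additive — precisely a charge, as required. Since $\mathcal{M}$ contains the open sets and $\mathcal{A}(X)$ is the smallest algebra containing them, I set $\mu=\mu^*|_{\mathcal{A}(X)}$; outer regularity is built into the definition, and inner regularity by closed sets follows by complementation together with normality.

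The identity $\Lambda(f)=\int f\,d\mu$ I would obtain by reducing to $0\leqslant f\leqslant 1$ (scaling and splitting into positive and negative parts), slicing $[0,1]$ into fine levels $t_0<\cdots<t_m$, and sandwiching both $\Lambda(f)$ and $\int f\,d\mu$ between the same Riemann-type sums $\sum_i(t_i-t_{i-1})\lambda(U_i)$ formed from the open sets $U_i=\{f>t_i\}$ and Urysohn functions subordinate to them, pushing the resulting gap below any prescribed $\epsilon$. The normalization $\mu(X)=\Lambda(1)=\|\Lambda\|$ is immediate, since positivity gives $|\Lambda(f)|\leqslant\Lambda(1)\|f\|_\infty$ with equality at $f=1$. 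For uniqueness, any regular charge $\nu$ representing $\Lambda$ must satisfy $\nu(U)=\sup\{\Lambda(g):g\prec U\}=\mu(U)$ for every open $U$ (by its inner regularity together with the representation applied to Urysohn functions), hence agree with $\mu$ on closed sets and therefore on the whole generated algebra $\mathcal{A}(X)$.

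The main obstacle is the representation step in tandem with the finite additivity of $\mu^*$ on $\mathcal{M}$: the delicate part is to exploit normality \emph{just enough} to secure additivity and the sandwich estimate while invoking no countable or monotone-convergence property of $\Lambda$, since none is available. This is exactly the reason the conclusion is a finitely additive regular charge rather than a countably additive measure, and it is where the argument must be handled most carefully.
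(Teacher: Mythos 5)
The paper never proves this statement: it is quoted as background from \cite{Aliprantis-Inf} (Th.\ 14.9) in the appendix, so there is no internal proof to compare yours against. Your proposal is, in essence, the standard Alexandroff--Riesz-content proof of exactly this cited result, and it is sound: the content $\lambda(U)=\sup\{\Lambda(g):g\prec U\}$, the finitely additive Carath\'eodory step, the level-slicing sandwich estimate, and uniqueness via agreement on open sets are the right ingredients (the preliminary Kantorovich detour produces a representing ultracharge but, as you note, contributes nothing to regularity and could be dropped). Two points deserve explicit care in a full write-up: first, finite subadditivity of $\lambda$ (hence of $\mu^*$) is where normality really enters, via a Urysohn splitting of $g\prec U_1\cup U_2$ into $g_1\prec U_1$, $g_2\prec U_2$, and there one must interpose a closed buffer $\overline{V}\subseteq U_1$ so that the two pieces genuinely vanish outside closed subsets of the respective $U_i$; second, in the uniqueness argument the step ``agreement on closed sets, therefore on all of $\mathcal{A}(X)$'' is not set-algebra bookkeeping but uses inner regularity of \emph{both} charges, which is precisely why regularity belongs to the uniqueness hypothesis.
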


\begin{theorem} \label{Riesz-Markov noncompact} (Riesz-Markov representation theorem \cite{Royden})
Let $X$ be a compact Hausdorff space and $\Lambda:C(M)\rightarrow\Rn$ a positive linear functional with $\Lambda(1)=1$.
Then there is a unique Baire probability measure $\mu$ on $X$ such that
$$\ \ \ \ \Lambda(f)=\int f d\mu\ \ \ \ \ \ \ \ \ \forall f\in C(M).$$
\end{theorem}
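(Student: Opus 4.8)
The plan is to obtain the measure from the already-established Riesz representation theorem (Theorem \ref{Riesz representation}) and then promote finite additivity to countable additivity using compactness. Since $X$ is compact, $C(X)=\mathbf{C}_b(X)$, so Theorem \ref{Riesz representation} applies directly to $\Lambda$ and yields a finite regular charge $\mu$ on the algebra $\mathcal{A}(X)$ generated by the open sets, satisfying $\mu(X)=\Lambda(1)=1$ and $\Lambda(f)=\int f\,d\mu$ for every $f\in C(X)$. Thus $\mu$ is already a regular probability charge representing $\Lambda$; what remains is to show that it is in fact countably additive, so that it determines a genuine Baire probability measure, and that the representation survives the extension.

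First I would establish that $\mu$ is countably additive on the algebra $\mathcal{A}(X)$. It suffices to verify continuity from above at $\emptyset$: if $A_n\downarrow\emptyset$ with $A_n\in\mathcal{A}(X)$, then $\mu(A_n)\to 0$. Fix $\epsilon>0$; by inner regularity of $\mu$ choose closed (hence compact) sets $E_n\subseteq A_n$ with $\mu(A_n\setminus E_n)\leqslant\epsilon\,2^{-n}$. Since $\bigcap_n A_n=\emptyset$ we have $\bigcap_n E_n=\emptyset$, so by compactness of $X$ some finite intersection $E_1\cap\cdots\cap E_N$ is already empty. For $x\in A_N$ the monotonicity $A_N\subseteq A_n$ forces $x\in A_n$ for every $n\leqslant N$, while emptiness of $\bigcap_{n\leqslant N}E_n$ forces $x\notin E_n$ for some such $n$; hence $A_N\subseteq\bigcup_{n\leqslant N}(A_n\setminus E_n)$ and $\mu(A_N)\leqslant\sum_{n\leqslant N}\mu(A_n\setminus E_n)\leqslant\epsilon$. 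As $\epsilon$ is arbitrary, $\mu(A_n)\to0$. By the Hahn--Kolmogorov (Carath\'eodory) extension theorem, $\mu$ extends to a countably additive probability measure on $\sigma(\mathcal{A}(X))=\mathcal{B}(X)$, and restricting to $\mathcal{B}_0(X)\subseteq\mathcal{B}(X)$ produces a Baire probability measure, whose regularity is automatic on a compact Hausdorff space as recorded in the appendix.

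Next I would verify that the representation $\Lambda(f)=\int f\,d\mu$ persists for the extended measure and then settle uniqueness. The extension agrees with the original charge on $\mathcal{A}(X)$, and the charge-integral of a continuous $f$ is the uniform limit of integrals of simple $\mathcal{A}(X)$-measurable functions; therefore the charge-integral and the measure-integral of any $f\in C(X)$ coincide, so the extended measure still represents $\Lambda$. For uniqueness, suppose $\mu_1,\mu_2$ are Baire probability measures with $\int f\,d\mu_1=\int f\,d\mu_2$ for all $f\in C(X)$. Given a closed $G_\delta$ set $F=\bigcap_k U_k$ with $U_k$ open, Urysohn's lemma (normality of $X$) furnishes continuous $f_k$ with $\chi_F\leqslant f_k\leqslant\chi_{U_k}$, which after replacing $f_k$ by $\min_{j\leqslant k}f_j$ decrease pointwise to $\chi_F$; monotone convergence gives $\mu_1(F)=\lim_k\int f_k\,d\mu_1=\lim_k\int f_k\,d\mu_2=\mu_2(F)$. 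Since the closed $G_\delta$ sets form a $\pi$-system generating $\mathcal{B}_0(X)$, the Dynkin $\pi$--$\lambda$ theorem yields $\mu_1=\mu_2$.

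The main obstacle is the promotion from finite to countable additivity in the second paragraph: this is precisely where compactness of $X$ is indispensable, since a merely regular charge on a non-compact space need not be countably additive, and the finite-subcover argument applied to the compact inner approximations $E_n$ is the crux of the whole theorem. Everything else reduces to bookkeeping with regularity and standard extension machinery.
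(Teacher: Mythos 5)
Your proof is correct, but note that the paper does not actually prove this statement: Theorem \ref{Riesz-Markov noncompact} is quoted in the appendix as a standard background fact with a citation to Royden (with a small typo, $C(M)$ for $C(X)$, which you silently corrected), so there is no "paper proof" to match. What you have done instead is derive it from the paper's other quoted representation theorem, Theorem \ref{Riesz representation}, and the derivation is sound: compactness gives $C(X)=\mathbf{C}_b(X)$; the resulting regular charge is continuous from above at $\emptyset$ by your inner-regularity plus finite-intersection-property argument (this is exactly Alexandroff's theorem that a regular charge on a compact space is countably additive, and it is indeed the crux); Hahn--Kolmogorov then produces a Borel measure; and the Urysohn-plus-Dynkin uniqueness argument on the $\pi$-system of closed $G_\delta$ sets is the standard one and uses only the paper's definition of $\mathcal{B}_0(X)$. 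One bookkeeping point deserves a cleaner formulation: when you claim the representation survives the extension, the simple $\mathcal{A}(X)$-measurable functions approximating $f$ need not be Baire measurable (closed sets need not be $G_\delta$ in a general compact Hausdorff space), so you should first compare the charge integral with the integral against the Borel extension $\bar{\mu}$, where those simple functions live, and then use that $\int f\,d\bar{\mu}=\int f\,d(\bar{\mu}|_{\mathcal{B}_0(X)})$ because $f$ itself is Baire measurable. This is a two-line repair, not a gap. The trade-off between the two treatments is clear: the paper's citation keeps its appendix short, while your argument makes the appendix self-contained modulo Theorem \ref{Riesz representation} and exhibits precisely where compactness enters --- the promotion from finite to countable additivity, which genuinely fails for regular charges on non-compact spaces.
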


Every Baire probability measure on a compact Hausdorff $X$ has a unique extension to a
regular Borel measure. So, we may replace Baire with Borel in the above theorem.

\newpage\noindent{\bf\large Questions}
\begin{enumerate}
\item Find general ultracharges $\mu$ for which $M^\mu$ is $\kappa$-saturated (or complete if $M$ is so).
\item Prove the compact variant of the joint embedding property:
every two compact $M\equiv N$ have a common compact elementary extension
(similarly for the elementary amalgamation property).
Prove that if $T$ has a compact model, every type is realized in a compact model.
\item Prove that if $M$ is compact connected and minimal (has no proper elementary submodel),
then every $N\equiv M$ is connected.
\item Does there exist an AL-complete $\kappa$-categorical theory for $\kappa\geqslant\aleph_1$?
\item Study the affine part of a first order theory such as PA, ACF, RCF etc.
Study ultrameans of $\Nn$, $\Rn$ etc. similar to the non-standard analysis methods.
\item Study computability aspects of affine logic. Is it true that the affine part PA is still undecidable?
\item Let $\mu,\nu$ be ultracharges on $I$ such that $\mu\leq\nu$ and $\nu\leq\mu$.
Prove that there is a bijection $f:I\rightarrow I$ such that $f(\mu)=\nu$.
Prove that $\mu\equiv\nu$ implies $M^\mu\simeq M^\nu$ for all $M$.
\item Study stability and related notions in the framework of affine logic.
\item Study the model theoretic relations between an affine theory and its extremal theory.
For example, prove that $T$ is stable (has QE) if and only if its extremal part is so.
\item Regarding Proposition \ref{invariant types}, prove that the extreme points of the set of invariant types have
properties similar to ergodic measures. Does there exist a model theoretic variant of ergodic theory
where probability measures are replaced with types?
\item Reduced mean with respect to an arbitrary probability charge is defined similar to ultramean
by replacing all integrals with upper integrals.
In particular, $d(a,b)=\bar{\int\ }d(a_i,b_i)d\mu$. Prove an affine variant for Feferman-Vaught
theorem and deduce that reduced means preserve affine elementary equivalence.
\end{enumerate}
\newpage
{\small\printindex}

\begin{thebibliography}{5}
\bibitem{Alfsen} E.M. Alfsen, \emph{Compact convex sets and boundary integrals}, Springer-Verlag (1971).
\bibitem{Aliprantis-Inf} C.D. Aliprantis, K.C. Border, \textit{Infinite dimensional analysis}, third edition, Springer (2006).
\bibitem{Aliprantis} C.D. Aliprantis, O. Burkinshaw, \textit{Positive operators}, Springer (2006).
\bibitem{Bacak} M. Ba\v{c}\'{a}k, \emph{Convex analysis and optimization in Hadamard spaces},
De Gruyter Series in Nonlinear Analysis and Applications 22, De Gruyter (2014).
\bibitem{Bagheri-Lip} S.M. Bagheri, \emph{Linear model theory for Lipschitz structures}, Arch. Math. Logic 53:897-927 (2014).
\bibitem{Bagheri-Extreme} S.M. Bagheri, \textit{Extreme types and extremal models}, Annals of Pure and Applied Logic 175 (7), 103451 (2024).
\bibitem{Bagheri-Definability} S.M. Bagheri, \textit{Definability in affine logic}, submitted.
\bibitem{Bagheri-Safari} S.M. Bagheri, R. Safari, \emph{Preservation theorems in linear continuous logic}, Math. Log. Quart. 60, No.3, 168-176 (2014).
\bibitem{Barwise} J. Barwise and S. Feferman, \textit{Model-theoretic logics}, Springer, 1985.
\bibitem{BBHU} I. Ben-Yaacov, A. Berenstein, C.W. Henson, A. Usvyatsov,
\textit{Model theory for metric structures}, Model theory with Applications
to Algebra and Analysis, volume 2 (Zoe Chatzidakis, Dugald Macpherson,
Anand Pillay, and Alex Wilkie, eds.), London Math Society Lecture Note Series,
vol. 350, Cambridge University Press, (2008), pp. 315-427.
\bibitem{ben0} I. Ben-Yaacov, A.P. Pedersen, \textit{A proof of completeness for continuous first order logic},
Journal of Symbolic Logic 75 (2010), no. 1, 168-190.
\bibitem{Ibarlucia} I. Ben-Yaacov, T. Ibarluc\'{i}a, T. Tsankov,
\textit{Extremal models and direct integrals in affine logic}, Preprint arXiv:2407.13344 (2024).
\bibitem{Aw} A. Berenstein, C. Ward Henson, \textit{Model theory of probability spaces with an automorphism}, Preprint arXiv:math/0405360v1 (2004).
\bibitem{Rao} K.P.S. Bhaskara Rao, M. Bhaskara Rao, \textit{Theory of charges}, Academic Press (1983).
\bibitem{Buechler} S. Buechler, \emph{Essential stability theory}, Springer-Verlag (1996).
\bibitem{CK1} C.C. Chang and H.J. Keisler, \textit{Model theory }, North-Holland (1990).
\bibitem{CK} C.C. Chang and H.J. Keisler, \textit{Continuous model theory}, Princeton University Press (1966).
\bibitem{Morris}J. Cleary, S. Morris, D. Yost, \emph{Numerical Geometry - Numbers for Shapes}, The American Mathematical Monthly, vol. 93 (1987), 260-275.
\bibitem{Comfort-Negrepontis} W.W. Comfort, S. Negrpontis, \textit{The theory of ultrafilters}, Springer-Verlag (1974).
\bibitem{Conway} J.B. Conway, \textit{A course in functional analysis}, second edition, Springer (1990).
\bibitem{Dudley} R.M. Dudley, \textit{Real analysis and probability}, Cambridge University Press (2004).
\bibitem{Fadai} F. Fadai, S.M. Bagheri, \textit{Linear formulas in continuous logic}, Iranian journl of mathematical sciences and informatics,
Vol. 17 No.2,  75-86 (2002).
\bibitem{Ziegler} J. Flum, M. Ziegler, \emph{Topological model theory}, Lecture Notes in Mathematics, 769, Springer (1980).
\bibitem{Fremlin} D.H. Fremlin, \emph{Measure theory, volume 3: Measure algebras}, Internet file (2004).
\bibitem{Goldbring} I. Goldbring, \emph{An approximate Herbrand's theorem and definable functions in metric structures}, Math. Logic Quart. 58, No.3 (2012).
\bibitem{Henson} C.W. Henson, J. Iovino, \emph{Ultraproducts in analysis}, in Analysis and Logic,
London Mathematical Society Lecture Notes Series, vol 262 (2002).
\bibitem{Hoogland} E. Hoogland, \emph{Definability and interpolation.
Model-theoretic investigations}, PhD Thesis, Amsterdam (2001).
\bibitem{Lyndon} R.C. Lyndon, P.E. Schupp, \textit{Combinatorial group theory}, Springer-Verlag (1977).
\bibitem{Iovino} J. Iovino. \textit{On the maximality of logics with approximations,}
The Journal of Symbolic Logic 66.04 (2001): 1909-1918.
\bibitem{W.Kantor} W. M. Kantor, \emph{On incidence matrices of finite projective and affine spaces},
Math. Z. 124, 315-318, $\copyright$ by Springer-Verlag (1972).
\bibitem{Malekghasemi} M. Malekghasemi, S.M. Bagheri, \emph{Maximality of linear continuous logic}, Math. Log. Quart. 64, No. 3 185-191 (2018).
\bibitem{Marker} D. Marker, \emph{Model theory, an introduction}, Springer-Verlag (2002).
\bibitem{Parthasarathy-selection} T. Parthasarathy, \emph{Selection theorems and their applications}, Lectures Notes in Mathematics 163, Springer-Verlag 1972
\bibitem{Phelps} R.R. Phelps, \textit{Lectures on Choquet's theorem}, Springer-Verlag (2001).
\bibitem{Poizat} B. Poizat, A. Yeshkeyev, \textit{Positive Jonsson theories}, Log. Univers. 12 (2018).
\bibitem{Poizat-Model} B. Poizat, \emph{Cours de Th\'{e}orie des mod\`{e}les}, Villeurbannes (1985).
\bibitem{Royden} H.L. Royden \textit{Real analysis}, Third edition, Macmillan (1988).
\bibitem{Rudin} W. Rudin, \emph{Functional analysis}, McGraw-Hill Inc. (1991).
\bibitem{Safari-Bagheri} R. Safari, S.M. Bagheri, \emph{Completeness for linear continuous logic},
Journal of Logic and Computation 27 (4), 985-998 (2015).
\bibitem{Simon} B. Simon, \emph{Convexity: An Analytic Viewpoint}, Cambridge University Press (2011).
\bibitem{Srivastava} S.M. Srivastava, \textit{A course on Borel sets}, Springer-Verlag (1998).
\bibitem{Vilaveces} A. Villaveces, P. Zambrano, \emph{Limit models in metric abstract elementary classes: the categorical case},
Math. Log. Quart. 62, No 4-5, p. 319-334 (2016).
\bibitem{Wickstead} A. W. Wickstead, \emph{Affine functions on compact convex sets}, Internet file.
\end{thebibliography}
\end{document}